\documentclass[english]{amsart}

\usepackage{mathrsfs}
\usepackage[francais,english]{babel}
\usepackage{amssymb,latexsym}

\usepackage{verbatim}
\usepackage{color}
\usepackage[pdftex]{graphicx}
\usepackage[pagebackref,pdftex,hyperindex]{hyperref}

%
%

\numberwithin{equation}{section}       
\numberwithin{figure}{section}       
\setcounter{secnumdepth}{4}

\theoremstyle{plain}
\newtheorem{Thm}{Theorem}[section]
\newtheorem{Prop}[Thm]{Proposition}
\newtheorem{Lemma}[Thm]{Lemma}
\newtheorem{Cor}[Thm]{Corollary}
\newtheorem{Prop-def}[Thm]{Proposition-Definition}

\newtheorem*{Thmempty}{Theorem} 
\newtheorem*{ThmA}{Theorem A} 
\newtheorem*{ThmB}{Theorem B} 
 
\newtheorem*{ThmC}{Theorem C}

\theoremstyle{definition}
\newtheorem{Assum}[Thm]{Assumption}
\newtheorem{Example}[Thm]{Example}
\newtheorem{Exer}[Thm]{Exercise}
\newtheorem{Def}[Thm]{Definition}
\newtheorem{Remark}[Thm]{Remark}
\newtheorem*{Ackn}{Acknowledgments}

\newcommand{\A}{{\mathbf{A}}}
\newcommand{\C}{{\mathbf{C}}}
\newcommand{\D}{{\mathbf{D}}}
\newcommand{\F}{{\mathbf{F}}}
\newcommand{\N}{{\mathbf{N}}}
\renewcommand{\P}{{\mathbf{P}}}
\newcommand{\Q}{{\mathbf{Q}}}
\newcommand{\R}{{\mathbf{R}}}
\newcommand{\Z}{{\mathbf{Z}}}
\renewcommand{\H}{{\mathbf{H}}}

\newcommand{\bX}{{\bar{X}}}

\newcommand{\cB}{{\mathcal{B}}}

\newcommand{\cE}{{\mathcal{E}}}
\newcommand{\cF}{{\mathcal{F}}}

\newcommand{\cH}{{\mathcal{H}}}

\newcommand{\cJ}{{\mathcal{J}}}
\newcommand{\cL}{{\mathcal{L}}}
\newcommand{\cM}{{\mathcal{M}}}

\newcommand{\cO}{{\mathcal{O}}}

\newcommand{\cT}{{\mathcal{T}}}
\newcommand{\cV}{{\mathcal{V}}}

\newcommand{\hcO}{\hat{\mathcal{O}}}
\newcommand{\hcV}{\hat{\mathcal{V}}}
\newcommand{\tcV}{\tilde{\mathcal{V}}}

\newcommand{\hcVm}[1]{\hat{\mathcal{V}}_{\scriptscriptstyle{\subseteq{#1}}}}
\newcommand{\hcVp}[1]{\hat{\mathcal{V}}_{\scriptscriptstyle{\supseteq{#1}}}}
\newcommand{\hcVe}[1]{\hat{\mathcal{V}}_{\scriptscriptstyle{#1}}}
\newcommand{\cWm}[1]{\mathcal{W}_{\scriptscriptstyle{\subseteq{#1}}}}
\newcommand{\cWp}[1]{\mathcal{W}_{\scriptscriptstyle{\supseteq{#1}}}}
\newcommand{\cWe}[1]{\mathcal{W}_{\scriptscriptstyle{#1}}}
\newcommand{\cVe}[1]{\mathcal{V}_{\scriptscriptstyle{#1}}}
\newcommand{\tcVe}[1]{\tilde{\mathcal{V}}_{\scriptscriptstyle{#1}}}

\newcommand{\hKa}{\widehat{K^a}}

\newcommand{\fa}{{\mathfrak{a}}}
\newcommand{\fb}{{\mathfrak{b}}}

\newcommand{\fm}{{\mathfrak{m}}}
\newcommand{\fo}{{\mathfrak{o}}}
\newcommand{\fp}{{\mathfrak{p}}}

\newcommand{\fB}{{\mathfrak{B}}}

\newcommand{\fE}{{\mathfrak{E}}}

\newcommand{\fL}{{\mathfrak{L}}}
\newcommand{\fM}{{\mathfrak{M}}}

\newcommand{\hfa}{{\hat{\mathfrak{a}}}}

\newcommand{\ha}{{\hat{\alpha}}}

\newcommand{\hf}{{\hat{f}}}
\newcommand{\hD}{{\hat{D}}}
\newcommand{\hE}{{\hat{E}}}

\newcommand{\hL}{{\hat{L}}}
\newcommand{\hI}{{\hat{I}}}

\newcommand{\hx}{{\hat{x}}}
\newcommand{\hDelta}{{\hat{\Delta}}}

\newcommand{\hrho}{{\hat{\rho}}}
\newcommand{\hsigma}{{\hat{\sigma}}}

\newcommand{\vE}{{\check{E}}}

\newcommand{\vL}{{\check{L}}}

\newcommand{\ta}{{\tilde{a}}}
\newcommand{\tf}{{\tilde{f}}}

\newcommand{\tC}{{\tilde{C}}}

\newcommand{\tH}{{\tilde{H}}}
\newcommand{\tK}{{\tilde{K}}}

\newcommand{\tDelta}{{\tilde{\Delta}}}

\newcommand{\tphi}{{\tilde{\phi}}}
\newcommand{\tpsi}{{\tilde{\psi}}}

\newcommand{\simto}{\overset\sim\to}
\renewcommand{\=}{:=}
\renewcommand{\a}{\alpha}
\renewcommand{\b}{\beta}
\newcommand{\g}{\gamma}

\newcommand{\e}{\varepsilon}
\newcommand{\f}{\varphi}
\newcommand{\p}{\psi}

\newcommand{\eg}{e.g.\ }
\newcommand{\ie}{i.e.\ }
\newcommand{\cf}{cf.\ }
\newcommand{\loccit}{\textit{loc.\ cit.\ }}

\newcommand{\vv}{{\vec{v}}}
\newcommand{\ww}{{\vec{w}}}

\newcommand{\Aff}{\operatorname{Aff}}

\newcommand{\Aut}{\operatorname{Aut}}
\newcommand{\BerkA}{\operatorname{\A^1_{\mathrm{Berk}}}}
\newcommand{\BerkAone}{\operatorname{\A^1_{\mathrm{Berk}}}}
\newcommand{\BerkAtwo}{\operatorname{\A^2_{\mathrm{Berk}}}}
\newcommand{\BerkAn}{\operatorname{\A^n_{\mathrm{Berk}}}}
\newcommand{\BerkC}{\operatorname{C_{\mathrm{Berk}}}}
\newcommand{\BerkD}{\operatorname{\D_{\mathrm{Berk}}}}
\newcommand{\BerkDtwo}{\operatorname{\D^2_{\mathrm{Berk}}}}
\newcommand{\BerkDn}{\operatorname{\D^n_{\mathrm{Berk}}}}
\newcommand{\BerkP}{\operatorname{\P^1_{\mathrm{Berk}}}}
\newcommand{\BerkPone}{\operatorname{\P^1_{\mathrm{Berk}}}}
\newcommand{\BerkY}{\operatorname{Y_{\mathrm{Berk}}}}
\newcommand{\BDV}{\operatorname{BDV}}

\newcommand{\Div}{\operatorname{Div}}
\newcommand{\emb}{\operatorname{emb}}
\newcommand{\ev}{\operatorname{ev}}

\newcommand{\lcm}{\operatorname{lcm}}

\newcommand{\charac}{\operatorname{char}}

\newcommand{\Gal}{{\operatorname{Gal}}}

\newcommand{\Hom}{\operatorname{Hom}}

\newcommand{\ord}{\operatorname{ord}}

\newcommand{\id}{\operatorname{id}}

\newcommand{\Pic}{\operatorname{Pic}}

\newcommand{\QSH}{\operatorname{QSH}}
\newcommand{\ratrk}{\operatorname{rat.rk}}

\newcommand{\SH}{\operatorname{SH}}

\newcommand{\supp}{\operatorname{supp}}
\newcommand{\trdeg}{\operatorname{tr.deg}}
\newcommand{\triv}{\operatorname{triv}}

\newcommand{\Val}{\operatorname {Val}}

\newcommand{\llbracket}{[\negthinspace[}
\newcommand{\rrbracket}{]\negthinspace]}

%
%

 \author{Mattias Jonsson}

\address{Department of Mathematics \\
University of Michigan \\
530 Church Street, 2076 East Hall \\
Ann Arbor, MI 48109-1043 \\
USA}

\email{mattiasj@umich.edu}

\urladdr{www.math.lsa.umich.edu/~mattiasj/}

 \title{Dynamics on Berkovich spaces in low dimensions}


\begin{document}


\begin{abstract}
  These are expanded lecture notes for the summer school on 
  Berkovich spaces that took place at the 
  Institut de Math\'ematiques de Jussieu,
  Paris, during June 28--July 9, 2010. They serve to illustrate
  some techniques and results from the dynamics on 
  low-dimensional Berkovich spaces and to
  exhibit the 
  structure of these spaces.
\end{abstract}
 

%
%
%
%
%

 \maketitle
\setcounter{tocdepth}{1}
\tableofcontents


\section{Introduction}\label{S297}
The goal of these notes is twofold. 
First, I'd like to describe how Berkovich 
spaces enters naturally in certain instances of 
discrete dynamical systems. In particular,
I will try to show how my own work with 
Charles Favre~\cite{eigenval,dyncomp} on
valuative dynamics relates to the dynamics
of rational maps on the Berkovich projective line
as initiated by Juan Rivera-Letelier in his thesis~\cite{Rivera1}
and subsequently studied by him and others.
In order to keep the exposition somewhat focused,
I have chosen three sample problems (Theorems~A,~B and~C below)
for which I will present reasonably complete proofs.

The second objective is to show
some of the simplest Berkovich spaces 
``in action''. While not necessarily representative
of the general situation, they have a structure that 
is very rich, yet can be described in detail.
In particular, they are trees, or cones over trees.

\smallskip
For the purposes of this introduction, the dynamical 
problems that we shall be interested in all arise from 
polynomial mappings 
\begin{equation*}
  f:\A^n\to\A^n,
\end{equation*}
where $\A^n$ denotes affine $n$-space over a 
\emph{valued field}, that is, a field $K$ complete with respect
a norm $|\cdot|$.
Studying the dynamics of $f$ means, in rather vague terms,
studying the asymptotic behavior of the \emph{iterates} of $f$:
\begin{equation*}
  f^m=f\circ f\circ\dots\circ f
\end{equation*}
(the composition is taken $m$ times) as $m\to\infty$. For example,
one may try to identify regular as opposed to chaotic behavior.
One is also interested in invariant objects such as
fixed points, invariant measures, etc.

When $K$ is the field of complex numbers, polynomial mappings can 
exhibit very interesting dynamics both in one and 
higher dimensions. We shall discuss this a little further in~\S\ref{S106}
below. As references we point to~\cite{CG,Milnor} for the one-dimensional
case and~\cite{Sibony} for higher dimensions.

Here we shall instead focus on the case when the norm
on $K$ is \emph{non-Archimedean} in the sense that 
the strong triangle inequality $|a+b|\le\max\{|a|,|b|\}$
holds. Interesting examples of such fields include 
the $p$-adic numbers $\Q_p$, the field of Laurent series 
$\C((t))$, or any field $K$ equipped with the \emph{trivial} norm.

One motivation for investigating the dynamics of polynomial
mappings over non-Archimedean fields is simply to see to what 
extent the known results over the complex (or real) numbers
continue to hold. However, non-Archimedean dynamics 
sometimes plays a role even when the original dynamical 
system is defined over the complex numbers. We shall
see some instances of this phenomenon in these notes;
other examples are provided by the work of 
Kiwi~\cite{Kiwi1}, Baker and DeMarco~\cite{BdM},
and Ghioca, Tucker and Zieve~\cite{GTZ}.

Over the complex numbers, many of the most powerful tools
for studying dynamics are either topological or 
analytical in nature: distortion estimates, 
potential theory, quasiconformal mappings etc. 
These methods do not directly carry over to the 
non-Archimedean setting since $K$ is totally disconnected.

On the other hand, a polynomial mapping $f$ 
automatically induces a selfmap 
\begin{equation*}
  f:\BerkAn\to\BerkAn
\end{equation*}
of the corresponding \emph{Berkovich space} 
$\BerkAn$. 
By definition, $\BerkAn=\BerkAn(K)$ is the set of
multiplicative seminorms on the coordinate ring 
$R\simeq K[z_1,\dots,z_n]$ of $\A^n$ that
extend the given norm on $K$. It carries a 
natural topology in which it it locally compact and 
arcwise connected. It also contains a copy of $\A^n$:
a point $x\in\A^n$ is identified with the seminorm
$\phi\mapsto|\phi(x)|$.
The action of $f$ on $\BerkAn$ is given as follows.
A seminorm $|\cdot|$ is mapped by $f$ to the seminorm
whose value on a polynomial $\phi\in R$ is given by $|f^*\phi|$.

The idea is now to study the dynamics on $\BerkAn$. 
At this level of generality, not very much seems to be
known at the time of writing (although the time may be ripe to
start looking at this). 
Instead, the most interesting results have appeared in situations
when the structure of the space $\BerkAn$ is better understood,
namely in sufficiently low dimensions.

We shall focus on two such situations:
\begin{itemize}
\item[(1)]
  $f:\A^1\to\A^1$ is a polynomial mapping
  of the affine line over a general valued field $K$;
\item[(2)]
  $f:\A^2\to\A^2$ is a polynomial mapping
  of the affine plane over a field $K$
  equipped with the trivial norm.
\end{itemize}  
In both cases we shall mainly treat the case when $K$ is algebraically closed.

In~(1), one makes essential use of the fact that 
the Berkovich affine line $\BerkAone$ is a 
tree.\footnote{For a precise definition of what we mean by ``tree'',
 see~\S\ref{S110}.}
This tree structure was pointed out already by Berkovich 
in his original work~\cite{BerkBook} and 
is described in great detail in the
book~\cite{BRBook} by Baker and Rumely.
It has been exploited by several authors and a very nice 
picture of the global dynamics on this Berkovich space has taken shape.
It is beyond the scope of these notes to give an account of all 
the results that are known. 
Instead, we shall focus on one specific problem:
equidistribution of preimages of points. 
This problem, which will be discussed in further detail
in~\S\ref{S106}, clearly shows the advantage of working 
on the Berkovich space as opposed to the ``classical'' affine line.

As for~(2), the Berkovich affine plane $\BerkAtwo$ is already quite a beast,
but it is possible to get a handle on its structure. We shall be
concerned not with the global dynamics of $f$, but the local dynamics
either at a fixed point $0=f(0)\in\A^2$, or at infinity. There are natural 
subspaces of $\BerkAtwo$ consisting of seminorms that ``live''
at 0 or at infinity, respectively, in a sense that can be made precise. 
These two spaces are cones over a tree and hence reasonably tractable.

While it is of general interest to study the dynamics in~(2)
for a general field $K$, there are surprising
applications to \emph{complex} dynamics when 
using $K=\C$ equipped with the trivial norm. We shall discuss this
in~\S\ref{S109} and~\S\ref{S155} below.
%
%
%
%
\subsection{Polynomial dynamics in one variable}\label{S106}
Our first situation is that of a polynomial mapping 
\begin{equation*}
  f:\A^1\to\A^1
\end{equation*}
of degree $d>1$ over a complete valued field $K$, that we here 
shall furthermore assume to be algebraically closed and,
for simplicity, of characteristic zero.

When $K$ is equal to the (archimedean) field $\C$, there is a 
beautiful theory describing the polynomial dynamics.
The foundation of this theory was built in the 1920's by Fatou and
Julia, who realized that Montel's theorem could be 
used to divide the phase space $\A^1=\A^1(\C)$ into a region where
the dynamics is tame (the Fatou set) and a region where it
is chaotic (the Julia set). In the 1980's and beyond,
the theory was very
significantly advanced, in part because of computer technology
allowing people to visualize Julia sets as fractal objects, 
but more importantly because of the introduction of new tools, 
in particular quasiconformal mappings. For further information on this
we refer the reader to the books~\cite{CG,Milnor}.

In between, however, a remarkable result by Hans Brolin~\cite{Brolin}
appeared in the 1960's. His result seems to have gone
largely unnoticed at the time, but has been of great importance for
more recent developments, especially in higher dimensions. 
Brolin used potential theoretic methods to 
study the asymptotic distribution of preimages of points.
To state his result, let us introduce some terminology. Given a
polynomial mapping $f$ as above, one can consider the 
\emph{filled Julia set} of $f$, consisting of all 
points $x\in\A^1$ whose orbit is bounded. 
This is a compact set. 
Let $\rho_f$ be \emph{harmonic measure} on the filled
Julia set, in the sense of potential theory. 
Now, given a point $x\in\A^1$ we can look at the distribution of 
preimages of $x$ under $f^n$.
There are $d^n$ preimages
of $x$, counted with multiplicity, and we write 
$f^{n*}\delta_x=\sum_{f^ny=x}\delta_y$, where the sum is taken
over these preimages. Thus $d^{-n}f^{n*}\delta_x$ is a probability measure
on $\A^1$. Brolin's theorem now states
\begin{Thmempty}
  For all points $x\in\A^1$, with at most one exception,
  we have 
  \begin{equation*}
    \lim_{n\to\infty}d^{-n}f^{n*}\delta_x\to\rho_f.
  \end{equation*}
  Furthermore, a point $x\in\A^1$ is exceptional iff there exists 
 a global coordinate $z$ on $\A^1$ vanishing at $x$ such that
 $f$ is given by the polynomial $z\mapsto z^d$. In this case, 
 $d^{-n}f^{n*}\delta_x=\delta_x$ for all $n$.
\end{Thmempty}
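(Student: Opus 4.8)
The plan is to recast the statement as a convergence result for logarithmic potentials on $\C$, and then exploit the functional equation satisfied by the Green function of the filled Julia set.

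First I would build the potential‑theoretic machinery. Let $K_f\subset\A^1(\C)$ be the filled Julia set, let $a_d$ be the leading coefficient of $f$, and set
\[
  g_f(z):=\lim_{n\to\infty}d^{-n}\log^+|f^n(z)|.
\]
One checks that this limit exists locally uniformly and that $g_f$ is a non‑negative continuous subharmonic function which vanishes exactly on $K_f$, is harmonic and positive on $\C\setminus K_f$, satisfies $g_f(z)=\log|z|+\gamma_f+o(1)$ as $z\to\infty$ for some constant $\gamma_f$, and obeys the homogeneity relation $g_f\circ f=d\,g_f$. Classical potential theory then identifies $\rho_f=\tfrac1{2\pi}\Delta g_f$ with the harmonic (equilibrium) measure of $K_f$, so it suffices to prove $d^{-n}f^{n*}\delta_x\to\rho_f$ weakly for all $x\in\A^1$ with at most one exception.

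Fix $x$ and put $\psi_n(z):=d^{-n}\log|f^n(z)-x|$, a subharmonic function with $\tfrac1{2\pi}\Delta\psi_n=d^{-n}f^{n*}\delta_x=:\mu_n$, a probability measure carried by the fixed compact set $\{g_f\le\max(g_f(x),1)\}$. From $f^{n+1}-x=(f^n-x)\circ f$ one gets $\psi_{n+1}=\tfrac1d\,\psi_n\circ f$, and hence, by induction and using $g_f=\tfrac1d\,g_f\circ f$,
\[
  \psi_n-g_f=d^{-n}\,(\psi_0-g_f)\circ f^n,\qquad \psi_0-g_f=\log|z-x|-g_f(z).
\]
Now $\log|z-x|-g_f(z)$ is continuous off $x$, tends to $-\gamma_f$ at infinity, and equals $-\infty$ at $x$, hence is bounded above on $\C$; therefore $\psi_n\le g_f+O(d^{-n})$ uniformly, so $\limsup_n\psi_n\le g_f$ everywhere, with equality at each $z\notin K_f$ (there $f^n(z)\to\infty$) and at each $z$ whose forward orbit stays bounded away from $x$.

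The crux is the matching lower bound: the ``bad set'' $B_x:=\{z:\liminf_n\psi_n(z)<g_f(z)\}$ should have capacity zero whenever $x$ is \emph{not} totally invariant, i.e.\ whenever $f^{-1}(x)\ne\{x\}$. Granting this, $\psi_n\to g_f$ quasi‑everywhere; as the $\psi_n$ are subharmonic and locally uniformly bounded above, this upgrades to $\psi_n\to g_f$ in $\Lloc(\C)$, and applying $\tfrac1{2\pi}\Delta$ yields $\mu_n\to\rho_f$. To prove the claim one combines a local analysis at $x$ — using that $f^m-x$ vanishes at $x$ to order $o(d^m)$ as soon as $x$ is not totally invariant, so that $\psi_n\to 0=g_f$ near $x$ — with a global argument: a non‑polar $B_x$ would carry a probability measure of bounded potential, and feeding this through the functional equation together with the invariance relation $f^*\rho_f=d\rho_f$ would force the backward orbit $\bigcup_n f^{-n}(x)$ to be finite, whereas a finite backward‑invariant set of a polynomial of degree $d\ge2$ must reduce to a single totally invariant fixed point. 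This global capacity estimate is the step I expect to be the main obstacle — it is the analytic heart of Brolin's theorem — while everything around it is either standard potential theory or elementary.

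It remains to analyse the exceptional case. If $f^{-1}(x)=\{x\}$ then $f(z)-x$ is a degree‑$d$ polynomial with a single root, so $f(z)-x=a_d(z-x)^d$; conjugating by $z\mapsto\lambda(z-x)$ with $\lambda^{d-1}=a_d$ turns $f$ into $z\mapsto z^d$. Then $f^{-n}(x)=\{x\}$ and $d^{-n}f^{n*}\delta_x=\delta_x$ for all $n$, which does not converge to $\rho_f$ (for $z\mapsto z^d$ the latter is normalized arclength on the unit circle). Finally, if $x_1\ne x_2$ were both totally invariant we would have $a_d(z-x_1)^d=f(z)-x_1$ and $a_d(z-x_2)^d=f(z)-x_2$, whence $x_2-x_1=a_d\big((z-x_1)^d-(z-x_2)^d\big)$, which is impossible since the right‑hand side is a non‑constant polynomial. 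Hence there is at most one exceptional point, and it is precisely of the stated form.
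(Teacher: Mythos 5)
First, note that the paper does not actually prove this statement: it is quoted as Brolin's classical theorem over $\C$ (with a reference to~\cite{Brolin}), and the only proof given in the text is for its non-Archimedean analogue, Theorem~\ref{T201}. Your framework --- the Green function $g_f$, the identity $\psi_n-g_f=d^{-n}(\psi_0-g_f)\circ f^n$, the uniform upper bound $\psi_n\le g_f+O(d^{-n})$, and the classification of the exceptional point via total invariance --- is correct and is precisely the potential-theoretic strategy that Brolin's proof and the paper's non-Archimedean proof both follow.

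The genuine gap is the step you yourself flag as ``the main obstacle'': the claim that $B_x=\{\liminf_n\psi_n<g_f\}$ is polar when $x$ is not exceptional. This is not a technical remainder to be granted; it is the entire analytic content of the theorem, and your one-sentence sketch of it does not supply a working mechanism. The difficulty is that $\mu_n$ is atomic on $d^n$ points and the upper bound gives no control on their clustering. Bounding individual multiplicities ($\deg_y(f^n)=o(d^n)$ for non-exceptional $y$, via the fact that a non-exceptional polynomial has at most two totally ramified points) is necessary but not sufficient over $\C$: one must also exclude that preimages of small multiplicity accumulate on a set of positive capacity. Note moreover that the soft route --- extract an $\Lloc$-limit $\psi\le g_f$ of the subharmonic functions $\psi_n$ and invoke uniqueness of the equilibrium measure --- fails exactly here, since $p_\mu\le p_{\rho_f}$ only yields $I(\mu)\le I(\rho_f)$, an inequality that the Dirac mass at an exceptional point also satisfies; so some input distinguishing the non-exceptional case is unavoidable. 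In the paper's proof of the non-Archimedean analogue this is where the real work happens: Lemma~\ref{L203} (smallness of the atoms of $\rho_n$) is combined with Proposition~\ref{P301}, which converts control of the atoms of $\Delta\f$ at the ends of the tree into a global lower bound on the potential, together with the growth estimate of Lemma~\ref{L202}. To complete your proof you would need to write out the complex counterpart of that chain --- Brolin's capacity estimate, or one of the arguments of Lyubich or Freire--Lopez--Ma\~n\'e --- in full.
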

A version of this theorem for selfmaps of $\P^1$
was later proved independently by Lyubich~\cite{Lyubich} and by 
Freire-Lopez-Ma\~n\'e~\cite{FLM}. There have also been far-reaching
generalizations of Brolin's theorem to higher-dimensional complex 
dynamics. However, we shall stick to the one-dimensional
polynomial case in this introduction. 

It is now natural to ask what happens when we replace
$\C$ by a \emph{non-Archimedean} valued field $K$.
We still assume that $K$ is algebraically closed
and, as above, that it is of characteristic zero.
An important example is $K=\C_p$, the completed
algebraic closure of the $p$-adic numbers $\Q_p$.
However, while most of the early work focused on $\C_p$, 
and certain deep results that are true for this field
do not hold for general $K$, we shall not assume $K=\C_p$
in what follows.

Early on, through work of Silverman, Benedetto, Hsia, Rivera-Letelier
and others~\cite{Benedetto1,Benedetto2,Benedetto3,Hsia,MortonSilverman,Rivera1}
it became clear that there were some significant
differences to the archimedean case.
For example, with the most direct translations of the definitions
from the complex numbers, 
it may well happen that the Julia set of a polynomial 
over a non-Archimedean field $K$ is empty.
This is in clear distinction with the complex case.
Moreover, the topological structure of $K$ is vastly 
different from that of $\C$. Indeed, $K$ is totally disconnected
and usually not even locally compact. 
The lack of compactness is inherited by 
the space of probability measures on $K$: there is a priori
no reason for the sequence of probability measures on $K$
to admit a convergent subsequence. This makes it unlikely
that a na{\"i}ve generalization of Brolin's theorem should hold.

Juan Rivera-Letelier was the first one to realize that Berkovich 
spaces could be effectively used to study the dynamics
of rational functions over non-Archimedean 
fields.
As we have seen above, $\A^1$ embeds naturally into 
$\BerkAone$ and the map $f$ extends to a map
\begin{equation*}
  f:\BerkAone\to\BerkAone.
\end{equation*}

Now $\BerkAone$ has good topological
properties. It is locally compact\footnote{Its one-point compactification is 
  the Berkovich projective line
  $\BerkPone=\BerkAone\cup\{\infty\}$.}
and contractible. This is true for the Berkovich affine space
$\BerkAn$ of any dimension. However, the structure
of the Berkovich affine $\BerkAone$ can be understood in 
much greater detail, and this is quite helpful when analyzing the dynamics.
Specifically, $\BerkAone$ has a structure of a tree
and the induced map $f:\BerkAone\to\BerkAone$ preserves
the tree structure, in a suitable sense.

Introducing the Berkovich space $\BerkAone$ is critical 
for even formulating many of the known results in 
non-Archimedean dynamics. This in particular applies to
the non-Archimedean version of Brolin's theorem:
\begin{ThmA}
  Let $f:\A^1\to\A^1$ be a polynomial map of degree $d>1$
  over an algebraically closed field of characteristic zero.
  Then there exists a probability measure $\rho=\rho_f$ on $\BerkAone$
  such that for all points $x\in\A^1$, with at most one exception,
  we have 
  \begin{equation*}
    \lim_{n\to\infty}d^{-n}f^{n*}\delta_x\to\rho.
  \end{equation*}
  Furthermore, a point $x\in\A^1$ is exceptional iff there exists 
  a global coordinate $z$ on $\A^1$ vanishing at $x$
  such that $f$ is given by the polynomial $z\mapsto z^d$. In this case, 
  $d^{-n}f^{n*}\delta_x=\delta_x$ for all $n$.
\end{ThmA}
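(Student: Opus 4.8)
The plan is to transport the classical potential-theoretic proof of Brolin's theorem to the tree $\BerkAone$. The ingredient that makes this possible is the potential theory on $\BerkAone$: a Laplace operator $\Delta$ defined on functions of bounded differential variation, for which every signed measure of total mass $0$ is $\Delta$ of a potential function, and which is compatible with $f$ in the sense that $f^*(\Delta u)=\Delta(u\circ f)$, where $f^*$ acts on measures by $f^*\delta_y=\sum_{f(w)=y}m_f(w)\,\delta_w$. I will also use that $\BerkPone=\BerkAone\cup\{\infty\}$ is compact, so probability measures on $\BerkAone$ are weakly sequentially precompact once one rules out escape of mass to $\infty$, and that the induced map $f:\BerkAone\to\BerkAone$ is open, finite, and of ``degree'' $d$.

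First I would produce the limit measure. Fixing an affine coordinate $z$, the functions $d^{-n}\log^+|f^n(\cdot)|$ form a uniform Cauchy sequence and converge to a continuous function $G_f\ge 0$ on $\BerkAone$ --- the dynamical Green function --- satisfying $G_f\circ f=d\cdot G_f$, vanishing exactly on the compact filled Julia set $K_f\subset\A^1$, harmonic off $K_f$, and with logarithmic growth near $\infty$. Set $\rho_f:=\Delta G_f$, a probability measure carried by $J_f=\partial K_f$. Applying $\Delta$ to the identity $G_f\circ f=dG_f$ and using compatibility of $\Delta$ with $f^*$ yields $f^*\rho_f=d\rho_f$; hence $f^{n*}\rho_f=d^n\rho_f$, so $d^{-n}f^{n*}\rho_f=\rho_f$ for all $n$, and $f_*\rho_f=\rho_f$. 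One checks $\rho_f$ is independent of the chosen coordinate. This is the measure in the statement.

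For the equidistribution, fix $x\in\A^1$ and put $\mu_n:=d^{-n}f^{n*}\delta_x$. The key observation is that, regarded as a polynomial in $w$, $f^n(w)-x$ has the points of $f^{-n}(x)$ as its zeros, with the correct multiplicities, so on $\BerkAone$
\begin{equation*}
  \mu_n=d^{-n}\,\Delta_w\log|f^n(w)-x|.
\end{equation*}
It therefore suffices to show $u_n:=d^{-n}\log|f^n(w)-x|\longrightarrow G_f$ in $\Lloc$ on $\BerkAone$, for then $\mu_n=\Delta u_n\to\Delta G_f=\rho_f$ weakly. On $\{G_f>0\}$ one has $|f^n(w)|\to\infty$, and writing $u_n=d^{-n}\log|f^n(w)|+d^{-n}\log|1-x/f^n(w)|$ shows $u_n\to G_f$ locally uniformly there. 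On $K_f=\{G_f=0\}$ the value $|f^n(w)|$ stays bounded, so $u_n\le d^{-n}C\to0=G_f$ from above; the content is the opposite (one-sided) $\Lloc$-bound on $K_f$, i.e. that the sets $f^{-n}(B(x,\varepsilon))$ spread out and carry mass tending to $0$ --- and this is precisely where one uses that $x$ is not exceptional, via the tree structure of $\BerkAone$ and the openness/finiteness of $f$: the preimages of $x$ can fail to equidistribute only when $\bigcup_{n\ge0}f^{-n}(x)$ is a finite set.

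Finally I would treat the exceptional case. If $\bigcup_n f^{-n}(x)$ is finite then, together with $\infty$ --- which is always totally invariant for a polynomial --- it forms a totally invariant finite set for $f:\P^1\to\P^1$, and a Riemann--Hurwitz count forces $f$ to be totally ramified over both $x$ and $\infty$; hence $f$ is, in a coordinate vanishing at $x$, the map $z\mapsto z^d$, in which case $f^{n*}\delta_x=d^n\delta_x$ and $\mu_n=\delta_x\ne\rho_f$ for every $n$. This is exactly the exception in the statement. I expect the main obstacle to be the one-sided $\Lloc$-estimate over $K_f$: quantifying how much mass $f^{-n}(B(x,\varepsilon))$ can concentrate, using non-exceptionality together with the geometry of the Berkovich line; the rest is soft potential theory together with a finite ramification computation.
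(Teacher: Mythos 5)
Your construction of $\rho_f$ via the Green function $G_f=\lim d^{-n}\log^+|f^{n*}z|$ is a legitimate alternative to the paper's construction (which instead iterates $d^{-1}f^*\delta_{x_0}=\delta_{x_0}+\Delta u$ for a point $x_0\in\H$ and sums the resulting potentials), and your treatment of the exceptional set via totally ramified points and Riemann--Hurwitz matches Lemma~\ref{L213} and Proposition~\ref{P210}. But there is a genuine gap at the step you yourself flag as ``the main obstacle'': the one-sided estimate over the filled Julia set is precisely the content of the theorem, and you assert rather than prove it. Saying that ``the preimages of $x$ can fail to equidistribute only when $\bigcup_n f^{-n}(x)$ is finite'' is a restatement of the conclusion, not an argument; nothing in the proposal supplies a mechanism ruling out, say, a fixed positive fraction of the mass of $d^{-n}f^{n*}\delta_x$ concentrating on a single high-multiplicity preimage or drifting to an end of the tree. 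The paper closes exactly this gap by a three-step chain: Lemma~\ref{L203} shows $\sup_{y\in\P^1}(d^{-n}f^{n*}\rho)\{y\}\to0$ when $\rho(E_f)=0$, using that a non-exceptional classical point satisfies $\deg_y(f^n)=o(d^n)$ (via Proposition~\ref{P210}); Proposition~\ref{P301} converts a bound on the atoms of $\Delta\f$ at ends into the lower bound $\f(x)\ge\f(x_0)-C-\lambda d_\H(x,x_0)$; and Lemma~\ref{L202} gives $d_\H(f^n(x),x_0)=O(d^n)$, so that $\f_{n+m}(x)=d^{-n}\f_m(f^n(x))\ge-D\e_m-C_md^{-n}$, forcing the normalized potentials to converge pointwise on $\H$. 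Some quantitative substitute for this chain is indispensable.

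A secondary but real problem is your use of ``$\Lloc$ convergence'' of $u_n$ to $G_f$: on $\BerkAone$ there is no Lebesgue-type reference measure, so this notion is undefined as stated. The correct substitute, and the one the paper uses, is pointwise convergence on hyperbolic space $\H$ of functions in a fixed class $\SH(\BerkPone,\rho_0)$, together with the fact that on this compact class the map $\f\mapsto\rho_0+\Delta\f$ is a homeomorphism onto $\cM^+_1(\BerkPone)$, so that convergence of potentials on $\H$ is equivalent to weak convergence of their Laplacians. (Also, the filled Julia set $\{G_f=0\}$ is a compact subset of $\BerkAone$, not of the classical line $\A^1$; it typically contains points of Types~2--4.) With the convergence notion fixed and the quantitative estimate above supplied, your route would go through and would essentially reproduce the Favre--Rivera-Letelier argument in a different packaging.
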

In fact, we could have started with any point $x\in\BerkAone$ assuming
we are careful with the definition of $f^{n*}\delta_x$.
Notice that when $x\in\A^1$, the probability measures 
$d^{-n}f^{n*}\delta_x$ are all supported on $\A^1\subseteq\BerkAone$,
but the limit measure may very well give no mass to $\A^1$.
It turns out that if we define the Julia set $J_f$ of $f$ as the 
support of the measure $\rho_f$, then $J_f$ shares many 
properties of the Julia set of complex polynomials.
This explains why we may not see a Julia set when studying the 
dynamics on $\A^1$ itself.

Theorem~A is due to Favre and Rivera-Letelier~\cite{FR2}.
The proof is parallel to Brolin's original proof in that it
uses \emph{potential theory}. Namely, one can define a Laplace
operator $\Delta$ on $\BerkAone$ and to every probability 
measure $\rho$ on $\BerkAone$ associate a subharmonic 
function $\f=\f_\rho$ such that $\Delta\f=\rho-\rho_0$,
where $\rho_0$ is a fixed reference measure (typically a 
Dirac mass at a point of $\BerkAone\setminus\A^1$).
The function $\f$ is unique up to an additive constant.
One can then translate convergence of the measures in 
Theorem~A to the more tractable statement about convergence of potentials.
The Laplace operator itself can be very concretely interpreted
in terms of the tree structure on $\BerkAone$.
All of this will be explained in~\S\S\ref{S110}--\ref{S127}.

The story does not end with Theorem~A. For instance, Favre and
Rivera-Letelier analyze the ergodic properties of $f$ with respect to
the measure $\rho_f$. 
Okuyama has given a quantitative strengthening of 
the equidistribution result in Theorem~A, see~\cite{Oku11b}.
The measure $\rho_f$ also
describes the distribution of periodic points, 
see~\cite[Th\'eor\`eme~B]{FR2} as well as~\cite{Oku11a}.

As already mentioned, there is also a very 
interesting Fatou-Julia theory. We shall discuss this a little
further in~\S\ref{S156} but the discussion will be brief due to limited space.
The reader will find many more details in the book~\cite{BRBook}.
We also recommend  the recent survey by Benedetto~\cite{BenedettoNotes}.
%
%
%
%
\subsection{Local plane polynomial dynamics}\label{S109}
The second and third situations that we will study both deal with 
polynomial mappings
\begin{equation*}
  f:\A^2\to\A^2
\end{equation*}
over a valued field $K$. In fact, they originally arose 
from considerations in \emph{complex} dynamics
and give examples where non-Archimedean methods
can be used to study Archimedean problems.

Thus we start out by assuming that $K=\C$. Polynomial
mappings of $\C^2$ can have quite varied and very interesting
dynamics; see the survey by Sibony~\cite{Sibony} for some of
this. Here we will primarily consider local dynamics, so
we first consider a \emph{fixed point} $0=f(0)\in\A^2$.
For a detailed general discussion of local dynamics in this
setting we refer to Abate's survey~\cite{Abate}.

The behavior of $f$ at the fixed point is largely governed by
the tangent map $df(0)$ and in particular on the eigenvalues
$\lambda_1,\lambda_2$ of the latter. 
For example, if $|\lambda_1|,|\lambda_2|<1$,
then we have an \emph{attracting} fixed point: there exists a
small neighborhood $U\ni 0$ such that $f(\overline{U})\subseteq U$ and
$f^n\to0$ on $U$. Further, when there are no \emph{resonances}
between the eigenvalues $\lambda_1$, $\lambda_2$, the dynamics
can in fact be \emph{linearized}: there exists a local biholomorphism
$\phi:(\A^2,0)\to(\A^2,0)$  such that $f\circ\phi=\phi\circ\Lambda$,
where $\Lambda(z_1,z_2)=(\lambda_1z_1,\lambda_2z_2)$. 
This in particular gives very precise information on the 
rate at which typical orbits converge to the origin: for 
a ``typical'' point $x\approx 0$ we have 
$\|f^n(x)\|\sim\max_{i=1,2}|\lambda_i|^n\|x\|$ as $n\to\infty$.

On the other hand, in the \emph{superattracting} case,
when $\lambda_1=\lambda_2=0$, the action of $f$ on the tangent space
$T_0\C^2$ does not provide much information about the dynamics. 
Let us still try to understand at what rate orbits tend to the fixed point.
To this end, let 
\begin{equation*}
  f=f_c+f_{c+1}+\dots+f_d
\end{equation*}
be the expansion of $f$ in homogeneous 
components: $f_j(\lambda z)=\lambda^jf_j(z)$
and where $f_c\not\equiv0$. Thus $c=c(f)\ge 1$ and the number
$c(f)$ in fact does not depend on the choice of coordinates.
Note that for a typical point $x\approx 0$ we will have 
\begin{equation*}
  \|f(x)\|\sim\|x\|^{c(f)}.
\end{equation*}
Therefore, one expects that the speed at which the orbit of
a typical point $x$ tends to the origin is governed by
the growth of $c(f^n)$ as $n\to\infty$. This can in fact be
made precise, see~\cite{eigenval}, but here we shall only study the 
sequence $(c(f^n))_n$.

Note that this sequence is
supermultiplicative: $c(f^{n+m})\ge c(f^n)c(f^m)$.
This easily implies that the limit
\begin{equation*}
  c_\infty(f):=\lim_{n\to\infty}c(f^n)^{1/n}
\end{equation*}
exists. Clearly $c_\infty(f^n)=c_\infty(f)^n$ for $n\ge 1$.
\begin{Example}
  If $f(z_1,z_2)=(z_2,z_1z_2)$, then $c(f^n)$ is the $(n+2)$th Fibonacci number
  and $c_\infty(f)=\frac12(\sqrt{5}+1)$ is the golden mean.
\end{Example}
Our aim is to give a proof of the following result, originally
proved in~\cite{eigenval}.
\begin{ThmB}
  The number $c_\infty=c_\infty(f)$ is a quadratic integer: there exist
  $a,b\in\Z$ such that $c_\infty^2=ac_\infty+b$. 
  Moreover, there exists a constant 
  $\delta>0$ such that 
  \begin{equation*}
    \delta c_\infty^n\le c(f^n)\le c_\infty^n
  \end{equation*}
  for all $n\ge 1$.
\end{ThmB}
Note that the right-hand inequality $c(f^n)\le c_\infty^n$
is an immediate consequence of supermultiplicativity.
It is the left-hand inequality that is nontrivial. 

To prove Theorem~B we study the induced dynamics 
\begin{equation*}
  f:\BerkAtwo\to\BerkAtwo
\end{equation*}
of $f$ on the Berkovich affine plane 
$\BerkAtwo$.
Now, if we consider $K=\C$ with its standard Archimedean
norm, then it is a consequence of the Gelfand-Mazur theorem
that $\BerkAtwo\simeq\A^2$, so this may not seem like
a particularly fruitful approach. If we instead, however, 
consider $K=\C$ equipped with the \emph{trivial} norm, then the 
associated Berkovich affine plane $\BerkAtwo$ is a 
totally different creature and the induced dynamics is
very interesting.

By definition, the elements of $\BerkAtwo$ are 
multiplicative seminorms on the coordinate ring of $\A^2$, that is,
the polynomial ring $R\simeq K[z_1,z_2]$ in two variables
over $K$.  It turns out to be convenient to instead view these
elements ``additively'' as \emph{semivaluations}
$v:R\to\R\cup\{+\infty\}$ such that 
$v |_{K^*}\equiv0$. The corresponding seminorm 
is $|\cdot|=e^{-v }$.

Since we are interested in the local dynamics of $f$ near 
a (closed) fixed point $0\in\A^2$, we
shall study the dynamics of $f$ on a corresponding 
subspace of $\BerkAtwo$, namely 
the set $\hcV_0$ of semivaluations $v$ such that
$v(\phi)>0$ whenever $\phi$ vanishes at $0$.
In valuative terminology, these are the semivaluations
$v\in\BerkAtwo\setminus\A^2$  
whose \emph{center} on $\A^2$ is the point 0.
It is clear that $f(\hcV_0)\subseteq\hcV_0$.

Note that $\hcV_0$ has the structure of a cone: 
if $v\in\hcV_0$, then $tv\in\hcV_0$ for $0<t\le\infty$.
The apex of this cone is the image of the point $0\in\A^2$
under the embedding $\A^2\hookrightarrow\BerkAtwo$.
The base of the cone can be identified with the subset
$\cV_0\subseteq\hcV_0$ consisting of semivaluations that are
\emph{normalized} by the condition $v (\fm_0)=\min_{p\in\fm_0}v(\phi)=+1$, 
where $\fm_0\subseteq R$ denotes the maximal ideal of $0$.
This space $\cV_0$ is compact and has a structure
of an $\R$-tree. We call it the \emph{valuative tree} at the
point 0. Its structure is investigated in detail in~\cite{valtree} and will be examined
in~\S\ref{S103}.\footnote{In~\cite{valtree,eigenval}, the valuative tree is denoted by $\cV$. We write $\cV_0$ here in order to emphasize the choice of point $0\in\A^2$.} 

Now, $\cV_0$ is in general not invariant by $f$. Instead, $f$
induces a selfmap 
\begin{equation*}
  f_\bullet:\cV_0\to\cV_0
\end{equation*}
and a ``multiplier'' function $c(f,\cdot):\cV_0\to\R_+$ such that 
\begin{equation*}
  f(v)=c(f,v )f_\bullet v 
\end{equation*}
for $v\in\cV_0$. 
The number $c(f)$ above is exactly equal to $c(f,\ord_0)$,
where $\ord_0\in\cV_0$ denotes the order of vanishing at 
$0\in\A^2$. Moreover, we have 
\begin{equation*}
  c(f^n)=c(f^n,\ord_0)=\prod_{i=0}^{n-1}c(f,v _i),
  \quad\text{where $v _i=f^i_\bullet\ord_0$};
\end{equation*}
this equation will allow us to understand the
behavior of the sequence $c(f^n)$ through the 
dynamics of $f_\bullet$ on $\cV_0$.

The proof of Theorem~B given in these notes is simpler
than the one in~\cite{eigenval}. Here is the main idea. 
Suppose that there exists a valuation $v\in\cV_0$
such that $f_\bullet v =v$, so that $f(v)=cv$,
where $c=c(f,v )>0$. Then $c(f^n,v )=c^n$ for $n\ge 1$.
Suppose that $v$ satisfies an Izumi-type bound:
\begin{equation}\label{e163}
  v(\phi)\le C\ord_0(\phi)\quad\text{for all polynomials $\phi$},
\end{equation}
where $C>0$ is a constant independent of $\phi$. This is true
for many, but not all semivaluations $v\in\cV_0$. The 
reverse inequality $v\ge\ord_0$ holds for all 
$v\in\cV_0$ by construction. 
Then we have 
\begin{equation*}
  C^{-1}c^n=C^{-1}c(f^n,v )\le c(f^n)\le c(f^n,v )\le c^n.
\end{equation*}
This shows that $c_\infty(f)=c$ and that the bounds in Theorem~B
hold with $\delta=C^{-1}$. To see that $c_\infty$ is a quadratic
integer, we look at the value group $\Gamma_v$ of $v$.
The equality $f(v) =cv$ implies that 
$c\Gamma_v\subseteq\Gamma_v$.
If we are lucky, then $\Gamma\simeq\Z^d$, where $d\in\{1,2\}$,
which implies that $c_\infty=c$ is an algebraic integer of 
degree one or two. 

The two desired properties of $v$ hold when the eigenvaluation
$v$ is \emph{quasimonomial} valuation.
In general, there may not exist a quasimonomial eigenvaluation, so the argument
is in fact a little more involved. We refer to~\S\ref{S107} for more details.
%
%
%
%
\subsection{Plane polynomial dynamics at infinity}\label{S155}
Again consider a polynomial mapping 
\begin{equation*}
  f:\A^2\to\A^2
\end{equation*}
over the field $K=\C$ of complex numbers.
In the previous subsection, we discussed the dynamics of $f$ at 
a (superattracting) fixed point in $\A^2$. Now we shall
consider the dynamics at infinity and, specifically,
the rate at which orbits tend to infinity.
Fix an embedding 
$\A^2\hookrightarrow\P^2$. 
It is then reasonable to argue that the rate at which ``typical''
orbits tend to infinity is governed by the 
\emph{degree growth sequence} $(\deg f^n)_{n\ge1}$. 
Precise assertions to this end can be found
in~\cite{eigenval,dyncomp}.
Here we shall content ourselves with the study on the 
degree growth sequence.

In contrast to the local case, 
this sequence is \emph{sub}multiplicative:
$\deg f^{n+m}\le\deg f^n\deg f^m$, but again the limit
\begin{equation*}
  d_\infty(f):=\lim_{n\to\infty}(\deg f^n)^{1/n}
\end{equation*}
exists. 
Apart from some inequalities being reversed, the situation is very similar
to the local case, so one may hope for a direct analogue of Theorem~B
above. However, the skew product example 
$f(z_1,z_2)=(z_1^2,z_1z_2^2)$ shows that we may have $\deg f^n\sim nd_\infty^n$.
What does hold true in general is
\begin{ThmC}
  The number $d_\infty=d_\infty(f)$ is a quadratic integer: there exist
  $a,b\in\Z$ such that $d_\infty^2=ad_\infty+b$. 
  Moreover, we are in exactly one of the following two cases:
  \begin{itemize}
  \item[(a)]
    there exists $C>0$ such that 
    $d_\infty^n\le\deg f^n\le Cd_\infty^n$ for all $n$;
  \item[(b)]
    $\deg f^n\sim nd_\infty^n$ as $n\to\infty$.
  \end{itemize}
  Moreover, case~(b) occurs iff $f$, after conjugation 
  by a suitable polynomial automorphism of $\C^2$,
  is a skew product of the form
  \begin{equation*}
    f(z_1,z_2)=(\phi(z_1),\psi(z_1)z_2^{d_\infty}+O_{z_1}(z_2^{d_\infty-1})),
  \end{equation*}
  where $\deg\phi=d_\infty$ and $\deg\p>0$.
\end{ThmC}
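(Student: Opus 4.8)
The plan is to mimic the strategy outlined for Theorem~B, transported to the space of valuations centered at infinity. First I would introduce the valuative tree at infinity, namely the space $\cV_\infty$ of semivaluations $v$ on $R=K[z_1,z_2]$ (with $K=\C$ trivially normed) that are normalized by $v(\fm_\infty)=-1$ in a suitable sense — concretely, normalized so that $v(\ell)=-1$ for a general linear form $\ell$ — and having center at infinity with respect to the fixed embedding $\A^2\hookrightarrow\P^2$. As in the local case, $f$ does not preserve this normalization but induces a selfmap $f_\bullet:\cV_\infty\to\cV_\infty$ together with a multiplier $d(f,\cdot):\cV_\infty\to\R_+$ with $f_*v=d(f,v)\,f_\bullet v$, and the key identity $\deg f^n=\prod_{i=0}^{n-1}d(f,v_i)$ where $v_i=f_\bullet^i(\deg)$ and $\deg\in\cV_\infty$ is the valuation $-\deg$ of the degree function. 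The degree growth is thereby converted into an orbit problem on the tree.

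Next I would establish the existence of an \emph{eigenvaluation}: a fixed point $v_*\in\cV_\infty$ of $f_\bullet$. Since $\cV_\infty$ is an $\R$-tree (or at least a compact, profinite $\R$-tree) and $f_\bullet$ is continuous and order-compatible in the appropriate sense — it does not increase, or behaves monotonically with respect to, the tree metric or the partial order rooted at a suitable end — one gets a fixed point by a tree fixed-point argument (the analogue of the Brouwer-type statement used in~\cite{eigenval}), possibly after passing to the subtree where $f_\bullet$ is contracting. Writing $d_*=d(f,v_*)$ one then has $d(f^n,v_*)=d_*^n$. To get the quadratic-integer conclusion I would analyze the value group $\Gamma_{v_*}$: from $f_*v_*=d_*v_*$ one deduces $d_*\Gamma_{v_*}\subseteq\Gamma_{v_*}$, and since a valuation on a two-dimensional function field has value group of rational rank $\le 2$, in the quasimonomial case $\Gamma_{v_*}\cong\Z$ or $\Z^2$, forcing $d_*$ to be an algebraic integer of degree $1$ or $2$ — which gives the relation $d_\infty^2=ad_\infty+b$. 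For the inequalities, if $v_*$ is quasimonomial it satisfies an Izumi-type bound $v_*\ge -C\deg$ (with the reverse automatic), yielding $C^{-1}d_*^n\le\deg f^n\le d_*^n$, i.e.\ case~(a) with $d_\infty=d_*$.

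The subtle part — and the main obstacle — is the trichotomy/dichotomy: one must show that \emph{either} a quasimonomial eigenvaluation exists, giving case~(a), \emph{or} the only eigenvaluation available is a non-quasimonomial one (an infinitely singular valuation, or a curve valuation), in which case the Izumi bound fails and one must extract instead the linear growth $\deg f^n\sim n d_\infty^n$ together with the skew-product normal form. Here I would argue as follows: when $f_\bullet$ has no quasimonomial fixed point, analyze the local behavior of $f_\bullet$ near its fixed end of $\cV_\infty$; the failure of contraction there translates, via the classification of ends of $\cV_\infty$, into the statement that $f$ preserves a pencil of rational curves through a point at infinity — equivalently $f$ is, up to polynomial automorphism, a skew product $f(z_1,z_2)=(\phi(z_1),\psi(z_1)z_2^{d_\infty}+O_{z_1}(z_2^{d_\infty-1}))$ — and for such skew products a direct computation of $\deg f^n$ via the one-variable dynamics of $\phi$ gives the asymptotic $\deg f^n\sim n d_\infty^n$ precisely when $\deg\phi=d_\infty$ and $\deg\psi>0$. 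The remaining bookkeeping — checking the two cases are mutually exclusive, and that $d_\infty$ is still a quadratic integer in case~(b) (there $d_\infty=\deg\phi$ is an ordinary integer, hence trivially quadratic) — is routine. I expect the classification of the action of $f_\bullet$ near a non-quasimonomial fixed end, and its translation into the existence of an invariant pencil, to be where essentially all the work lies; the rest follows the template of Theorem~B with inequalities reversed.
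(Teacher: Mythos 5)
Your plan correctly transports the cocycle structure and the fixed-point strategy from the local case, but there is a genuine gap in the central step, and it is precisely the one the paper flags as the main difficulty at infinity. You write that ``if $v_*$ is quasimonomial it satisfies an Izumi-type bound $v_*\ge -C\deg$.'' This is \emph{false} for general quasimonomial valuations centered at infinity. Take the product map $f(z_1,z_2)=(z_1^3,z_2^2)$ and let $v$ be the monomial valuation with $v(z_1)=0$, $v(z_2)=-1$. Then $v$ is quasimonomial, $f(v)=2v$, but $d_\infty=3$; the reverse Izumi inequality $v\le C\ord_\infty$ fails for every $C>0$, and so does your proposed chain of estimates. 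In the local case $\a(v)\ge 1$ holds automatically for all $v\in\cV_0$, but at infinity $\a$ can be zero or negative, and this breaks the template you are importing from Theorem~B.

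The missing idea is the \emph{tight tree at infinity} $\cV'_\infty=\{v: A(v)\le 0\le\a(v)\}$, together with its $f_\bullet$-invariance (the paper's Proposition~\ref{P117}). One must locate a fixed point inside $\cV'_\infty$, not merely among quasimonomial valuations. This also changes the shape of the dichotomy: the two cases of Theorem~C do \emph{not} correspond to ``quasimonomial eigenvaluation exists'' versus ``only a non-quasimonomial one,'' as you suggest. Both cases arise from quasimonomial fixed points in $\cV'_\infty$. Case~(a) is $\a(v_*)>0$, where your Izumi argument goes through with $\a(v_*)^{-1}$ as the constant. Case~(b) is $\a(v_*)=0$, and there the monomialization theorem (Theorem~\ref{T105}, proved via Riemann--Roch and the Abhyankar--Moh--Suzuki line embedding theorem) kicks in: an $\a=0$ divisorial valuation in $\cV'_\infty$ with $A\le 0$ must be monomial in suitable coordinates, forcing $f$ into the skew-product form. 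Your proposed route to the skew product --- ``failure of contraction near a non-quasimonomial fixed end implies an invariant pencil'' --- is not substantiated and, because the dichotomy is at $\a=0$ rather than at non-quasimonomiality, is aiming at the wrong boundary case. Finally, the paper's actual mechanism for producing the eigenvaluation in $\cV'_\infty$ is a recursive sequence of blowups through tight compactifications (Lemmas~\ref{L106} and~\ref{L107}), controlled by the monotone quantity $\a(v_n)=\a(v)-nb^{-2}\ge 0$, which is quite different from a one-shot tree fixed-point argument: one needs to keep the approximating fixed points \emph{inside} $\cV'_\infty$ and track $\a$, and that is exactly what the tight compactifications provide.
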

As in the local case, we approach this theorem 
by considering the induced dynamics
\begin{equation*}
  f:\BerkAtwo\to\BerkAtwo,
\end{equation*}
where we consider $K=\C$ equipped with the trivial norm.
Since we are interested in the dynamics
of $f$ at infinity, we restrict our attention to the space $\hcVe{\infty}$
consisting of semivaluations $v:R\to\R\cup\{+\infty\}$ whose
center is at infinity, that is, for which $v(\phi)<0$ for some
polynomial $\phi$.
This space has the structure of a pointed\footnote{The apex of the cone does not define an element in $\BerkAtwo$.} cone.
To understand its base, note that our choice of embedding 
$\A^2\hookrightarrow\P^2$ determines the space $\cL$
of affine functions on $\A^2$ (the polynomials of degree 
at most one).
Define 
\begin{equation*}
  \cVe{\infty}:=\{v\in\BerkAtwo\mid\min_{L\in\cL}v (L)=-1\}.
\end{equation*}
We call $\cVe{\infty}$ the~\emph{valuative tree at infinity}.\footnote{In~\cite{eigenval,dyncomp}, the valuative tree at infinity is denoted by $\cV_0$, but the notation $\cVe{\infty}$ seems more natural.}
This subspace at first glance looks very similar to the valuative tree $\cV_0$ 
at a point but there are some important differences. 
Notably, for a semivaluation $v\in\cV_0$ we have $v(\phi)\ge 0$
for all polynomials $\phi$. In contrast, while a semivaluations in $\cVe{\infty}$
must take some negative values, it can take positive values on 
certain polynomials.

Assuming for simplicity that $f$ is \emph{proper}, we obtain
a dynamical system $f:\hcVe{\infty}\to\hcVe{\infty}$,
which we can split into an induced map 
$f_\bullet:\cVe{\infty}\to\cVe{\infty}$ and a multiplier
$d(f,\cdot):\cVe{\infty}\to\R_+$ such that 
$f(v)=d(f,v )f_\bullet v$.

The basic idea in the proof of Theorem~C is again to
look for an eigenvaluation, that is, a 
semivaluation $v\in\cVe{\infty}$ such that $f_\bullet v=v$.
However, even if we can find a ``nice'' (say,
quasimonomial) eigenvaluation, the proof in the local case
does not automatically go through. The reason is that 
Izumi's inequality~\eqref{e163} may fail.

The remedy to this problem is to use
an invariant subtree $\cV'_\infty\subseteq\cVe{\infty}$
where the Izumi bound almost always holds. 
In fact, the valuations $v\in\cV'_\infty$ for
which Izumi's inequality does not hold are
of a very special form, and the case 
when we end up with a fixed point of that type
corresponds exactly to the degree growth $\deg f^n\sim nd_\infty^n$.
In these notes, $\cV'_\infty$ is called the \emph{tight tree at infinity}.
I expect it to have applications beyond the situation here.
%
%
%
%
\subsection{Philosophy and scope}
When writing these notes I was faced with the question of
how much material to present, and at what level of detail to 
present it. Since I decided to have Theorems~A,~B and~C as goals for
the presentation, I felt it was necessary to provide enough background
for the reader to go through the proofs, without too many black boxes.
As it turns out, there is quite a lot of background to cover, so these
notes ended up rather expansive!

All the main results that I present here can be found in the literature,
However, we draw on many different sources that use different notation
and terminology. In order to make the presentation coherent, I have tried
to make it self-contained. 
Many complete proofs are included, others are sketched in reasonable detail.

While the point of these notes is to illustrate the usefulness of 
Berkovich spaces, we only occasionally draw on the general
theory as presented in~\cite{BerkBook,Berkihes}. As a general rule,
Berkovich spaces obtained by analytification of an algebraic variety
are much simpler than the ones constructed by gluing affinoid spaces.
Only at a couple of places in~\S\ref{S101} and~\S\ref{S156}
do we rely on (somewhat) nontrivial facts from the general theory.
On the other hand, these facts, mainly involving the local rings at a
point on the Berkovich space, are very useful. We try to exploit them 
systematically.
It is likely that in order to treat
higher-dimensional questions, one has to avoid simple
topological arguments based on the tree structure and instead use 
algebraic arguments involving the structure sheaf of the space in question.

At the same time, the tree structure of the spaces in question is 
of crucial importance. They can be viewed as the analogue of the conformal structure
on Riemann surfaces. For this reason I have included a self-contained presentation
of potential theory and dynamics on trees, at least to the extent that is
needed for the later applications in these notes.

\smallskip
I have made an attempt to provide a unified point of view of 
dynamics on low-dimensional Berkovich spaces. One can of 
course try to go further and study dynamics on 
higher-dimensional Berkovich spaces
over a field (with either trivial or nontrivial valuation). 
After all, there has been significant progress in higher dimensional 
complex dynamics 
over the last few years. For example, it is reasonable to hope 
for a version of the Briend-Duval equidistribution 
theorem~\cite{BriendDuval}.

\smallskip
Many interesting topics are not touched upon at all in these notes.
For instance, we say very little about the dynamics on, or the structure of the Fatou
set of a rational map and we likewise do not study the ramification locus.
Important contributions to these and other issues have been made by 
Matt Baker,
Robert Benedetto,
Laura DeMarco,
Xander Faber,
Charles Favre,
Liang-Chung Hsia,
Jan Kiwi,
Y\^usuke Okuyama,
Clayton Petsche,
Juan Rivera-Letelier,
Robert Rumely
Lucien Szpiro,
Michael Tepper,
Eugenio Trucco
and others.

For the relevant results we refer to the original 
papers~\cite{BdM,Bak06,Bak09,BH05,BR06,BenedettoThesis,Benedetto1,Benedetto2,Benedetto5,Benedetto6,Benedetto7,Benedetto4,Benedetto8,
Faber09,Faber1,Faber2,Faber3,FKT,FR3,FR1,FR2,Hsia,Kiwi1,Kiwi2,Oku11a,Oku11b,PST,Rivera1,Rivera2,Rivera3,Rivera4,Trucco}.
Alternatively, many of these results can be found in the book~\cite{BRBook} by 
Baker and Rumely or the lecture notes~\cite{BenedettoNotes} by Benedetto.

Finally, we say nothing about arithmetic aspects such as
the equidistribution of points of small 
height~\cite{BRBook,CL,FR1,Yuan,Gubler,Faber09,YZa,YZb}.
For an introduction to arithmetic dynamics, see~\cite{SilvBook} and~\cite{SilvNotes}.
%
%
%
%
\subsection{Comparison to other surveys}\label{S298}
Beyond research articles such as the ones mentioned above,
there are several useful sources that contain a 
systematic treatment of material related to the topics discussed in these notes.

First, there is a significant overlap between these notes and the material in the
\textit{Th\`ese d'Habilitation}~\cite{FavreHab} of Charles Favre.
The latter thesis, which is strongly recommended reading, explains
the usage of tree structures in dynamics and complex analysis.
It treats Theorems~A-C as well as some of my joint work with him on
the singularities of plurisubharmonic functions~\cite{pshsing,valmul}.
However, the presentation here has a different flavor and contains more details.

The book by~\cite{BRBook} by Baker and Rumely treats potential theory
and dynamics on the Berkovich projective line in great detail. The main results 
in~\S\S\ref{S101}--\ref{S127} are contained in this book, but the presentation 
in these notes is at times a little different. We also treat the case when the 
ground field has positive characteristic and discuss the case when it is
not algebraically closed and/or trivially valued. On the other hand,~\cite{BRBook} 
contains a great deal of material not covered here. 
For instance, it contains results on the structure of the Fatou and Julia sets 
of rational maps, and it gives a much more systematic treatment of potential
theory on the Berkovich line.

The lecture notes~\cite{BenedettoNotes} by Benedetto are also recommended reading. 
Just as~\cite{BRBook}, they treat the dynamics on the Fatou and Julia sets in detail. 
It also contains results in ``classical'' non-Archimedean analysis and dynamics, not involving Berkovich spaces.

The Ph.D.\ thesis by Amaury Thuillier~\cite{ThuillierThesis} gives a general treatment of potential theory on Berkovich curves. It is written in a less elementary way than the treatment in, say,~\cite{BRBook} but on the other hand is more amenable to generalizations to higher dimensions. 
Potential theory on curves is also treated in~\cite{Bak08}.

The valuative tree in~\S\ref{S103} is discussed in detail in the monograph~\cite{valtree}. However, the exposition here is self-contained and leads more directly to the dynamical applications that we have in mind.

As already mentioned, we do not discuss arithmetic dynamics in these notes.
For information on this fascinating subject we again refer to the book and lecture notes by 
Silverman~\cite{SilvBook,SilvNotes}.
%
%
%
%
\subsection{Structure}
The material is divided into three parts. 
In the first part,~\S\ref{S110}, we discuss trees
since the spaces on which we do dynamics
are either trees or cones over trees.
The second part,~\S\S\ref{S101}--\ref{S127},
is devoted to the Berkovich affine and 
projective lines and dynamics on them.
Finally, in~\S\S\ref{S105}--\ref{S104} we study 
polynomial dynamics on the Berkovich affine plane
over a trivially valued field.

\smallskip
We now describe the contents of each chapter in more detail.
Each chapter ends with a section called
``Notes and further references'' containing further comments.

\smallskip
In~\S\ref{S110} we gather some general definitions
and facts about trees. Since we shall work on several spaces
with a tree structure, I felt it made sense to collect 
the material in a separate section. See also~\cite{FavreHab}.
First  we define what we mean by a tree, with or without a 
metric. Then we define a Laplace operator on a general metric tree,
viewing the latter as a pro-finite tree.
In our presentation, the Laplace operator is defined on the class
of quasisubharmonic functions and takes values in
the space of signed measures with total mass zero and whose
negative part is a finite atomic measure. 
Finally we study maps between trees. It turns out that simply assuming 
that such a map is finite, open and surjective gives quite strong 
properties. We also prove a fixed point theorem for selfmaps of trees.

\smallskip
The structure of the Berkovich affine and projective lines
is outlined in~\S\ref{S101}. This material is described in 
much more detail in~\cite{BRBook}. One small way in which our
presentation stands out is that we try to avoid coordinates as
far as possible. We also point out some features of the local rings
that turn out to be useful for analyzing the mapping properties
and we make some comments about the case when the ground field is
not algebraically closed and/or trivially valued.

\smallskip
In~\S\ref{S156} we start considering rational maps.
Since we work in arbitrary characteristic, we include
a brief discussion of separable and purely inseparable maps.
Then we describe how polynomial and
rational maps extend to maps on the Berkovich
affine and projective line, respectively.
This is of course only a very special case of the 
analytification functor in the general theory
of Berkovich spaces, but it is useful to see in detail
how to do this. Again our approach differs slightly from
the ones in the literature that I am aware of, in that it
is coordinate free. Having extended a rational map
to the Berkovich projective line, we look at the 
important notion of the local degree at 
a point.\footnote{In~\cite{BRBook}, the local degree is called
\emph{multiplicity}.}
We adopt an algebraic definition of the local degree
and show that it can be interpreted as a local expansion 
factor in the hyperbolic metric. While this important
result is well known, we give an algebraic proof that I believe
is new. We also show that the local degree is the same as the
multiplicity defined by Baker and Rumely, using the Laplacian
(as was already known.)
See~\cite{Faber1,Faber2} for more on the local degree and the 
ramification locus, defined as the subset where the local degree 
is at least two. 
Finally, we discuss the case when the ground field is not algebraically closed
and/or is trivially valued.

\smallskip
We arrive at the dynamics on the Berkovich projective line 
in~\S\ref{S127}. Here we do not really try to survey the known
results. While we do discuss fixed points and the Fatou and Julia
sets, the exposition is very brief and the reader is encouraged
to consult the book~\cite{BRBook} by Baker and Rumely
or the notes~\cite{BenedettoNotes} by Benedetto for much more
information. Instead we focus on Theorem~A in the introduction,
the equidistribution theorem by Favre and Rivera-Letelier.
We give a complete proof which differs in the details from the
one in~\cite{FR2}.
We also give some consequences of the equidistribution theorem.
For example, we prove Rivera-Letelier's dichotomy that the Julia set is
either a single point or else a perfect set.
Finally, we discuss the case when the ground field is not algebraically closed
and/or is trivially valued.

\smallskip
At this point, our attention turns to the Berkovich 
affine plane over a trivially valued field. Here it seems
more natural to change from the multiplicative terminology
of seminorms to the additive notion of semivaluations.
We start in~\S\ref{S105} by introducing the home and the center
of a valuation. This allows us to stratify the Berkovich affine space.
This stratification is very explicit in dimension one, and 
possible (but nontrivial) to visualize in dimension two.  
We also introduce the important notion of a quasimonomial
valuation and discuss the Izumi-Tougeron inequality.

\smallskip
In~\S\ref{S103} we come to the valuative tree at a closed point 0. 
It is the same object as in the monograph~\cite{valtree} but here
it is defined as a subset of the Berkovich affine plane. 
We give a brief, but self-contained description
of its main properties with a presentation that is influenced
by my joint work with Boucksom and Favre~\cite{hiro,siminag,nama} in
hiugher dimensions. As before, our treatment is coordinate-free.
A key result is that the valuative tree at 0 is homeomorphic
to the inverse limit of the dual graphs 
over all birational morphisms above 0.
Each dual graph has a natural metric, so the valuative tree
is a pro-finite metric tree, and hence a metric tree in the sense
of~\S\ref{S110}. In some sense, the cone over the 
valuative tree is an even more natural object. We define a 
Laplace operator on the valuative tree that takes this 
fact into account. The subharmonic functions turn out to
be closely related to ideals in the ring of polynomials that are
primary to the maximal ideal at 0. In general, the geometry of 
blowups of the point~0 can be well understood and we exploit
this systematically.

\smallskip
Theorem~B is proved in~\S\ref{S107}. We give a proof that is
slightly different and shorter than the original one 
in~\cite{eigenval}. In particular, we have a significantly 
simpler argument for the fact that 
the number $c_\infty$ is a quadratic integer. 
The new argument makes more systematic use of the value groups
of valuations.

\smallskip
Next we move from a closed point in $\A^2$ to infinity.
The valuative tree at infinity was first defined in~\cite{eigenval}
and in~\S\ref{S108} we review its main properties. Just as in the
local case, the presentation is supposed to be self-contained and
also more geometric than in~\cite{eigenval}. There is a dictionary 
between the situation at a point and at infinity. For example,
a birational morphism above the closed point $0\in\A^2$
corresponds to a compactification of $\A^2$ and indeed,
the valuative tree at infinity is homeomorphic to the inverse limit
of the dual graphs of all (admissible) compactifications.
Unfortunately, the dictionary is not perfect, and there are many
subtleties when working at infinity. For example,
a polynomial in two variables tautologically 
defines a function on both the valuative tree at a point and at
infinity. At a point, this function is always negative but at
infinity, it takes on both positive and negative values.
Alternatively, the subtelties can be said to stem from the fact that the
geometry of compactifications of $\A^2$ can be much more
complicated than that of blowups of a closed point.

To remedy some shortcomings of the valuative tree at infinity, 
we introduce a subtree, the tight tree at infinity. It is an inverse
limit of dual graphs over a certain class of tight compactifications
of $\A^2$. These have much better properties than general 
compactifications and should have applications to other problems.
In particular, the nef cone of a tight compactification is always
simplicial, whereas the nef cone in general can be quite
complicated.

\smallskip
Finally, in~\S\ref{S104} we come to polynomial dynamics at infinity,
in particular the proof of Theorem~C. We follow the strategy of
the proof of Theorem~B closely, but we make sure to only use
tight compactifications. This causes some additional complications,
but we do provide a self-contained proof, that is
simpler than the one in~\cite{eigenval}.

%
%
%
%
\subsection{Novelties}
While most of the material here is known, certain proofs and ways
of presenting the results are new.

The definitions of a general tree in~\S\ref{S172} and
metric tree in~\S\ref{S131} are new, although equivalent to
the ones in~\cite{valtree}. The class of quasisubharmonic functions
on a general tree also seems new, as are the results in~\S\ref{S201}
on their singularities. The results on tree maps
in~\S\ref{S130} are new in this setting: they can be found
in \eg~\cite{BRBook} for rational maps on the 
Berkovich projective line.

Our description of the Berkovich affine and projective lines
is new, but only in the way that we insist on defining
things in a coordinate free way whenever possible. 
The same applies to the extension of a polynomial or
rational map from $\A^1$ or $\P^1$ to
$\BerkAone$ or $\BerkPone$, respectively.

While Theorem~\ref{T101}, expressing the local degree as a dilatation factor 
in the hyperbolic metric, is due to Rivera-Letelier, the proof
here is directly based on the definition of the local degree and
seems to be new. The remarks in~\S\ref{S254} on the non-algebraic case 
also seem to be new.

The structure of the Berkovich affine plane over a trivially valued
field, described in~\S\ref{S139} was no doubt known to experts but
not described in the literature. In particular, the valuative tree at
a closed point and at infinity were never explicitly identified
as subsets of the Berkovich affine plane.

Our exposition of the valuative tree differs from the treatment
in the book~\cite{valtree} and instead draws on the analysis of
the higher dimensional situation in~\cite{hiro}. 

The proof of Theorem~B in~\S\ref{S107} is new and somewhat simpler than the
one in~\cite{eigenval}. In particular, the fact that $c_\infty$ is
a quadratic integer is proved using value groups, whereas 
in~\cite{eigenval} this was done via rigidification. The same applies
to Theorem~C in~\S\ref{S104}.
%
%
%
%
\begin{Ackn}
I would like to express my 
gratitude to many people, first and foremost to Charles Favre
for a long and fruitful collaboration and 
without whom these notes would not exist.
Likewise, I have benefitted enormously from working with S\'ebastien
Boucksom. 
I thank Matt Baker for many interesting discussions;
the book by Matt Baker and Robert Rumely has also 
served as an extremely useful reference for dynamics 
on the Berkovich projective line.
I am grateful to Michael Temkin and Antoine Ducros for
answering various questions about Berkovich spaces
and to Andreas Blass for help with 
Remark~\ref{R206}
and
Example~\ref{E201}.
Conversations with Dale Cutkosky, William Gignac, Olivier Piltant and Matteo Ruggiero 
have also been very helpful, as have comments by Y\^{u}suke Okuyama.
Vladimir Berkovich of course deserves a special acknowledgment
as neither these notes nor the summer school itself would have been
possible without his work.
Finally I am grateful to the organizers and the sponsors 
of the summer school.
My research has been partially funded by 
grants DMS-0449465 and DMS-1001740  from the NSF.
\end{Ackn}

%
%
%
%
%
%
\newpage
\section{Tree structures}\label{S110}
We shall do dynamics on certain low-dimensional 
Berkovich spaces, or subsets thereof. In all cases, the space/subset
has the structure of a tree. Here we digress to discuss exactly
what we mean by this. We also present a general
version of potential theory on trees.
The definitions that follow are slightly different from,
but equivalent to the ones in~\cite{valtree,BRBook,FavreHab}, 
to which we refer for details.
The idea is that any two points in a 
tree should be joined by a unique interval.
This interval should look like a real interval but may or may not
be equipped with a distance function.
%
%
%
%
\subsection{Trees}\label{S172}
We start by defining a general notion of a tree. 
All our trees will be modeled on the real line 
(as opposed to a general ordered group $\Lambda$).\footnote{Our definition 
of ``tree'' is not the same as the one used in set theory~\cite{Jec03} but we
trust that no confusion will occur. The terminology  ``$\R$-tree'' 
would have been natural, but has already been reserved~\cite{GH}
for slightly different objects.}
In order to avoid technicalities,
we shall also only consider trees that are complete in the sense 
that they contain all their endpoints.
\begin{Def}
  An \emph{interval structure} on a set $I$
  is a partial order $\le$ on $I$ 
  under which $I$ becomes isomorphic 
  (as a partially ordered set) to the real interval $[0,1]$
  or to the trivial real interval $[0,0]=\{0\}$.
\end{Def}
Let $I$ be a set with an interval structure.
A \emph{subinterval} of $I$ is a subset $J\subseteq I$ 
that becomes a subinterval of $[0,1]$ or $[0,0]$ 
under such an isomorphism.
The \emph{opposite} interval structure on $I$ 
is obtained by reversing the partial ordering.
\begin{Def}\label{D201}
  A \emph{tree}\index{tree} is a set $X$
  together with the following data. For each $x,y\in X$, 
  there exists a subset $[x,y]\subseteq X$ containing $x$ and $y$
  and equipped with an interval structure. 
  Furthermore, we have:
  \begin{itemize}
  \item[(T1)]
    $[x,x]=\{x\}$;
  \item[(T2)]
    if $x\ne y$, then $[x,y]$ and $[y,x]$ are equal as subsets of $X$ 
    but equipped with opposite interval structures; they have 
    $x$ and $y$ as minimal elements, respectively;
  \item[(T3)]
    if $z\in[x,y]$ then $[x,z]$ and $[z,y]$ are subintervals of
    $[x,y]$ such that $[x,y]=[x,z]\cup[z,y]$ and $[x,z]\cap[z,y]=\{z\}$; 
  \item[(T4)]
    for any $x,y,z\in X$ there exists a unique element 
    $x\wedge_z y\in[x,y]$ 
    such that $[z,x]\cap[y,x]=[x\wedge_zy,x]$ 
    and $[z,y]\cap[x,y]=[x\wedge_zy,y]$;
  \item[(T5)]
    if $x\in X$ and $(y_\a)_{\a\in A}$ is a net in $X$ such that the segments
    $[x,y_\a]$ increase with $\a$, then there exists $y\in X$ such that 
    $\bigcup_\a[x,y_\a[\,=[x,y[$.
  \end{itemize}  
\end{Def}
In~(T5) we have used the convention $[x,y[\,:=[x,y]\setminus\{y\}$.
Recall that a \emph{net} is a sequence indexed by a directed 
(possibly uncountable) set. The subsets $[x,y]$ above will be
called \emph{intervals} or \emph{segments}.
%
%
\subsubsection{Topology}\label{S119}
A tree as above carries a natural \emph{weak topology}.
Given a point $x\in X$, define two points $y,z\in X\setminus\{x\}$
to be equivalent if $]x,y]\cap\,]x,z]\ne\emptyset$. An equivalence class
is called a \emph{tangent direction} at $x$ and the set of $y\in X$ 
representing a tangent direction $\vv$ is denoted $U(\vv)$.
The weak topology is generated by all such sets $U(\vv)$.
Clearly $X$ is arcwise connected and the connected components 
of $X\setminus\{x\}$ are exactly the sets $U(\vv)$ as $\vv$
ranges over tangent directions at $x$.
A tree is in fact uniquely arc connected in the sense that if 
$x\ne y$ and $\gamma:[0,1]\to X$ is an injective continuous map
with $\gamma(0)=x$, $\gamma(1)=y$, then the image of $\gamma$
equals $[x,y]$.
Since the sets $U(\vv)$ are connected, any point in $X$ admits a
basis of connected open neighborhoods.
We shall see shortly that $X$ is compact in the weak topology.

If $\gamma=[x,y]$ is a nontrivial interval, then the \emph{annulus}
$A(\gamma)=A(x,y)$ is defined by $A(x,y):=U(\vv_x)\cap U(\vv_y)$,
where $\vv_x$ (resp., $\vv_y$) is the tangent direction at 
$x$ containing $y$ (resp., at $y$ containing $x$).

An \emph{end} of $X$ is a point admitting a unique tangent
direction. A \emph{branch point} is a point having at least three
tangent directions.
%
%
\subsubsection{Subtrees}\label{S117}
A \emph{subtree} of a tree $X$ is a subset $Y\subseteq X$
such that the intersection $[x,y]\cap Y$ is either empty 
or a closed subinterval of $[x,y]$ for any $x,y\in X$.
In particular, if $x,y\in Y$, then $[x,y]\subseteq Y$ and 
this interval is then equipped with the 
same interval structure as in $X$. It is easy to see
that conditions~(T1)--(T5) are satisfied so that $Y$
is a tree.
The intersection of any collection of subtrees of $X$
is a subtree (if nonempty). The \emph{convex hull}
of any subset $Z\subseteq X$ is the 
intersection of all subtrees containing $Z$.

A subtree $Y$ is a closed subset of $X$
and the inclusion $Y\hookrightarrow X$ is an embedding.
We can define a \emph{retraction} $r:X\to Y$ as follows:
for $x\in X$ and $y\in Y$ the intersection 
$[x,y]\cap Y$ is an interval of the form $[r(x),y]$;
one checks that $r(x)$ does not depend on the choice of $y$.
The map $r$ is continuous and restricts to the identity on $Y$.
A subtree of $X$
is \emph{finite} if it is the convex hull of a finite set.

Let $(Y_\a)_{\a\in A}$ be an increasing net of finite 
subtrees of $X$, indexed by a directed 
set $A$ (\ie $Y_\a\subseteq Y_\b$ when $\a\le\b$).
Assume that the net is \emph{rich} 
in the sense that for any two distinct points $x_1,x_2\in X$  
there exists $\a\in A$ such that the retraction 
$r_\a:X\to Y_\a$ satisfies $r_\a(x_1)\ne r_\a(x_2)$. 
For example, $A$ could be the set of \emph{all} finite subtrees,
partially ordered by inclusion.
The trees $(Y_\a)$ form an 
inverse system via the retraction maps $r_{\a\b}:Y_\b\to Y_\a$ for $a\le\b$
defined by $r_{\a\b}=r_\a|_{Y_\b}$,
and we can form the inverse limit $\varprojlim Y_\a$,
consisting of points $(y_\a)_{\a\in A}$ in the product space 
$\prod_\a Y_\a$ such that $r_{\a\b}(y_\b)=y_\a$ for all $\a\le\b$. 
This inverse limit is a compact Hausdorff space.
Since $X$ retracts to each $Y_\a$ we get a continuous map
\begin{equation*}
  r:X\to\varprojlim Y_\a,
\end{equation*}
which is injective by the assumption that $A$ is rich.
That $r$ is surjective is a consequence of condition~(T5).
Let us show that the inverse of $r$ is also continuous. This
will show that $r$ is a homeomorphism, so that $X$ is compact.
(Of course, if we knew that $X$ was compact, the continuity
of $r^{-1}$ would be immediate.)

Fix a point $x\in X$ and a tangent direction $\vv$ at $x$.
It suffices to show that $r(U(\vv))$ is open in $\varprojlim Y_\a$.
Pick a sequence $(x_n)_{n\ge 1}$ in $U(\vv)$ such that
$[x_{n+1},x]\subseteq[x_n,x]$ and $\bigcap_n[x_n,x[\,=\emptyset$.
By richness there exists $\a_n\in A$ such that 
$r_{\a_n}(x_n)\ne r_{\a_n}(x)$. Let $\vv_n$ be the tangent direction 
in $X$ at $r_{\a_n}(x)$ represented by $r_{\a_n}(x_n)$.
Then $r(U(\vv_n))$ is open in $\varprojlim Y_\a$; hence so is 
$r(U(\vv))=\bigcup_n r(U(\vv_n))$.
\begin{Remark}\label{R102}
  One may form the inverse limit of any inverse system 
  of finite trees (not necessarily subtrees of a given tree).
  However, such an inverse limit may
  contain a ``compactified long line'' and hence not be a tree!
\end{Remark}
%
%
%
%
\subsection{Metric trees}\label{S131}
Let $I$ be a set with an interval structure. 
A \emph{generalized metric} on $I$ 
is a function $d:I\times I\to[0,+\infty]$ satisfying:
\begin{itemize}
  \item[(GM1)]
   $d(x,y)=d(y,x)$ for all $x,y$, and $d(x,y)=0$ iff $x=y$;
  \item[(GM2)]
   $d(x,y)=d(x,z)+d(z,y)$ whenever $x\le z\le y$
  \item[(GM3)]
    $d(x,y)<\infty$ if neither $x$ nor $y$ is an endpoint of $I$.
  \item[(GM4)]
    if $0<d(x,y)<\infty$, then for every $\e>0$ there exists $z\in I$
    such that $x\le z\le y$ and $0<d(x,z)<\e$.
 \end{itemize}    
 A \emph{metric tree} is a tree $X$ together with
 a choice of generalized metric on each interval $[x,y]$ in $X$
 such that whenever $[z,w]\subseteq[x,y]$, the inclusion
 $[z,w]\hookrightarrow[x,y]$ is an isometry in the obvious sense.

It is an interesting question whether or not 
every tree is \emph{metrizable} in the sense
that it can be equipped with a generalized metric. 
See Remark~\ref{R206} below.
%
%
\subsubsection{Hyperbolic space}
Let $X$ be a metric tree containing more than one point and
let $x_0\in X$ be a point that is not an end.
Define \emph{hyperbolic space} $\H$ to be the set of points $x\in X$
having finite distance from $x_0$. This definition 
does not depend on the choice of $x_0$.
Note that all points in $X\setminus\H$ are ends, but that 
some ends in $X$ may be contained in $\H$.

The generalized metric on $X$ 
restricts to a \emph{bona fide} metric on $\H$.
One can show that $\H$ is complete in this metric and that 
$\H$ is an $\R$-tree in the usual sense~\cite{GH}.
In general, even if $\H=X$, the topology generated by the metric
may be strictly stronger than the weak topology. In fact, the weak
topology on $X$ may not be metrizable. This happens, for example,
when there is a point with uncountable tangent space: such a point
does not admit a countable basis of open neighborhoods.
%
%
\subsubsection{Limit of finite trees}\label{S137}
As noted in Remark~\ref{R102}, the inverse limit
of finite trees may fail to be a tree. However, this 
cannot happen in the setting of metric trees.
A \emph{finite metric tree} is a finite tree equipped
with a generalized metric in which all distances are
finite.
Suppose we are given a directed set $A$, 
a finite metric tree $Y_\a$ for each $\a\in A$ and, for $\a\le\b$:
\begin{itemize}
\item
  an isometric embedding $\iota_{\b\a}:Y_\a\to Y_\b$; this means that 
  each interval in $Y_\a$ maps isometrically onto an interval in $Y_\b$;
\item
  a continuous map $r_{\a\b}:Y_\b\to Y_\a$ 
  such that $r_{\a\b}\circ\iota_{\b\a}=\id_{Y_\a}$
  and such that $r_{\a\b}$ maps each connected component of 
  $Y_\b\setminus Y_\a$ to a single point in $Y_\a$.
\end{itemize}
We claim that the space 
\begin{equation*}
  X:=\varprojlim_\a Y_\a
\end{equation*}
is naturally a metric tree.
Recall that $X$ is the set of points 
$(x_\a)_{\a\in A}$ in the product space $\prod_\a Y_\a$ such that 
$r_{\a\b}(x_\b)=x_\a$ for all $\a\le\b$. It is a compact Hausdorff space.
For each $\a$ we have an injective map $\iota_\a:Y_\a\to X$
mapping $x\in Y_\a$ to $(x_\b)_{\b\in A}$, where $x_\b\in Y_\b$ is
defined as follows: $x_\b=r_{\b\g}\iota_{\g\a}(x)$,
where $\g\in A$ dominates both $\a$ and $\b$.
Abusing notation, we view $Y_\a$ as a subset of $X$.
For distinct points $x,y\in X$ define 
\begin{equation*}
  [x,y]:=\{x\}\cup\bigcup_{\a\in A}[x_\a,y_\a]\cup\{y\}.
\end{equation*}
We claim that $[x,y]$ naturally carries an interval 
structure as well as a generalized metric. 
To see this, pick $\a_0$ such that $x_{\a_0}\ne y_{\a_0}$
and $z=(z_\a)\in\,]x_{\a_0},y_{\a_0}[$.
Then $d_\a(x_\a,z_\a)$ and $d_\a(y_\a,z_\a)$ are finite 
and increasing functions of $\a$, hence converge to 
$\delta_x,\delta_y\in[0,+\infty]$, respectively. 
This gives rise to an isometry of $[x,y]$ onto the interval
$[-\delta_x,\delta_y]\subseteq[-\infty,+\infty]$.
%
%
%
%
\subsection{Rooted and parametrized trees}\label{S111}
Sometimes there is a point in a tree that plays a special role.
This leads to the following notion.
\begin{Def}
  A \emph{rooted tree} is a partially ordered set $(X,\le)$
  satisfying the following properties:
  \begin{itemize}
  \item[(RT1)]
    $X$ has a unique minimal element $x_0$;
  \item[(RT2)]
    for any $x\in X\setminus\{x_0\}$, the set $\{z\in X\mid z\le x\}$ 
    is isomorphic (as a partially ordered set) to the real interval $[0,1]$;
  \item[(RT3)]
    any two points $x,y\in X$ admit an infimum $x\wedge y$ in $X$,
    that is, $z\le x$ and $z\le y$ iff $z\le x\wedge y$;
  \item[(RT4)]
    any totally ordered subset of $X$ has a least upper bound in $X$.
  \end{itemize}
\end{Def}
Sometimes it is natural to reverse the partial ordering so that 
the root is the unique \emph{maximal} element.
\begin{Remark}\label{R201}
  In~\cite{valtree} it was claimed that~(RT3) follows from the other 
  three axioms but this is not true. A counterexample is provided 
  by two copies of the interval $[0,1]$ identified along the half-open
  subinterval $[0,1[\,$. I am grateful to 
  Josnei Novacoski and Franz-Viktor Kuhlmann for pointing this out.
\end{Remark}
Let us compare this notion with the definition of a tree above.
If $(X,\le)$ is a rooted tree, then we can define intervals
$[x,y]\subseteq X$ as follows. First, when $x\le y\in X$,
set $[x,y]:=\{z\in X\mid x\le z\le y\}$ and $[y,x]:=[x,y]$.
For general $x,y\in X$ set $[x,y]:=[x\wedge y,x]\cup[x\wedge y,y]$.
We leave it to the reader to equip 
$[x,y]$ with an interval structure and to verify conditions~(T1)--(T5).
Conversely, given a tree $X$ and a point $x_0\in X$, 
define a partial ordering on $X$ by declaring
$x\le y$ iff  $x\in [x_0,y]$.
One checks that conditions~(RT1)--(RT4) are verified.

A \emph{parametrization} of a rooted tree $(X,\le)$ as above
is a monotone function
$\a:X\to[-\infty,+\infty]$ whose restriction to any 
segment $[x,y]$ with $x<y$ is a homeomorphism onto 
a closed subinterval of $[-\infty,+\infty]$. 
We also require $|\a(x_0)|<\infty$ unless $x_0$ is an endpoint 
of $X$. 
This induces a generalized metric on $X$ by setting
\begin{equation*}
  d(x,y)=|\a(x)-\a(x\wedge y)|+|\a(y)-\a(x\wedge y)|
\end{equation*}
for distinct points $x,y\in X$. The set $\H$ is exactly the locus where $|\a|<\infty$.
Conversely given a generalized metric $d$ on a tree $X$, a point 
$x_0\in\H$ and a real number $\a_0\in\R$, we 
obtain an increasing parametrization 
$\a$ of the tree $X$ rooted in $x_0$ by setting 
$\a(x)=\a_0+d(x,x_0)$.
\begin{Remark}\label{R206}
  A natural question is whether or not every rooted tree admits a parametrization.
  In personal communication to the author, Andreas Blass has outlined
  an example of a rooted tree that cannot be parametrized. His construction
  relies on Suslin trees~\cite{Jec03}, the existence of which cannot be 
  decided from the ZFC axioms. It would be interesting to have a more
  explicit example.
\end{Remark}
%
%
%
%
\subsection{Radon measures on trees}\label{S203}
Let us review the notions of Borel and Radon measures 
on compact topological spaces and, more specifically,
on trees.
\subsubsection{Radon and Borel measures on compact spaces}\label{S204}
A reference for the material in this section is~\cite[\S7.1-2]{Folland}.
Let $X$ be a compact (Hausdorff) space and $\cB$ the associated
Borel $\sigma$-algebra.
A \emph{Borel measure}  on $X$ is a function $\rho:\cB\to[0,+\infty]$
satisfying the usual axioms. A Borel measure $\rho$ is \emph{regular} if
for every Borel set $E\subseteq X$ and every $\e>0$ there exists
a compact set $F$ and an open set $U$ such that 
$F\subseteq E\subseteq U$ and $\rho(U\setminus F)<\e$.

A \emph{Radon measure} on $X$ is a positive linear
functional on the vector space $C^0(X)$ of continuous functions on $X$.
By the Riesz representation theorem, Radon measures can be identified
with regular Borel measures.

If $X$ has the property that every open set of $X$ is $\sigma$-compact, 
that is, a countable union
of compact sets, then every Borel measure on $X$ is Radon.
However, many Berkovich spaces do not have this property.
For example, the Berkovich projective line over any non-Archimedean
field $K$ is a tree, but if the residue field of $K$ is 
uncountable, then the complement of any Type~2 point 
(see~\S\ref{S167}) is an open set that is not $\sigma$-compact.

We write $\cM^+(X)$ for the set of positive Radon measures
on $X$ and endow it with the topology of weak (or vague) convergence.
By the Banach-Alaoglu Theorem, the subspace $\cM^+_1(X)$ of 
Radon probability measure is compact.

A \emph{finite atomic measure} on $X$ is a Radon measure of the form 
$\rho=\sum_{i=1}^Nc_i\delta_{x_i}$, where $c_i>0$.
A \emph{signed} Radon measure is a real-valued linear functional on
$C^0(X;\R)$. The only signed measures that we shall consider will
be of the form $\rho-\rho_0$, where $\rho$ is a Radon measure 
and $\rho_0$ a finite atomic measure.

%
%
\subsubsection{Measures on finite trees}
Let $X$ be a \emph{finite} tree. It is then easy to see
that every connected open set is of the form 
$\bigcap_{i=1}^n U(\vv_i)$, where
$\vv_1,\dots,\vv_n$ are tangent directions in $X$
such that $U(\vv_i)\cap U(\vv_j)\ne\emptyset$ 
but $U(\vv_i)\not\subseteq U(\vv_j)$ for $i\ne j$.
Each such set is a countable union of compact subsets,
so it follows from the above that every Borel measure
is in fact a Radon measure.
%
%
\subsubsection{Radon measures on general trees} 
Now let $X$ be an arbitrary tree in the sense of
Definition~\ref{D201}. It was claimed in~\cite{valtree}
and~\cite{BRBook} that in this case, too, every Borel measure
is Radon, but there is a gap in the proofs. 
\begin{Example}\label{E201}
  Let $Y$ be a set with the following property: there exists
  a probability measure $\mu$ on the maximal $\sigma$-algebra
  (that contains all subsets of $Y$) that gives zero mass to any
  finite set. The existence of such a set, whose cardinality is 
  said to be a \emph{real-valued measurable cardinal} 
  is a well known problem in set theory~\cite{Fremlin}: 
  suffice it to say that its
  existence or nonexistence cannot be decided from the ZFC axioms.
  Now equip $Y$ with the discrete topology and 
  let $X$ be the cone over $Y$, that is
  $X=Y\times[0,1]/\sim$, where $(y,0)\sim(y',0)$ for all $y,y'\in Y$.
  Let $\phi:Y\to X$ be the continuous map defined by $\phi(y)=(y,1)$. 
  Then $\rho:=\phi_*\mu$ is a Borel measure on $X$
  which is not Radon. Indeed, the open set $U:=X\setminus\{0\}$ has 
  measure 1, but any compact subset of $U$ is contained in a 
  \emph{finite} union of intervals $\{y\}\times\,]0,1]$ and thus 
  has measure zero.
\end{Example}
Fortunately, this does not really lead to any problems. The message to
take away is that on a general tree, one should systematically use 
Radon measures, and this is indeed what we shall do here.
%
%
\subsubsection{Coherent systems of measures}
The description of a general tree $X$ as a pro-finite tree 
is well adapted to describe Radon measures on $X$.
Namely, let $(Y_\a)_{\a\in A}$ be a rich net of finite subtrees
of $X$, in the sense of~\S\ref{S117}.
The homeomorphism $X\simto\varprojlim Y_\a$ then 
induces a homeomorphism $\cM^+_1(X)\simto\varprojlim\cM^+_1(Y_\a)$.
Concretely, the right hand side consists of 
collections $(\rho_\a)_{\a\in A}$ of Radon measures on 
each $Y_\a$ satisfying $(r_{\a\b})_*\rho_\b=\rho_\a$ for $\a\le\b$.
Such a collection of measures is called a 
\emph{coherent system of measures} in~\cite{BRBook}.
The homeomorphism above assigns to a Radon probability measure
$\rho$ on $X$
the collection $(\rho_\a)_{\a\in A}$ defined by $\rho_\a:=(r_\a)_*\rho$.
%
%
%
%
\subsection{Potential theory}\label{S116}
Next we outline how to do potential theory
on a metric tree. 
The presentation is adapted to our needs but basically 
follows~\cite{BRBook}, especially~\S1.4 and~\S2.5.
The Laplacian on a tree is a combination of the 
usual real Laplacian with the combinatorially defined 
Laplacian on a simplicial tree.
%
%
\subsubsection{Quasisubharmonic functions on finite metric trees}   
Let $X$ be a \emph{finite} metric tree.
The Laplacian $\Delta$ on $X$ is naturally defined on
the class $\BDV(X)\subseteq C^0(X)$ 
of functions with bounded differential variation, 
see~\cite[\S3.5]{BRBook},
but we shall restrict our attention to the subclass
$\QSH(X)\subseteq\BDV(X)$ of
\emph{quasisubharmonic} functions.

Let $\rho_0=\sum_{i=1}^Nc_i\delta_{x_i}$ 
be a finite atomic measure on $X$.
Define the class
$\SH(X,\rho_0)$ of \emph{$\rho_0$-subharmonic} functions
as the set of continuous functions 
$\f$ that are convex on any segment 
disjoint from the support of $\rho_0$
and such that, for any $x\in X$:
\begin{equation*}
  \rho_0\{x\}+\sum_\vv D_\vv\f\ge0,
\end{equation*}
where the sum is over all tangent directions $\vv$ at $x$.
Here $D_\vv\f$ denotes the directional derivative of 
$\f$ in the direction $\vv$ (outward from $x$): 
this derivative is well defined by the convexity of $\f$.
We leave it to the reader to verify that 
\begin{equation}\label{e210}
  D_\vv\f\le0
  \quad\text{whenever}\quad\rho_0(U(\vv))=0
\end{equation}
for any $\f\in\SH(X,\rho_0)$; this inequality is quite useful.

Define $\QSH(X)$ as the union of $\SH(X,\rho_0)$ 
over all finite atomic measures $\rho_0$. 
Note that if $\rho_0$, $\rho_0'$ are two finite atomic 
measures with $\rho'_0\ge\rho_0$, then 
$\SH(X,\rho_0)\subseteq\SH(X,\rho'_0)$.
We also write $\SH(X,x_0):=\SH(X,\delta_{x_0})$ and refer
to its elements as \emph{$x_0$-sub\-harmonic}.

Let $Y\subseteq X$ be a subtree of $X$ containing 
the support of $\rho_0$.
We have an injection $\iota:Y\hookrightarrow X$ 
and a retraction $r:X\to Y$. 
It follows easily from~\eqref{e210} that 
\begin{equation*}
  \iota^*\SH(X,\rho_0)\subseteq\SH(Y,\rho_0)
  \quad\text{and}\quad
  r^*\SH(Y,\rho_0)\subseteq\SH(X,\rho_0).
\end{equation*}
Moreover, $\f\le r^*\iota^*\f$ 
for any $\f\in\SH(X,\rho_0)$.
%
%
\subsubsection{Laplacian}\label{S223}
For $\f\in\QSH(X)$, 
let $\Delta\f$ be the signed (Borel) measure
on $X$ defined as follows:
if $\vv_1,\dots,\vv_n$ are tangent directions in $X$
such that $U(\vv_i)\cap U(\vv_j)\ne\emptyset$ 
but $U(\vv_i)\not\subseteq U(\vv_j)$ for $i\ne j$,
then 
\begin{equation*}
 \Delta\f(\bigcap_{i=1}^nU(\vv_i))=\sum_{i=1}^nD_{\vv_i}\f.
\end{equation*}
This equation defines $\Delta\f$ uniquely as 
every open set in $X$ is a countable disjoint union of
open sets of the form $\bigcap U(\vv_i)$. 
The mass of $\Delta\f$ at a point $x\in X$ is given by
$\sum_{\vv\in T_x}D_\vv\f$ and the restriction of $\Delta\f$
to any open segment $I\subseteq X$ containing no branch point
is equal to the usual real Laplacian of $\f|_I$.

The Laplace operator is essentially injective. 
Indeed, suppose $\f_1,\f_2\in\QSH(X)$ 
and $\Delta\f_1=\Delta\f_2$. 
We may assume $\f_1,\f_2\in\SH(X,\rho_0)$
for a common positive measure $\rho_0$.
If $\f=\f_1-\f_2$, then 
$\f$ is affine on any closed interval
whose interior is disjoint from the support of $\rho_0$.
Moreover, at any point $x\in X$ we have 
$\sum_{\vv\in T_x}D_\vv\f=0$. These two conditions
easily imply that $\f$ is constant.
(Too see this, first check that $\f$ is locally constant at any end of $X$.)

If $\f\in\SH(X,\rho_0)$, then $\rho_0+\Delta\f$ is a 
positive Borel measure on $X$ of the same mass as $\rho_0$.
In particular, when $\rho_0$ is a probability measure, we 
obtain a map
\begin{equation}\label{e134}
  \SH(X,\rho_0)\ni\f\mapsto\rho_0+\Delta\f\in\cM^+_1(X),
\end{equation}
where $\cM^+_1(X)$ denotes the set of probability measures on $X$.
We claim that this map is surjective. 
To see this, first note that the function $\f_{y,z}$ given by
\begin{equation}\label{e122}
  \f_{y,z}(x)=-d(z,x\wedge_z y),
\end{equation}
with $x\wedge_z y\in X$ as in~(T4),
belongs to $\SH(X,z)$ and satisfies 
$\Delta\f=\delta_y-\delta_z$.
For a general probability measure $\rho$
and finite atomic probability measure $\rho_0$,
the function
\begin{equation}\label{e137}
  \f(x)=\iint\f_{y,z}(x)\,d\rho(y)d\rho_0(z)
\end{equation}
belongs to $\SH(X,\rho_0)$ and 
satisfies $\Delta\f=\rho-\rho_0$.

Let $Y\subseteq X$ be a subtree containing the support of
$\rho_0$ and denote the 
Laplacians on $X$ and $Y$ by $\Delta_X$ and $\Delta_Y$, respectively.
Then, with notation as above, 
\begin{align}
  \Delta_Y(\iota^*\f)=r_*(\Delta_X\f)
  \quad\text{for $\f\in\SH(X,\rho_0)$}\label{e120}\\
  \Delta_X(r^*\f)=\iota_*(\Delta_Y\f)
  \quad\text{for $\f\in\SH(Y,\rho_0)$}\label{e121},
\end{align}
where $\iota:Y\hookrightarrow X$ and $r:X\to Y$
are the inclusion and retraction, respectively. 
%
%
\subsubsection{Equicontinuity}
The spaces $\SH(X,\rho_0)$ have very nice compactness properties
deriving from the fact that if $\rho_0$ is a probability measure then
\begin{equation}\label{e138}
  |D_\vv\f|\le 1
  \quad\text{for all tangent directions $\vv$ and all $\f\in\SH(X,\rho_0)$}.
\end{equation}
Indeed, using the fact that a function in $\QSH(X)$ is determined, up to an
additive constant, by its Laplacian~\eqref{e138}
follows from~\eqref{e122}
when $\rho_0$ and $\rho_0+\Delta\f$ are Dirac masses, 
and from~\eqref{e137} in general.

As a consequence of~\eqref{e138}, the functions in 
$\SH(X,\rho_0)$ are uniformly Lipschitz continuous and in particular
equicontinuous. This shows that pointwise convergence in $\SH(X,\rho_0)$  
implies uniform convergence.

The space $\SH(X,\rho_0)$ is easily seen to be closed in the 
$C^0$-topology, so we obtain several 
compactness assertions from the Arzela-Ascoli theorem.
For example, the set $\SH^0(X,\rho_0)$ of
$\f\in\SH(X,\rho_0)$ for which $\max\f=0$
is compact.

Finally, we have an exact sequence of topological vector spaces
\begin{equation}\label{e139}
  0
  \to\R
  \to\SH(X,\rho_0)
  \to\cM^+_1(X)
  \to0;
\end{equation}
here $\cM^+_1(X)$ is equipped with the weak topology on measures.
Indeed, the construction in~\eqref{e122}-\eqref{e137} gives rise to 
a continuous bijection between $\cM^+_1(X)$
and $\SH(X,\rho_0)/\R\simeq\SH^0(X,\rho_0)$. By compactness,
the inverse is also continuous.
%
%
\subsubsection{Quasisubharmonic functions on general metric trees}\label{S122}
Now let $X$ be a general metric tree and $\rho_0$
a finite atomic measure supported on the associated
hyperbolic space $\H\subseteq X$.

Let $A$ be the set of finite metric subtrees of $X$ that 
contain the support of $\rho_0$. This is a directed set, partially ordered by 
inclusion.
For $\a\in A$, denote the associated metric tree by $Y_\a$.
The net $(Y_\a)_{\a\in A}$ is rich in the sense of~\S\ref{S117},
so the retractions $r_\a:X\to Y_\a$ induce a homeomorphism
$r:X\simto\varprojlim Y_\a$. 

Define $\SH(X,\rho_0)$ to be the set of functions 
$\f:X\to[-\infty,0]$ such that 
$\f|_{Y_\a}\in\SH(Y_\a,\rho_0)$ for all $\a\in A$
and such that $\f=\lim r_\a^*\f$.
Notice that in this case $r_\a^*\f$ in fact decreases to $\f$.
Since $r_\a^*\f$ is continuous for all $\a$, this implies
that $\f$ is upper semicontinuous.

We define the topology on $\SH(X,\rho_0)$ in terms of pointwise
convergence on $\H$. Thus a net $\f_i$ converges
to $\f$ in $\SH(X,\rho_0)$ iff $\f_i|_{Y_\a}$ converges 
to $\f|_{Y_\a}$ for all $\a$. 
Note, however,  that the convergence $\f_i\to\f$ is not required 
to hold on all of $X$.
 
Since, for all $\a$, $\SH(Y_\a,\rho_0)$ is compact in the topology
of pointwise convergence on $Y_\a$, 
it follows that $\SH(X,\rho_0)$ is also compact. 
The space $\SH(X,\rho_0)$ has many nice properties beyond
compactness. 
For example, if $(\f_i)_i$ is a decreasing 
net in $\SH(X,\rho_0)$, and $\f:=\lim\f_i$,
then either $\f_i\equiv-\infty$ on $X$ 
or $\f\in\SH(X,\rho_0)$. 
Further, if $(\f_i)_i$ is a family in $\SH(X,\rho_0)$
with $\sup_i\max_X\f_i<\infty$, 
then the upper semicontinuous regularization of 
$\f:=\sup_i\f_i$ belongs to $\SH(X,\rho_0)$.

As before, we define $\QSH(X)$, the space of \emph{quasisubharmonic functions},
to be the union of $\SH(X,\rho_0)$ over all finite atomic measures $\rho_0$
supported on $\H$.
%
%
\subsubsection{Laplacian}
Let $X$, $\rho_0$ and $A$ be as above.
Recall that a Radon probability measure $\rho$ on $X$
is given by a coherent system $(\rho_\a)_{\a\in A}$ of
(Radon) probability measures on $Y_\a$.

For $\f\in\SH(X,\rho_0)$ we define 
$\rho_0+\Delta\f\in\cM^+_1(X)$ to be the 
unique Radon probability measure such that 
\begin{equation*}
  (r_\a)_*(\rho_0+\Delta\f)=\rho_0+\Delta_{Y_\a}(\f|_{Y_\a})
\end{equation*}
for all $\a\in A$. This makes sense in view of~\eqref{e120}.

The construction in~\eqref{e122}-\eqref{e137} remains valid
and the sequence~\eqref{e139} of topological vector spaces is exact.
For future reference we record that if 
$(\f_i)_i$ is a net in $\SH^0(X,\rho_0)$, then 
$\f_i\to 0$ (pointwise on $\H$)  iff $\Delta\f_i\to0$ 
in $\cM^+_1(X)$.
%
%
\subsubsection{Singularities of quasisubharmonic functions}\label{S201}
Any quasisubharmonic function on a metric tree $X$ is bounded from above
on all of $X$ and Lipschitz continuous on hyperbolic space $\H$, but can
take the value $-\infty$ at infinity.
For example, if $x_0\in\H$ and $y\in X\setminus\H$,
then the function $\f(x)=-d_\H(x_0,x\wedge_{x_0} y)$ is $x_0$-subharmonic
and $\f(y)=-\infty$. 
Note that $\Delta\f=\delta_y-\delta_{x_0}$. The following
result allows us to estimate a quasisubharmonic function from 
below in terms of the mass of its Laplacian at infinity.
It will be used in the proof of the equidistribution result in~\S\ref{S301}.
\begin{Prop}\label{P301}
  Let $\rho_0$ be a finite atomic probability measure on $\H$ and 
  let $\f\in\SH(X,\rho_0)$. Pick $x_0\in\H$ and any number 
  $\lambda>\sup_{y\in X\setminus\H}\Delta\f\{y\}$.
  Then there exists a constant $C=C(x_0,\rho_0,\f,\lambda)>0$ such that 
  \begin{equation*}
    \f(x)\ge\f(x_0)-C-\lambda d_\H(x,x_0)
  \end{equation*}
  for all $x\in\H$.
\end{Prop}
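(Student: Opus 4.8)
The plan is to reduce the statement to a single interval estimate and then integrate. First I would observe that by the exact sequence~\eqref{e139} it suffices to control $\f$ in terms of $\Delta\f=:\rho-\rho_0$, where $\rho$ is a Radon probability measure on $X$. Using the integral representation~\eqref{e137}, namely $\f(x)=\iint\f_{y,z}(x)\,d\rho(y)\,d\rho_0(z)+\text{const}$, together with $\f_{y,z}(x)=-d(z,x\wedge_z y)$, the whole estimate becomes a statement about how fast $d(z,x\wedge_z y)$ can grow as $x$ moves out along a ray in $\H$. Fix $x_0\in\H$; since $\rho_0$ is a finite atomic measure supported on $\H$, the points $z$ in its support all lie at finite distance from $x_0$, so comparing $\f_{y,z}(x)$ with $\f_{y,x_0}(x)$ costs only a bounded constant. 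Thus it is enough to bound $\int\f_{y,x_0}(x)\,d\rho(y)=-\int d(x_0,x\wedge_{x_0}y)\,d\rho(y)$ from below.

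Next I would fix a point $x\in\H$ and analyze the geodesic $[x_0,x]$. For a point $y\in X$, the quantity $d(x_0,x\wedge_{x_0}y)$ equals the distance from $x_0$ to the point where the segment $[x_0,y]$ branches off from $[x_0,x]$; equivalently, parametrizing $[x_0,x]$ by arclength $t\in[0,L]$ with $L=d_\H(x,x_0)$, it is the supremum of the $t$ for which $y$ lies in the connected component of $X\setminus\{x(t)\}$ containing $x$. Writing $w=x(t)$ and letting $\vv$ be the tangent direction at $w$ pointing toward $x$, the set of such $y$ with $x\wedge_{x_0}y$ beyond $w$ is exactly $U(\vv)$. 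So, with $g(t):=\rho(U(\vv_t))$ a nonincreasing function of $t$ on $[0,L]$, Fubini gives
\begin{equation*}
  -\int d(x_0,x\wedge_{x_0}y)\,d\rho(y)=-\int_0^L g(t)\,dt.
\end{equation*}
Now the key point: $\rho=\rho_0+\Delta\f$, and $\rho(U(\vv_t))\le\rho_0(U(\vv_t))+(\Delta\f)^+(U(\vv_t))$. The mass of $\Delta\f$ on the ``distal'' set $U(\vv_t)$ is, by upper semicontinuity of $t\mapsto\rho(U(\vv_t))$ and the fact that $\bigcap_t U(\vv_t)$ is either empty or a single end $y_\infty\in X\setminus\H$, bounded for $t$ large by $\sup_{y\in X\setminus\H}\Delta\f\{y\}<\lambda$; and $\rho_0(U(\vv_t))\to0$ as well once $t$ exceeds the (finite) distance from $x_0$ to $\supp\rho_0$. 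Hence there is $t_0=t_0(x_0,\rho_0,\f,\lambda)$, independent of $x$, with $g(t)\le\lambda$ for $t\ge t_0$, while trivially $g(t)\le1$ for $t<t_0$. Therefore $\int_0^L g(t)\,dt\le t_0+\lambda(L-t_0)_+\le t_0+\lambda L$, which yields $\f(x)\ge\f(x_0)-C-\lambda d_\H(x,x_0)$ after absorbing all the bounded terms (the constant from replacing $\f_{y,z}$ by $\f_{y,x_0}$, the additive constant in~\eqref{e137}, and $t_0$) into $C$.

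The main obstacle I anticipate is the measure-theoretic bookkeeping on a general (non-metrizable-weak-topology) tree: one must argue carefully that $t\mapsto\rho(U(\vv_t))$ is genuinely nonincreasing and, crucially, upper semicontinuous from the right, so that the supremum of $\Delta\f$ over ends $X\setminus\H$ really does control $\limsup_{t\to\infty}(\Delta\f)(U(\vv_t))$ --- here is where one needs that the segment $[x_0,x]$ meets $\H$ only in its interior points and that the ``tail'' sets $U(\vv_t)$ shrink down to an end or to the empty set (condition~(T5)). Once that semicontinuity and the existence of the uniform threshold $t_0$ are nailed down, the Fubini computation and the final integration are routine. A secondary technical point is to make sure $t_0$ can be chosen independently of the direction of the ray $[x_0,x]$; this follows because $\supp\rho_0$ is a fixed finite set and $\lambda$ strictly dominates the (fixed) supremal end-mass of $\Delta\f$, so the choice of $t_0$ depends only on $x_0,\rho_0,\f,\lambda$ and not on $x$.
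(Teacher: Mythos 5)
Your plan is sound and follows essentially the same route as the paper's proof: your Fubini computation re-derives (a slightly sharper version of) the paper's Lemma~\ref{L212}, and the rest consists of bounding the tail mass along $[x_0,x]$ and integrating. The genuine gap is the uniformity of $t_0$, which you flag as a ``secondary technical point'' but then dispose of with an assertion rather than an argument: you write that uniformity ``follows because $\supp\rho_0$ is a fixed finite set and $\lambda$ strictly dominates the (fixed) supremal end-mass of $\Delta\f$, so the choice of $t_0$ depends only on $x_0,\rho_0,\f,\lambda$.'' But the fact that each ray individually has a finite threshold does not by itself yield a threshold uniform over rays; a priori the per-ray thresholds $t_0(x)$ could be unbounded as the direction of $[x_0,x]$ varies over the (possibly uncountable) set of tangent directions at points near $x_0$.

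The missing argument, which is what the paper's proof supplies, is to work with the level set $Y_\lambda:=\{y\in X:(\rho_0+\Delta\f)\{z\ge y\}\ge\lambda\}$ for the partial order rooted at $x_0$. Since $\rho:=\rho_0+\Delta\f$ is a probability measure and any $k$ mutually incomparable points of $Y_\lambda$ have pairwise disjoint tails each of mass $\ge\lambda$, we get $k\le 1/\lambda$, so $Y_\lambda$ is a finite subtree. Its ends cannot lie in $X\setminus\H$: if $y_n\in Y_\lambda$ increases to $y_\infty\in X\setminus\H$, continuity of $\rho$ along the decreasing sets $\{z\ge y_n\}$ gives $\rho\{y_\infty\}\ge\lambda$, while $\rho\{y_\infty\}=\Delta\f\{y_\infty\}<\lambda$ because $\rho_0$ is supported on $\H$. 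Hence $Y_\lambda\subseteq\H$ and $C:=\sup_{y\in Y_\lambda}d_\H(x_0,y)<\infty$ is precisely the uniform threshold $t_0$ you need. With that in hand, your Fubini identity and the final integration go through as you describe.
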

We shall use the following estimates, which are of independent interest.
\begin{Lemma}\label{L212}
  Let $\rho_0$ be a finite atomic probability measure on $\H$ and let $x_0\in\H$.
  Pick $\f\in\SH(X,\rho_0)$ and set $\rho=\rho_0+\Delta\f$. Then 
  \begin{equation*}
    \f(x)-\f(x_0)
    \ge-\int_{x_0}^x\rho\{z\ge y\}d\a(y)
    \ge
    -d_\H(x,x_0)\cdot\rho\{z\ge x\},
  \end{equation*}
  where $\le$ is the partial ordering on $X$ rooted in $x_0$.
\end{Lemma}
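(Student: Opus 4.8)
The plan is to work entirely inside a finite subtree and then pass to the limit, since everything is defined through the retractions $r_\a$. First I would reduce to the case where $X$ itself is a finite metric tree containing $\supp\rho_0$ and the points $x_0, x$. Indeed, pick $\a\in A$ with $x_0, x\in Y_\a$ and $\supp\rho_0\subseteq Y_\a$; then $\f(x)\ge(r_\a^*\f)(x)=\f|_{Y_\a}(x)$ and $\f(x_0)=\f|_{Y_\a}(x_0)$ since $x_0\in\H\subseteq Y_\a$, while by~\eqref{e120} the measure $\rho_\a=\rho_0+\Delta_{Y_\a}(\f|_{Y_\a})=(r_\a)_*\rho$ satisfies $\rho_\a\{z\ge y\}=\rho\{z\ge y\}$ for the ordering rooted at $x_0$ (the retraction is order-preserving toward $x_0$, so the fibers over the segment $[x_0,x]$ behave correctly). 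So it suffices to prove the two inequalities for a finite metric tree, with $\rho=\rho_0+\Delta\f$.

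On a finite metric tree the first inequality is a one-dimensional computation along the segment $\gamma=[x_0,x]$, parametrized by $\a(y)=\a_0+d_\H(y,x_0)$ as in~\S\ref{S111}. Along the interior of $\gamma$, away from branch points, $\Delta\f$ is the usual real Laplacian of $\f$, i.e.\ $\f''\,d\a$, and at each branch point $y$ on $\gamma$ the atom of $\Delta\f$ at $y$ is $\sum_{\vv\in T_y}D_\vv\f$, which includes the two derivatives along $\gamma$ plus the (nonpositive, by~\eqref{e210} applied to directions $\vv$ with $\rho_0(U(\vv))=0$, or more carefully: bounded above by $\rho\{z\ge y\}$ minus the incoming derivative) derivatives into the branches hanging off toward $x$. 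The key identity is that the right-derivative of $\f$ along $\gamma$ at a point $y$, call it $g(y):=D_{\vv_y}\f$ where $\vv_y$ points from $y$ toward $x$, satisfies $g(y)\ge -\rho\{z\ge y\}$: this is exactly the subharmonicity inequality summed over the subtree $\{z\ge y\}$, i.e.\ the outward flux across the cut at $y$ equals $-\rho(\{z\ge y\}) + (\text{mass of }\Delta\f\text{ strictly above }y)$ wait — cleanly, $\sum_{z\ge y}(\rho_0+\Delta\f)\{z\} = \rho_0(\{z\ge y\}) + g(y) + (\text{sum of }D_\vv\f\text{ over all other outgoing directions in }\{z\ge y\})$, and telescoping the subharmonicity inequalities over the subtree above $y$ yields $g(y)\ge -\rho\{z\ge y\}$. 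Then
\begin{equation*}
  \f(x)-\f(x_0)=\int_{x_0}^x g(y)\,d\a(y)\ge -\int_{x_0}^x\rho\{z\ge y\}\,d\a(y),
\end{equation*}
which is the first asserted inequality. The second is immediate: $\rho\{z\ge y\}\ge\rho\{z\ge x\}$ for $y\in[x_0,x]$ since $\{z\ge x\}\subseteq\{z\ge y\}$, so $-\int_{x_0}^x\rho\{z\ge y\}\,d\a(y)\ge -\rho\{z\ge x\}\int_{x_0}^x d\a(y) = -\rho\{z\ge x\}\,d_\H(x,x_0)$.

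The main obstacle is the telescoping argument establishing $g(y)\ge-\rho\{z\ge y\}$ cleanly in the presence of branch points and atoms of $\rho_0$ on $[x_0,x]$: one must carefully separate the contribution of $\rho_0$ from that of $\Delta\f$, and handle the directional derivatives at branch points where $\f$ need not be differentiable. I would handle this by induction on the number of vertices of the finite tree lying in $\{z\ge y\}$, or equivalently by integrating the defining relation $\Delta\f(\bigcap U(\vv_i)) = \sum D_{\vv_i}\f$ over the subtree $\{z> y\}$ and using that $\f$ is locally constant near the ends (as in the injectivity argument in~\S\ref{S223}), which forces the boundary terms at the ends to vanish. Once the finite-tree case is done, the passage to general $X$ is the routine reduction sketched above.
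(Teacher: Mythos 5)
Your proof is correct, and it spells out what the paper compresses into a single invocation of~\eqref{e137}. The paper states the identity $\f(x)-\f(x_0)=-\int_{x_0}^x(\Delta\f)\{z\ge y\}\,d\a(y)$ as an immediate consequence of the integral representation, and then uses only $\rho\ge\Delta\f$ and the monotonicity of $y\mapsto\rho\{z\ge y\}$ along $[x_0,x]$. Your route instead reduces to a finite subtree $Y_\a$ via the retractions---the check that $(r_\a)_*\rho$ has the same tail masses $\rho\{z\ge y\}$ along $[x_0,x]$ is precisely the order-compatibility of $r_\a$, and it is correct---and then proves the pointwise flux bound $D_{\vv_y}\f\ge-\rho\{z\ge y\}$ on the one-sided derivative toward $x$, which is the differential form of the same identity, before integrating. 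This makes the argument self-contained; the two proofs differ essentially only in whether~\eqref{e137} is taken as a black box. Two small remarks. The hint that ``$\f$ is locally constant near the ends'' is not needed here and is not true for a general $\f\in\SH(X,\rho_0)$; in~\S\ref{S223} it was invoked only for differences of functions with equal Laplacians. Also, your final bound is $-d_\H(x,x_0)\,\rho\{z\ge x\}$ with a minus sign, which is correct; the minus is missing from the lemma's statement and from the last line of the paper's proof, but it must be there since $\f(x)-\f(x_0)\le0$ while $d_\H(x,x_0)\,\rho\{z\ge x\}\ge0$, so you have implicitly repaired a typo.
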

\begin{proof}[Proof of Lemma~\ref{L212}]
  It follows from~\eqref{e137} that
  \begin{multline*}
    \f(x)-\f(x_0)
    =-\int_{x_0}^x(\Delta\f)\{z\ge y\}d\a(y)\\
    \ge-\int_{x_0}^x\rho\{z\ge y\}d\a(y)
    \ge-\int_{x_0}^x\rho\{z\ge x\}d\a(y)
    =-d_\H(x,x_0)\cdot\rho\{z\ge x\},
  \end{multline*}
  where we have used that $\rho\ge\Delta\f$ and $x\ge y$.
\end{proof}
\begin{proof}[Proof of Proposition~\ref{P301}]
  Let $\le$ denote the partial ordering rooted in $x_0$ and set 
  \begin{equation*}
    Y_\lambda:=\{y\in X\mid (\rho_0+\Delta\f)\{z\ge y\}\ge\lambda\}.
  \end{equation*}
  Recall that $\rho_0+\Delta\f$ is a probability measure. 
  Thus $Y_\lambda=\emptyset$ if $\lambda>1$.
  If $\lambda\le 1$, then $Y_\lambda$ is a finite subtree of $X$
  containing $x_0$ and having at most $1/\lambda$ ends.
  The assumption that 
  $\lambda>\sup_{y\in X\setminus\H}\Delta\f\{y\}$
  implies that $Y_\lambda$ is in fact contained in $\H$.
  In particular, the number $C:=\sup_{y\in Y_\lambda}d_\H(x_0,y)$
  is finite.
  
  It now follows from Lemma~\ref{L212} that
  \begin{equation*}
    \f(x)-\f(x_0)
    \ge-\int_{x_0}^x(\rho_0+\Delta\f)\{z\ge y\}d\a(y)
    \ge-C-\lambda d_\H(x,x_0),
  \end{equation*}
  completing the proof.
\end{proof}
%
%
\subsubsection{Regularization}\label{S245}
In complex analysis, it is often useful to approximate a 
quasisubharmonic function by a decreasing sequence
of smooth quasisubharmonic functions. 
In higher dimensions, regularization results of this type
play a fundamental role in pluripotential theory, as
developed by Bedford and Taylor~\cite{BT1,BT2}. 
They are also crucial to the approach to non-Archimedean 
pluripotential theory in~\cite{hiro,siminag,nama}.

Let us say that a function $\f\in\SH(X,\rho_0)$ is 
\emph{regular} if it is piecewise affine in the sense that 
$\Delta\f=\rho-\rho_0$, where $\rho$ is a finite atomic 
measure supported on $\H$. 
\begin{Thm}\label{T202}
  For any $\f\in\SH(X,\rho_0)$ there exists a decreasing sequence 
  of regular functions $(\f_n)_{n=1}^\infty$  in $\SH(X,\rho_0)$ such that
  $\f_n$ converges pointwise to $\f$ on $X$.
\end{Thm}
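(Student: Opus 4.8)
The plan is to prove the statement first when $X$ is a \emph{finite} metric tree, where it reduces to piecewise‑linear interpolation, and then to bootstrap to a general metric tree using the description of $\SH(X,\rho_0)$ as an inverse limit over the retractions $r_\a$ onto finite subtrees. The finite case is the substantive part; the passage to general trees is where I expect the one genuine difficulty.

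First, the finite case. Given $\f\in\SH(X,\rho_0)$ on a finite metric tree $X$, I would pick an increasing sequence $S_1\subseteq S_2\subseteq\cdots$ of finite subsets, each containing $\supp\rho_0$ together with all branch points and all ends of $X$, with $\bigcup_nS_n$ dense in $X$, and let $\f_n$ be the function equal to $\f$ on $S_n$ and affine on the closure of each connected component of $X\setminus S_n$. Since $\supp\rho_0$ and all branch points lie in $S_n$, each such component is a segment disjoint from $\supp\rho_0$, on which $\f$ is convex, so $\f_n$ is the piecewise‑linear interpolant of $\f$ along each segment cut out by $S_n$. Then one checks: (i) $\f\le\f_{n+1}\le\f_n$, since interpolating a convex function at more nodes yields a smaller convex function; (ii) $\f_n\in\SH(X,\rho_0)$ — away from $S_n$ the function $\f_n$ is affine and $\rho_0$ puts no mass there, on any $\supp\rho_0$‑free segment $\f_n$ is convex because it is the PL interpolant of the convex function $\f$, and at a point $x\in S_n$ one has $\rho_0\{x\}+\sum_\vv D_\vv\f_n(x)\ge\rho_0\{x\}+\sum_\vv D_\vv\f(x)\ge0$, because in each tangent direction the chord slope of $\f$ over the $\supp\rho_0$‑free edge leaving $x$ dominates its one‑sided derivative; (iii) $\f_n$ is regular, as $\Delta\f_n$ is finite, atomic, and supported on $S_n\subseteq X=\H$. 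Finally $\f_n\to\f$ pointwise: the two functions agree on the dense set $\bigcup_nS_n$, and by the Lipschitz bound~\eqref{e138} (after normalizing $\rho_0$) the family $\{\f_n\}$ is equicontinuous, so the convergence is in fact uniform.

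Now let $X$ be general, $X=\varprojlim Y_\a$ over the finite metric subtrees containing $\supp\rho_0$, and recall that $r_\a^*\f$ decreases to $\f$. Applying the finite case to each $\f|_{Y_\a}$ and pulling back by $r_\a$ — which by~\eqref{e121} keeps the Laplacian finite, atomic, and supported on $Y_\a\subseteq\H$ — produces a family of regular functions in $\SH(X,\rho_0)$, all $\ge\f$, with infimum $\f$. The remaining task is to extract from this family a single \emph{decreasing sequence} that still converges to $\f$ pointwise on all of $X$; for this it suffices to exhibit a \emph{countable} increasing chain $Y_1\subseteq Y_2\subseteq\cdots$ with $r_n^*\f\downarrow\f$ pointwise on $X$ and then to diagonalize, choosing the successive interpolation node sets increasingly and adapted to the chain. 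Such a chain can be manufactured from $\rho:=\rho_0+\Delta\f$: since $\rho$ is a Radon \emph{probability} measure it has at most countably many atoms and charges at most countably many of the pairwise disjoint branches at any given point, while on any branch carrying no $\rho$‑mass $\f$ is constant (by the mechanism behind~\eqref{e210} together with the integral formula~\eqref{e137}); after a preliminary monotone modification that pushes the countably many atoms of $\rho$ lying in $X\setminus\H$ inward along segments into $\H$ — which itself uses the finite‑case argument applied to Dirac‑type potentials — a chain whose union is dense in the (separable) support of $\rho$ and which contains $\supp\rho_0$ and all atoms of $\rho$ does what is required: on that dense part one has $r_n(x)\to x$ and uses continuity, while on a $\rho$‑null branch $\f$ already equals its value at the point where the branch attaches to the dense part.

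I expect the main obstacle to be precisely this last step, the production of a countable chain: it must cope with the fact that $X$, and even $\supp\rho$, may fail to be metrizable (\cf the non‑$\sigma$‑compactness phenomena discussed after Example~\ref{E201}), so no naive ``dense sequence'' of points is available. What rescues the argument is that $\rho_0+\Delta\f$ has finite total mass, which confines all of the branching that $\f$ actually detects to a separable part of $X$. By contrast, the finite‑tree interpolation of the second paragraph is routine, and is really the heart of the regularization.
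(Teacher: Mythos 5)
Your proposal is correct, and its overall architecture --- settle the case of a finite metric tree first, then extract a countable chain of finite subtrees adapted to the finite-mass measure $\rho=\rho_0+\Delta\f$ --- is the same as the paper's; the two steps are just implemented differently. For the finite case the paper does not interpolate $\f$: it discretizes the measure, cutting $Y\setminus\{x_0\}$ into half-open segments of length at most $2^{-n}$, lumping the $\rho$-mass of each segment at its outer endpoint to form $\rho_n$, and solving $\Delta\f_n=\rho_n-\rho_0$ with $\f_n(x_0)=\f(x_0)$; monotonicity then comes from the explicit potentials \eqref{e122}--\eqref{e137} rather than from convexity of $\f$ on $\rho_0$-free segments. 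Your PL-interpolation argument is equally valid and arguably more elementary, the only difference being that it pins $\f_n=\f$ on the whole node set instead of just at $x_0$. For the general case, the step you rightly flag as the main difficulty is resolved in the paper by the explicit definition $Y'_n:=\{y\mid \rho\{z\ge y\}\ge 2^{-n}\ \text{and}\ d_\H(x_0,y)\le 2^n\}$: the first condition makes $Y'_n$ a finite subtree with at most $2^n$ ends because $\rho$ is a probability measure, and the second forces $Y'_n\subseteq\H$, so no separate ``pushing of atoms into $\H$'' and no appeal to separability of $\supp\rho$ is needed --- the pushforward under the retraction $r_{Y_n}$ does all of that at once. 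The convergence $r_{Y_n}^*\f\to\f$ then rests on exactly the fact you isolate, namely that $\f$ is constant on any segment $]y,x]$ carrying no $\rho$-mass. So your identification of the finite total mass of $\rho$ as the feature that rescues the argument is precisely the paper's point; the paper's formulation simply makes the countable chain explicit and thereby avoids the somewhat delicate density and separability language in your last paragraph.
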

\begin{proof}
  Let $Y_0\subset X$ be a finite tree containing the support of 
  $\rho_0$ and pick a point $x_0\in Y_0$.
  Set $\rho=\rho_0+\Delta\f$. 

  First assume that $\rho$ is supported on a finite subtree 
  contained in $\H$. We may assume $Y_0\subseteq Y$.
  For each $n\ge 1$, write $Y\setminus\{x_0\}$ as a finite disjoint 
  union of half-open segments $\gamma_i=\,]x_i,y_i]$, $i\in I_n$,
  called segments of order $n$, 
  in such a way that each segment of order $n$ has length at most $2^{-n}$
  and is the disjoint union of two segments of order $n+1$.
  Define finite atomic measures $\rho_n$ by
  \begin{equation*}
    \rho_n=\rho\{x_0\}\delta_{x_0}+\sum_{i\in I_n}\rho(\gamma_i)\delta_{y_i} 
  \end{equation*}
  and define $\f_n\in\SH(X,x_0)$ by $\Delta\f_n=\rho_n-\rho_0$,
  $\f_n(x_0)=\f(x_0)$. From~\eqref{e122} and~\eqref{e137} 
  it follows that $\f_n$ decreases to $\f$ pointwise on $X$, as $n\to\infty$.
  Since $\f=r_Y^*\f$ is continuous, the convergence is in fact uniform
  by Dini's Theorem.

  Now consider a general $\f\in\SH(X,\rho_0)$.
  For $n\ge 1$, define $Y'_n\subseteq X$ by 
  \begin{equation*}
    Y'_n:=\{y\in X\mid 
    \rho\{z\ge y\}\ge 2^{-n}\quad\text{and}\quad d_\H(x_0,y)\le 2^n\},
  \end{equation*}
  where $\le$ denotes the partial ordering rooted in $x_0$.
  Then $Y'_n$ is a finite subtree of $X$ and $Y'_n\subseteq Y'_{n+1}$ 
  for $n\ge1$. Let $Y_n$ be the convex hull of
  the union of $Y'_n$ and $Y_0$ and set $\p_n=r_{Y_n}^*\f_n$.
  Since $Y_n\subseteq Y_{n+1}$, we have $\f\le\p_{n+1}\le\p_n$ for all $n$.
  We claim that $\p_n(x)$ converges to $\f(x)$ as $n\to\infty$ for every 
  $x\in X$. Write $x_n:=r_{Y_n}(x)$ so that $\p_n(x)=\f(x_n)$. The points $x_n$
  converge to a point $y\in[x_0,x]$ and $\lim_n\p_n(x)=\f(y)$.
  If $y=x$, then we are done. But if $y\ne x$, then by construction of $Y'_n$,
  the measure $\rho$ puts no mass on the interval $]y,x]$, so 
  it follows from~\eqref{e122} and~\eqref{e137} that $\f(x)=\f(y)$.

  Hence $\p_n$ decreases to $\f$ pointwise on $X$ as $n\to\infty$.
  By the first part of the proof, we can find a regular 
  $\f_n\in\SH(X,\rho_0)$ such that $\p_n\le\f_n\le\p_n+2^{-n}$
  on $X$. Then $\f_n$ decreases to $\f$ pointwise on $X$, as desired.
\end{proof}
\begin{Remark}
  A different kind of regularization is used in~\cite[\S4.6]{FR1}.
  Fix a point $x_0\in\H$ and for each $n\ge1$ let $X_n\subseteq X$ be the
  (a priori not finite) subtree defined by 
  $X_n=\{x\in X\mid d_\H(x_0,x)\le n^{-1}\}$. 
  Let $\f_n\in\SH(X,\rho_0)$ be defined by 
  $\rho_0+\Delta\f_n=(r_n)_*(\rho_0+\Delta\f)$ and $\f_n(x_0)=\f(x_0)$,
  where $r_n:X\to X_n$ is the retraction.
  Then $\f_n$ is bounded and $\f_n$ decreases to $\f$ as $n\to\infty$.
\end{Remark}
%
%
%
%
\subsection{Tree maps}\label{S130}
Let $X$ and $X'$ be trees in the sense of~\S\ref{S131}.
We say that a continuous map $f:X\to X'$ is a \emph{tree map}
if it is open, surjective and finite in the sense that there exists
a number $d$ such that every point in $X'$ has at most $d$
preimages in $X$. The smallest such number $d$ is the
\emph{topological degree} of $f$.
\begin{Prop}\label{P105}
  Let $f:X\to X'$ be a tree map of topological degree $d$.
  \begin{itemize}
  \item[(i)]
    if $U\subseteq X$ is a connected open set, then so is $f(U)$
    and $\partial f(U)\subseteq f(\partial U)$;
  \item[(ii)]
    if $U'\subseteq X'$ is a connected open set
    and $U$ is a connected component of $f^{-1}(U')$, then 
    $f(U)=U'$ and $f(\partial U)=\partial U'$; 
    as a consequence, $f^{-1}(U')$ has at most 
    $d$ connected components;
  \item[(iii)]
    if $U\subseteq X$ is a connected open set and $U'=f(U)$, then
    $U$ is a connected component of $f^{-1}(U')$ iff
    $f(\partial U)\subseteq\partial U'$.
  \end{itemize}
\end{Prop}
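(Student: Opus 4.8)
The plan is to exploit the fact that $X$ and $X'$ are uniquely arcwise connected and that every point has a neighborhood basis of connected open sets of the form $U(\vv)$ or, more generally, finite intersections $\bigcap_i U(\vv_i)$. First I would establish (i). Openness of $f$ is part of the hypothesis, so $f(U)$ is open; connectedness is preserved by continuous maps, so $f(U)$ is connected. For the boundary inclusion $\partial f(U)\subseteq f(\partial U)$, take $y\in\partial f(U)$. Since $f(U)$ is open, $y\notin f(U)$. Because $U$ has at most $d$ preimages over each point and $f$ is surjective, $f^{-1}(y)$ is a nonempty finite set $\{x_1,\dots,x_k\}$; I claim at least one $x_j$ lies in $\partial U$. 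If not, each $x_j$ is either in $U$ (impossible, as then $y\in f(U)$) or in the open set $X\setminus\overline U$. Then $f^{-1}(y)\subseteq X\setminus\overline U$, and using finiteness of $f$ together with openness one shows $y$ has a neighborhood disjoint from $f(U)$ — concretely, pick disjoint open neighborhoods $W_j\ni x_j$ with $W_j\cap\overline U=\emptyset$; by a standard argument (properness-type, using that $f$ is finite and open on the compact tree) $f(\bigcup_j W_j)$ contains a neighborhood of $y$, contradicting $y\in\partial f(U)=\overline{f(U)}\setminus f(U)$. Hence some $x_j\in\partial U$ and $y\in f(\partial U)$.

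Next I would prove (ii). Let $U'\subseteq X'$ be connected open and $U$ a connected component of $f^{-1}(U')$. Since $X$ is locally connected (every point has a basis of connected open neighborhoods), connected components of the open set $f^{-1}(U')$ are open, so $U$ is open. By (i), $f(U)$ is open and connected with $f(U)\subseteq U'$ and $\partial f(U)\subseteq f(\partial U)$. Now $\partial U\subseteq f^{-1}(\partial U')$: indeed a boundary point $x$ of $U$ is a limit of points of $U$, so $f(x)\in\overline{U'}$; and $x\notin f^{-1}(U')$, for otherwise $x$ would lie in some component of $f^{-1}(U')$, which being open would force $x\in U$ (as $x\in\overline U$ and components are relatively clopen in $f^{-1}(U')$) — contradiction. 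Hence $f(\partial U)\subseteq\partial U'$, so $\partial f(U)\subseteq\partial U'$ as well. A nonempty open connected subset $f(U)$ of the connected space $U'$ whose boundary (in $X'$) is contained in $\partial U'$ must equal $U'$: if $f(U)\subsetneq U'$ then $U'\setminus f(U)$ is nonempty, and $U'=f(U)\sqcup(U'\setminus\overline{f(U)})\sqcup(\partial f(U)\cap U')$, but $\partial f(U)\cap U'\subseteq\partial U'\cap U'=\emptyset$, so $U'$ is disconnected — contradiction. Thus $f(U)=U'$, and then $f(\partial U)\subseteq\partial U'$ combined with surjectivity of $f$ onto $\overline{U'}$ (applied to a sequence in $U$ converging to a given point of $\partial U'$, using finiteness to extract a convergent preimage sequence landing in $\partial U$) gives $f(\partial U)=\partial U'$. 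Finally, distinct components of $f^{-1}(U')$ are disjoint and each surjects onto $U'$; picking any $y'\in U'$, each component contributes at least one point of $f^{-1}(y')$, and $|f^{-1}(y')|\le d$, so there are at most $d$ components.

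For (iii): if $U$ is a component of $f^{-1}(U')$ with $U'=f(U)$, then $f(\partial U)\subseteq\partial U'$ by (ii). Conversely, suppose $U$ is connected open, $U'=f(U)$, and $f(\partial U)\subseteq\partial U'$. Let $\tilde U$ be the component of $f^{-1}(U')$ containing $U$; it is open and $U\subseteq\tilde U$. I want $U=\tilde U$. Since $\tilde U$ is connected and $U$ is open in it, it suffices to show $U$ is closed in $\tilde U$, i.e.\ $\partial U\cap\tilde U=\emptyset$. But $\partial U\subseteq f^{-1}(f(\partial U))\subseteq f^{-1}(\partial U')$, which is disjoint from $f^{-1}(U')\supseteq\tilde U$ since $\partial U'\cap U'=\emptyset$. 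Hence $\partial U\cap\tilde U=\emptyset$, so $U$ is clopen in the connected set $\tilde U$, forcing $U=\tilde U$.

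The main obstacle I anticipate is the "properness-type" step used in (i) and in the surjectivity onto boundaries in (ii): turning the finiteness of fibers plus openness of $f$ on a (locally) compact tree into the statement that preimages of small neighborhoods can be localized near the finite fiber. One has to be a little careful because $X$ need not be globally compact in the metric topology, but it is compact in the weak topology (as established in~\S\ref{S117}), and $f$ is continuous and open for the weak topology; this should let the standard argument — cover the finite fiber $f^{-1}(y)$ by small disjoint connected open sets, use openness to get that their images cover a neighborhood of $y$, and use that the complement of their union is weakly compact with image a closed set missing $y$ — go through. Everything else is point-set topology on uniquely arcwise connected, locally connected spaces and should be routine.
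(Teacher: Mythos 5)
The paper does not prove Proposition~\ref{P105}; it observes that the statement holds for any finite, surjective, open, continuous map between compact Hausdorff spaces in which every point has a basis of connected open neighborhoods, and refers to Baker--Rumely for the elementary details. Your argument proves precisely this general statement and is essentially correct, with two small repairs needed. In part~(i), as literally written the claimed contradiction does not follow: the fact that $f\bigl(\bigcup_j W_j\bigr)$ is a neighborhood of $y$ is not in tension with $y\in\partial f(U)$, since $f$ need not be injective, so $f\bigl(\bigcup_j W_j\bigr)$ may very well meet $f(U)$. The neighborhood you actually want --- and which you describe correctly in your closing paragraph --- is $X'\setminus f(K)$ where $K:=X\setminus\bigcup_j W_j$: this $K$ is closed, hence compact, so $f(K)$ is closed; $y\notin f(K)$ because $f^{-1}(y)\cap K=\emptyset$; and $X'\setminus f(K)$ is disjoint from $f(U)$ because $U\subseteq K$. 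In part~(ii), the argument that $f(\partial U)\supseteq\partial U'$ should be phrased with nets rather than sequences, since a general tree is compact but need not be metrizable in the weak topology; the compactness argument you sketch goes through verbatim with nets, and in fact the finiteness of $f$ is not needed at that particular step, only compactness of $X$. With these repairs the proof is complete and matches the general-topology route the paper points to.
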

The statement is valid for finite surjective open continuous maps 
$f:X\to X'$ between compact Hausdorff spaces,
under the assumption that every point of $X$
admits a basis of connected open neighborhoods.
We omit the elementary proof; see 
Lemma~9.11, Lemma~9.12 and Proposition~9.15
in~\cite{BRBook} for details.
\begin{Cor}\label{C107}
  Consider a point $x\in X$ and set $x':=f(x)\in X'$. 
  Then there exists a connected open neighborhood $V$ of 
  $x$ with the following properties:
  \begin{itemize}
  \item[(i)]
    if $\vv$ is a tangent direction at $x$, then
    there exists a tangent direction $\vv'$ at $x'$  such that 
    $f(V\cap U(\vv))\subseteq U(\vv')$;
    furthermore, either  $f(U(\vv))=U(\vv')$ or $f(U(\vv))=X'$;
  \item[(ii)]
    if $\vv'$ is a tangent direction at $x'$ then there exists
    a tangent direction $\vv$ at $x$ such that 
    $f(V\cap U(\vv))\subseteq U(\vv')$.
  \end{itemize}
\end{Cor}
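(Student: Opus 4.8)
The plan is to derive Corollary~\ref{C107} directly from Proposition~\ref{P105}, using the fact that tangent directions at a point correspond to connected components of the complement of that point, and that $f$ maps connected open sets to connected open sets.

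First I would produce the neighborhood $V$. By Proposition~\ref{P105}(ii) applied to a small connected open neighborhood $V'$ of $x'$, the connected component $V$ of $f^{-1}(V')$ containing $x$ satisfies $f(V)=V'$ and $f(\partial V)=\partial V'$. Choosing $V'$ small enough that $V'$ meets only finitely many tangent directions at $x'$ — in fact so that $x'$ is the only branch point or special point inside $V'$, which is possible since $x'$ admits a basis of connected open neighborhoods and $X'$ is a metric tree — we may arrange that $V'\setminus\{x'\}$ is a disjoint union of annuli $A(x',z'_j)$, one for each tangent direction $\vv'_j$ at $x'$ represented in $V'$. Shrinking further, we may also assume $V$ is small enough that $x$ is the only point of $V$ that could be a branch point, so $V\setminus\{x\}$ is likewise a disjoint union of the sets $V\cap U(\vv)$ over tangent directions $\vv$ at $x$.

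Next, for part~(i): fix a tangent direction $\vv$ at $x$ and consider the connected open set $V\cap U(\vv)$, which does not contain $x$. Its image $f(V\cap U(\vv))$ is connected (Proposition~\ref{P105}(i)) and does not contain $x'$ — indeed, if some $y\in V\cap U(\vv)$ had $f(y)=x'$, then $y$ would lie in a fiber over $x'$; one rules this out either by noting $f^{-1}(x')\cap V=\{x\}$ after shrinking $V$ (using finiteness of fibers and continuity), or by a direct component argument. A connected subset of $X'$ not containing $x'$ lies in a single connected component of $X'\setminus\{x'\}$, i.e.\ in a single $U(\vv')$; this gives the first assertion $f(V\cap U(\vv))\subseteq U(\vv')$. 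For the dichotomy $f(U(\vv))=U(\vv')$ or $f(U(\vv))=X'$: note $U(\vv)$ is a connected component of $X\setminus\{x\}$, hence $f(U(\vv))$ is connected and open; if it does not contain $x'$ it is contained in some $U(\vv'')$, and then $U(\vv)$ is a connected component of $f^{-1}(f(U(\vv)))$ — here I would invoke Proposition~\ref{P105}(iii), checking $f(\partial U(\vv))\subseteq\{x'\}=\partial U(\vv'')$ since $\partial U(\vv)=\{x\}$ — so by Proposition~\ref{P105}(ii) the image is all of $U(\vv'')$, forcing $\vv''=\vv'$ and $f(U(\vv))=U(\vv')$; if instead $x'\in f(U(\vv))$, then since $f(U(\vv))$ is open and connected and $f$ is surjective, a connectedness-and-boundary argument (again Proposition~\ref{P105}) shows $f(U(\vv))$ has empty boundary in $X'$, hence equals $X'$.

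Finally, part~(ii): given a tangent direction $\vv'$ at $x'$, the set $V\setminus\{x\}=\bigsqcup_\vv (V\cap U(\vv))$ maps into $V'\setminus\{x'\}=\bigsqcup_{\vv'} A(x',z'_{\vv'})$ after removing the fiber point $x$, and since $f(V)=V'$ with $f^{-1}(x')\cap V=\{x\}$, the direction $\vv'$ must be hit: some $V\cap U(\vv)$ maps into $A(x',z'_{\vv'})\subseteq U(\vv')$, giving the required $\vv$. The main obstacle I anticipate is the bookkeeping in shrinking $V$ so that simultaneously $f^{-1}(x')\cap V=\{x\}$, $x$ is the only branch-type point of $V$, and $x'$ is the only branch-type point of $V'=f(V)$; all three are routine given that the trees are metric (so points have countable neighborhood bases of annuli) and fibers are finite, but they must be arranged consistently before the component arguments go through cleanly. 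Everything else is a direct application of the three parts of Proposition~\ref{P105} together with the identification of tangent directions with components of the punctured tree.
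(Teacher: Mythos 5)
Your overall strategy is the same as the paper's: shrink $V$ so that $f^{-1}(x')\cap V=\{x\}$, observe that each $V\cap U(\vv)$ is connected and its image avoids $x'$, hence lands in a single $U(\vv')$, get the dichotomy from $\partial f(U(\vv))\subseteq f(\partial U(\vv))=\{x'\}$, and deduce (ii) from the fact that $f(V)$ is an open neighborhood of $x'$ and therefore meets every tangent direction at $x'$. Two steps need repair, though neither is load-bearing.

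First, the claim that you can shrink $V'$ so that it ``meets only finitely many tangent directions at $x'$'' and contains no branch point other than $x'$ is false in general: every open neighborhood of $x'$ meets \emph{every} $U(\vv')$ (since $x'\in\overline{U(\vv')}$), and in the trees of interest (e.g.\ $\BerkPone$ over $\C_p$) branch points are dense and Type~2 points have infinitely many tangent directions. Fortunately you never need this: for any connected open $V\ni x$ one has $V\setminus\{x\}=\bigsqcup_\vv(V\cap U(\vv))$ with each piece connected, and that is all your argument uses; the ``annulus'' normal form is dispensable. Second, your use of Proposition~\ref{P105}~(iii) in the dichotomy is misapplied: that proposition requires $f(\partial U(\vv))\subseteq\partial U'$ with $U'=f(U(\vv))$, whereas you check containment in $\partial U(\vv'')$, and the subsequent appeal to~(ii) with $U'=f(U(\vv))$ only returns a tautology. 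The fix is easy: either argue directly that $U(\vv)$ is a connected component of $f^{-1}(U(\vv''))$ (since $x\notin f^{-1}(U(\vv''))$, the component containing $U(\vv)$ lies in a single component of $X\setminus\{x\}$, hence equals $U(\vv)$) and then apply~(ii); or, as the paper does, note that $f(U(\vv))$ is a connected open set with $\partial f(U(\vv))\subseteq\{x'\}$, so it is either clopen (hence $X'$) or a component of $X'\setminus\{x'\}$, namely $U(\vv')$. With these repairs your proof is correct and coincides with the paper's.
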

\begin{Def}
  The \emph{tangent map} of $f$ at $x$  
  is the map that  associates $\vv'$ to $\vv$. 
\end{Def}
The  tangent map is surjective and every tangent direction
has at most $d$ preimages. Since the ends of $X$ are 
characterized by the tangent space being a singleton,
it follows that $f$ maps ends to ends.
\begin{proof}[Proof of Corollary~\ref{C107}]
  Pick $V$ small enough so that  it contains no preimage of $x'$ 
  besides $x$. 
  Note that~(ii) follows from~(i) and the fact that $f(V)$ is 
  an open neighborhood of $x'$.
  
  To prove~(i), note that $V\cap U(\vv)$ is connected for every $\vv$.
  Hence $f(V\cap U(\vv))$ is connected and does not contain $x'$,
  so it must be contained in $U(\vv')$ for some $\vv'$.
  Moreover, the fact that $f$ is open implies
  $\partial f(U(\vv))\subseteq f(\partial U(\vv))=\{x'\}$.
  Thus either $f(U(\vv))=X'$ or $f(U(\vv))$ is a connected open set
  with boundary $\{x'\}$. In the latter case, we must have 
  $f(U(\vv))=U(\vv')$.
\end{proof}
%
%
\subsubsection{Images and preimages of segments}\label{S164}
The following result makes the role of the 
tangent map more precise.
\begin{Cor}\label{C108}
  Let $f:X\to X'$ be a tree map as above. Then: 
  \begin{itemize}
    \item[(i)]
      if $\vv$ is a tangent direction at a point $x\in X$,
      then there exists a point $y\in U(\vv)$ such that 
      $f$ is a homeomorphism of the interval $[x,y]\subseteq X$ onto
      the interval $[f(x),f(y)]\subseteq X'$;
      furthermore, $f$ maps the annulus $A(x,y)$ onto the 
      annulus $A(f(x),f(y))$;
    \item[(ii)]
      if $\vv'$ is a tangent direction at a point $x'\in X'$,
      then there exists $y'\in U(\vv')$ such that if 
      $\gamma':=[x',y']$ then 
      $f^{-1}\gamma'=\bigcup_i\gamma_i$, where the
      $\gamma_i=[x_i,y_i]$ are closed intervals in $X$ with 
      pairwise disjoint interiors and 
      $f$ maps $\gamma_i$ homeomorphically
      onto $\gamma'$ for all $i$;
      furthermore we have $f(A(x_i,y_i))=A(x',y')$ for all $i$ and
      $f^{-1}(A(x',y'))=\bigcup_iA(x_i,y_i)$.
    \end{itemize}
  \end{Cor}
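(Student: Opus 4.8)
The plan is to extract both statements from the local description of tree maps in Corollary~\ref{C107} together with the structural Proposition~\ref{P105}, reducing everything to a compactness argument along intervals. For part~(i), fix a tangent direction $\vv$ at $x$ and set $x'=f(x)$, $\vv'$ the image of $\vv$ under the tangent map. I would consider the set $S$ of points $y\in U(\vv)$ such that $f$ restricts to a homeomorphism of $[x,y]$ onto $[x',f(y)]$; this set contains points close to $x$ by Corollary~\ref{C107}(i) applied in the neighborhood $V$. The key observation is that $S$ is ``interval-closed'' along $]x,\cdot]$: if $f$ is injective on $[x,y]$ and maps it continuously onto an interval, then because $f$ is open and finite, on a slightly larger segment the only way injectivity can fail is if the tangent map at some interior point $z\in[x,y]$ identifies the two directions along the segment, or if $f$ folds the segment back. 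I would rule this out by noting that at each interior point $z$ of $[x,y]$, the two directions of the segment map to the two directions of $[x',f(y)]$ at $f(z)$ (using that $f$ is a homeomorphism up to $z$ on one side and locally an open map on the other), so by Corollary~\ref{C107}(i) the homeomorphism property propagates a definite amount past $z$. A maximality/König-type argument along the totally ordered segment (using (T5) and the fact that $f$ has at most $d$ preimages, so the segment cannot be subdivided infinitely without a homeomorphic piece surviving) then yields a $y\in U(\vv)$ with $y\ne x$ doing the job. Finally, $f(A(x,y))=A(f(x),f(y))$ follows from Proposition~\ref{P105}(i)--(ii): $f$ maps the connected open set $A(x,y)$ to a connected open set with boundary contained in $f(\{\vv_x,\vv_y\text{-endpoints}\})=\{f(x),f(y)\}$, and since $f$ restricted to $[x,y]$ is a homeomorphism onto $[f(x),f(y)]$, the two boundary points are distinct, forcing the image to be exactly $A(f(x),f(y))$; surjectivity onto the annulus comes from $f^{-1}$ of that annulus meeting $A(x,y)$ and Proposition~\ref{P105}(iii).

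For part~(ii), fix $\vv'$ at $x'$. By Corollary~\ref{C107}(ii) each $x_i\in f^{-1}(x')$ has a tangent direction $\vv_i$ mapping to $\vv'$; list these finitely many directions (at most $d$ of them, and only finitely many $x_i$ lie in $f^{-1}(x')$ by finiteness). Apply part~(i) to each pair $(x_i,\vv_i)$ to get points $y_i''\in U(\vv_i)$ with $f$ a homeomorphism $[x_i,y_i'']\to[x',f(y_i'')]$. Now choose $y'\in U(\vv')$ close enough to $x'$ that $y'\in\bigcap_i[x',f(y_i'')]$ and small enough that the only preimages of $[x',y']$ issue from the $x_i$ in the directions $\vv_i$ — this uses that $f^{-1}(x')$ is finite and that, by part~(i) and openness, preimages of a sufficiently short initial segment of $\vv'$ must lie in the segments $[x_i,y_i'']$. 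Set $\gamma'=[x',y']$ and $\gamma_i=[x_i,y_i]$ where $y_i$ is the unique preimage of $y'$ in $[x_i,y_i'']$. Then $f$ maps each $\gamma_i$ homeomorphically onto $\gamma'$, and $f^{-1}(\gamma')=\bigcup_i\gamma_i$: the inclusion $\supseteq$ is clear, and $\subseteq$ holds because any point mapping into $\gamma'$ lies on an arc from some $x_i$ staying in $f^{-1}(\gamma')$, which by uniqueness of arcs and the previous step is contained in $\gamma_i$. The interiors are pairwise disjoint since distinct $\gamma_i$ meet $f^{-1}(x')$ in distinct points $x_i$ or not at all, and $f$ is a local homeomorphism in their interiors. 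The annulus statements $f(A(x_i,y_i))=A(x',y')$ and $f^{-1}(A(x',y'))=\bigcup_i A(x_i,y_i)$ then follow exactly as in part~(i): Proposition~\ref{P105} gives that each $A(x_i,y_i)$ maps onto a connected open set with two distinct boundary points $x',y'$, hence onto $A(x',y')$; and any preimage of a point in $A(x',y')$ lies in a component of $f^{-1}(A(x',y'))$ whose boundary maps to $\{x',y'\}$, which by the interval analysis must be one of the $A(x_i,y_i)$.

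The main obstacle I anticipate is making the propagation argument in part~(i) fully rigorous — i.e., showing that the homeomorphism property along $[x,y]$ does not fail at a limit point of an increasing sequence of ``good'' segments. The subtlety is that a priori $f$ could fold the segment back on itself at a branch point where several directions collapse under the tangent map; one must use both the openness of $f$ (to see the image is an interval, via Proposition~\ref{P105}(i)) and the degree bound $d$ (to see that only finitely many folds can occur, so after passing to a sufficiently short segment none do) in tandem with axiom~(T5) for the limit step. Once this local-to-segment propagation is established, everything else is a bookkeeping exercise with connected components and boundaries using Proposition~\ref{P105}.
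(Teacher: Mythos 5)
There is a genuine gap, and it sits exactly where you anticipate: your argument for part~(i). First, the base case of your propagation does not follow from Corollary~\ref{C107}(i). That corollary only says $f(V\cap U(\vv))\subseteq U(\vv')$; it does not say that a short initial segment $[x,y]$ maps homeomorphically onto the segment $[f(x),f(y)]$. A continuous image of an interval in a tree need not be an interval (it can be a tripod traced by backtracking), and openness plus finiteness of $f$ do not by themselves forbid folding at a point, so the set $S$ is not obviously nonempty. Second, the propagation step assumes precisely what must be proved: that at an interior point $z$ the two directions of the segment map to \emph{distinct} directions at $f(z)$. Surjectivity of the tangent map rules this out at points with exactly two tangent directions mapping to a point with at least two, but at branch points (or when $f(z)$ has fewer directions) nothing you have said excludes a fold. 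Third, in your derivation of (ii) the inclusion $f^{-1}(\gamma')\subseteq\bigcup_i\gamma_i$ rests on the claim that any point mapping into $\gamma'$ lies on an arc from some $x_i$ inside $f^{-1}(\gamma')$; there is no a priori reason the preimage of an interval is a union of arcs emanating from $f^{-1}(x')$, and this is not a bookkeeping step.

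The missing idea — and the reason the paper proves (ii) \emph{first} and deduces (i) from it — is to control everything through annuli and Proposition~\ref{P105} rather than by propagating along the segment. A connected component $U$ of $f^{-1}(U(\vv'))$ has finite boundary consisting of preimages $x_i$ of $x'$ (Proposition~\ref{P105}(ii)), which produces the candidate segments for free; after choosing $y'$ and $y_i$ with $f(y_i)=y'$ so that $A_i:=A(x_i,y_i)$ contains no preimage of $x'$ or of an auxiliary point $z'$, Proposition~\ref{P105}(i) gives $\partial f(A_i)\subseteq\{x',y'\}$, forcing $f(A_i)=A(x',y')$, and \ref{P105}(iii) makes $A_i$ a full component of $f^{-1}(A(x',y'))$. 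The key trick you are missing is then applied at each interior point $\xi\in\,]x_i,y_i[\,$: since $f(A(\xi,y_i))\subseteq A(x',y')$ and $\partial f(A(\xi,y_i))\subseteq\{f(\xi),y'\}$, the image must be $A(f(\xi),y')$, which simultaneously pins $f(\xi)$ onto $[x',y']$, yields injectivity of $f$ on $\gamma_i$ (two points with the same image would give nested annuli with the same image, an immediate contradiction), and — via the same boundary computation for each tangent direction $\ww$ at $\xi$ with $U(\ww)\subseteq A_i$ — shows $f^{-1}(\gamma')\cap A_i\subseteq\gamma_i$. This replaces your base case, your anti-folding step, your limit argument via (T5), and your unjustified arc-connectivity claim all at once; I would recommend restructuring the proof along these lines.
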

\begin{proof}
  We first prove~(ii).
  Set $U'=U(\vv')$ and let $U$ be a connected component of $f^{-1}(U')$.
  By Proposition~\ref{P105}~(ii), the boundary of $U$ consists of 
  finitely many preimages $x_1,\dots x_m$ of $x'$. 
  (The same preimage of $x'$ can lie on the boundary 
  of several connected components $U$.)
  Since $U$ is connected, there exists, for $1\le i\le m$,
  a unique tangent direction $\vv_i$ at $x_i$
  such that $U\subseteq U(\vv_i)$.
  
  Pick any point $z'\in U'$.
  Also pick points $z_1,\dots,z_m$ in $U$ such that the segments 
  $[x_i,z_i]$ are pairwise disjoint. 
  Then $f(]x_i,z_i])\cap\,]x',z']\ne\emptyset$ for all $i$, so 
  we can find $y'\in\,]x',z']$ and 
  $y_i\in\,]x_i,z_i]\,$ arbitrarily close to $x_i$ such that 
  $f(y_i)=y'$ for all $i$.
  In particular, we may assume that the annulus
  $A_i:=A(x_i,y_i)$ contains no preimage of $z'$.
  By construction it contains no preimage of $x'$ either. 
  Proposition~\ref{P105}~(i) first shows that 
  $\partial f(A_i)\subseteq\{x',y'\}$, so 
  $f(A_i)=A':=A(x',y')$ for all $i$.
  Proposition~\ref{P105}~(iii) then implies that 
  $A_i$ is a connected component of $f^{-1}(A')$.
  Hence $f^{-1}(A')\cap U=\bigcup_iA_i$.

  Write $\gamma_i=[x_i,y_i]$ and $\gamma'=[x',y']$.
  Pick any $\xi\in\,]x_i,y_i[\,$ and set $\xi':=f(\xi)$.
  On the one hand, $f(A(\xi,y_i))\subseteq f(A_i)=A'$.
  On the other hand, $\partial f(A(\xi,y_i))\subseteq\{\xi',y'\}$
  so we must have $f(A(\xi,y_i))=A(\xi',y')$ and 
  $\xi'\in\gamma'$. 
  We conclude that $f(\gamma_i)=\gamma'$ and that 
  $f:\gamma_i\to\gamma'$ is injective, hence a 
  homeomorphism.

  The same argument gives $f(A(x_i,\xi))=A(x',\xi)$.
  Consider any tangent direction $\ww$ at $\xi$ 
  such that $U(\ww)\subseteq A_i$. 
  As above we have $f(U(\ww))\subseteq A'$ and 
  $\partial f(U(\ww))\subseteq\{\xi'\}$, which implies
  $f(U(\ww))=U(\ww')$ for some  tangent direction $\ww$
  at $\xi'$ for which $U(\ww)\subseteq A'$.
  We conclude that $f^{-1}(\gamma')\cap A_i\subseteq\gamma_i$.

  This completes the proof of~(ii), and~(i) is an easy consequence.
\end{proof}
Using compactness, we easily deduce the following result 
from Corollary~\ref{C108}.
See the proof of Theorem~9.35 in~\cite{BRBook}.
\begin{Cor}\label{C109}
  Let $f:X\to X'$ be a tree map as above. Then:
    \begin{itemize}
    \item[(i)]
      any closed interval $\gamma$ in $X$ can be
      written as a finite union of closed intervals $\gamma_i$
      with pairwise disjoint interiors, such that
      $\gamma'_i:=f(\gamma_i)\subseteq X'$ is an interval 
      and $f:\gamma_i\to\gamma'_i$
      is a homeomorphism for all $i$;
      furthermore, $f$ maps the annulus
      $A(\gamma_i)$ onto the annulus $A(\gamma'_i)$;
    \item[(ii)]
      any closed interval $\gamma'$ in $X'$ can be
      written as a union of finitely many intervals $\gamma'_i$
      with pairwise disjoint interiors, such that, for all $i$,
      $f^{-1}(\gamma'_i)$ is a finite union of closed 
      intervals $\gamma_{ij}$ with pairwise disjoint interiors,
      such that $f:\gamma_{ij}\to\gamma'_i$ is a homeomorphism 
      for each $j$;
      furthermore, $f$ maps the annulus
      $A(\gamma_{ij})$ onto the annulus $A(\gamma'_i)$;
      and $A(\gamma_{ij})$ is a connected component of
      $f^{-1}(A(\gamma'_i))$.
    \end{itemize}      
\end{Cor}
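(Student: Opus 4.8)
The plan is to cover the given interval with the ``good'' subintervals produced by Corollary~\ref{C108}, extract a finite subcover by compactness, and then read off the decomposition from a sufficiently fine common refinement. I would prove~(i) first and then argue symmetrically for~(ii).

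For~(i), fix a closed interval $\gamma\subseteq X$. For each $x\in\gamma$ and each tangent direction $\vv$ at $x$ lying along $\gamma$ (there are at most two), Corollary~\ref{C108}~(i) provides a point $y_\vv\in U(\vv)\cap\gamma$ such that $f$ restricts to a homeomorphism of $[x,y_\vv]$ onto the interval $[f(x),f(y_\vv)]$ and maps $A(x,y_\vv)$ onto $A(f(x),f(y_\vv))$; the same holds for every subinterval of $[x,y_\vv]$ and the corresponding subannulus, as is clear from the proof of Corollary~\ref{C108}. Set $J_x:=\{x\}\cup\bigcup_\vv\,]x,y_\vv[$, an open subinterval of $\gamma$ containing $x$. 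The family $\{J_x\}_{x\in\gamma}$ covers the compact interval $\gamma$, so finitely many $J_{x_1},\dots,J_{x_n}$ already cover it. Now subdivide $\gamma$ at all endpoints of these $J_{x_k}$, at the centres $x_1,\dots,x_n$, and at the endpoints of $\gamma$, obtaining finitely many closed intervals $\gamma_i$ with pairwise disjoint interiors whose union is $\gamma$. Each $\gamma_i$ lies in some $J_{x_k}$, and since $x_k$ is among the subdivision points, $\gamma_i$ is in fact contained in a single interval $[x_k,y_\vv]$. Hence $f|_{\gamma_i}$ is a homeomorphism onto the interval $\gamma'_i:=f(\gamma_i)$ and carries $A(\gamma_i)$ onto $A(\gamma'_i)$, which is~(i).

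For~(ii) I would repeat this with Corollary~\ref{C108}~(ii) in place of~(i). Fix a closed interval $\gamma'\subseteq X'$. For each $x'\in\gamma'$ and each tangent direction $\vv'$ at $x'$ along $\gamma'$, Corollary~\ref{C108}~(ii) gives $y'_{\vv'}\in U(\vv')\cap\gamma'$ such that, with $\delta'=[x',y'_{\vv'}]$, the preimage $f^{-1}(\delta')$ is a finite union of closed intervals with pairwise disjoint interiors, each mapped homeomorphically onto $\delta'$, with $f(A(\cdot))=A(x',y'_{\vv'})$ on each such interval and these annuli being exactly the connected components of $f^{-1}(A(x',y'_{\vv'}))$ --- and likewise over each subinterval of $\delta'$. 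Put $J_{x'}:=\{x'\}\cup\bigcup_{\vv'}\,]x',y'_{\vv'}[$, cover $\gamma'$ by finitely many $J_{x'_1},\dots,J_{x'_n}$, and subdivide $\gamma'$ at all endpoints of these sets, at the centres $x'_k$, and at the endpoints of $\gamma'$. Each resulting interval $\gamma'_i$ then lies inside a single interval $[x'_k,y'_{\vv'}]$, so $f^{-1}(\gamma'_i)$ is a finite union of closed intervals $\gamma_{ij}$ with pairwise disjoint interiors, each mapped homeomorphically onto $\gamma'_i$, with the asserted annulus and connected-component properties; the number of the $\gamma_{ij}$ is moreover bounded by the topological degree $d$. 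This gives~(ii).

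The genuinely routine points --- that a continuous injection of a compact interval onto its image in a tree is a homeomorphism and that such an image is again an interval --- are immediate from unique arc-connectedness. The one place where a little care is needed is the bookkeeping of the subdivision: one must insert the centres $x_k$ (resp.\ $x'_k$) of the chosen covering intervals as extra subdivision points so that each piece is contained in a \emph{single} interval $[x_k,y_\vv]$ furnished by Corollary~\ref{C108}. Without this, a piece straddling a ``folding'' point, where the tangent map identifies the two directions along the interval, would fail to be mapped injectively. Once the subdivision is set up this way, both statements, including the annulus assertions, are a direct transcription of Corollary~\ref{C108}; this is essentially the argument used for Theorem~9.35 in~\cite{BRBook}.
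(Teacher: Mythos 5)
Your proposal is correct and implements precisely the compactness argument the paper alludes to when it says ``Using compactness, we easily deduce the following result from Corollary~\ref{C108}. See the proof of Theorem~9.35 in~\cite{BRBook}.'' The one place that needs care --- inserting the centres $x_k$ as additional subdivision points so that each piece lies on a single side of $x_k$ and hence inside one interval $[x_k,y_\vv]$ from Corollary~\ref{C108} --- you identify and handle correctly.
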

%
%
\subsubsection{Fixed point theorem}\label{S165}
It is an elementary fact that any continuous selfmap
of a \emph{finite} tree admits a fixed point.
This can be generalized to arbitrary trees.
Versions of the following fixed point theorem can be found
in~\cite{valtree,Rivera3,BRBook}.
\begin{Prop}\label{P119}
  Any tree map $f:X\to X$ admits a fixed point $x=f(x)\in X$.
  Moreover, we can assume that one of the following two
  conditions hold:
  \begin{itemize}
  \item[(i)]
    $x$ is not an end of $X$;
  \item[(ii)]
    $x$ is an end of $X$ and $x$ is an attracting fixed point:
    there exists an open neighborhood $U\subseteq X$ of $x$
    such that $f(U)\subseteq U$ and $\bigcap_{n\ge 0}f^n(U)=\{x\}$.
  \end{itemize}
\end{Prop}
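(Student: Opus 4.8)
The plan is to build a decreasing sequence of ``invariant'' finite subtrees and extract a fixed point in the limit, using the fixed point property for finite trees as the base case. First I would recall the elementary fact: a continuous selfmap $g:T\to T$ of a finite tree has a fixed point. (One proof: pick any $x$; if $g(x)=x$ we are done; otherwise consider the segment $[x,g(x)]$ and the retraction, and use a standard minimality/convexity argument, or invoke that finite trees are contractible ANRs so Lefschetz applies.) Next, let $(Y_\a)_{\a\in A}$ be a rich net of finite subtrees of $X$ as in~\S\ref{S117}, so that $X\simto\varprojlim Y_\a$. For each $\a$, consider $f_\a:=r_\a\circ f|_{Y_\a}:Y_\a\to Y_\a$, a continuous selfmap of a finite tree; let $\Fix(f_\a)\subseteq Y_\a$ be its (nonempty, closed) fixed point set. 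I would like to pass to the limit over $\a$, but the sets $\Fix(f_\a)$ need not be nested nor compatible with the retractions, so this requires care — this compatibility issue is the first technical point.

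The cleaner route, which I would actually pursue, is the one used for rational maps on $\BerkPone$: iterate the retraction-of-a-segment construction directly. Start with any point $x_1\in X$ and set $x_2:=f(x_1)$. If $x_1=x_2$ we are done. Otherwise, by Corollary~\ref{C109}(i) applied to the segment $\gamma_1:=[x_1,x_2]$, break $\gamma_1$ into finitely many subsegments on each of which $f$ is a homeomorphism onto its image; track the direction in which $f$ moves points along $\gamma_1$. Either $f$ has a fixed point on $\gamma_1$ (done, and it is not an end unless $x_1$ or $x_2$ is), or $f$ pushes the whole segment $[x_1,x_2]$ strictly ``past'' $x_2$, i.e. $f(x_2)$ lies in a tangent direction $\vv$ at $x_2$ not pointing back towards $x_1$. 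In the latter case set $x_3:=f(x_2)$ and continue: one produces a monotone sequence $x_1,x_2,x_3,\dots$ with $x_{n+1}\in[x_n,x_{n+2}]$, i.e. the segments $[x_1,x_n]$ are increasing. By axiom~(T5) there is a point $x_\infty\in X$ with $\bigcup_n[x_1,x_n[\,=[x_1,x_\infty[$, and $x_\infty$ is an end of $X$ (it has only the tangent direction pointing back towards $x_1$, since any other direction would have been ``overtaken'').

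It remains to show that in this second case $x_\infty$ is a fixed point, and moreover an attracting one. For this I would use Corollary~\ref{C107}: choose the neighborhood $V$ of $x_\infty$ provided there, and let $\vv$ be the unique tangent direction at $x_\infty$, so $U(\vv)=X\setminus\{x_\infty\}$ eventually contains the tail of the sequence. Continuity of $f$ forces $f(x_\infty)=\lim f(x_n)=\lim x_{n+1}=x_\infty$. For attraction: since $x_\infty$ is an end and $f$ maps ends to ends (noted after the tangent map definition), the tangent map at $x_\infty$ fixes $\vv$, so by Corollary~\ref{C107}(i) there is a connected open neighborhood with $f$ mapping the relevant ``annular'' neighborhoods of $x_\infty$ into themselves; combined with Corollary~\ref{C108}, one gets a segment $[x_\infty,y]$ with $f([x_\infty,y])\subseteq[x_\infty,y]$ and, because $f$ is locally a homeomorphism onto a strictly smaller segment (the monotonicity built into the construction shows $f$ moves points towards $x_\infty$ near $x_\infty$), the annulus $U:=A(x_\infty,y)$ satisfies $f(\overline U)\subseteq U$ and $\bigcap_n f^n(U)=\{x_\infty\}$.

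\medskip
\noindent\textbf{Main obstacle.} The delicate step is verifying the dichotomy cleanly at each stage: either $f$ genuinely fixes a point of the current segment $[x_n,x_{n+1}]$, or it strictly advances, with \emph{no} possibility of ``oscillation'' that would prevent the $x_n$ from forming a monotone sequence. This is exactly where Corollary~\ref{C109}(i) is needed — it lets one subdivide $[x_n,x_{n+1}]$ into finitely many pieces on which $f$ is a homeomorphism, so that the ``displacement direction'' of $f$ along the segment is well-defined piecewise and one can run a finite combinatorial argument (analogous to the one-dimensional intermediate value argument) to locate either a fixed point or a consistent outward push. The second subtlety is checking that the limit end is genuinely \emph{attracting} rather than merely fixed, which uses that the tangent map fixes the unique direction at $x_\infty$ together with the local expansion/contraction description in Corollaries~\ref{C107} and~\ref{C108}.
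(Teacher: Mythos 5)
Your orbit-iteration argument has a genuine gap, and it sits exactly at the step you flag as delicate. The dichotomy ``either $f$ has a fixed point on $[x_n,x_{n+1}]$, or $f(x_{n+1})$ lies strictly beyond $x_{n+1}$'' is false. The intermediate-value argument that Corollary~\ref{C109}~(i) lets you run along a segment $\gamma=[x_n,x_{n+1}]$ only compares $z$ with $r_\gamma(f(z))$, where $r_\gamma$ is the retraction onto $\gamma$; it therefore produces a point $z$ with $r_\gamma(f(z))=z$, and such a $z$ need not be fixed, because $f(z)$ may lie in a tangent direction at $z$ \emph{transverse} to $\gamma$. (This is precisely why Lemma~\ref{L124} requires the retraction condition at \emph{both} endpoints, and why even that lemma needs proof.) Your case analysis omits the possibility that $f(x_{n+1})$ does not move strictly past $x_{n+1}$ and yet every IVT point on $[x_n,x_{n+1}]$ is sent into a side branch; nothing in the argument rules this out. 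Consequently the sequence $x_{n+1}=f(x_n)$ need not be monotone -- for a rational map of degree $\ge2$ on $\BerkPone$ with perfect Julia set, a typical Julia orbit is recurrent -- so the increasing segments $[x_1,x_n]$, the limit point $x_\infty$, and everything after it collapse. (A secondary error: even granting monotonicity, the point $y$ with $\bigcup_n[x_1,x_n[\,=[x_1,y[$ given by~(T5) need not be an end; it can be a branch point approached from one side, so the claim that ``any other direction would have been overtaken'' does not hold.)

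The route you sketched first and then abandoned is the one the paper actually takes, and the ``compatibility issue'' you correctly identified there is the real content of the proof, not an obstacle to be sidestepped. One first reduces to the case where every fixed point of $f$ is an end, fixes a non-end $x_0$ and a finite subtree $X_0$ containing $x_0$ and \emph{all of its preimages} but no ends of $X$, and takes the rich net of all finite subtrees $Y_\a\supseteq X_0$ containing no ends. For each $\a$ the retracted map $f_\a$ on $Y_\a$ has a nonempty compact fixed point set $F_\a$, and the heart of the argument is the compatibility $r_\a(F_\b)=F_\a$ for $\b\ge\a$, proved using Lemma~\ref{L124} together with tangent-direction arguments in the spirit of Corollary~\ref{C107} (this is where the inclusion of all preimages of $x_0$ in $X_0$ is used). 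Compatibility yields a point $x$ with $r_\a(f(r_\a(x)))=r_\a(x)$ for all $\a$, which by the standing reduction must be an end, and applying the same compatibility to the subtrees $X_0\cup[x_0,x_n]$ for a sequence $x_n\to x$ shows that $x$ is attracting. To repair your write-up you must either prove this compatibility or supply a correct substitute for the false dichotomy; as it stands, the proof does not go through.
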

In the proof we will need the following easy consequence
of Corollary~\ref{C109}~(i).
\begin{Lemma}\label{L124}
  Suppose there are points $x,y\in X$, $x\ne y$,
  with $r(f(x)))=x$ and $r(f(y))=y$,
  where $r$ denotes the retraction of $X$ onto the 
  segment $[x,y]$. Then $f$ has a fixed point on $[x,y]$.
\end{Lemma}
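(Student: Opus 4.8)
The plan is to push the statement onto the segment $\gamma:=[x,y]$ by composing $f$ with the retraction $r$, thereby reducing it to a one-dimensional intermediate-value argument; the only genuinely tree-theoretic ingredient will be a rigidity property of $r$. Put $h:=r\circ f|_\gamma\colon\gamma\to\gamma$, which is continuous (using continuity of $r$, \S\ref{S117}) and satisfies $h(x)=x$ and $h(y)=y$ by hypothesis. I will argue by contradiction, so assume $f$ has no fixed point on $\gamma$.

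The key point is the rigidity statement: \emph{if $z\in\gamma$ satisfies $h(z)=z$, then $h\equiv z$ on a neighbourhood of $z$ in $\gamma$.} To prove it, note first that $z$ is not an end of $X$: an end of $X$ lying on $\gamma$ is one of the endpoints $x,y$, and for such an end one has $r^{-1}(z)=\{z\}$, so $r(f(z))=z$ would force the excluded fixed point $f(z)=z$. Since $z$ is not an end of $X$ while $f(z)\ne z$ and $r(f(z))=z$, the point $f(z)$ lies in a connected component $U$ of $X\setminus\{z\}$ disjoint from $\gamma$ (if its component met $\gamma$, the gate $r(f(z))$ would be a point of $\gamma$ distinct from $z$). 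Now $U$ is open with $\partial U=\{z\}$, so every point of $U$ retracts to $z$; and by continuity of $f$ there is a neighbourhood $W\ni z$ in $X$ with $f(W)\subseteq U$, whence $r(f(w))=z$ for all $w\in W\cap\gamma$, as claimed. Applied to $z=x$ and $z=y$, this shows $h$ equals $x$ on a right neighbourhood of $x$ in $\gamma$ and equals $y$ on a left neighbourhood of $y$.

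Identify $\gamma$ with $[0,1]$ through its interval structure, with $x\mapsto 0$ and $y\mapsto 1$, and put $g(w):=h(w)-w$, a continuous function on $[0,1]$ that is $<0$ on a punctured right neighbourhood of $0$ and $>0$ on a punctured left neighbourhood of $1$. Let $w^\ast:=\sup\{w\in[0,1]\mid g\le 0\text{ on }[0,w]\}$; then $0<w^\ast<1$, that set equals $[0,w^\ast]$ so $g(w^\ast)\le 0$, while for every $\varepsilon>0$ some $u\in(w^\ast,w^\ast+\varepsilon]$ has $g(u)>0$. If $g(w^\ast)<0$, continuity gives $g<0$ on an interval immediately to the right of $w^\ast$, contradicting the previous sentence; hence $g(w^\ast)=0$, i.e. $h(w^\ast)=w^\ast$. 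If in addition $f(w^\ast)\ne w^\ast$, the rigidity claim yields $h\equiv w^\ast$ near $w^\ast$, so $g<0$ just to the right of $w^\ast$, again a contradiction. Thus $f(w^\ast)=w^\ast$ with $w^\ast\in\gamma$, the required fixed point.

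I expect the retraction bookkeeping in the rigidity claim to be the only delicate step: one must check that $r(f(z))=z$ together with $f(z)\ne z$ genuinely confines $f(z)$ to a component of $X\setminus\{z\}$ disjoint from $\gamma$, and that every point of such a component retracts to $z$. This uses only the convexity of the subtree $\gamma$ and the gate description of the retraction from \S\ref{S117}, after peeling off the degenerate cases in which $x$ or $y$ is an end of $X$ (where the hypothesis already forces $f(x)=x$ or $f(y)=y$). Alternatively, one could invoke Corollary~\ref{C109}(i) to write $f|_\gamma$ as a finite concatenation of homeomorphisms onto subsegments, making $h$ manifestly piecewise monotone and the analysis of its fixed set combinatorial; but the argument above does not require that extra structure.
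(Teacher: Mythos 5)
Your proof is correct, but it takes a different route from the one the paper intends. The paper presents Lemma~\ref{L124} as a consequence of Corollary~\ref{C109}~(i): decomposing $[x,y]$ into finitely many subsegments on which $f$ is a homeomorphism onto an interval (and annuli map onto annuli) makes $h=r\circ f$ piecewise monotone, after which the fixed point is found combinatorially; this uses the finiteness and openness of the tree map. Your argument instead isolates a purely point-set rigidity property of the retraction: if $r(f(z))=z$ but $f(z)\ne z$, then $f(z)$ sits in a component $U$ of $X\setminus\{z\}$ disjoint from $[x,y]$ (since $r(f(z))=z$ forces $z\in[f(z),p]$ for every $p\in[x,y]\setminus\{z\}$), all of $U$ retracts to $z$, and continuity of $f$ then freezes $h$ at the value $z$ on a neighbourhood of $z$ in the segment; combined with a standard supremum/intermediate-value argument on $[0,1]$ this yields the fixed point. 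The bookkeeping you flag as delicate — that $U\cap[x,y]=\emptyset$ and that every point of $U$ has gate $z$ — checks out from the defining property $[w,p]\cap[x,y]=[r(w),p]$ of the retraction in \S\ref{S117}, and your treatment of the degenerate case where $x$ or $y$ is an end of $X$ (forcing $f(x)=x$ or $f(y)=y$ outright) is also correct. What your approach buys is generality and economy: it uses only continuity of $f$ and the tree structure, so the lemma holds for an arbitrary continuous selfmap of $X$, whereas the route through Corollary~\ref{C109} needs $f$ to be finite and open; the price is that you do not get the extra structural information (piecewise monotonicity of $h$) that the paper's route provides.
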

\begin{proof}[Proof of Proposition~\ref{P119}]
  We may suppose that $f$ does not have any fixed point that is 
  not an end of $X$, or else we are in case~(i).
  Pick any non-end $x_0\in X$ and pick a finite 
  subtree $X_0$ that contains $x_0$, all preimages of $x_0$,
  but does not contain any ends of $X$. 
  Let $A$ be the set of finite subtrees of $X$ that contain
  $X_0$ but does not contain any end of $X$. For $\a\in A$,
  let $Y_\a$ be the corresponding subtree. 
  Then $(Y_\a)_{\a\in A}$ is a rich net of subtrees in the 
  sense of~\S\ref{S117}, so $X\simto\varprojlim Y_\a$.

  For each $\a$, define $f_\a:Y_\a\to Y_\a$ by 
  $f_\a=f\circ r_\a$. This is a continuous selfmap of a
  finite tree so the set $F_\a$ of its fixed points is a 
  nonempty compact set.  We will show that 
  $r_\a(F_\b)=F_\a$ when $\b\ge\a$. This will
  imply that there exists $x\in X$ such that 
  $r_\a(f(r_\a(x))=r_\a(x)$ for all $\a$. By assumption,
  $x$ is an end in $X$. Pick a sequence $(x_n)_{n=0}^\infty$
  of points in $X$ such that $x_{n+1}\in\,]x_n,x[\,$ and 
  $x_n\to x$ as $n\to\infty$. 
  Applying what precedes to the subtrees 
  $Y_{\a_n}=X_0\cup[x_0,x_n]$
  we easily conclude that $x$ is an attracting fixed point.

  It remains to show that $r_\a(F_\b)=F_\a$ when $\b\ge\a$. 
  First pick $x_\b\in F_\b$. 
  We will show that $x_\a:=r_\a(x_\b)\in F_\a$.
  This is clear if $x_\b\in Y_\a$
  since $r_\a=r_{\a\b}\circ r_\b$, so suppose
  $x_\b\not\in Y_\a$,
  By assumption, $f(x_\a)\ne x_\a$ and $f(x_\b)\ne x_\b$. 
  Let $\vv$ be the tangent direction at $x_\a$ represented by $x_\b$.
  Then $U(\vv)\cap Y_\a=\emptyset$ so $x_0\not\in f(U(\vv))$
  and hence $f(U(\vv))=U(\vv')$ for some tangent direction $\vv'$
  at $f(x_\a)$. Note that $f(x_\b)\in U(\vv')$. 
  If $f(x_\a)\not\in U(\vv)$, then Lemma~\ref{L124} 
  applied to $x=x_\a$, $y=x_\b$ gives a fixed point for $f$ in
  $[x_\a,x_\b]\subseteq Y_\b$, a contradiction.
  Hence $f(x_\a)\not\in U(\vv)$, so that 
  $r_\a(f(x_\a))=x_\a$, that is, $x_\a\in F_\a$.

  Conversely, pick $x_\a\in F_\a$. By assumption, 
  $f(x_\a)\ne x_\a$. Let $\vv$ be the tangent direction at $x_\a$
  defined by $U(\vv)$.  Then $U(\vv)\cap Y_\a=\emptyset$ 
  so $f(\overline{U(\vv)})\subseteq U(\vv)$. 
  Now $\overline{U(\vv)}\cap Y_\b$ is a finite nonempty 
  subtree of $X$ that is invariant under $f_\b$. Hence $f_\b$
  admits a fixed point $x_\b$ in this subtree. 
  Then $x_\b\in Y_\b$ and $r_\a(x_\b)=x_\a$.
\end{proof}
%
%
%
%
\subsection{Notes and further references}
Our definition of ``tree'' differs from the one in set theory,
see~\cite{Jec03}. It is also not equivalent to the notion
of ``$\R$-tree'' that has been around for 
quite some time (see~\cite{GH}) and found striking applications.
An $\R$-tree is a metric space and usually 
considered with its metric topology. On the other hand,
the notion of the weak topology on an $\R$-tree 
seems to have been rediscovered several times, sometimes
under different names (see~\cite{CLM}).

Our definitions of trees and metric trees are new 
but equivalent\footnote{Except for the missing condition~(RT3), see Remark~\ref{R201}.} 
to the ones given in~\cite{valtree},
where rooted trees are defined first and general
(non-rooted) trees are defined as equivalence classes of 
rooted trees. The presentation here seems
more natural. Following Baker and Rumely~\cite{BRBook} 
we have emphasized viewing a tree as a 
pro-finite tree, that is, an inverse limit of finite trees. 

Potential theory on simplicial graphs is a quite old subject
but the possibility of doing potential theory on general metric trees 
seems to have been 
discovered independently by Favre and myself~\cite{valtree},
Baker and Rumely~\cite{BRBook} and Thuillier~\cite{ThuillierThesis};
see also~\cite{FavreHab}.
Our approach here follows~\cite{BRBook} quite closely in how the
Laplacian is extended from finite to general trees. 
The class of quasisubharmonic functions is modeled on its
complex counterpart, where its compactness properties makes
this class very useful in complex dynamics and geometry.
It is sufficiently large for our purposes
and technically easier to handle than the class of  
functions of bounded differential variations studied in~\cite{BRBook}. 

Note that the interpretation of ``potential theory'' used here is 
quite narrow; for further results and questions we 
refer to~\cite{BRBook,ThuillierThesis}.
It is also worth mentioning that while potential theory on the Berkovich
projective line can be done in a purely tree theoretic way, this
approach has its limitations. In other situations, and especially
in higher dimensions, it seems advantageous to take a more
geometric approach. This point of view is used already
in~\cite{ThuillierThesis}
and is hinted at in our exposition of the valuative tree 
in~\S\ref{S103} and~\S\ref{S108}. 
We should remark that Thuillier in~\cite{ThuillierThesis} does
potential theory on general Berkovich curves. These are not 
always trees in our sense as they can contain loops.

Most of the results on tree maps in~\S\ref{S130} are well known and 
can be found in~\cite{BRBook} in the context of the 
Berkovich projective line. I felt it would be useful
to isolate some properties that are purely topological and
only depend on the map between trees being continuous,
open and finite. In fact, these properties turn out to be
quite plentiful. 

As noted in the text, versions of the fixed point result
in Proposition~\ref{P119} can be found in the work 
of Favre and myself~\cite{eigenval} and 
of Rivera-Letelier~\cite{Rivera3}. 
The proof here is new.
%
%
%
%
%
%
\newpage
\section{The Berkovich affine and projective lines}\label{S101}
Let us briefly describe the Berkovich affine and projective
lines. A comprehensive reference for this material is 
the recent book by Baker and Rumely~\cite{BRBook}.
See also Berkovich's original work~\cite{BerkBook}.
One minor difference to the presentation in~\cite{BRBook} is that 
we emphasize working in a coordinate free way. 
%
%
%
%
\subsection{Non-Archimedean fields}\label{S261}
We start by recalling some facts about non-Archimedean fields.
A comprehensive reference for this material is~\cite{BGR}.
%
%
\subsubsection{Seminorms and semivaluations}   
Let $R$ be a integral domain.
A \emph{multiplicative, non-Archimedean seminorm} on $R$ is 
a function $|\cdot|:R\to\R_+$ satisfying
$|0|=0$, $|1|=1$, $|ab|=|a||b|$ and 
$|a+b|\le\max\{|a|,|b|\}$.
If $|a|>0$ for all nonzero $a$, then $|\cdot|$ is a \emph{norm}.
In any case, the set $\fp\subseteq R$ consisting of
elements of norm zero is a prime ideal
and $|\cdot|$ descends to a norm on the quotient
ring $R/\fp$ and in turn extends to a norm
on the fraction field of the latter. 

Sometimes it is more convenient to work
additively and consider the associated
\emph{semi-valuation}\footnote{Unfortunately,
  the terminology is not uniform across the literature.
  In~\cite{BGR,BerkBook} `valuation'
  is used to denoted multiplicative norms.
  In~\cite{valtree}, `valuation' instead
  of `semi-valuation' is used even when the prime
  ideal $\{v =+\infty\}$ is nontrivial.}
$v:R\to\R\cup\{+\infty\}$
defined by $v =-\log|\cdot|$.
It satisfies the axioms
$v (0)=+\infty$, $v (1)=0$, 
$v (ab)=v (a)+v (b)$
and $v (a+b)\ge\min\{v (a),v (b)\}$.
The prime ideal $\fp$ above is now given by 
$\fp=\{v =+\infty\}$
and $v$ extends uniquely
to a real-valued valuation on the fraction field
of $R/\fp$.

Any seminorm on a field $K$ is a norm.
A \emph{non-Archimedean field} is a 
field $K$ equipped with a non-Archimedean, multiplicative
norm $|\cdot|=|\cdot|_K$ such that 
$K$ is complete in the induced metric.
In general, we allow the norm on $K$ be trivial: 
see Example~\ref{exam:trivialnorm}.
As a topological space, $K$ is totally disconnected.
We write $|K^*|=\{|a|\mid a\in K\setminus\{0\}\}\subseteq\R_+^*$
for the (multiplicative) \emph{value group} of $K$.
%
%
\subsubsection{Discs} 
A \emph{closed disc} in $K$ is a set of the form 
$D(a,r)=\{b\in K\mid |a-b|\le r\}$. 
This disc is \emph{degenerate} if $r=0$,
\emph{rational} if $r\in|K^*|$
and \emph{irrational} otherwise.
Similarly, $D^-(a,r):=\{b\in K\mid |a-b|<r\}$, $r>0$, 
is an \emph{open disc}.

The terminology  is natural but
slightly misleading since nondegenerate discs are both open
and closed in $K$.
Further, if $0<r\not\in|K^*|$, then $D^-(a,r)=D(a,r)$.
Note that any point in a disc in $K$ 
can serve as a center and that when
two discs intersect, one must contain the other.
As a consequence, any two closed discs
admit a unique smallest closed disc containing them both.
%
%
\subsubsection{The residue field}
The \emph{valuation ring} of $K$ is the
ring $\fo_K:=\{|\cdot|\le 1\}$. 
It is a local ring with maximal ideal
$\fm_K:=\{|\cdot|< 1\}$. 
The \emph{residue field} of $K$ is
$\tK:=\fo_K/\fm_K$.
We can identify $\fo_K$ and $\fm_K$
with the closed and open unit discs
in $K$, respectively. 
The \emph{residue characteristic} of $K$
is the characteristic of $\tK$.
Note that if $\tK$ has characteristic zero, then so does $K$.
\begin{Example}\label{exam:trivialnorm}
  We can equip any field $K$ with the 
  \emph{trivial} norm in which 
  $|a|=1$ 
  whenever $a\ne 0$.
  Then $\fo_K=K$, $\fm_K=0$ and $\tK=K$.
\end{Example}
\begin{Example}\label{exam:Qp}
  The field $K=\Q_p$ of $p$-adic numbers is the completion
  of $\Q$ with respect to the $p$-adic norm.
  Its valuation ring $\fo_K$ is the ring of $p$-adic 
  integers $\Z_p$ and the residue field $\tK$
  is the finite field $\F_p$. In particular,
  $\Q_p$ has characteristic zero and residue
 characteristic $p>0$.
\end{Example}
\begin{Example}\label{exam:Cp}
  The algebraic closure of $\Q_p$ is not complete.
  Luckily, the completed algebraic closure $\C_p$
  of $\Q_p$ is both algebraically closed and complete.
  Its residue field is $\overline{\F_p}$, the algebraic
  closure of $\F_p$. Again, $\C_p$
  has characteristic zero and residue
  characteristic $p>0$.
\end{Example}
\begin{Example}\label{exam:laurent}
  Consider the field $\C$ of complex numbers 
  (or any algebraically closed field of characteristic zero)
  \emph{equipped with the trivial norm}.
  Let $K=\C((u))$ be the field of Laurent series
  with coefficients in $\C$. The norm $|\cdot|$ on $K$ 
  is given by $\log|\sum_{n\in\Z}a_nu^n|=-\min\{n\mid a_n\ne 0\}$.
  Then $\fo_K=\C[[u]]$, $\fm_K=u\fo_K$
  and $\tK=\C$.
  We see that $K$ is complete and of residue characteristic zero.
  However, it is not algebraically closed.
\end{Example}
\begin{Example}\label{exam:puiseux}
  Let $K=\C((u))$ be the field of Laurent series.
  By the Newton-Puiseux theorem, the algebraic
  closure $K^a$ of $K$ is the field of 
  \emph{Puiseux series}
  \begin{equation}\label{e101}
    a=\sum_{\beta\in B} a_\beta u^\beta,
  \end{equation}
  where the sum is over a (countable) subset $B\subseteq\Q$ 
  for which there exists $m,N\in\N$ (depending
  on $a$) such that  $m+NB\subseteq\N$.
  This field is not complete; its completion
  $\hKa$ is algebraically closed
  as well as complete.
  It has residue characteristic zero.
\end{Example}
\begin{Example}\label{exam:giant}
  A giant extension of $\C((u))$ is given by the 
  field $K$ consisting of series of the form~\eqref{e101},
  where $B$ ranges over well-ordered subsets of $\R$. 
  In this case, $|K^*|=\R^*$.
\end{Example}
%
%
%
%
\subsection{The Berkovich affine line}
%
%
Write $R\simeq K[z]$ for the ring of polynomials 
in one variable with coefficients in $K$.
The \emph{affine line} $\A^1$ over $K$ is 
the set of maximal ideals in $R$.
Any choice of coordinate $z$ (\ie $R=K[z]$) defines 
an isomorphism 
$\A^1\simto K$.
A (closed or open) disc in $\A^1$ is a disc 
in $K$ under this isomorphism. This makes sense 
since any automorphism $z\mapsto az+b$
of $K$ maps discs to discs. We can also talk about
rational and irrational discs. However, 
the radius of a disc in $\A^1$ is not well defined.
\begin{Def}
  The \emph{Berkovich affine line} $\BerkA=\BerkA(K)$ 
  is the set of multiplicative seminorms $|\cdot|:R\to\R_+$ whose
  restriction to the ground field $K\subseteq R$ 
  is equal to the given norm $|\cdot|_K$.
\end{Def}
Such a seminorm is necessarily non-Archimedean.
Elements of $\BerkA$ are usually denoted 
$x$ and the associated seminorm on $R$ by
$|\cdot|_x$. 
The topology on $\BerkA$ is 
the weakest topology in which all evaluation
maps $x\mapsto|\phi|_x$, $\phi\in R$,
are continuous. There is a natural partial ordering
on $\BerkAone$: $x\le y$ iff $|\phi|_x\le |\phi|_y$ for all
$\phi\in R$.
%
%
%
%
\subsection{Classification of points}\label{S258}
One very nice feature of the Berkovich affine line is that we 
can completely and precisely classify its elements. The situation
is typically much more complicated in higher dimensions.
Following Berkovich~\cite{BerkBook} we shall describe four types
of points in $\BerkAone$, then show that this list is in fact complete.

For simplicity we shall from now on and until~\S\ref{S247}
assume that $K$ is \emph{algebraically closed} and
that the valuation on $K$ is \emph{nontrivial}.
The situation when one or both of these conditions is not
satisfied is discussed briefly in~\S\ref{S247}. 
See also~\S\ref{S211} for a different presentation of the 
trivially valued case.
%
%
\subsubsection{Seminorms from points}\label{S221}
Any closed point $x\in\A^1$ defines a seminorm $|\cdot|_x$ on $R$ through
\begin{equation*}
  |\phi|_x:=|\phi(x)|.
\end{equation*}
This gives rise to an embedding $\A^1\hookrightarrow\BerkAone$.
The images of this map will be called 
\emph{classical points}.\footnote{They are sometimes called \emph{rigid points}
  as they are the points that show up rigid analytic geometry~\cite{BGR}.}
\begin{Remark}
  If we define $\BerkAone$ as above when $K=\C$, then it follows from
  the Gel'fand-Mazur Theorem that all points are classical, that is,
  the map $\A^1\to\BerkAone$ is surjective. 
  The non-Archimedean case is vastly different.
\end{Remark}
%
%
\subsubsection{Seminorms from discs}
Next, let $D\subseteq\A^1$ be a closed disc
and define a seminorm $|\cdot|_D$ on $R$ by
\begin{equation*}
  |\phi|_D\=\max_{x\in D}|\phi(x)|.
\end{equation*}
It follows from Gauss' Lemma that this indeed defines
a multiplicative seminorm on $R$. In fact, the maximum
above is attained for a ``generic'' $x\in D$.
We denote the corresponding element of $\BerkAone$
by $x_D$.
In the degenerate case $D=\{x\}$, $x\in\A^1$, this
reduces to the previous construction: $x_D=x$.
%
%
\subsubsection{Seminorms from nested collections of discs}
It is clear from the construction that 
if $D,D'$ are closed discs in $\A^1$, then 
\begin{equation}\label{e110}
  |\phi|_D\le|\phi|_{D'}
  \ \text{for all $\phi\in R$ iff $D\subseteq D'$}.
\end{equation}
\begin{Def}
 A collection $\cE$ of closed discs in $\A^1$ 
 is \emph{nested} if the following conditions are satisfied:
\begin{itemize}
 \item[(a)]
   if $D,D'\in\cE$ then $D\subseteq D'$ or $D'\subseteq D$;
 \item[(b)]
   if  $D$ and $D'$ are closed discs 
   in $\A^1$ with $D'\in\cE$ and 
   $D'\subseteq D$, then $D\in\cE$;
 \item[(c)]
   if  $(D_n)_{n\ge 1}$ is a decreasing sequence of discs
   in $\cE$ whose intersection is a disc $D$ in $\A^1$,
   then $D\in\cE$.
 \end{itemize}
\end{Def}
In view of~\eqref{e110} we can associate a seminorm
$x_\cE\in\BerkA$ to a nested collection 
$\cE$ of discs by
\begin{equation*}
  x_\cE=\inf_{D\in\cE}x_D;
\end{equation*}
indeed, the limit of an decreasing sequence of seminorms is
a seminorm. 
When the intersection $\bigcap_{D\in\cE}D$ is nonempty,
it is a closed disc $D(\cE)$ 
(possibly of radius 0).
In this case $x_\cE$ is the seminorm associated to 
the disc $D(\cE)$. In general, however, the intersection 
above may be empty (the existence of a nested collection of discs
with nonempty intersection is equivalent to the field $K$
not being \emph{spherically complete}).

The set of nested collections of discs is partially ordered
by inclusion and we have $x_\cE\le x_{\cE'}$ 
iff $\cE'\subseteq\cE$.
%
%
\subsubsection{Classification}\label{S167}
Berkovich proved that all seminorms in $\BerkAone$ 
arise from the construction above.
\begin{Thm}\label{T203}
  For any $x\in\BerkAone$ there exists a unique 
  nested collection
  $\cE$ of discs in $\A^1$ such that $x=x_\cE$.
  Moreover, the map $\cE\to x_\cE$ is an order-preserving
  isomorphism.
\end{Thm}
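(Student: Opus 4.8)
The plan is to construct, for a given $x=|\cdot|_x\in\BerkAone$, a nested collection $\cE$ of discs and verify $x=x_\cE$, then treat uniqueness and the order-isomorphism claim separately. First I would dispose of the easy cases: if $x$ is a classical point $a\in\A^1$, take $\cE$ to be the collection of all closed discs containing $a$. In general, the natural guess is to set
\begin{equation*}
  \cE_x:=\{D\subseteq\A^1\text{ closed disc}\mid |\cdot|_x\le|\cdot|_D\},
\end{equation*}
which by~\eqref{e110} is automatically upward-closed, so condition~(b) holds for free. The core of the argument is to show $\cE_x$ is totally ordered (condition~(a)) and that $x_{\cE_x}=x$.

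For total ordering: fix a coordinate $z$, so $\A^1\simeq K$. Given $D=D(a,r)$ and $D'=D(a',r')$ in $\cE_x$ that are \emph{not} nested, their intersection is empty, hence (over a nondegenerate ultrametric) $|a-a'|>\max\{r,r'\}$, and one checks $\min\{|\cdot|_D,|\cdot|_{D'}\}$ evaluated on $z-a$ forces a contradiction with multiplicativity of $|\cdot|_x$ — concretely, $|z-a|_x\le r<|a-a'|=|(z-a')-(z-a)|\le\max\{|z-a'|_x,|z-a|_x\}$ would need $|z-a'|_x=|a-a'|>r'$, contradicting $D'\in\cE_x$. So~(a) holds, and~(c) (closure under intersections of decreasing sequences) follows because $|\cdot|_{D_n}\downarrow|\cdot|_D$ pointwise when $D=\bigcap D_n$. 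To identify $x_{\cE_x}$ with $x$, the key computation is to evaluate both seminorms on linear polynomials $z-a$ and then bootstrap to all of $R=K[z]$: since $K$ is algebraically closed, every $\phi\in R$ factors as $\phi=c\prod(z-a_i)$, and a multiplicative seminorm is determined by its values on linear factors, so it suffices to show $|z-a|_x=|z-a|_{\cE_x}=\inf_{D\in\cE_x}|z-a|_D$ for all $a\in K$. The inequality $|z-a|_x\le|z-a|_{\cE_x}$ is immediate from the definition of $\cE_x$; for the reverse, set $r:=|z-a|_x$ and observe $D(a,r)\in\cE_x$ (because $|z-a|_x\le r$ and, by the ultrametric computation above, $|z-a'|_x\le\max\{r,|a-a'|\}=|z-a'|_{D(a,r)}$ for every $a'$, which via the factorization extends to all $\phi$), whence $|z-a|_{\cE_x}\le|z-a|_{D(a,r)}=r$.

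For uniqueness: if $\cE,\cE'$ are nested collections with $x_\cE=x_{\cE'}=x$, I would show both equal $\cE_x$. The inclusion $\cE\subseteq\cE_x$ is~\eqref{e110} applied with $x_\cE\le x_D$; for $\cE_x\subseteq\cE$, take $D\in\cE_x$, so $|\cdot|_x\le|\cdot|_D$, and use axioms~(a),(c) of a nested collection together with the disc-comparison~\eqref{e110} to locate $D$ inside $\cE$ — if $D\notin\cE$ then by~(b) no member of $\cE$ is contained in $D$, so every $D'\in\cE$ either contains $D$ or is disjoint from $D$; the disjoint ones can be excluded because they would force $|z-a|_{D'}<|z-a|_D$ for $a$ the center of $D$ while $|z-a|_x=r(D)$, and one derives $x_\cE\not\le x_D$, a contradiction. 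Finally, the order-preserving isomorphism statement is the bookkeeping: $x_\cE\le x_{\cE'}\iff\cE'\subseteq\cE$ is exactly~\eqref{e110} extended from single discs to collections (one direction is trivial; the other uses that $\cE=\cE_{x_\cE}$ and comparing the infima), and surjectivity is what we just proved, so $\cE\mapsto x_\cE$ is an order-reversing bijection, i.e.\ an isomorphism onto $\BerkAone$ for the reversed order on collections — matching the partial order conventions set up just before the theorem.

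The main obstacle I anticipate is the total-ordering step~(a) for $\cE_x$ together with the bootstrapping from linear polynomials to all of $R$: both rely on carefully exploiting the strong triangle inequality for $|\cdot|_x$ in the "generic point of a disc" situation, and the case analysis (degenerate vs. rational vs. irrational discs, and whether relevant intersections are empty) needs to be handled cleanly. The subtlety that $\bigcap_{D\in\cE}D$ may be empty when $K$ is not spherically complete — so $x$ need not come from an actual disc — is precisely why one must phrase everything in terms of the collection $\cE$ rather than a single disc, and I would flag that this is where Type~4 points enter the classification.
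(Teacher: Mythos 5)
Your proposal follows the same strategy as the paper (which itself only gives a sketch and defers to Baker--Rumely for details): define $\cE(x)=\{D\mid x_D\ge x\}$, verify it is a nested collection, show $x_{\cE(x)}=x$ by reducing to linear factors via algebraic closedness, and then check inverse-bijection and order-compatibility. Your existence argument is essentially right, and you correctly identify that the bootstrapping step ($|z-a|_{D(a,r)}=r$ with $r:=|z-a|_x$) is where both standing hypotheses enter: algebraic closedness to reduce to $\deg 1$, and nontrivial valuation so that $r$ is in fact the genuine radius of $D(a,r)$, i.e.\ so that~\eqref{e162} applies.

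One slip worth flagging: in the uniqueness argument your treatment of the disjoint case gets the inequality backwards. If $D=D(a,r)\in\cE(x)\setminus\cE$ and $D'=D(a',r')\in\cE$ is disjoint from $D$, then by~\eqref{e162} one has $|z-a|_{D'}=|a-a'|>r=|z-a|_D$, not $<$. The disjoint case is still impossible, but the correct argument is exactly the ultrametric chain you already used in the total-ordering step: $D\in\cE(x)$ gives $|z-a|_x\le r$ and $x_\cE\le x_{D'}$ gives $|z-a'|_x\le r'$, so $|a-a'|\le\max\{|z-a|_x,|z-a'|_x\}\le\max\{r,r'\}<|a-a'|$, a contradiction. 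After that, the remaining possibility (every $D'\in\cE$ properly contains $D$) is where you need axiom~(c): the infimum of the radii over $D'\in\cE$ must equal $r$ (else $x_\cE\not\le x_D$ already on $z-a$), a decreasing sequence $D_n\in\cE$ with $r(D_n)\to r$ has $\bigcap D_n=D$ since all $D_n$ contain $a$, and then~(c) forces $D\in\cE$, contradiction. You gesture at this (``use axioms~(a),(c)\ldots'') but the decisive use of~(c) deserves to be spelled out, since it is the only place it is needed.
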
  
\begin{proof}[Sketch of proof]
  The strategy is clear enough: 
  given $x\in\BerkAone$ define $\cE(x)$ as the collection of 
  discs $D$ such that $x_D\ge x$. 
  However, it requires a little work to show that the maps
  $\cE\mapsto x_\cE$ and $x\mapsto\cE(x)$ 
  are order-preserving  and inverse one to another.  
  Here we have to use the assumptions that $K$ is algebraically 
  closed and that the norm on $K$ is nontrivial. The first
  assumption implies that $x$ is uniquely determined
  by its values on \emph{linear} polynomials in $R$. 
  The second assumption is necessary to ensure surjectivity 
  of $\cE\mapsto x_\cE$: if the norm on $K$ is trivial, 
  then there are too few discs in $\A^1$.
  See the proof of~\cite[Theorem~1.2]{BRBook} for details.
\end{proof}
%
%
\subsubsection{Tree structure}\label{S265}
Using the classification theorem above, we can already see that the 
Berkovich affine line is naturally a tree. Namely, let $\fE$ denote the 
set of nested collections of discs in $\A^1$. We also consider the
empty collection as an element of $\fE$. It is then straightforward to 
verify that $\fE$, partially ordered by inclusion, is a rooted tree in the 
sense of~\S\ref{S111}. As a consequence, the set 
$\BerkAone\cup\{\infty\}$ is a rooted metric tree.
Here $\infty$ corresponds to the empty collection of discs in $\A^1$
and can be viewed as the function $|\cdot|_\infty:R\to[0,+\infty]$
given by $|\phi|=\infty$ for any nonconstant polynomial $\phi\in R$
and $|\cdot|_\infty=|\cdot|_K$ on $K$. Then $\BerkAone\cup\{\infty\}$
is a rooted tree with the partial ordering $x\le x'$ iff 
$|\cdot|_x\ge|\cdot|_{x'}$ on $R$. 
See Figure~\ref{F208}.
\begin{figure}[ht]
  \includegraphics[width=0.8\textwidth]{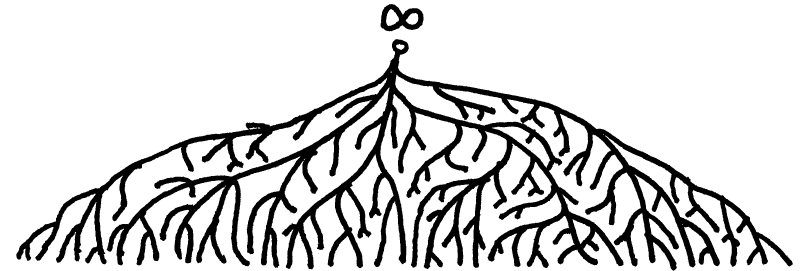}
  \caption{The Berkovich affine line.}\label{F208}
\end{figure}
%
%
\subsubsection{Types of points}\label{S264}
Using the identification with nested collections of discs, 
Berkovich classifies the points in $\BerkAone$ as follows:
\begin{itemize}
\item
  a point of \emph{Type~1} is a classical point, that is, 
  a point in the image
  of the embedding $\A^1\hookrightarrow\BerkAone$;
\item
  a point of \emph{Type~2} is of the form
 $x_D$ where $D$ is a rational disc in $\A^1$;
\item
  a point of \emph{Type~3}
  is of the form $x_D$ 
  where $D$ is an irrational disc in $\A^1$;
\item
  a point of \emph{Type~4} is of the form
  $x_\cE$, where $\cE$ is a
  nested collection of discs with empty intersection.
\end{itemize}
Note that Type~3 points exist iff $|K|\subsetneq\R_+$,
while Type~4 points exist iff $K$ is not spherically complete.
%
%
\subsubsection{Action by automorphisms}\label{S208}
Any automorphism $A\in\Aut(\A^1)$ arises from 
a $K$-algebra automorphism $A^*$ of $R$,
hence extends to an automorphism of $\BerkAone$
by setting 
\begin{equation*}
  |\phi|_{A(x)}:=|A^*\phi|_x
\end{equation*} 
for any polynomial $\phi\in R$. Note that $A$
is order-preserving. 
If $\cE$ is a nested collection of discs in $\A^1$,
then so is $A(\cE)$ and $A(x_\cE)=x_{A(\cE)}$.
It follows that $A$ preserves the type of a point in $\BerkAone$.

Clearly $\Aut(\A^1)$ acts transitively on 
$\A^1$, hence on the Type~1 points in $\BerkAone$.
It also acts transitively on the rational discs in $\A^1$,
hence the Type~2 points. 
In general, it will not act transitively on the set of 
Type~3 or Type~4 points, see~\S\ref{S263}.
%
%
\subsubsection{Coordinates, radii and the Gauss norm}\label{S263}
The description of $\BerkAone$ above was coordinate independent.
Now fix a coordinate $z:\A^1\simto K$. Using $z$,
every disc $D\subseteq\A^1$ becomes a disc in $K$, hence
has a well-defined \emph{radius} $r_z(D)$. 
If $D$ is a closed disc of radius $r=r_z(D)$ centered 
at point in $\A^1$ with coordinate $a\in K$, then 
\begin{equation}\label{e162}
  |z-b|_D=\max\{|a-b|,r\}.
\end{equation}
We can also define the radius
$r_z(\cE):=\inf_{D\in\cE}r_z(D)$ of a nested collection
of discs. The completeness of $K$ implies that if $r_z(\cE)=0$,
then $\bigcap_{D\in\cE}D$ is a point in $\A^1$.

The \emph{Gauss norm} is the norm in $\BerkAone$ defined by 
the unit disc in $K$. 
We emphasize that the Gauss norm depends on a 
choice of coordinate $z$.
In fact, any Type~2 point is the Gauss norm in some coordinate.

The radius $r_z(D)$ of a disc depends on $z$.
However, if we have two closed discs $D\subseteq D'$ in $\A^1$,
then the ratio $r_z(D')/r_z(D)$ does \emph{not} depend on $z$.
Indeed, any other coordinate $w$ is
of the form $w=az+b$, with $a\in K^*$, $b\in K$
and so $r_w(D)=|a|r_z(D)$, $r_w(D')=|a|r_z(D')$.
We think of the quantity $\log\frac{r_z(D')}{r_z(D)}$ as the 
modulus of the annulus $D'\setminus D$. It will
play an important role in what follows.

In the same spirit, the class $[r_z(x)]$ 
of $r_z(x)$ in $\R_+^*/|K^*|$ does not depend on the
choice of coordinate $z$. 
This implies that if $|K|\ne\R_+$, then $\Aut(\A^1)$ does
not act transitively on Type~3 points. Indeed, 
if $|K|\ne\R_+$, then given any Type~3 point $x$ 
we can find another Type~3 point $y\in[\infty,x]$ such that 
$[r_z(x)]\ne[r_z(y)]$. Then $A(x)\ne y$ for any $A\in\Aut(\A^1)$.
The same argument shows that if $K$ admits Type~4 points of
any given radius, then $A$ does not always act transitively on 
Type~4 points. For $K=\C_p$, there does indeed exist Type~4 points 
of any given radius, see~\cite[p.143]{Robert}.
%
%
%
%
\subsection{The Berkovich projective line}\label{S257}
%
%
We can view the projective line $\P^1$ over $K$
as the set of proper valuation rings $A$ of $F/K$, where
$F\simeq K(z)$ is the field of rational functions in
one variable with coefficients in $K$. 
In other words, $A\subsetneq F$ is a subring containing $K$
such that for every nonzero $\phi\in F$ we have $\phi\in A$ or $\phi^{-1}\in A$.
Since $A\ne F$,
there exists $z\in F\setminus A$ such that $F=K(z)$ 
and $z^{-1}\in A$.
The other elements of $\P^1$ are then the
localizations of the ring $R:=K[z]$ at its maximal ideals.
This gives rise to a decomposition
$\P^1=\A^1\cup\{\infty\}$ in which  
$A$ becomes the point $\infty\in\P^1$.

Given such a decomposition  we define
a \emph{closed disc} in $\P^1$ to be a closed disc in $\A^1$,
the singleton $\{\infty\}$, or 
the complement of an open disc in $\A^1$. 
Open discs are defined in the same way. A disc is \emph{rational}
if it comes from a rational disc in $\A^1$.
These notions do not depend on the choice of point $\infty\in\P^1$.
\begin{Def}
  The \emph{Berkovich projective line} 
  $\BerkPone$ over $K$ is the set of functions 
  $|\cdot|:F\to[0,+\infty]$ extending the norm on $K\subseteq F$
  and satisfying $|\phi+\psi|\le\max\{|\phi|,|\psi|\}$ for all $\phi,\psi\in F$,
  and $|\phi\psi|=|\phi||\psi|$ unless 
  $|\phi|=0$, $|\psi|=+\infty$
  or $|\psi|=0$, $|\phi|=+\infty$.
\end{Def}
To understand this, pick a rational function $z\in F$ such that $F=K(z)$. 
Then $R:=K[z]$ is the coordinate ring of $\A^1:=\P^1\setminus\{z=\infty\}$. 
There are two 
cases. Either $|z|=\infty$, in which case $|\phi|=\infty$ for
all nonconstant polynomials $\phi\in R$, 
or $|\cdot|$ is a seminorm on $R$, hence an element of $\BerkAone$.
Conversely, any element $x\in\BerkAone$ defines an element 
of $\BerkPone$ in the sense above. Indeed, every nonzero
$\phi\in F$ is of the form $\phi=\phi_1/\phi_2$ with 
$\phi_1,\phi_2\in R$ having no common
factor. Then we can set $|\phi|_x:=|\phi_1|_x/|\phi_2|_x$; this is
well defined by the assumption on $\phi_1$ and $\phi_2$.
Similarly, the function which is identically 
$\infty$ on all nonconstant polynomials defines a unique element
of $\BerkPone$: each $\phi\in F$ defines a rational function on $\P^1$
and $|\phi|:=|\phi(\infty)|\in[0,+\infty]$. This leads to a decomposition
\begin{equation*}
  \BerkPone=\BerkAone\cup\{\infty\},
\end{equation*}
corresponding to the decomposition
$\P^1=\A^1\cup\{\infty\}$.

We equip $\BerkP$ with the topology of
pointwise convergence. By Tychonoff, 
$\BerkP$ is a compact Hausdorff space
and, as a consequence, $\BerkAone$ is locally compact.
The injection $\A^1\hookrightarrow\BerkAone$
extends to an injection 
$\P^1\hookrightarrow\BerkP$
by associating the function $\infty\in\BerkPone$
to the point $\infty\in\P^1$.

Any automorphism $A\in\Aut(\P^1)$ is given by 
an element $A^*\in\Aut(F/K)$.
hence extends to an automorphism of $\BerkPone$
by setting 
\begin{equation*}
  |\phi|_{A(x)}:=|A^*\phi|_x
\end{equation*} 
for any rational function $\phi\in F$.
As in the case of $\BerkAone$, the type of a point is
preserved.
Further, $\Aut(\P^1)$ acts transitively on the set of 
Type~1 and Type~2 points, but not on the 
Type~3 or Type~4 points in general, see~\S\ref{S263}.
%
%
%
%
\subsection{Tree structure}\label{S256}
We now show that $\BerkPone$ admits natural structures
as a tree and a metric tree. See~\S\ref{S110} for the relevant definitions.

Consider a decomposition $\P^1=\A^1\cup\{\infty\}$ 
and the corresponding decomposition
$\BerkPone=\BerkAone\cup\{\infty\}$. The elements
of $\BerkPone$ define functions on the polynomial ring $R$ with values
in $[0,+\infty]$. This gives rise to a partial ordering on $\BerkPone$:
$x\le x'$ iff and only if $|\phi|_x\ge|\phi|_{x'}$ for all
polynomials $\phi$. 
As already observed in~\S\ref{S265}, 
$\BerkPone$ then becomes a rooted tree in the sense of~\S\ref{S111},
with $\infty$ as its root.
The partial ordering on $\BerkPone$ depends on a choice of 
point $\infty\in\P^1$, but the associated (nonrooted)
tree structure does not.

The ends of $\BerkPone$ are the points of Type~1 and~4, whereas
the branch points are the Type~2 points.
See Figure~\ref{F209}.
\begin{figure}[ht]
  \includegraphics[width=0.6\textwidth]{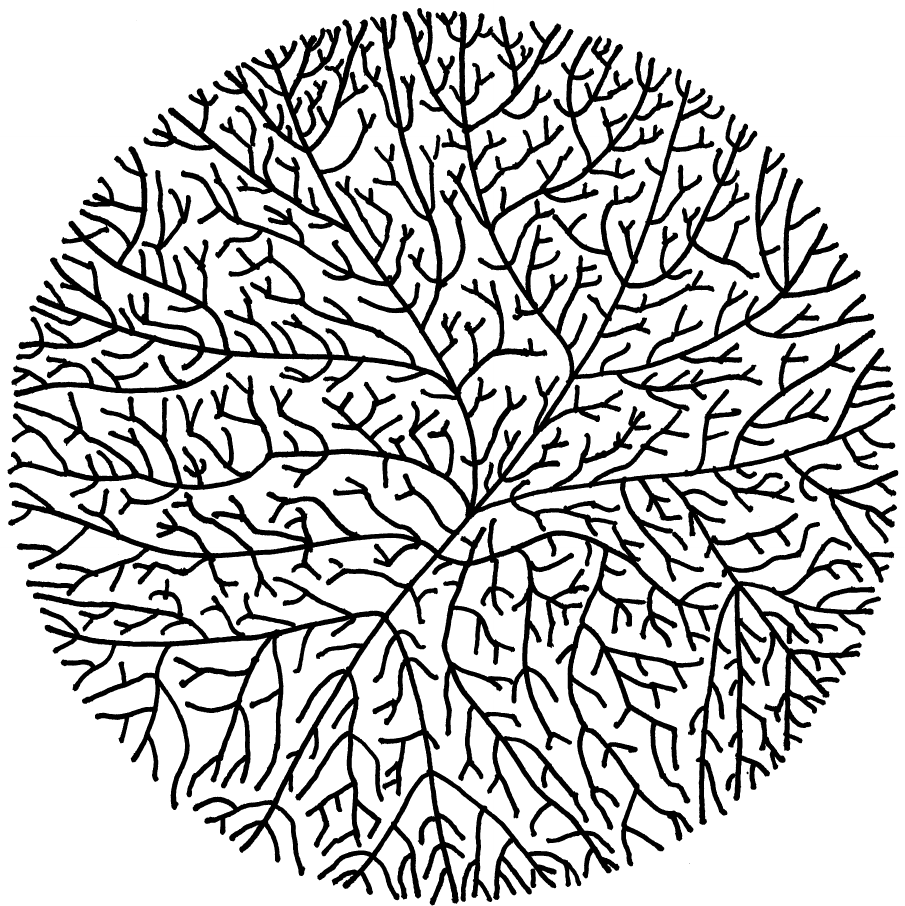}
  \caption{The Berkovich projective line.}\label{F209}
\end{figure}

Given a coordinate $z:\A^1\simto K$ we can parametrize $\BerkPone$
rooted in $\infty$ using radii of discs. 
Instead of doing so literally, we define an decreasing parametrization 
$\a_z:\BerkPone\to[-\infty,+\infty]$ using 
\begin{equation}\label{e111}
  \a_z(x_\cE):=\log r_z(\cE).
\end{equation}
One checks that this is a parametrization in the sense
of~\S\ref{S111}. 
The induced metric tree structure
on $\BerkPone$ does \emph{not} depend on the 
choice of coordinate $z$ and any automorphism of $\P^1$ induces 
an isometry of $\BerkPone$ in this generalized metric.
This is one reason for using the logarithm in~\eqref{e111}.
Another reason has to do with potential theory, 
see~\S\ref{S112}.
Note that $\a_z(\infty)=\infty$ and $\a_z(x)=-\infty$ iff
$x$ is of Type~1.

The associated \emph{hyperbolic space} in the sense of~\S\ref{S110}
is given by 
\begin{equation*}
  \H:=\BerkP\setminus\P^1.
\end{equation*}
The generalized metric on $\BerkPone$ above induces
a complete metric on $\H$ (in the usual sense). 
Any automorphism of $\P^1$ induces an isometry of $\H$.

%
%
%
%
\subsection{Topology and tree structure}\label{S112}
The topology on $\BerkP$ defined above agrees with the weak topology 
associated to the tree structure. To see this, note that $\BerkPone$ is
compact in both topologies. It therefore suffices to show that if 
$\vv$ is a tree tangent direction $\vv$ at a point $x\in\BerkPone$,
then the set $U(\vv)$ is open in the Berkovich topology. We may assume
that $x$ is of Type~2 or~3.
In a suitable coordinate $z$, $x=x_{D(0,r)}$ and $\vv$ is 
represented by the point $x_0$. 
Then $U(\vv)=\{y\in\BerkPone\mid |z|_x<r\}$, which is
open in the Berkovich topology.

A \emph{generalized open Berkovich disc} is a connected component of
$\BerkP\setminus\{x\}$ for some $x\in\BerkP$. When 
$x$ is of Type~2 or~3 we call it an
\emph{open Berkovich disc} and when $x$ of Type~2 a 
\emph{strict open Berkovich disc}.
A  (strict) \emph{simple domain} is a finite intersection of 
(strict) open Berkovich discs.
The collection of all (strict) simple domains 
is a basis for the topology on $\BerkP$.

%
%
%
%
\subsection{Potential theory}\label{S141}
As $\BerkPone$ is a metric tree we can
do potential theory on it, following~\S\ref{S116}. 
See also~\cite{BRBook} for a comprehensive treatment,
and the thesis of Thuillier~\cite{ThuillierThesis}
for potential theory on general Berkovich analytic curves.

We shall not repeat the material in~\S\ref{S116} here,
but given a finite atomic probability measure $\rho_0$ on 
$X$ with support on $\H$, 
we have a space $\SH(\BerkPone,\rho_0)$ of 
$\rho_0$-subharmonic functions, as well as
a homeomorphism 
\begin{equation*}
  \rho_0+\Delta:\SH(\BerkPone,\rho_0)/\R
  \simto\cM^+_1(\BerkPone).
\end{equation*}

Over the complex numbers, the analogue of 
$\SH(\BerkPone,\rho_0)$ is the space
$\SH(\P^1,\omega)$ of $\omega$-subharmonic 
functions on $\P^1$, where $\omega$ is a K\"ahler form.

\begin{Lemma}\label{L109}
  If $\phi\in F\setminus\{0\}$ is a rational function, then 
  the function 
  $\log|\phi|:\H\to\R$ is Lipschitz continuous
  with Lipschitz constant $\deg(\phi)$.
\end{Lemma}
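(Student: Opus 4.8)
The plan is to reduce the Lipschitz estimate to a one-variable computation along geodesic segments of $\H$, the essential input being the explicit formula~\eqref{e162} for the seminorm $|\cdot|_D$ attached to a disc $D$. Preliminarily I would check that $\log|\phi|$ is genuinely $\R$-valued on $\H$: since $K$ is algebraically closed and nontrivially valued, every point of $\H=\BerkPone\setminus\P^1$ is of Type~2,~3 or~4, and by the classification (Theorem~\ref{T203}) the associated seminorm on $K[z]$ is then a finite-valued norm — for Type~4 points one uses that a nested collection with empty intersection has $r_z(\cE)>0$ by completeness of $K$. Hence $|\phi|_x\in(0,+\infty)$ for every $x\in\H$, the zeros and poles of $\phi$ being confined to the Type~1 points $\P^1$, which lie outside $\H$.

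The heart of the proof is a bound on directional derivatives, which I would obtain as follows. Let $x\in\H$ be of Type~2 or~3 and let $\vv$ be a tangent direction at $x$. Since $x=x_D$ for a nondegenerate disc $D$, one may pick the affine coordinate $z$ so that $x=x_{D(0,r)}$ and the one-parameter family of discs pointing in the direction $\vv$ is $\{D(0,e^s)\}_s$ (for a suitable choice of $0$ as center); along this family the arclength for $d_\H$ is $|s-\log r|$. Factoring $\phi=c\prod_i(z-a_i)/\prod_j(z-b_j)$ into linear factors (possible as $K$ is algebraically closed) and substituting $|z-a|_{x_{D(0,e^s)}}=\max\{|a|,e^s\}$ from~\eqref{e162}, one finds
\begin{equation*}
  \log|\phi|_{x_{D(0,e^s)}}=\log|c|+\sum_i\max\{\log|a_i|,s\}-\sum_j\max\{\log|b_j|,s\},
\end{equation*}
a continuous piecewise-affine function of $s$ whose slope on each linearity interval is the number of zeros $a_i$ lying in the closed disc of radius $e^s$ minus the number of such poles $b_j$, counted with multiplicity. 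Since $\phi$, viewed as a morphism $\P^1\to\P^1$, has exactly $\deg\phi$ zeros and exactly $\deg\phi$ poles counted with multiplicity, each of these two counts is $\le\deg\phi$; hence every slope — in particular $D_\vv\log|\phi|$ — lies in $[-\deg\phi,\deg\phi]$.

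To globalize I would take $x,y\in\H$ and work on the segment $[x,y]$, whose interior consists of Type~2 and~3 points. A direct computation with~\eqref{e162} (or the factorization above, noting that each $\log|z-a_i|$ is the retraction onto $[a_i,\infty]$ followed by $\a_{z-a_i}$, hence $1$-Lipschitz and piecewise affine along segments) shows that $\log|\phi|$ restricted to $[x,y]$ is continuous and piecewise affine with respect to arclength, with finitely many breakpoints; on each affine piece the slope equals some directional derivative $D_\vv\log|\phi|$ at an interior point, hence has absolute value $\le\deg\phi$ by the previous step. Therefore $|\log|\phi|_x-\log|\phi|_y|\le\deg\phi\cdot d_\H(x,y)$, which is the assertion.

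The only real obstacle is bookkeeping, and it is mild: one must identify the parameter $s$ with arclength on the relevant geodesic, verify that the slope on a linearity interval is literally the stated zero/pole count (watching the behaviour at the breakpoints $s=\log|a_i|$, $\log|b_j|$), and confirm the piecewise-affineness along an arbitrary segment. A cheaper route gives a weaker result: factoring $\phi$ and using only that each $\log|z-a|$ is $1$-Lipschitz on $\H$ immediately yields the crude constant $\deg P+\deg Q$ (enough for continuity), but recovering the sharp constant $\deg\phi=\max(\deg P,\deg Q)$ genuinely requires the cancellation between the slopes coming from the zeros and those coming from the poles — precisely what the ``number of zeros minus number of poles on one side'' count records.
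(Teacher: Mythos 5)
Your proposal is correct and follows essentially the same route as the paper: factor $\phi$ into linear pieces, invoke the explicit disc-norm formula~\eqref{e162}, and bound the slope of $\log|\phi|$ along segments. Where the paper obtains the sharp constant $\deg\phi$ by noting that $\log|\phi_1|$ and $\log|\phi_2|$ are each monotone in the partial ordering rooted at $\infty$ (so their difference has slope in $[-\deg\phi_1,\deg\phi_2]$), you make the same cancellation explicit by identifying the slope as the signed count of zeros minus poles inside a disc; these are the same observation in different clothing.
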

\begin{proof}
  Pick any coordinate $z$ on $\P^1$ and write 
  $\phi=\phi_1/\phi_2$, with $\phi_1,\phi_2$ polynomials. 
  The functions $\log|\phi_1|$ and $\log|\phi_2|$ are 
  decreasing in the partial ordering rooted at $\infty$
  and $\log|\phi|=\log|\phi_1|-\log|\phi_2|$.
  Hence we may assume that $\phi$ is a polynomial.
  Using that $K$ is algebraically closed we 
  further reduce to the case $\phi=z-b$, where $b\in K$. 
  But then the result follows from~\eqref{e162}.
\end{proof}  
\begin{Remark}\label{R205}
  The function $\log|\phi|$ belongs to the space $\BDV(\BerkPone)$
  of functions of bounded differential variation and $\Delta\log|\phi|$ is the 
  divisor of $\phi$, viewed as a signed, finite atomic measure on
  $\P^1\subseteq\BerkPone$; see~\cite[Lemma~9.1]{BRBook}.
  Lemma~\ref{L109} then also follows from a version of~\eqref{e138}
  for functions in $\BDV(\BerkPone)$. 
  These considerations also show that the generalized 
  metric on $\BerkPone$ is the correct one from the point
  of potential theory.
\end{Remark}
%
%
%
%
\subsection{Structure sheaf and numerical invariants}\label{S280}
Above, we have defined the Berkovich projective line as a
topological space, but it also an analytic space in the 
sense of Berkovich  and carries a structure sheaf 
$\cO$. The local rings $\cO_x$ are useful
for defining and studying the local degree of a rational map.
They also allow us to recover Berkovich's classification 
via certain numerical invariants.
%
%
\subsubsection{Structure sheaf}\label{S209}
A holomorphic function on an open set $U\subseteq\BerkPone$ 
is a locally uniform limit of rational functions without poles in $U$. 
To make sense of this, we first need to say where 
the holomorphic functions take their values: 
the value at a point $x\in\BerkPone$ is in a 
non-Archimedean field $\cH(x)$. 

To define $\cH(x)$, assume $x\in\BerkAone$. 
The kernel of the seminorm $|\cdot|_x$
is a prime ideal in $R$ and $|\cdot|_x$
defines a norm on the fraction field of 
$R/\ker(|\cdot|_x)$; the field $\cH(x)$
is its completion. 

When $x$ is of Type~1, $\cH(x)\simeq K$.
If instead $x$ is of Type~3, pick a coordinate 
$z\in R$ such that $r:=|z|_x\not\in|K|$. 
Then $\cH(x)$ is isomorphic to the set of 
series $\sum_{-\infty}^\infty a_jz^j$ with $a_j\in K$
and $|a_j|r^j\to0$ as $j\to\pm\infty$.
For $x$ of Type~2 or~4, I am not aware of a
similar explicit description of $\cH(x)$.

The pole set of a rational function 
$\phi\in F$ can be viewed as a set of 
Type~1 points in $\BerkPone$. 
If $x$ is not a pole of $\phi$, then 
$\phi(x)\in\cH(x)$ is well defined.
The definition of a holomorphic function on 
an open subset $U\subseteq\BerkPone$ now 
makes sense and gives rise to the structure sheaf
$\cO$. 
%
%
\subsubsection{Local rings and residue fields}\label{S279}
The ring $\cO_x$ for $x\in\BerkPone$ 
is the ring of germs of holomorphic functions at $x$.
Denote by $\fm_x$ the maximal ideal of 
$\cO_x$ and by $\kappa(x):=\cO_x/\fm_x$ the 
residue field. Note that the seminorm $|\cdot|_x$
on $\cO_x$ induces a norm on $\kappa(x)$.
The field $\cH(x)$ above is the completion 
of $\kappa(x)$ with respect to the residue norm
and is therefore called the completed residue field.

When $x$ is of Type~1, $\cO_x$ is 
isomorphic to the ring of power series
$\sum_0^\infty a_jz^j$ such that 
$\limsup|a_j|^{1/j}<\infty$,
and $\kappa(x)=\cH(x)=K$.

If $x$ is not of Type~1, then $\fm_x=0$
and $\cO_x=\kappa(x)$ is a field.
This field is usually difficult to describe explicitly.
However, when $x$ is of Type~3 it has a description
analogous to the one of its completion $\cH(x)$
given above. Namely, pick a coordinate 
$z\in R$ such that $r:=|z|_x\not\in|K|$. 
Then $\cO_x$ is isomorphic to the set of 
series $\sum_{-\infty}^\infty a_jz^j$ with $a_j\in K$
for which there exists $r'<r<r''$ such that 
$|a_j|(r'')^j,|a_{-j}|(r')^{-j}\to0$ as $j\to+\infty$.
%
%
\subsubsection{Numerical invariants}\label{S114}
While the local rings $\cO_x$ and the completed residue fields $\cH(x)$
are not always easy to describe explicitly, certain numerical invariants 
of them are more tractable and allow us to recover Berkovich's classification.

First, $x$ is of Type~1 iff the seminorm $|\cdot|_x$ has
nontrivial kernel. Now suppose the kernel is trivial.
Then $\cO_x$ is a field and contains $F\simeq K(z)$
as a subfield. Both these fields are dense in $\cH(x)$
with respect to the norm $|\cdot|_x$.
In this situation we have two basic invariants.

First, the (additive) \emph{value group} is defined by
\begin{equation*}
  \Gamma_x
  :=\log|\cH(x)^*|_x
  =\log|\cO_x^*|_x
  =\log|F^*|_x.
\end{equation*}
This is an additive subgroup of $\R$ containing
$\Gamma_K:=\log|K^*|$.
The \emph{rational rank} $\ratrk x$ 
of $x$ is the dimension of the $\Q$-vector space
$(\Gamma_x/\Gamma_K)\otimes_\Z\Q$.

Second, the three fields 
$\cH(x)$, $\cO_x$ and $F$
have the same residue field
with respect to the norm $|\cdot|_x$.
We denote this field by $\widetilde{\cH(x})$;
it contains the residue field $\tK$ of $K$ as a subfield. 
The \emph{transcendence degree} $\trdeg x$ of $x$
is the transcendence degree of the field extension 
$\widetilde{\cH(x})/\tK$.

One shows as in~\cite[Proposition~2.3]{BRBook} that
\begin{itemize}
\item
  if $x$ is of Type~2, then $\trdeg x=1$ and $\ratrk x=0$;
  more precisely $\Gamma_x=\Gamma_K$ and
  $\widetilde{\cH(x)}\simeq\tK(z)$;
\item
  if $x$ is of Type~3, then $\trdeg x=0$ and $\ratrk x=1$;
  more precisely, $\Gamma_x=\Gamma_K\oplus\Z\a$,
  where $\a\in\Gamma_x\setminus\Gamma_K$,
  and $\widetilde{\cH(x)}\simeq\tK$;
\item
  if $x$ is of Type~4, then $\trdeg x=0$ and $\ratrk x=0$;
  more precisely, $\Gamma_x=\Gamma_K$ and
  $\widetilde{\cH(x)}\simeq\tK$;
\end{itemize}
%
%
\subsubsection{Quasicompleteness of the residue field}\label{S277}
Berkovich proved in~\cite[2.3.3]{Berkihes} that the residue field $\kappa(x)$ is
\emph{quasicomplete} in the sense that the induced norm
$|\cdot|_x$ on $\kappa(x)$ extends uniquely 
to any algebraic extension of $\kappa(x)$. 
This fact is true for any point of a ``good''
Berkovich space. It will be exploited (only) in~\S\ref{S124}.
%
%
\subsubsection{Weak stability of the residue field}\label{S278} 
If $x$ is of Type~2 or~3, then the residue field $\kappa(x)=\cO_x$
is \emph{weakly stable}. By definition~\cite[3.5.2/1]{BGR} this means
that any finite extension $L/\kappa(x)$ is 
\emph{weakly Cartesian}, that is, there exists a linear homeomorphism
$L\simto\kappa(x)^n$, where $n=[L:\kappa(x)]$, 
see~\cite[2.3.2/4]{BGR}. 
Here the norm on $L$ is the unique extension 
of the norm on the quasicomplete field $\kappa(x)$.
The homeomorphism above is not necessarily
an isometry.

The only consequence of weak stability that we shall use is
that if $L/\kappa(x)$ is a finite extension, then 
$[L:\kappa(x)]=[\hL:\cH(x)]$, where $\hL$ denotes the 
completion of $L$, see~\cite[2.3.3/6]{BGR}.
This, in turn, will be used (only) in~\S\ref{S124}. 

Let us sketch a proof that $\kappa(x)=\cO_x$ is weakly stable when
$x$ is of Type~2 or~3. Using the remark at the end
of~\cite[3.5.2]{BGR} it suffices to show that the field extension
$\cH(x)/\cO_x$ is separable. 
This is automatic if the ground field $K$ has 
characteristic zero, so suppose $K$ has characteristic $p>0$.
Pick a coordinate $z\in R$ such that $x$
is associated to a disc centered at $0\in K$. 
It is then not hard to see that $\cO_x^{1/p}=\cO_x[z^{1/p}]$
and it suffices to show that $z^{1/p}\not\in\cH(x)$. 
If $x$ is of Type~3, then this follows from the fact that 
$\frac1p\log r=\log|z^{1/p}|_x\not\in\Gamma_K+\Z\log r=\Gamma_x$.
If instead $x$ is of Type~2, then we may assume that $x$ is the
Gauss point with respect to the coordinate $z$. 
Then $\widetilde{\cH(x)}\simeq\tilde{K}(z)\not\ni z^{1/p}$
and hence $z^{1/p}\not\in\cH(x)$.
%
%
\subsubsection{Stability of the completed residue field}\label{S120}
When $x$ is a Type~2 or Type~3 point,
the completed residue field $\cH(x)$ is \emph{stable} field
in the sense of~\cite[3.6.1/1]{BGR}. This means that 
any finite extension $L/\cH(x)$ admits a basis $e_1,\dots,e_m$
such that $|\sum_ia_ie_i|=\max_i|a_i||e_i|$ for $a_i\in K$. 
Here the norm on $L$ is the unique extension  
of the norm on the complete field $\cH(x)$. 
The stability of $\cH(x)$ is proved in~\cite[6.3.6]{temkinstable}
(the case of a Type~2 point also follows from~\cite[5.3.2/1]{BGR}).

Let $x$ be of Type~2 or~3.
The stability of $\cH(x)$ implies that for any finite extension
$L/\cH(x)$ we have
$[L:\cH(x)]=[\Gamma_L:\Gamma_x]\cdot[\tilde{L}:\widetilde{\cH(x)}]$,
where $\Gamma_L$ and $\tilde{L}$ are the value group and 
residue field of $L$, see~\cite[3.6.2/4]{BGR}.
%
%
\subsubsection{Tangent space and reduction map}\label{S115}
Fix $x\in\BerkPone$. Using the tree structure,
we define as in~\S\ref{S119}
the tangent space $T_x$ of $\BerkPone$ at $x$ 
as well as a tautological ``reduction'' map from 
$\BerkPone\setminus\{x\}$ onto $T_x$.
Let us interpret this procedure algebraically
in the case when $x$ is a Type~2 point.

The tangent space $T_x$ at a Type~2 point
$x$ is the set of valuation rings $A\subsetneq\tH(x)$ 
containing $\tK$. 
Fix a coordinate $z$ such that $x$ becomes the 
Gauss point. Then $\tH(x)\simto\tK(z)$ and
$T_x\simeq\P^1(\tK)$.
Let us define the reduction map $r_x$ of $\BerkPone\setminus\{x\}$
onto $T_x\simeq\P^1(\tK)$. 
Pick a point $y\in\BerkPone\setminus\{x\}$.
If $|z|_y>1$, then we declare $r_x(y)=\infty$.
If $|z|_y\le 1$, then, since $y\ne x$, there exists
$a\in\fo_K$ such that $|z-a|_y<1$. The element $a$
is not uniquely defined, but its class $\ta\in\tK$ is and
we set $r_x(y)=\ta$. One can check that this 
definition does not depend on the choice of coordinate $z$
and gives the same result as the tree-theoretic construction.

The reduction map can be naturally 
understood in the context of formal models,
but we shall not discuss this here.
%
%
%
%
\subsection{Other ground fields}\label{S247} 
Recall that from~\S\ref{S257} onwards, we assumed that the field
$K$ was algebraically closed and nontrivially valued. 
These assumptions were used in the proof of Theorem~\ref{T203}.
Let us briefly discuss what happens when they are removed.

As before, $\BerkAone(K)$ is the set of multiplicative seminorms
on $R\simeq K[z]$ extending the norm on $K$ and 
$\BerkPone(K)\simeq\BerkAone(K)\cup\{\infty\}$. We can equip
$\BerkAone(K)$ and $\BerkPone(K)$ with a partial ordering defined by
$x\le x'$ iff $|\phi(x)|\ge|\phi(x')|$ for all polynomials $\phi\in R$.
%
%
\subsubsection{Non-algebraically closed fields}\label{S273}
First assume that $K$ is nontrivially valued but
not algebraically closed. Our discussion follows~\cite[\S4.2]{BerkBook};
see also~\cite[\S2.2]{Ked1},~\cite[\S5.1]{Ked2} and~\cite[\S6.1]{Ked3}.

Denote by $K^a$ the algebraic closure of $K$ and by $\hKa$ 
its completion. Since $K$ is complete, the norm
on $K$ has a unique extension to $\hKa$.

The Galois group $G:=\Gal(K^a/K)$ acts on the field
$\hKa$ and induces an action on $\BerkAone(\hKa)$,
which in turn extends to 
$\BerkPone(\hKa)=\BerkAone(\hKa)\cup\{\infty\}$
using $g(\infty)=\infty$ for all $g\in G$.
It is a general fact that $\BerkPone(K)$ is isomorphic to 
the quotient $\BerkPone(\hKa)/G$.
The quotient map $\pi:\BerkPone(\hKa)\to\BerkPone(K)$ is 
continuous, open and surjective.

It is easy to see that $g$ maps any segment 
$[x,\infty]$ homeomorphically onto the segment $[g(x),\infty]$.
This implies that $\BerkPone(K)$ is a tree in the sense
of~\S\ref{S172}.
In fact, the rooted tree structure on $\BerkPone(K)$ is
defined by the partial ordering above.

If $g\in G$ and $x\in\BerkPone(\hKa)$, then $x$
and $g(x)$ have the same type. This leads to a classification of 
points in $\BerkPone(K)$ into Types~1-4. 
Note that since $\hKa\ne K^a$ in general, 
there may exist Type~1 points $x\ne\infty$ such that 
$|\phi(x)|>0$ for all polynomials $\phi\in R=K[z]$.

We can equip the Berkovich projective line $\BerkPone(K)$ 
with a generalized metric. In fact, there are two natural ways of doing this. 
Fix a coordinate $z\in R$.
Let $\ha_z:\BerkPone(\hKa)\to[-\infty,+\infty]$ be the 
parametrization defined in~\S\ref{S256}. 
It satisfies $\ha_z\circ g=\ha_z$ for all $g\in G$ and hence induces 
a parametrization $\ha_z:\BerkPone(K)\to[-\infty,+\infty]$.
The associated generalized metric on $\BerkPone(K)$ 
does not depend on the choice of coordinate $z$ and has the
nice feature that the associated hyperbolic space
consists exactly of points of Types~2--4.

However, for potential theoretic considerations, it is better to use
a slightly different metric. 
For this, first define the 
\emph{multiplicity}\footnote{This differs from the ``algebraic degree'' used 
  by Trucco, see~\cite[Definition~5.1]{Trucco}.}
$m(x)\in\Z_+\cup\{\infty\}$ 
of a point $x\in\BerkPone(K)$ as the number of preimages of $x$
in $\BerkPone(\hKa)$. The multiplicity of a Type~2 or Type~3 
point is finite and if $x\le y$, 
then $m(x)$ divides $m(y)$. Note that $m(0)=1$ so all points on the 
interval $[\infty,0]$ have multiplicity 1. We now define a
decreasing parametrization $\a_z:\BerkPone(K)\to[-\infty,+\infty]$ as follows.
Given $x\in\BerkPone(K)$, set $x_0:=x\wedge0$ and 
\begin{equation}\label{e221}
  \a_z(x)=\a_z(x_0)-\int_{x_0}^x\frac1{m(y)}\,d\ha_z(y)
\end{equation}
Again, the associated generalized metric on $\BerkPone(K)$ 
does not depend on the choice of coordinate $z$. The hyperbolic
space $\H$ now contains all points of Types~2--4 but may also
contain some points of Type~1.

One nice feature of the generalized metric induced by $\a_z$
is that if $\rho_0$ is a finite positive measure on $\BerkPone(K)$
supported on points of finite multiplicity and if 
$\f\in\SH(\BerkPone(K),\rho_0)$, then 
$\pi^*\f\in\QSH(\BerkPone(\hKa))$ and 
\begin{equation*}
  \Delta\f=\pi_*\Delta(\pi^*\f).
\end{equation*}
Furthermore, for any rational function $\phi\in F$, the measure
$\Delta\log|\phi|$ on $\BerkPone(K)$ 
can be identified with the divisor of $\phi$, see Remark~\ref{R205}. 
%
%
\subsubsection{Trivially valued fields}\label{S262}
Finally we discuss the case when $K$ is trivially valued,
adapting the methods above.
A different approach is presented in~\S\ref{S211}.

First assume $K$ is algebraically closed. Then a multiplicative
seminorm on $R$ is determined by its values on linear polynomials.
Given a coordinate $z\in R$ it is easy to see that any point 
$x\in\BerkAone$ is of one of the following three types:
\begin{itemize}
\item
  we have $|z-a|_x=1$ for all $a\in K$;
  this point $x$ is the \emph{Gauss point};
\item
  there exists a unique $a\in K$ such that $|z-a|_x<1$;
\item
  there exists $r>1$ such that $|z-a|_x=r$ for all $a\in K$.
\end{itemize}
Thus we can view $\BerkAone$ as the quotient $K\times[0,\infty[\,/\sim$,
where $(a,r)\sim(b,s)$ iff $r=s$ and $|a-b|\le r$.
Note that if $r\ge 1$, then $(a,r)\sim(b,r)$ for all $r$,
whereas if $0\le r<1$, then $(a,r)\simeq(b,r)$ iff $a=b$.

We see that the Berkovich projective line 
$\BerkPone=\BerkAone\cup\{\infty\}$
is a tree naturally rooted at $\infty$ with 
the Gauss point as its only branch point. 
See Figure~\ref{F210}.
The hyperbolic metric is induced by the parametrization 
$\a_z(a,r)=\log r$. In fact, this parametrization does not depend
on the choice of coordinate $z\in R$.

\begin{figure}[ht]
 \includegraphics{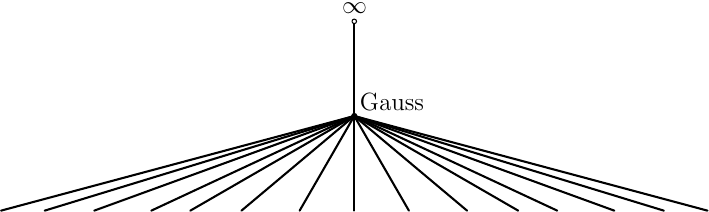}
  \caption{The Berkovich affine line over a trivially valued field.}\label{F210}
\end{figure}

If we instead choose the Gauss point as the root of the 
tree, then we can view the topological space underlying 
$\BerkPone$ as the cone over $\P^1$,
that is, as the quotient $\P^1\times[0,\infty]$, where 
$(a,s)\sim(b,t)$ if $s=t=0$. The Gauss point is the apex
of the cone and its distance to $(a,t)$ is $t$ in the 
hyperbolic metric. See Figure~\ref{F211}.

\begin{figure}[ht]
 \includegraphics{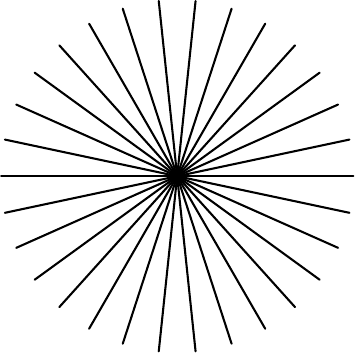}
  \caption{The Berkovich projective line over a trivially valued field.}\label{F211}
\end{figure}

Just as in the nontrivially valued case, the generalized metric on 
$\BerkPone$ is the correct one in the sense that Remark~\ref{R205}
holds also in this case.

\smallskip
Following the terminology of~\S\ref{S167},
a point of the form $(a,t)$ is of Type~1 and Type~2
iff $t=0$ and $t=\infty$, respectively.
All other points are of Type~3; there are no Type~4 points.

\smallskip
We can also describe the structure sheaf $\cO$. 
When $x$ is the Gauss point, the 
local ring $\cO_x$ is the field $F$ of rational functions
and $\cH(x)=\cO_x=F$ is equipped with the trivial norm.
Further, $\Gamma_x=\Gamma_K=0$, so 
$\ratrk x=0$ and $\trdeg x=1$.

Now assume $x\in\BerkPone$ is not the Gauss point and 
pick a coordinate $z\in F$ such that $|z|_x<1$.
If $x$ is of Type~3, that is, $0<|z|_x<1$, then 
$\cO_x=K((z))$ is the field of formal power series 
and $\cH(x)=\cO_x$ is equipped with the norm 
$|\sum_ja_jz^j|_x=r^{\max\{j\mid a_j\ne0\}}$.
Further, $\Gamma_x=\Z\log r$, so 
$\ratrk x=1$, $\trdeg x=0$.

If instead $|z|_x=0$ so that $x$ is of Type~1, then 
we have $\cO_x=K[[z]]$, whereas $\cH(x)\simeq K$ is
equipped with the trivial norm.

\smallskip
Finally, when $K$ is not algebraically closed, we view
$\BerkPone(K)$ as a quotient of $\BerkPone(K^a)$, where
$K^a$ is the algebraic closure of $K$ (note that $K^a$ is 
already complete in this case). 
We can still view the Berkovich projective line as the quotient
$\P^1(K)\times[0,\infty]/\sim$, with $\P^1(K)$ the 
set of closed (but not necessarily $K$-rational) points of
the projective line over $K$ and where $(a,0)\sim(b,0)$
for all $a,b$. The multiplicity (\ie the number of preimages in $\BerkPone(K^a)$
of the Gauss point is 1 and the multiplicity of any point $(a,t)$
is equal to the degree $[K(a):K]$ if $t>0$,
where $K(a)$ is the residue field of $a$. We define a
parametrization of $\BerkPone(K)$ using~\eqref{e221}.
Then the result in Remark~\ref{R205} remains valid.
%
%
%
%
\subsection{Notes and further references}
The construction of the Berkovich affine and projective lines is,
of course, due to Berkovich and everything in this section is,
at least implicitly, contained in his book~\cite{BerkBook}.

For general facts on Berkovich spaces we refer to the 
original works~\cite{BerkBook,Berkihes} or to some of
the recent surveys, \eg the ones by Conrad~\cite{ConradNotes} and
Temkin~\cite{temkinnotes}.
However, the affine and projective lines are very special
cases of Berkovich spaces and in fact we need
very little of the general theory in order to understand them.
I can offer a personal testimony to this fact as I was doing
dynamics on Berkovich spaces before I even knew what a
Berkovich space was!

Having said that, it is sometimes advantageous to use some
general notions, and in particular the structure sheaf,
which will be used to define the local degree of 
a rational map in~\S\ref{S166}. Further, the stability of the 
residue field at Type~2 and Type~3 points is quite useful.
In higher dimensions, simple arguments using the tree structure
are probably less useful than in dimension 1.

The Berkovich affine and projective lines are studied in 
great detail in the book \cite{BRBook} by Baker and Rumely,
to which we refer for more details.
However, our presentation here is slightly different 
and adapted to our purposes. 
In particular, we insist on trying to work in a coordinate free way
whenever possible. For example, the Berkovich unit disc and
its associated Gauss norm play 
an important role in most descriptions of the Berkovich
projective line, but they are only defined once we have chosen a
coordinate; without coordinates all Type~2 points are equivalent.
When studying the dynamics of rational maps, there is
usually no canonical choice of coordinate and hence no
natural Gauss point (the one exception being maps
of simple reduction, see~\S\ref{S160}).

One thing that we do not talk about at all are formal models.
They constitute a powerful geometric tool for studying Berkovich 
spaces, see~\cite{BerkLC1,BerkLC2} but we do not need them here. 
However, the corresponding notion for trivially valued fields is used
systematically in~\S\S\ref{S105}-\ref{S104}.
%
%
%
%
%
%
\newpage
\section{Action by polynomial and rational maps}\label{S156}
We now study how a polynomial or 
a rational map acts on the 
Berkovich affine and projective lines, respectively.
Much of the material in this chapter can be found with details
in the literature. However, as a general rule our presentation is
self-contained, the exception being when we draw more heavily on
the general theory of Berkovich spaces or non-Archimedean geometry.
As before, we strive to work in a coordinate free way 
whenever possible.

Recall that over the complex numbers, the projective line $\P^1$ 
is topologically a sphere. Globally a rational map 
$f:\P^1\to\P^1$ is a branched covering. Locally it is 
of the form $z\mapsto z^m$,  where $m\ge1$ is the local 
degree of $f$ at the point.
In fact, $m=1$ outside the ramification locus of $f$,
which is a finite set.

The non-Archimedean case is superficially very different but 
in fact exhibits many of the same properties when 
correctly translated. The projective line is a tree
and a rational map is a tree map in the sense of~\S\ref{S130}.
Furthermore, there is a natural notion of local degree that we shall 
explore in some detail. The ramification locus can be quite large
and has been studied in detail by Faber~\cite{Faber1,Faber2,Faber3}.
Finally, it is possible to give local normal forms, at least at 
points of Types~1-3.
%
%
%
%
\subsection{Setup}
As before, $K$ is a non-Archimedean field. We assume
that the norm on $K$ is non-trivial
and that $K$ is algebraically closed
but of arbitrary characteristic.
See~\S\ref{S254} for extensions.

Recall the notation $R\simeq K[z]$
for the polynomial ring in one variable with coefficients in $K$,
and $F\simeq K(z)$ for its fraction field.
%
%
%
%
\subsection{Polynomial and rational maps}\label{S259}
We start by recalling some general algebraic facts about 
polynomial and rational maps.
The material in~\S\ref{S291}--\S\ref{S295} is interesting mainly when
the ground field $K$ has positive characteristic. 
General references for that part are~\cite[VII.7]{LangAlgebra} 
and~\cite[IV.2]{Hartshorne}.
%
%
\subsubsection{Polynomial maps}\label{S281}
A nonconstant polynomial map 
$f:\A^1\to\A^1$ 
of the affine line over $K$ is 
given by an injective $K$-algebra homomorphism $f^*:R\to R$.
The \emph{degree} $\deg f$ of $f$ is the length of $R$ as a module
over $f^*R$. 
Given coordinates $z,w\in R$ on $\A^1$,
$f^*w$ is a polynomial in $z$ of degree $\deg f$.
%
%
\subsubsection{Rational maps}\label{S282}
A nonconstant regular map
$f:\P^1\to\P^1$ 
of the projective line over $K$ is 
is defined by an injective homomorphism 
$f^*:F\to F$ of fields over $K$, where $F\simeq K(z)$ is the fraction field 
of $R$. 
The degree of $f$ is the degree of the field extension $F/f^*F$. 
Given coordinates $z,w\in F$ on $\P^1$,
$f^*w$ is a rational function of $z$ of
degree $d:=\deg f$, that is, $f^*w=\phi/\psi$, where
$\phi,\psi\in K[z]$ are polynomials without
common factor and $\max\{\deg\phi,\deg\psi\}=d$.
Thus we refer to $f$ as a rational map, even though it is 
of course regular.

Any polynomial map $f:\A^1\to\A^1$ extends to 
a rational map $f:\P^1\to\P^1$ satisfying $f(\infty)=\infty$.
In fact, polynomial maps can be identified with rational maps
$f:\P^1\to\P^1$ admitting a totally invariant point 
$\infty=f^{-1}(\infty)$.
%
%
\subsubsection{Separable maps}\label{S291}
We say that a rational map $f$ is \emph{separable} 
if the field extension $F/f^*F$ is separable,
see~\cite[VII.4]{LangAlgebra}.
This is always the case if $K$
has characteristic zero.

If $f$ is separable, of degree $d$, then, by the 
Riemann-Hurwitz Theorem~\cite[IV.2/4]{Hartshorne} the 
\emph{ramification divisor} $R_f$ on $\P^1$ is well defined and of degree 
$2d-2$. In particular, all but finitely many points of $\P^1$
have exactly $d$ preimages under $f$, so $f$
has \emph{topological degree} $d$.
%
%
\subsubsection{Purely inseparable maps}\label{S292}
We say that a rational map $f$ is \emph{purely inseparable} if
the field extension $F/f^*F$ is 
purely inseparable. Assuming $\deg f>1$, this can only happen when $K$ has 
characteristic $p>0$ and means that for every $\phi\in F$
there exists $n\ge0$ such that $\phi^{p^n}\in f^*F$, 
see~\cite[VII.7]{LangAlgebra}.
Any purely inseparable map $f:\P^1\to\P^1$ is 
bijective. 
We shall see in~\S\ref{S294} that if $f$ is purely inseparable
of degree $d>1$, then $d=p^n$ for some $n\ge1$ and 
there exists a coordinate $z\in F$ on $\P^1$
such that $f^*z=z^d$.
%
%
\subsubsection{Decomposition}\label{S295}
In general, any algebraic field extension 
can be decomposed into
a separable extension followed by a purely inseparable
extension, see~\cite[VII.7]{LangAlgebra}.
As a consequence, any rational map $f$ can be 
factored as $f=g\circ h$, where $g$ is separable and 
$h$ is purely inseparable.
The topological degree of $f$ is equal to the 
degree of $g$ or, equivalently, the separable 
degree of the field extension $F/f^*F$, see~\cite[VII.4]{LangAlgebra}.
%
%
\subsubsection{Totally ramified points}\label{S293}
We say that a rational map $f:\P^1\to\P^1$
is \emph{totally ramified} at a point $x\in\P^1$ 
if $f^{-1}(f(x))=\{x\}$.
\begin{Prop}\label{P210}
  Let $f:\P^1\to\P^1$ be a rational map of degree $d>1$. 
  \begin{itemize}
  \item[(i)]
    If $f$ is purely inseparable, then $f$ is totally ramified at 
    every point $x\in\P^1$.
  \item[(ii)]
    If $f$ is not purely inseparable, then there are at most 
    two points at which $f$ is totally ramified.
  \end{itemize}
\end{Prop}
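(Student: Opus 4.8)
The plan is to reduce everything to a statement about the induced map on divisors, via the decomposition $f=g\circ h$ with $h$ purely inseparable and $g$ separable (\S\ref{S295}). Part~(i) is essentially immediate: a purely inseparable map is bijective on $\P^1$ (\S\ref{S292}), so $f^{-1}(f(x))=\{x\}$ for every $x$, and the conclusion holds trivially. So the content is in~(ii), and the main tool will be the Riemann--Hurwitz theorem and the fact that the topological degree equals the separable degree.

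For~(ii), write $f=g\circ h$ as above. Since $h$ is purely inseparable it is a bijection on $\P^1$, so $f$ is totally ramified at a point $x$ if and only if $g$ is totally ramified at $h(x)$; hence it suffices to treat the case where $f$ itself is separable of topological degree $d>1$. Now, for a separable map, the ramification divisor $R_f$ on $\P^1$ is well-defined of degree $2d-2$ (\S\ref{S291}). If $f$ is totally ramified at a point $x$ with $y=f(x)$, then the fiber $f^{-1}(y)$, counted with multiplicity, is $d$ copies of $x$; since all $d$ preimages are concentrated at the single point $x$, the local ramification index $e_x$ of $f$ at $x$ equals $d$, so $x$ contributes at least $e_x-1=d-1$ to $\deg R_f$ (the contribution is exactly $e_x-1$ in the tame case and $\ge e_x-1$ in general by the Riemann--Hurwitz formula of \cite[IV.2]{Hartshorne}). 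Distinct totally ramified points lie over distinct points of $\P^1$ only if they have distinct images; but a totally ramified point is the \emph{unique} preimage of its image, so two distinct totally ramified points automatically have distinct images, and their contributions to $R_f$ are supported at distinct points and hence add. Therefore $k$ totally ramified points force $k(d-1)\le\deg R_f=2d-2=2(d-1)$, giving $k\le 2$ since $d>1$.

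The step I expect to be the main obstacle is the bookkeeping with wild ramification: when $\charac K=p>0$ and $p\mid e_x$, the local contribution of $x$ to $R_f$ is strictly larger than $e_x-1$, so the inequality $k(d-1)\le 2(d-1)$ still holds (indeed with room to spare), but one must be careful to invoke the correct form of Riemann--Hurwitz, namely that each ramified point contributes \emph{at least} $e_x-1$, and that the separable degree is what controls the count of preimages. One should also double-check the degenerate possibility that after splitting off the purely inseparable part the separable map $g$ has degree $1$: this happens precisely when $f$ is purely inseparable, which is excluded in~(ii), so $\deg g=d_{\mathrm{sep}}>1$ and the argument goes through. No genuinely new idea is needed beyond assembling \S\S\ref{S291}--\ref{S295} and the standard Riemann--Hurwitz estimate.
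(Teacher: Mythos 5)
Your proof is correct and follows essentially the same route as the paper: part (i) via bijectivity of purely inseparable maps, and part (ii) by splitting off the purely inseparable factor and then counting contributions of totally ramified points against $\deg R_f=2d-2$ via Riemann--Hurwitz. Your explicit care with the wild case (contribution $\ge e_x-1$ rather than exactly $e_x-1$) is in fact slightly more precise than the paper's phrasing, but the argument is the same.
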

\begin{proof}
  If $f$ is purely inseparable, then $f:\P^1\to\P^1$
  is bijective and hence totally ramified at every point.

  Now suppose $f$ is not purely inseparable. Then $f=g\circ h$,
  where $h$ is purely inseparable and $g$ is separable, of degree 
  $\deg g>1$. 
  If $f$ is totally ramified at $x$, then so is $g$, so we may assume
  $f$ is separable. In this case, a direct calculation shows that 
  the ramification divisor has order $d-1$ at $x$. 
  The result follows since the ramification divisor has degree $2(d-1)$.
\end{proof}
%
%
%
%
\subsection{Action on the Berkovich space}\label{S296}
Recall that the affine and projective line $\A^1$ and $\P^1$
embed in the corresponding Berkovich spaces
$\BerkAone$ and $\BerkPone$, respectively.
%
%
\subsubsection{Polynomial maps}
Any nonconstant polynomial map $f:\A^1\to\A^1$ extends to
\begin{equation*}
  f:\BerkAone\to\BerkAone
\end{equation*}
as follows. If $x\in\BerkAone$, then $x'=f(x)$ is 
the multiplicative seminorm $|\cdot|_{x'}$ on $R$ defined by
\begin{equation*}
  |\phi|_{x'}:=|f^*\phi|_x.
\end{equation*} 
It is clear that $f:\BerkAone\to\BerkAone$ is continuous,
as the topology on $\BerkA$ was defined in
terms of pointwise convergence.
Further, $f$ is order-preserving
in the partial ordering on $\BerkAone$
given by $x\le x'$ iff $|\phi|_x\le|\phi|_{x'}$ 
for all polynomials $\phi$. 
%
%
\subsubsection{Rational maps}
Similarly, we can extend any nonconstant rational map $f:\P^1\to\P^1$
to a map
\begin{equation*}
  f:\BerkPone\to\BerkPone.
\end{equation*}
Recall that we defined $\BerkPone$ as the set of 
generalized seminorms $|\cdot|:F\to[0,+\infty]$.
If $x\in\BerkPone$, then the 
value of the seminorm $|\cdot|_{f(x)}$ on 
a rational function $\phi\in F$ is given by 
\begin{equation*}
  |\phi|_{f(x)}:=|f^*\phi|_x.
\end{equation*}

On the Berkovich projective line $\BerkPone$
there is no canonical partial ordering, so in general
it does not make sense to expect $f$ to be order preserving. 
The one exception to this is when there exist points 
$x,x'\in\BerkPone$ such that $f^{-1}(x')=\{x\}$.
In this case one can show that $f:\BerkPone\to\BerkPone$ becomes
order preserving when the source and target spaces are equipped with 
the partial orderings rooted in $x$ and $x'$.
If $x$ and $x'$ are both of Type~2, we can find 
coordinates on the source and target in which $x$ and $x'$ are 
both equal to the Gauss point, in which case one says that $f$
has \emph{good reduction}, see~\S\ref{S160}.
%
%
%
%
\subsection{Preservation of type}\label{S118}
There are many ways of analyzing the mapping properties of 
a rational map $f:\BerkPone\to\BerkPone$. First we show
the type of a point is invariant under $f$. For this, we use
the numerical classification in~\S\ref{S114}.
\begin{Lemma}\label{L211}
  The map $f:\BerkP\to\BerkP$
  sends a point of Type~1-4 to a point
  of the same type.
\end{Lemma}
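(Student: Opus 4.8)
The plan is to reduce the statement to the behavior of the three numerical invariants introduced in~\S\ref{S114}: whether the seminorm $|\cdot|_x$ has nontrivial kernel (which detects Type~1), the transcendence degree $\trdeg x$, and the rational rank $\ratrk x$. Recall from~\S\ref{S114} that among the points with trivial kernel, Type~2 is characterized by $\trdeg x = 1$, $\ratrk x = 0$; Type~3 by $\trdeg x = 0$, $\ratrk x = 1$; and Type~4 by $\trdeg x = 0$, $\ratrk x = 0$. So it suffices to show that $f$ preserves the Type~1 locus and that, off that locus, $f$ preserves both $\trdeg$ and $\ratrk$.

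First I would handle Type~1. If $x \in \BerkPone$ is of Type~1, then $x$ lies in the image of $\P^1 \hookrightarrow \BerkPone$, and since $f:\P^1\to\P^1$ is a morphism of varieties and the embedding is $f$-equivariant, $f(x)$ is again a classical point, hence of Type~1. Conversely, suppose $x$ is not of Type~1, so $|\cdot|_x$ has trivial kernel and $\cO_x = \kappa(x) =: L$ is a field containing $F$, with completion $\cH(x)$. The map $f$ corresponds to the inclusion $f^*: F \hookrightarrow F$ of degree $d = \deg f$; one checks directly from the definition $|\phi|_{f(x)} = |f^*\phi|_x$ that the kernel of $|\cdot|_{f(x)}$ is $(f^*)^{-1}(\ker|\cdot|_x) = 0$, so $f(x)$ also has trivial kernel, i.e.\ is not of Type~1. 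Thus the Type~1 locus is exactly $f$-invariant (as a set, in the preimage-and-image sense), and it remains to treat the other three types simultaneously.

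For a point $x$ with trivial kernel, set $x' = f(x)$. The completed residue field $\cH(x)$ contains, via $f^*$, the subfield $\cH(x')$: indeed $F/f^*F$ is a finite extension of degree $d$, and passing to the $|\cdot|_x$-completion, $\cH(x)/\cH(x')$ is a finite extension as well, of degree dividing $d$ (here one uses that the two fields sit densely inside their common completions in a compatible way). The key inputs are then: the value group satisfies $\Gamma_{x'} \subseteq \Gamma_x$ with $\Gamma_x/\Gamma_{x'}$ a finite group (a finite extension of valued fields has finite ramification index), so $(\Gamma_x \otimes \Q) = (\Gamma_{x'} \otimes \Q)$ as subspaces of $\R$ containing $\Gamma_K \otimes \Q$, whence $\ratrk x = \ratrk x'$; and the residue field extension $\widetilde{\cH(x)}/\widetilde{\cH(x')}$ is finite, hence $\trdeg x = \trdeg x'$ since transcendence degree over $\tK$ is unchanged by finite extensions. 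Combining these with the numerical classification recalled above shows $x$ and $x'$ have the same type. I expect the main obstacle to be the clean justification that $[\cH(x):\cH(x')]$ is finite and that ramification index and residue degree are both finite — this is where one must invoke that $F/f^*F$ has finite degree and argue that completion does not create infinite-dimensional extensions; the stability/weak-stability remarks of~\S\ref{S120} and~\S\ref{S278} are exactly the tools that make this rigorous for Type~2 and~3 points, and for Type~4 one argues that finiteness of $[\cH(x):\cH(x')]$ forces the trivial-kernel, trivial-$\Gamma_{x'}/\Gamma_K$-torsion, trivial-residue-extension combination to be inherited.
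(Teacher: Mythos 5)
Your proof is correct and follows essentially the same route as the paper: reduce to the numerical classification of \S\ref{S114}, handle the Type~1 locus via the kernel of the seminorm, and then use that $\Gamma_{x'}$ has finite index in $\Gamma_x$ and that $\widetilde{\cH(x)}/\widetilde{\cH(x')}$ is finite to preserve $\ratrk$ and $\trdeg$. The only remark is that the detour through the completed residue fields $\cH(x)$ and the stability results is not needed here: since $\Gamma_x=\log|F^*|_x$ and $\widetilde{\cH(x)}$ is already the residue field of $(F,|\cdot|_x)$, the fundamental inequality $ef\le[F:f^*F]=d$ applied directly to the finite extension $F/f^*F$ gives both finiteness statements at once, for all non-Type~1 points uniformly.
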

\begin{proof}
  We follow the proof of~\cite[Proposition~2.15]{BRBook}.
  Fix $x\in\BerkPone$ and write $x'=f(x)$.

  If $|\cdot|_{x'}$ has nontrivial kernel, then clearly
  so does $|\cdot|_x$ and it is not hard to prove
  the converse, using that $K$ is algebraically closed.
  
  Now suppose $|\cdot|_x$ and $|\cdot|_{x'}$ have 
  trivial kernels. In this case, the value group 
  $\Gamma_{x'}$ is a subgroup of $\Gamma_{x}$ of finite index.
  As a consequence, $x$ and $x'$ have the same rational rank.
  Similarly, $\widetilde{\cH(x)}/\widetilde{\cH(x')}$ 
  is a finite field extension,
  so $x$ and $x'$ have the same transcendence degree.
  In view of the numerical classification, $x$ and $x'$
  must have the same type.
\end{proof}
%
%
%
%
\subsection{Topological properties}\label{S260}
Next we explore the basic topological properties 
of a rational map. 
\begin{Prop}\label{P107}
  The map $f:\BerkP\to\BerkP$ 
  is continuous, finite, open  and surjective.
  Any point in $\BerkP$ has at least one
  and at most $d$ preimages, where 
  $d=\deg f$. 
\end{Prop}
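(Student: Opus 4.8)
The plan is to read off all the assertions from the identity $|\phi|_{f(x)}=|f^*\phi|_x$. \emph{Continuity} is essentially automatic: the topology on $\BerkPone$ is that of pointwise convergence of (generalized) seminorms, and for each fixed $\phi\in F$ the function $x\mapsto|\phi|_{f(x)}=|f^*\phi|_x$ is continuous because $f^*\phi\in F$ and $y\mapsto|f^*\phi|_y$ is continuous on $\BerkPone$ by definition of the topology. (The same formula shows that composing rational maps induces composition of the extended maps, which I would use freely.)

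\emph{Fibres and surjectivity.} Fix $x'\in\BerkPone$, and first suppose $x'$ is of Type~2,~3 or~4, so that the kernel of $|\cdot|_{x'}$ is trivial and $|\cdot|_{x'}$ is an honest multiplicative norm on $F$. A preimage of $x'$ is exactly a point of $\BerkPone$ whose seminorm restricts to $|\cdot|_{x'}$ on the subfield $f^*F\subseteq F$. Since $F$ is free of rank $d=[F:f^*F]$ over $f^*F$, any such seminorm is bounded for the $f^*F$-module structure, hence extends continuously and multiplicatively to the $d$-dimensional $\cH(x')$-algebra $A:=F\otimes_{f^*F}\cH(x')$; conversely every multiplicative seminorm on $A$ restricting to the norm of $\cH(x')$ restricts to such a preimage. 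Now $A$ is Artinian, and since $\cH(x')$ is \emph{complete}, for each prime $\fp\subseteq A$ the quotient $A/\fp$ is a finite field extension of $\cH(x')$ carrying a \emph{unique} norm extending that of $\cH(x')$; so the seminorms on $A$ we are after are in bijection with $\Spec A$, a nonempty set with at most $\dim_{\cH(x')}A=d$ elements. This gives $1\le\#f^{-1}(x')\le d$. For a Type~1 point $x'$, which corresponds to a closed point of $\P^1(K)$, one argues directly with the algebraic map $f:\P^1\to\P^1$: the fibre over that point has between $1$ and $d$ closed points, all of Type~1, and by Lemma~\ref{L211} nothing of another type maps to $x'$. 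Either way $f^{-1}(x')$ has between $1$ and $d$ points, which in particular yields surjectivity.

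\emph{Openness} is the substantive point, and I expect it to be the main obstacle. One clean route is to invoke that $f:\P^1\to\P^1$ is a finite morphism of smooth projective curves, hence flat by miracle flatness (the target is regular of dimension one, the source Cohen--Macaulay), and that the analytification of a finite flat morphism of varieties is open; this gives openness of $f:\BerkPone\to\BerkPone$ directly. If one prefers a hands-on argument, note first that, by the fibre bound just proved, $f$ collapses no nondegenerate segment (such a segment would lie inside a single, infinite fibre), so $f$ has totally disconnected fibres. One then works locally around a point $x$ with $x'=f(x)$: at a Type~1 or Type~3 point a Newton-polygon computation puts $f$ in a local normal form $z\mapsto cz^m(1+\cdots)$ and shows that small (strict) open Berkovich discs about $x$ map onto open Berkovich discs about $x'$; at a Type~2 point one analyzes the tangent map of $f$ at $x$, i.e.\ the reduction of $f$, acting on $T_x\simeq\P^1(\tK)$, to obtain the same conclusion on the corresponding basis of open Berkovich discs at $x$; and the Type~4 case follows by writing $x$ as a decreasing limit of Type~2 points along a segment. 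Making the Type~2,~3,~4 analysis uniform (or, alternatively, setting up the flatness input carefully) is the only genuinely delicate part; the fibre count and continuity are routine.
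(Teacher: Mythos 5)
Your proof follows essentially the same route as the paper's: continuity is read off from the definition, the fibre bound comes from the fact that $F/f^*F$ has degree $d$ so that a norm on $f^*F$ has between $1$ and $d$ extensions to $F$ (the paper simply cites Zariski--Samuel here, where you reprove it via $\Spec\bigl(F\otimes_{f^*F}\cH(x')\bigr)$ and completeness of $\cH(x')$), and openness is delegated to general results on finite morphisms of Berkovich/analytic spaces (the paper cites~\cite[3.2.4]{BerkBook}). Your self-contained completion argument and the sketched hands-on alternative for openness are fine, but they do not change the structure of the proof.
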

We shall see shortly that any point 
has \emph{exactly} $d$ preimages, counted
with multiplicity. 
However, note that for a purely inseparable map,
this multiplicity is equal to $\deg f$ at every point.
\begin{proof}
  All the properties follow quite easily from more general
  results in~\cite{BerkBook,Berkihes}, but we recall the 
  proof from~\cite[p.126]{FR2}.

  Continuity of $f$ is clear from the definition,
  as is the fact that a point of Type~1 has at least one and
  at most $d$ preimages.
  A point in $\H=\BerkP\setminus\P^1$ defines a
  norm on $F$, hence also on the subfield $f^*F$.
  The field extension $F/f^*F$ has degree $d$,
  so by~\cite{ZS} a valuation on $f^*F$ has 
  at least one and at most $d$ extensions to $F$. 
  This means that a point in $\H$
  also has at least one and at most $d$ preimages.

  In particular, $f$ is finite and surjective. By general results 
  about morphisms of Berkovich spaces, this implies
  that $f$ is open, see~\cite[3.2.4]{BerkBook}. 
\end{proof}
Since $\BerkPone$ is a tree, Proposition~\ref{P107} shows that
all the results of~\S\ref{S130} apply
and give rather strong information on the
topological properties of $f$.

One should note, however, that these purely topological 
results seem very hard to replicated for Berkovich spaces of
higher dimensions. The situation over the complex numbers
is similar, where the one-dimensional and higher-dimensional
analyses are quite different.
%
%
%
%
\subsection{Local degree} \label{S166}
It is reasonable to expect that any point in $\BerkPone$ should
have exactly $d=\deg f$ preimages under $f$ counted with 
multiplicity. 
This is indeed true, the only problem being to define
this multiplicity. There are several (equivalent) definitions in the 
literature. Here we shall give the one spelled out by Favre and
Rivera-Letelier~\cite{FR2}, but also used by Thuillier~\cite{ThuillierThesis}.
It is the direct translation of the corresponding notion in 
algebraic geometry.

Fix a point $x\in\BerkP$ and write $x'=f(x)$.
Let $\fm_x$ be the maximal ideal in the 
local ring $\cO_x$ and $\kappa(x):=\cO_x/\fm_x$ the residue
field.
Using $f$ we can $\cO_x$ as an $\cO_{x'}$-module and
$\cO_x/\fm_{x'}\cO_x$ as a $\kappa(x')$-vector space.
\begin{Def}\label{D101}
  The local degree of $f$ at $x$ is 
  $\deg_x f=\dim_{\kappa(x')}(\cO_x/\fm_{x'}\cO_x)$.
\end{Def}
Alternatively, since $f$ is finite, it follows~\cite[3.1.6]{BerkBook} that 
$\cO_x$ is a \emph{finite} $\cO_{x'}$-module.
The local degree $\deg_x f$ is therefore also equal
to the rank of the module $\cO_x$ viewed as 
$\cO_{x'}$-module, see~\cite[Theorem~2.3]{Matsumura}.
From this remark it follows that if 
$f,g:\P^1\to\P^1$ are nonconstant rational maps, then 
\begin{equation*}
  \deg_x(f\circ g)=\deg_xg\cdot\deg_{g(x)}f
\end{equation*}
for any $x\in\BerkPone$.

The definition above of the local degree works also over the complex numbers. 
A difficulty in the non-Archimedean setting is that the local rings $\cO_x$
are not as concrete as in the complex case, where they 
are isomorphic to the ring of convergent power series.

The following result shows that that local degree behaves as
one would expect from the complex case.
See~\cite[Proposition-Definition~2.1]{FR2}.
\begin{Prop}\label{P102}
  For every simple domain $V$ and every 
  connected component $U$ of $f^{-1}(V)$,
  the integer
  \begin{equation}\label{e201}
    \sum_{f(y)=x, y\in U}\deg_y f
  \end{equation}
  is independent of the point $x\in V$.
\end{Prop}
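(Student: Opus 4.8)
The plan is to prove that the quantity \eqref{e201} is locally constant on $V$, and then invoke connectedness of $V$ (any simple domain is connected) to conclude it is constant. Since the map $f\colon\BerkPone\to\BerkPone$ is a tree map of topological degree $d$ by Proposition~\ref{P107}, I will want to work with a well-chosen small simple neighborhood of a given point and use the multiplicativity of the local degree together with the key algebraic fact that $\deg_xf$, being the rank of $\cO_x$ over $\cO_{x'}$, is insensitive to ``moving $x$ slightly'' inside a fibre.

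First I would reduce to the case where $V$ is a \emph{simple domain} small enough that $V$ contains no critical values in a suitable sense and $U$ meets $f^{-1}(x)$ in a controlled way; concretely, fix $x_0\in V$ and let $y_1,\dots,y_k$ be the preimages of $x_0$ lying in $U$. Using Corollary~\ref{C107} and Corollary~\ref{C108}, choose, for each $y_j$, a small connected neighborhood $W_j\ni y_j$ mapping onto a neighborhood of $x_0$, with the tangent map at $y_j$ understood. The heart of the matter is a \emph{local} statement: for $y$ near a Type~2 or Type~3 point, the sum $\sum_{f(y')=f(y)}\deg_{y'}f$ taken over $y'$ in a fixed small neighborhood equals $\deg_yf$ when $y$ itself is that distinguished point, and this is witnessed via the behavior on segments and annuli in Corollary~\ref{C108}(ii): $f^{-1}(\gamma')$ near $y$ is a disjoint union of intervals $\gamma_i=[x_i,y_i]$ each mapped homeomorphically onto $\gamma'$, and the annuli $A(x_i,y_i)$ map onto $A(x',y')$. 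On such annuli $f$ is (in suitable coordinates) a monomial-type map, so the local degree is constant along the interior of each $\gamma_i$ and the degrees add up correctly across the finitely many $\gamma_i$ making up a fibre inside $U$.

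The technical spine of the argument is therefore: (a) $\deg_yf$ is constant as $y$ ranges over the interior of a segment $\gamma_i$ produced by Corollary~\ref{C108}, because along that segment $\cO_y$ over $\cO_{f(y)}$ has constant rank — this is where one uses the concrete description of $\cO_x$ and $\cH(x)$ at Type~3 points from~\S\ref{S279}, and the stability of the completed residue field (\S\ref{S120}) which gives $\deg_xf=[\Gamma_x':\Gamma_{x}]\cdot[\widetilde{\cH(x)}:\widetilde{\cH(x')}]$ a multiplicative formula that visibly does not jump along an annulus; and (b) at the ``branch'' points the contributions from the various branches sum, by additivity of $\dim_{\kappa(x')}(\cO_x/\fm_{x'}\cO_x)$ over the finitely many points $y_i$ with $f(y_i)=x'$ lying over a common small interval. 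Combining (a) and (b), the sum \eqref{e201} does not change as $x$ moves along any segment inside $V$ that avoids the (finitely many) branch points of $V$ and of the relevant subtree; since $V$ is a connected simple domain, any two of its points are joined by such a path after subdividing at the finitely many branch points, and at each branch point the two one-sided limits agree by the additivity in (b). Hence the sum is constant on $V$.

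The main obstacle I anticipate is step (a): making rigorous that the local degree is genuinely constant along the interior of a segment, i.e.\ that $\cO_y/\fm_{f(y)}\cO_y$ has locally constant dimension. The clean route is to avoid arguing directly with the somewhat opaque local rings $\cO_y$ and instead pass to completed residue fields, where $\deg_yf=[\cH(y):\cH(f(y))]$ by quasicompleteness and weak stability (\S\ref{S277}--\S\ref{S278}), and then use the product formula from~\S\ref{S120} at Type~2/Type~3 points: along an open segment free of branch points one is looking at a family of Type~3 (or a single Type~2) points on which both the value-group index and the residue-field degree are manifestly constant, because $f$ restricted to the corresponding annulus is, up to the homeomorphism of Corollary~\ref{C108}, a fixed monomial map $z\mapsto z^m$. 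An alternative, if one prefers to stay purely topological, is to bypass (a) by instead proving $\sum_{f(y)=x,\,y\in U}\deg_yf=\deg(f|_U)$, the topological degree of $f$ as a tree map restricted to $U$, which is manifestly constant; but identifying $\deg_yf$ with the local topological expansion factor is exactly Theorem~\ref{T101}, which is proved later, so within the logical order of these notes the algebraic argument via (a) and (b) is the one to carry out here.
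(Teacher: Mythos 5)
The paper does not actually prove this proposition: it refers to \cite[p.126]{FR2}, where the statement is obtained by viewing $f\colon U\to V$ as a finite morphism of Berkovich analytic curves and invoking the general theory (in essence, that the fibre degree of a finite flat morphism is locally constant). The author explicitly flags this as one of the few places where the notes lean on the general machinery rather than on tree arguments. So your proposal is necessarily a different route, and the question is whether the elementary route closes. I don't think it does as written.

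The gap is in the fibres over points of Type~1 and, especially, Type~4. Your steps (a) and (b) analyse Type~2 and Type~3 points via segments and annuli, where the local rings and completed residue fields are explicit and the product formula $\deg_xf=[\Gamma_x:\Gamma_{x'}]\cdot[\widetilde{\cH(x)}:\widetilde{\cH(x')}]$ is available. But a simple domain $V$ contains points of all types, and if $x\in V$ is of Type~4 then so are all its preimages $y\in U$. For such $y$ there is no explicit description of $\cO_y$, no stability statement, and no monomial model on an annulus; the assertion you need --- that $\dim_{\kappa(x')}(\cO_y/\fm_{x'}\cO_y)$ equals the limit of the fibre sums along a segment approaching $y$ --- is precisely the continuity statement that the paper later \emph{deduces from} Proposition~\ref{P102} when proving the Type~4 case of Theorem~\ref{T101}. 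So within the logical order of the notes your argument either leaves the Type~4 fibres untreated or becomes circular.

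A second, related problem is in your step (b). Theorem~\ref{T101}~(iii)~(b) is a statement about a single point $y$: the local degree at $y$ is the sum of the directional multiplicities $m_\vv(f)$ at $y$. What the proposition requires is different: when $x$ moves away from $x_0$, a single preimage $y_j$ of $x_0$ may split into several preimages of $x$, and one must show that the sum of their local degrees equals $\deg_{y_j}f$. There is no ``additivity of $\dim_{\kappa(x')}(\cO_y/\fm_{x'}\cO_y)$ over the points of a fibre'' available pointwise; that conservation of degree is exactly the coherence/flatness input ($f_*\cO_U$ locally free of rank equal to the fibre sum) that the reference to \cite{FR2} supplies. Your argument for Type~2 and Type~3 points can be salvaged using the approximation lemma (Lemma~\ref{L121}) and the reduction map, but to handle the ends of the tree you will either need the analytic-curve machinery after all, or a separate limiting argument for Type~1 points (via Proposition~\ref{P103} and the explicit formula $f(x_{D(0,r)})=x_{D(0,r^k)}$) together with a genuinely new idea for Type~4 points.
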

Recall that a simple domain is a finite intersection of 
open Berkovich discs; see~\S\ref{S112}.
The integer in~\eqref{e201} should be interpreted as the degree of the map from 
$U$ to $V$. If we put $U=V=\BerkPone$, then this degree is $d$.

We refer to~\cite[p.126]{FR2} for a proof. The idea is to view 
$f:U\to V$ as a map between Berkovich analytic curves. 
In fact, this is one of the few places in these notes where we draw
more heavily on the general theory of Berkovich spaces.

We would like to give a more concrete interpretation of the local
degree. First, at a Type~1 point, it  can
be read off from a local expansion of $f$:
\begin{Prop}\label{P103}
  Let $x\in\BerkPone$ be a Type~1 point and 
  pick coordinates $z$, $w$ on $\P^1$ such that
  $x=f(x)=0$. 
  Then $\cO_x\simeq K\{z\}$, $\cO_{f(x)}=K\{w\}$
  and we have 
  \begin{equation}\label{e127}
    f^*w=az^k(1+h(z)),
  \end{equation}
  where $a\ne0$, $k=\deg_x(f)$ and $h(0)=0$.
\end{Prop}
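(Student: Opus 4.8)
The plan is to reduce everything to an elementary computation inside the convergent power series ring $K\{z\}$, using the concrete description of the local rings at Type~1 points recalled in~\S\ref{S279}.

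First I would fix coordinates $z,w$ with $x=f(x)=0$ and invoke~\S\ref{S279} to identify $\cO_x\simeq K\{z\}$ and $\cO_{f(x)}\simeq K\{w\}$, with maximal ideals $\fm_x=(z)$, $\fm_{f(x)}=(w)$, and residue fields $\kappa(x)=\kappa(f(x))=K$. The next step is to record that $f^*\colon\cO_{f(x)}\to\cO_x$ is a local homomorphism: since $f$ is nonconstant, $f^*$ is injective, so $f^*w\ne0$; and $f^*w$ is a rational function with no pole at $x$ (because $f(x)=0\ne\infty$), hence lies in $\cO_x$, and it vanishes at $x$ since $f(x)=0$, so $f^*w\in\fm_x$. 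Writing $f^*w=\sum_{j\ge1}a_jz^j$ in $K\{z\}$ and letting $k\ge1$ be the smallest index with $a_k\ne0$, I would factor $f^*w=az^k(1+h(z))$ with $a:=a_k\ne0$ and $h(z):=\sum_{j\ge1}(a_{k+j}/a_k)z^j$. Shifting the index and dividing by the nonzero scalar $a$ does not change $\limsup_j|a_j|^{1/j}$, so $h\in K\{z\}$, and clearly $h(0)=0$. This already yields the asserted local form~\eqref{e127}, with $k$ a priori just the order of vanishing of $f^*w$ at $x$.

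The remaining, and only substantive, step is to identify $k$ with $\deg_xf$. Here I would compute the $\kappa(f(x))$-vector space in Definition~\ref{D101}. Since $\fm_{f(x)}=w\,\cO_{f(x)}$ is principal, the extended ideal $\fm_{f(x)}\cO_x$ is the ideal of $\cO_x=K\{z\}$ generated by $f^*w=az^k(1+h(z))$; because $h(0)=0$ the factor $a(1+h(z))$ has nonzero constant term, hence is a unit in $K\{z\}$, so $\fm_{f(x)}\cO_x=z^k\,K\{z\}$. Therefore $\cO_x/\fm_{f(x)}\cO_x\cong K\{z\}/(z^k)$, which has $K$-basis $1,z,\dots,z^{k-1}$ (and on which $\kappa(f(x))=K$ acts through $f^*$ as scalars), so $\deg_xf=\dim_K\bigl(K\{z\}/(z^k)\bigr)=k$.

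I do not expect a genuine obstacle here. The one point requiring a little care is that $f^*$ really is the substitution $w\mapsto f^*w$, and in particular a local homomorphism carrying $\fm_{f(x)}$ into $\fm_x$; this is immediate from the definition of the action of $f$ on $\BerkPone$ together with $f(x)=0$. The only non-elementary input is the identification $\cO_x\simeq K\{z\}$ at a Type~1 point, which is exactly the content cited from~\S\ref{S279}; everything else is a one-line manipulation of power series.
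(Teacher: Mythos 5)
Your proof is correct, and it takes a cleaner route than the paper's own argument. The paper verifies $k=\deg_x(f)$ via the alternative characterization of the local degree (mentioned just after Definition~\ref{D101}) as the rank of $\cO_x$ as an $\cO_{f(x)}$-module: assuming characteristic zero, it extracts a $k$-th root $1+\phi(z)$ of $1+h(z)$ in $K\{z\}$ so that $f^*w=(z(1+\phi(z)))^k$, exhibits $\{(z(1+\phi(z)))^j\}_{0\le j<k}$ as a free basis, and then defers the positive-characteristic case to~\cite[p.126]{FR2}. You instead compute the invariant in Definition~\ref{D101} head-on: $\fm_{f(x)}\cO_x$ is generated by $f^*w=az^k(1+h(z))$, the factor $a(1+h(z))$ is a unit in $K\{z\}$ because its constant term $a$ is nonzero (standard, since $K\{z\}$ is the ring of power series with positive radius of convergence over a complete valued field), hence $\fm_{f(x)}\cO_x=(z^k)$ and $\dim_K\bigl(K\{z\}/(z^k)\bigr)=k$. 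This has two concrete advantages: it uses only what the definition asks for, needing no change of variable or explicit free basis, and it is characteristic-free, so there is no case split and no appeal to an external reference. The one point worth spelling out in a final write-up is the unit-inverse estimate in $K\{z\}$, but that is a one-line remark.
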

\begin{proof}
  The only thing that needs to be checked is that $k=\deg_x(f)$.
  We may assume $a=1$.
  First suppose $\charac K=0$. Then we can find 
  $\phi(z)\in K\{z\}$ 
  such that $1+h(z)=(1+\phi(z))^k$ in $K\{z\}$.
  It is now clear that $\cO_x\sim K\{z\}$ is a
  free module over $f^*\cO_{f(x)}$ of rank $k$,
  with basis given by $(z(1+\phi(z)))^j$, $0\le j\le k-1$,
  so $\deg_x(f)=k$.
  A similar argument can be used the case when $K$ has
  characteristic $p>0$; we refer to~\cite[p.126]{FR2} for 
  the proof.
\end{proof}
We shall later see how the local degree at a Type~2 or Type~3 points 
also appears in a suitable local expansion of $f$.

The following crucial result allows us to interpret the local degree 
quite concretely as a local expansion factor in the hyperbolic metric.
\begin{Thm}\label{T101}
  Let $f:\BerkPone\to\BerkPone$ be as above.
  \begin{itemize}
  \item[(i)]
    If $x$ is a point of Type~1 or~4 and $\gamma=[x,y]$ 
    is a sufficiently small segment, then $f$ maps $\gamma$ 
    homeomorphically onto $f(\gamma)$ and 
    expands the hyperbolic metric on $\gamma$ by a factor 
    $\deg_x(f)$.
  \item[(ii)]
    If $x$ is a point of Type~3 and $\gamma$ 
    is a sufficiently small segment containing $x$
    in its interior,  then $f$ maps $\gamma$ 
    homeomorphically onto $f(\gamma)$ and 
    expands the hyperbolic metric on $\gamma$ by a factor 
    $\deg_x(f)$.
  \item[(iii)]
    If $x$ is a point of Type~2, then for every tangent direction 
    $\vv$ at $x$ there exists an integer $m_\vv(f)$ such 
    that the following holds:
    \begin{itemize}
    \item[(a)]
      for any sufficiently small segment $\gamma=[x,y]$ 
      representing $\vv$, $f$ maps $\gamma$ homeomorphically
      onto $f(\gamma)$ and expands the hyperbolic metric on $\gamma$
      by a factor  $m_\vv(f)$;
    \item[(b)]
      if $\vv$ is any tangent direction at $x$ and 
      $\vv_1,\dots,\vv_m$ are the preimages of $\vv$ 
      under the tangent map, then 
      $\sum_i m_{\vv_i}(f)=\deg_x(f)$.
    \end{itemize}
  \end{itemize}
\end{Thm}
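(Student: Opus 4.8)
The strategy is to reduce everything to the algebraic definition of the local degree (Definition~\ref{D101}) combined with the explicit description of the local rings $\cO_x$ at points of Types~1--3 from~\S\ref{S279}. The unifying principle is this: near a point $x$ of Type~2 or~3, after choosing suitable coordinates $z$ at $x$ and $w$ at $x'=f(x)$, the pullback $f^*w$ can be written in a normal form from which one reads off both the effect on the parametrization $\a_z$ (hence on the hyperbolic metric) and the local degree. The three cases are handled in increasing order of difficulty: Type~1, then Type~3, then Type~2 (the last being the real content).

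\textbf{Type~1 points.} Here $x$ is classical, and Proposition~\ref{P103} already gives the local expansion $f^*w=az^k(1+h(z))$ with $k=\deg_x(f)$. The plan is to push this expansion forward along the segment $\gamma=[x,y]$: parametrize points near $x$ on $\gamma$ by small discs $D(0,r)$ (or their irrational analogues), and use~\eqref{e162} to compute $|f^*w|_{x_{D(0,r)}}=|az^k|_{x_{D(0,r)}}=|a|r^k$ for $r$ small, since $h(0)=0$ makes the factor $(1+h)$ have norm $1$. In additive terms, $\a_w(f(x_r))=\log|a|+k\a_z(x_r)$ up to an additive constant, so the hyperbolic distance is multiplied by $k=\deg_x f$. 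The homeomorphism statement follows because $r\mapsto|a|r^k$ is strictly monotone. Type~4 is identical: a Type~4 point is a decreasing limit of Type~2 points, the expansion~\eqref{e127} still governs the values on small enough discs, and $\deg_x f$ is constant along a small initial segment.

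\textbf{Type~3 points.} Now $x=x_D$ for an irrational disc $D$; choose a coordinate $z$ with $x$ associated to $D(0,r)$, $r\notin|K^*|$. Using the explicit description of $\cO_x$ at a Type~3 point (series $\sum a_jz^j$ with the stated decay), one shows $f^*w=cz^k u(z)$ where $u$ is a unit in $\cO_x$ (so $|u|_x=1$) and $k=\deg_x f$ — this is where one invokes that the local degree equals the rank of $\cO_x$ over $\cO_{x'}$, together with the structure of these rings; alternatively, approximate $f^*w$ by a rational function and factor out the contribution concentrated near the $K$-points lying ``inside'' $D$. Then for $r'$ near $r$, $|f^*w|_{x_{D(0,r')}}=|c|(r')^k$, giving $\a_w\circ f=k\a_z+\text{const}$ on a small bi-infinite neighborhood of $x$ in its segment, hence the metric is dilated by $\deg_x f$; monotonicity of $(r')\mapsto|c|(r')^k$ gives the homeomorphism onto the image.

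\textbf{Type~2 points --- the main obstacle.} This is the crux. Fix a Type~2 point $x$; choose coordinates so that $x$ and $x'=f(x)$ are both the Gauss point (possible since $\Aut(\P^1)$ acts transitively on Type~2 points). Then $f^*w$ is a rational function $\phi/\psi$, and its reduction $\overline{f^*w}\in\widetilde{\cH(x')}\simeq\tilde K(z)$ is a nonconstant rational function over $\tilde K$ (nonconstant because $f$ does not collapse $x'$ — this uses $\deg f>0$ and the fact that $f$ preserves Type~2 points, Lemma~\ref{L211}). For a tangent direction $\vv$ at $x$, corresponding under the reduction map $r_x\colon\BerkPone\setminus\{x\}\to\P^1(\tilde K)$ of~\S\ref{S115} to a point $\bar\xi\in\P^1(\tilde K)$, I define $m_\vv(f)$ to be the order of vanishing of $\overline{f^*w}-\overline{f^*w}(\bar\xi)$ at $\bar\xi$ (in the appropriate chart), i.e.\ the local degree of the reduced rational map at $\bar\xi$. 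For part~(a): represent $\vv$ by a small segment $\gamma=[x,y]$; points on $\gamma$ near $x$ are Type~2/3 points $x_{D(a,s)}$ with $\bar a=\bar\xi$ and $s\uparrow1$. A local computation of $|\phi/\psi|$ on such discs — expanding $\phi,\psi$ around $a$ and tracking which terms dominate — shows $|f^*w|_{x_{D(a,s)}}=C\cdot s^{m_\vv(f)}\cdot(1+o(1))$, giving the dilation factor $m_\vv(f)$ on $\gamma$, and strict monotonicity gives the homeomorphism. For part~(b): the preimages $\vv_1,\dots,\vv_m$ of a tangent direction $\vv$ at $x$ correspond to the points of $\P^1(\tilde K)$ mapping to $\bar\xi$ under $\overline{f^*w}$, and $\sum_i m_{\vv_i}(f)$ is the sum of the local degrees of the reduced map over $\bar\xi$, which equals the (topological = field-extension) degree of $\overline{f^*w}\colon\P^1(\tilde K)\to\P^1(\tilde K)$. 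So it remains to identify this with $\deg_x f$. The cleanest route is via Definition~\ref{D101}: compute $\cO_x/\fm_{x'}\cO_x$. Since $x,x'$ are Type~2, $\cO_x$ and $\cO_{x'}$ are fields (residue fields), and by weak stability of these fields (\S\ref{S278}) $\deg_x f=[\cO_x:\cO_{x'}]=[\cH(x):\cH(x')]$; then the stability of $\cH(x)$ (\S\ref{S120}) gives $[\cH(x):\cH(x')]=[\Gamma_x:\Gamma_{x'}]\cdot[\widetilde{\cH(x)}:\widetilde{\cH(x')}]$. In the normalized coordinates both value groups equal $\Gamma_K$, so the ramification index is $1$ and $\deg_x f=[\tilde K(z):\overline{f^*w}\,\tilde K(z)]=\deg\overline{f^*w}$, which is exactly $\sum_i m_{\vv_i}(f)$. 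The main obstacle is precisely this identification — keeping the normalization (both points Gauss), correctly defining the reduction of $f$ as a rational map of $\P^1_{\tilde K}$, and invoking the stability results so that the value-group contribution is trivial; once that is in place, part~(b) is the classical fact that the degree of a rational map of $\P^1_{\tilde K}$ is the number of preimages of a point counted with multiplicity. Consistency of $m_\vv(f)$ with part~(a) and with Proposition~\ref{P102} (applied to small Berkovich discs $U(\vv)$) then closes the argument.
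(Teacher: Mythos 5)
Your treatment of Types~1, 2 and~3 follows essentially the same route as the paper: local normal form at Type~1 points via Proposition~\ref{P103}, a unit-times-monomial factorization at Type~3 points, and at Type~2 points the identification of the tangent map with the reduced rational map over $\tK$ together with $\deg_x(f)=[\Gamma_x:\Gamma_{x'}]\cdot[\widetilde{\cH(x)}:\widetilde{\cH(x')}]$ and the vanishing of the ramification index at Gauss points. The one place where the paper invests more care is in making ``tracking which terms dominate'' uniform: it proves an approximation lemma (Lemma~\ref{L121}) showing that if $f^*w=g(z)(1+h(z))$ with $|h(z)|_x<1$, then $f$ and $g$ agree, with equal local degrees, on a whole hyperbolic ball around $x$; this is what lets one transfer the computation from the explicit model map $g$ back to $f$ at all nearby points, not just at $x$. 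You should either prove such a lemma or argue pointwise at every $y$ on the segment.

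The genuine gap is your Type~4 case. You assert it is ``identical'' to Type~1 because the expansion~\eqref{e127} ``still governs the values,'' but~\eqref{e127} is an expansion in a coordinate $z$ vanishing at $x$, and a Type~4 point is not classical: there is no such coordinate, $\ker|\cdot|_x=0$, and $\cO_x$ is a field rather than a power series ring, so no analogue of Proposition~\ref{P103} is available. Moreover ``$\deg_x f$ is constant along a small initial segment'' is precisely what has to be proved, not assumed. The argument that works is different in kind: first use Corollary~\ref{C108} to get a segment $\gamma=[x,y]$ mapped homeomorphically; then show the local expansion factor along $\gamma$ (defined at the interior Type~2 and~3 points by the already-established cases) is eventually constant --- at a Type~2 point $\xi\in\,]x,y[$ one compares the multiplicities $m(\vv_+)$, $m(\vv_-)$ of the two tangent directions along $\gamma$ and shows $m(\vv_+)\ge m(\vv_-)$ (otherwise two distinct tangent directions at $\xi$ would have the same image, forcing an extra preimage of $f(x)$ near $x$, contradicting that $x$ is the unique preimage in a small $U(\ww)$), and since $m(\vv_+)\le\deg f$ the factor stabilizes to some integer $m$; finally one identifies $m$ with $\deg_x(f)$ by applying Proposition~\ref{P102} to the component $U(\ww)$, in which every $\xi\in\gamma$ is the unique preimage of $f(\xi)$ and $\deg_\xi(f)=m$ for $\xi\ne x$. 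Without some version of this argument the Type~4 case is unproved.
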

Theorem~\ref{T101} is due to 
Rivera-Letelier~\cite[Proposition~3.1]{Rivera4}
(see also~\cite[Theorem~9.26]{BRBook}). However,
in these references, different (but equivalent) 
definitions of local degree were used.
In~\S\ref{S123} below we will indicate a direct proof of 
Theorem~\ref{T101} using the above definition of 
the local degree.

Since the local degree is bounded by the algebraic degree, we obtain
as an immediate consequence
\begin{Cor}\label{C202}
  If $f:\BerkPone\to\BerkPone$ is as above, then 
  \begin{equation*}
    d_\H(f(x),f(y))\le\deg f\cdot d_\H(x,y)
  \end{equation*}
  for all $x,y\in\H$.
\end{Cor}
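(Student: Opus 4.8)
The plan is to deduce the global Lipschitz estimate from the local expansion statement of Theorem~\ref{T101} by chopping the segment $[x,y]$ into finitely many pieces.

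First I would record two preliminary observations. Since $x,y\in\H$ are not of Type~1 and the Type~1 points are precisely the ends of $\BerkPone$ lying in $\P^1$, no Type~1 point lies in the interior of $[x,y]$; hence $[x,y]\subseteq\H$ and the hyperbolic metric is finite along it. Moreover $f$ preserves the type of a point (Lemma~\ref{L211}), so $f(\P^1)\subseteq\P^1$ and therefore $f(\H)\subseteq\H$; in particular $f([x,y])\subseteq\H$, so all the hyperbolic distances appearing below make sense. We may of course assume $x\ne y$.

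The key step is to produce a subdivision $x=x_0,x_1,\dots,x_n=y$ of $[x,y]$, with $\gamma_i:=[x_{i-1},x_i]$ having pairwise disjoint interiors, such that for every $i$ the map $f$ restricts to a homeomorphism of $\gamma_i$ onto $f(\gamma_i)$ and expands the hyperbolic metric on $\gamma_i$ by a factor at most $\deg f$. For this I would apply Theorem~\ref{T101} at each point $p\in[x,y]$: according to the type of $p$ (and to whether $p$ is an endpoint or an interior point of $[x,y]$), this furnishes a small segment, or a pair of small segments on the two sides of $p$, whose union is a relatively open neighborhood of $p$ in $[x,y]$, on which $f$ is a homeomorphism onto its image and expands the hyperbolic length by the factor $\deg_p(f)$ at a Type~1, 3 or 4 point, and by the factor $m_{\vv}(f)$ along the tangent direction $\vv$ at a Type~2 point. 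Each such factor is at most $\deg f$: the local degree satisfies $\deg_p(f)\le\deg f$ (summing local degrees over a fibre, as in Proposition~\ref{P102}), and $m_{\vv}(f)\le\deg_p(f)$ by Theorem~\ref{T101}(iii)(b). These neighborhoods cover the compact set $[x,y]$, so finitely many of them cover it, and the finite set of their endpoints lying in $[x,y]$ defines the desired subdivision: each resulting $\gamma_i$ is contained in one of the chosen small segments, hence inherits the homeomorphism and the expansion bound.

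Finally I would conclude by the triangle inequality. Since $f|_{\gamma_i}$ is a homeomorphism of the interval $\gamma_i$ onto the interval $f(\gamma_i)$, it carries endpoints to endpoints, so $f(\gamma_i)=[f(x_{i-1}),f(x_i)]$ and $d_\H(f(x_{i-1}),f(x_i))\le\deg f\cdot d_\H(x_{i-1},x_i)$. Summing over $i$ and using additivity of the hyperbolic length along $[x,y]$,
\[
d_\H(f(x),f(y))\le\sum_{i=1}^n d_\H(f(x_{i-1}),f(x_i))\le\deg f\sum_{i=1}^n d_\H(x_{i-1},x_i)=\deg f\cdot d_\H(x,y).
\]
I do not expect any serious obstacle here; the only point requiring care is the bookkeeping in the covering step — verifying that Theorem~\ref{T101}, applied at a point of arbitrary type that may be an endpoint or an interior point of $[x,y]$, always supplies a relatively open neighborhood in $[x,y]$ on which $f$ is injective with hyperbolic expansion bounded by $\deg f$ — and this is routine given the case analysis already present in the statement of Theorem~\ref{T101}.
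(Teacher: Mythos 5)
Your argument is correct and is exactly the route the paper intends: the paper dismisses this as an ``immediate consequence'' of Theorem~\ref{T101} together with the bound $\deg_x(f)\le\deg f$, and your covering/subdivision step is just the standard compactness argument (essentially the one already carried out for Corollary~\ref{C109}) that makes ``immediate'' precise. No gaps.
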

Using Theorem~\ref{T101} we can also make Corollary~\ref{C109} more precise:
\begin{Cor}\label{C106}
  Let $\gamma\subseteq\BerkPone$ be a segment
  such that the local degree is constant on the interior
  of $\gamma$. Then $f$ maps $\gamma$ homeomorphically
  onto $\gamma':=f(\gamma)$.
\end{Cor}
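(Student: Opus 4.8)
The plan is to combine Corollary~\ref{C109}~(i) with the metric-expansion statement in Theorem~\ref{T101}. First I would apply Corollary~\ref{C109}~(i) to decompose $\gamma$ as a finite union $\gamma=\bigcup_i\gamma_i$ of closed intervals with pairwise disjoint interiors, such that $f$ maps each $\gamma_i$ homeomorphically onto an interval $\gamma_i':=f(\gamma_i)$. So $f|_\gamma$ is a finite concatenation of homeomorphisms onto subintervals, and the only way $f|_\gamma$ can fail to be injective (hence fail to be a homeomorphism onto its image) is if the direction of travel ``reverses'' at one of the breakpoints $x_i$ between consecutive pieces $\gamma_{i-1}$ and $\gamma_i$ — that is, if the tangent direction at $x_i$ pointing along $\gamma_{i-1}$ and the one pointing along $\gamma_i$ have the same image under the tangent map, so their images coincide and $f(\gamma_{i-1})$, $f(\gamma_i)$ overlap near $f(x_i)$ on the same side.

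The key point is to rule out such a reversal at each interior breakpoint $x_i$ using the hypothesis that the local degree is constant on the interior of $\gamma$. Each breakpoint $x_i$ lies in the interior of $\gamma$, so $\deg_{x_i}f=:m$ equals the common value of the local degree on the interior. Now $x_i$ is of Type~1, 2, 3 or 4; since it is not an endpoint of $\gamma$ it has at least two tangent directions inside $\gamma$, which forces $x_i$ to be of Type~2 (Type~1 and~4 points are ends of $\BerkPone$, and at a Type~3 point the segment through $x_i$ is essentially unique in the relevant sense — more precisely, I would invoke Theorem~\ref{T101}~(i)--(ii) to handle Type~1, 3, 4 points directly: there $f$ is a local homeomorphism on a small segment through $x_i$, so no reversal is possible). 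At a Type~2 point $x_i$, let $\vv^-$ and $\vv^+$ be the two tangent directions at $x_i$ represented by $\gamma_{i-1}$ and $\gamma_i$ respectively. By Theorem~\ref{T101}~(iii)(b), $\sum_{\vv} m_\vv(f)=\deg_{x_i}f=m$ where the sum runs over preimages of any fixed tangent direction at $f(x_i)$; and by part~(iii)(a) the multiplicity $m_\vv(f)$ governs the local metric expansion along $\vv$. I would now use that the local degree is constant $=m$ on the interiors of both $\gamma_{i-1}$ and $\gamma_i$ near $x_i$: by Theorem~\ref{T101}~(iii)(a) applied to short segments representing $\vv^-$ and $\vv^+$, we get $m_{\vv^-}(f)=m=m_{\vv^+}(f)$ (the expansion factor along a segment whose interior points have local degree $m$ must be $m$, by part~(i) of the theorem applied at those interior points). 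But then $m_{\vv^-}(f)+m_{\vv^+}(f)=2m>m=\deg_{x_i}f$, which together with the nonnegativity of the other $m_{\vv_j}(f)$ contradicts part~(iii)(b) — unless $\vv^-$ and $\vv^+$ have \emph{distinct} images under the tangent map. Hence the tangent map sends $\vv^-,\vv^+$ to distinct directions, so $f(\gamma_{i-1})$ and $f(\gamma_i)$ leave $f(x_i)$ along different directions and only meet at $f(x_i)$ itself.

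Putting this together: $f|_\gamma$ is a concatenation of homeomorphisms $\gamma_i\to\gamma_i'$ in which consecutive images meet only at the shared endpoint and proceed into distinct tangent directions there, so no point of $\gamma':=f(\gamma)$ is hit twice. Thus $f|_\gamma$ is a continuous injection of the compact segment $\gamma$ onto $\gamma'$, hence a homeomorphism. The main obstacle I anticipate is the bookkeeping at the breakpoints — carefully matching ``constant local degree on the interior'' with the multiplicities $m_\vv(f)$ via Theorem~\ref{T101}, and checking that a breakpoint forced by the decomposition in Corollary~\ref{C109}~(i) which happens \emph{not} to be a genuine branch/ramification point causes no trouble (there $m_{\vv^-}(f)=m_{\vv^+}(f)=m$ with no other preimage directions, the tangent map is injective on $\{\vv^-,\vv^+\}$ automatically, and the two pieces glue into a single homeomorphism). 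Everything else is routine.
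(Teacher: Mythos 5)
Your proof is correct and follows essentially the same route as the paper: reduce via Corollary~\ref{C109} to injectivity of the tangent map on the two tangent directions defined by $\gamma$ at each interior point, and rule out a collision there because $m_{\vv^-}(f)=m_{\vv^+}(f)=m=\deg_x(f)$ would force $\deg_x(f)\ge 2m$ by Theorem~\ref{T101}~(iii)(b). (One cosmetic point: when identifying the expansion factors $m_{\vv^\pm}(f)$ with the constant $m$, the relevant interior points of the short segments are Type~3 (or Type~2), so you want parts~(ii)--(iii) of Theorem~\ref{T101} rather than part~(i), which concerns Type~1 and~4 points.)
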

\begin{proof}
  By Corollary~\ref{C109} the first assertion is a local statement:
  it suffices to prove that if $x$ belongs to the interior of $\gamma$ 
  then the tangent map of $f$ is injective on the set of tangent directions
  at $x$ defined by $\gamma$. But if this were not the case, the 
  local degree at $x$ would be too high in view of assertion~(iii)~(b)
  in Theorem~\ref{T101}.
\end{proof}
\begin{Remark}
  Using similar arguments, Rivera-Letelier was able to improve 
  Pro\-position~\ref{P105} and describe $f(U)$ for a 
  simple domain $U$. For example, he described  when the image of 
  an open disc is an open disc as opposed to all of $\BerkPone$
  and similarly described the image of an annulus. 
  See Theorems~9.42 and~9.46 in~\cite{BRBook} and also 
  the original papers~\cite{Rivera1,Rivera2}.
\end{Remark}
%
%
%
%
\subsection{Ramification locus}\label{S171}
Recall that, over the complex numbers, a rational map has local
degree 1 except at finitely many points. In the non-Archimedean
setting, the situation is more subtle.
\begin{Def}
  The \emph{ramification locus} $R_f$ of $f$ is the set of $x\in\BerkPone$
  such that $\deg_x(f)>1$.
  We say that $f$ is 
  \emph{tame}\footnote{The terminology ``tame'' follows Trucco~\cite{Trucco}.}
  if $R_f$ is contained in the convex hull of a finite subset of $\P^1$.
\end{Def}
\begin{Lemma}\label{L120}
  If $K$ has residue characteristic zero, then $f$ is tame. 
  More precisely, $R_f$ is a finite union of segments in $\BerkPone$ 
  and is contained in the convex hull of the critical set of $f:\P^1\to\P^1$.
  As a consequence, the local degree is one at all 
  Type~4 points.
\end{Lemma}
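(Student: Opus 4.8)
The statement to prove is Lemma~\ref{L120}: when $K$ has residue characteristic zero, the ramification locus $R_f$ is a finite union of segments contained in the convex hull of the critical set of $f\colon\P^1\to\P^1$, and in particular $\deg_x f=1$ at every Type~4 point. The natural strategy is to work locally and combine Theorem~\ref{T101} with the classical (Type~1) ramification theory of $f$ together with the residue-characteristic-zero hypothesis, which is what forces ``tameness''.

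\textbf{Step 1: control the local degree via the tangent map.} Fix $x\in\BerkPone$. If $x$ is of Type~1 or~4, Theorem~\ref{T101}(i) says that on a small segment $\gamma=[x,y]$ the map $f$ expands the hyperbolic metric by the factor $\deg_x f$; if $x$ is of Type~3, Theorem~\ref{T101}(ii) gives the same with $\deg_x f$. If $x$ is of Type~2, Theorem~\ref{T101}(iii)(b) writes $\deg_x f=\sum_i m_{\vv_i}(f)$ as a sum over the preimages of a fixed tangent direction $\vv$ under the tangent map at $x$. So in all cases it suffices to understand (a) when a single tangent direction $\vv$ at $x$ has $m_{\vv}(f)>1$, and (b) when the tangent map at a Type~2 point fails to be injective. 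The key point is that each $m_{\vv}(f)$ is itself a local degree of an induced map on a small annulus/segment, and in residue characteristic zero one can identify it (via a local expansion as in Proposition~\ref{P103}, or via Corollary~\ref{C108} applied to the segment representing $\vv$) with an honest ramification index of a finite morphism of curves; the Riemann–Hurwitz bound then makes $m_{\vv}(f)=1$ for all but finitely many $\vv$ along any fixed segment, and the ``excess'' $\deg_x f-1$ at a Type~2 point is governed by the ramification divisor of the induced self-map of $\tH(x)\simeq\tK(w)$, which in characteristic zero (here $\charac\tK=0$) is a separable map of $\P^1_{\tK}$ with finitely many critical points.

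\textbf{Step 2: localize the ramification set to the convex hull of the critical set.} Let $C\subseteq\P^1$ be the critical set of $f$ (finite, since in residue characteristic zero $f$ is automatically separable — $\charac K=0$ as noted after Example~\ref{exam:Cp}), and let $\Gamma=\conv(C\cup f^{-1}(f(C)))$ be its convex hull, a finite subtree. I claim $R_f\subseteq\Gamma$. Take $x\notin\Gamma$; then $x$ lies in some connected component $U$ of $\BerkPone\setminus\Gamma$, an open Berkovich disc whose boundary point $x_0$ lies in $\Gamma$. Since $U$ contains no critical point of $f$ in $\P^1$, the restriction $f\colon U\to f(U)$ is, after passing to a small enough disc, unramified over all Type~1 points; by Proposition~\ref{P102} (the local-degree-counting over a simple domain) the fiberwise sum $\sum_{f(y)=z}\deg_y f$ is constant on $f(U)$ and equals $1$ for Type~1 points $z$ with a single preimage in $U$, hence $\deg_y f=1$ for all $y\in U$. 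This is where I expect the main technical work: justifying that ``no classical critical point in $U$'' upgrades to ``$\deg_\cdot f\equiv 1$ on $U$'' requires either Rivera-Letelier's description of images of discs (alluded to in the Remark after Corollary~\ref{C106}) or a direct argument that the local degree, being locally constant off a locally finite set by Theorem~\ref{T101} and upper semicontinuous, can only jump above $1$ at a point whose preimage structure detects a classical branch point. In residue characteristic zero there is no ``wild'' contribution, so the local degree is literally the number of classical sheets, forcing $R_f\subseteq\Gamma$.

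\textbf{Step 3: finiteness of $R_f$ as a union of segments, and the Type~4 statement.} Once $R_f\subseteq\Gamma$ with $\Gamma$ a finite subtree, it remains to see $R_f$ is a finite union of closed segments. By Corollary~\ref{C109}(i) we can subdivide $\Gamma$ into finitely many segments $\gamma_i$ on each of which $f$ is a homeomorphism onto its image; by Theorem~\ref{T101} the local degree is constant on the interior of each $\gamma_i$ (it is the expansion factor, which is constant along a segment mapped homeomorphically — or one refines the subdivision further using Corollary~\ref{C106}). Thus $R_f$ is the union of those finitely many closed segments $\gamma_i$ on which this constant exceeds $1$, together with finitely many Type~2 branch points of $\Gamma$ where the tangent map is non-injective; all of this is a finite union of segments. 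Finally, every Type~4 point $x$ is an end of $\BerkPone$ and is \emph{not} in the convex hull of any finite subset of $\P^1$ (a Type~4 point has a unique tangent direction, so it cannot be an interior point of a segment joining two classical points, and it is not itself classical), hence $x\notin\Gamma\supseteq R_f$, giving $\deg_x f=1$. The hard part, as flagged, is Step~2: cleanly reducing the non-Archimedean local degree to the classical ramification count away from $\Gamma$, which is exactly where the residue-characteristic-zero hypothesis is indispensable (in positive residue characteristic the local degree can jump along infinitely many points, and $R_f$ need not be tame).
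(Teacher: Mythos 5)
The paper does not actually prove this lemma; it points to the papers of Faber and singles out the one essential tool as a non-Archimedean Rolle's Theorem (stated right after the lemma): if $\charac\tK=0$ and $D\subseteq\P^1$ is an open disc with $f(D)\ne\P^1$ and $f$ not injective on $D$, then $f$ has a critical point in $D$. This is exactly the ingredient your Step~2 needs and omits.

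Concretely, you pass from ``$U$ contains no classical critical point'' to ``$\deg_y f=1$ for all $y\in U$'' via Proposition~\ref{P102}, but that appeal does not close as written: Proposition~\ref{P102} requires $U$ to be a connected component of $f^{-1}(V)$ for some simple domain $V$, which by Proposition~\ref{P105}~(iii) amounts to $f(\partial U)\subseteq\partial f(U)$ --- and that is part of what you are trying to establish, not something you get for free from ``no classical critical point''. The clean route is Rolle: if two distinct points of the open Berkovich disc $U$ had the same image, one exhibits a slightly smaller open disc $U'\subseteq U$ with $f(U')\ne\P^1$ still capturing the coincidence, and the Rolle theorem produces a classical critical point in $U'$, contradicting $U\cap C=\emptyset$. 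Injectivity of $f$ on $U$, together with Proposition~\ref{P102} applied to $U=$ a component of $f^{-1}(f(U))$, then gives $\deg_y f\equiv1$ on $U$. You flag the gap yourself (``this is where I expect the main technical work'') and then substitute a heuristic (``no wild contribution'') for an argument; the heuristic is not a proof, and the Rolle theorem is precisely the mechanism by which residue characteristic zero makes that heuristic true.

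A second, independent problem: you set $\Gamma=\conv\bigl(C\cup f^{-1}(f(C))\bigr)$, which is in general strictly larger than $\conv(C)$. For $f(z)=a(z^3-3z^2)$ as in Example~\ref{E207}, $C=\{0,2,\infty\}$ but $f^{-1}(f(C))=\{-1,0,2,3,\infty\}$, so $\conv(C)\subsetneq\Gamma$. The lemma claims $R_f\subseteq\conv(C)$, so your argument --- even if completed --- would establish only a strictly weaker containment. Once the Rolle argument is in place, working directly with the components of $\BerkPone\setminus\conv(C)$ is both necessary and sufficient. Your Step~3 bookkeeping and the observation that a Type~4 point, being an end and not classical, cannot lie in the convex hull of a finite set of Type~1 points, are fine.
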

We will not prove this lemma here. Instead we refer to
the papers~\cite{Faber1,Faber2} by X.~Faber for a detailed analysis
of the ramification locus, including the case of positive
residue characteristic. 
The main reason why the zero residue characteristic case is
easier stems from the following version of Rolle's Theorem
(see \eg~\cite[Proposition~A.20]{BRBook}):
if $\charac\tK=0$ and $D\subseteq\P^1$ is an open disc
such that $f(D)\ne\P^1$ and $f$ is 
not injective on $D$, then $f$ has a critical point in~$D$.

Se~\S\ref{S210} below for some examples of ramification loci.
%
%
%
%
\subsection{Proof of Theorem~\ref{T101}}\label{S123}
While several proofs of Theorem~\ref{T101} exist in the literature,
I am not aware of any that directly uses Definition~\ref{D101} of the 
local degree. Instead, they use different definitions, which in view of 
Proposition~\ref{P102} are equivalent to the one we use. 
Our proof of Theorem~\ref{T101} uses some
basic non-Archimedean analysis in the spirit of~\cite{BGR}.
%
%
\subsubsection{Type~1 points}
First suppose $x\in\P^1$ is a classical point.
As in the proof of Proposition~\ref{P103},
we find coordinates $z$ and $w$ on
$\P^1$ vanishing at $x$ and $x'$, respectively,
such that $f^*w=az^k(1+h(z))$, where $a\ne0$, 
$k=\deg_x(f)\ge 1$ and $h(0)=0$.
In fact, we may assume $a=1$.
Pick $r_0>0$ so small that $|h(z)|_{D(0,r)}<1$ for $r\le r_0$.
It then follows easily that
$f(x_{D(0,r)})=x_{D(0,r^k)}$ for $0\le r\le r_0$.
Thus $f$ maps the segment 
$[x_0,x_{D(0,r_0)}]$ homeomorphically onto
the segment $[x_0,x_{D(0,r_0^k)}]$
and the hyperbolic metric is expanded by a factor~$k$.
%
%
\subsubsection{Completion}\label{S124}
Suppose $x$ is of Type~2 or~3.
Then the seminorm $|\cdot|_x$ is a norm,
$\cO_x$ is a field having $\cO_{x'}$ as a subfield and 
$\deg_x(f)$ is the degree 
$[\cO_x:\cO_{x'}]$ of the field extension
$\cO_x/\cO_{x'}$.
Recall that $\cH(x)$ is the completion of $\cO_x$.

In general, the degree of a field extension can change
when passing to the completion. However, we have
\begin{Prop}\label{P206}
  For any point $x\in\BerkPone$ of Type 2 or 3 we have
  \begin{equation}\label{e124}
    \deg_x(f)
    =[\cO_x:\cO_{x'}]
    =[\cH(x):\cH(x')]
    =[\Gamma_x:\Gamma_{x'}]
    \cdot[\widetilde{\cH(x)}:\widetilde{\cH(x')}],
  \end{equation}
  where $\Gamma$ and $\widetilde{H}$ denotes the
  value groups and residue fields of the norms 
  under consideration.
\end{Prop}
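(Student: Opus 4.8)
The plan is to derive Proposition~\ref{P206} as a concatenation of three equalities, each of which is an application of a structural fact about $x$ already recorded in the text. By Lemma~\ref{L211} the point $x'=f(x)$ is again of Type~2 or~3, and this is what allows all the cited facts to be used for both $x$ and $x'$. The first step is purely formal: since $x,x'$ are of Type~2 or~3, the local rings $\cO_x=\kappa(x)$ and $\cO_{x'}=\kappa(x')$ are fields with trivial maximal ideals (\S\ref{S279}), so $\fm_{x'}\cO_x=0$, and Definition~\ref{D101} collapses to $\deg_x(f)=\dim_{\cO_{x'}}\cO_x=[\cO_x:\cO_{x'}]$, where $\cO_{x'}$ is viewed as a subfield of $\cO_x$ via $f^*$. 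Finiteness of $f$ (\cite[3.1.6]{BerkBook}) makes $\cO_x$ a finite $\cO_{x'}$-module, so this extension is finite; in particular $\deg_x(f)<\infty$.

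The second and third steps pass to completions and then decompose the resulting degree. The preliminary observation needed here is that $f^*$ identifies $|\cdot|_{x'}$ with the restriction of $|\cdot|_x$ to $\cO_{x'}\subseteq\cO_x$, and that, since $\cO_{x'}=\kappa(x')$ is quasicomplete (\S\ref{S277}), the norm $|\cdot|_{x'}$ extends uniquely to the algebraic extension $\cO_x$; hence $|\cdot|_x$ is precisely that extension, and $\cH(x)$, $\cH(x')$ are the completions of $(\cO_x,|\cdot|_x)$, $(\cO_{x'},|\cdot|_{x'})$. Then weak stability of $\kappa(x')$ (\S\ref{S278}), applied to the finite extension $\cO_x/\cO_{x'}$, gives $[\cO_x:\cO_{x'}]=[\cH(x):\cH(x')]$. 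Finally, stability of the complete field $\cH(x')$ (\S\ref{S120}), applied to the finite extension $\cH(x)/\cH(x')$, gives $[\cH(x):\cH(x')]=[\Gamma_x:\Gamma_{x'}]\cdot[\widetilde{\cH(x)}:\widetilde{\cH(x')}]$, using that the value group and residue field of $\cH(x)$ equipped with $|\cdot|_x$ are $\Gamma_x$ and $\widetilde{\cH(x)}$ by definition (\S\ref{S114}), and similarly for $x'$. Chaining the four identities $\deg_x(f)=[\cO_x:\cO_{x'}]=[\cH(x):\cH(x')]=[\Gamma_x:\Gamma_{x'}]\cdot[\widetilde{\cH(x)}:\widetilde{\cH(x')}]$ yields the proposition.

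The main obstacle I anticipate is bookkeeping rather than substantive mathematics: one must be careful that the norms line up under $f^*$ so that ``the'' extension of $|\cdot|_{x'}$ to $\cO_x$ really is $|\cdot|_x$ (so that completing it produces $\cH(x)$ and not some other valued field), and that the hypotheses of the results quoted from \cite{BGR} — quasicompleteness, weak stability, stability — are each invoked for the correct field (the \emph{source} $x'$ in every case). It is also worth flagging explicitly that finiteness of $f$ is genuinely used, to know $[\cO_x:\cO_{x'}]<\infty$ before the completion arguments apply; without it the weak-stability step would be vacuous.
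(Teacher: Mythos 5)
Your proof is correct and follows essentially the same route as the paper's: it observes that $\cO_x$ and $\cO_{x'}$ are fields (so $\deg_x f=[\cO_x:\cO_{x'}]$), invokes quasicompleteness of $\cO_{x'}$ to identify $|\cdot|_x$ as the unique extension of $|\cdot|_{x'}$, then applies weak stability of $\cO_{x'}$ for the passage to completions and stability of $\cH(x')$ for the final decomposition. Your version is merely a little more explicit about the preliminary bookkeeping (type preservation via Lemma~\ref{L211}, finiteness of $f$ ensuring a finite extension), which is a reasonable amount of extra care.
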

\begin{proof}
  Recall from~\S\ref{S277} that the field
  $\cO_{x'}$ is quasicomplete in the sense that the norm
  $|\cdot|_{x'}$ on $\cO_{x'}$ extends uniquely 
  to any algebraic extension.
  In particular, the norm $|\cdot|_x$ is the unique
  extension of this norm to $\cO_x$.
  Also recall from~\S\ref{S278} that the field $\cO_{x'}$
  is weakly stable. Thus $\cO_x$ is weakly Cartesian 
  over $\cO_{x'}$, which by~\cite[2.3.3/6]{BGR} 
  implies the second equality in~\eqref{e124}.
  
  Finally recall from~\S\ref{S120} that 
  the field $\cH(x')$ is stable. 
  The third equality in~\eqref{e124} then follows 
  from~\cite[3.6.2/4]{BGR}.
\end{proof}
%
%
\subsubsection{Approximation}\label{S121}
In order to understand the local degree of a rational map,
it is useful to simplify the map in a way similar to~\eqref{e127}.
Suppose $x$ and $x'=f(x)$ are Type~2 or Type~3 points.
In suitable coordinates on the source and target, we can 
write $x=x_{D(0,r)}$ and $x'=x_{D(0,r')}$,
where $0<r,r'\le 1$. If $x$ and $x'$ are Type~2 points,
we can further assume $r=r'=1$.

Write $f^*w=f(z)$ for some rational function $f(z)\in F\simeq K(z)$. 
Suppose we can find a decomposition in $F$ of the form
\begin{equation*}
  f(z)=g(z)(1+h(z)),
  \quad\text{where $|h(z)|_x<1$}.
\end{equation*}
The rational function $g(z)\in F$
induces a rational map $g:\P^1\to\P^1$,
which extends to 
$g:\BerkPone\to\BerkPone$.
\begin{Lemma}\label{L121}
  There exists $\delta>0$ such that 
  $g(y)=f(y)$ and $\deg_y(g)=\deg_y(f)$ 
  for all $y\in\H$ with $d_\H(y,x)\le\delta$.
\end{Lemma}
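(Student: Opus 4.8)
The key point is that the condition $|h(z)|_x < 1$ is an \emph{open} condition on $\BerkPone$ near $x$, and since $x$ is of Type~2 or~3, a neighborhood of $x$ in $\H$ is a finite union of small segments on which the relevant seminorms vary continuously. First I would observe that the set $W := \{ y \in \BerkPone \mid |h(z)|_y < 1 \}$ is open (evaluation $y \mapsto |h(z)|_y$ is continuous on $\BerkPone$) and contains $x$, hence contains a ball $\{ y \in \H \mid d_\H(y,x) \le \delta \}$ for some $\delta > 0$; here I use that around a Type~2 or~3 point the hyperbolic metric topology and the weak topology agree locally (this follows from the tree structure in \S\ref{S112} together with \S\ref{S256}). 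On such a ball, for every $y$ we have $|1 + h(z)|_y = \max\{1, |h(z)|_y\} = 1$ by the non-Archimedean (ultrametric) property of the seminorm $|\cdot|_y$, so $|f(z)|_y = |g(z)|_y$. Since $f^*w = f(z)$ and $g^*w = g(z)$, this gives $|\phi|_{f(y)} = |\phi|_{g(y)}$ for $\phi$ a coordinate at the target, and then for all rational functions $\phi \in F$ by multiplicativity and the fact that a Type~2 or~3 point is determined by its values on linear polynomials in a suitable coordinate; hence $f(y) = g(y)$.

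The second assertion, $\deg_y(g) = \deg_y(f)$, is more delicate and is where I expect the main work. The cleanest route is via Proposition~\ref{P206}: for $y$ of Type~2 or~3 (and all points in a small enough $\H$-ball around $x$ are of this type, since $x$ is not an end), we have
\begin{equation*}
  \deg_y(f) = [\cH(y) : \cH(f(y))], \qquad \deg_y(g) = [\cH(y) : \cH(g(y))],
\end{equation*}
and since $f(y) = g(y)$ the two completed residue fields on the right coincide \emph{as subfields of $\cH(y)$} — provided one checks that the inclusion $\cH(f(y)) \hookrightarrow \cH(y)$ induced by $f^*$ is the same as the one induced by $g^*$. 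This is exactly the content of the computation $|f^*\phi|_y = |g^*\phi|_y$ extended to the completions: $f^*$ and $g^*$ agree on $F$ up to multiplication by the unit $1 + h(z)$, which has $|\cdot|_y$-norm $1$, so they induce the same isometric embedding of the completed residue field. Therefore $[\cH(y):\cH(f(y))] = [\cH(y):\cH(g(y))]$, giving $\deg_y(f) = \deg_y(g)$.

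The main obstacle is the bookkeeping in the last step: one must be careful that $g(z) \in F$ really does define a nonconstant rational map (so that $\deg_y(g)$ makes sense and Proposition~\ref{P206} applies), and that the decomposition $f(z) = g(z)(1 + h(z))$ holds \emph{in $F$}, not merely as an identity of $|\cdot|_x$-values — but this is part of the hypothesis. One should also shrink $\delta$ if necessary so that the ball around $x$ contains no Type~1 point and no point where $g$ or $f$ behaves pathologically (e.g. a pole of $h$), which is automatic once $\delta$ is small since $x \in \H$ is of Type~2 or~3 and such points form a discrete set on each segment. I would present the norm computation $|1+h|_y = 1$ and the conclusion $f(y)=g(y)$ first, then invoke Proposition~\ref{P206} for the degree equality.
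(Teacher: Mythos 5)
Your first step has a genuine gap. From $|1+h(z)|_y=1$ you obtain $|f^*w|_y=|g^*w|_y$, i.e.\ the points $f(y)$ and $g(y)$ assign the same value to the single function $w$; but a point of $\BerkPone$ is determined by its values on \emph{all} affine functions $w-b$, $b\in K$, and equality on $w$ alone does not force $f(y)=g(y)$. Indeed $f^*(w-b)-g^*(w-b)=g(z)h(z)$, and the ultrametric inequality gives $|f^*(w-b)|_y=|g^*(w-b)|_y$ only when $|g(z)h(z)|_y<|g^*(w-b)|_y$; the right-hand side can be as small as the ``radius'' of the point $g(y)$, which is in general much smaller than $|g(z)|_y$, so the qualitative condition $|h(z)|_y<1$ does not suffice. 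One needs a quantitative bound $|h(z)|_y\le\e^2$ together with a lower bound $|f^*(w-b)|_y\ge\e|f^*w|_y$ \emph{uniform in} $b\in K$. The latter is exactly where the hypothesis that $x'=f(x)=x_{D(0,r')}$ is of Type~2 or~3 (so that $|w-b|_{x'}\ge r'>0$ for all $b$) and the Lipschitz estimate of Lemma~\ref{L109} enter; this is the real content of the choice of $\delta$ in the paper, which involves both $|h(z)|_x/\deg h(z)$ and $r'/\deg f$, not merely the openness of $\{|h|<1\}$.

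The degree equality has a second gap. The embeddings $f^*,g^*:\cH(y')\to\cH(y)$ are \emph{not} the same map, and their images are genuinely different subfields of $\cH(y)$ in general (already $f^*F=K(f(z))\ne K(g(z))=g^*F$ inside $F$); what is true is only the approximation $|f^*\phi-g^*\phi|_y\le\e|f^*\phi|_y$. Deducing $[\cH(y):f^*\cH(y')]=[\cH(y):g^*\cH(y')]$ from this requires a further input: either Temkin's result that two sufficiently close isometric embeddings of a complete field induce extensions of the same degree, or the observation that $f^*$ and $g^*$ induce the \emph{same} embeddings of the value group $\Gamma_{y'}$ and of the residue field $\widetilde{\cH(y')}$, combined with the stability of $\cH(y')$ and~\eqref{e124}. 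Your assertion that the two embeddings coincide because $f^*$ and $g^*$ ``agree up to multiplication by a unit of norm one'' is incorrect and skips this step. (Two smaller points: the hyperbolic and weak topologies do not agree locally at a Type~2 point, although weakly open sets are still metrically open in $\H$, which is all you actually need there; and a small hyperbolic ball around $x$ may contain Type~4 points, so Proposition~\ref{P206} does not apply verbatim to every $y$ in the ball.)
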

\begin{proof}
  We may assume that $h(z)\not\equiv0$, 
  or else there is nothing to prove.
  Thus we have $|h(z)|_x>0$.
  Pick $0<\e<1$ such that $|h(z)|_x\le\e^3$,
  set 
  \begin{equation*}
    \delta=(1-\e)\min\left\{\frac{|h(z)|_x}{\deg h(z)},\frac{r'}{2\deg f}\right\}
  \end{equation*}
  and assume $d_\H(y,x)\le\delta$.
  We claim that 
  \begin{equation}\label{e123}
    |f^*\phi-g^*\phi|_y\le\e|f^*\phi|_y
    \quad\text{for all $\phi\in F$}.
  \end{equation}

  Granting~\eqref{e123}, we get
  $|g^*\phi|_y=|f^*\phi|_y$ for all $\phi$ and hence
  $g(y)=f(y)=:y'$.
  Furthermore, $f$ and $g$ give rise to
  isometric embeddings $f^*,g^*:\cH(y')\to\cH(y)$.
  By Proposition~\ref{P206}, the degrees of the 
  two induced field extensions $\cH(y)/\cH(y')$ 
  are equal to $\deg_y f$ and $\deg_y g$, respectively.
  By continuity, the inequality~\eqref{e123} extends to all 
  $\phi\in\cH(y')$. It then 
  follows from~\cite[6.3.3]{temkinstable} 
  that $\deg_y f=\deg_y g$.

  We also remark that~\eqref{e123} implies
  \begin{equation}\label{e125}
    f^*\Gamma_{y'}=g^*\Gamma_{y'}
    \quad\text{and}\quad
    f^*\widetilde{\cH(y')}=g^*\widetilde{\cH(y')}.
  \end{equation}
  Thus $f$ and $g$ give the same embeddings of 
  $\Gamma_{y'}$ and $\widetilde{\cH(y')}$ into 
  $\Gamma_y$ and $\widetilde{\cH(y)}$, respectively.
  When $y$, and hence $y'$ is of Type~2 or~3, the 
  field $\cH(y')$ is stable, and so~\eqref{e124} 
  gives another proof of the equality
  $\deg_y f=\deg_y g$.

  It remains to prove~\eqref{e123}. 
  A simple calculation shows that 
  if~\eqref{e123} holds
  for $\phi,\psi\in F$, then it also holds for
  $\phi\psi$, $1/\phi$ and $a\phi$ for any
  $a\in K$. Since $K$ is algebraically closed, it
  thus suffices to prove~\eqref{e123} 
  for $\phi=w-b$, where $b\in K$.

  Using Lemma~\ref{L109} and the fact that 
  $f(x)=x_{D(0,r')}$, we get
  \begin{multline*}
    |f(z)-b|_y
    \ge|f(z)-b|_x-\delta\deg f
    =|w-b|_{f(x)}-\delta\deg f\ge\\
    \ge r'-\delta\deg f
    \ge\e(r'+\delta\deg f)
    =\e(|f(z)|_x+\delta\deg f)
    \ge\e|f(z)|_y.
  \end{multline*}
  Now Lemma~\ref{L109} 
  and the choice of $\delta$ imply $|h(z)|_y\le\e^2<1$.
  As a consequence, $|g(z)|_y=|f(z)|_y$. We conclude that
  \begin{equation*}
    |f^*(w-b)-g^*(w-b)|_y
    =|h(z)|_y|g(z)|_y
    \le\e^2|f(z)|_y
    \le\e|f(z)-b|_y
    =\e|f^*(w-b)|_y,
  \end{equation*}
  establishing~\eqref{e123} and 
  completing the proof of Lemma~\ref{L121}.
\end{proof}
%
%
\subsubsection{Type~3 points}
Now consider a point $x$ of Type~3. 
In suitable coordinates $z$, $w$ we may assume that 
$x$ and $x'=f(x)$ are associated to 
irrational closed discs $D(0,r)$ and $D(0,r')$, respectively.
In these coordinates, $f$ is locally approximately monomial
at $x$; there exist $\theta\in K^*$ and $k\in\Z\setminus\{0\}$
such that $f^*w=\theta z^k(1+h(z))$, where $h(z)\in K(z)$
satisfies $|h(z)|_x<1$.
Replacing $w$ by $(\theta^{-1}w)^{\pm1}$ we may assume 
$\theta=1$ and $k>0$. In particular, $r'=r^k$.

Let $g:\P^1\to\P^1$ be defined by $g^*w=z^k$.
We claim that $\deg_x(g)=k$. 
Indeed, the field $\cH(x)$ (resp.\ $\cH(x')$)
can be concretely described as the 
set of formal series $\sum_{-\infty}^\infty a_jz^j$ 
(resp.\ $\sum_{-\infty}^\infty b_jw^j$)
with $|a_j|r^j\to 0$ as $|j|\to\infty$
(resp.\ $|b_j|r^{kj}\to 0$ as $|j|\to\infty$).
Then $1,z,\dots,z^{k-1}$ form a basis for $\cH(x)/\cH(x')$.
We can also see that $\deg_x(g)=k$ from~\eqref{e124}
using that 
$\widetilde{\cH(x)}=\widetilde{\cH(x')}=\tK$,
$\Gamma_x=\Gamma_K+\Z\log r$
and
$\Gamma_{x'}=\Gamma_K+k\Z\log r$.

Lemma~\ref{L121} gives $\deg_x(f)=\deg_x(g)$.
Moreover, we must have 
$f(x_{D(0,s)})=x_{D(0,s^k)}$ for $s\approx r$,
so $f$ expands the hyperbolic metric by a factor $k=\deg_x(f)$.
Thus we have established all statements in Theorem~\ref{T101}
for Type~3 points.
%
%
\subsubsection{Type~2 points}
Now suppose $x$ and hence $x'=f(x)$ is of Type~2.
Then $\Gamma_x=\Gamma_{x'}=\Gamma_K$.
We may assume $x$ and $x'$ both equal the 
Gauss point in suitable coordinates $z$ and $w$.
The algebraic tangent spaces $T_x,T_{x'}\simeq\P^1(\tK)$ 
defined in~\S\ref{S115}
have $\widetilde{\cH(x)}\simeq\tK(z)$ 
and $\widetilde{\cH(x')}\simeq\tK(w)$ as function fields.
Now $f$ induces a map 
$f^*:\widetilde{\cH(x')}\to\widetilde{\cH(x)}$
and hence a map $T_x\to T_{x'}$.
By~\eqref{e124}, the latter has degree $\deg_x(f)$.

As opposed to the Type~3 case, we cannot necessarily 
approximate $f$ by a monomial map. However, after 
applying a coordinate change of the form 
$z\mapsto (\theta z)^{\pm1}$,
we can find $g(z)\in F=K(z)$ of the form
\begin{equation}\label{e128}
  g(z)=z^m\frac{\prod_{i=1}^{l-m}(z-a_i)}{\prod_{j=1}^k(z-b_j)},
\end{equation}
with $m\ge 0$, $|a_i|=|b_j|=1$, $a_i\ne b_j$ and $a_ib_j\ne0$
for all $i,j$, such that 
\begin{equation*}
  f^*w=g(z)(1+h(z)),
\end{equation*}
in $F$, where $|h(z)|_x<1=|g(z)|_x$. 

On the one hand, $g(z)$ induces a map 
$g:\P^1(K)\to\P^1(K)$ and hence also a map
$g:\BerkPone\to\BerkPone$.
We clearly have $g(x)=x'$ and Lemma~\ref{L121}
gives $\deg_x(g)=\deg_x(f)$.
On the other hand, $g(z)$ also induces
a map $g:\P^1(\tK)\to\P^1(\tK)$, which
can be identified with the common tangent map
$T_x\to T_{x'}$ of $f$ and $g$.
Both these maps $g$ have degree $\max\{l,k\}$,
so in accordance with~\eqref{e124}, we see that
$\deg_x(f)=[\widetilde{\cH(x)}:\widetilde{\cH(x')}]$.

To prove the remaining statements in Theorem~\ref{T101}~(iii),
define $m_\vv(f)$ as the local degree of the 
algebraic tangent map $T_x\to T_x'$ 
at the tangent direction $\vv$. Statement~(a) in Theorem~\ref{T101}~(iii)
is then clear, so it suffices to show~(b).
We may assume that $\vv$ and
its image $\vv'$ are both represented by $x_0$. 
Then $m(\vv)$ is the integer $m$ in~\eqref{e128}.
We see from~\eqref{e128} and from Lemma~\ref{L121} that
$f(x_{D(0,r)})=x_{D(0,r^m)}$ when $0\ll1-r<1$.
Thus~(b) holds.
%
%
\subsubsection{Type~4 points}
Finally suppose $x$ is a Type~4 point. 
By Corollary~\ref{C108} 
we can find $y\in\BerkPone$ such that $f$ is a homeomorphism
of the segment $\gamma=[x,y]$ onto 
$f(\gamma)$. We first claim that by moving $y$
closer to $x$, $f$ will expand the hyperbolic metric on 
$\gamma$ by a fixed integer constant $m\ge 1$. 

Let $\ww$ be the tangent direction at 
$y$ represented by $x$. By moving $y$
closer to $x$, if necessary, we may assume that $x$ 
is the unique preimage of $x'$ in $U(\ww)$. 

Consider a point $\xi\in\,]x,y[\,$. 
If $\xi$ is of Type~3, then we know that 
$f$ locally expands the hyperbolic metric along $\gamma$
by a factor $m(\xi)$. 
Now suppose $\xi$ is a Type~2 point and let $\vv_+$
and $\vv_-$ be the tangent directions at $\xi$ represented by 
$x$ and $y$, respectively. Then
$f$ locally expands the hyperbolic metric along $\vv_\pm$
by factors $m(\vv_\pm)$. Suppose that $m(\vv_+)<m(\vv_-)$.
Then there must exist a tangent direction $\vv$ at $\xi$ 
different from $\vv_+$ but having the same image as
$\vv_+$ under the tangent map. By Corollary~\ref{C107}
this implies that 
$x'\in f(U(\vv))\subseteq f(U(\ww)\setminus\{x\})$,
a contradiction. Hence $m(\vv_+)\ge m(\vv_-)$. Since
$m(\vv_+)$ is bounded from above by $d=\deg f$,
we may assume that $m(\vv_+)=m(\vv_-)$ at all Type~2
points on $\gamma$. This shows that $f$ expands the 
hyperbolic metric on $\gamma$ by a constant factor $m$.

To see that $m=\deg_x(f)$, note that the above argument shows that
$\deg_\xi(f)=m$ for all $\xi\in\gamma\setminus\{x\}$. Moreover,
if $\ww'$ is the tangent direction at $f(y)$ represented by 
$f(x)$, then the above reasoning shows that 
$U(\ww)$ is a connected component of $f^{-1}(U(\ww'))$
and that $\xi$ is the unique preimage of $f(\xi)$ in $U(\ww)$
for any $\xi\in\gamma$. It then follows from 
Proposition~\ref{P102} that $\deg_x f=m$.
%
%
%
%
\subsection{Laplacian and pullbacks}\label{S134}
Using the local degree we can pull
back Radon measures on $\BerkP$ by $f$. 
This we do by first defining a push-forward operator on continuous functions: 
\begin{equation*}
  f_*H(x)=\sum_{f(y)=x}\deg_y(f)H(y)
\end{equation*}
for any $H\in C^0(\BerkPone)$.
It follows from Proposition~\ref{P102} that $f_*H$ is 
continuous and it is clear that $\|f_*H\|_\infty\le d\|H\|_\infty$,
where $d=\deg f$.
We then define the pull-back of Radon measures by duality:
\begin{equation*}
  \langle f^*\rho,H\rangle = \langle \rho,f_*H\rangle.
\end{equation*}
The pull-back operator is continuous in the 
weak topology of measures. If $\rho$ is a probability measure,
then so is $d^{-1}f^*\rho$. Note that the pull-back of a Dirac mass becomes 
\begin{equation*}
  f^*\delta_x=\sum_{f(y)=x}\deg_y(f)\delta_y.
\end{equation*}

Recall from~\S\ref{S116} that 
given a positive Radon measure $\rho$ on $\BerkPone$ 
and a finite atomic measure $\rho_0$ supported on $\H$
of the same mass as $\rho$, we can write 
$\rho=\rho_0+\Delta\f$ for a
unique function $\f\in\SH^0(\BerkPone,\rho_0)$.
A key property is
\begin{Prop}\label{P104}
  If $\f\in\SH^0(\BerkPone,\rho_0)$, then 
  $f^*\f\in\SH^0(\BerkPone,f^*\rho_0)$ and 
  \begin{equation}\label{e103}
    \Delta(f^*\f)= f^*(\Delta\f).
  \end{equation}
\end{Prop}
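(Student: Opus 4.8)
The plan is to reduce the statement to a purely local, tree-theoretic computation using Theorem~\ref{T101}. Since both sides of~\eqref{e103} are signed measures, it suffices to check that they assign the same mass to every open set of the form $\bigcap_{i=1}^n U(\vv_i)$, and by the decomposition of open sets into such pieces (and the additivity built into the definition of the Laplacian in~\S\ref{S223}), it is enough to show that $\Delta(f^*\f)$ and $f^*(\Delta\f)$ agree on small connected open neighborhoods, i.e.\ that they have the same mass at each point $x\in\BerkPone$ and the same density along each open segment containing no branch point. First I would verify the membership statement: $f^*\f=\f\circ f$ is continuous because $f$ is continuous; it is convex on segments disjoint from $\supp(f^*\rho_0)=f^{-1}(\supp\rho_0)$ because, by Corollary~\ref{C109}(i), such a segment decomposes into finitely many subsegments each mapped homeomorphically by $f$ onto a segment avoiding $\supp\rho_0$, along each of which $f$ is an affine expansion in the hyperbolic metric (Theorem~\ref{T101}) and $\f$ is convex; the requisite inequality $f^*\rho_0\{x\}+\sum_\vv D_\vv(f^*\f)\ge 0$ at each $x$ will follow from the directional-derivative identity below.

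The heart of the argument is the local expansion formula: for a point $x\in\BerkPone$, $x'=f(x)$, a tangent direction $\vv$ at $x$ mapping to $\vv'$ at $x'$ under the tangent map, one has
\begin{equation*}
  D_\vv(f^*\f)(x)=m_\vv(f)\,D_{\vv'}\f(x'),
\end{equation*}
where $m_\vv(f)$ is the local expansion factor of Theorem~\ref{T101} (equal to $\deg_x f$ at Type~1, 3, 4 points, and the tangential multiplicity at Type~2 points). This is immediate from Theorem~\ref{T101}(i)--(iii)(a): along a sufficiently short segment $\gamma$ representing $\vv$, $f$ is a homeomorphism onto $f(\gamma)$ scaling hyperbolic length by $m_\vv(f)$, and directional derivatives are taken with respect to the hyperbolic parametrization. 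Summing over all tangent directions $\vv$ at $x$ and using Theorem~\ref{T101}(iii)(b) — which says $\sum_{\vv\mapsto\vv'}m_\vv(f)=\deg_x f$ — gives
\begin{equation*}
  \sum_{\vv\in T_x}D_\vv(f^*\f)(x)
  =\sum_{\vv'\in T_{x'}}\Big(\sum_{\vv\mapsto\vv'}m_\vv(f)\Big)D_{\vv'}\f(x')
  =\deg_x f\cdot\sum_{\vv'\in T_{x'}}D_{\vv'}\f(x'),
\end{equation*}
i.e.\ the mass of $\Delta(f^*\f)$ at $x$ equals $\deg_x f$ times the mass of $\Delta\f$ at $f(x)$. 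On the other hand, by definition of pullback of measures via the pushforward $f_*$ on functions, the mass of $f^*(\Delta\f)$ at $x$ is exactly $\deg_x f\cdot(\Delta\f)\{f(x)\}$, since among the preimages of any point only $x$ itself sits over $f(x)$ with the local-degree weighting — more precisely one checks $f^*(\delta_{x'})=\sum_{f(y)=x'}\deg_y(f)\delta_y$ and reads off the coefficient at $x$. The two point masses therefore coincide. For the absolutely continuous part along an open segment $I$ containing no branch point and no preimage of a branch point, Corollary~\ref{C106} (or Corollary~\ref{C109}(i)) shows $f|_I$ is a homeomorphism onto its image scaling hyperbolic length by the constant $m$, and the classical chain rule for the one-dimensional Laplacian gives $\Delta(f^*\f)|_I=m\cdot f^*(\Delta\f|_{f(I)})$, matching $f^*(\Delta\f)$ restricted to $I$.

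Assembling these: the signed measures $\Delta(f^*\f)$ and $f^*(\Delta\f)$ have equal atomic parts (equal mass at every point) and equal restrictions to every branch-point-free open segment, hence agree on a generating family of open sets and therefore coincide as Radon measures. Finally $f^*\f\in\SH^0$ holds because $f^*\f\le 0$ (as $\f\le0$), $\max f^*\f=0$ (since $f$ is surjective by Proposition~\ref{P107}, so $f^*\f$ attains every value of $\f$, in particular $0$), and the subharmonicity inequality at each $x$ is precisely the nonnegativity $f^*\rho_0\{x\}+\sum_\vv D_\vv(f^*\f)(x)=\deg_x f\,(\rho_0+\Delta\f)\{f(x)\}\ge 0$ established above together with $f^*\rho_0\{x\}=\deg_x f\cdot\rho_0\{f(x)\}$; on the general-metric-tree level one passes to the finite subtrees $Y_\a$ using~\eqref{e120} and the compatibility of $f$ with retractions. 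The main obstacle I anticipate is the bookkeeping at Type~2 points — one must carefully match the tangential multiplicities $m_\vv(f)$ appearing in $\Delta(f^*\f)$ against the single weight $\deg_x f$ appearing in $f^*(\Delta\f)$, which is exactly what Theorem~\ref{T101}(iii)(b) is designed to handle, so the argument should go through cleanly once that identity is invoked.
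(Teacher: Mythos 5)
Your core computation is exactly the one the paper uses: the directional-derivative identity $D_\vv(f^*\f)=m_\vv(f)D_{\vv'}\f$ (this is~\eqref{e222} in the text) combined with $\sum_{Df(\vv)=\vv'}m_\vv(f)=\deg_x(f)$ from Theorem~\ref{T101}~(iii)~(b), yielding $\Delta(f^*\f)\{x\}=\deg_x(f)\,(\Delta\f)\{f(x)\}=f^*(\Delta\f)\{x\}$. That part is correct and is the heart of the paper's argument.

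The gap is in your assembly step. You claim that two Radon measures on $\BerkPone$ having the same mass at every point and the same restriction to every branch-point-free open segment must coincide. On a general tree this is false: a Radon measure can be non-atomic and carried entirely by the (typically uncountable) set of ends, in which case it gives zero mass to every point and to every open segment of $\H$. This is not a pathology you may ignore here --- for $\f$ a potential of the canonical measure, $\rho_0+\Delta\f=\rho_f$ is typically of exactly this kind. Consequently, knowing the atoms and the segment densities does not determine $\Delta(f^*\f)(U(\vv))$, and your ``agree on a generating family of open sets'' conclusion does not follow as stated; note also that you never actually compute $f^*(\Delta\f)(U(\vv))$ from the duality definition of the pullback, which is not immediate since $U(\vv)$ need not be a component of $f^{-1}(U(\vv'))$. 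The paper sidesteps all of this by regularizing first: replace $\f$ by its retractions $\f\circ r_X$ to finite subtrees and reduce to the case where $\rho_0+\Delta\f$ is a finite atomic measure supported on Type~2 points. Then $\Delta(f^*\f)$ is itself a finite atomic measure (it vanishes on the interiors of the intervals $\gamma_{ij}$ of Corollary~\ref{C109}, where $f$ is an affine expansion of the hyperbolic metric), the point-mass computation you carried out suffices, and the general case follows since both $\Delta$ and $f^*$ are continuous for the weak topology of measures. Inserting that approximation step makes your proof complete and essentially identical to the paper's.
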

This formula, which will be crucial for the proof of the equidistribution 
in the next section, confirms that the generalized metric $d_\H$
on the tree $\BerkP$ is the correct one. See also Remark~\ref{R205}.
\begin{proof}
  By approximating $\f$ by its retractions $\f\circ r_X$,
  where $X$ ranges over finite subtrees of $\H$ containing the support of
  $\rho_0$ we may assume that $\rho:=\rho_0+\Delta\f$ 
  is supported on such a finite subtree $X$. This means that $\f$ is 
  locally constant outside $X$. By further approximation we reduce to the 
  case when $\rho$ is a finite atomic measure supported on 
  Type~2 points of $X$.

  Let $Y=f^{-1}(X)$. 
  Using Corollary~\ref{C108} and Theorem~\ref{T101}
  we can write $X$ (resp.\ $Y$)
  as a finite union $\gamma_i$ (resp.\ $\gamma_{ij}$) 
  of  intervals with mutually disjoint interiors such that
  $f$ maps $\gamma_{ij}$ homeomorphically onto $\gamma_i$
  and the local degree is constant, equal to $d_{ij}$ on the interior of 
  $\gamma_{ij}$. We may also assume that the interior of
  each $\gamma_i$ (resp.\ $\gamma_{ij}$) is disjoint from 
  the support of $\rho$ and $\rho_0$ 
  (resp.\ $f^*\rho$ and $f^*\rho_0$).
  Since $f$ expands the hyperbolic metric on each $\gamma_{ij}$ 
  with a constant factor $d_{ij}$, it follows that 
  $\Delta(f^*\f)=0$ on the interior of $\gamma_{ij}$.

  In particular, $\Delta(f^*\f)$ is a finite atomic measure.
  Let us compute its mass at a point $x$.
  If $\vv$ is a tangent direction at $x$ and $\vv'=Df(\vv)$
  its image under the tangent map, then it follows from
  Theorem~\ref{T101}~(iii) that 
\begin{equation}\label{e222}
  D_\vv(f^*\f)=m_\vv(f)D_{\vv'}(\f)
\end{equation}
and hence
  \begin{multline*}
    \Delta(f^*\f)\{x\}
    =\sum_\vv D_\vv(f^*\f)
    =\sum_\vv m_\vv(f)D_{\vv'}(\f)
    =\sum_{\vv'}D_{\vv'}\f\sum_{Df(\vv)=\vv'} m_\vv(f)\\
    =\deg_x(f)\sum_{\vv'}D_{\vv'}(\f)
    =\deg_x(f)(\Delta\f)\{f(x)\}
    =f^*(\Delta\f)\{x\},
  \end{multline*}
  which completes the proof.
\end{proof}
%
%
%
%
\subsection{Examples}\label{S210}
To illustrate the ideas above, let us study three concrete 
examples of rational maps. Fix a coordinate 
$z\in F$ on $\P^1$. Following standard practice we 
write $f(z)$ for the rational function $f^*z$.
\begin{Example}\label{E207}
  Consider the polynomial map defined by
  \begin{equation*}
    f(z)=a(z^3-3z^2)
  \end{equation*}
  where $a\in K$. Here $K$ has residue characteristic zero.
  The critical points of $f:\P^1\to\P^1$ are $z=0$,
  $z=2$ and $z=\infty$, where the local degree is 
  2, 2 and 3, respectively.
  On $\BerkPone$, the local degree is 3 on the interval 
  $[x_G,\infty]$, where $x_G$ is the Gauss norm.
  The local degree is 2 on the intervals $[0,x_G[\,$ 
  and $[2,x_G[\,$ and it is 1 everywhere else. 
  See Figure~\ref{F101}.
\end{Example}
\begin{figure}[ht]
 \includegraphics{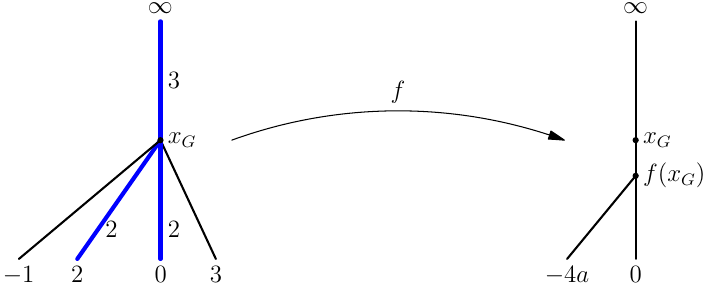}
 \caption{The ramification locus of the map $f(z)=a(z^3-3z^2)$ 
   in Example~\ref{E207} when $|a|<1$.
   Here $x_G$ is the Gauss point. 
   The preimage of the interval $[0,f(x_G)]$ is $[0,x_G]$ 
   (with multiplicity 2) and $[3,x_G]$.
   The preimage of the interval $[-4a,f(x_G)]$ is $[2,x_G]$ 
   (with multiplicity 2) and $[-1,x_G]$.
   The preimage of the interval $[\infty,f(x_G)]$ is $[\infty,x_G]$ 
   (with multiplicity 3).}\label{F101}
\end{figure}
\begin{Example}\label{E208}
  Next consider the polynomial map defined by
  \begin{equation*}
    f(z)=z^p
  \end{equation*}
  for a prime $p$.
  Here the ground field $K$ has characteristic zero.
  If the residue characteristic is different from $p$, 
  then $f$ is tamely ramified and the ramification locus is
  the segment $[0,\infty]$. On the other hand, if the residue
  characteristic is $p$, then $f$ is not tamely ramified.
  A point in $\BerkAone$ corresponding to a disc $D(a,r)$
  belongs to the ramification locus iff $r\ge p^{-1}|a|$.
  The ramification locus is therefore quite large and can be visualized 
  as an ``inverted Christmas tree'', as illustrated in~Figure~\ref{F213}. 
  It is the set of points in $\BerkPone$ having hyperbolic distance at 
  most $\log p$ to the segment $[0,\infty]$. 
  See~\cite[Example~9.30]{BRBook} for more details.
\end{Example}  
\begin{figure}[ht]
  \includegraphics{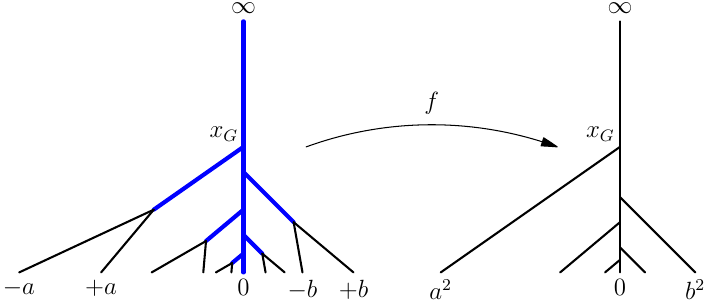}
  \caption{The ramification locus of the map $f(z)=z^2$
    in residual characteristic 2.
    A point in $\BerkAone$ corresponding to a disc $D(a,r)$
    belongs to the ramification locus iff $r\ge 2|a|$.
    The point $x_G$ is the Gauss point.}\label{F213}
\end{figure}
\begin{Example}\label{E212}
  Again consider the polynomial map defined by
  \begin{equation*}
    f(z)=z^p
  \end{equation*}
  for a prime $p$, but now assume 
  that $K$ has characteristic $p>0$.
  Then $f$ is purely inseparable and usually called the 
  \emph{Frobenius map}. 
  We will see in~\S\ref{S294} that every purely inseparable
  map of degree $>1$ is an iterate of the Frobenius map in some
  coordinate $z$.

  The mapping properties of $f$ on the Berkovich projective line 
  are easy to describe. 
  Since $f$ is a bijection, the local degree is equal to $p$ 
  at \emph{all} points of $\BerkPone$. Hence the ramification locus
  is equal to $\BerkPone$.
  The Gauss point $x_G$  in the coordinate $z$ is a fixed point:
  $f(x_G)=x_G$. If $x\in\BerkPone$, then $f$ maps the segment
  $[x_G,x]$ homeomorphically onto the segment $[x_G,f(x)]$
  and expands the hyperbolic metric by a constant factor $p$.
\end{Example}  

For many more interesting examples, see~\cite[\S10.10]{BRBook}.
%
%
%
%
\subsection{Other ground fields}\label{S254}
Above we worked with the assumption that our non-Archimedean field
$K$ was algebraically closed and nontrivially valued. 
Let us briefly discuss what happens when one or both of these 
assumption is dropped.
%
%
\subsubsection{Non-algebraically closed fields}\label{S267}
First suppose $K$ is nontrivially valued but not algebraically closed.
Most of the results above remain
true in this more general setting and can be proved by passing to
the completed algebraic closure $\hKa$ as in~\S\ref{S247}.
Let us outline how to do this.

The definitions and results in~\S\ref{S296} go through unchanged.
Note that $f$ induces a map $\hf:\BerkPone(\hKa)\to\BerkPone(\hKa)$
that is equivariant under the action of the Galois group $G=\Gal(K^a/K)$.
Thus $f\circ\pi=\pi\circ\hf$, where 
$\pi:\BerkPone(\hKa)\to\BerkPone(K)$ is the projection.
The fact that $\hf$ preserves the type of a point (Lemma~\ref{L211}) implies
that $f$ does so as well. Proposition~\ref{P107} remains valid 
and implies that $f$ is a tree map in the sense of~\S\ref{S130}.

We define the local degree of $f$ as in~\S\ref{S166}. 
Proposition~\ref{P102} remains valid. The local degrees
of $f$ and $\hf$ are related as follows. Pick a point 
$\hx\in\BerkPone(\hKa)$ and set $x=\pi(\hx)$,
$\hx':=f(\hx)$ and $x':=\pi(\hx')=f(x)$.
The stabilizer $G_\hx:=\{\sigma\in G\mid \sigma(\hx)=\hx\}$
is a subgroup of $G$ and we have $G_\hx\subseteq G_{\hx'}$.
The index of $G_\hx$ in $G_{\hx'}$ only depends on the projection
$x=\pi(\hx)$ and we set
\begin{equation*}
  \delta_x(f):=[G_{\hx'}:G_\hx];
\end{equation*}
this is an integer bounded by the (topological) degree of $f$. We have
$m(x)=\delta_x(f)m(f(x))$ for any $x\in\BerkPone(K)$, 
where $m(x)$ is the multiplicity of $x$, \ie the number of preimages
of $x$ under $\pi$.
Now
\begin{equation*}
  \deg_x(f)=\delta_x(f)\deg_{\hx}(\hf).
\end{equation*}
Using this relation
(and doing some work), one reduces the assertions in
Theorem~\ref{T101} to the corresponding statements for $f$.
Thus the local degree can still be interpreted as a local 
expansion factor for the hyperbolic metric on $\BerkPone(K)$,
when this metric is defined as in~\S\ref{S247}.
In particular, Corollaries~\ref{C202} and~\ref{C106}
remain valid. 
Finally, the pullback of measures is defined using the local
degree as in~\S\ref{S134} and formulas~\eqref{e103}--\eqref{e222}
continue to hold.
%
%
\subsubsection{Trivially valued fields}\label{S266}
Finally, let us consider the case when $K$ is trivially valued.
First assume $K$ is algebraically closed.
The Berkovich projective line $\BerkPone$ is discussed
in~\S\ref{S262} (see also~\S\ref{S211} below).
In particular, the Berkovich projective line is a cone over the usual
projective line. In other words,
$\BerkPone\simeq\P^1\times[0,\infty]/\sim$,
where $(x,0)\sim(y,0)$ for any $x,y\in\P^1$.
This common point $(x,0)$ is the Gauss point in any 
coordinate. See Figure~\ref{F201}. 
The generalized metric 
on $\BerkPone$ is induced by the parametrization 
$\a:\BerkPone\to[0,+\infty]$ given by $\a(x,t)=t$.

Any rational map $f:\P^1\to\P^1$ of degree $d\ge 1$
induces a selfmap of $\BerkPone$ that fixes the Gauss point.
The local degree is $d$ at the Gauss point. At any point
$(x,t)$ with $t>0$, the local degree is equal to the local 
degree of $f:\P^1\to\P^1$ at $x$. Moreover, 
$f(x,t)=(f(x),t\deg_x(f))$, so $f$ expands the hyperbolic 
metric by a factor equal to the local degree, in accordance
with Theorem~\ref{T101}.

Finally, the case when $K$ is trivially valued but not 
algebraically closed can be treated by passing to the 
algebraic closure $K^a$ (which is of course already 
complete under the trivial norm).
%
%
%
%
\subsection{Notes and further references}
A rational map on the Berkovich projective line is a special case 
of a finite morphism between Berkovich curves, so various
results from~\cite{BerkBook,Berkihes} apply. Nevertheless,
it is instructive to see the mapping properties in more detail,
in particular the interaction with the tree structure.

The fact that the Berkovich projective line can be understood
from many different points of view means that there are several 
ways of defining the action of a rational map. 
In his thesis and early work, Rivera-Letelier viewed the action 
as an extension from $\P^1$ to the hyperbolic space $\H$,
whose points he identified with nested collections of closed discs
as in~\S\ref{S167}.
The definition in~\cite[\S2.3]{BRBook} uses homogeneous coordinates
through a ``Proj'' construction of the Berkovich projective line
whereas~\cite{FR2} simply used the (coordinate-dependent)
decomposition $\BerkPone=\BerkAone\cup\{\infty\}$.
Our definition here seems to be new, but it is of
course not very different from what is already in the literature.
As in~\S\ref{S101}, it is guided by the principle
of trying to work without coordinates whenever possible. 

There are some important techniques that we have not touched upon,
in particular those that take place on the classical (as opposed
to Berkovich) affine and projective lines. For example, we never
employ Newton polygons even though these can be useful
see~\cite[\S{A}.10]{BRBook} or~\cite[\S3.2]{BenedettoNotes}.

The definition of the local degree
is taken from~\cite{FR2} but appears already
in~\cite{ThuillierThesis} and is the natural one in the general
context of finite maps between Berkovich spaces. 
In the early work of Rivera-Letelier, a different definition 
was used, modeled on Theorem~\ref{T101}.
The definition of the local degree (called multiplicity there)
in~\cite{BRBook} uses potential theory and is designed 
to make~\eqref{e103} hold.

As noted by Favre and Rivera-Letelier, Proposition~\ref{P102}
implies that all these different definitions coincide. 
Having said that, I felt it was useful to have a proof of
Theorem~\ref{T101} that is directly based on the algebraic
definition of the local degree. The proof presented here
seems to be new although many of the ingredients are not.

The structure of the ramification locus in the case of 
positive residue characteristic is very interesting.
We refer to~\cite{Faber1,Faber2,Faber3} for details.
%
%
%
%
%
%
\newpage
\section{Dynamics of rational maps in one variable}\label{S127}
Now that we have defined the action of a rational
map on the Berkovich projective line, we would like to
study the dynamical system obtained by iterating 
the map. 
While it took people some time to realize that 
considering the dynamics on $\BerkP$ (as opposed to $\P^1$)
could be useful, it has become abundantly clear
that this is the right thing to do for many questions.

It is beyond the scope of these notes to give an overview
of all the known results in this setting. Instead, in order
to illustrate some of the main ideas, we shall focus on 
an equidistribution theorem due to Favre and Rivera-Letelier~\cite{FR2},
as well as some of its consequences.
For these results we shall, on the other hand, give more or
less self-contained proofs. 

For results not covered 
here---notably on the structure of Fatou and Julia sets---we 
recommend the book~\cite{BRBook} by Baker and Rumely
and the survey~\cite{BenedettoNotes} by Benedetto.
%
%
%
%
\subsection{Setup}
We work over a fixed non-Archimedean field $K$,
of any characteristic.
For simplicity we shall assume that $K$ is algebraically
closed and nontrivially valued. 
The general case is discussed in~\S\ref{S255}.

Fix a rational map $f:\P^1\to\P^1$ of degree $d>1$.
Our approach will be largely coordinate free, but in any case, note
that since we are to study the dynamics of $f$, we must choose
the same coordinates on the source and target. Given a coordinate
$z$, $f^*z$ is a rational function in $z$ of degree $d$.
%
%
%
%
\subsection{Periodic points}\label{S161}
When analyzing a dynamical system, one of the first
things to look at are periodic points. We say that
$x\in\BerkPone$ is a \emph{fixed point} if $f(x)=x$
and a \emph{periodic point} if $f^n(x)=x$ for some $n\ge 1$.
%
%
\subsubsection{Classical periodic points}
First suppose $x=f^n(x)\in\P^1$ is a classical 
periodic point and pick a coordinate $z$ on $\P^1$ vanishing at $x$.
Then
\begin{equation*}
  f^{*n}z=\lambda z+O(z^2)
\end{equation*}
where $\lambda\in K$ is the \emph{multiplier} of 
the periodic point.
We say that $x$ is \emph{attracting} 
if $|\lambda|<1$, \emph{neutral} if $|\lambda|=1$ and 
\emph{repelling} if $|\lambda|>1$. 
The terminology is more or less self-explanatory. For example,
if $x$ is attracting, then there exists a small disc $D\subseteq\P^1$
containing $x$ such that $f^n(D)\subseteq D$ and $f^{nm}(y)\to x$ as 
$m\to\infty$ for every $y\in D$.

The \emph{multiplicity} of a periodic point $x=f^n(x)$ is 
the order of vanishing at $x$ of the rational function 
$f^{n*}z-z$ for any coordinate $z\in F$ vanishing at $x$.
It is easy to see that $f$ has $d+1$ fixed points counted with
multiplicity. Any periodic point of multiplicity at least two must
have multiplier $\lambda=1$.

\begin{Prop}\label{P209}
  Let $f:\P^1\to\P^1$ be a rational map of degree $d>1$.
  \begin{itemize}
  \item[(i)]
    There exist infinitely many distinct classical periodic points.
  \item[(ii)]
    There exists at least one classical nonrepelling fixed point.
  \item[(iii)]
    Any nonrepelling classical fixed point admits a basis of open 
    neighborhoods $U\subseteq\BerkPone$ 
    that are invariant, \ie $f(U)\subseteq U$.
  \end{itemize}
\end{Prop}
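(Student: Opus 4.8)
The three statements call for three different arguments, which I would carry out in turn.

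For~(i) the plan is to count classical fixed points of the iterates. Since $\deg f^n=d^n$, the map $f^n$ has $d^n+1$ classical fixed points in $\P^1$, counted with multiplicity. The key point is that for all but finitely many primes $n$ one has $\mathrm{Fix}(f^n)\supsetneq\mathrm{Fix}(f)$, which then produces a classical point of exact period $n$; since distinct primes yield distinct exact periods, infinitely many periodic points follow. To get the strict inclusion it suffices to show that, for good $n$, the multiplicity of each $p\in\mathrm{Fix}(f)$ is the same whether computed for $f$ or for $f^n$, for then $\sum_{p\in\mathrm{Fix}(f)}\mathrm{mult}_p(f^n)=d+1<d^n+1$. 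This is a local computation: in a coordinate $z$ vanishing at $p$ write $f^*z-z=c\,z^m+O(z^{m+1})$ with $c\ne0$; if $m\ge2$ (multiplier $1$) one finds $f^{n*}z-z=nc\,z^m+O(z^{m+1})$, and if $m=1$ (multiplier $\lambda\ne1$) one finds $f^{n*}z-z=(\lambda^n-1)z+O(z^2)$. Hence the multiplicity is preserved as long as $\charac K\nmid n$ and $\lambda^n\ne1$ for every simple fixed point, and for $n$ prime this rules out only finitely many values of $n$ (the characteristic, together with the orders of those among the finitely many multipliers that happen to be roots of unity of prime order).

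For~(ii), if some classical fixed point has multiplicity $\ge2$ then its multiplier is $1$, which is nonrepelling, and we are done. Otherwise $f$ has $d+1\ge3$ simple classical fixed points with multipliers $\lambda_1,\dots,\lambda_{d+1}$, all $\ne1$, and the rational fixed point (residue) formula gives $\sum_{i}\tfrac1{1-\lambda_i}=1$, which holds over any algebraically closed field precisely because all fixed points are simple. Were every fixed point repelling, i.e.\ $|\lambda_i|>1$ for all $i$, then $|1-\lambda_i|=|\lambda_i|$ and $|1/(1-\lambda_i)|<1$, so by the ultrametric inequality $\bigl|\sum_i 1/(1-\lambda_i)\bigr|<1$, contradicting $|1|=1$. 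Thus some $\lambda_i$ satisfies $|\lambda_i|\le1$.

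For~(iii), let $x$ be the nonrepelling classical fixed point, with multiplier $\lambda$, so $|\lambda|\le1$. Choose a coordinate $z$ vanishing at $x$; then $f^*z\in\cO_x$ is a convergent power series $\lambda z+\sum_{k\ge2}a_kz^k$ (see~\S\ref{S279} and Proposition~\ref{P103}). A routine ultrametric estimate shows that, for $r>0$ small enough, $|a_k|t^{k-1}\le1$ for all $k\ge1$ and all $0<t\le r$ — the case $k=1$ being exactly $|\lambda|\le1$ — and hence $|f^*z|_{D(c,s)}\le\max(|c|,s)$ whenever $\max(|c|,s)<r$. It follows that the open Berkovich disc $U_r:=\{y\in\BerkPone:|z|_y<r\}$, which contains $x$, satisfies $|z|_{f(y)}=|f^*z|_y\le|z|_y<r$, that is $f(U_r)\subseteq U_r$ (for $y$ of Type~4 one passes to the discs of the defining nested collection). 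Since the sets $U_r$ with $r$ small form a fundamental system of neighborhoods of the classical point $x$ (cf.~\S\ref{S112} and~\S\ref{S119}), this proves~(iii).

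The step I expect to be most delicate is the multiplicity bookkeeping in~(i): a priori the multiplicity of a fixed point of $f$ can jump when one passes to an iterate, and it is exactly the primality of $n$ together with the residue characteristic that keep this under control; the local multiplier computation, though elementary, must be carried out carefully in characteristic $p$. Parts~(ii) and~(iii), by contrast, are short once one has the residue formula and the basic ultrametric estimates in hand.
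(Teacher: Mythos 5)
Your proposal is correct and follows essentially the same route as the paper's proof: part~(i) is the Beardon--Silverman style count via the local multiplier computation at prime iterates, part~(ii) is the residue formula $\sum_i 1/(1-\lambda_i)=1$ combined with the ultrametric inequality, and part~(iii) is a small-disc estimate in a local coordinate at the fixed point. The only cosmetic difference is in~(iii): the paper identifies the invariant neighborhoods as the open sets $U(\vv_r)$ cut out by a tangent direction at $x_{D(0,r)}$ and appeals to Corollary~\ref{C107}~(ii) to get invariance, while you bypass that corollary by directly estimating $|f^*z|_{D(c,s)}\le\max(|c|,s)$ and observing that the open Berkovich disc $\{|z|<r\}$ is the same set; both are correct and of comparable length.
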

Statement~(i) when $K=\C$ goes back at least to Julia. 
A much more precise result was proved by
by I.~N.~Baker~\cite{Bak64}.
Statements~(ii) and~(iii) are due to Benedetto~\cite{BenedettoThesis}
who adapted an argument used by Julia.
\begin{proof}[Sketch of proof]
  To prove~(i) we follow~\cite[pp.102--103]{Beardon}
  and~\cite[Corollary~4.7]{SilvBook}.
  We claim that the following holds 
  for all but at most $d+2$ primes $q$:
  any classical point $x$ with $f(x)=x$ has the same multiplicity 
  as a fixed point of $f$ and as a fixed point of $f^q$.
  This will show that $f^q$ has $d^q-d>1$ fixed points
  (counted with multiplicity) that are not fixed points of $f$.
  In particular, $f$ has infinitely many distinct classical
  periodic points.
  
  To prove the claim, consider a fixed point
  $x\in\P^1$ and pick a coordinate $z\in F$ vanishing at $x$.
  We can write $f^*z=az+bz^{r+1}+O(z^{r+2})$, where $a,b\in K^*$
  and $r>0$. One proves by induction that 
  \begin{equation*}
    f^{n*}z=a^nz+b_nz^{r+1}+O(z^{r+2}),
  \end{equation*}
  where $b_n=a^{n-1}b(1+a^r+\dots+a^{(n-1)r})$.
  If $a\ne 1$, then for all but at most one prime $q$
  we have $a^q\ne 1$ and hence $x$ is a fixed point of
  multiplicity one for both $f$ and $f^q$.
  If instead $a=1$, then $b_q=qb$, so if $q$
  is different from the characteristic of $K$,
  then $x$ is a fixed point of multiplicity 
  $r$ for both $f$ and $f^q$.

  Next we prove~(ii), following~\cite[\S1.3]{BenedettoNotes}.
  Any fixed point of $f$ of multiplicity at least two is 
  nonrepelling, so we may assume that $f$ has exactly
  $d+1$ fixed points $(x_i)_{i=1}^{d+1}$. 
  Let $(\lambda_i)_{i=1}^{d+1}$ be the corresponding multipliers. 
  Hence $\lambda_i\ne 1$ for all $i$. 
  it follows from the Residue Theorem 
  (see~\cite[Theorem~1.6]{BenedettoNotes}) that
  \begin{equation*}
    \sum_{i=1}^{d+1}\frac1{1-\lambda_i}=1.
  \end{equation*}
  If $|\lambda_i|>1$ for all $i$, 
  then the left hand side would have norm $<1$, a
  contradiction. Hence $|\lambda_i|\le 1$ for some $i$
  and then $x_i$ is a nonrepelling fixed point.

  Finally we prove~(iii). Pick a coordinate $z\in F$ vanishing at
  $x$ and write $f^*z=\lambda z+O(z^2)$, with $|\lambda|\le 1$.
  For $0<r\ll1$ we have $f(x_{D(0,r)})=x_{D(0,r')}$, where 
  $r'=|\lambda|r\le r$. Let $U_r:=U(\vv_r)$, where $\vv_r$ is the 
  tangent direction at $x_{D(0,r)}$ determined by $x$. 
  The sets $U_r$ form a basis of open neighborhoods of $x$
  and it follows from Corollary~\ref{C107}~(ii) that
  $f(U_r)\subseteq U_r$ for $r$ small enough.
\end{proof}
%
%
\subsubsection{Nonclassical periodic points}
We say that a fixed point $x=f(x)\in\H$ is \emph{repelling} 
if $\deg_x(f)>1$ and \emph{neutral} otherwise (points in $\H$ cannot be
attracting). This is justified by the interpretation of the 
local degree as an expansion factor in the hyperbolic metric,
see Theorem~\ref{T101}.

The following result is due to 
Rivera-Letelier~\cite[Lemme~5.4]{Rivera2}.
\begin{Prop}\label{P120}
  Any repelling fixed point $x\in\H$ must be of Type 2.
\end{Prop}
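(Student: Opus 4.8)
The plan is to rule out the cases Type~1, Type~3, and Type~4 one by one, using the fine structure of the local degree established in Theorem~\ref{T101}. Since $x\in\H$ by hypothesis, Type~1 is already excluded, so I only need to handle Types~3 and~4. For a Type~3 or Type~4 fixed point $x=f(x)$, the key point is that Theorem~\ref{T101}~(i),(ii) tells us that $f$ acts near $x$ as a \emph{homeomorphism} of a small segment $\gamma$ through (or ending at) $x$ onto $f(\gamma)$, \emph{expanding} the hyperbolic metric by the integer factor $\deg_x(f)$. If $x$ were repelling, this factor would be at least $2$.

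The main step is to derive a contradiction from such strict local expansion at a fixed point of Type~3 or~4. First I would consider a Type~3 point $x$. Choose, via Theorem~\ref{T101}~(ii), a segment $\gamma\ni x$ with $x$ in its interior such that $f|_\gamma$ is a homeomorphism onto $f(\gamma)$ expanding $d_\H$ by $m:=\deg_x(f)\ge2$. Parametrize a neighborhood of $x$ inside $\gamma$ by $(-\e,\e)$ with $x\leftrightarrow 0$ and arclength given by $d_\H$. Since $f(x)=x$, the map $f$ in this parametrization is an expansion fixing $0$; thus it cannot map any sub-neighborhood of $0$ into itself, yet $f$ is continuous and fixes $0$. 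More precisely, I would run the standard $\lambda$-lemma/contraction-type argument in reverse: because $f$ expands $d_\H$ on $\gamma$ by the constant factor $m\ge2$ and $f(x)=x$, for points $y\in\gamma$ close to $x$ we get $d_\H(f(y),x)=d_\H(f(y),f(x))=m\,d_\H(y,x)>d_\H(y,x)$, so $f(y)$ leaves the segment; iterating, $f^n(y)$ escapes any fixed bounded neighborhood of $x$ in $\gamma$ — but that is not yet a contradiction by itself, so the real content is to combine this with the finiteness of the fibers of $f$ (Proposition~\ref{P107}): in a suitable small \emph{annular} neighborhood of $x$ the map $f$ must be injective on $\gamma$ and strictly expanding, and one checks that there is then no fixed point in that neighborhood besides $x$, while the tangent-space picture at $x$ (only two tangent directions, both preserved) forces $f$ to behave like the Type~3 local model $z\mapsto z^k$ of the proof of Theorem~\ref{T101} — and a Type~3 point is \emph{not} a branch point, so such a model with $k\ge2$ is incompatible with $f$ having $x$ as an isolated repelling fixed point of an honest homeomorphism of $\gamma$. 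I expect the cleanest route is: the local model at the Type~3 point $x$ is, in suitable coordinates, $f^*w=\theta z^k(1+h(z))$ with $k=\deg_x(f)$; if $k\ge2$ then $\Gamma_{f(x)}=\Gamma_K+k\Z\log r\subsetneq \Gamma_K+\Z\log r=\Gamma_x$, contradicting $f(x)=x$ which forces $\Gamma_{f(x)}=\Gamma_x$. This is the sharpest and shortest argument, and it also dispatches the Type~3 case immediately.

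For Type~4 points $x$, the analogous obstruction is Lemma~\ref{L120} in residue characteristic zero (giving local degree one at all Type~4 points), but since $K$ may have positive residue characteristic I cannot invoke that. Instead I would argue directly: by Theorem~\ref{T101}~(i), for a sufficiently small segment $\gamma=[x,y]$, $f$ is a homeomorphism of $\gamma$ onto $f(\gamma)$ expanding $d_\H$ by $m=\deg_x(f)$. Since $f(x)=x$ and $x$ is an end of $\BerkPone$, the tangent map at $x$ is a bijection of the one-point tangent space, hence $f(\gamma)$ is again a segment issuing from $x$ into the same unique tangent direction; comparing with the argument in the Type~4 part of the proof of Theorem~\ref{T101}, one builds, for each $\xi\in\,]x,y]$ near $x$, the relation $d_\H(f(\xi),x)=m\,d_\H(\xi,x)$. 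Now pick the point $x_0:=x\wedge 0\in\H$ (or any fixed non-end reference point) and consider the monotone quantity $t(\xi):=d_\H(x_0,\xi)$ along $[x_0,x]$; repelling expansion near $x$ says $f$ strictly increases $t$, so the points $f^n(y)$ move monotonically toward $x_0$ and away from $x$, but $f^n(y)$ lies on a bounded-length segment, contradiction — here the finiteness/boundedness comes from the fact that $\deg f$ bounds the expansion globally (Corollary~\ref{C202}) and $x$ is an end, so the total length available between $x$ and the first branch point or classical point is finite. Assembling: in all three cases Type~1,3,4 a repelling fixed point $x\in\H$ leads to a contradiction, hence $x$ must be of Type~2.

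\textbf{Main obstacle.} The delicate point is the Type~4 case in positive residue characteristic, where Lemma~\ref{L120} is unavailable; there one must genuinely use the local-homeomorphism-plus-strict-expansion statement of Theorem~\ref{T101}~(i) together with a global length bound to obtain the contradiction, rather than a clean algebraic invariant as in the Type~3 case. I would expect to spend most of the write-up making that escape-to-the-root argument rigorous and checking that the relevant segment has finite hyperbolic length so that strict iterated expansion is genuinely impossible.
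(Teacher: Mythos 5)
Your Type~3 argument is exactly the paper's: by~\eqref{e124}, at a Type~3 point the residue field extension is trivial, so $\deg_x(f)=[\Gamma_x:\Gamma_{f(x)}]$, and $f(x)=x$ forces this index to be $1$. That part is fine and is the same value-group computation the paper uses.

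The Type~4 case, however, contains a genuine gap, and it is precisely the case you flagged as delicate. Your proposed contradiction is that the iterates $f^n(y)$ of a point $y$ near $x$ ``move monotonically away from $x$ \dots but lie on a bounded-length segment.'' This is not a contradiction. The expansion factor $\deg_x(f)=m\ge 2$ from Theorem~\ref{T101}~(i) is only valid on a sufficiently small segment $[x,y]$; once an iterate leaves that segment (or passes a branch point) you have no control over where it goes, and an orbit escaping every small neighborhood of a fixed point is exactly the expected behavior at a repelling fixed point --- it happens, for instance, at repelling Type~2 fixed points, which do exist. So any argument of this shape would ``prove'' too much. There is also no monotonicity to exploit: after one step $f(y)$ need not lie on $[x_0,x]$ at all, and nothing confines the orbit to a segment of finite length. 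The actual obstruction for Type~4 points is of a different nature: following Rivera-Letelier, one shows (using Newton polygons) that any neighborhood of a putative repelling Type~4 fixed point must contain a \emph{classical} fixed point; since $f$ has only finitely many classical fixed points, this is impossible. That step is the real content of the Type~4 case and cannot be replaced by the metric escape argument; if you want to avoid Newton polygons you would need some substitute mechanism that manufactures classical (or at least additional) fixed points arbitrarily close to $x$, not merely the local expansion estimate.
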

\begin{proof}[Sketch of proof]
  We can rule out that $x$ is of Type~3 using value groups.
  Indeed, by~\eqref{e124}  the local degree of $f$ at a Type~3
  point is equal to index of the value group $\Gamma_{f(x)}$ as a
  subgroup of $\Gamma_x$, so if $f(x)=x$, 
  then the local degree is one. 

  I am not aware of an argument of the same style to
  rule out repelling points of Type~4. Instead, Rivera-Letelier
  argues by contradiction. Using Newton polygons he shows
  that any neighborhood of a repelling fixed point of Type~4 
  would contain a classical fixed point. Since there are only
  finitely many classical fixed points, this gives a contradiction.
  See the original paper by Rivera-Letelier
  or~\cite[Lemma~10.80]{BRBook}.
\end{proof}
%
%
\subsubsection{Construction of fixed points}
Beyond Proposition~\ref{P209} there are at least two other 
methods for producing fixed points.

First, one can use Newton polygons to produce 
classical fixed points. This was alluded to in the proof 
of Proposition~\ref{P120} above. 
We shall not describe this further here but instead 
refer the reader to~\cite[\S3.2]{BenedettoNotes} 
and~\cite[{\S}A.10]{BRBook}.

Second, one can use topology. Since $f$ can be viewed as
a tree map, Proposition~\ref{P119} applies and provides
a fixed point in $\BerkPone$. This argument can be refined, 
using that $f$ expands the hyperbolic metric, to produce either
attracting or repelling fixed points. 
See~\cite[\S10.7]{BRBook}.
%
%
%
%
\subsection{Purely inseparable maps}\label{S294}
Suppose $f$ is purely inseparable of degree
$d>1$. In particular, $\charac K=p>0$.
We claim that there exists a coordinate $z\in F$
and $n\ge 1$ such that $f^*z=z^{p^n}$. 
A rational map $f$ such that $f^*z=z^p$ is usually
called the \emph{Frobenius map}, see~\cite[2.4.1--2.4.2]{Hartshorne}.

To prove the claim, we use the fact that
$f$ admits exactly $d+1$ classical fixed points.
Indeed, the multiplier of each fixed point is zero.
Pick a coordinate $z\in F$ such that $z=0$ and $z=\infty$
are fixed points of $f$. 
Since $f$ is purely inseparable there exists 
$n\ge 0$ such that $z^{p^n}\in f^*F$. 
Choose $n$ minimal with this property. Since $\deg f>1$
we must have $n\ge 1$. On the other hand,
the minimality of $n$ shows that 
$z^{p^n}=f^*w$ for some coordinate $w\in F$. 
The fact that $z=0$ and $z=\infty$ are fixed points
imply that $z=aw$ for some $a\in K^*$,
so $f^*z=az^{p^n}$. 
After multiplying $z$ by a 
suitable power of $a$, we get $a=1$, proving the claim.
%
%
%
%
\subsection{The exceptional set}
A classical point $x\in\P^1$ is called \emph{exceptional} 
for $f$ if its total backward orbit $\bigcup_{n\ge 0}f^{-n}(x)$
is finite. The \emph{exceptional set} of $f$ is the set of
exceptional points and denoted $E_f$. Since $f$ is surjective,
it is clear that $E_{f^n}=E_f$ for any $n\ge 1$.
We emphasize that $E_f$ by definition consists of classical
points only.
\begin{Lemma}\label{L213}
  Let $f:\P^1\to\P^1$ be a rational map of degree $d>1$.
  \begin{itemize}
  \item[(i)]
    If $f$ is not purely inseparable, then there are at most two 
    exceptional points. Moreover:
    \begin{itemize}
    \item[(a)]
      if there are two
      exceptional points, then $f(z)=z^{\pm d}$ in a suitable
      coordinate $z$ on $\P^1$
      and $E_f=\{0,\infty\}$;
    \item[(b)]
      if there is exactly 
      one exceptional point,
      then $f$ is a polynomial in a suitable coordinate
      and $E_f=\{\infty\}$.
    \end{itemize}
  \item[(ii)]
    If $f$ is purely inseparable, then the exceptional set is countably 
    infinite and consists of all periodic points of $f$.
  \end{itemize}
\end{Lemma}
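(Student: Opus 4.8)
The strategy is to split into the separable-but-not-purely-inseparable case and the purely inseparable case, and in each case to use the local degree machinery together with the classification of totally ramified points (Proposition~\ref{P210}) and the fixed-point structure of purely inseparable maps (\S\ref{S294}).

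For part~(i), suppose $f$ is not purely inseparable and let $x\in E_f$. Since $E_f$ is finite and totally backward invariant, and since preimages of a point in $E_f$ again lie in $E_f$, some iterate $f^n$ fixes each point of $E_f$ and, more importantly, each point $x\in E_f$ has $f^{-1}(x)\subseteq E_f$; by a counting/pigeonhole argument on the finite set $f^{-1}(E_f)\subseteq E_f$ one sees that $f$ permutes $E_f$ and, after replacing $f$ by an iterate that fixes every point of $E_f$, each such $x$ satisfies $f^{-1}(x)=\{x\}$, i.e.\ $f$ (that iterate) is totally ramified at $x$. If $\#E_f\ge 3$ this would give three totally ramified points for a non-purely-inseparable map, contradicting Proposition~\ref{P210}~(ii); hence $\#E_f\le 2$. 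If $\#E_f=2$, say $E_f=\{a,b\}$, put these points at $0$ and $\infty$ via a coordinate $z$: then $f^{-1}(0)=\{0\}$ and $f^{-1}(\infty)=\{\infty\}$ force $f(z)=cz^{\pm d}$, and absorbing $c$ into the coordinate (possible since $K$ is algebraically closed) gives $f(z)=z^{\pm d}$, and clearly $E_f=\{0,\infty\}$. If $\#E_f=1$, put the point at $\infty$; then $f^{-1}(\infty)=\{\infty\}$ means $\infty$ is totally invariant, which is exactly the characterization of polynomial maps recorded in~\S\ref{S282}, so $f$ is a polynomial in this coordinate and $E_f=\{\infty\}$; conversely every polynomial has $\infty\in E_f$.

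For part~(ii), suppose $f$ is purely inseparable of degree $d>1$. By~\S\ref{S294} there is a coordinate $z$ with $f^*z=z^{p^n}$ for some $n\ge 1$, and $f:\P^1\to\P^1$ is a bijection. Bijectivity gives $f^{-1}(x)=\{f^{-1}(x)\}$ a single point for every $x$, so the total backward orbit of $x$ is $\{f^{-m}(x):m\ge 0\}$; this is finite if and only if $x$ is periodic (for a bijection, an infinite backward orbit would be impossible to avoid otherwise — one shows the backward orbit is finite iff it is eventually periodic iff $x$ itself is periodic). Thus $E_f$ equals the set of periodic points of $f$. In the coordinate $z$, the periodic points in $\P^1(K)$ are $0$, $\infty$, together with all $z\in K^*$ satisfying $z^{p^{nk}}=z$ for some $k\ge1$, i.e.\ all roots of unity of order dividing $p^{nk}-1$; since $K$ is algebraically closed these form a countably infinite set, so $E_f$ is countably infinite. (The non-classical points of $\H$ are never exceptional since they have infinite backward orbit — e.g.\ the Gauss point in a generic coordinate — so $E_f$ consists of classical points only, consistent with the definition.)

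\textbf{Main obstacle.} The delicate point is part~(i): passing from ``$E_f$ finite and backward invariant'' to ``some iterate of $f$ is totally ramified at each exceptional point'' requires care, because $f$ may permute $E_f$ nontrivially and a point of $E_f$ could have several preimages all lying in $E_f$. The clean way is the counting argument: the map $f^{-1}$ on subsets sends $E_f$ into itself, $f$ restricted to $E_f$ is surjective (as $f$ is surjective on $\P^1$ and preimages of exceptional points are exceptional), hence bijective on the finite set $E_f$; then for a suitable iterate $g=f^N$ fixing every point of $E_f$ pointwise, one must rule out that some $x\in E_f$ has a preimage under $g$ outside $\{x\}$ — but such a preimage would lie in $E_g=E_f=\{x,\dots\}$ and feed forward to $x$, and tracking multiplicities/cardinalities of the finite backward orbit forces $g^{-1}(x)=\{x\}$. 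Once this is in place, Proposition~\ref{P210} applied to $g$ (which is not purely inseparable, since $f$ isn't and purely inseparable maps are closed under composition while separable degree is multiplicative — see~\S\ref{S295}) does the rest. I would write this counting step out carefully and treat the rest as routine.
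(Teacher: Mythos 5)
Your overall strategy — show each exceptional point is totally ramified via a counting argument and then invoke Proposition~\ref{P210}, and in the purely inseparable case use bijectivity to identify $E_f$ with the set of periodic points — is exactly the paper's. However, there is a circularity in part~(i) as written: you run the counting/pigeonhole argument on $E_f$ itself, repeatedly invoking its finiteness (``the finite set $f^{-1}(E_f)\subseteq E_f$'', ``bijective on the finite set $E_f$''). The finiteness of $E_f$ is not known a priori — it is essentially the conclusion of part~(i), and it is false in case~(ii) — so this step is unjustified. The definition only guarantees that the backward orbit $F_x:=\bigcup_{n\ge0}f^{-n}(x)$ of each \emph{individual} exceptional point is finite. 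The repair (which is the paper's argument) is to localize: $f^{-1}(F_x)\subseteq F_x$, and surjectivity of $f$ together with finiteness of $F_x$ force $f^{-1}(F_x)=F_x=f(F_x)$, whence every point of $F_x$ — in particular $x$ itself — is totally ramified for $f$. Proposition~\ref{P210}~(ii) then gives $\#E_f\le 2$ directly, with no need to pass to an iterate of $f$ at all.

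Two smaller points. In case~(a), your assertion that $f^{-1}(0)=\{0\}$ and $f^{-1}(\infty)=\{\infty\}$ fails when $f$ swaps the two exceptional points (then $f^{-1}(0)=\{\infty\}$); the correct statement is that $f$ permutes $\{0,\infty\}$ with each point having a single preimage, which yields $f(z)=cz^{\pm d}$ in either case, and your normalization of $c$ and your treatment of case~(b) are fine. Part~(ii) is correct and matches the paper; the explicit count of the periodic points of $z\mapsto z^{p^n}$ as roots of unity is a useful detail the paper leaves implicit.
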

Case~(ii) only occurs when $\charac K=p>0$ and $f$
is an iterate of the Frobenius map: $f^*z=z^d$ for $d$ a power of $p$ 
in some coordinate $z\in F$, see~\S\ref{S294}.
\begin{proof}
  For $x\in E_f$ set $F_x:=\bigcup_{n\ge 0}f^{-n}(x)$.
  Then $F_x$ is a finite set with $f^{-1}(F_x)\subseteq F_x\subseteq E_f$.
  Since $f$ is surjective, $f^{-1}(F_x)=F_x=f(F_x)$.
  Hence each point in $F_x$ must be totally ramified 
  in the sense that $f^{-1}(f(x))=\{x\}$.
  
  If $f$ is purely inseparable, then every point in $\P^1$
  is totally ramified, so $F_x$ is finite iff $x$ is periodic. 
  
  If $f$ is not purely inseparable, then it follows from
  Proposition~\ref{P210}~(i) that $E_f$ has at most two elements. 
  The remaining statements are easily verified.
\end{proof}
%
%
%
%
\subsection{Maps of simple reduction}\label{S160}
By definition, the exceptional set consists of classical points
only. The following result by Rivera-Letelier~\cite{Rivera2}
characterizes totally invariant points in hyperbolic space.
\begin{Prop}\label{P207}
  If $x_0\in\H$ is a totally invariant point, $f^{-1}(x_0)=x_0$, then 
  $x_0$ is a Type~2 point.
\end{Prop}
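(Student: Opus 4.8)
### Proof proposal for Proposition~\ref{P207}

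The plan is to rule out Types~1, 3 and~4 for a totally invariant point $x_0\in\H$, using the tools already developed. Since $x_0\in\H$, it is automatically not of Type~1, so the real work is to exclude Types~3 and~4.

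First I would dispose of Type~3. If $x_0$ is of Type~3 and $f^{-1}(x_0)=\{x_0\}$, then in particular $f(x_0)=x_0$, so by Proposition~\ref{P206} (equation~\eqref{e124}) the local degree $\deg_{x_0}(f)$ equals $[\Gamma_{x_0}:\Gamma_{x_0}]\cdot[\widetilde{\cH(x_0)}:\widetilde{\cH(x_0)}]=1$. But total invariance of $x_0$ together with Proposition~\ref{P102} (applied with $U=V=\BerkPone$, which has degree $d$) forces $\deg_{x_0}(f)=d>1$, a contradiction. This is exactly the value-group argument used in the proof of Proposition~\ref{P120}, and it is the easy case.

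Next I would handle Type~4, which I expect to be the main obstacle. The value-group trick fails here (Type~4 points have $\Gamma_{x_0}=\Gamma_K$ and $\widetilde{\cH(x_0)}=\tK$, yet nothing prevents a large local degree a priori). Instead I would argue by contradiction along the lines of the Type~4 repelling case: suppose $x_0$ is of Type~4 and totally invariant. Since $x_0$ is an end of $\BerkPone$, it has a unique tangent direction $\vv_0$; let $U_0=U(\vv_0)=\BerkPone\setminus\{x_0\}$. Total invariance means $f^{-1}(U_0)=U_0$, so by Proposition~\ref{P105}~(ii), $U_0$ is a connected component of $f^{-1}(U_0)$ with $f(\partial U_0)=\partial U_0$, i.e.\ $f(\{x_0\})=\{x_0\}$, which we knew. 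The key is that $f$ expands the hyperbolic metric near $x_0$ by the factor $\deg_{x_0}(f)=d>1$ (Theorem~\ref{T101}~(i), applied to the Type~4 point $x_0$ and a small segment $\gamma=[x_0,y]$): thus $x_0$ is a repelling fixed point in $\H$. But Proposition~\ref{P120} asserts that a repelling fixed point in $\H$ must be of Type~2, contradicting $x_0$ being of Type~4. Hence $x_0$ cannot be of Type~4.

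Therefore the only remaining possibility is that $x_0$ is of Type~2, which proves the proposition. I should double-check one subtlety: Theorem~\ref{T101}~(i) gives expansion by $\deg_{x_0}(f)$ on a sufficiently small segment emanating from a Type~4 point, and the identification $\deg_{x_0}(f)=d$ uses total invariance via Proposition~\ref{P102} exactly as in the Type~3 case; so invoking Proposition~\ref{P120} is legitimate. An alternative, more self-contained route for the Type~4 case—should one wish to avoid citing Proposition~\ref{P120}—would be to use Newton polygons to locate a classical fixed point in every neighborhood of $x_0$, contradicting the finiteness of the classical fixed-point set (there are exactly $d+1$ of them counted with multiplicity); but since Proposition~\ref{P120} is already available in the text, the short argument above suffices.
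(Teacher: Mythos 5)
Your proof is correct and coincides with the paper's primary argument: the paper's proof opens by observing that a totally invariant point in $\H$ is a repelling fixed point (since total invariance forces $\deg_{x_0}(f)=d>1$ by Proposition~\ref{P102}), so the result follows from Proposition~\ref{P120}, exactly as you argue. The paper then also supplies an alternative, more self-contained proof via the function $G(x,y)=-d_\H(x_0,x\wedge_{x_0}y)$ and the identity $G(f(x),y)=\sum_i d_iG(x,y_i)$, which avoids the Newton-polygon input hidden in the Type~4 case of Proposition~\ref{P120}; your suggested fallback for that case is essentially what Proposition~\ref{P120} already does.
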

\begin{Def}\label{D202}
  A rational map $f:\P^1\to\P^1$ has \emph{simple reduction}
  if there exists a Type~2 point that is totally invariant for $f$.
\end{Def}
\begin{Remark}
  Suppose $f$ has simple reduction and 
  pick a coordinate $z$ in which the totally invariant Type~2 point
  becomes the Gauss point.
  Then we can write $f^*z=\phi/\psi$,
  where $\phi,\psi\in\fo_K[z]$ and where the rational function
  $\tphi/\tpsi\in\tK(z)$ has
  degree $d=\deg f$. Such a map is usually said to have 
  \emph{good reduction}~\cite{MortonSilverman}.
  Some authors refer to simple reduction 
  as \emph{potentially good reduction}.
  One could argue that dynamically speaking, maps of
  good or simple reduction are not the most interesting ones,
  but they do play an important role.
  For more on this, see~\cite{Benedetto4,Bak09,PST}.
\end{Remark}
\begin{proof}[Proof of Proposition~\ref{P207}]
  A totally invariant point in $\H$ is repelling so the
  result follows from Proposition~\ref{P120}. 
  Nevertheless, we give an alternative proof.

  Define a function $G:\BerkPone\times\BerkPone\to[-\infty,0]$  
  by\footnote{In~\cite{Bak09,BRBook}, the function $-G$ is called the 
    normalized Arakelov-Green's function with respect to the Dirac mass at $x_0$.} 
  \begin{equation*}
    G(x,y)=-d_\H(x_0,x\wedge_{x_0}y).
  \end{equation*}    
  It is characterized by the following properties:
  $G(y,x)=G(x,y)$, $G(x_0,y)=0$ and 
  $\Delta G(\cdot,y)=\delta_y-\delta_{x_0}$.
  
  Pick any point $y\in\BerkPone$. Let $(y_i)_{i=1}^m$ be the preimages
  of $y$ under $f$ and $d_i=\deg_{y_i}(f)$ the corresponding local degrees.
  We claim that
  \begin{equation}\label{e228}
    G(f(x),y)=\sum_{i=1}^m d_iG(x,y_i)
  \end{equation}
  for any $x\in\BerkPone$. 
  To see this, note that 
  since $f^*\delta_{x_0}=d\delta_{x_0}$ it follows from 
  Proposition~\ref{P104} that both sides of~\eqref{e228} are 
  $d\delta_{x_0}$-subharmonic as a function of $x$, with 
  Laplacian $f^*(\delta_y-\delta_{x_0})=\sum_id_i(\delta_{y_i}-\delta_{x_0})$. 
  Now, the Laplacian determines a quasisubharmonic 
  function up to a constant, so
  since both sides of~\eqref{e228} vanish when $x=x_0$
  they must be equal for all $x$, proving the claim.

  Now pick $x$ and $y$ as distinct classical fixed points of $f$.
  Such points exist after replacing $f$ by an iterate,
  see Proposition~\ref{P209}.
  We may assume $y_1=y$. Then~\eqref{e228} gives
  \begin{equation}\label{e229}
    (d_1-1)G(x,y)+\sum_{i\ge2}d_iG(x,y_i)=0
  \end{equation}
  Since $G\le 0$, we must have $G(x,y_i)=0$ for $i\ge 2$ and
  $(d_1-1)G(x,y)=0$.
  
  First assume $x_0$ is of Type~4. Then $x_0$
  is an end in the tree $\BerkPone$,
  so since $x\ne x_0$ and $y_i\ne x_0$ for all $i$, we have
  $x\wedge_{x_0}y_i\ne x_0$ and hence $G(x,y_i)<0$. 
  This contradicts~\eqref{e229}.

  Now assume $x_0$ is of Type~3. Then there are exactly two tangent directions
  at $x_0$ in the tree $\BerkPone$. Replacing $f$ by an iterate, we may assume 
  that these are invariant under the tangent map. We may assume that the 
  classical fixed points $x,y\in\P^1$ above represent the same tangent 
  direction, so that $x\wedge_{x_0}y\ne x_0$. 
  Since $x_0$ is totally invariant, it follows from Corollary~\ref{C107}~(i)
  that all the preimages $y_i$ of $y$ also represent this tangent vector at $x_0$.
  Thus $G(x,y_i)<0$ for all $i$ which again contradicts~\eqref{e229}.
\end{proof}
\begin{Remark}
  The proof in~\cite{Bak09} also uses the function $G$ above 
  and analyzes the lifting of $f$ as a homogeneous polynomial
  map of $K\times K$. 
\end{Remark}
%
%
%
%
\subsection{Fatou and Julia sets}\label{S289}
In the early part of the 20th century, Fatou and Julia
developed a general theory of iteration of rational
maps on the Riemann sphere. Based upon some of those
results, we make the following definition.
\begin{Def}
  The \emph{Julia set} $\cJ=\cJ_f$ is the set of points $x\in\BerkP$
  such that for every open neighborhood $U$ of $x$ we have
  $\bigcup_{n\ge 0}f^n(U)\supseteq\BerkP\setminus E_f$.
  The \emph{Fatou set} is the complement of the Julia set.
\end{Def}
\begin{Remark}\label{R202}
  Over the complex numbers, one usually defines the Fatou
  set as the largest open subset of the Riemann sphere where
  the sequence of iterates is locally equicontinuous.
  One then shows that the Julia set is characterized by 
  the conditions in the definition above.
  Very recently, a non-Archimedean version of this was 
  found by Favre, Kiwi and Trucco, see~\cite[Theorem~5.4]{FKT}.
  Namely, a point $x\in\BerkPone$ belongs to the Fatou set of $f$
  iff the family $\{f^n\}_{n\ge 1}$ is normal in a neighborhood of $x$
  in a suitable sense. We refer to~\cite[\S5]{FKT} for the definition
  of normality, but point out that the situation is more subtle 
  in the non-Archimedean case than over the complex numbers.
\end{Remark}
\begin{Thm}\label{T204}
  Let $f:\P^1\to\P^1$ be any rational map of degree $d>1$.
  \begin{itemize}
  \item[(i)]
    The Fatou set $\cF$ and Julia set $\cJ$ are totally invariant: 
    $\cF=f(\cF)=f^{-1}(\cF)$ and $\cJ=f(\cJ)=f^{-1}(\cJ)$.
  \item[(ii)]
    We have $\cF_f=\cF_{f^n}$ and $\cJ_f=\cJ_{f^n}$ for all $n\ge 1$.
  \item[(iii)]
    The Fatou set is open and dense in $\BerkPone$.
    It contains any nonrepelling classical periodic
    point and in particular any exceptional point.
  \item[(iv)]
    The Julia set is nonempty, compact and has empty interior.
    Further:
    \begin{itemize}
    \item[(a)]
      if $f$ has simple reduction, then $\cJ$ consists of a single Type~2 point;
    \item[(b)]
      if $f$ does not have simple reduction, then $\cJ$ is a 
      perfect set, that is, it has no isolated points.
    \end{itemize}
\end{itemize}
\end{Thm}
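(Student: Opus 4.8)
The plan is to derive Theorem~\ref{T204} by combining the equidistribution theorem (Theorem~A, whose proof occupies~\S\ref{S301}) with the topological and local-degree machinery already set up, together with the structural results on totally invariant and periodic points (Propositions~\ref{P207}, \ref{P209}, \ref{P120}) proved above. Let $\rho=\rho_f$ denote the measure produced by the equidistribution theorem; I expect the cleanest route is to first establish that $\cJ_f=\supp\rho$, and then read off all of (i)--(iv) from properties of $\rho$. So the very first step is to prove this identification: one inclusion, $\supp\rho\subseteq\cJ_f$, follows because $d^{-n}f^{n\ast}\delta_x\to\rho$ for every $x\notin E_f$, so if $U$ meets $\cJ_f^c$... actually more directly: if $x\in\supp\rho$ and $U\ni x$ is open, then for any $y\notin E_f$ the measures $d^{-n}f^{n\ast}\delta_y$ eventually charge $U$, which forces $y\in\bigcup_n f^n(U)$; hence $\bigcup_n f^n(U)\supseteq\BerkPone\setminus E_f$, i.e.\ $x\in\cJ_f$. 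The reverse inclusion $\cJ_f\subseteq\supp\rho$ uses that on $\supp\rho^c$ one has a Montel-type normality / attraction to the Fatou set; alternatively one shows directly that if $x\notin\supp\rho$ then a neighborhood of $x$ has forward orbit avoiding $\supp\rho$ and omitting at least three points, hence is not all of $\BerkPone\setminus E_f$. I would cite the detailed argument from~\cite{FR2} here rather than reproduce it.

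Given $\cJ_f=\supp\rho$, the remaining assertions fall out as follows. For (i): total invariance of $\rho$, namely $f^\ast\rho=d\,\rho$ (equivalently $f_\ast\rho=\rho$), is part of the equidistribution package (it follows from $d^{-1}f^\ast(d^{-n}f^{n\ast}\delta_x)=d^{-(n+1)}f^{(n+1)\ast}\delta_x$ and continuity of $f^\ast$ in~\S\ref{S134}); since $f$ is continuous, open, finite and surjective (Proposition~\ref{P107}), $\supp(f^\ast\rho)=f^{-1}(\supp\rho)$ and $\supp(f_\ast\rho)=f(\supp\rho)$, giving $\cJ=f^{-1}(\cJ)=f(\cJ)$, and then $\cF=f^{-1}(\cF)=f(\cF)$ by taking complements and using surjectivity. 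For (ii): $\rho_{f^n}=\rho_f$ because $d^{-nm}f^{nm\ast}\delta_x$ is a subsequence of $d^{-k}f^{k\ast}\delta_x$, so the limits agree, whence $\cJ_{f^n}=\cJ_f$. For (iii): $\cJ$ is closed (support of a measure) so $\cF$ is open; $\cF$ is dense because $\cJ$ has empty interior, which in turn holds since a nonempty open set $U\subseteq\cJ=\supp\rho$ would, by total invariance, satisfy $\bigcup_n f^n(U)=\BerkPone\setminus E_f$, and one checks this forces $\rho$ to have an atom or spreads mass in a way contradicting that $\rho$ is a probability measure with the right invariance — more simply, I would argue that any classical nonrepelling periodic point lies in $\cF$ (by Proposition~\ref{P209}(iii): it has invariant neighborhoods $U$ with $f(U)\subseteq U$, so $\bigcup_n f^n(U)=U\neq\BerkPone\setminus E_f$), and exceptional points, being nonrepelling periodic or having finite backward orbit, are likewise Fatou; combined with the existence of infinitely many periodic points (Proposition~\ref{P209}(i)) and the no-interior statement this gives density.

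For (iv): $\cJ=\supp\rho$ is automatically nonempty and compact (closed subset of the compact space $\BerkPone$), and has empty interior by the argument just sketched. The dichotomy is the substantive part. Case (a): if $f$ has simple reduction there is a totally invariant Type~2 point $x_0$ (Definition~\ref{D202}); total invariance gives $f^\ast\delta_{x_0}=d\,\delta_{x_0}$, so $d^{-n}f^{n\ast}\delta_{x_0}=\delta_{x_0}$ for all $n$, and one shows $x_0\notin E_f$ (it is not classical), whence by equidistribution $\rho=\lim d^{-n}f^{n\ast}\delta_{x_0}=\delta_{x_0}$; therefore $\cJ=\{x_0\}$. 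Conversely, case (b): if $f$ does \emph{not} have simple reduction, I must show $\cJ$ is perfect. First, $\cJ$ cannot be a single point: if $\cJ=\{x_0\}$ then total invariance forces $f^{-1}(x_0)=\{x_0\}$, and $x_0\notin\P^1$ (a classical totally invariant point would make $E_f$ infinite or force $f$ to be a power map with a nonclassical Julia point anyway — cleaner: $\cJ=\supp\rho$ and $\rho$ charges no classical point unless $f$ is exceptional, and in all cases a one-point Julia set at a classical point is impossible for $\deg>1$), so $x_0\in\H$ is totally invariant, hence Type~2 by Proposition~\ref{P207}, hence $f$ has simple reduction — contradiction. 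Now that $\cJ$ has at least two points: total invariance and the fact that $f_\bullet$ (the tree map) expands the hyperbolic metric let one show no point of $\cJ$ is isolated — I would argue that the backward orbit of any $x\in\cJ$ accumulates at $x$ (using $\cJ=f^{-1}(\cJ)$, that $f$ has topological degree $d\ge2$ so preimages proliferate, and that isolated points of a totally invariant closed set would have to be permuted among finitely many, contradicting the expansion property / the perfectness of $\supp\rho$ which follows once $\#\cJ\ge2$ because an isolated point $x\in\supp\rho$ would be an atom, and $f_\ast\rho=\rho$ together with $f$ finite would make $\rho$ a finite sum of atoms, forcing $\cJ$ finite and then, by the single-point analysis applied to an iterate, a single point).

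The main obstacle I anticipate is making the ``no atoms / perfectness'' step (b) rigorous without simply quoting~\cite{FR2}: one needs to rule out $\rho$ being purely atomic, which amounts to showing that a finite totally invariant set in $\H$ of Type~2 points is impossible unless it is a single point — this is where Proposition~\ref{P207} (totally invariant points in $\H$ are Type~2) must be leveraged carefully, presumably by applying it to a suitable iterate $f^n$ fixing each point of the putative finite invariant set, and invoking that $\rho_{f^n}=\rho_f$. The rest is bookkeeping with the total invariance of $\rho$ and the topological properties of $f$ from~\S\ref{S130} and~\S\ref{S260}.
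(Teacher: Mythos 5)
Your overall route is the same as the paper's: everything substantive is funneled through the equidistribution theorem via the identification $\cJ_f=\supp\rho_f$ (the paper's Proposition~\ref{P204}), with the dichotomy read off from the atom/simple-reduction equivalence (Proposition~\ref{P116}). One organizational difference worth noting: the paper proves (i), (ii), (iii) and the compactness/empty-interior part of (iv) by purely topological arguments that never mention $\rho_f$ — total invariance of $\cF$ is immediate from $f$ being open, continuous, finite and surjective, and $\cF_{f^n}=\cF_f$ follows from $E_{f^n}=E_f$ directly from the definition — reserving equidistribution only for nonemptiness of $\cJ$ and for (a)/(b). Your measure-theoretic derivations of (i) and (ii) are valid but heavier than necessary. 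For (iv)(b) you take a genuinely different path: you deduce perfectness from ``no atoms implies no isolated points,'' which is a clean alternative to the paper's argument (which writes $f^{n*}\delta_x=m^n\delta_x+\rho'_n$ with $m=\deg_x(f)<d$ and extracts a preimage $y\ne x$ of $x$ in any neighborhood, thereby showing the backward orbit of $x$ accumulates at $x$ — a slightly stronger conclusion).

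Three spots need tightening. First, you should not defer the inclusion $\cJ_f\subseteq\supp\rho_f$ to~\cite{FR2} or invoke a Montel-type statement: the set $U:=\BerkPone\setminus\supp\rho_f$ is open and totally invariant, so $\bigcup_n f^n(U)=U$ is disjoint from $\supp\rho_f$, and since $\supp\rho_f$ is nonempty and $\rho_f(E_f)=0$ forces $\supp\rho_f\not\subseteq E_f$, no point of $U$ can be in $\cJ_f$. Second, your empty-interior argument is left dangling: the ``more simply'' clause only establishes that $\cF\ne\emptyset$; you still need to close the loop by observing that an open $U\subseteq\cJ$ would have $\bigcup_n f^n(U)\subseteq\cJ$ by invariance, hence would miss the nonempty open set $\cF$, which is not contained in $E_f$ — contradicting $x\in\cJ$ for $x\in U$. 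Third, the claim that an atom forces $\rho_f$ to be ``a finite sum of atoms, forcing $\cJ$ finite'' is not the right mechanism: from $f^*\rho_f=d\rho_f$ one gets $\rho_f\{x_0\}\le\rho_f\{f(x_0)\}$ with equality iff $\deg_{x_0}(f)=d$, so finiteness of the total mass forces the forward orbit of the atom to be eventually periodic with full local degree along the cycle; that periodic point is then totally invariant for an iterate $f^n$, is of Type~2 by Proposition~\ref{P207}, and $\rho_f=\rho_{f^n}=\delta_{x_0}$ gives simple reduction for $f$ itself. You correctly identified this as the delicate step and pointed at the right ingredients, so with these repairs the proof goes through.
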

\begin{proof}
  It is clear that $\cF$ is open. 
  Since $f:\BerkPone\to\BerkPone$ is an open continuous map, 
  it follows that $\cF$ is totally invariant.
  Hence $\cJ$ is compact and totally invariant.
  The fact that $\cF_{f^n}=\cF_f$, and hence $\cJ_{f^n}=\cJ_f$,
  follow from the total invariance of $E_f=E_{f^n}$.
  
  It follows from Proposition~\ref{P209} that any nonrepelling
  classical periodic point is in the Fatou set. Since such points exist,
  the Fatou set is nonempty. This also implies that the 
  Julia set has nonempty interior. Indeed, if $U$ were an open set 
  contained in the Julia set, then the set $U':=\bigcup_{n\ge1}f^n(U)$ 
  would be contained in the Julia set for all $n\ge1$. 
  Since the Fatou set is open and 
  nonempty, it is not contained in $E_f$, 
  hence $\BerkPone\setminus U'\not\subseteq E_f$,
  so that $U\subseteq\cF$, a contradiction.
  
  The fact that the Julia set is nonempty and that 
  properties~(a) and~(b) hold is nontrivial and will
  be proved in~\S\ref{S284} as a 
  consequence of the equidistribution theorem below.
  See Propositions~\ref{P204} and~\ref{P208}.
\end{proof}

Much more is known about the Fatou and Julia set than what 
is presented here. 
For example, 
as an analogue of the classical result by Fatou and Julia,
Rivera-Letelier proved that $\cJ$ is the closure of the repelling
periodic points of $f$. 

For a polynomial map, the Julia set is also the boundary of the 
filled Julia set, that is, the set of points whose orbits are
bounded in the sense that they are disjoint from a fixed open
neighborhood of infinity. See~\cite[Theorem~10.91]{BRBook}.

Finally, a great deal is known about the dynamics on the Fatou set.
We shall not study this here. Instead we
refer to~\cite{BRBook,BenedettoNotes}.
%
%
%
%
\subsection{Equidistribution theorem}\label{S301}
The following result that describes the distribution of
preimages of points under iteration was proved 
by Favre and Rivera-Letelier~\cite{FR3,FR2}.
The corresponding result over the complex numbers
is due to Brolin~\cite{Brolin} for polynomials and to 
Lyubich~\cite{Lyubich} and Freire-Lopez-Ma\~n\'e~\cite{FLM} 
for rational functions.
\begin{Thm}\label{T201}
  Let $f:\P^1\to\P^1$ be a rational map of degree
  $d>1$.
  Then there exists a unique Radon 
  probability measure $\rho_f$ on $\BerkP$
  with the following property: 
  if $\rho$ is a Radon probability measure on $\BerkPone$,
  then   
  \begin{equation*}
    \frac1{d^n}f^{n*}\rho\to\rho_f
    \quad\text{as $n\to\infty$},
  \end{equation*}
  in the weak sense of measures, iff $\rho(E_f)=0$.
  The measure
  $\rho_f$ puts no mass on any classical point;
  in particular $\rho_f(E_f)=0$. It is totally invariant
  in the sense that $f^*\rho_f=d\rho_f$.
\end{Thm}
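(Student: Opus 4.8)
The proof proceeds via potential theory on the tree $\BerkPone$, following Favre--Rivera-Letelier. The first step is to construct $\rho_f$ as the Laplacian of a fixed point of a suitable operator on quasisubharmonic functions. Fix a Type~2 point $x_0$ (say the Gauss point in some coordinate) and let $\rho_0 = \delta_{x_0}$. By Proposition~\ref{P104}, the assignment $\f \mapsto d^{-1} f^*\f$ sends $\SH(\BerkPone, \rho_0)$ into $\SH(\BerkPone, d^{-1}f^*\rho_0)$. Since $f^*\rho_0 = f^*\delta_{x_0} = \sum_i \deg_{y_i}(f)\,\delta_{y_i}$ is a finite atomic probability measure of mass $1$ supported on $\H$ (using that $f$ preserves type, Lemma~\ref{L211}, and that preimages of a Type~2 point are Type~2), the function $g := d^{-1}f^*\delta_{x_0} - \delta_{x_0}$ — more precisely the potential $\psi$ with $\Delta\psi = d^{-1}f^*\delta_{x_0} - \delta_{x_0}$ normalized by $\max\psi = 0$ — is a fixed reference. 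I would then iterate: set $\f_0 = \psi$ and $\f_{n+1} = d^{-1}f^*\f_n + \psi$, or equivalently work directly with $\rho_n := d^{-n}f^{n*}\delta_{x_0}$ and the associated potentials $\psi_n \in \SH^0(\BerkPone,\delta_{x_0})$ defined by $\rho_n = \delta_{x_0} + \Delta\psi_n$. One checks $\psi_{n+1} = d^{-1}f^*\psi_n + \psi_1$ using~\eqref{e103}, so $\psi_n = \sum_{k=0}^{n-1} d^{-k}(f^{k*}\psi_1)$. Using Lemma~\ref{L109} (each $\log|\phi|$ is $\deg\phi$-Lipschitz on $\H$) and Corollary~\ref{C202} ($f$ expands $d_\H$ by at most $d$), the tail terms $d^{-k}f^{k*}\psi_1$ are uniformly Lipschitz with summable constants, so $\psi_n$ converges uniformly on $\H$ to some $\psi_\infty \in \SH^0(\BerkPone,\delta_{x_0})$; set $\rho_f := \delta_{x_0} + \Delta\psi_\infty$. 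Passing to the limit in $\psi_{n+1} = d^{-1}f^*\psi_n + \psi_1$ and applying~\eqref{e103} gives $\rho_f = d^{-1}f^*\rho_f$, i.e.\ total invariance $f^*\rho_f = d\rho_f$.

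The next step is to show $\rho_f$ gives no mass to classical points. I would argue that $\rho_f\{x\} > 0$ for some Type~1 point $x$ forces, by total invariance $f^*\rho_f = d\rho_f$ and the fact that local degrees are bounded by $d$, that $x$ has few preimages; iterating, the backward orbit of $x$ would be finite, so $x \in E_f$. But $E_f$ is finite unless $f$ is purely inseparable (Lemma~\ref{L213}), and a short separate computation handles the atomic-mass balance on a finite totally invariant set of classical points: summing $\rho_f\{y\}$ over $y \in f^{-1}(x)$ with multiplicities and comparing with $d\rho_f\{x\}$, one finds the total $\rho_f$-mass carried by $E_f$ would have to be both positive and, by the defining limit construction from a non-atomic-at-classical-points measure, arbitrarily small — a contradiction. (Alternatively, invoke Proposition~\ref{P301}: the potential $\psi_\infty$ is genuinely Lipschitz on $\H$ with a finite constant, since $\Delta\psi_\infty\{y\} = 0$ at Type~1 points $y$, because any such atom would propagate backward under the recursion and violate boundedness of the Lipschitz constants $\deg\phi$.) Hence $\rho_f(E_f) = 0$, and in particular $\rho_f$ is a well-defined Radon probability measure not charging classical points.

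The equidistribution statement itself — that $d^{-n}f^{n*}\rho \to \rho_f$ iff $\rho(E_f) = 0$ — is then proved by reduction to Dirac masses and a potential-theoretic estimate. Writing $\rho = \delta_{x_0} + \Delta\f$ for $\f \in \SH^0(\BerkPone,\delta_{x_0})$, one has $d^{-n}f^{n*}\rho = \rho_n + d^{-n}\Delta(f^{n*}\f)$ with $\rho_n \to \rho_f$ already established; so it suffices to show $d^{-n}f^{n*}\f \to 0$ pointwise on $\H$ (equivalently $d^{-n}\Delta(f^{n*}\f) \to 0$ weakly, by the exactness of~\eqref{e139}) precisely when $\rho(E_f) = 0$. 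Here Proposition~\ref{P301} is the key tool: it bounds $\f(x)$ below in terms of $\lambda > \sup_{y \in \BerkPone\setminus\H}\Delta\f\{y\}$, and $\Delta\f\{y\} = \rho\{y\}$ for $y$ classical. If $\rho(E_f) = 0$, one chooses $\lambda$ small; pulling back and using that $f^{n*}$ expands $d_\H$ by at most $d^n$ while pushing the mass-at-infinity under control via total invariance and Proposition~\ref{P104}, the lower bound on $d^{-n}f^{n*}\f$ forces it to $0$, and the trivial upper bound $\le 0$ closes the argument. Conversely, if $\rho$ charges some exceptional point $x$, then $d^{-n}f^{n*}\rho$ retains an atom of mass $\ge \rho\{x\}$ on the finite backward orbit of $x$, which $\rho_f$ does not have, so convergence fails.

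The main obstacle is the quantitative lower bound making $d^{-n}f^{n*}\f \to 0$ when $\rho(E_f)=0$: one must control how the mass of $\Delta(f^{n*}\f)$ at classical points (and near them) behaves under iteration, ensuring it does not concentrate except possibly on $E_f$, and then exploit the hypothesis $\rho(E_f)=0$ together with the $d^{-n}$ normalization beating the $d^n$ metric-expansion. This is exactly where Proposition~\ref{P301} and the hyperbolic-metric expansion estimate (Corollary~\ref{C202}, Theorem~\ref{T101}) must be combined with care; everything else is a fairly mechanical consequence of~\eqref{e103} and the compactness built into the spaces $\SH^0(\BerkPone,\rho_0)$.
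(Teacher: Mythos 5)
Your overall strategy is the same as the paper's: construct $\rho_f$ as $\delta_{x_0}+\Delta u_\infty$ with $u_\infty=\lim_n\sum_{j<n}d^{-j}u\circ f^j$, deduce total invariance and (from boundedness of $u_\infty$) the absence of classical atoms, and reduce equidistribution to showing $\f_n:=d^{-n}\f\circ f^n\to0$ on $\H$ via Proposition~\ref{P301}. But the crucial step is not closed as you describe it. Proposition~\ref{P301} requires $\lambda>\sup_{y\in\P^1}\Delta\f\{y\}$, and for a general admissible $\rho$ (e.g.\ $\rho=\delta_x$ with $x$ classical and non-exceptional) this supremum equals $1$, so you cannot ``choose $\lambda$ small'' at the outset. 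Combined with $d_\H(f^n(x),x_0)=O(d^n)$, the resulting lower bound is only $\liminf_n\f_n\ge-\lambda D$ for a fixed constant $D$, which does not tend to zero. The missing ingredient is the paper's Lemma~\ref{L203}: if $f$ is not purely inseparable and $\rho(E_f)=0$, then $\e_m:=\sup_{y\in\P^1}(d^{-m}f^{m*}\rho)\{y\}\to0$ as $m\to\infty$ (because $\deg_y(f^m)=o(d^m)$ off $E_f$, by Proposition~\ref{P210}). One must therefore run a double limit, writing $\f_{n+m}(x)=d^{-n}\f_m(f^n(x))$ and applying Proposition~\ref{P301} to $\f_m$ with $\lambda=\e_m$, letting $n\to\infty$ first and then $m\to\infty$. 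Without this two-stage structure the argument does not yield $\liminf\f_n\ge0$.

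Second, the theorem is stated in arbitrary characteristic, and your argument fails outright for purely inseparable maps: there the local degree equals $d$ at every point, so the classical atoms of $d^{-n}f^{n*}\rho$ never dissipate and $\e_m\equiv\e_0$. The paper treats this case by a separate argument (showing $\rho\{y\ge f^n(x)\}\to0$ using that a nested limit point would be a classical periodic, hence exceptional, point), which you would need to supply. A smaller issue: your first argument that $\rho_f$ charges no classical point (via backward orbits and total invariance) is essentially circular, since it concludes $x\in E_f$ and then needs $\rho_f(E_f)=0$; the clean route, which your parenthetical gestures at, is simply that $u_\infty$ is bounded and the representation~\eqref{e137} forbids a bounded potential from having Laplacian atoms at points of $\BerkPone\setminus\H$.
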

Recall that we have assumed that the ground field $K$ is algebraically 
closed and nontrivially valued. 
See~\S\ref{S255} for the general case.

As a consequence of Theorem~\ref{T201}, 
we obtain a more general version of Theorem~A from the introduction, 
namely
\begin{Cor}\label{C201}
  With $f$ as above, we have 
 \begin{equation*}
    \frac1{d^n}\sum_{f^n(y)=x}\deg_y(f^n)\delta_y\to\rho_f
    \quad\text{as $n\to\infty$},
  \end{equation*}
  for any non-exceptional point $x\in\BerkPone\setminus E_f$.
\end{Cor}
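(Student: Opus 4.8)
The plan is to deduce this from Theorem~\ref{T201} by taking $\rho=\delta_x$ for a non-exceptional classical or non-classical point $x$. The only genuine content beyond Theorem~\ref{T201} itself is the identification of the iterated pullback $f^{n*}\delta_x$ with the weighted sum $\sum_{f^n(y)=x}\deg_y(f^n)\,\delta_y$. This in turn is the functoriality of the local degree: recall from~\S\ref{S166} that $\deg_y(f^n\circ\,\cdot\,)$ multiplies along compositions, more precisely $\deg_y(g\circ h)=\deg_y h\cdot\deg_{h(y)}g$, and hence $\deg_y(f^n)=\prod_{i=0}^{n-1}\deg_{f^i(y)}(f)$. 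First I would check, directly from the definition $f^*\delta_w=\sum_{f(z)=w}\deg_z(f)\,\delta_z$ in~\S\ref{S134}, that pulling back once more and collecting terms over the tree of preimages gives $(f^n)^*\delta_x=\sum_{f^n(y)=x}\deg_y(f^n)\,\delta_y$; this is a routine induction on $n$ using the composition formula for local degrees and the fact that $\langle (f^{n+1})^*\delta_x,H\rangle=\langle \delta_x,(f_*)^{n+1}H\rangle=\langle (f^n)^*(f^*\delta_x),H\rangle$ for all $H\in C^0(\BerkPone)$.

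Next I would verify that $\delta_x(E_f)=0$ exactly when $x\notin E_f$. Since $E_f$ consists of classical points only and is, by Lemma~\ref{L213}, either finite (the non--purely-inseparable case) or a countable set of periodic points (the purely inseparable case), the Dirac mass $\delta_x$ gives $E_f$ mass $1$ if $x\in E_f$ and mass $0$ otherwise. Therefore, for any $x\in\BerkPone\setminus E_f$, Theorem~\ref{T201} applied to $\rho=\delta_x$ yields
\begin{equation*}
  \frac1{d^n}f^{n*}\delta_x\to\rho_f\quad\text{as }n\to\infty
\end{equation*}
in the weak topology of measures. Substituting the identity $f^{n*}\delta_x=\sum_{f^n(y)=x}\deg_y(f^n)\,\delta_y$ from the previous step gives precisely the assertion of the corollary.

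The only point requiring a little care — and the one I would regard as the main (though still modest) obstacle — is the bookkeeping in the pullback identity: one must be sure that when a preimage $y$ of $x$ under $f^n$ has several intermediate orbit branches, or when local degrees at intermediate points combine, the total weight attached to $\delta_y$ is exactly $\deg_y(f^n)$ and not an overcount. This is handled cleanly by the multiplicativity $\deg_y(f^n)=\prod_{i=0}^{n-1}\deg_{f^i(y)}(f)$, which is itself an immediate consequence of the composition law for the algebraically defined local degree recorded in~\S\ref{S166}, together with the observation that the orbit $y\mapsto f(y)\mapsto\dots\mapsto f^n(y)=x$ is uniquely determined by $y$. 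With that in hand the induction closes and no further input is needed; everything else is a direct quotation of Theorem~\ref{T201} and Lemma~\ref{L213}.
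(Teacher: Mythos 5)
Your proof is correct and fills in exactly the routine deduction that the paper leaves implicit by calling Corollary~\ref{C201} ``a consequence of Theorem~\ref{T201}'': take $\rho=\delta_x$, note $\delta_x(E_f)=0$ iff $x\notin E_f$ since $E_f\subseteq\P^1$, and identify $(f^n)^*\delta_x$ with $\sum_{f^n(y)=x}\deg_y(f^n)\delta_y$ via the multiplicativity of the local degree from~\S\ref{S166}. The bookkeeping concern you flag is handled correctly by the composition formula, and no further input is needed.
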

Following~\cite{BRBook} we call 
$\rho_f$ the \emph{canonical measure} of $f$.
It is clear that $\rho_f=\rho_{f^n}$ for $n\ge 1$. 
The proof of Theorem~\ref{T201} will be given in~\S\ref{S202}.
\begin{Remark}
 Okuyama~\cite{Oku11b} has proved a 
 quantitative strengthening of Corollary~\ref{C201}.
 The canonical measure is also expected to describe the
 distribution of repelling periodic points. This does not
 seem to be established full generality, but is known in 
 many cases~\cite{Oku11a}.
\end{Remark}
%
%
%
%
\subsection{Consequences of the equidistribution theorem}\label{S284}
In this section we collect some result that follow from Theorem~\ref{T201}.
\begin{Prop}\label{P204}
  The support of the measure $\rho_f$ is 
  exactly the Julia set $\cJ=\cJ_f$. 
  In particular, $\cJ$ is nonempty.
\end{Prop}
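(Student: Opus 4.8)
The plan is to prove the two statements $\supp\rho_f=\cJ_f$ and $\cJ_f\neq\emptyset$ together, deducing the second from the first once we know $\rho_f$ has nonempty support (which is automatic since $\rho_f$ is a probability measure). So the real content is the identification $\supp\rho_f=\cJ_f$, and I would split it into the two inclusions.

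\textbf{Step 1: $\cJ_f\subseteq\supp\rho_f$.} Suppose $x\notin\supp\rho_f$, so there is an open neighborhood $U$ of $x$ with $\rho_f(U)=0$. I want to show $x\in\cF_f$. The idea is that on $U$ the mass of $d^{-n}f^{n*}\rho_f=\rho_f$ stays zero, so $U$ cannot be "expanding" in the sense required of a Julia point. More precisely: pick any point $y\notin E_f$ lying in $\BerkPone\setminus\supp\rho_f$ (such $y$ exists since $\rho_f$ gives no mass to classical points and $\supp\rho_f$ has empty interior on the Fatou side — actually the cleanest route is to use total invariance). By total invariance $f^*\rho_f=d\rho_f$, hence $f(\supp\rho_f)\subseteq\supp\rho_f$ and $f^{-1}(\supp\rho_f)=\supp\rho_f$; so the complement $\BerkPone\setminus\supp\rho_f$ is totally invariant and open. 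Therefore $\bigcup_{n\ge0}f^n(U)\subseteq\BerkPone\setminus\supp\rho_f$, which is a proper subset of $\BerkPone\setminus E_f$ whenever $\supp\rho_f\not\subseteq E_f$ — and indeed $\rho_f$ puts no mass on classical points while being a probability measure, so $\supp\rho_f$ contains non-classical points, in particular $\supp\rho_f\not\subseteq E_f$. Hence $x$ has a neighborhood whose forward orbit omits a non-exceptional point, so $x\in\cF_f$. This proves $\cJ_f\subseteq\supp\rho_f$.

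\textbf{Step 2: $\supp\rho_f\subseteq\cJ_f$.} Let $x\in\supp\rho_f$ and let $U$ be any open neighborhood of $x$; then $\rho_f(U)>0$. I must show $\bigcup_{n\ge0}f^n(U)\supseteq\BerkPone\setminus E_f$. Fix any $z\notin E_f$ and consider the measures $d^{-n}f^{n*}\delta_z$. By Corollary~\ref{C201} (equivalently Theorem~\ref{T201} applied to $\rho=\delta_z$, valid since $\delta_z(E_f)=0$), these converge weakly to $\rho_f$. Since $\rho_f(U)>0$ and $U$ is open, weak convergence gives $\liminf_n d^{-n}f^{n*}\delta_z(U)\ge\rho_f(U)>0$; in particular for $n$ large there is a point $w\in U$ with $f^n(w)=z$, i.e. $z\in f^n(U)$. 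Hence $z\in\bigcup_{n\ge0}f^n(U)$. As $z\in\BerkPone\setminus E_f$ was arbitrary, $x\in\cJ_f$.

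\textbf{Main obstacle.} The delicate point is Step 2's use of weak convergence to extract an actual preimage in $U$: weak convergence of measures only controls $\liminf$ of mass on open sets, which is exactly what we need, but one must be careful that $d^{-n}f^{n*}\delta_z$ is genuinely the normalized sum over preimages counted with local degree (this is the content of the pullback formula $f^*\delta_z=\sum_{f(y)=z}\deg_y(f)\delta_y$ from \S\ref{S134}), so that positive mass on $U$ forces a preimage point in $U$. A secondary subtlety is making sure in Step 1 that $\supp\rho_f$ is not contained in $E_f$; this follows because $E_f$ is at most countable and consists of classical points, while $\rho_f$ is a probability measure giving zero mass to every classical point, so $\supp\rho_f$ must contain a point of Type~2, 3, or 4. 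With those two points handled, both inclusions close up, and $\cJ_f=\supp\rho_f\neq\emptyset$ since a probability measure has nonempty support.
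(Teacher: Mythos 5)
Your proof is correct and follows essentially the same line as the paper's: total invariance of $\supp\rho_f$ together with $\supp\rho_f\not\subseteq E_f$ for the inclusion $\cJ_f\subseteq\supp\rho_f$, and the equidistribution theorem (Corollary~\ref{C201}) for the reverse inclusion. The only cosmetic difference is that the paper takes $U=\BerkPone\setminus\supp\rho_f$ directly rather than a small neighborhood of $x$, which shortens the first step slightly.
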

\begin{proof}
  First note that the support of $\rho_f$
  is totally invariant. This follows formally from
  the total invariance of $\rho_f$. 
  Further, the support of $\rho_f$ cannot be contained in 
  the exceptional set $E_f$ since $\rho_f(E_f)=0$.

  Consider a point $x\in\BerkP$.
  If $x$ is not in the support of $\rho_f$,
  let $U=\BerkP\setminus\supp\rho_f$.
  Then $f^n(U)=U$ for all $n$. In particular,
  $\bigcup_{n\ge 0}f^n(U)$ is disjoint from 
  $\supp\rho_f$. Since $\supp\rho_f\not\subseteq E_f$,
  $x$ must belong to the Fatou set.

  Conversely, if $x\in\supp\rho_f$
  and $U$ is any open neighborhood of
  $x$, then $\rho_f(U)>0$. 
  For any $y\in\BerkPone\setminus E_f$,
  Corollary~\ref{C201}  implies that $f^{-n}(y)\cap U\ne\emptyset$ for
  $n\gg0$. We conclude that 
  $\bigcup_{n\ge 0}f^n(U)\supseteq\BerkP\setminus E_f$,
  so $x$ belongs to the Julia set.
\end{proof}
We will not study the equilibrium measure $\rho_f$ in detail,
but the following result is not hard to deduce from what we 
already know.
\begin{Prop}\label{P116}
  The following conditions are equivalent.
  \begin{itemize}
  \item[(i)]
    $\rho_f$ puts mass at some point in $\BerkPone$;
  \item[(ii)]
    $\rho_f$ is a Dirac mass at a Type~2 point;
  \item[(iii)]
    $f$ has simple reduction;
  \item[(iv)]
    $f^n$ has simple reduction for all $n\ge 1$;
  \item[(v)]
    $f^n$ has simple reduction for some $n\ge 1$.
  \end{itemize}
\end{Prop}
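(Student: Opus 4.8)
The plan is to establish the cycle of implications (i)$\Rightarrow$(ii)$\Rightarrow$(iii)$\Rightarrow$(iv)$\Rightarrow$(v)$\Rightarrow$(i). The key input is the total invariance $f^*\rho_f = d\rho_f$ together with Theorem~\ref{T201} (which says $\rho_f$ puts no mass on classical points and that $\rho_{f^n} = \rho_f$), Proposition~\ref{P207} (a totally invariant point of $\H$ is of Type~2), and Proposition~\ref{P204} (the support of $\rho_f$ is the Julia set).

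\textbf{Key steps.} First I would prove (i)$\Rightarrow$(ii). Suppose $\rho_f\{x\}=c>0$ for some $x\in\BerkPone$; by Theorem~\ref{T201}, $x$ is not classical, so $x\in\H$. Let $x_0\in\H$ be a point realizing the maximal atom $c_{\max}:=\max_{y\in\BerkPone}\rho_f\{y\}>0$; there are finitely many such points since $\rho_f$ is a probability measure. Using $f^*\rho_f = d\rho_f$, for any $y$ one has $d\,\rho_f\{f(y)\} = \sum_{f(z)=f(y)}\deg_z(f)\rho_f\{z\} \ge \deg_y(f)\rho_f\{y\}\ge \rho_f\{y\}$, hence $\rho_f\{f(y)\}\ge \tfrac1d\rho_f\{y\}$; I would instead run the inequality the other way to see that $f$ permutes the (finite) set of points carrying the maximal atom: if $\rho_f\{x_0\}=c_{\max}$ then $d\,c_{\max}=d\,\rho_f\{f(x_0)\}\ge\deg_{x_0}(f)\,c_{\max} + \sum_{f(z)=f(x_0),\,z\ne x_0}\deg_z(f)\rho_f\{z\}$, forcing $\rho_f\{f(x_0)\}=c_{\max}$ as well (and similarly one checks the preimages). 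So the atoms of maximal mass form a finite totally invariant set, and replacing $f$ by an iterate (legitimate since $\rho_{f^n}=\rho_f$) we may assume there is a single totally invariant point $x_0$ with $f^{-1}(x_0)=\{x_0\}$. By Proposition~\ref{P207}, $x_0$ is of Type~2; then $\deg_{x_0}(f)=d$ (the single preimage), so $f^*\delta_{x_0}=d\delta_{x_0}$, and since $d^{-1}f^{n*}\delta_{x_0}=\delta_{x_0}$ for all $n$ while $\delta_{x_0}(E_f)=0$ (Type~2 points are not classical), Theorem~\ref{T201} gives $\delta_{x_0}\to\rho_f$, i.e. $\rho_f=\delta_{x_0}$. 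This proves (ii), and along the way that $f$ (some iterate of it) has a totally invariant Type~2 point. To upgrade to (iii) for $f$ itself, note that if $f^n$ has a totally invariant Type~2 point $x_0$ then $f$ permutes the finite set $f^{-1}(x_0)\cap\{x : f^{n-1}(f(x))=x_0\}$ more carefully: since $x_0$ is the unique point with $\rho_f\{x_0\}=1$ and $\rho_f=\rho_{f^n}$ is totally invariant under $f$, we get $f(x_0)=x_0$ and $f^{-1}(x_0)=\{x_0\}$ directly. Hence (ii)$\Rightarrow$(iii).

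\textbf{Remaining implications.} For (iii)$\Rightarrow$(ii) (closing into the proof that (iii) implies the measure is a Dirac mass, which we need for (iii)$\Rightarrow$(iv)$\Rightarrow$(v)$\Rightarrow$(i)): if $x_0$ is a totally invariant Type~2 point, then as above $f^*\delta_{x_0}=d\delta_{x_0}$ and Theorem~\ref{T201} forces $\rho_f=\delta_{x_0}$, a Dirac mass at a Type~2 point. Since $f^{-1}(x_0)=\{x_0\}$ implies $f^{-n}(x_0)=\{x_0\}$, the point $x_0$ is totally invariant for every $f^n$, giving (iv); and (iv)$\Rightarrow$(v) is trivial. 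Finally (v)$\Rightarrow$(i): if $f^n$ has simple reduction then by the implication just established $\rho_{f^n}$ is a Dirac mass, and $\rho_f=\rho_{f^n}$ puts mass at a point, which is (i). Assembling: (i)$\Rightarrow$(ii)$\Rightarrow$(iii)$\Rightarrow$(iv)$\Rightarrow$(v)$\Rightarrow$(i), and (iii)$\Rightarrow$(ii) is covered too, so all five are equivalent.

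\textbf{Main obstacle.} The delicate point is the combinatorial argument in (i)$\Rightarrow$(ii): showing that the set of maximal atoms of $\rho_f$ is finite and totally invariant under $f$, so that passing to an iterate yields a genuinely totally invariant point to which Proposition~\ref{P207} applies. One must be careful that $\deg_z(f)\ge 1$ with equality generically yet the mass-balance inequality $d\,\rho_f\{f(y)\}=\sum_{f(z)=f(y)}\deg_z(f)\,\rho_f\{z\}$ is an exact equality (from $f^*\rho_f=d\rho_f$ evaluated at the atom $f(y)$), and to extract from it both that $f$ cannot strictly increase the maximal atom value and that it maps maximal-atom points to maximal-atom points with no other point collapsing onto the image. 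Once that bookkeeping is done and Proposition~\ref{P207} is invoked, the rest is formal manipulation of the equidistribution theorem.
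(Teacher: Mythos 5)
Your overall logical scaffolding is fine --- (ii)$\Leftrightarrow$(iii), then (ii)--(v) equivalent via $\rho_f=\rho_{f^n}$, (ii)$\Rightarrow$(i) trivial, with the substantive content in (i)$\Rightarrow$(v) --- and your use of Corollary~\ref{C201} and Proposition~\ref{P207} is exactly as in the paper. However, the central mass-balance identity you write down is wrong, and the derivation built on it is circular. You claim
\[
  d\,\rho_f\{f(y)\}=\sum_{f(z)=f(y)}\deg_z(f)\,\rho_f\{z\},
\]
but this does not follow from $f^*\rho_f=d\rho_f$. Reading off the atom of $f^*\rho_f$ at $y$ (not at $f(y)$) gives the correct identity
\[
  d\,\rho_f\{y\}=(f^*\rho_f)\{y\}=\deg_y(f)\,\rho_f\{f(y)\}.
\]
Substituting this into your formula shows it would require $\sum_{f(z)=f(y)}\deg_z(f)^2=d^2$, which holds only when $f(y)$ has a unique totally ramified preimage --- precisely the situation you are trying to establish. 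Your version only yields the weak bound $\rho_f\{f(y)\}\ge\tfrac1d\rho_f\{y\}$, which gives you nothing, and the subsequent line ``$d\,c_{\max}=d\,\rho_f\{f(x_0)\}\ge\cdots$'' assumes $\rho_f\{f(x_0)\}=c_{\max}$ at the outset, the very thing you are claiming to conclude.

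With the correct identity the whole ``maximal atom'' apparatus becomes unnecessary. One sees directly that $\rho_f\{f(y)\}=\frac{d}{\deg_y(f)}\rho_f\{y\}\ge\rho_f\{y\}$, with equality iff $\deg_y(f)=d$. So along the forward orbit $x_n=f^n(x_0)$ of any atom $x_0$, the masses $\rho_f\{x_n\}$ are non-decreasing and all bounded below by $\rho_f\{x_0\}>0$; since $\rho_f$ is a probability measure there are only finitely many distinct $x_n$, so the orbit is eventually periodic, and around the cycle all the inequalities are equalities, forcing $\deg_{x_j}(f)=d$ on the cycle. That makes the periodic point totally invariant under $f^n$, and then Proposition~\ref{P207} applies. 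This is what the paper does; your ``maximal atom permutation'' idea can be repaired by the same correction, but it is a detour, and as written it rests on a false formula plus a circular step.
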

\begin{proof}
  If $f$ has simple reduction then, by definition, there exists
  a totally invariant Type~2 point $x_0$. We then have 
  $d^{-n}f^{n*}\delta_{x_0}=\delta_{x_0}$ so Corollary~\ref{C201}
  implies $\rho_f=\delta_{x_0}$. 
  Conversely, if $\rho_f=\delta_{x_0}$ for some Type~2 point 
  $x_0$, then $f^*\rho_f=d\rho_f$ implies that 
  $x_0$ is totally invariant, so that $f$ has simple reduction.
  Thus~(ii) and~(iii) are equivalent.
  Since $\rho_f=\rho_{f^n}$, this implies that~(ii)--(v) are
  equivalent.

  Clearly~(ii) implies~(i). We complete the proof by proving 
  that~(i) implies~(v).
  Thus suppose $\rho_f\{x_0\}>0$ for some $x_0\in\BerkPone$.
  Since $\rho_f$ does not put mass on classical points we have 
  $x_0\in\H$. 
  The total invariance of $\rho_f$ implies 
  \begin{equation*}
    0<\rho_f\{x_0\}
    =\frac1d(f^*\rho_f)\{x_0\}
    =\frac1d\deg_{x_0}(f)\rho_f\{f(x_0)\}
    \le\rho_f\{f(x_0)\},
  \end{equation*}
  with equality iff $\deg_{x_0}(f)=d$. 
  Write $x_n=f^n(x_0)$ for $n\ge0$.
  Now the total mass of $\rho_f$ is finite, so
  after replacing $x_0$ by $x_m$ for some 
  $m\ge 0$ we may assume that $x_n=x_0$
  and $\deg_{x_j}(f)=d$ for $0\le j<n$ and some $n\ge 1$.
  This implies that $x_0$ is totally invariant under $f^n$.
  By Proposition~\ref{P207}, $x_0$ is then a Type~2 point
  and $f^n$ has simple reduction.
\end{proof}
With the following result we complete the proof of Theorem~\ref{T204}.
\begin{Prop}\label{P208}
  Let $f:\P^1\to\P^1$ be a rational map of degree $d>1$ and 
  let $\cJ=\cJ_f$ be the Julia set of $f$.
  \begin{itemize}
  \item[(i)]
    If $f$ has simple reduction, then $\cJ$ consists of a single Type~2 point.
 \item[(ii)]
   If $f$ does not have simple reduction, then $\cJ$ is a 
   perfect set.
 \end{itemize}
\end{Prop}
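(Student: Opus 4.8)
The plan is to derive both statements from the equidistribution theorem (Theorem~\ref{T201}), Proposition~\ref{P204} (which identifies $\cJ$ with $\supp\rho_f$), and Proposition~\ref{P116} (which characterizes when $\rho_f$ has an atom). Statement~(i) is almost immediate: if $f$ has simple reduction, then by the equivalence of~(iii) and~(ii) in Proposition~\ref{P116}, the measure $\rho_f$ is a Dirac mass $\delta_{x_0}$ at a Type~2 point $x_0$; hence $\cJ=\supp\rho_f=\{x_0\}$ by Proposition~\ref{P204}. Conversely, if $f$ does not have simple reduction, then by Proposition~\ref{P116} the measure $\rho_f$ puts no mass at any point of $\BerkPone$, i.e.\ $\rho_f$ is atomless. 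So the real content of the proposition is: \emph{an atomless Radon probability measure on a tree has perfect support}. That is what I would prove for statement~(ii).

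First I would recall that $\cJ=\supp\rho_f$ is nonempty (Proposition~\ref{P204}) and closed, so it suffices to show it has no isolated point. Suppose, for contradiction, that $x\in\cJ$ is isolated in $\cJ$. Then there is an open neighborhood $U$ of $x$ in $\BerkPone$ with $U\cap\cJ=\{x\}$; shrinking $U$, I may take it to be a connected open neighborhood, in fact a simple domain or even a generalized open Berkovich disc depending on the type of $x$. Since $\supp\rho_f=\cJ$, the measure $\rho_f$ gives no mass to $U\setminus\{x\}$ (that open set is disjoint from the support), while $\rho_f(U)>0$ because $x\in\supp\rho_f$ and $U$ is a neighborhood of $x$. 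Therefore $\rho_f\{x\}=\rho_f(U)>0$, so $\rho_f$ has an atom at $x$ — contradicting the fact that $\rho_f$ is atomless. Hence $\cJ$ has no isolated points and is perfect.

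The only subtlety — and the step I would be most careful about — is the claim that $x\in\supp\rho_f$ forces $\rho_f(U)>0$ for every \emph{open} neighborhood $U$ of $x$; this is just the definition of the support of a Radon measure, but one should make sure to phrase it for Radon (not merely Borel) measures, which is harmless here since we work with the topology of $\BerkPone$ and $\rho_f$ is a Radon probability measure by Theorem~\ref{T201}. A second minor point is that one genuinely needs $U\setminus\{x\}$ to be open, which it is since $\{x\}$ is closed in the Hausdorff space $\BerkPone$; and one needs $U\setminus\{x\}$ to be disjoint from $\supp\rho_f=\cJ$, which follows from $U\cap\cJ=\{x\}$. No further input about the dynamics is required: once atomlessness of $\rho_f$ is in hand (from Proposition~\ref{P116}), statement~(ii) is a soft topological/measure-theoretic consequence. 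I would also remark that this argument does not even use total invariance of $\cJ$; it is purely a statement about supports of atomless measures, which is why it works uniformly in all the remaining cases.
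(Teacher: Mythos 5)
Your proof is correct, and for part~(ii) it takes a genuinely different route from the paper. The paper argues dynamically: given $x\in\cJ$ and a neighborhood $U$, it applies the equidistribution statement (Corollary~\ref{C201}) to $\delta_x$, writes $f^{n*}\delta_x=m^n\delta_x+\rho'_n$ with $m=\deg_x(f)$ if $x$ is fixed (after passing to an iterate) and $m=0$ otherwise, notes $m<d$ because $x$ is not totally invariant when $f$ lacks simple reduction, and concludes that $\rho'_n(U)>0$ for large $n$, i.e.\ $x$ has preimages in $U\setminus\{x\}$. You instead use Proposition~\ref{P116} a second time to get that $\rho_f$ is atomless and then invoke the soft fact that an atomless Radon measure has perfect support; your attention to inner regularity (so that the open set $U\setminus\{x\}$, being disjoint from $\supp\rho_f$, really is $\rho_f$-null) is exactly the right point to be careful about, and it is unproblematic since Theorem~\ref{T201} produces a Radon measure. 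Your argument is shorter and purely measure-theoretic; the paper's argument yields the stronger dynamical fact that backward orbits of points of $\cJ$ accumulate at those points, which is of independent interest, but as a proof of the stated proposition both are complete.
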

\begin{proof}
  Statement~(i) is a direct consequence of Proposition~\ref{P116}.
  Now suppose $f$ does not have simple reduction. 
  Pick any point $x\in\cJ$ and an open neighborhood 
  $U$ of $x$. It suffices to prove that there exists a point 
  $y\in U$ with $y\ne x$ and $f^n(y)=x$ for some $n\ge1$.
  After replacing $f$ by an iterate we may assume 
  that $x$ is either fixed or not periodic.
  Set $m:=\deg_x(f)$ if $f(x)=x$ and $m:=0$ otherwise.
  Note that $m<d$ as $x$ is not totally invariant.

  Since $x\not\in E_f$, Corollary~\ref{C201} shows that the 
  measure $d^{-n}f^{n*}\delta_x$ converges weakly to $\rho_f$.
  Write $f^{n*}\delta_x=m^n\delta_x+\rho'_n$,
  where 
  \begin{equation*}
    \rho'_n=\sum_{y\ne x, f^n(y)=x}\deg_y(f^n)\delta_y.
  \end{equation*}
  We have $x\in\cJ=\supp\rho_f$ so $\rho_f(U)>0$
  and hence $\liminf_{n\to\infty}(d^{-n}f^{n*}\delta_x)(U)>0$. 
  Since $m<d$ it follows that $\rho'_n(U)>0$ for $n\gg0$.
  Thus there exist points $y\in U$ with $y\ne x$ and $f^n(y)=x$. 
\end{proof}
%
%
%
%
\subsection{Proof of the equidistribution theorem}\label{S202}
To prove the equidistribution theorem we follow 
the approach of Favre and Rivera-Letelier~\cite{FR2}, 
who in turn adapted 
potential-theoretic techniques from complex dynamics 
developed by Forn{\ae}ss-Sibony and others. 
Using the tree Laplacian defined in~\S\ref{S116}
we can study convergence of measures 
in terms of convergence of quasisubharmonic 
functions, a problem for which there are good techniques. 
If anything, the analysis is easier in the nonarchimedean case.
Our proof does differ from the one in~\cite{FR2} in that it avoids
studying the dynamics on the Fatou set.
%
%
\subsubsection{Construction of the canonical measure}\label{S285}
Fix a point $x_0\in\H$.
Since $d^{-1}f^*\delta_{x_0}$ is a probability measure, we 
have 
\begin{equation}\label{e105}
 d^{-1}f^*\delta_{x_0}=\delta_{x_0}+\Delta u
\end{equation} 
for an $x_0$-subharmonic function $u$.
In fact,~\eqref{e122} gives an explicit expression for $u$ and
shows that $u$ is continuous, since $f^{-1}(x_0)\subseteq\H$.

Iterating~\eqref{e105} and using~\eqref{e103} leads to
\begin{equation}
  d^{-n}f^{n*}\delta_{x_0}=\delta_{x_0}+\Delta u_n,
\end{equation}
where $u_n=\sum_{j=0}^{n-1}d^{-j}u\circ f^j$.
It is clear that the sequence $u_n$ converges
uniformly to a continuous $x_0$-subharmonic function $u_\infty$.
We set 
\begin{equation*}
  \rho_f:=\delta_{x_0}+\Delta u_\infty.
\end{equation*}
Since $u_\infty$ is bounded, it follows from~\eqref{e137} 
that $\rho_f$ does not put mass on any classical point.
In particular, $\rho_f(E_f)=0$, since $E_f$ is at most countable.
%
%
\subsubsection{Auxiliary results}\label{S286}
Before starting the proof of equidistribution, let us record
a few results that we need.
\begin{Lemma}\label{L202}
  If $x_0,x\in\H$, then $d_\H(f^n(x),x_0)=O(d^n)$ as $n\to\infty$.
\end{Lemma}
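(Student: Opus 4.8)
The plan is to derive this directly from the Lipschitz estimate for $f$ on hyperbolic space, Corollary~\ref{C202}; once that is in hand the statement is pure bookkeeping.

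First I would record that $f$ preserves $\H$: by Lemma~\ref{L211} the map $f$ sends a point of Type~$2$, $3$ or~$4$ to a point of the same type, so $f(\H)\subseteq\H$ and all the iterates $f^n(x)$, $f^n(x_0)$ appearing below lie in $\H$, where $d_\H$ is a genuine (finite-valued) metric. Next, iterating Corollary~\ref{C202} gives that $f^n$ is $d^n$-Lipschitz on $\H$, i.e. $d_\H(f^n(y),f^n(z))\le d^n\,d_\H(y,z)$ for all $y,z\in\H$ and all $n\ge 0$.

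The one substantive step is to control the orbit of the base point $x_0$. Set $c:=d_\H(f(x_0),x_0)$, which is finite since $x_0,f(x_0)\in\H$. Applying the Lipschitz bound for $f^{j}$ to the pair $(f(x_0),x_0)$ yields $d_\H(f^{j+1}(x_0),f^{j}(x_0))\le c\,d^{j}$, so by the triangle inequality
\begin{equation*}
  d_\H(f^n(x_0),x_0)\le\sum_{j=0}^{n-1}d_\H(f^{j+1}(x_0),f^{j}(x_0))\le c\sum_{j=0}^{n-1}d^{j}=c\,\frac{d^n-1}{d-1}.
\end{equation*}
Finally, for the given point $x\in\H$, the triangle inequality together with the $d^n$-Lipschitz bound for $f^n$ gives
\begin{equation*}
  d_\H(f^n(x),x_0)\le d_\H(f^n(x),f^n(x_0))+d_\H(f^n(x_0),x_0)\le d^n\,d_\H(x,x_0)+c\,\frac{d^n-1}{d-1},
\end{equation*}
which is $O(d^n)$ as $n\to\infty$.

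I do not expect any real obstacle: the entire analytic content is Corollary~\ref{C202}, and the rest is the triangle inequality plus a geometric series. The only point worth stating explicitly is that $d_\H$ takes finite values on $\H$ (which is why the telescoping argument is legitimate) — this is precisely the reason the estimate is phrased for points of hyperbolic space rather than for arbitrary points of $\BerkPone$.
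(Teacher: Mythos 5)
Your proof is correct and follows essentially the same route as the paper: iterate Corollary~\ref{C202}, telescope the orbit of a base point via the triangle inequality, and combine. The only cosmetic difference is that you keep the geometric series $c\,(d^n-1)/(d-1)$ where the paper simply bounds $\sum_{j=0}^{n-1}d^j\le d^n$ using $d\ge 2$; both give $O(d^n)$.
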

\begin{proof}
  We know that $f$ expands the hyperbolic metric by
  a factor at most $d$, see Corollary~\ref{C202}. 
  Using the triangle inequality and the
  assumption $d\ge2$, this yields
  \begin{equation*}
    d_\H(f^n(x),x)
    \le\sum_{j=0}^{n-1}d_\H(f^{j+1}(x),f^j(x))
    \le\sum_{j=0}^{n-1}d^jd_\H(f(x),x)
    \le d^nd_\H(f(x),x),
  \end{equation*}
  so that 
  \begin{align*}
    d_\H(f^n(x),x_0)
    &\le d_\H(f^n(x),f^n(x_0)) + d_\H(f^n(x_0),x_0)\\
    &\le d^n(d_\H(x,x_0) + d_\H(f(x_0),x_0)),
  \end{align*}
  completing the proof.
\end{proof}
\begin{Lemma}\label{L203}
  Suppose that $f$ is not purely inseparable.
  If $\rho$ is a Radon probability measure on $\BerkPone$
  such that $\rho(E_f)=0$ and we set $\rho_n:=d^{-n}f^{n*}\rho$,
  then $\sup_{y\in\P^1}\rho_n\{y\}\to0$ as $n\to\infty$.
\end{Lemma}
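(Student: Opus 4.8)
\textbf{Plan of proof for Lemma~\ref{L203}.}
The plan is to argue by contradiction: if the conclusion fails, one can extract a classical point carrying a positive amount of mass in the limit, and then use the hypothesis that $f$ is not purely inseparable to derive a contradiction with $\rho(E_f)=0$. First I would observe that $\rho_n\{y\} = d^{-n}\sum_{f^n(x)=y}\deg_x(f^n)\rho\{x\}$, so $\sup_{y\in\P^1}\rho_n\{y\}$ is controlled by how much $\rho$-mass can be concentrated on a single backward orbit fibre $f^{-n}(y)$, weighted by local degrees. Suppose $\limsup_n \sup_{y}\rho_n\{y\} = 2\varepsilon > 0$. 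Pick points $y_n\in\P^1$ and an increasing sequence of indices along which $\rho_n\{y_n\}>\varepsilon$. By compactness of $\P^1\hookrightarrow\BerkPone$ (passing to a further subsequence) we may assume $y_n\to y_*\in\BerkPone$; since $\rho_n\{y_n\}>\varepsilon>0$ and preimages of classical points are classical while preimages of points of $\H$ lie in $\H$, one checks the $y_n$ must all be classical (indeed $\rho$ gives no mass to $\H\setminus\{$atoms$\}$ is not available, but the relevant point is that mass$>\varepsilon$ forces $f^{-n}(y_n)$ to meet the atomic part of $\rho$, hence to contain classical atoms, hence $y_n$ classical), and one would like $y_*$ classical too.

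The key mechanism is the following: the set $\Sigma := \{x\in\P^1 : \rho\{x\}\ge\varepsilon\}$ is finite (it has at most $1/\varepsilon$ elements). If $\rho_n\{y_n\}>\varepsilon$, then the fibre $f^{-n}(y_n)$ must contain a point of $\Sigma$ whose local-degree-weighted contribution is already substantial — more precisely, writing $\rho_n\{y_n\}=d^{-n}\sum_{f^n(x)=y_n}\deg_x(f^n)\rho\{x\}$ and splitting the sum over $x\in\Sigma$ and $x\notin\Sigma$, the tail over $x\notin\Sigma$ contributes at most $\varepsilon\cdot d^{-n}\sum_{f^n(x)=y_n}\deg_x(f^n) = \varepsilon$ by Proposition~\ref{P102} (with $U=V=\BerkPone$). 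Hence the sum over $x\in\Sigma$ contributes at least $\rho_n\{y_n\}-\varepsilon>0$, so $f^{-n}(y_n)\cap\Sigma\ne\emptyset$, i.e. $y_n\in f^n(\Sigma)$. Since $\Sigma$ is finite, by pigeonhole there is a fixed $x_*\in\Sigma$ with $y_n = f^{n}(x_*)$ for infinitely many $n$ (after passing to a subsequence). Now I would use: if $\rho_n\{f^n(x_*)\}\not\to 0$ then $d^{-n}\deg_{x_*}(f^n)\rho\{x_*\}\not\to0$, forcing $\deg_{x_*}(f^n)$ to grow like $d^n$; since $\deg_{x_*}(f^n)\le d^n$ with equality only if $x_*$ is totally ramified for $f^j$ at each step $0\le j<n$, letting $n\to\infty$ shows that $x_*$ and all its forward iterates are totally ramified. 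Combined with the fact that the forward orbit $\{f^j(x_*)\}$ must then be finite (since there are only finitely many classical points carrying $\rho$-mass $\ge\varepsilon$ on the fibres, so the $y_n$ range over a finite set), one concludes $x_*$ has finite backward orbit under an iterate, hence $x_*\in E_f$. But Lemma~\ref{L213}~(i) says that when $f$ is not purely inseparable, $E_f$ is finite, and in fact $x_*\in E_f$ with $\rho\{x_*\}\ge\varepsilon>0$ directly contradicts $\rho(E_f)=0$.

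The step I expect to be the main obstacle is making rigorous the claim that a persistent atom $\rho_n\{y_n\}>\varepsilon$ forces, through the local-degree growth, an actual \emph{exceptional} point rather than merely a point with slowly-growing backward orbit. The clean way is the mass-splitting argument above: Proposition~\ref{P102} gives the exact identity $\sum_{f^n(x)=y}\deg_x(f^n) = d^n$, which bounds the "non-exceptional" contribution by $\varepsilon$ uniformly in $n$, so the excess $\rho_n\{y_n\}-\varepsilon$ is carried by the finite set $\Sigma$; then finiteness of $\Sigma$ and of $f^n(\Sigma)\cap(\text{relevant }y\text{'s})$ via pigeonhole pins down a single orbit, and total ramification along that orbit (forced by $\deg_{x_*}(f^n)\sim d^n$) plus Proposition~\ref{P210} — which requires precisely the non-purely-inseparable hypothesis — shows the orbit is finite and lands in $E_f$. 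I would also need to handle the minor point that the $y_n$ might be points of $\H$ a priori, but mass $>\varepsilon>0$ on a fibre means the fibre meets $\supp\rho$ restricted to its atomic part, and atoms of $\rho$ may lie in $\H$; however only atoms in $\Sigma$ matter, and if such an atom is in $\H$ then its whole forward orbit stays in $\H$, so by Proposition~\ref{P207}/\ref{P120} total ramification of such an orbit point (a repelling or totally invariant point of $\H$) is incompatible with $\deg\sim d^n$ unless it is a fixed Type~2 point — a case one disposes of separately using that $\rho_f$, not $\rho$, would then be forced to be a Dirac mass, contradicting nothing about $\rho$ but rather showing such atoms simply cannot persist because $d^{-n}\deg_{x_*}(f^n)$ stays bounded away from the needed growth. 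I will present the classical-point case as the main line and remark that the $\H$ case is simpler.
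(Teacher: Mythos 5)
Your proof is built on an incorrect formula for the atomic mass of the pulled-back measure, and this error propagates through the whole argument. You write $\rho_n\{y\} = d^{-n}\sum_{f^n(x)=y}\deg_x(f^n)\rho\{x\}$, but that expression is the mass at $y$ of something like a weighted push-forward, not the pull-back. The pull-back $f^*\rho$ is defined by duality with $f_*H(x)=\sum_{f(y)=x}\deg_y(f)H(y)$, and if you test against $H=\mathbf{1}_{\{y\}}$ you find that $(f^{n*}\rho)\{y\}=\deg_y(f^n)\,\rho\{f^n(y)\}$: the mass at $y$ depends on the \emph{forward} image $f^n(y)$, not on the preimages $f^{-n}(y)$. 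Consequently the entire mass-splitting step over $\Sigma$ (using $\sum_{f^n(x)=y}\deg_x(f^n)=d^n$ to control the tail), the pigeonhole extraction of a single $x_*\in\Sigma$, and the limiting argument concluding $x_*\in E_f$, are all reasoning about the wrong quantity. Even after correcting the formula the pigeonhole step would not go through as you set it up, because $\rho_n\{y_n\}>\varepsilon$ for a sequence of distinct $y_n$ does not single out one fixed orbit; instead one needs a \emph{uniform} estimate on $\deg_y(f^n)$ over all $y\notin E_f$.

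The correct and much shorter route, which you do half-identify via Proposition~\ref{P210}, is this. From $\rho_n\{y\}=d^{-n}\deg_y(f^n)\rho\{f^n(y)\}$ and the total invariance of $E_f$: if $y\in E_f$ then $f^n(y)\in E_f$ and $\rho\{f^n(y)\}=0$, so $\rho_n\{y\}=0$; if $y\notin E_f$ then $\rho\{f^n(y)\}\le 1$, so it suffices to show $\sup_{y\notin E_f}\deg_y(f^n)=o(d^n)$. Now set $y_j=f^j(y)$. If $\deg_{y_j}(f)=d$ for $j=0,1,2$, then $y_0,y_1,y_2$ are each totally ramified for $f$; since $f$ is not purely inseparable, Proposition~\ref{P210}(ii) allows at most two totally ramified points, so two of $y_0,y_1,y_2$ coincide, whence $y$ is periodic and the orbit $\{y_0,y_1,y_2\}$ is totally ramified, forcing $y\in E_f$. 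Hence $\deg_y(f^3)\le d^3-1$ for every $y\notin E_f$, and iterating (again using total invariance of $E_f$) gives $\deg_y(f^n)\le d^2(d^3-1)^{\lfloor n/3\rfloor}=o(d^n)$ uniformly. Your instinct to invoke Proposition~\ref{P210} and total ramification is exactly right; the missing ingredient is to deploy it as a \emph{uniform} bound on local degrees rather than to try to localize a single exceptional atom via a combinatorics that, with the correct pull-back formula, does not apply.
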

Note that the supremum is taken over classical points only.
Also note that the lemma always applies if the ground field
is of characteristic zero. However, the lemma is false for
purely inseparable maps.
\begin{proof}
  We have $\rho_n\{y\}=d^{-n}\deg_y(f^n)\rho\{f^n(y)\}$,
  so it suffices to show that 
  \begin{equation}\label{e223}
    \sup_{y\in\P^1\setminus E_f}\deg_y(f^n)=o(d^n).
  \end{equation}
  For $y\in\P^1$ and $n\ge0$, 
  write $y_n=f^n(y)$. If $\deg_{y_n}(f)=d$ for 
  $n=0,1,2$, then Proposition~\ref{P210}~(i)
  implies $y\in E_f$. 
  Thus $\deg_y(f^3)\le d^3-1$ 
  and hence $\deg_y(f^n)\le d^2(d^3-1)^{n/3}$
  for $y\in\P^1\setminus E_f$, completing the proof.
\end{proof}
%
%
\subsubsection{Proof of the equidistribution theorem}\label{S287}
Let $\rho$ be a Radon probability measure
on $\BerkPone$ and set $\rho_n=d^{-n}f^{n*}\rho$. 
If $\rho(E_f)>0$, then $\rho_n(E_f)=\rho(E_f)>0$
for all $n$. Any accumulation point of $\{\rho_n\}$ must 
also put mass on $E_f$, so $\rho_n\not\to\rho_f$ as
$n\to\infty$.

Conversely, assume $\rho(E_f)=0$ and let us show that
$\rho_n\to\rho_f$ as $n\to\infty$.
Let $\f\in\SH(\BerkPone,x_0)$ 
be a solution to the equation $\rho=\delta_{x_0}+\Delta \f$.
Applying $d^{-n}f^{n*}$ to both sides of this equation
and using~\eqref{e103}, we get 
\begin{equation*}
  \rho_n
  =d^{-n}f^{n*}\delta_{x_0} + \Delta\f_n
  =\delta_{x_0}+\Delta(u_n+\f_n),
\end{equation*}
where $\f_n=d^{-n}\f\circ f^n$.
Here $\delta_{x_0}+\Delta u_n$ tends to $\rho_f$
by construction. 
We must show that $\delta_{x_0}+\Delta(u_n+\f_n)$ also 
tends to $\rho_f$. By~\S\ref{S122}, this amounts to showing that
$\f_n$ tends to zero pointwise on $\H$. 
Since $\f$ is bounded from above, we always have 
$\limsup_n\f_n\le0$. Hence it remains to show that 
\begin{equation}\label{e230}
  \liminf_{n\to\infty}\f_n(x)\ge0
  \quad\text{for any $x\in\H$}.
\end{equation}

To prove~\eqref{e230} we first consider the case when 
$f$ is not purely inseparable.
Set $\e_m=\sup_{y\in\P^1}\rho_m\{y\}$ for $m\ge 0$.
Then $\e_m\to0$ as $m\to\infty$ by Lemma~\ref{L203}.
Using Lemma~\ref{L202} and Proposition~\ref{P301} we get,
for $m,n\ge 0$
\begin{align*}
  \f_{n+m}(x)
  &=d^{-n}\f_m(f^n(x))\\
  &\ge d^{-n}\f_m(x_0)
  -d^{-n}(C_m+\e_md_\H(f^n(x),x_0))\\
  &\ge -D\e_m-C_md^{-n}
\end{align*}
for some constant $D$ independent of $m$ and $n$
and some constant $C_m$ independent of $n$.
Letting first $n\to\infty$ and then $m\to\infty$
yields $\liminf_n\f_n(x)\ge0$, completing the proof.

\medskip
Now assume $f$ is purely inseparable. In 
particular, $K$ has characteristic $p>0$,
$f$ has degree $d=p^m$ for some $m\ge 1$ and
there exists a coordinate $z\in F$ such that 
$f$ becomes an iterate of the Frobenius map: $f^*z=z^d$.

In this case, we cannot use Lemma~\ref{L203} since~\eqref{e223} 
is evidently false: the local degree is $d$ everywhere on $\BerkPone$.
On the other hand, the dynamics is simple to describe, 
see Example~\ref{E212}.
The Gauss point $x_0$ in the coordinate $z$ is (totally) invariant.
Hence $\rho_f=\delta_{x_0}$. 
The exceptional set $E_f$ is countably infinite and consists
of all classical periodic points. 
Consider the partial ordering on $\BerkPone$ rooted in $x_0$.
Then $f$ is order preserving and  
$d_\H(f^n(x),x_0)=d^nd_\H(x,x_0)$ for any $x\in\BerkPone$. 

As above, write $\rho=\delta_{x_0}+\Delta\f$, 
with $\f\in\SH(\BerkPone,x_0)$.
Pick any point $x\in\H$. 
It suffices to prove that~\eqref{e230} holds, 
where $\f_n=d^{-n}\f(f^n(x))$. 
Using Lemma~\ref{L212} and the fact that 
$d_\H(f^n(x),x_0)=d^nd_\H(x,x_0)$
it suffices to show that 
\begin{equation}\label{e231}
  \lim_{n\to\infty}\rho(Y_n)=0,
  \quad\text{where $Y_n:=\{y\ge f^n(x)\}$}.
\end{equation}
Note that for $m,n\ge1$, either $Y_{m+n}\subseteq Y_n$
or $Y_n$, $Y_{n+m}$ are disjoint. If $\rho(Y_n)\not\to0$,
there must exist a subsequence $(n_j)_j$ such that 
$Y_{n_{j+1}}\subseteq Y_{n_j}$ for all $j$ and 
$\rho(Y_{n_j)}\not\to0$. Since $d_\H(f^n(x),x_0)\to\infty$ we
must have $\bigcap_jY_{n_j}=\{y_0\}$ for a classical point $y_0\in\P^1$.
Thus $\rho\{y_0\}>0$. 
On the other hand, we claim that $y_0$ is periodic, hence
exceptional, contradicting $\rho(E_f)=0$.

To prove the claim, pick $m_1\ge1$ minimal such that 
$Y_{n_1+m_1}=f^{m_1}(Y_{n_1})\subseteq Y_{n_1}$
and set $Z_r=Y_{n_1+rm_1}=f^{rm_1}(Y_{n_1})$ for $r\ge 0$.
Then $Z_r$ forms a decreasing sequence of compact sets
whose intersection consists of a single classical point $y$,
which moreover is periodic: $f^{m_1}(y)=y$.
On the other hand, for $m\ge 1$ we have 
$Y_{n_1+m}\subseteq Y_{n_1}$ iff $m_1$ divides $m$. 
Thus we can write $n_j=n_1+r_jm_1$ with $r_j\to\infty$.
This implies that 
$\{y_0\}=\bigcap_j Y_{n_j}\subseteq\bigcap_r Z_r=\{y\}$
so that $y_0=y$ is periodic. 

The proof of Theorem~\ref{T201} is now complete.
%
%
%
%
\subsection{Other ground fields}\label{S255}
Above we worked with the assumption that our non-Archimedean field
$K$ was algebraically closed and nontrivially 
valued. Let us briefly discuss what happens for other fields, focusing 
on the equidistribution theorem and its consequences.
%
%
\subsubsection{Non-algebraically closed fields}\label{S269}
Suppose $K$ is of arbitrary characteristic and nontrivially valued 
but not algebraically closed. The Berkovich projective line $\BerkPone(K)$
and the action by a rational map were outlined in~\S\ref{S273} and~\S\ref{S267},
respectively.
Let $K^a$ be the algebraic closure of $K$ and $\hKa$ its completion.
Denote by $\pi:\BerkPone(\hKa)\to\BerkPone(K)$ the natural projection.
Write $\hf:\BerkPone(\hKa)\to\BerkPone(\hKa)$ for the induced map.
Define $E_{\hf}$ as the exceptional set for $\hf$ and set $E_f=\pi(E_\hf)$.
Then $f^{-1}(E_f)=E_f$ and $E_f$ has at most two elements, 
except if $K$ has characteristic $p$
and $f$ is purely inseparable, in which case $E_f$ is countable.

We will deduce the equidistribution result in Theorem~\ref{T201} for $f$
from the corresponding theorem for $\hf$. 
Let $\rho_\hf$ be the measure on $\BerkPone(\hKa)$ given by Theorem~\ref{T201}
and set $\rho_f=\pi_*(\rho_\hf)$. Since $E_{\hf}=\pi^{-1}(E_f)$, the measure
$\rho_f$ puts no mass on $E_f$.

Let $\rho$ be a Radon probability measure on $\BerkPone(K)$.
If $\rho(E_f)>0$, then any limit point of $d^{-n}f^{n*}\rho$ puts mass on $E_f$,
hence $d^{-n}f^{n*}\rho\not\to\rho_f$.
Now assume $\rho(E_f)=0$. 
Write $x_0$ and $\hx_0$ for the Gauss point on $\BerkPone(K)$ and 
$\BerkPone(\hKa)$, respectively,
in some coordinate on $K$. 
We have $\rho=\delta_{x_0}+\Delta\f$ for some 
$\f\in\SH(\BerkPone(K),x_0)$. The generalized metric on $\BerkPone(K)$
was defined in such a way that $\pi^*\f\in\SH(\BerkPone(\hKa),x_0)$.
Set $\hrho:=\delta_{\hx_0}+\Delta(\pi^*\f)$.
Then $\hrho$ is a Radon probability measure on $\BerkPone(\hKa)$ such that 
$\pi_*\hrho=\rho$. 
Since $E_\hf$ is countable, $\pi(E_\hf)=E_f$ and $\rho(E_f)=0$
we must have $\hrho(E_f)=0$. Theorem~\ref{T201} therefore
gives $d^{-n}\hf^{n*}\hrho\to\rho_{\hf}$ and hence
$d^{-n}f^{n*}\rho\to\rho_f$ as $n\to\infty$.
%
%
\subsubsection{Trivially valued fields}
Finally let us consider the case when $K$ is equipped with the trivial valuation.
Then the Berkovich projective line is a cone over $\P^1(K)$, 
see~\S\ref{S262}. The equidistribution theorem
can be proved essentially as above, but the proof is in fact much easier.
The measure $\rho_f$ is a Dirac mass at the Gauss point and the exceptional 
set consists of at most two points, except if $f$ is purely inseparable,
The details are left as an exercise to the reader.
%
%
%
%
\subsection{Notes and further references}\label{S290}
The equidistribution theorem is due to Favre and Rivera-Letelier.
Our proof basically follows~\cite{FR2} but avoids studying the 
dynamics on the Fatou set and instead uses the hyperbolic
metric more systematically through Proposition~\ref{P301}
and Lemmas~\ref{L202} and~\ref{L203}.
In any case, both the proof here and the one in~\cite{FR2} 
are modeled on arguments from complex dynamics. 
The remarks in~\S\ref{S255} about general ground fields
seem to be new.

The measure $\rho_f$ is conjectured to describe the distribution of
repelling periodic points, see~\cite[Question~1, p.119]{FR2}.
This is known in certain cases but not in general. 
In characteristic zero, Favre and Rivera-Letelier proved that
the classical periodic points (a priori not repelling) are
distributed according to $\rho_f$, 
see~\cite[Th\'eor\`eme~B]{FR2} as well as~\cite{Oku11a}.

Again motivated by results over the complex
numbers, Favre and Rivera also go beyond
equidistribution and study the ergodic properties of $\rho_f$.

Needless to say, I  have not even scratched the surface when
describing the dynamics of rational maps. I decided to 
focus on the equidistribution theorem since its proof uses 
potential theoretic techniques related to some of the 
analysis in later sections.

One of the many omissions is the Fatou-Julia theory,
in particular the classification of Fatou components,
existence and properties of wandering components etc.
See~\cite[\S10]{BRBook} and~\cite[\S\S6--7]{BenedettoNotes} for this.

Finally, we have said nothing at all about 
arithmetic aspects of dynamical systems.
For this, see \eg the book~\cite{SilvBook} 
and lecture notes~\cite{SilvNotes} by Silverman.
%
%
%
%
%
%
\newpage
\section{The Berkovich affine plane over a trivially valued field}\label{S105}
In the remainder of the paper we will consider polynomial 
dynamics on the Berko\-vich affine plane over a trivially valued field,
at a fixed point and at infinity.
Here we digress and discuss the general structure 
of the Berkovich affine space $\BerkAn$ in the case
of a trivially valued field. While we are primarily interested in the
case $n=2$, many of the notions and results are valid in any dimension.
%
%
%
%
\subsection{Setup}
Let $K$ be any field equipped with the trivial norm. 
(In~\S\S\ref{S129}--\ref{S270} we shall make further restriction on $K$.)
Let $R\simeq K[z_1,\dots,z_n]$ denote the polynomial ring in $n$
variables with coefficients in $K$. Thus $R$ is the coordinate
ring of the affine $n$-space $\A^n$ over $K$.
We shall view $\A^n$ as a scheme equipped with the Zariski topology. 
Points of $\A^n$ are thus prime ideals of $R$ 
and closed points are maximal ideals.
%
%
%
%
\subsection{The Berkovich affine space and analytification}\label{S216}
We start by introducing the basic object that we shall study.
\begin{Def}
  The Berkovich affine space $\BerkAn$
  of dimension $n$ is the set of multiplicative
  seminorms on the polynomial ring $R$ whose restriction to 
  $K$ is the trivial norm.
\end{Def}
This definition is a special case of the \emph{analytification} of a 
variety (or scheme) over $K$. 
Let $Y\subseteq\A^n$ be an irreducible subvariety defined by 
a prime ideal $I_Y\subseteq R$ and having coordinate ring 
$K[Y]=R/I_Y$. Then the analytification $\BerkY$ of 
$Y$ is the set of multiplicative seminorms on $K[Y]$
restricting to the trivial norm 
on $K$.\footnote{The analytification of a general variety or scheme
  over $K$ is defined by gluing the analytifications of open affine subsets,
  see~\cite[\S3.5]{BerkBook}.}
We equip $\BerkY$ with the topology of pointwise convergence.
The map $R\to R/I_Y$ induces a continuous injection 
$\BerkY\hookrightarrow\BerkAn$.

As before, points in $\BerkAn$ will be denoted $x$
and the associated seminorm by $|\cdot|_x$. It is
customary to write $|\phi(x)|:=|\phi|_x$ for a polynomial $\phi\in R$.
Let $\fp_x\subset R$ be the kernel of the seminorm $|\cdot|_x$.
The completed residue field $\cH(x)$ is the 
completion of the ring $R/\fp_x$ with respect to the norm
induced by $|\cdot|_x$. The structure sheaf $\cO$ on $\BerkAn$
can now be defined in the same way as in~\S\ref{S209},
following~\cite[\S1.5.3]{BerkBook}, 
but we will not directly us it. 

Closely related to $\BerkAn$ is the \emph{Berkovich unit polydisc} 
$\BerkDn$.
This is defined\footnote{The unit polydisc is denoted by $E(0,1)$ 
  in~\cite[\S1.5.2]{BerkBook}.}
in~\cite[\S1.5.2]{BerkBook} as the spectrum of the Tate algebra 
over $K$.
Since $K$ is trivially valued, the Tate algebra is the polynomial 
ring $R$ and $\BerkDn$ is the set of multiplicative seminorms on $R$ 
bounded by the trivial norm, that is, 
the set of points $x\in\BerkAn$ such that $|\phi(x)|\le 1$ 
for all polynomials $\phi\in R$.
%
%
%
%
\subsection{Home and center}\label{S219}
To a seminorm $x\in\BerkAn$ we can associate 
two basic geometric objects.
First, the kernel $\fp_x$ of $|\cdot|_x$ defines 
a point in $\A^n$ that we call the \emph{home} of $x$. 
Note that the home of $x$ is equal to $\A^n$ iff $|\cdot|_x$
is a norm on $R$. We obtain a continuous \emph{home map}
\begin{equation*}
  \BerkAn\to\A^n.
\end{equation*}
Recall that  $\A^n$ is viewed as a scheme with the Zariski topology.

Second, we define the \emph{center} of $x$ on $\A^n$ as follows.
If there exists a polynomial $\phi\in R$ such that $|\phi(x)|>1$, then we say that
$x$ has \emph{center at infinity}. Otherwise $x$ belongs to the 
Berkovich unit polydisc $\BerkDn$, in which case we define the 
center of $x$
to be the point of $\A^n$ defined by the prime ideal 
$\{\phi\in R\mid |\phi(x)|<1\}$. 
Thus we obtain a 
\emph{center map}\footnote{The center map is called the 
  \emph{reduction map} in~\cite[\S2.4]{BerkBook}.
  We use the valuative terminology center as in~\cite[\S6]{Vaquie1}
  since it will be convenient to view the elements 
  of $\BerkAn$ as semivaluations rather than seminorms.}
\begin{equation*}
  \BerkDn\to\A^n
\end{equation*}
which has the curious property of being 
\emph{anticontinuous} in the sense that preimages of 
open/closed sets are closed/open. 

The only seminorm in $\BerkAn$ whose center is all of 
$\A^n$ is the trivial norm on $R$.
More generally, if $Y\subseteq\A^n$ is any irreducible 
subvariety, there is a unique seminorm in $\BerkAn$ whose
home and center are both equal to $Y$, namely the image
of the trivial norm on $K[Y]$ under the embedding
$\BerkY\hookrightarrow\BerkAn$,
see also~\eqref{e202} below.
This gives rise to an embedding
\begin{equation*}
  \A^n\hookrightarrow\BerkAn
\end{equation*}
and shows that the home and center maps are both surjective.

The home of a seminorm always contains the center, provided the latter
is not at infinity. By letting the home and center vary over pairs
of points of $\A^n$ we obtain various partitions 
of the Berkovich affine space, see~\S\ref{S126}.

It will occasionally be convenient to identify irreducible
subvarieties of $\A^n$ with their generic points. Therefore, we shall 
sometimes think of the center and home of a seminorm as irreducible
subvarieties (rather than points) of $\A^n$.

There is a natural action of $\R_+^*$ on $\BerkAn$ which to 
a real number $t>0$ and a seminorm $|\cdot|$ associates the 
seminorm $|\cdot|^t$. The fixed points under this action are
precisely the images under the embedding $\A^n\hookrightarrow\BerkAn$ above.
%
%
%
%
\subsection{Semivaluations}\label{S222}
In what follows, it will be convenient to
work additively rather than multiplicatively. Thus we 
identify a seminorm $|\cdot| \in\BerkAn$ with the corresponding 
\emph{semivaluation} 
\begin{equation}\label{e209}
  v=-\log|\cdot|.
\end{equation}
The home of $v$ is now given by the prime ideal $(v=+\infty)$ of $R$.
We say that $v$ is a \emph{valuation} if the home is all of $\A^n$.
If $v(\phi)<0$ for some polynomial $\phi\in R$, then 
$v$ has center at infinity; otherwise $v$ belongs to
the $\BerkDn$ and its center is defined by the prime ideal $\{v>0\}$.
The action of $\R_+^*$ on $\BerkAn$ is now 
given by multiplication: $(t,v)\mapsto tv$.
The image of an irreducible subvariety 
$Y\subseteq\A^n$ under the embedding $\A^n\hookrightarrow\BerkAn$ 
is the semivaluation $\triv_Y$, defined by 
\begin{equation}\label{e202}
  \triv_Y(\phi)
  =\begin{cases}
    +\infty&\text{if $\phi\in I_Y$}\\
    0  &\text{if $\phi\not\in I_Y$},
  \end{cases}
\end{equation}
where $I_Y$ is the ideal of $Y$. 
Note that $\triv_{\A^n}$ is the trivial valuation on $R$.

For $v\in\BerkDn$ we write
\begin{equation*}
  v(\fa):=\min_{p\in\fa}v(\phi)
\end{equation*}
for any ideal $\fa\subseteq R$;
here it suffices to take the minimum over 
any set of generators of $\fa$.
%
%
%
%
\subsection{Stratification}\label{S126}
Let $Y\subseteq\A^n$ be an irreducible subvariety.
To $Y$ we can associate two natural elements of $\BerkAn$:
the semivaluation $\triv_Y$ above and the valuation 
$\ord_Y$\footnote{This is a divisorial valuation given by the order of vanishing
  along the exceptional divisor of the blowup of $Y$, see~\S\ref{S129}.}
defined by
\begin{equation*}
  \ord_Y(\phi)=\max\{k\ge0\mid\phi\in I_Y^k\}.
\end{equation*}
As we explain next, $Y$ also determines several natural subsets of $\BerkAn$.
%
%
\subsubsection{Stratification by home}\label{S224}
Define 
\begin{equation*}
  \cWp{Y},\quad\cWm{Y}\quad\text{and}\quad \cWe{Y}
\end{equation*}
as the set of semivaluations in $\BerkAn$ whose home in $\A^n$
contains $Y$, is contained in $Y$ and is equal to $Y$, respectively.
Note that $\cWm{Y}$ is closed by the continuity of the 
home map. 
We can identify $\cWm{Y}$ with the analytification $\BerkY$ 
of the affine variety $Y$ as defined in~\S\ref{S216}.
In particular, $\triv_Y\in\cWm{Y}$ corresponds to the
trivial valuation on $K[Y]$. 

The set $\cWp{Y}$ is open, since it is the 
complement in $\BerkAn$ of the union of all 
$\cWm{Z}$, where $Z$ ranges over irreducible subvarieties 
of $\A^n$ not containing $Y$.  
The set $\cWe{Y}$, on the other hand, 
is neither open nor closed unless $Y$ is a point or all 
of $\A^n$. It can be identified with the 
set of \emph{valuations} on the coordinate ring $K[Y]$.
%
%
\subsubsection{Valuations centered at infinity}\label{S227}
We define $\hcVe{\infty}$ to be the open subset of $\BerkAn$ consisting of 
semivaluations having center at infinity. Note that $\hcVe{\infty}$ is the 
complement of $\BerkDn$ in $\BerkAn$:
\begin{equation*}
  \BerkAn=\BerkDn\cup\hcVe{\infty}
  \quad\text{and}\quad
  \BerkDn\cap\hcVe{\infty}=\emptyset.
\end{equation*}
The space $\hcVe{\infty}$ is useful for the study of polynomial
mappings of $\A^n$ at infinity
and will be explored in~\S\ref{S108} in the two-dimensional case.
Notice that the action of $\R_+^*$ on $\hcVe{\infty}$
is fixed point free. We denote the quotient by $\cVe{\infty}$:
\begin{equation*}
  \cVe{\infty}:=\hcVe{\infty}/\R_+^*.
\end{equation*}
If we write $R=K[z_1,\dots,z_n]$, 
then we can identify $\cVe{\infty}$ with the set
of semivaluations for which 
$\min_{1\le i\le n}\{v(z_i)\}=-1$.
However, this identification depends on the choice
of coordinates, or at least on the embedding of
$\A^n\hookrightarrow\P^n$.
%
%
\subsubsection{Stratification by center}\label{S225}
We can classify the semivaluations in the Berkovich 
unit polydisc $\BerkDn$ according to their centers.
Given an irreducible subvariety $Y\subseteq\A^n$ we define
\begin{equation*}
  \hcVp{Y},\quad\hcVm{Y}\quad\text{and}\quad \hcVe{Y}
\end{equation*}
as the set of semivaluations in $\BerkDn$ whose center
contains $Y$, is contained in $Y$ and is equal to $Y$,
respectively.
By anticontinuity of the center map,
$\hcVm{Y}$ is open and, consequently, 
$\hcVp{Y}$ closed in $\BerkDn$.
Note that $v\in\hcVm{Y}$ iff $v (I_Y)>0$. 
As before, $\hcVe{Y}$ is neither open nor closed unless
$Y$ is a closed point or all of $\A^n$.

Note that $\cWm{Y}\cap\BerkDn\subseteq\hcVm{Y}$. 
The difference $\hcVm{Y}\setminus\cWm{Y}$ is the open subset 
of $\BerkDn$ consisting of semivaluations $v$ 
satisfying $0<v(I_Y)<\infty$. If we define
\begin{equation}
  \cV_Y:=\{v\in\BerkDn\mid v (I_Y)=1\},
\end{equation}
then $\cV_Y$ is a closed subset of $\BerkDn$
(hence also of $\BerkAn$) 
and the map $v\mapsto v/v(I_Y)$ induces a homeomorphism 
\begin{equation*}
  (\hcVm{Y}\setminus\cWm{Y})/\R_+^*\simto\cV_Y.
\end{equation*}
\begin{Remark}\label{R204}
  In the terminology of Thuillier~\cite{ThuillierStepanov}, 
  $\hcVm{Y}$ is the Berkovich space associated to 
  the completion of $\A^n$ along the closed subscheme $Y$.
  Similarly, the open subset $\hcVm{Y}\setminus\cWm{Y}$ is the 
  generic fiber of this formal subscheme. This terminology differs
  slightly from that of Berkovich~\cite{BerkVan1} who refers to 
  $\hcVm{Y}$ as the generic fiber, see~\cite[p.383]{ThuillierStepanov}.
\end{Remark}
%
%
\subsubsection{Extremal cases}\label{S226}
Let us describe the subsets of $\BerkAn$ introduced above in the case
when the subvariety $Y$ has maximal or minimal dimension.
First, it is clear that 
\begin{equation*}
  \cWm{\A^n}=\BerkAn
  \quad\text{and}\quad
  \hcVm{\A^n}=\BerkDn.
\end{equation*}
Furthermore,
\begin{equation*}
  \hcVp{\A^n}=\hcVe{\A^n}=\cWp{\A^n}=\cWe{\A^n}=\{\triv_{\A^n}\},
\end{equation*}
the trivial valuation on $R$. 
Since $I_{\A^n}=0$, we also have
\begin{equation*}
  \cV_{\A^n}=\emptyset.
\end{equation*}

At the other extreme, for a closed point $\xi\in\A^n$, 
we have
\begin{equation*}
  \cWm{\xi}=\cWe{\xi}=\{\triv_\xi\}.
\end{equation*}
The space $\cV_\xi$ is a singleton when $n=1$ (see~\S\ref{S211})
but has a rich structure when $n>1$.
We shall describe in dimension two in~\S\ref{S103}, in which case
it is a tree in the sense of~\S\ref{S172}.
See~\cite{hiro} for the higher-dimensional case.
%
%
\subsubsection{Passing to the completion}
A semivaluation $v\in\BerkDn$ whose center is equal to an 
irreducible subvariety $Y$ extends uniquely to a semivaluation 
on the local ring $\cO_{\A^n,Y}$
such that $v(\fm_Y)>0$, where $\fm_Y$ is the maximal ideal.
By $\fm_Y$-adic continuity, $v$ further extends uniquely as a 
semivaluation on the completion 
and by Cohen's structure theorem,
the latter is isomorphic to the power series ring 
$\kappa(Y)\llbracket z_1,\dots z_r\rrbracket$,
where $r$ is the codimension of $Y$.
Therefore we can view $\hcVe{Y}$ as the set of semivaluations
$v$ on $\kappa(Y)\llbracket z_1,\dots z_r\rrbracket$ whose restriction to 
$\kappa(Y)$ is trivial and such that $v(\fm_Y)>0$. 
In particular, for a closed point $\xi$,
we can view $\hcVe{\xi}$ (resp., $\cVe{\xi}$)   
as the set of semivaluations
$v$ on $\kappa(\xi)\llbracket z_1,\dots z_n\rrbracket$ whose
restriction to $\kappa(\xi)$ is trivial and such that 
$v(\fm_\xi)>0$ (resp., $v(\fm_\xi)=1$).  
This shows that when $K$ is algebraically closed,
the set $\cVe{\xi}$ above is isomorphic to the space considered   
in~\cite{hiro}. This space was first introduced in dimension $n=2$  
in~\cite{valtree} where it was called the valuative tree. We shall study it 
from a slightly different point of view in~\S\ref{S103}. 
Note that it may happen that a valuation $v\in\hcVe{\xi}$ has home $\xi$
but that the extension of $v$ to $\widehat{\cO}_{\A^n,\xi}$ is a semivaluation
for which the ideal $\{v=\infty\}\subseteq\widehat{\cO}_{\A^n,\xi}$ is
nontrivial.
%
%
%
%
\subsection{The affine line}\label{S211}
Using the definitions above, let us describe the Berkovich affine
line $\BerkAone$ over a trivially valued field $K$.

An irreducible subvariety of $\A^1$ is either $\A^1$ itself or
a closed point. As we noted in~\S\ref{S226} 
\begin{equation*}
  \hcVm{\A^1}=\BerkD,
  \quad \cWm{\A^1}=\BerkAone,
  \quad\hcVp{\A^1}=\hcVe{\A^1}=\cWp{\A^1}=\cWe{\A^1}=\{\triv_{\A^1}\}
\end{equation*}
whereas $\cV_{\A^1}$ is empty.

Now suppose the center of $v\in\BerkAone$ is a closed point 
$\xi\in\A^1$. 
If the home of $v$ is also equal to $\xi$, then $v=\triv_\xi$.
Now suppose the home of $v$ is $\A^1$, so that 
$0<v(I_\xi)<\infty$. After scaling we may assume 
$v(I_\xi)=1$ so that $v\in\cV_\xi$. 
Since $R\simeq K[z]$ is a PID is follows easily that 
$v=\ord_\xi$. This shows that 
\begin{equation*}
  \cWm{\xi}=\cWe{\xi}=\{\triv_\xi\}
  \quad\text{and}\quad 
  \cV_\xi=\{\ord_\xi\},
\end{equation*}
Similarly, if $v\in\BerkAone$ has center at infinity, then,
after scaling, we may assume that $v(z)=-1$, where 
$z\in R$ is a coordinate. It is then clear that 
$v=\ord_\infty$, where $\ord_\infty$ is the valuation on 
$R$ defined by $\ord_\infty(\phi)=-\deg\phi$. 
Thus we have 
\begin{equation*}
  \cVe{\infty}=\{\ord_\infty\}.
\end{equation*}
Note that any polynomial $\phi\in R$ can be viewed as a rational 
function on $\P^1=\A^1\cup\{\infty\}$ and $\ord_\infty(\phi)\le0$ 
is the order of vanishing of $\phi$ at $\infty$.

We leave it as an exercise to the reader to compare the 
terminology above with the one in~\S\ref{S262}.
See Figure~\ref{F102} for a picture of the Berkovich affine
line over a trivially valued field.

\begin{figure}[ht]
 \includegraphics{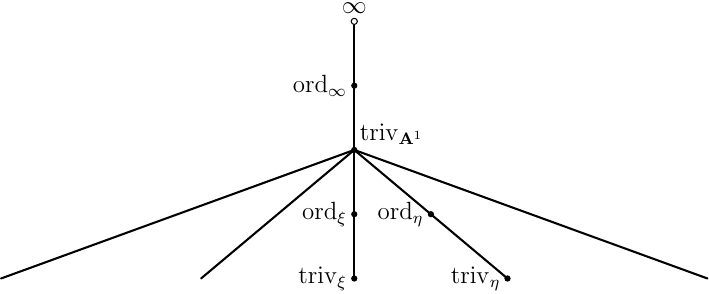}
  \caption{The Berkovich affine line over a trivially valued field.
    The trivial valuation $\triv_{\A^1}$ is the only point with center $\A^1$.
    The point $\triv_\xi$ for $\xi\in\A^1$ has home $\xi$. 
    All the points on the open segment $]\triv_{\A^1},\triv_\xi[$ 
    have home $\A^1$ and center $\xi$ and are proportional
    to the valuation $\ord_\xi$. 
    The point $\infty$ does not belong to $\BerkAone$.
    The points on the open segment $]\triv_{\A^1},\infty[$ 
    have home $\A^1$, center at infinity
    and are proportional to the valuation $\ord_\infty$.
  }\label{F102}
\end{figure}

%
%
%
%
\subsection{The affine plane}\label{S139}
In dimension $n=2$, the Berkovich affine space is 
significantly more complicated than in dimension one, 
but can still---with some effort---be visualized.

An irreducible subvariety of $\A^2$ is either 
all of $\A^2$, a curve, or a closed point.
As we have seen,
\begin{equation*}
  \hcVm{\A^2}=\BerkDtwo,
  \quad \cWm{\A^2}=\BerkAtwo,
  \quad\hcVp{\A^2}=\hcVe{\A^2}=\cWp{\A^2}=\cWe{\A^2}=\{\triv_{\A^2}\}
\end{equation*}
whereas $\cV_{\A^2}$ is empty.

\smallskip
Now let $\xi$ be a closed point. 
As before, $\cWm{\xi}=\cWe{\xi}=\{\triv_\xi\}$, where $\triv_\xi$
is the image of $\xi$ under the embedding 
$\A^2\hookrightarrow\BerkAtwo$. 
The set $\hcVm{\xi}=\hcVe{\xi}$ is open and 
$\hcVe{\xi}\setminus\{\triv_\xi\}=\hcVe{\xi}\setminus\cWe{\xi}$
is naturally a punctured cone with base $\cV_\xi$.
The latter  will be called \emph{the valuative tree}
(at the point $\xi$) and is studied in detail in~\S\ref{S103}.
Suffice it here to say that it is a tree in the sense of~\S\ref{S172}.
The whole space $\hcVe{\xi}$ is a cone over the valuative tree with 
its apex at $\triv_\xi$. The boundary of $\hcVe{\xi}$ 
consists of all semivaluations whose center strictly contains $\xi$,
so it is the union of  $\triv_{\A^2}$ and $\hcVe{C}$, 
where $C$ ranges over curves containing $C$. As we shall
see, the boundary therefore has the structure of a tree 
naturally rooted in $\triv_{\A^2}$. See Figure~\ref{F103}.
If $\xi$ and $\eta$ are two different closed points, 
then the open sets $\hcVe{\xi}$ and $\hcVe{\eta}$ are disjoint.

\smallskip
Next consider a curve $C\subseteq\A^2$.
By definition, the set $\cWm{C}$ consists all semivaluations whose
home is contained in $C$. This means that $\cWm{C}$ is the
image of the analytification $\BerkC$ of $C$ under the 
embedding $\BerkC\hookrightarrow\BerkAtwo$. 
As such, it looks quite similar to 
the Berkovich affine line $\BerkAone$, see~\cite[\S1.4.2]{BerkBook}.
More precisely, the semivaluation $\triv_C$ is the unique
semivaluation in $\cWm{C}$ having center $C$. 
All other semivaluations in $\cWm{C}$ have center at a closed
point $\xi\in C$. The only such semivaluation having home $\xi$
is $\triv_\xi$; the other semivaluations in $\cWm{C}\cap\hcVe{\xi}$ 
have home $C$ and center $\xi$. We can normalize them by $v(I_\xi)=1$. 
If $\xi$ is a nonsingular point on $C$, then there is a unique
normalized semivaluation $v_{C,\xi}\in\BerkAtwo$ having home $C$ and center
$\xi$. When $\xi$ is a singular point on $C$, the set of such
semivaluations is instead in bijection with the set of 
local branches\footnote{A local branch is a preimage of a point of $C$
  under the normalization map.}
of $C$ at $\xi$. 
We see that $\cWm{C}$ looks like $\BerkAone$ except that there may be
several intervals joining $\triv_C$ and $\triv_\xi$: one for
each local branch of $C$ at $\xi$. See Figure~\ref{F201}.

Now look at the closed set $\hcVp{C}$
of semivaluations whose center contains $C$. 
It consists of all semivaluations $t\ord_C$ for 
$0\le t\le\infty$. Here $t=\infty$ and $t=0$ correspond to 
$\triv_C$ and $\triv_{\A^2}$, respectively.
As a consequence, for any closed point $\xi$, 
$\partial\hcVe{\xi}$ has the structure of a tree, much 
like the Berkovich affine line $\BerkAone$. 

The set $\hcVm{C}$ is open and its boundary consists of semivaluations
whose center strictly contains $C$. In other words, the boundary is
the singleton $\{\triv_{\A^2}\}$. 
For two curves $C,D$, the intersection 
$\hcVm{C}\cap\hcVm{D}$ is the union of sets 
$\hcVe{\xi}$ over all closed points $\xi\in C\cap D$. 

The set $\cV_C\simeq(\hcVm{C}\setminus\cWm{C})/\R_+^*$ 
looks quite similar to the valuative tree at a closed point.
To see this, note that the valuation $\ord_C$ is the only semivaluation
in $\cV_C$ whose center is equal to $C$. All other semivaluations in 
$\cV$ have center at a closed point $\xi\in C$. 
For each semivaluation $v\in\cV_\xi$ whose home is not equal to $C$,
there exists a unique $t=t(\xi,C)>0$ such that $tv\in\cV_C$;
indeed, $t=v(I_C)$. Therefore, $\cV_C$ can be obtained by taking
the disjoint union of the trees $\cV_\xi$ over all $\xi\in C$ and identifying the
semivaluations having home $C$ with the point $\ord_C$.
If $C$ is nonsingular, then $\cV_C$ will be a tree naturally rooted
in $\ord_C$.

We claim that if $C$ is a line, then $\cV_C$ can be identified with the 
Berkovich unit disc over the field of Laurent series in one variable
with coefficients in $K$. To see this, pick affine coordinates 
$(z_1,z_2)$ such that $C=\{z_1=0\}$. Then $\cV_C$ is the set of 
semivaluations $v:K[z_1,z_2]\to\R_+\cup\{\infty\}$ such that 
$v(z_1)=1$. Let $L=K((z_1))$ be the field of Laurent series,
equipped with the valuation $v_L$ that is trivial on $K$ and takes
value 1 on $z_1$. Then the Berkovich unit disc 
$\BerkD$ over $L$ is the set of semivaluations 
$L[z_2]\to\R_+\cup\{\infty\}$ extending $v_L$. 
Every element of $\BerkD$ defines an element of 
$\cV_C$ by restriction. 
Conversely, pick $v\in\cV_C$. If $v=\ord_C$, then $v$ extends
uniquely to an element of $\BerkD$, namely the Gauss point.
If $v\ne\ord_C$, then the center of $v$ is a closed point 
$\xi\in C$ and $v$ extends uniquely to the fraction field
of the completion $\hcO_\xi$. This fraction field contains $L[z_2]$.

\smallskip
The open subset $\hcVe{\infty}=\BerkAtwo\setminus\BerkDtwo$ 
of semivaluations centered at infinity 
is a punctured cone over a base $\cVe{\infty}$. The latter space is called 
\emph{the valuative tree at infinity} and will be studied in detail
in~\S\ref{S108}. Superficially, its structure is quite similar to the 
valuative tree at a closed point $\xi$. In particular it is a tree in the 
sense of~\S\ref{S172}. 
The boundary of $\hcVe{\infty}$ is the union of $\hcVp{C}$ 
over \emph{all} affine curves $C$, that is, the set of semivaluations 
in $\BerkDtwo$ whose center is \emph{not} a closed point. Thus the boundary 
has a structure of a tree rooted in $\triv_{\A^2}$. See Figure~\ref{F212}.
We emphasize that there is no point $\triv_\infty$ in $\hcVe{\infty}$.

\smallskip
To summarize the discussion, $\BerkAtwo$ contains a closed
subset $\Sigma$ with empty interior consisting of semivaluations 
having center of dimension one or two. This set is a naturally
a tree, which can be viewed as the cone over the collection of all
irreducible affine curves.
The complement of $\Sigma$ is an
open dense subset whose connected components are $\hcVe{\infty}$,
and $\hcVe{\xi}$, where $\xi$ ranges over closed points of $\A^2$.
The set $\hcVe{\infty}$ is a punctured cone over a tree $\cVe{\infty}$
and its boundary is all of $\Sigma$. 
For a closed point $\xi$, $\hcVe{\xi}$ is a cone over a tree
$\cVe{\xi}$ and its boundary is a subtree of $\Sigma$, namely
the cone over the collection of all irreducible affine curves containing $\xi$.
\begin{figure}[ht]
  \includegraphics[width=\textwidth]{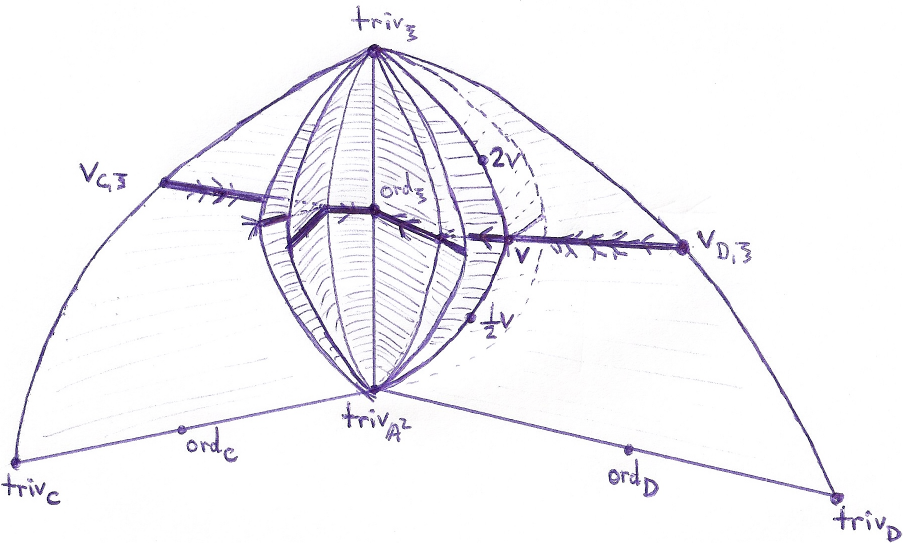}
  \caption{The Berkovich affine plane over a trivially
    valued field. The picture shows the closure of the set $\hcVe{\xi}$
    of semivaluations having center at a closed point $\xi\in\A^2$.
    Here $C$, $D$ are irreducible curves containing $\xi$. 
    The semivaluation $\triv_\xi\in\hcVe{\xi}$ has home $\xi$.
    All semivaluations in $\hcVe{\xi}\setminus\{\triv_\xi\}$ are 
    proportional to a semivaluation $v$ in the valuative tree
    $\cV_\xi$ at $\xi$. We have $tv\to\triv_\xi$ as $t\to\infty$.
    As $t\to0+$, $tv$ converges to the semivaluation $\triv_Y$,
    where $Y$ is the home of $v$. 
    The semivaluations $v_{C,\xi}$ and $v_{D,\xi}$ belong to $\cV_\xi$
    and have home $C$ and $D$, respectively.
    The boundary of $\hcVe{\xi}$ is a tree consisting of all segments 
    $[\triv_{\A^2},\triv_C]$ for all irreducible affine 
    curves $C$ containing both $\xi$.
    Note that the segment $[\triv_C,\triv_\xi]$ in the closure of 
   $\hcVe{\xi}$ is also a segment in the analytification 
   $\BerkC\subseteq\BerkAtwo$ of $C$,
   see Figure~\ref{F201}.
}\label{F103}
\end{figure}
\begin{figure}[ht]
  \includegraphics[width=\textwidth]{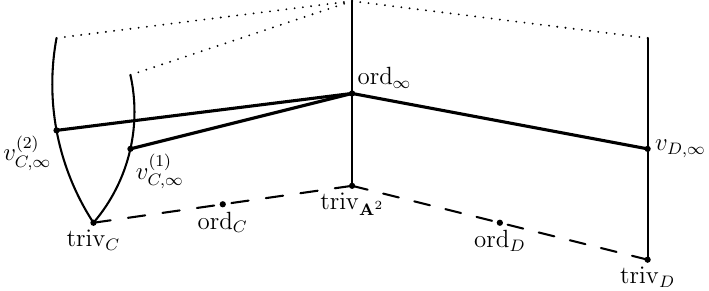}
  \caption{The Berkovich affine plane over a trivially
    valued field. The picture shows (part of) the closure of the set $\hcVe{\infty}$
    of semivaluations having center at infinity.
    Here $C$ and $D$ are affine curves having two and one places
    at infinity, respectively.
    The set $\hcVe{\infty}$ is a cone whose base is $\cVe{\infty}$, 
    the valuative tree at infinity. Fixing an embedding $\A^2\hookrightarrow\P^2$
    allows us to identify $\cVe{\infty}$ with a subset of
    $\hcVe{\infty}$
    and the valuation $\ord_\infty$ is the order of vanishing along
    the line at infinity in $\P^2$.
    The semivaluations $v_{D,\infty}$ and $v_{C,\infty}^{(i)}$,
    $i=1,2$ have home $D$ and $C$, respectively; 
    the segments $[\ord_\infty,v_{D,\infty}]$ and
    $[\ord_\infty,v_{C,\infty}^{(i)}]$, $i=1,2$ belong to $\cVe{\infty}$.
   The segments $[\triv_{\A^2},\triv_C]$ and $[\triv_{\A^2},\triv_D]$
    at the bottom of the picture belong to the 
    boundary of $\hcVe{\infty}$: the full boundary is a tree 
    consisting of all such segments and whose only branch point is
    $\triv_{\A^2}$.
    The dotted segments in the top of the picture do not belong to
    the Berkovich affine plane.
}\label{F212}
\end{figure}
\begin{figure}[ht]
 \includegraphics{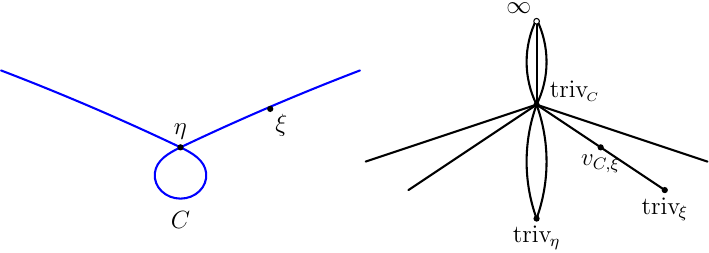}
 \caption{The analytification $\BerkC$ of an affine curve $C$ over a trivially
    valued field. The semivaluation $\triv_C$ is the only semivaluation
    in $\BerkC$ having center $C$ and home $C$. To each closed point $\xi\in C$
    is associated a unique semivaluation $\triv_\xi\in\BerkC$ with center and home 
    $\xi$. The set of elements of $\BerkC$ with home $C$ and center at a given closed 
    point $\xi$ is a disjoint union of open intervals, one for each 
    local branch of $C$ at $\xi$. Similarly, the set of elements of $\BerkC$ 
    with home $C$ and center at infinity is a disjoint union of open intervals, 
    one for each branch of $C$ at infinity.
    The left side of the picture shows a nodal cubic curve $C$ and the
    right side shows its analytification $\BerkC$.
    Note that for a smooth point $\xi$ on $C$,
    the segment $[\triv_C,\triv_\xi]$ in $\BerkC$ also lies
    in the closure of the cone $\hcVe{\xi}$,
    see Figure~\ref{F103}.
   }\label{F201}
\end{figure}
%
%
%
%
\subsection{Valuations}\label{S220}
A semivaluation $v$ on $R\simeq K[z_1,\dots,z_n]$ 
is a \emph{valuation} if the corresponding seminorm 
is a norm, that is, if $v(\phi)<\infty$ for all nonzero polynomials 
$\phi\in R$. A valuation $v$ extends to the fraction field 
$F\simeq K(z_1,\dots,z_n)$ of $R$ by setting 
$v(\phi_1/\phi_2)=v(\phi_1)-v(\phi_2)$.

Let $X$ be a variety over $K$ whose function field is equal to $F$.
The \emph{center} of a valuation $v$ on $X$, if it exists,
is the unique (not necessarily closed) point $\xi\in X$ defined 
by the properties that $v\ge 0$ on the local ring $\cO_{X,\xi}$ 
and $\{v>0\}\cap\cO_{X,\xi}=\fm_{X,\xi}$.
By the valuative criterion of properness,
the center always exists and is unique 
when $X$ is proper over $K$.

Following~\cite{graded} we write $\Val_X$ for the set of valuations 
of $F$ that admit a center on $X$. As usual, this set is endowed with the 
topology of pointwise convergence. 
Note that $\Val_X$ is a subset of $\BerkAn$ that can in fact be shown to
be dense. One nice feature of $\Val_X$ is that any proper
birational morphism $X'\to X$ induces an isomorphism $\Val_{X'}\simto\Val_X$. 
(In the same situation, the analytification $X'_\mathrm{Berk}$ 
maps onto $X_\mathrm{Berk}$, but this map is not injective.)

We can view the Berkovich unit polydisc $\BerkDn$ as the disjoint union 
of $\Val_Y$, where $Y$ ranges over irreducible subvarieties of $X$.
%
%
%
%
\subsection{Numerical invariants}\label{S133}
To a valuation $v\in\BerkAn$ we can associate several invariants.
First, the \emph{value group} of $v$ is defined by 
$\Gamma_v:=\{v(\phi)\mid\phi\in F\setminus\{0\}\}$.
The \emph{rational rank} $\ratrk v$ of $v$ is
the dimension of the $\Q$-vector space 
$\Gamma_v\otimes_\Z\Q$. 

Second, the valuation ring 
$R_v=\{\phi\in F\mid v(\phi)\ge 0\}$ of $v$ is a local
ring with maximal ideal $\fm_v=\{v(\phi)>0\}$.
The \emph{residue field} $\kappa(v)=R_v/\fm_v$ contains $K$ as
a subfield and 
the \emph{transcendence degree} of $v$
is the transcendence degree of the field extension 
$\kappa(v)/K$. 

In our setting, the fundamental \emph{Abhyankar inequality} 
states that
\begin{equation}\label{e129}
  \ratrk v +\trdeg v \le n.
\end{equation}
The valuations for which equality holds are of particular 
importance. At least in characteristic zero, 
they admit a nice geometric description that we discuss next.
%
%
%
%
\subsection{Quasimonomial and divisorial valuations}\label{S129}
Let $X$ be a smooth variety over $K$ with function field $F$.
We shall assume in this section that the field $K$ has characteristic zero or
that $X$ has dimension at most two. This allows us to freely
use resolutions of  singularities.

Let $\xi\in X$ be a point (not necessarily closed) with residue
field $\kappa(\xi)$. 
Let $(\zeta_1,\dots,\zeta_r)$ be a system of
algebraic coordinates at $\xi$ (\ie a regular
system of parameters of $\cO_{X,\xi}$).
We say that a valuation $v\in\Val_X$ is \emph{monomial}
in coordinates $(\zeta_1,\dots,\zeta_r)$ with weights 
$t_1,\dots,t_r\ge 0$ if the following holds:
if we write $\phi\in\widehat{\cO}_{X,\xi}$ as
$\phi=\sum_{\b\in\Z_{\ge0}^m}c_\b\zeta^\b$
with each $c_\b\in\widehat{\cO}_{X,\xi}$ either zero or a unit, then
\begin{equation*}
  v(\phi)=\min\{\langle t,\b\rangle\mid c_\b\ne0\},
\end{equation*}
where $\langle t,\b\rangle=t_1\b_1+\dots+t_r\b_r$.
After replacing $\xi$ by the (generic point of the)
intersection of all divisors $\{\zeta_i=0\}$ we may in fact
assume that $t_i>0$ for all $i$.

We say that a valuation $v\in\Val_X$ is \emph{quasimonomial}
(on $X$) if it is monomial in some birational model of $X$. 
More precisely, we require that there exists a proper birational
morphism $\pi:X'\to X$, with $X'$ smooth,
such that $v$ is monomial in some algebraic coordinates 
at some point $\xi\in X'$.
As explained in~\cite{graded}, in this case we can
assume that the divisors $\{\zeta_i=0\}$ are irreducible components
of a reduced, effective simple normal crossings divisor $D$ on $X'$
that contains the exceptional locus of $\pi$.
(In the two-dimensional situation that we shall be primarily
interested in, arranging this is quite elementary.)

It is a fact that a valuation $v\in\Val_X$ is quasimonomial
iff equality holds in Abhyankar's inequality~\eqref{e129}.
For this reason, quasimonomial valuations are sometimes
called Abhyankar valuations. See~\cite[Proposition~2.8]{ELS}.

Furthermore, we can arrange the situation so that 
the weights $t_i$ are all strictly positive and linearly independent over
$\Q$: see~\cite[Proposition~3.7]{graded}.
In this case the residue field of $v$ is isomorphic
to the residue field of $\xi$, and hence 
$\trdeg v=\dim(\overline\xi)=n-r$. Furthermore, the value group of
$v$ is equal to 
\begin{equation}\label{e130}
  \Gamma_v=\sum_{i=1}^r\Z t_i,
\end{equation}
so $\ratrk v=r$.

A very important special case of quasimonomial 
valuations are given by \emph{divisorial valuations}.
Numerically, they are characterized by $\ratrk=1$, $\trdeg=n-1$.
Geometrically, they are described as follows: there exists
a birational morphism $X'\to X$, a prime divisor 
$D\subseteq X'$ and a constant $t>0$ such that 
$t^{-1}v(\phi)$ is the order of vanishing along $D$ for all
$\phi\in F$.
%
%
%
%
\subsection{The Izumi-Tougeron inequality}\label{S270}
Keep the same assumptions on $K$ and $X$ as in~\S\ref{S129}.
Consider a valuation $v\in\Val_X$ and let $\xi$ be its 
center on $X$. Thus $\xi$ is a (not necessarily closed) 
point of $X$. By definition, $v$ is nonnegative on 
the local ring $\cO_{X,\xi}$ and strictly positive on the 
maximal ideal $\fm_{X,\xi}$. 
Let $\ord_\xi$ be the order of vanishing at $\xi$.
It follows from the valuation axioms that 
\begin{equation}\label{e132}
  v\ge c\ord_\xi,
\end{equation}
on $\cO_{X,\xi}$, where $c=v(\fm_{X,\xi})>0$. 

It will be of great importance to us that if 
$v\in\Val_X$ is quasimonomial
then the reverse inequality holds 
in~\eqref{e132}. Namely, 
there exists a constant
$C=C(v)>0$ such that 
\begin{equation}\label{e131}
  c\ord_\xi\le v\le C\ord_\xi 
\end{equation}
on $\cO_{X,\xi}$.
This inequality is often referred to as Izumi's 
inequality (see~\cite{Izumi,Rees,HuSw,ELS}) 
but in the smooth case we are considering it goes back at least to 
Tougeron~\cite[p.178]{Tougeron}.
More precisely, Tougeron proved this 
inequality for divisorial valuations, but that easily 
implies the general case.

As in~\S\ref{S124}, a valuation $v\in\Val_X$ having center 
$\xi$ on $X$ extends uniquely to a semivaluation on 
$\widehat{\cO}_{X,\xi}$. The Izumi-Tougeron inequality~\eqref{e131} 
implies that if $v$ is quasimonomial, then this extension 
is in fact a valuation. 
In general, however, the extension may not be a valuation,
so the Izumi-Tougeron inequality certainly does not hold
for \emph{all} valuations in $\Val_X$ having center $\xi$ on $X$.
For a concrete example, let $X=\A^2$, let $\xi$ be 
the origin in coordinates $(z,w)$ and let 
$v(\phi)$ be defined as the order of vanishing 
at $u=0$ of $\phi(u,\sum_{i=1}^\infty\frac{u^i}{i!})$.
Then $v(\phi)<\infty$ for all nonzero polynomials $\phi$,
whereas $v(w-\sum_{i=1}^\infty\frac{u^i}{i!})=0$.
%
%
%
%
\subsection{Notes and further references}\label{S228}
It is a interesting feature of Berkovich's theory that
one can work with trivially valued fields: this is definitely
not possible in rigid geometry (see e.g.~\cite{ConradNotes}
for a general discussion of rigid geometry and various other
aspects of non-Archimedean geometry).

In fact, Berkovich spaces over trivially valued fields 
have by now seen several interesting and 
unexpected applications. In these notes we focus on 
dynamics, but one can also study use Berkovich spaces
to study the singularities of plurisubharmonic
functions~\cite{pshsing,hiro} and various asymptotic 
singularities in algebraic geometry,
such as multiplier ideals~\cite{valmul,graded}.
In other directions, Thuillier~\cite{ThuillierStepanov} 
exploited Berkovich spaces to give a new proof of a theorem
by Stepanov in birational geometry, and Berkovich~\cite{BerkHodge} 
has used them in the context of mixed Hodge structures.

The Berkovich affine space of course also comes with a structure sheaf $\cO$.
We shall not need use it in what follows but it is surely a useful tool for 
a more systematic study of polynomial mappings on the $\BerkAn$.

The spaces $\hcVe{\xi}$, $\cVe{\xi}$ and $\cVe{\infty}$
were introduced (in the case of $K$ algebraically closed
of characteristic zero) and studied in~\cite{valtree,eigenval,hiro}
but not explicitly identified as subset of the Berkovich affine
plane. The structure of the Berkovich affine space does
not seem to have been written down in detail before,
but see~\cite{YZb}.

The terminology ``home'' is not standard.
Berkovich~\cite[\S1.2.5]{BerkBook} uses this construction
but does not give it a name. The name ``center''
comes from valuation theory, see~\cite[\S6]{Vaquie1}
whereas non-Archimedean geometry tends to use
the term ``reduction''.
Our distinction between (additive) valuations and
(multiplicative) norms is not always made in the literature.
Furthermore, in~\cite{valtree,hiro}, the term `valuation' instead
of `semi-valuation' is used even when the prime
ideal $\{v =+\infty\}$ is nontrivial.

The space $\Val_X$ was introduced in~\cite{graded} for the 
study of asymptotic invariants of graded sequences of ideals.
In~\loccit it is proved that $\Val_X$ is an inverse limit
of cone complexes, in the same spirit as~\S\ref{S248} below.
%
%
%
%
%
%
\newpage
\section{The valuative tree at a point}\label{S103}
Having given an overview of the Berkovich affine plane
over a trivially valued field, we now study the set 
of semivaluations centered at a closed point. As indicated 
in~\S\ref{S139}, this is a cone over a space that we call
the valuative tree.

The valuative tree is treated in detail in
the monograph~\cite{valtree}. However, 
the self-contained presentation here has a different focus. 
In particular, we emphasize
aspects that generalize to higher dimension.
See~\cite{hiro} for some of these generalizations.
%
%
%
%
\subsection{Setup}
Let $K$ be field equipped with the trivial norm.
For now we assume that $K$ is algebraically closed
but of arbitrary characteristic.
(See~\S\ref{S230} for a more general case).
In applications to complex dynamics we would
of course pick $K=\C$, but
we emphasize that the norm is then \emph{not}
the Archimedean one.
As in~\S\ref{S105}
we work additively rather than multiplicatively
and consider $K$ equipped with the
trivial valuation, whose value on nonzero elements is zero
and whose value on $0$ is $+\infty$.

Let $R$ and $F$ be the coordinate ring and
function field of $\A^2$.
Fix a closed point $0\in\A^2$
and write $\fm_0\subseteq R$ for the corresponding maximal ideal.
If $(z_1,z_2)$ are global coordinates on $\A^2$ 
vanishing at $0$, then $R=K[z_1,z_2]$, $F=K(z_1,z_2)$ and $\fm_0=(z_1,z_2)$.
We say that an ideal $\fa\subseteq R$ is $\fm_0$-\emph{primary} 
or simply \emph{primary} if it contains some power of $\fm_0$.

Recall that the Berkovich affine plane $\BerkAtwo$ is the set of 
semivaluations on $R$ that restrict to the trivial valuation
on $K$. Similarly, the Berkovich unit bidisc $\BerkDtwo$ is the 
set of semivaluations $v\in\BerkAtwo$ that are nonnegative on $R$.
If $\fa\subseteq R$ is an ideal and $v\in\BerkDtwo$, then we write 
$v(\fa)=\min\{v(\phi)\mid\phi\in\fa\}$.
In particular, $v(\fm_0)=\min\{v(z_1),v(z_2)\}$.
%
%
%
%
\subsection{The valuative tree}\label{S229}
Let us recall some definitions from~\S\ref{S225} and~\S\ref{S139}.
Let $\hcV_0\subseteq\BerkDtwo$ 
be the subset of semivaluations whose center on $\A^2$
is equal to the closed point $0\in\A^2$. 
In other words, 
$\hcV_0$ is the set of semivaluations $v:R\to[0,+\infty]$
such that $v|_{K^*}\equiv0$ and $v(\fm_0)>0$.

There are now two cases. Either $v(\fm_0)=+\infty$,
in which case $v=\triv_0\in\BerkAtwo$ 
is the trivial valuation associated
to the point $0\in\A^2$, or $0< v(\fm_0)<\infty$.
Define $\hcV_0^*$ as the set of semivaluations of the 
latter type. This set is naturally a pointed cone and
admits the following set as a ``section''.
\begin{Def}
  The \emph{valuative tree} $\cV_0$ at the point $0\in\A^2$
  is the set of semivaluations
  $v:R\to[0,+\infty]$ satisfying $v(\fm_0)=1$.
\end{Def}
To repeat, we have 
\begin{equation*}
  \hcV_0=\{\triv_0\}\cup\hcV_0^*
  \quad\text{and}\quad
  \hcV_0^*=\R_+^*\cV_0.
\end{equation*}

We equip $\cV_0$ and $\hcV_0$ with the subspace topology from
$\BerkAtwo$, that is, the weakest topology for
which all evaluation maps $v\mapsto v(\phi)$
are continuous, where $\phi$ ranges over polynomials in $R$.
It follows easily from Tychonoff's theorem that 
$\cV_0$ is a compact Hausdorff space.

Equivalently, we could demand that $v\mapsto v(\fa)$
be continuous for any primary ideal $\fa\subseteq R$.
For many purposes it is indeed quite natural to evaluate semivaluations
in $\hcV_0^*$ on primary ideals rather than polynomials.
For example, we have $v(\fa+\fb)=\min\{v(\fa),v(\fb)\}$ for any 
primary ideals $\fa$, $\fb$, whereas we only
have $v(\phi+\psi)\ge\min\{v(\phi),v(\psi)\}$ for polynomials $\phi,\psi$.

An important element of $\cV_0$ is the valuation $\ord_0$
defined by 
\begin{equation*}
  \ord_0(\phi)=\max\{k\ge 0\mid \phi\in\fm_0^k\}.
\end{equation*}
Note that $v(\phi)\ge\ord_0(\phi)$ for all $v\in\cV_0$ and all
$\phi\in R$.

Any semivaluation $v\in\BerkAtwo$ extends as a 
function $v:F\to[-\infty,+\infty]$, where $F$ is the 
fraction field of $R$, by setting $v(\phi_1/\phi_2)=v(\phi_1)-v(\phi_2)$; 
this is well defined since $\{v=+\infty\}\subseteq R$ is a prime ideal.

Our goal for now is to justify the name
``valuative tree'' by showing that $\cV_0$ can be equipped with a 
natural tree structure, rooted at $\ord_0$. 
This structure can be obtained 
from many different points of view, as 
explained in~\cite{valtree}. Here we focus on
a geometric approach that is
partially generalizable to higher dimensions (see~\cite{hiro}).
%
%
%
%
\subsection{Blowups and log resolutions}\label{S154} 
We will consider birational morphisms
\begin{equation*}
  \pi:X_\pi\to\A^2,
\end{equation*}
with $X_\pi$ smooth, 
that are isomorphisms above $\A^2\setminus\{0\}$.
Such a morphism is necessarily a finite 
composition of point blowups; somewhat sloppily 
we will refer to it simply as a \emph{blowup}.
The set $\fB_0$ of blowups is a partially ordered
set: we say $\pi\le\pi'$ if the induced 
birational map $X_{\pi'}\to X_\pi$ is a morphism 
(and hence itself a composition of point blowups).
In fact, $\fB_0$ is a directed system: any two 
blowups can be dominated by a third. 
%
%
\subsubsection{Exceptional primes}\label{S234}
An irreducible component $E\subseteq\pi^{-1}(0)$ is called
an \emph{exceptional prime} (divisor) of $\pi$. There are as many
exceptional primes as the number of points blown up. 
We often identify an exceptional prime of $\pi$ with its  
strict transform to any blowup $\pi'\in\fB_0$ dominating $\pi$.
In this way we can identify an exceptional prime $E$
(of some blowup $\pi$) with the corresponding divisorial valuation $\ord_E$. 

If $\pi_0$ is the simple blowup of the origin, then there is a unique
exceptional prime $E_0$ of $\pi_0$ whose associated divisorial valuation is
$\ord_{E_0}=\ord_0$. Since any blowup $\pi\in\fB_0$ 
factors through $\pi_0$, $E_0$ is an exceptional prime of 
any $\pi$.
%
%
\subsubsection{Free and satellite points}\label{S274}
The following terminology is convenient and commonly used in the 
literature. Consider a closed point $\xi\in\pi^{-1}(0)$ for some
blowup $\pi\in\fB_0$. We say that $\xi$ is a \emph{free} point 
if it belongs to a unique exceptional prime; otherwise it is the
intersection point of two distinct exceptional primes and is called
a \emph{satellite} point.
%
%
\subsubsection{Exceptional divisors}\label{S212}
A divisor on $X_\pi$ is \emph{exceptional} if
its support is contained in $\pi^{-1}(0)$.
We write $\Div(\pi)$ for the abelian group of 
exceptional divisors on $X_\pi$.
If $E_i$, $i\in I$, are the exceptional primes of $\pi$, then 
$\Div(\pi)\simeq\bigoplus_{i\in I}\Z E_i$.

If $\pi,\pi'$ are blowups and $\pi'=\pi\circ\mu\ge\pi$,
then there are natural maps 
\begin{equation*}
  \mu^*:\Div(\pi)\to\Div(\pi')
  \quad\text{and}\quad
  \mu_*:\Div(\pi')\to\Div(\pi)
\end{equation*}
satisfying the projection formula $\mu_*\mu^*=\id$.
In many circumstances it is natural to
identify an exceptional divisor $Z\in\Div(\pi)$
with its pullback $\mu^*Z\in\Div(\pi')$.
%
%
\subsubsection{Intersection form}\label{S243}
We denote by $(Z\cdot W)$ the intersection number 
between exceptional divisors $Z,W\in\Div(\pi)$.
If $\pi'=\pi\circ\mu$, 
then $(\mu^*Z\cdot W')=(Z\cdot\mu_*W')$ 
and hence $(\mu^*Z\cdot\mu^*W)=(Z\cdot W)$ for
$Z,W\in\Div(\pi)$, $Z'\in\Div(\pi')$.
\begin{Prop}\label{P203}
  The intersection form on $\Div(\pi)$ is negative definite and 
  unimodular.
\end{Prop}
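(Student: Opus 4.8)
The plan is to prove both statements — negative definiteness and unimodularity — by induction on the number of point blowups composing $\pi$, using the elementary behaviour of the intersection form under a single point blowup.

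First I would record the base case. The simplest blowup $\pi_0\colon X_{\pi_0}\to\A^2$ of the origin has a single exceptional prime $E_0$ with $(E_0\cdot E_0)=-1$, so $\Div(\pi_0)\simeq\Z E_0$ with Gram matrix $(-1)$, which is negative definite and unimodular (determinant $\pm1$). For the inductive step, suppose $\pi'=\pi\circ\mu$ where $\mu$ is the blowup of a single closed point $\xi\in\pi^{-1}(0)\subseteq X_\pi$, with new exceptional prime $E'$. Let $E_i$, $i\in I$, be the exceptional primes of $\pi$ and identify each with its strict transform on $X_{\pi'}$, so that the exceptional primes of $\pi'$ are the $E_i$ together with $E'$. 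The key computation is: $(E'\cdot E')=-1$; $(E'\cdot E_i)=1$ if $\xi\in E_i$ and $0$ otherwise; and on $X_{\pi'}$ the strict transform satisfies $E_i = \mu^*E_i - E'$ when $\xi\in E_i$ and $E_i=\mu^*E_i$ otherwise. Since $\xi$ is either a free point (lying on exactly one $E_i$) or a satellite point (lying on exactly two), there are at most two indices $i$ with $(E'\cdot E_i)=1$. Using $(\mu^*E_i\cdot\mu^*E_j)=(E_i\cdot E_j)$ and $(\mu^*E_i\cdot E')=0$, one expresses the Gram matrix $M'$ of $\{E_i\}_{i\in I}\cup\{E'\}$ on $X_{\pi'}$ in terms of the Gram matrix $M$ of $\{E_i\}_{i\in I}$ on $X_\pi$: writing the new basis vectors $E_i$ (strict transforms) in terms of the $\mu^*E_i$ and $E'$, $M'$ is obtained from $M\oplus(-1)$ by an integral change of basis (a unipotent one, subtracting $E'$ from the relevant $\mu^*E_i$'s).

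From this I conclude both properties at once. Unimodularity: $\det M' = \det(M\oplus(-1))\cdot(\pm1)^2 = -\det M$, which by induction is $\pm1$. Negative definiteness: the quadratic form associated to $M'$ is the pullback of the form $M\oplus(-1)$ under an invertible (real) linear change of coordinates; a form that is negative definite pulls back to a negative definite form, and $M\oplus(-1)$ is negative definite by the inductive hypothesis applied to $M$. Hence $M'$ is negative definite and unimodular, completing the induction. (Alternatively, for negative definiteness one can invoke the general fact that the intersection form on the exceptional divisors of a resolution of a normal surface singularity is negative definite, but the inductive argument above is self-contained and elementary in this setting.)

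The main obstacle — really the only point requiring care — is the bookkeeping for the single-blowup step: correctly writing each strict transform $E_i$ on $X_{\pi'}$ as $\mu^*E_i$ minus the appropriate multiple of $E'$ (multiplicity $1$ since we blow up a reduced point), and checking the three intersection numbers involving $E'$ in the free versus satellite cases. Once that local computation is in hand, the change-of-basis argument transporting negative definiteness and the determinant from $X_\pi$ to $X_{\pi'}$ is routine.
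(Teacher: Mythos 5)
Your proof is correct and takes essentially the same route as the paper: induction on the number of blowups, with the key step being that a single point blowup adds a $(-1)$-curve orthogonal (after passing to pullbacks) to the previous lattice. The paper phrases the inductive step more compactly as the orthogonal decomposition $\Div(\pi')=\mu^*\Div(\pi)\oplus\Z E$ in the pullback basis, avoiding the unipotent change-of-basis bookkeeping you carry out with strict transforms, but the two are equivalent.
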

\begin{proof}
  We argue by induction on the number of blowups in $\pi$.
  If $\pi=\pi_0$ is the simple blowup of $0\in\A^2$, then 
  $\Div(\pi)=\Z E_0$ and $(E_0\cdot E_0)=-1$. 
  For the inductive step, suppose $\pi'=\pi\circ\mu$, where
  $\mu$ is the simple blowup of a closed point on $\pi^{-1}(0)$,
  resulting in an exceptional prime $E$. 
  Then we have an orthogonal decomposition 
  $\Div(\pi')=\mu^*\Div(\pi)\oplus\Z E$. 
  The result follows since $(E\cdot E)=-1$.

  Alternatively, we may view $\A^2$ as embedded in $\P^2$
  and $X_\pi$ accordingly embedded in a smooth compact
  surface $\bX_\pi$. The proposition can then be obtained
  as a consequence of the Hodge Index Theorem~\cite[p.364]{Hartshorne}
  and Poincar{\'e} Duality applied to the smooth rational 
  surface $\bX_\pi$.
\end{proof}
%
%
\subsubsection{Positivity}\label{S159}
It follows from Proposition~\ref{P203} that for any $i\in I$ 
there exists a unique divisor $\vE_i\in\Div(\pi)$ such that 
$(\vE_i\cdot E_i)=1$ and $(\vE_i\cdot E_j)=0$ for 
$j\ne i$. 

An exceptional divisor $Z\in\Div(\pi)$ is 
\emph{relatively nef}\footnote{The acronym ``nef'' is due to M.~Reid
 who meant it to stand for ``numerically eventually free'' although
 many authors refer to it as ``numerically effective''.} 
if $(Z\cdot E_i)\ge 0$ for all exceptional primes $E_i$. 
We see that the set of relatively nef divisors is a free semigroup 
generated by the $\vE_i$, $i\in I$. 
Similarly, the set of \emph{effective} divisors is  
a free semigroup generated by the $E_i$, $i\in I$.

Using the negativity of the intersection form and some 
elementary linear algebra, one shows that the divisors $\vE_i$ have strictly 
negative coefficients in the basis $(E_j)_{j\in I}$. Hence any
relatively nef divisor is antieffective.\footnote{A higher-dimensional 
  version of this result is known as the ``Negativity Lemma''
  in birational geometry: see~\cite[Lemma~3.39]{KollarMori}
  and also~\cite[Proposition~2.11]{BdFF}.}

We encourage the reader to explicitly construct the divisors $\vE_i$ 
using the procedure in the proof of Proposition~\ref{P203}. 
Doing this, one sees directly that $\vE_i$ is antieffective.
See also~\S\ref{S252}.
%
%
\subsubsection{Invariants of exceptional primes}\label{S249}
To any exceptional prime $E$ (or the associated divisorial valuation 
$\ord_E\in\hcV^*_0)$ we can associate two basic numerical
invariants $\a_E$ and $A_E$. We shall not directly use them
in this paper, but they seem quite fundamental and their
cousins at infinity (see~\S\ref{S151}) will be of great importance.

To define $\a_E$, pick a blowup $\pi\in\fB_0$ for which 
$E$ is an exceptional prime.
Above we defined the divisor $\vE=\vE_\pi\in\Div(\pi)$ by duality:
$(\vE_\pi\cdot E)=1$ and $(\vE_\pi\cdot F)=0$ for all 
exceptional primes $F\ne E$ of $\pi$.
Note that if $\pi'\in\fB_0$ dominates $\pi$,
then the divisor $\vE_{\pi'}\in\Div(\pi')$
is the pullback of $\vE_\pi$ under the morphism 
$X_{\pi'}\to X_\pi$. In particular, the self-intersection number 
\begin{equation*}
  \a_E:=\a(\ord_E):=(\vE\cdot\vE)
\end{equation*}
is an integer independent of the choice of $\pi$. Since $\vE$ is
antieffective, $\a_E\le -1$.

The second invariant is the 
\emph{log discrepancy} $A_E$.\footnote{The log discrepancy is called 
  \emph{thinness} in~\cite{valtree,pshsing,valmul,eigenval}.}
This is an important invariant in higher dimensional
birational geometry, see~\cite{Kollar}.
Here we shall use a definition adapted to our purposes.
Let $\omega$ be a nonvanishing regular 2-form on $\A^2$. 
If $\pi\in\fB_0$ is a blowup,  then $\pi^*\omega$ is a 
regular 2-form on $X_\pi$. For any exceptional 
prime $E$ of $\pi$ with associated divisorial valuation 
$\ord_E\in\hcV^*_0$, we define
\begin{equation}\label{e149}
  A_E:=A(\ord_E):=1+\ord_E(\pi^*\omega).
\end{equation}
Note that $\ord_E(\pi^*\omega)$ is simply the order of 
vanishing along $E$ of the Jacobian determinant of $\pi$.
The log discrepancy $A_E$ is a positive integer 
whose value does not depend on the choice of $\pi$ or $\omega$. 
A direct calculation shows that $A(\ord_0)=2$.
%
%
\subsubsection{Ideals and log resolutions} 
A \emph{log resolution} of a primary ideal $\fa\subseteq R$
is a blowup $\pi\in\fB_0$ such that the ideal sheaf
$\fa\cdot\cO_{X_\pi}$ on $X_\pi$ is locally principal:
\begin{equation}\label{e159}
  \fa\cdot\cO_{X_\pi}=\cO_{X_\pi}(Z)
\end{equation}
for some exceptional divisor $Z=Z_\pi(\fa)\in\Div(\pi)$.
This means that the pullback of the ideal $\fa$ to 
$X_\pi$ is locally generated by a single monomial
in coordinates defining the exceptional primes.
It is an important basic fact that 
any primary ideal $\fa\subseteq R$
admits a log resolution.

If $\pi$ is a log resolution of $\fa$ and 
$\pi'=\pi\circ\mu\ge\pi$,
then $\pi'$ is also a log resolution of $\fa$ and 
$Z_{\pi'}(\fa)=\mu^*Z_\pi(\fa)$.
\begin{Example}\label{E209}
  The ideal 
  $\fa=(z_2^2-z_1^3,z_1^2z_2)$ 
  admits a log resolution that is a
  composition of four point blowups.
  Each time we blow up the base locus 
  of the strict transform of $\fa$. 
  The first blowup is at the origin.
  In the terminology of~\S\ref{S274}, the second and fourth
  blowups occur at free points whereas the third blowup is at a
  satellite point.
  See Figure~\ref{F205}.
\end{Example}
\begin{figure}[ht]
\includegraphics{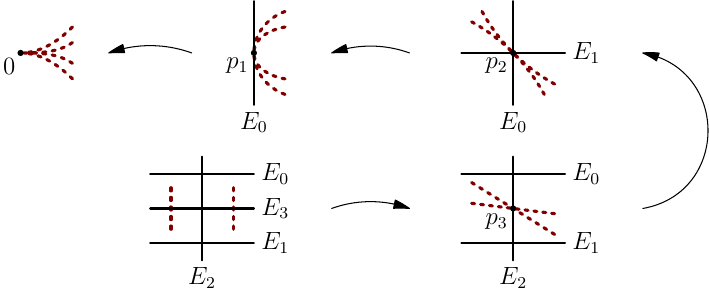}
\caption{A log resolution of the primary ideal
  $\fa=(z_2^2-z_1^3,z_1^2z_2)$.
  The dotted curves show the strict transforms of curves of
  the form $C_a=\{z_2^2-z_1^3=az_1^2z_2\}$ for two different values
  of $a\in K^*$.
  The first blowup is the blowup of the 
  origin; then we successively blow up
  the intersection of the exceptional divisor with the 
  strict transform of the curves $C_a$.
  In the terminology of~\S\ref{S274}, the second and fourth
  blowups occur at free points whereas the third blowup is at a
  satellite point.
}\label{F205}
\end{figure}
%
%
\subsubsection{Ideals and positivity}\label{S215}
The line bundle $\cO_{X_\pi}(Z)$ on $X_\pi$ in~\eqref{e159} is
\emph{relatively base point free}, that is, it admits
a nonvanishing section at any point of $\pi^{-1}(0)$.
Conversely, if $Z\in\Div(\pi)$ is an exceptional divisor 
such that $\cO_{X_\pi}(Z)$ is relatively base point free, then
$Z=Z_\pi(\fa)$ for $\fa=\pi_*\cO_{X_\pi}(Z)$.

If a line bundle $\cO_{X_\pi}(Z)$ is relatively base point free, then 
its restriction to any exceptional prime $E$ 
is also base point free, implying 
$(Z\cdot E)=\deg(\cO_{X_\pi}(Z)|_E)\ge0$, 
so that $Z$ is relatively nef.
It is an important fact that the converse implication also holds:
\begin{Prop}\label{P202}
  If $Z\in\Div(\pi)$ is relatively nef, then the line bundle
  $\cO_{X_\pi}(Z)$ is relatively base point free.
\end{Prop}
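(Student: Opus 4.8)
The statement is local over $\A^2$ near the point $0$, so I would work on a fixed blowup $\pi\in\fB_0$ with exceptional primes $E_i$, $i\in I$, and a relatively nef $Z=\sum_i a_iE_i\in\Div(\pi)$. Recall from~\S\ref{S159} that relative nefness forces $Z$ to be antieffective, i.e. $a_i\le 0$ for all $i$; this sign information will be used repeatedly. The goal is to produce, at each closed point $\xi\in\pi^{-1}(0)$, a section of $\cO_{X_\pi}(Z)$ that does not vanish at $\xi$. Equivalently, writing $\fm_\xi$ for the maximal ideal at $\xi$, I must show the evaluation map $H^0(U,\cO_{X_\pi}(Z))\to\cO_{X_\pi}(Z)\otimes\kappa(\xi)$ is surjective for $U$ a suitable neighborhood of $\pi^{-1}(0)$ — and since everything is supported over the single point $0\in\A^2$, it is enough to handle it after pushing forward, i.e. to show that the coherent sheaf $\pi_*\cO_{X_\pi}(Z)$ generates $\cO_{X_\pi}(Z)$ relatively over $\A^2$.

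\textbf{Main steps.} First I would reduce to the case where $\pi=\pi'\circ\mu$ with $\mu$ the blowup of a single closed point $\xi_0\in\pi'^{-1}(0)$ creating a last exceptional prime $E$, and argue by induction on the number of blowups, the base case $\pi=\pi_0$ being trivial since $\cO_{X_{\pi_0}}(Z)$ with $Z=a_0E_0$, $a_0\le 0$, restricts to $\cO_{\P^1}(-a_0)$ on $E_0\simeq\P^1$ and is clearly globally generated. For the inductive step, I would split $Z=\mu^*Z'+bE$ where $Z'\in\Div(\pi')$; the condition $(Z\cdot E)\ge 0$ together with $(\mu^*Z'\cdot E)=0$ and $(E\cdot E)=-1$ gives $b\le 0$. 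One then checks that $Z'$ is itself relatively nef on $X_{\pi'}$ — here the projection formula $(\mu^*Z'\cdot F')=(Z'\cdot\mu_*F')$ and the fact that $\mu_*$ maps exceptional primes of $\pi$ to exceptional primes of $\pi'$ (or to zero, for $E$) do the bookkeeping. By induction $\cO_{X_{\pi'}}(Z')$ is relatively base point free, hence so is its pullback $\mu^*\cO_{X_{\pi'}}(Z')=\cO_{X_\pi}(\mu^*Z')$. It then remains to twist by $\cO_{X_\pi}(bE)$ with $b\le 0$: the key local computation is that near a point of $E$, if $t$ is a local equation for $E$, multiplication by $t^{-b}$ (a section of $\cO(bE)$ since $b\le 0$... careful: $\cO(bE)$ for $b\le 0$ is the ideal sheaf $\cO(-|b|E)$, which has the \emph{section} $1$ viewed as vanishing to order $|b|$) realizes $\cO_{X_\pi}(Z)$ as a subsheaf of $\cO_{X_\pi}(\mu^*Z')$, and one must exhibit non-vanishing sections at both the points of $E$ and the points of $\mu^*Z'$'s support away from $E$. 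The cleanest route is the Leray/projection-formula identity $R^q\mu_*\cO_{X_\pi}(-jE)=0$ for $q>0$, $j\ge 0$ (which holds because $\cO_E(-jE|_E)=\cO_{\P^1}(j)$ has no higher cohomology), giving $\mu_*\cO_{X_\pi}(Z)=\cO_{X_{\pi'}}(Z')$, so sections of $\cO_{X_\pi}(Z)$ over a neighborhood of $\pi^{-1}(0)$ are exactly sections of $\cO_{X_{\pi'}}(Z')$ over a neighborhood of $\pi'^{-1}(0)$; those generate $\cO_{X_{\pi'}}(Z')$ by induction, and pulling back one sees they generate $\cO_{X_\pi}(Z)$ at every point not on $E$. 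Finally, at a point $\xi\in E$ one restricts to $E$: $Z|_E=(\mu^*Z'+bE)|_E$ has degree $(Z\cdot E)\ge 0$, and a section of $\cO_{X_{\pi'}}(Z')$ nonvanishing at $\mu(\xi)$ pulls back to a section nonvanishing somewhere on $E$; combined with sections obtained by multiplying by local equations of $E$ one gets base point freeness on all of $E$. (Alternatively one invokes the Hodge Index / Poincaré duality picture from the proof of Proposition~\ref{P203}, embedding $X_\pi$ in a smooth projective rational surface and citing relative Kawamata–Viehweg or the classical fact that a nef divisor with no higher direct images is relatively free.)

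\textbf{Expected obstacle.} The genuinely delicate point is the behavior at the exceptional prime $E$ created last, and more precisely controlling base points that could hide at satellite points — intersections $E\cap E_i$ — where two local equations interact. The induction handles the ``horizontal'' direction (generation away from $E$) cleanly, but showing $\cO_{X_\pi}(Z)|_E$ is globally generated as a line bundle on $E\simeq\P^1$ of degree $(Z\cdot E)\ge 0$ requires knowing that the restriction map $H^0(\text{nbhd},\cO(Z))\to H^0(E,\cO(Z)|_E)$ is surjective, which is exactly where the vanishing $R^1\mu_*\cO_{X_\pi}(Z-E)=R^1\mu_*\cO_{X_\pi}(\mu^*Z'+(b-1)E)=0$ is needed. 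Establishing that vanishing — via the standard exact sequence $0\to\cO(-jE)\to\cO(-(j-1)E)\to\cO_E(j-1)\to 0$ and descending induction on $j$ — is the technical heart; everything else is linear algebra on $\Div(\pi)$ and the projection formula already recorded in~\S\ref{S243} and~\S\ref{S212}.
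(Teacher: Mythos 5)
Your approach is genuinely different from the paper's: you attempt the cohomological route (essentially Lipman's argument, inducting on the number of blowups and invoking vanishing of higher direct images), whereas the paper gives a direct geometric construction — it reduces to $Z=\vE$, chooses two formal curves through distinct \emph{free} points of $E$ intersecting $E$ transversely, shows via unimodularity of the intersection form that $\pi^*C_i=\tC_i-\vE$, and then checks that the resulting primary ideal $\fa\subseteq\cO_{\A^2,0}$ satisfies $\fa\cdot\cO_{X_\pi}=\cO_{X_\pi}(\vE)$ because the two strict transforms are disjoint. The paper's construction avoids sheaf cohomology entirely, which is exactly why they prefer it to Lipman's proof (this is stated explicitly just before the proof).

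However, as written, your argument has a genuine gap at the central step. You assert that $R^q\mu_*\cO_{X_\pi}(-jE)=0$ for $q>0$, $j\ge 0$ — which is true — and conclude ``$\mu_*\cO_{X_\pi}(Z)=\cO_{X_{\pi'}}(Z')$.'' That conclusion is false when $b<0$. Since $Z=\mu^*Z'+bE$ with $b\le 0$, the projection formula gives
\begin{equation*}
  \mu_*\cO_{X_\pi}(Z)=\cO_{X_{\pi'}}(Z')\otimes\mu_*\cO_{X_\pi}(bE)
  =\cO_{X_{\pi'}}(Z')\otimes\fm_{\xi_0}^{-b},
\end{equation*}
which is a proper subsheaf of $\cO_{X_{\pi'}}(Z')$ whenever $b<0$. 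The vanishing of $R^1\mu_*$ controls \emph{higher} direct images; it does not promote $\mu_*$ to the full line bundle. This defect propagates: a section of $\cO_{X_{\pi'}}(Z')$ that generates at a point $\mu(\xi)\in\pi'^{-1}(0)$ close to $\xi_0$ need not lie in $\fm_{\xi_0}^{-b}$, so it does not pull back to a section of $\cO_{X_\pi}(Z)$. Thus even your ``easy'' half (generation off $E$) does not follow from the inductive hypothesis as stated — you would need sections of $\cO(Z')$ that simultaneously generate at a chosen point of $\pi'^{-1}(0)$ and vanish to order $\ge -b$ at $\xi_0$, and base-point-freeness of $\cO(Z')$ alone does not produce those. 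The right inductive invariant for this route is a cohomological one: one should prove $R^1\pi_*\cO_{X_\pi}(Z)=0$ for $Z$ relatively nef and then deduce base-point-freeness from the restriction sequences $0\to\cO(Z-E_i)\to\cO(Z)\to\cO_{E_i}(Z)\to 0$, which is precisely Lipman's Proposition 12.1(ii). Your write-up gestures at this at the end, but the induction as set up does not carry that vanishing, so the argument as it stands does not close.
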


Since $0\in\A^2$ is a trivial example of 
a rational singularity, Proposition~\ref{P202} is merely 
a special case of a result by Lipman, see~\cite[Proposition~12.1~(ii)]{Lipman}.
The proof in \loccit uses sheaf cohomology
as well as the Zariski-Grothendieck theorem on formal functions,
techniques that will not be exploited elsewhere in the paper. 
Here we outline a more elementary proof, 
taking advantage of $0\in\A^2$ being a smooth point
and working over an algebraically closed ground field.
\begin{proof}[Sketch of proof of Proposition~\ref{P202}]
  By the structure of the semigroup of relatively nef divisors, 
  we may assume $Z=\vE$ for an exceptional prime $E$ of $\pi$. 
  Pick two distinct free points 
  $\xi_1$, $\xi_2$ on $E$ and formal curves $\tC_i$
  at $\xi_i$, $i=1,2$, intersecting $E$ transversely.
  Then $C_i:=\pi(\tC_i)$, $i=1,2$ are formal curves at $0\in\A^2$
  satisfying $\pi^*C_i=\tC_i+G_i$, where $G_i\in\Div(\pi)$
  is an exceptional divisor. 
  Now $(\pi^*C_i\cdot F)=0$ for every exceptional prime $F$ of 
  $\pi$, so $(G_i\cdot F)=-(\tC_i\cdot F)=-\delta_{EF}=(-\vE\cdot F)$.
  Since the intersection pairing on $\Div(\pi)$ is nondegenerate, 
  this implies $G_i=-\vE$, that is,  $\pi^*C_i=\tC_i-\vE$ for $i=1,2$. 

  Pick $\phi_i\in\widehat{\cO}_{\A^2,0}$ defining $C_i$.
  Then the ideal $\hfa$ generated by $\phi_1$ and $\phi_2$
  is primary so the ideal $\fa:=\hfa\cap\cO_{\A^2,0}$ is also primary and
  satisfies $\fa\cdot\widehat{\cO}_{\A^2,0}=\hfa$. 
  Since $\ord_F(\fa)=\ord_F(\phi_i)=-\ord_F(\vE)$, $i=1,2$, 
  for any exceptional prime $F$ and the
  (formal) curves $\tC_i$ are disjoint, it follows that 
  $\fa\cdot\cO_{X_\pi}=\cO_{X_\pi}(\vE)$ as desired.
\end{proof}
%
%
%
%
\subsection{Dual graphs and fans}\label{S217}
To a blowup $\pi\in\fB_0$ we can associate two basic 
combinatorial objects, equipped with additional structure.
%
%
\subsubsection{Dual graph}
First we have the classical notion of the \emph{dual graph}
$\Delta(\pi)$. This is an abstract simplicial complex of dimension
one.  Its vertices correspond to exceptional primes of $\pi$ and 
its edges to proper intersections between exceptional primes.
In the literature one often labels each vertex with the self-intersection
number of the corresponding exceptional prime.
We shall not do so here since this number is not an invariant of
the corresponding divisorial valuation but
depends also on the blowup $\pi$. 
From the point of view of these notes, it is more 
natural to use invariants such as the ones in~\S\ref{S249}.

The dual graph $\Delta(\pi)$ is connected and simply connected.
This can be seen using the decomposition of $\pi$ as a
composition of point blowups, see~\S\ref{S157}.
Alternatively, the connectedness of $\Delta(\pi)$ follows
from Zariski's Main Theorem~\cite[p.280]{Hartshorne}
and the simple connectedness can be deduced from
sheaf cohomology considerations, see~\cite[Corollary~7]{Artin}.

See Figure~\ref{F207} for an example of a dual graph.
\begin{figure}[ht]
\includegraphics{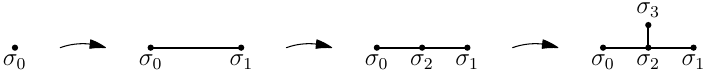}
\caption{The dual graphs of the blowups leading up to 
  the log resolution of the primary ideal 
  $\fa=(z_2^2-z_1^3,z_1^2z_2)$ described in Example~\ref{E209}
  and depicted in Figure~\ref{F205}.
  Here $\sigma_i$ is the vertex corresponding to $E_i$.
}\label{F207}
\end{figure}
%
%
\subsubsection{Dual fan}\label{S231}
While the dual graph $\Delta(\pi)$ is a natural object, the 
\emph{dual fan} $\hDelta(\pi)$ is arguably more canonical. 
To describe it, we use basic notation and terminology from 
toric varieties, see~\cite{KKMS,Fulton,Oda}.\footnote{We shall not, however, 
  actually consider the toric variety defined by the fan $\hDelta(\pi)$.}
Set 
\begin{equation*}
  N(\pi):=\Hom(\Div(\pi),\Z).
\end{equation*}
If we label the exceptional primes $E_i$, $i\in I$, then 
we can write $N(\pi)=\bigoplus_{i\in I}\Z e_i\simeq\Z^I$
with $e_i$ satisfying $\langle e_i,E_j\rangle=\delta_{ij}$.
Note that if we identify $N(\pi)$ with $\Div(\pi)$ using the 
unimodularity of the intersection product (Proposition~\ref{P203}),
then $e_i$ corresponds to the divisor $\vE_i$ in~\S\ref{S159}.

Set $N_\R(\pi):=N(\pi)\otimes_\Z\R\simeq\R^I$.
The one-dimensional cones in $\hDelta(\pi)$ are then 
of the form $\hsigma_i:=\R_+e_i$, $i\in I$, and the two-dimensional
cones are of the form $\hsigma_{ij}:=\R_+e_i+\R_+e_j$, 
where $i,j\in I$ are such that $E_i$ and $E_j$ intersect properly.
Somewhat abusively, we will write $\hDelta(\pi)$ 
both for the fan and for its support (which is a subset of $N_\R(\pi)$).

Note that the dual fan $\hDelta(\pi)$ is naturally a cone over
the dual graph $\Delta(\pi)$. In~\S\ref{S136} we shall see how to 
embed the dual graph inside the dual fan.

A point $t\in\hDelta(\pi)$ is \emph{irrational} if
$t=t_1e_1+t_2e_2$ with $t_i>0$ and $t_1/t_2\not\in\Q$;
otherwise $t$ is \emph{rational}. Note that the rational
points are always dense in $\hDelta(\pi)$. The irrational points
are also dense except if $\pi=\pi_0$, the simple blowup
of $0\in\A^2$.
%
%
\subsubsection{Free and satellite blowups}\label{S157} 
Using the factorization of birational surface maps into simple
point blowups, we can understand the structure of the dual
graph and fan of a blowup $\pi\in\fB_0$.

First, when $\pi=\pi_0$ is a single blowup of the origin, there is
a unique exceptional prime $E_0$, so $\hDelta(\pi_0)$ 
consists of a single, one-dimensional cone 
$\hsigma_0=\R_+e_0$ and $\Delta(\pi)=\{\sigma_0\}$ is 
a singleton.

Now suppose $\pi'$ is obtained from $\pi$ by blowing 
up a closed point $\xi\in\pi^{-1}(0)$.
Let $E_i$, $i\in I$ be the exceptional primes of $\pi$.
Write $I=\{1,2,\dots,n-1\}$, where $n\ge 2$.
If $E_n\subseteq X_{\pi'}$ is the preimage of $\xi$, then the exceptional
primes of $\pi'$ are $E_i$, $i\in I'$, where $I'=\{1,2,\dots,n\}$.
Recall that we are identifying an exceptional prime of $\pi$
with its strict transform in $X_{\pi'}$.

To see what happens in detail,
first suppose $\xi$ is a free point, belonging 
to a unique exceptional prime of $\pi$, 
say $E_1$. In this case, 
the dual graph $\Delta(\pi')$ is obtained from $\Delta(\pi)$ 
by connecting a new vertex $\sigma_n$ to $\sigma_1$.
See Figure~\ref{F106}.

If instead $\xi$ is a satellite point,
belonging to two distinct exceptional primes of $\pi$,
say $E_1$ and $E_2$, then 
we obtain $\Delta(\pi')$ from $\Delta(\pi)$ 
by subdividing the edge $\sigma_{12}$
into two edges $\sigma_{1n}$ and $\sigma_{2n}$.
Again see Figure~\ref{F106}.
\begin{figure}[ht]
  \begin{center}
    \includegraphics{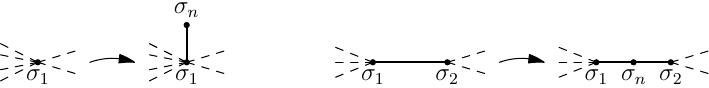}
 \end{center}
  \caption{Behavior of the dual graph under a single blowup.
    The left part of the picture illustrates the blowup of a free
   point on $E_1$, creating a new vertex $\sigma_n$ connected 
   to the vertex $\sigma_1$.
   The right part of the picture illustrates the blowup of the satellite
   point $E_1\cap E_2$, creating a new vertex $\sigma_n$ and
   subdividing the segment $\sigma_{12}$ into two segments 
   $\sigma_{1n}$ and $\sigma_{2n}$.
}\label{F106}
\end{figure}
%
%
\subsubsection{Integral affine structure}\label{S232}
We define the \emph{integral affine structure} on $\hDelta(\pi)$
to be the lattice 
\begin{equation*}
  \Aff(\pi):=\Hom(N(\pi),\Z)\simeq\Z^I
\end{equation*}
and refer to its elements as integral affine functions. 
By definition,  $\Aff(\pi)$ can be identified with the group $\Div(\pi)$
of exceptional divisors on $X_\pi$.
%
%
\subsubsection{Projections and embeddings}\label{S233}
Consider blowups $\pi,\pi'\in\fB_0$ with $\pi\le\pi'$,
say $\pi'=\pi\circ\mu$, with $\mu:X_{\pi'}\to X_\pi$
a birational morphism.
Then $\mu$ gives rise to an injective homomorphism 
$\mu^*:\Div(\pi)\to\Div(\pi')$ and we let
\begin{equation*}
  r_{\pi\pi'}:N(\pi')\to N(\pi)
\end{equation*}
denote its transpose. 
It is clear that $r_{\pi\pi'}\circ r_{\pi'\pi''}=r_{\pi\pi''}$ when $\pi\le\pi'\le\pi''$.
\begin{Lemma}\label{L210}
  Suppose $\pi,\pi'\in\fB_0$ and $\pi\le\pi'$.  Then:
  \begin{itemize}
  \item[(i)]
    $r_{\pi\pi'}(\hDelta(\pi'))=\hDelta(\pi)$;
  \item[(ii)]
    any irrational point in $\hDelta(\pi)$ has a unique preimage 
    in $\hDelta(\pi')$;
  \item[(iii)]
    if $\hsigma'$ is a $2$-dimensional cone in $\hDelta(\pi)$ then
    either $r_{\pi\pi'}(\hsigma')$ is a one-dimensional cone in $\hDelta(\pi)$,
    or $r_{\pi\pi'}(\hsigma')$ is a $2$-dimensio\-nal cone contained in 
    a $2$-dimensional cone $\hsigma$ of $\hDelta(\pi)$.
    In the latter case, the restriction of $r_{\pi\pi'}$ to $\hsigma'$
    is unimodular in the sense that 
    $r_{\pi\pi'}^*\Aff(\pi)|_{\hsigma'}=\Aff(\pi')|_{\hsigma'}$.
  \end{itemize}
\end{Lemma}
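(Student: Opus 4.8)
The plan is to reduce everything to the case where $\pi' = \pi \circ \mu$ with $\mu$ a \emph{single} point blowup, since both $r_{\pi\pi'}$ and the fan structure behave transitively under composition (using $r_{\pi\pi'}\circ r_{\pi'\pi''} = r_{\pi\pi''}$), and the class of statements in (i)--(iii) is visibly stable under composing such reductions. So from now on assume $\mu$ is the blowup of a closed point $\xi \in \pi^{-1}(0)$ creating a new exceptional prime $E_n$, with exceptional primes $E_1,\dots,E_{n-1}$ for $\pi$ and $E_1,\dots,E_n$ for $\pi'$, identifying each $E_i$ ($i<n$) with its strict transform. I would then split into the two cases of \S\ref{S157}: $\xi$ a free point (lying only on $E_1$, say) or a satellite point (lying on $E_1 \cap E_2$, say).

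The computational heart is to write down $\mu^*: \Div(\pi) \to \Div(\pi')$ explicitly and transpose it. In the free case, $\mu^* E_1 = E_1 + E_n$ and $\mu^* E_i = E_i$ for $i \ne 1$, $i < n$; dualizing, $r_{\pi\pi'}(e_i) = e_i$ for $i<n$ and $r_{\pi\pi'}(e_n) = e_1$. In the satellite case, $\mu^* E_1 = E_1 + E_n$, $\mu^* E_2 = E_2 + E_n$, $\mu^* E_i = E_i$ otherwise; dualizing, $r_{\pi\pi'}(e_i)=e_i$ for $i<n$ and $r_{\pi\pi'}(e_n) = e_1 + e_2$. With these formulas in hand, (i) is immediate: in the free case the cone $\hsigma_{1n} = \R_+e_1 + \R_+e_n$ of $\hDelta(\pi')$ maps onto the ray $\hsigma_1 = \R_+e_1$ and the ray $\hsigma_n$ maps onto $\hsigma_1$, while every other cone of $\hDelta(\pi')$ is a cone of $\hDelta(\pi)$ fixed by $r_{\pi\pi'}$; in the satellite case the two cones $\hsigma_{1n}$, $\hsigma_{2n}$ obtained by subdividing $\hsigma_{12}$ map onto the two halves of $\hsigma_{12}$, reassembling to all of $\hsigma_{12}$, and again the other cones are untouched. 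For (ii), a point $t \in \hDelta(\pi)$ lying in the interior of $\hsigma_{12}$ (the satellite case; the free case is analogous, or trivial since the ray $\hsigma_1$ is shared) has preimages only in $\hsigma_{1n} \cup \hsigma_{2n}$, and on each of these $r_{\pi\pi'}$ is injective with images meeting only along $\R_+e_n$, which maps to the \emph{rational} ray $\R_+(e_1+e_2)$; so an irrational $t$ has a unique preimage, lying in the interior of one of the two subcones.

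For (iii) I would check directly from the above matrices that on each two-dimensional cone $\hsigma'$ of $\hDelta(\pi')$ the map $r_{\pi\pi'}$ is represented, in the lattice bases $(e_i,e_j)$ of $N(\pi')|_{\hsigma'}$ and the corresponding basis of $N(\pi)$, by a matrix that is either rank one (the collapsing case: e.g. $e_n \mapsto e_1$, $e_1 \mapsto e_1$ sends $\hsigma_{1n}$ onto $\hsigma_1$ in the free case) or unimodular onto its image $\hsigma$ — in the satellite case $\hsigma_{1n} \to \hsigma_{12}$ is given by $e_1 \mapsto e_1$, $e_n \mapsto e_1 + e_2$, which has determinant $1$ in the bases $(e_1,e_n)$ and $(e_1,e_2)$, hence its transpose identifies $\Aff(\pi)|_{\hsigma}$ with $\Aff(\pi')|_{\hsigma'}$; and similarly for $\hsigma_{2n}$. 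Then the general case follows by composing single blowups, noting that the collapsing and unimodular alternatives are each preserved under composition (a composite of unimodular maps is unimodular; once a cone collapses to lower dimension it stays collapsed). The main obstacle is purely bookkeeping: being careful that the identification $N(\pi) \simeq \Div(\pi)$ via the intersection form (Proposition~\ref{P203}) is \emph{not} the one that makes $r_{\pi\pi'}$ transpose to $\mu^*$ — rather $r_{\pi\pi'}$ is the transpose of $\mu^*$ with respect to the \emph{tautological} pairing $\langle e_i, E_j\rangle = \delta_{ij}$ — so the clean formulas above are for $r_{\pi\pi'}$ acting on the $e_i$, and one should resist conflating the two identifications when verifying unimodularity of $\Aff$.
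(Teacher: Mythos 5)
Your proposal is correct and follows essentially the same route as the paper: reduce to a single point blowup, split into the free and satellite cases, derive the formulas $r_{\pi\pi'}(e'_i)=e_i$ for $i<n$ and $r_{\pi\pi'}(e'_n)=e_1$ (free) or $e_1+e_2$ (satellite) by transposing $\mu^*$, and then read off (i)--(iii) cone by cone. Your extra care about which pairing makes $r_{\pi\pi'}$ the transpose of $\mu^*$ is a correct and worthwhile clarification, but the argument is the same as the paper's.
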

We use the following notation.
If $e_i$ is a basis element of $N(\pi)$ associated to an exceptional prime $E_i$,
then $e'_i$ denotes the basis element of $N(\pi')$ associated to 
the strict transform of $E_i$.
\begin{proof}
  It suffices to treat the case when $\pi'=\pi\circ\mu$, where $\mu$ is 
  a single blowup of a closed point $\xi\in\pi^{-1}(0)$. 
  As in~\S\ref{S157} we let
  $E_i$, $i\in I$ be the exceptional primes of $\pi$.
  Write $I=\{1,2,\dots,n-1\}$, where $n\ge 2$.
  If $E_n\subseteq X_{\pi'}$ is the preimage of $\xi$, then the exceptional
  primes of $\pi'$ are $E_i$, $i\in I'$, where $I'=\{1,2,\dots,n\}$.
  
  First suppose $\xi\in E_1$ is a free point.
  Then $r_{\pi\pi'}(e'_i)=e_i$ for $1\le i<n$ and $r_{\pi\pi'}(e'_n)=e_1$.
  Conditions~(i)-(iii) are immediately verified:
  $r_{\pi\pi'}$ maps the cone $\hsigma'_{1n}$ onto $\hsigma_1$
  and  maps all other cones $\hsigma'_{ij}$ onto the corresponding
  cones $\hsigma_{ij}$, preserving the integral affine structure.

  Now suppose $\xi\in E_1\cap E_2$ is a satellite point.
  The linear map $r_{\pi\pi'}$ is then determined by 
  $r_{\pi\pi'}(e'_i)=e_i$ for $1\le i<n$ and  $r_{\pi\pi'}(e'_n)=e_1+e_2$. 
  We see that the cones $\hsigma_{1n}$ and $\hsigma_{2n}$ in $\hDelta(\pi')$ 
  map onto the subcones 
  $\R_+e_1+\R_+(e_1+e_2)$ and $\R_+e_2+\R_+(e_1+e_2)$, respectively,
  of the cone $\hsigma_{12}$ in $\hDelta(\pi)$.
  Any other cone $\hsigma'_{ij}$ of $\hDelta(\pi')$ is mapped onto the 
  corresponding cone $\hsigma_{ij}$ of $\hDelta(\pi)$,
  preserving the integral affine structure.
  Conditions~(i)--(iii) follow.
\end{proof}
Using Lemma~\ref{L210} we can show that $r_{\pi\pi'}$ admits a 
natural one-side inverse.
\begin{Lemma}\label{L204}
  Let $\pi,\pi'\in\fB_0$ be as above. Then 
  there exists a unique continuous, homogeneous map 
  $\iota_{\pi'\pi}:\hDelta(\pi)\to\hDelta(\pi')$ such that:
  \begin{itemize}
  \item[(i)] 
    $r_{\pi\pi'}\circ\iota_{\pi'\pi}=\id$ on $\hDelta(\pi)$;
  \item[(ii)]
    $\iota_{\pi'\pi}(e_i)=e'_i$ for all $i$.
  \end{itemize}  
  Further, a two-dimensional cone $\hsigma'$ in 
  $\hDelta(\pi')$ is contained in the image of $\iota_{\pi'\pi}$ 
  iff $r_{\pi\pi'}(\hsigma')$ is two-dimensional.
\end{Lemma}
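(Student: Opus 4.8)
The strategy is to reduce immediately to the case of a single point blowup, exactly as in the proof of Lemma~\ref{L210}, since both the map $r_{\pi\pi'}$ and the desired section $\iota_{\pi'\pi}$ are compatible with composition: if $\pi\le\pi'\le\pi''$, then setting $\iota_{\pi''\pi}:=\iota_{\pi''\pi'}\circ\iota_{\pi'\pi}$ gives a section for $r_{\pi\pi''}$, and uniqueness of the single-blowup sections will propagate uniqueness to the general case (one checks that condition~(ii) is preserved under composition, using that the strict transform of $E_i$ in $X_{\pi''}$ is the strict transform of its strict transform in $X_{\pi'}$). So I would assume $\pi'=\pi\circ\mu$ with $\mu$ the blowup of a single closed point $\xi\in\pi^{-1}(0)$, and keep the notation of Lemma~\ref{L210}: exceptional primes $E_i$, $i\in I=\{1,\dots,n-1\}$, for $\pi$, and $E_i$, $i\in I'=\{1,\dots,n\}$, for $\pi'$.

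Next I would define $\iota_{\pi'\pi}$ explicitly on the two types of cones, mirroring the two cases in Lemma~\ref{L210}. If $\xi\in E_1$ is a free point, then $r_{\pi\pi'}(e'_i)=e_i$ for $i<n$ and $r_{\pi\pi'}(e'_n)=e_1$; I set $\iota_{\pi'\pi}(e_i)=e'_i$ for all $i\in I$ and extend homogeneously and linearly on each cone $\hsigma_{ij}$ of $\hDelta(\pi)$ — this lands in the corresponding cone $\hsigma'_{ij}$ of $\hDelta(\pi')$ because every pair of primes meeting in $X_\pi$ still meets in $X_{\pi'}$ after taking strict transforms. If $\xi\in E_1\cap E_2$ is a satellite point, then $r_{\pi\pi'}(e'_n)=e_1+e_2$, and the cone $\hsigma_{12}$ of $\hDelta(\pi)$ is the union of the two subcones $\R_+e'_1+\R_+(e'_1+e'_2)$ and $\R_+e'_2+\R_+(e'_1+e'_2)$ viewed via $r_{\pi\pi'}$; here there is genuinely no way to send $\hsigma_{12}$ into $\hDelta(\pi')$ while being a section, since $e_1+e_2$ is not a ray of $\hDelta(\pi)$ but $\hsigma_{1n},\hsigma_{2n}$ are distinct cones. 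The right move is: on the cone $\hsigma_{12}$, define $\iota_{\pi'\pi}$ to be the map sending $e_1\mapsto e'_1$, $e_2\mapsto e'_2$, extended linearly — note $\iota_{\pi'\pi}(t_1e_1+t_2e_2)=t_1e'_1+t_2e'_2$ does NOT lie in $\hDelta(\pi')$ unless one of $t_1,t_2$ is zero, because the cone $\R_+e'_1+\R_+e'_2$ has been subdivided. This shows the image of $\iota_{\pi'\pi}$ cannot contain the interior of such a cone, which is precisely the content of the last assertion: a two-dimensional cone $\hsigma'$ of $\hDelta(\pi')$ lies in the image iff $r_{\pi\pi'}(\hsigma')$ is two-dimensional, i.e.\ iff $\hsigma'\ne\hsigma'_{1n},\hsigma'_{2n}$ in the satellite case (and all of them in the free case).

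So the correct definition is to take $\iota_{\pi'\pi}$ defined \emph{cone by cone} on $\hDelta(\pi)$: on each two-dimensional cone $\hsigma$ of $\hDelta(\pi)$ whose preimage structure under $r_{\pi\pi'}$ is ``unsubdivided'' (the free case, and all cones except $\hsigma_{12}$ in the satellite case), send the generators to the corresponding primed generators and extend linearly; on the one subdivided cone $\hsigma_{12}$ in the satellite case, I would instead send $t_1e_1+t_2e_2$ to the point of $\hDelta(\pi')$ that $r_{\pi\pi'}$ maps to it — but that point is not unique as a set-theoretic preimage unless we make a choice, so in fact the clean statement is that $\iota_{\pi'\pi}$ exists and is continuous precisely because we can define it consistently: on $\hsigma_{12}$, map it identically onto $\hsigma_{12}$ regarded as $\R_+e_1+\R_+e_2\subseteq N_\R(\pi')$ via $e_i\mapsto e'_i$, which lands in $\hDelta(\pi')$ only on the boundary rays — hence $\hsigma_{12}$ contributes nothing to the image interior. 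I would then verify: (i) $r_{\pi\pi'}\circ\iota_{\pi'\pi}=\id$ by direct computation on generators in each case; (ii) $\iota_{\pi'\pi}(e_i)=e'_i$ by construction; continuity because the piecewise-linear pieces agree on shared rays $\hsigma_i$; homogeneity because each piece is linear. Uniqueness: any homogeneous continuous section must send each ray $\hsigma_i$ to $\hsigma'_i$ by~(ii) and then is forced to be linear on cones by the requirement $r_{\pi\pi'}\circ\iota_{\pi'\pi}=\id$ together with injectivity of $r_{\pi\pi'}$ on each two-dimensional cone of $\hDelta(\pi')$ not of the form $\hsigma'_{1n},\hsigma'_{2n}$. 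The main obstacle I anticipate is purely bookkeeping: getting the satellite-case image description exactly right and phrasing ``$\iota_{\pi'\pi}$ on the subdivided cone'' so that it is genuinely well-defined and continuous rather than multivalued — the content is trivial once one draws Figure~\ref{F106}, but the linear-algebra normalization of which subcone is which requires care. Everything else is a one-line check on the two explicit $2\times 2$ (or $n\times n$) matrices already written down in the proof of Lemma~\ref{L210}.
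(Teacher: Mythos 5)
Your reduction to a single point blowup and your treatment of the free case are fine and match the paper. The proposal breaks down at the satellite case, which is the whole content of the lemma. Your claim that ``there is genuinely no way to send $\hsigma_{12}$ into $\hDelta(\pi')$ while being a section'' is false, and the map you fall back on, $t_1e_1+t_2e_2\mapsto t_1e'_1+t_2e'_2$, does not take values in $\hDelta(\pi')$ at all (the cone $\R_+e'_1+\R_+e'_2$ is not in the fan after the blowup), so you have not produced a map with the stated codomain. The missing idea is that $\iota_{\pi'\pi}$ is only \emph{piecewise} linear on $\hsigma_{12}$, subordinate to the subdivision induced by $r_{\pi\pi'}$: since $r_{\pi\pi'}(e'_n)=e_1+e_2$, one sets
\begin{equation*}
  \iota_{\pi'\pi}(t_1e_1+t_2e_2)=
  \begin{cases}
    (t_1-t_2)e'_1+t_2e'_n &\text{if $t_1\ge t_2$,}\\
    (t_2-t_1)e'_2+t_1e'_n &\text{if $t_1\le t_2$,}
  \end{cases}
\end{equation*}
which lands in $\hsigma'_{1n}$ (resp.\ $\hsigma'_{2n}$), satisfies $r_{\pi\pi'}\circ\iota_{\pi'\pi}=\id$ by the computation $(t_1-t_2)e_1+t_2(e_1+e_2)=t_1e_1+t_2e_2$, and is continuous because the two branches agree on the diagonal $t_1=t_2$.

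Because you have the wrong picture of the section, you also apply the final assertion backwards. In the satellite case, $r_{\pi\pi'}(\hsigma'_{1n})=\R_+e_1+\R_+(e_1+e_2)$ and $r_{\pi\pi'}(\hsigma'_{2n})$ are both two-dimensional, and indeed both cones \emph{are} in the image of $\iota_{\pi'\pi}$ (they are exactly the images of the two halves of $\hsigma_{12}$). It is in the \emph{free} case that the new cone $\hsigma'_{1n}$ fails to lie in the image, since there $r_{\pi\pi'}(e'_n)=e_1$ and $r_{\pi\pi'}(\hsigma'_{1n})=\R_+e_1$ is one-dimensional. Your uniqueness argument also needs repair, since it presumes linearity on each cone of $\hDelta(\pi)$; the paper instead gets uniqueness from the density of irrational points in $\hDelta(\pi)$ (for $\pi\ne\pi_0$) together with Lemma~\ref{L210}~(ii), which says each irrational point has a unique preimage under $r_{\pi\pi'}$, and from condition~(ii) alone when $\pi=\pi_0$.
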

It follows easily from the uniqueness statement that 
$\iota_{\pi''\pi}=\iota_{\pi''\pi'}\circ\iota_{\pi'\pi}$
when $\pi\le\pi'\le\pi''$.
We emphasize that $\iota_{\pi'\pi}$ is only \emph{piecewise} linear
and not the restriction to $\hDelta(\pi)$ of a linear
map $N_\R(\pi)\to N_\R(\pi')$.
\begin{proof}[Proof of Lemma~\ref{L204}]
  Uniqueness is clear: when $\pi=\pi_0$ is the simple blowup
  of $0\in\A^2$, $\iota_{\pi'\pi}$ is determined by~(ii) and 
  when $\pi\ne\pi_0$, the irrational points are dense 
  in $\hDelta(\pi)$ and uniqueness is a consequence of
  Lemma~\ref{L210}~(ii).

  As for existence, it suffices to treat the case when
  $\pi'=\pi\circ\mu$, where $\mu$ is 
  a simple blowup of a closed point $\xi\in\pi^{-1}(0)$.
  
  When $\xi\in E_1$ is a free point, $\iota_{\pi'\pi}$ maps
  $e_i$ to $e'_i$ for $1\le i<n$ and 
  maps any cone $\hsigma_{ij}$ in $\hDelta(\pi)$
  onto the corresponding cone $\hsigma'_{ij}$ 
  in $\hDelta(\pi')$ linearly
  via $\iota_{\pi'\pi}(t_ie_i+t_je_j)=(t_ie'_i+t_je'_j)$.

  If instead $\xi\in E_1\cap E_2$ is a satellite point,
  then $\iota_{\pi'\pi}(e_i)=e'_i$ for $1\le i<n$.
  Further, $\iota_{\pi'\pi}$ is piecewise linear on the cone 
  $\hsigma_{12}$:
  \begin{equation}\label{e216}
    \iota_{\pi'\pi}(t_1e_1+t_2e_2)=
    \begin{cases}
      (t_1-t_2)e'_1+t_2e'_n &\text{if $t_1\ge t_2$}\\
      (t_2-t_1)e'_2+t_1e'_n &\text{if $t_1\le t_2$}\\
    \end{cases}
  \end{equation}
  and maps any other two-dimensional cone 
  $\hsigma_{ij}$ onto $\hsigma'_{ij}$ linearly
  via $\iota_{\pi'\pi}(t_ie_i+t_je_j)=(t_ie'_i+t_je'_j)$.
\end{proof}
%
%
\subsubsection{Embedding the dual graph in the dual fan}\label{S136}
We have noted that $\hDelta(\pi)$ can be viewed as a cone over $\Delta(\pi)$.
Now we embed $\Delta(\pi)$ in $\hDelta(\pi)\subseteq N_\R$,
in a way that remembers the maximal ideal $\fm_0$.
For $i\in I$ define an integer $b_i\ge 1$ by
\begin{equation*}
  b_i:=\ord_{E_i}(\fm_0),
\end{equation*}
where $\ord_{E_i}$ is the divisorial valuation given by 
order of vanishing along $E_i$.
There exists a unique function $\f_0\in\Aff(\pi)$
such that $\f_0(e_i)=b_i$. It is the integral affine
function corresponding to the exceptional divisor 
$-Z_0\in\Div(\pi)$, where $Z_0=-\sum_{i\in I}b_iE_i$.
Note that $\pi$ is a log resolution of the maximal ideal $\fm_0$
and that $\fm_0\cdot\cO_{X_\pi}=\cO_{X_\pi}(Z_0)$.

We now define $\Delta(\pi)$ as the subset of $\hDelta(\pi)$
given by $\f_0=1$. In other words, the vertices of $\Delta(\pi)$
are of the form 
\begin{equation*}
  \sigma_i:=\hsigma_i\cap\Delta(\pi)= b_i^{-1}e_i
\end{equation*}
 and the edges of the form 
\begin{equation*}
  \sigma_{ij}:=\hsigma_{ij}\cap\Delta(\pi)=\{t_ie_i+t_je_j\mid t_i,t_j\ge0, b_it_i+b_jt_j=1\}.
\end{equation*}
If $\pi,\pi'\in\fB_0$ and 
$\pi'\ge\pi$, then $r_{\pi\pi'}(\Delta(\pi'))=\Delta(\pi)$
and $\iota_{\pi'\pi}(\Delta(\pi))\subseteq\Delta(\pi')$.
%
%
\subsubsection{Auxiliary calculations}\label{S252}
For further reference let us record a  few calculations involving the numerical 
invariants $A$, $\a$ and $b$ above.

If $\pi_0\in\fB_0$ is the simple blowup of the origin, then 
\begin{equation*}
  A_{E_0}=2,
  \quad
  b_{E_0}=1,
  \quad
  \vE_0=-E_0
  \quad\text{and}\quad
  \a_{E_0}=-1.
\end{equation*}

Now suppose $\pi'=\pi\circ\mu$, where $\mu$ is the simple blowup
of a closed point $\xi$ and let us check how the numerical invariants behave.
We use the notation of~\S\ref{S157}.
In the case of a free blowup we have
\begin{equation}\label{e212}
  A_{E_n}=A_{E_1}+1,
  \quad
  b_{E_n}=b_{E_1}
  \quad\text{and}\quad
  \vE_n=\vE_1-E_n,
\end{equation}
where, in the right hand side, we identify 
the divisor $\vE_1\in\Div(\pi)$ with its
pullback in $\Div(\pi')$. 
Since $(E_n\cdot E_n)=-1$ we derive as a consequence,
\begin{equation}\label{e213}
 \a_{E_n}:=(\vE_n\cdot\vE_n)=(\vE_1\cdot\vE_1)-1=\a_{E_1}-1.
\end{equation}

In the case of a satellite blowup,
\begin{equation}\label{e214}
  A_{E_n}=A_{E_1}+A_{E_2},
  \quad
  b_{E_n}=b_{E_1}+b_{E_2} 
  \quad\text{and}\quad
  \vE_n=\vE_1+\vE_2-E_n.
\end{equation}
Using $(E_n\cdot E_n)=-1$ this implies
\begin{equation}\label{e215}
  \a_{E_n}:=\a_{E_1}+\a_{E_2}+2(\vE_1\cdot\vE_2)-1.
\end{equation}

We also claim that if $E_i$, $E_j$ are exceptional primes
that intersect properly in some $X_\pi$, then 
\begin{equation}\label{e217}
  ((b_i\vE_j-b_j\vE_i)\cdot(b_i\vE_j-b_j\vE_i))=-b_ib_j.
\end{equation}
Note that both sides of~\eqref{e217} are independent 
of the blowup $\pi\in\fB_0$ but we have to assume that $E_i$
and $E_j$ intersect properly in \emph{some} blowup.

To prove~\eqref{e217}, we proceed inductively. 
It suffices to consider the case when $E_i$ is obtained 
by blowing up a closed point $\xi\in E_j$.
When $\xi$ is free, we have $b_i=b_j$,
$\vE_i=\vE_j-E_i$ and~\eqref{e217} reduces to the 
fact that $(E_i\cdot E_i)=-1$.
When instead $\xi\in E_j\cap E_k$ is a satellite point,
we have $((b_i\vE_k-b_k\vE_i)\cdot(b_i\vE_k-b_k\vE_i))=-b_ib_k$
by induction. Furthermore, $b_i=b_j+b_k$,
$\vE_i=\vE_j+\vE_k-E_i$; we obtain~\eqref{e217} from these
equations and from simple algebra.

In the dual graph depicted in Figure~\ref{F207} we have
$b_0=b_1=1$, 
$b_2=b_3=2$,
$\a_0=-1$,
$\a_1=-2$,
$\a_2=-6$,
$\a_3=-7$,
$A_0=2$,
$A_1=3$,
$A_2=5$
and
$A_3=6$.
%
%
\subsubsection{Extension of the numerical invariants}\label{S251}
We extend the numerical invariants $A$ and $\a$ in~\S\ref{S133} 
to functions on the dual fan
\begin{equation*}
  A_\pi:\hDelta(\pi)\to\R_+
  \quad\text{and}\quad
  \a_\pi:\hDelta(\pi)\to\R_-
\end{equation*}
as follows.
First we set $A_\pi(e_i)=A_{E_i}$ and extend $A_\pi$
uniquely as an (integral) linear function on $\hDelta(\pi)$.
Thus we set $A_\pi(t_ie_i)=t_iA_\pi(e_i)$ and 
\begin{equation}\label{e218}
  A_\pi(t_ie_i+t_je_j)
  =t_iA_\pi(e_i)+t_jA_\pi(e_j).
\end{equation}
In particular, $A_\pi$ is integral affine on each simplex in
the dual graph $\Delta(\pi)$. 

Second, we set $\a_\pi(e_i)=\a_{E_i}=(\vE_i\cdot\vE_i)$ and extend $\a_\pi$
as a homogeneous function of order \emph{two} on $\hDelta(\pi)$
which is affine on each simplex in the dual graph $\Delta(\pi)$.
In other words, we set $\a_\pi(t_ie_i)=t_i^2\a_\pi(e_i)$ for any $i\in I$
and 
\begin{align}\label{e219}
  \a_\pi(t_ie_i+t_je_j)\notag
  &=(b_it_i+b_jt_j)^2
  \left(
    \frac{b_it_i}{b_it_i+b_jt_j}\a_\pi(\sigma_i)
    +\frac{b_jt_j}{b_it_i+b_jt_j}\a_\pi(\sigma_j)
  \right)\\
  &=(b_it_i+b_jt_j)
  \left(
    \frac{t_i}{b_i}\a_\pi(e_i)+
    \frac{t_j}{b_j}\a_\pi(e_j)
  \right)
\end{align}
whenever $E_i$ and $E_j$ intersect properly.

Let us check that 
\begin{equation*}
  A_{\pi'}\circ\iota_{\pi'\pi}=A_\pi
  \quad\text{and}\quad
  \a_{\pi'}\circ\iota_{\pi'\pi}=\a_\pi
\end{equation*}
on $\hDelta(\pi)$ whenever $\pi'\ge\pi$.
It suffices to do this when $\pi'=\pi\circ\mu$ and
$\mu$ is the blowup of $X_{\pi}$ at a closed point $\xi$.
Further, the only case that requires verification is when 
$\xi\in E_1\cap E_2$ is a satellite point, in which case 
it suffices to prove 
$A_\pi(e_1+e_2)=A_{\pi'}(e'_n)$ and
$\a_\pi(e_1+e_2)=\a_{\pi'}(e'_n)$. 
The first of these formulas follows from~\eqref{e214} and~\eqref{e218}
whereas the second results from~\eqref{e215},~\eqref{e217} and~\eqref{e219}.
The details are left to the reader.

In the dual graph depicted in Figure~\ref{F207} we have
$A_\pi(\sigma_0)=2$,
$A_\pi(\sigma_1)=3$,
$A_\pi(\sigma_2)=5/2$,
$A_\pi(\sigma_3)=3$,
$\a_\pi(\sigma_0)=-1$, 
$\a_\pi(\sigma_1)=-2$, 
$\a_\pi(\sigma_2)=-3/2$, 
and
$\a_\pi(\sigma_3)=-7/4$.
%
%
\subsubsection{Multiplicity of edges in the dual graph}\label{S250}
We define the \emph{multiplicity} $m(\sigma)$ of an edge $\sigma$ in
a dual graph $\Delta(\pi)$ as follows. Let 
$\sigma=\sigma_{ij}$ have endpoints $v_i=b_i^{-1}e_i$ and $v_j=b_j^{-1}e_j$.
We set
\begin{equation}\label{e224}
  m(\sigma_{ij}):=\gcd(b_i,b_j).
\end{equation}
Let us see what happens when $\pi'$ is obtained from 
$\pi$ by blowing up a closed point $\xi\in\pi^{-1}(0)$. We use
the notation above. See also Figure~\ref{F106}.

If $\xi\in E_1$ is a free point, then we have seen in~\eqref{e212} that 
$b_n=b_1$ and hence 
\begin{equation}\label{e160}
  m(\sigma_{1n})=b_1.
\end{equation} 
If instead $\xi\in E_1\cap E_2$ is a satellite point, then~\eqref{e214}
gives $b_n=b_1+b_2$ and hence
\begin{equation}\label{e161}
  m(\sigma_{1n})=m(\sigma_{2n})=m(\sigma_{12}).
\end{equation}
This shows that the multiplicity does not change when subdividing
a segment.

In the dual graph depicted in Figure~\ref{F207} we have
$m_{02}=m_{12}=1$ and $m_{23}=2$.
%
%
\subsubsection{Metric on the dual graph}\label{S135}
Having embedded $\Delta(\pi)$ inside $\hDelta(\pi)$, the 
integral affine structure $\Aff(\pi)$ gives rise to an 
abelian group of functions on $\Delta(\pi)$ by restriction. 
Following~\cite[p.95]{KKMS}, this further induces a volume form 
on each simplex in $\Delta(\pi)$.
In our case, this simply means a metric on each edge $\sigma_{ij}$.
The length of $\sigma_{ij}$ is the largest positive number $l_{ij}$ such that 
$\f(\sigma_i)-\f(\sigma_j)$ is an integer multiple of $l_{ij}$ 
for all $\f\in\Aff(\pi)$. From this description it follows
that $l_{ij}=\lcm(b_i,b_j)^{-1}$.

However, it turns out that the ``correct'' metric for doing potential
theory is the one for which 
\begin{equation}\label{e225}
  d_\pi(\sigma_i,\sigma_j)
  =\frac{1}{b_ib_j}
  =\frac{1}{m_{ij}}\cdot\frac{1}{\lcm(b_i,b_j)},
\end{equation}
where $m_{ij}=\gcd(b_i,b_j)$ is the multiplicity of the edge $\sigma_{ij}$
as in~\S\ref{S250}.

We have seen that the dual graph is 
connected and simply connected.
It follows that $\Delta(\pi)$ is a metric tree. 
The above results imply that if $\pi,\pi'\in\fB_0$ and 
$\pi'\ge\pi$, then $\iota_{\pi'\pi}:\Delta(\pi)\hookrightarrow\Delta(\pi')$ 
is an isometric embedding.

Let us see more concretely what happens when $\pi'$ is obtained from 
$\pi$ by blowing up a closed point $\xi\in\pi^{-1}(0)$. We use
the notation above.

If $\xi\in E_1$ is a free point, then $b_n=b_1$
and the dual graph $\Delta(\pi')$
is obtained from $\Delta(\pi)$ 
by connecting a new vertex $\sigma_n$ to $\sigma_1$
using an edge of length $b_1^{-2}$. See Figure~\ref{F106}.

If instead $\xi\in E_1\cap E_2$ is a satellite point, then
$b_n=b_1+b_2$ and we obtain $\Delta(\pi')$ from $\Delta(\pi)$ 
by subdividing the edge $\sigma_{12}$, which is of length $\frac1{b_1b_2}$
into two edges $\sigma_{1n}$ and $\sigma_{2n}$, of lengths 
$\frac1{b_1(b_1+b_2)}$ and $\frac1{b_2(b_1+b_2)}$, respectively.
Note that these lengths add up to $\frac1{b_1b_2}$.
Again see Figure~\ref{F106}.

In the dual graph depicted in Figure~\ref{F207} we have
$d(\sigma_0,\sigma_2)=d(\sigma_1,\sigma_2)=1/2$
and $d(\sigma_2,\sigma_3)=1/4$.
%
%
\subsubsection{Rooted tree structure}\label{S253}
The dual graph $\Delta(\pi)$ is a tree in the sense of~\S\ref{S172}. 
We turn it into a rooted tree by declaring the root to be the 
vertex $\sigma_0$ corresponding to the strict transform of $E_0$,
the exceptional prime of $\pi_0$, the simple blowup of $0$.

When restricted to the dual graph, the functions $\a_\pi$ and $A_\pi$ on the dual fan
$\hDelta(\pi)$ described in~\S\ref{S251} define 
parametrizations 
\begin{equation}\label{e226}
  \a_\pi:\Delta(\pi)\to\,]\!-\infty,-1]
  \quad\text{and}\quad
  A_\pi:\Delta(\pi)\to[2,\infty[
\end{equation}
satisfying $A_{\pi'}\circ\iota_{\pi'\pi}=A_\pi$ and 
$\a_{\pi'}\circ\iota_{\pi'\pi}=\a_\pi$ whenever $\pi'\ge\pi$.
 
We claim that $\a_\pi$ induces the metric on the dual graph given 
by~\eqref{e225}. For this, it suffices to show that 
$|\a_\pi(\sigma_i)-\a_\pi(\sigma_j)|=\frac1{b_ib_j}$ when $E_i$, $E_j$
are exceptional primes intersecting properly. In fact, it suffices to 
verify this when $E_i$ is obtained by blowing up a free point on $E_j$.
But then $b_i=b_j$ and it follows from~\eqref{e213} that 
\begin{equation*}
  \a_\pi(\sigma_i)-\a_\pi(\sigma_j)
  =b_i^{-2}(\a_{E_i}-\a_{E_j})
  =-b_i^{-2}
  =-d(\sigma_i,\sigma_j).
\end{equation*}

In a similar way we see that the parametrization $A_\pi$ of
$\Delta(\pi)$ induces by the log discrepancy gives rise to
the metric induced by the integral affine structure 
as in~\S\ref{S135}. In other words,
if $E_i$, $E_j$ are exceptional primes of $X_\pi$ 
intersecting properly, then 
\begin{equation}\label{e227}
  A(\sigma_j)-A(\sigma_i)=-m_{ij}(\a(\sigma_j)-\a(\sigma_i)),
\end{equation}
where $m_{ij}=\gcd(b_i,b_j)$ is the multiplicity of the edge $\sigma_{ij}$.
%
%
%
%
\subsection{Valuations and dual graphs}\label{S143}
Now we shall show how to embed the dual graph into the valuative tree.
%
%
\subsubsection{Center}\label{S239}
It follows from the valuative criterion of properness that 
any semivaluation $v\in\hcV_0^*$ admits a \emph{center}
on $X_\pi$, for any blowup $\pi\in\fB_0$.
The center is the unique (not necessarily closed) point 
$\xi=c_\pi(v)\in X_\pi$ such that $v\ge 0$ 
on the local ring $\cO_{X_\pi,\xi}$ 
and such that $\{ v>0\}\cap\cO_{X_\pi,\xi}$ 
equals the maximal ideal $\fm_{X_\pi,\xi}$.
If $\pi'\ge\pi$, then the map $X_{\pi'}\to X_\pi$ sends
$c_{\pi'}(v)$ to $c_\pi(v)$. 
%
%
\subsubsection{Evaluation}\label{S238}
Consider a semivaluation $v\in\hcV_0^*$ and a blowup $\pi\in\fB_0$.
We can evaluate $v$ on exceptional divisors $Z\in\Div(\pi)$.
Concretely, if $Z=\sum_{i\in I} r_iE_i$, 
$\xi=c_\pi(v)$ is the center of $v$ on $X_\pi$ and 
$E_j$, $j\in J$ are the exceptional primes containing $\xi$, 
then $v(Z)=\sum_{j\in J}r_jv(\zeta_j)$, where
$\zeta_j\in\cO_{X_\pi,\xi}$ and $E_j=\{\zeta_j=0\}$. 

This gives rise to an \emph{evaluation map} 
\begin{equation}\label{e203}
 \ev_\pi:\hcV_0^*\to N_\R(\pi)
\end{equation}
that is continuous, more or less by definition.
The image of $\ev_\pi$ is contained in
the dual fan $\hDelta(\pi)$. Furthermore, the embedding of 
the dual graph $\Delta(\pi)$ in the dual fan $\hDelta(\pi)$ was
exactly designed so that $\ev_\pi(\cV_0)\subseteq\Delta(\pi)$.
In fact, we will see shortly that these inclusions are equalities.

It follows immediately from the definitions that 
\begin{equation}
  r_{\pi\pi'}\circ\ev_{\pi'}=\ev_\pi\label{e205}
\end{equation}
when $\pi'\ge\pi$.

Notice that if the center of $v\in\hcV_0^*$ on $X_\pi$ is the generic point
of $\bigcap_{i\in J}E_i$, then $\ev_\pi(v)$ lies in the relative
interior of the cone $\sum_{i\in J}\R_+e_i$.
%
%
\subsubsection{Embedding and quasimonomial valuations}
Next we construct a one-sided inverse to the evaluation
map in~\eqref{e203}.
\begin{Lemma}\label{L205}
  Let $\pi\in\fB_0$ be a blowup. 
  Then there exists a unique continuous map
  $\emb_\pi:\hDelta^*(\pi)\to\hcV^*_0$
  such that:
  \begin{itemize}
  \item[(i)]
    $\ev_\pi\circ\emb_\pi=\id$ on $\hDelta^*(\pi)$;
  \item[(ii)]
    for $t\in\hDelta^*(\pi)$, the center of $\emb_\pi(t)$ is the generic
    point of the intersection of all exceptional primes 
    $E_i$ of $\pi$ such that $\langle t,E_i\rangle>0$.
  \end{itemize}
  Furthermore, condition~(ii) is superfluous except in the case when 
  $\pi=\pi_0$ is a simple blowup of $0\in\A^2$ in which case the dual graph
  $\Delta(\pi)$ is a singleton.
\end{Lemma}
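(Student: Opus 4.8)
The idea is to construct $\emb_\pi$ explicitly, cone by cone, on the dual fan $\hDelta^*(\pi)=\hDelta(\pi)\setminus\{0\}$, and to verify that the two required properties determine it uniquely. Recall that $\hDelta(\pi)$ has one-dimensional cones $\hsigma_i=\R_+e_i$ and two-dimensional cones $\hsigma_{ij}=\R_+e_i+\R_+e_j$, the latter occurring exactly when $E_i$ and $E_j$ intersect properly in $X_\pi$. For a point $t$ in the relative interior of $\hsigma_i$, I would define $\emb_\pi(t)$ to be the divisorial valuation $\tfrac{t}{A_{E_i}}\,\ord_{E_i}$ rescaled so that $\ev_\pi$ sends it back to $t$; concretely, $\emb_\pi(t_ie_i)$ is the semivaluation $\phi\mapsto t_i\ord_{E_i}(\phi)$, whose center is the generic point of $E_i$ (a free point situation). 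For a point $t=t_ie_i+t_je_j$ in the interior of a two-dimensional cone $\hsigma_{ij}$, the center should be the generic point $\xi$ of $E_i\cap E_j$, and $\emb_\pi(t)$ should be the monomial (quasimonomial) valuation in the local coordinates $(\zeta_i,\zeta_j)$ at $\xi$ cutting out $E_i$ and $E_j$, with weights $t_i$ on $\zeta_i$ and $t_j$ on $\zeta_j$ — this is well defined since $X_\pi$ is smooth and $(E_i,E_j)$ cross normally at $\xi$. One then checks directly from the definition of $\ev_\pi$ in~\eqref{e203} that $\ev_\pi(\emb_\pi(t))=t$, and that the center is as prescribed in~(ii).

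\textbf{Continuity and gluing.} The next step is to check that these pieces glue to a continuous map on all of $\hDelta^*(\pi)$. The only points where gluing must be verified are the one-dimensional cones $\hsigma_i$ that lie in the closure of some two-dimensional cone $\hsigma_{ij}$: as $t_j\to 0^+$ in $\hsigma_{ij}$, the monomial valuation with weights $(t_i,t_j)$ converges (in the topology of pointwise evaluation on $R$, i.e.\ the subspace topology from $\BerkAtwo$) to the valuation $\phi\mapsto t_i\ord_{E_i}(\phi)$, because the center specializes from the generic point of $E_i\cap E_j$ to the generic point of $E_i$, and the monomial formula $v(\phi)=\min\{\langle (t_i,t_j),\beta\rangle : c_\beta\ne 0\}$ degenerates correctly. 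This is a routine computation using that each $E_j$ through $\xi$ is defined by a coordinate $\zeta_j$ with $\ev_\pi(v)$ landing in the relative interior of the corresponding face (the remark at the end of~\S\ref{S238}). Continuity on the interior of each cone is immediate from the monomial formula, and continuity across faces follows from the specialization just described; homogeneity ($\emb_\pi(st)=s\,\emb_\pi(t)$) is built into the construction.

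\textbf{Uniqueness and the exceptional case.} For uniqueness: if $\pi\ne\pi_0$, then by Lemma~\ref{L210} (more precisely its analogue, or directly because $\pi$ then contains at least two exceptional primes meeting properly) the irrational points of $\hDelta(\pi)$ are dense, and for an irrational point $t$ in the interior of a two-dimensional cone $\hsigma_{ij}$ there is only one semivaluation $v\in\hcV_0^*$ with $\ev_\pi(v)=t$ — namely the monomial one — because any such $v$ is nonnegative on $\cO_{X_\pi,\xi}$ with prescribed values on $\zeta_i,\zeta_j$, and Abhyankar's inequality~\eqref{e129} forces equality hence $v$ quasimonomial with value group $\Z t_i+\Z t_j$, which pins it down. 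So property~(i) alone determines $\emb_\pi$ on a dense set, hence everywhere by continuity. When $\pi=\pi_0$ the dual graph is the single vertex $\sigma_0$, the dual fan is the single ray $\R_+e_0$, and $\ev_{\pi_0}$ is far from injective (every valuation in $\hcV_0^*$ proportional in value of $\fm_0$ maps to the same ray is false, but many do); here property~(ii) is needed to force the center to be the generic point of $E_0$, which singles out the multiples of $\ord_0=\ord_{E_0}$, and then~(i) fixes the scaling. I expect the main obstacle to be the careful verification of continuity at the boundary rays $\hsigma_i\subseteq\overline{\hsigma_{ij}}$ — making precise that the monomial valuation varies continuously in its weights, including in the limit where one weight vanishes and the center jumps — but this is ultimately a direct check with the monomial formula and causes no real difficulty.
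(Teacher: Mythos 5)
Your proposal is correct and follows essentially the same route as the paper: define $\emb_\pi$ on each cone as the monomial valuation at the generic point of the corresponding intersection of exceptional primes, check $\ev_\pi\circ\emb_\pi=\id$ and continuity, and get uniqueness from the density of irrational points together with the fact that irrational weights force the value of any preimage on a power series to be the monomial minimum (your appeal to Abhyankar's inequality is a harmless detour here, since the direct computation with distinct monomial values is what actually pins the valuation down), with condition~(ii) only needed for $\pi=\pi_0$. The only blemish is the stray normalization $t/A_{E_i}$ at the start — the log discrepancy plays no role in the scaling — but your subsequent ``concretely'' formula $\emb_\pi(t_ie_i)=t_i\ord_{E_i}$ is the right one.
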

As a consequence of~(i), $\emb_\pi:\hDelta^*(\pi)\to\hcV^*_0$ 
is injective and $\ev_\pi:\hcV^*_0\to\hDelta^*(\pi)$ surjective.
\begin{Cor}\label{C203}
  If $\pi,\pi'\in\fB_0$ and $\pi'\ge\pi$, 
  then $\emb_{\pi'}\circ\iota_{\pi'\pi}=\emb_\pi$.
\end{Cor}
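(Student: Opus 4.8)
The plan is to verify the identity $\emb_{\pi'}\circ\iota_{\pi'\pi}=\emb_\pi$ on $\hDelta^*(\pi)$ by checking that the composition $\emb_{\pi'}\circ\iota_{\pi'\pi}$ satisfies the two characterizing properties~(i) and~(ii) of Lemma~\ref{L205} that single out $\emb_\pi$, and then invoking the uniqueness statement in that lemma. First I would record that $\emb_{\pi'}\circ\iota_{\pi'\pi}:\hDelta^*(\pi)\to\hcV^*_0$ is continuous, being a composition of the continuous maps $\iota_{\pi'\pi}$ (Lemma~\ref{L204}) and $\emb_{\pi'}$ (Lemma~\ref{L205}); note also that $\iota_{\pi'\pi}$ does map $\hDelta^*(\pi)$ into $\hDelta^*(\pi')$ since it is homogeneous and injective and sends the apex to the apex.

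For property~(i), I would compute, using $r_{\pi\pi'}\circ\iota_{\pi'\pi}=\id$ on $\hDelta(\pi)$ from Lemma~\ref{L204}~(i), $\ev_{\pi'}\circ\emb_{\pi'}=\id$ on $\hDelta^*(\pi')$ from Lemma~\ref{L205}~(i), and the compatibility $r_{\pi\pi'}\circ\ev_{\pi'}=\ev_\pi$ from~\eqref{e205}:
\begin{equation*}
  \ev_\pi\circ(\emb_{\pi'}\circ\iota_{\pi'\pi})
  =r_{\pi\pi'}\circ\ev_{\pi'}\circ\emb_{\pi'}\circ\iota_{\pi'\pi}
  =r_{\pi\pi'}\circ\iota_{\pi'\pi}
  =\id
\end{equation*}
on $\hDelta^*(\pi)$, so property~(i) holds for the composition.

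For property~(ii), the point of Lemma~\ref{L205} is that it only needs to be verified when $\pi=\pi_0$ is the simple blowup of $0$, since otherwise~(i) already forces uniqueness (the irrational points being dense in $\hDelta(\pi)$). In the case $\pi=\pi_0$, the dual graph $\Delta(\pi_0)$ is the single vertex $\sigma_0$ with $\hDelta^*(\pi_0)=\R_+^*e_0$, and $\iota_{\pi'\pi_0}(e_0)=e'_0$ by Lemma~\ref{L204}~(ii), where $e'_0$ corresponds to the strict transform of $E_0$ in $X_{\pi'}$. By Lemma~\ref{L205}~(ii) applied to $\pi'$, the center of $\emb_{\pi'}(e'_0)$ is the generic point of $E_0$ in $X_{\pi'}$, which maps to the generic point of $E_0$ in $X_{\pi_0}$ under $X_{\pi'}\to X_{\pi_0}$; by homogeneity the same holds for $\emb_{\pi'}(te'_0)$ for all $t>0$. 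Hence $\emb_{\pi'}\circ\iota_{\pi'\pi_0}$ satisfies the centering condition~(ii) for $\pi_0$ as well. I expect the only mild subtlety — and the one point to check carefully — is precisely this tracking of centers under the morphism $X_{\pi'}\to X_\pi$, i.e.\ that $\iota_{\pi'\pi}$ sends the face of $\hDelta(\pi)$ cut out by a set $J$ of exceptional primes into the face of $\hDelta(\pi')$ corresponding to primes whose strict transforms pass through the generic point of $\bigcap_{i\in J}E_i$; this is immediate from the explicit piecewise-linear description of $\iota_{\pi'\pi}$ in the proof of Lemma~\ref{L204} (formula~\eqref{e216} and the free-point case). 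With both~(i) and~(ii) verified, the uniqueness in Lemma~\ref{L205} gives $\emb_{\pi'}\circ\iota_{\pi'\pi}=\emb_\pi$.
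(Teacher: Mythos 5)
Your proof is correct and follows essentially the same strategy as the paper's: compute $\ev_\pi\circ\emb_{\pi'}\circ\iota_{\pi'\pi}=r_{\pi\pi'}\circ\ev_{\pi'}\circ\emb_{\pi'}\circ\iota_{\pi'\pi}=r_{\pi\pi'}\circ\iota_{\pi'\pi}=\id$, then invoke the uniqueness clause of Lemma~\ref{L205}. The one place you are actually more careful than the paper is the edge case $\pi=\pi_0$. The paper reduces to $\pi'\ne\pi$ and remarks that $\pi'$ is then not the simple blowup, but the uniqueness in Lemma~\ref{L205} is uniqueness of $\emb_\pi$ (not $\emb_{\pi'}$), so the case that actually needs the centering condition~(ii) is $\pi=\pi_0$, not $\pi'=\pi_0$; the paper's aside does not dispose of it. You verify~(ii) explicitly when $\pi=\pi_0$ by tracking the center of $\emb_{\pi'}(e'_0)$ under $X_{\pi'}\to X_{\pi_0}$, which closes that potential gap. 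So your proof is, if anything, a slightly more careful version of the same argument.
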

As in~\S\ref{S129} we say that a valuation $v\in\hcV_0^*$ 
is \emph{quasimonomial} if it lies in the image of $\emb_\pi$ 
for some blowup $\pi\in\fB_0$. By Corollary~\ref{C203},
$v$ then lies in the image of $\emb_{\pi'}$ for all $\pi'\ge\pi$.
\begin{proof}[Proof of Corollary~\ref{C203}]
  We may assume $\pi'\ne\pi$ so that $\pi'$ is not the simple
  blowup of $0\in\A^2$. 
  The map $\emb'_\pi:=\emb_{\pi'}\circ\iota_{\pi'\pi}:\Delta(\pi)\to\cV_0$
  is continuous and satisfies
  \begin{equation*}
   \ev_\pi\circ\emb'_\pi
   =r_{\pi\pi'}\circ\ev_{\pi'}\circ\emb_{\pi'}\circ\iota_{\pi'\pi}
   =r_{\pi\pi'}\circ\iota_{\pi'\pi}
   =\id.
 \end{equation*}
 By Lemma~\ref{L205} this implies $\emb'_\pi=\emb_\pi$.
\end{proof}
\begin{proof}[Proof of Lemma~\ref{L205}]
  We first prove existence.
  Consider a point $t=\sum_{i\in I}t_ie_i\in\hDelta^*(\pi)$
  and let $J\subseteq I$ be the set of indices $i$ such that $t_i>0$.
  Let $\xi$ be the generic point of $\bigcap_{i\in J}E_i$
  and write $E_i=(\zeta_i=0)$ in local algebraic coordinates 
  $\zeta_i$, $i\in J$  at $\xi$. Then we let $\emb_\pi(t)$ 
  be the monomial valuation 
  with weights $t_i$ on $\zeta_i$ as in~\S\ref{S129}.
  More concretely, after relabeling 
  we may assume that either $J=\{1\}$ is a singleton and 
  $\emb_\pi(t)=t_1\ord_{E_1}$ is a divisorial valuation, 
  or $J=\{1,2\}$ in which case $v_t$ is defined on 
  $R\subseteq\widehat{\cO}_{X_\pi,\xi}\simeq K\llbracket \zeta_1,\zeta_2\rrbracket$
 by
  \begin{equation}\label{e208}
    \emb_\pi(t)(\sum_{\b_1,\b_2\ge 0}
    c_{\beta_1\beta_2}\zeta_1^{\b_1}\zeta_2^{\b_2})
    =\min\{t_1\b_1+t_2\b_2\mid c_\b\ne 0\}.
  \end{equation}
  It is clear that $\emb_\pi$ is continuous and 
  that $\ev_\pi\circ\emb_\pi=\id$.

 The uniqueness statement is clear when $\pi=\pi_0$
  since the only valuation whose center on $X_\pi$
  is the generic point of the exceptional divisor $E_0$
  is proportional to $\ord_{E_0}=\ord_0$.
  
  Now suppose $\pi\ne\pi_0$ and that 
  $\emb'_\pi:\hDelta^*(\pi)\to\hcV^*_0$
  is another continuous map satisfying 
  $\ev_\pi\circ\iota_\pi=\id$. 
  It suffices to show that $\emb'_\pi(t)=\emb_\pi(t)$ for 
  any \emph{irrational} $t\in\hDelta^*(\pi)$.
  But if $t$ is irrational, the value of $\emb'_\pi(t)$ 
  on a monomial $\zeta_1^{\b^1}\zeta_2^{\b_2}$ is $t_1\b_1+t_2\b_2$.
  In particular, the values on distinct monomials are
  distinct, so it follows that the value of $\emb'_\pi(t)$
  on a formal power series is given as in~\eqref{e208}.
  Hence $\emb'_\pi(t)=\emb_\pi(t)$,
  which completes the proof.
 
  In particular the divisorial valuation in $\cV_0$
  associated to the exceptional prime $E_i$ is given by 
  \begin{equation*}
    v_i:=b_i^{-1}\ord_{E_i}
    \quad\text{where}\quad
    b_i:=\ord_{E_i}(\fm_0)\in\N
  \end{equation*}
 \end{proof} 

The embedding $\emb_\pi:\hDelta^*(\pi)\hookrightarrow\hcV^*_0\subseteq\BerkAtwo$ 
extends to the full cone fan $\hDelta(\pi)$ 
and maps the apex $0\in\hDelta(\pi)$ 
to the trivial valuation $\triv_{\A^2}$ on $R$.
The boundary of $\emb_\pi:\hDelta^*(\pi)$ inside $\BerkAtwo$ consists
of $\triv_{\A^2}$ and the semivaluation $\triv_0$.
Thus $\emb_\pi(\hDelta(\pi))$ looks like a ``double cone''.
See Figure~\ref{F104}. 
\begin{figure}[ht]
\includegraphics[width=0.45\textwidth]{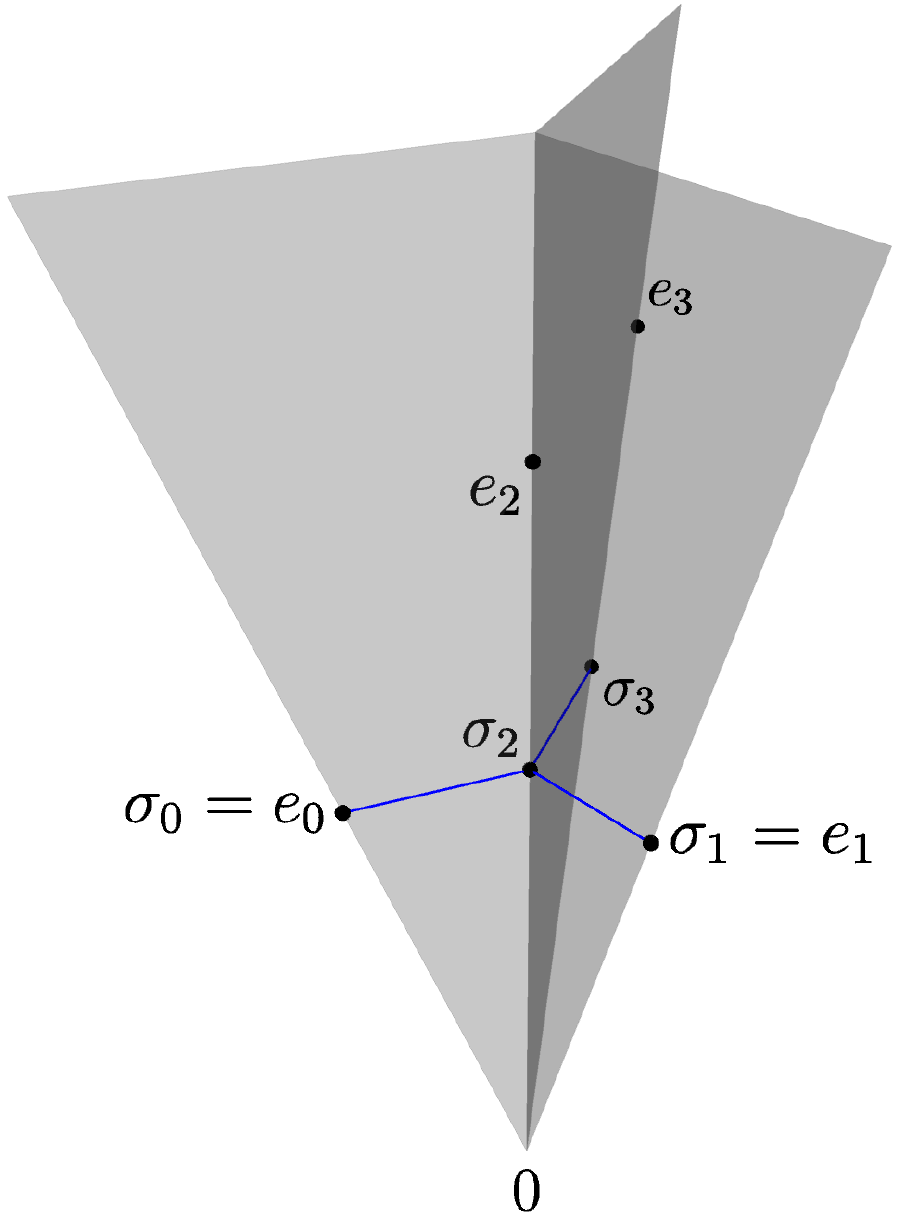}
\hfill 
\includegraphics[width=0.45\textwidth]{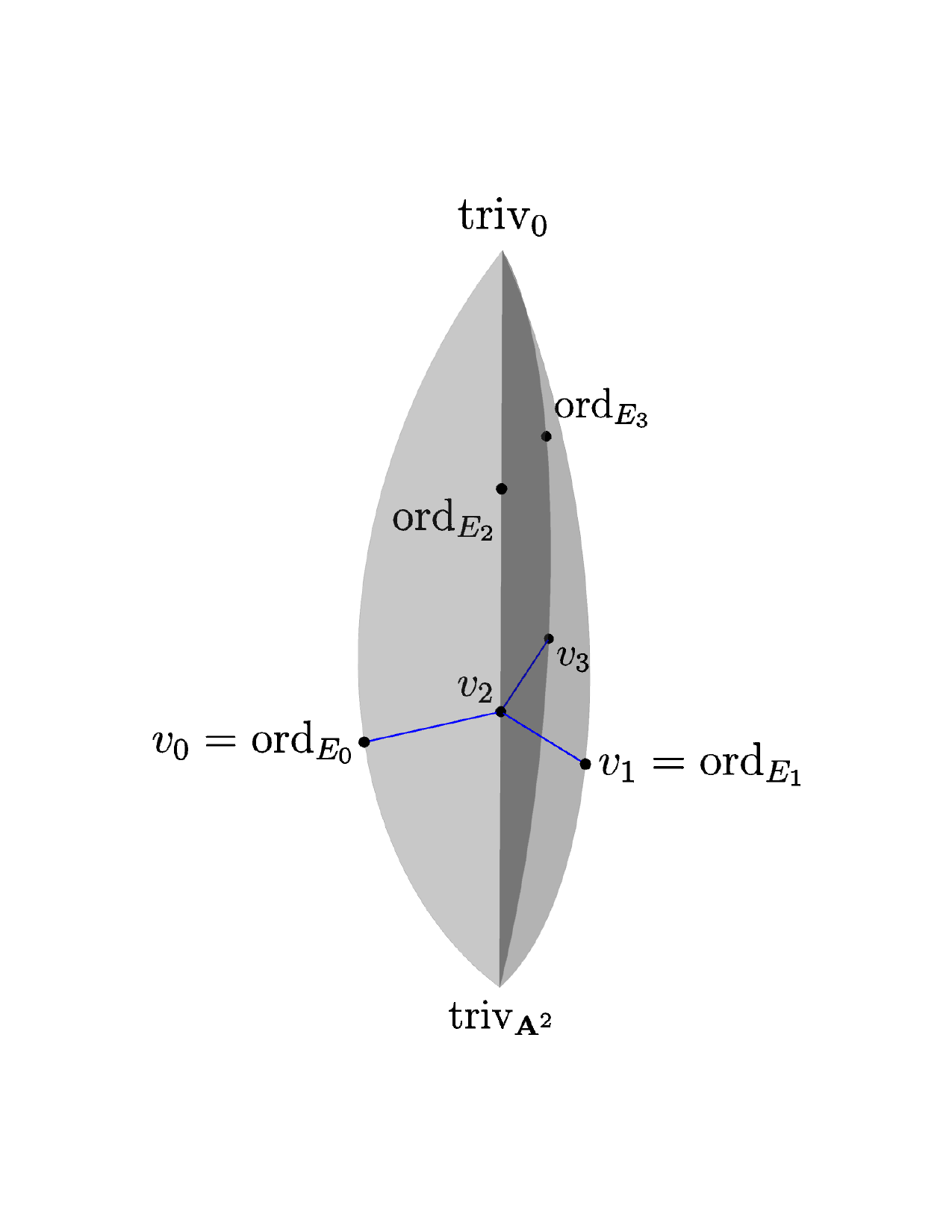}
\caption{The dual fan of a blowup. The picture on the left illustrates
  the dual fan $\hDelta(\pi)$, where $\pi$ is the log resolution
  illustrated in Figure~\ref{F205}. The picture on the left
  illustrates the closure of the embedding of the dual fan inside
  the Berkovich affine plane. The line segments illustrate the dual
  graph $\Delta(\pi)$ and its embedding inside the valuative tree $\cV_0$.
}\label{F104}
\end{figure}
%
%
\subsubsection{Structure theorem}\label{S248}
Because of~\eqref{e205}, the evaluation maps $\ev_\pi$
induce a continuous map
\begin{equation}\label{e106}
  \ev:\cV_0\to\varprojlim_\pi\Delta(\pi),
\end{equation} 
where the right hand side is equipped with the inverse limit topology.
Similarly, the embeddings $\emb_\pi$ define an embedding 
\begin{equation}\label{e206}
  \emb:\varinjlim_\pi\Delta(\pi)\to\cV_0,
\end{equation}  
where the direct limit is defined using the maps  
$\iota_{\pi'\pi}$ and is equipped with the direct limit
topology. 
The direct limit is naturally a dense subset of the inverse limit
and under this identification we have $\ev\circ\emb=\id$.
\begin{Thm}\label{T103}
  The map $\ev:\cV_0\to\varprojlim\Delta(\pi)$ is a homeomorphism.
\end{Thm}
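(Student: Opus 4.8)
The plan is to establish that $\ev$ is a continuous bijection between compact Hausdorff spaces; continuity of the inverse is then automatic. Continuity of $\ev$ has already been observed, since it is induced by the maps $\ev_\pi$ which are continuous by construction, and the target carries the inverse limit topology. Compactness of $\cV_0$ was noted in~\S\ref{S229} via Tychonoff, and the inverse limit $\varprojlim\Delta(\pi)$ is compact Hausdorff as an inverse limit of compact Hausdorff spaces (each $\Delta(\pi)$ being a finite metric tree, hence compact). So the real content is to prove that $\ev$ is \emph{bijective}.

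\textbf{Injectivity.} First I would show $\ev$ is injective. Suppose $v,w\in\cV_0$ satisfy $\ev_\pi(v)=\ev_\pi(w)$ for all $\pi\in\fB_0$. This means $v(Z)=w(Z)$ for every exceptional divisor $Z\in\Div(\pi)$ and every blowup $\pi$. The key point is that the numerical data $(v(Z))_Z$ determines $v$ on all of $R$: given a polynomial $\phi\in R$ and a sufficiently high blowup $\pi$ that is a log resolution of the principal ideal $(\phi)$ together with $\fm_0$, the strict transform of $\{\phi=0\}$ meets the exceptional locus in finitely many free points, and $v(\phi)=v(Z_\pi(\phi))+\ord_{c}(\phi\text{ along the strict transform})$ where the second term vanishes unless $c_\pi(v)$ lies on the strict transform of $\{\phi = 0\}$. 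In that exceptional case, one blows up further: passing to $\pi'\ge\pi$ separating the center of $v$ from the strict transform, and using that $\ev_{\pi'}(v)=\ev_{\pi'}(w)$ forces $c_{\pi'}(v)=c_{\pi'}(w)$ (the center is read off from which $e_i$ have positive pairing with $\ev_{\pi'}(v)$), one gets $v(\phi)=v(Z_{\pi'}(\phi))=w(Z_{\pi'}(\phi))=w(\phi)$. I expect this reduction --- showing a semivaluation is recovered from its values on exceptional divisors over all blowups --- to be the technical heart of injectivity, and the main obstacle overall; it uses in an essential way that primary ideals (and more generally $\fm_0$ together with any $(\phi)$) admit log resolutions.

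\textbf{Surjectivity.} For surjectivity, take a coherent system $(t_\pi)_\pi\in\varprojlim\Delta(\pi)$, so $r_{\pi\pi'}(t_{\pi'})=t_\pi$ whenever $\pi'\ge\pi$. I would construct the corresponding $v\in\cV_0$ by the following dichotomy. If some $t_\pi$ is a rational point lying in the relative interior of a vertex cone $\hsigma_i$, i.e. $t_\pi=\sigma_i$, check whether the system stabilizes; if $t_{\pi'}=\iota_{\pi'\pi}(\sigma_i)$ remains at the vertex $\sigma_i$ for all $\pi'\ge\pi$, then $v:=v_i=b_i^{-1}\ord_{E_i}$ is the divisorial valuation provided by $\emb_\pi$, and Corollary~\ref{C203} gives $\ev(v)=(t_\pi)_\pi$. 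If instead the points $t_\pi$ are eventually irrational, or eventually interior to edges, then by Lemma~\ref{L205} and Corollary~\ref{C203} each $\emb_\pi(t_\pi)$ agrees on the tower, defining a quasimonomial valuation $v$ with $\ev(v)=(t_\pi)_\pi$. The remaining case is that the centers $c_\pi(v)$ form an infinite properly decreasing sequence of closed points (the system ``escapes to the boundary of every dual graph''); here I would use condition~(T5)/the least-upper-bound property together with the metric estimate~\eqref{e225} --- noting that $\a_\pi$ parametrizes $\Delta(\pi)$ and the edge lengths $b_i^{-1}b_j^{-1}$ are summable along any branch --- to see that $(t_\pi)$ converges to an end of $\varprojlim\Delta(\pi)$, which is realized by an infinitely singular valuation defined as a limit (an increasing limit of the divisorial valuations $v_i$ along the branch). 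In all cases one checks $v(\fm_0)=1$ since each $t_\pi\in\Delta(\pi)$ lies on the hyperplane $\f_0=1$, so $v\in\cV_0$.

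\textbf{Conclusion.} Having shown $\ev$ is a continuous bijection of compact Hausdorff spaces, it is a homeomorphism, completing the proof of Theorem~\ref{T103}. I would remark that an alternative, cleaner route to surjectivity avoids the case analysis: one shows directly that $\emb:\varinjlim\Delta(\pi)\to\cV_0$ has dense image (its image contains all quasimonomial valuations, in particular all divisorial ones, which are dense in $\cV_0$) and that $\cV_0$ is the closure of this image; combined with the fact that $\varprojlim\Delta(\pi)$ is the completion of $\varinjlim\Delta(\pi)$ and that $\ev$ restricts to the identity on the dense subset, a continuity-and-density argument yields that $\ev$ is onto. I expect to present the argument in whichever of these two forms turns out to require the least additional machinery about infinitely singular valuations.
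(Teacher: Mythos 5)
Your overall strategy --- continuous bijection between compact Hausdorff spaces, so it suffices to prove bijectivity --- is exactly the paper's. But both halves of your bijectivity argument have a gap that centers on the same source, namely curve semivaluations, and in both cases the paper's actual argument sidesteps the issue rather than confronting it.

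On injectivity, you propose to recover $v(\phi)$ from the data $(\ev_\pi(v))_\pi$ by log-resolving the principal ideal $(\phi)$ and blowing up until the center of $v$ is separated from the strict transform of $\{\phi=0\}$. This separation is impossible when $v$ is the curve semivaluation attached to an analytic branch of $\{\phi=0\}$ through $0$: the center of $v$ on every $X_\pi$ is precisely the corresponding infinitely near point of that branch, which by definition lies on the strict transform. So your ``exceptional case'' never resolves. The paper avoids this entirely by working with \emph{primary} ideals rather than polynomials: since the topology on $\cV_0$ is equivalently generated by the maps $v\mapsto v(\fa)$ for $\fm_0$-primary $\fa$, two distinct semivaluations are already separated by some primary ideal $\fa$, and a primary ideal pulls back under a log resolution $\pi$ to $\cO_{X_\pi}(Z)$ with \emph{no} strict-transform component, so that $v(\fa)=-\langle\ev_\pi(v),Z\rangle$ holds unconditionally and the separation is immediate. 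This is the decisive simplification you are missing.

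On surjectivity, your main case analysis asserts that ``the edge lengths $b_i^{-1}b_j^{-1}$ are summable along any branch'' and deduces that every escaping coherent system converges to an infinitely singular valuation. That summability claim is false: along a branch whose limit is a curve semivaluation the lengths add up to $+\infty$ (equivalently $\a(v)=-\infty$, so $v\notin\H$), so that case is not covered by your argument. Fortunately, the ``cleaner route'' you mention at the end --- image is compact hence closed, contains the dense subset $\varinjlim\Delta(\pi)$ on which $\ev\circ\emb=\id$, therefore is everything --- is exactly the paper's one-line surjectivity argument and requires no case analysis whatsoever. You should commit to that version; the constructive one has a real hole and gains you nothing.
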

By homogeneity, we also obtain a homeomorphism
$\ev:\hcV_0^*\to\varprojlim\hDelta^*(\pi)$.
\begin{proof}
  Since $r$ is continuous and both sides of~\eqref{e106} are compact,
  it suffices to show that $r$ is bijective.
  The image of $r$ contains the dense subset $\varinjlim\Delta(\pi)$
  so surjectivity is clear. 

  To prove injectivity, pick $v,w\in\cV_0$ with 
  $v\ne w$. Then there exists a primary ideal
  $\fa\subseteq R$ such that $v(\fa)\ne w(\fa)$.
  Let $\pi\in\fB_0$ be a log resolution of $\fa$
  and write $\fa\cdot\cO_{X_\pi}=\cO_{X_\pi}(Z)$,
  where $Z\in\Div(\pi)$. Then
  \begin{equation*}
    \langle\ev_\pi(v),Z\rangle
   =-v(\fa)
    \ne-w(\fa)
    =\ev_\pi(Z)
    =\langle\ev_\pi(w),Z\rangle,
  \end{equation*}
  so that $\ev_\pi(v)\ne\ev_\pi(w)$ and hence $\ev(v)\ne\ev(w)$.
\end{proof}
%
%
\subsubsection{Integral affine structure}
We set
\begin{equation*}
  \Aff(\hcV_0^*)=\varinjlim_\pi\ev_\pi^*\Aff(\pi).
\end{equation*}
Thus a function $\f:\hcV_0^*\to\R$ is integral affine iff it is of the form
$\f=\f_\pi\circ\ev_\pi$, with $\f_\pi\in\Aff(\pi)$.
In other words, $\f$ is defined by an exceptional divisor
in some blowup. 
%
%
%
%
\subsection{Tree structure on $\cV_0$}\label{S244}
Next we use Theorem~\ref{T103} to equip $\cV_0$
with a tree structure.
%
%
\subsubsection{Metric tree structure}\label{S138}
The metric on a dual graph $\Delta(\pi)$ defined in~\S\ref{S135}
turns this space into a finite metric tree in the sense of~\S\ref{S131}.
Further, if $\pi'\ge\pi$, then the embedding 
$\iota_{\pi'\pi}:\Delta(\pi)\hookrightarrow\Delta(\pi')$
is an isometry.
It then follows from the discussion in~\S\ref{S137} that 
$\cV_0\simeq\varprojlim\Delta(\pi)$ is a metric tree.
\begin{Lemma}
  The ends of $\cV_0$ are exactly the valuations that are not
  quasimonomial.
\end{Lemma}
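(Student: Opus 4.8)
The claim has two directions. For the first, I would show that every quasimonomial valuation $v\in\cV_0$ is \emph{not} an end of the tree $\cV_0$. By definition $v=\emb_\pi(t)$ for some blowup $\pi\in\fB_0$ and some $t\in\Delta(\pi)$. Using Corollary~\ref{C203} we may refine $\pi$ freely, so after performing additional point blowups I can arrange that $t$ is an \emph{interior} point of an edge $\sigma_{ij}$ of $\Delta(\pi)$, or, if $t=\sigma_i$ is a vertex, that $\sigma_i$ has at least two neighbors in $\Delta(\pi)$ (blow up a free point on $E_i$ and, if needed, another exceptional prime meeting $E_i$; the structure of free/satellite blowups in~\S\ref{S157} guarantees a vertex always acquires new neighbors after suitable blowups). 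In either case $\ev(v)=t$ has at least two tangent directions in $\varprojlim\Delta(\pi)\simeq\cV_0$, so $v$ is not an end. The point $\ord_0$ is the root and also not an end by the same reasoning (blow up two distinct free points of $E_0$).

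For the converse, suppose $v\in\cV_0$ is not quasimonomial; I must show $v$ is an end of $\cV_0$, i.e.\ it admits a unique tangent direction. Via the homeomorphism $\ev:\cV_0\simto\varprojlim_\pi\Delta(\pi)$ of Theorem~\ref{T103}, a point of the inverse limit is a compatible family $(t_\pi)_\pi$ with $t_\pi\in\Delta(\pi)$, and $v$ non-quasimonomial means $v\ne\emb_\pi(t_\pi)$ for every $\pi$, equivalently (by Lemma~\ref{L205}) that $t_\pi$ is never stabilized: for each $\pi$ there is $\pi'\ge\pi$ with $\iota_{\pi'\pi}(t_\pi)\ne t_{\pi'}$, and in fact the center $c_\pi(v)$ is always a \emph{closed} point. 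I would argue that the segments $[\ord_0,t_\pi]\subseteq\Delta(\pi)$ form an increasing family (via the isometric embeddings $\iota_{\pi'\pi}$) whose union is a half-open segment $[\ord_0,v[$ in $\cV_0$ by axiom~(T5). It then suffices to show that $v$ lies on no other segment, i.e.\ that $\cV_0\setminus\{v\}$ is connected near $v$; equivalently, that for every $\pi$ the retraction $r_\pi(v)=t_\pi$ lies in the component of $\Delta(\pi)$ "pointing toward" the direction in which the center refines. Concretely: given any $w\in\cV_0$ with $w\ne v$, I claim $w\wedge_{\ord_0} v \ne v$; this follows because $w$ and $v$ are separated at some finite stage $\pi$ (they differ on some primary ideal, as in the proof of Theorem~\ref{T103}), so $r_\pi(w)\ne r_\pi(v)=t_\pi$, and since $t_\pi$ is at that stage still either a vertex with a unique "active" direction or an interior point of an edge along which the center continues to localize, $r_\pi(w)$ must lie on the $\ord_0$-side of $t_\pi$.

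The technical heart, and the step I expect to be the main obstacle, is establishing that the family of centers $c_\pi(v)$ being closed points forces the retracted points $t_\pi$ to converge to an end rather than accumulating at an interior point with branching. The subtlety is that one must rule out the scenario where $v$, although not equal to any $\emb_\pi(t_\pi)$, still sits at a branch point of the inverse limit — this cannot happen, but seeing it requires tracking, through the free/satellite blowup dichotomy of~\S\ref{S157} and the behavior of the metric in~\S\ref{S135}, that each successive blowup at the closed point $c_\pi(v)$ either extends the current edge or attaches a new edge in a single new direction, never creating a genuine fork that $v$ would sit at the center of. A clean way to package this is: for each $\pi$, let $\vv_\pi$ be the tangent direction at $t_\pi$ in $\Delta(\pi)$ "away from $\ord_0$" into which the center localizes; the compatibility of the $\iota_{\pi'\pi}$ shows these assemble into a single tangent direction at $v$ in $\cV_0$, and the separation argument above shows it is the only one. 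I would also remark that this recovers the known description in~\cite{valtree}: the non-quasimonomial valuations are exactly the infinitely singular ones and the curve semivaluations, which are precisely the ends of $\cV_0$ other than — wait, more precisely, together with nothing else, since divisorial and irrational valuations are quasimonomial hence interior. Finally, I would note the statement is consistent with the rooted-tree parametrizations $\a_\pi,A_\pi$ of~\eqref{e226}: an end corresponds to a maximal totally ordered chain, along which $A_E\to\infty$, matching the fact that no finite blowup captures $v$.
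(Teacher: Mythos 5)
Your first direction (quasimonomial $\Rightarrow$ not an end) is correct and is essentially the paper's argument: an interior point of an edge is already not an end, and a vertex $\sigma_i$ acquires a second neighbor after blowing up a free point on $E_i$ (with the root $\ord_0$ handled by two such blowups). Since $\Delta(\pi')$ embeds in $\cV_0$, two tangent directions at the finite level give two tangent directions in $\cV_0$, so this half is fine.

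The converse is where you have a genuine gap, and you flag it yourself. The step "$r_\pi(w)$ must lie on the $\ord_0$-side of $t_\pi$" is not justified and is not true for arbitrary $w$; more importantly, the route you propose for repairing it --- tracking the free/satellite dichotomy to argue that successive blowups "never create a genuine fork at $v$" --- is both laborious and never actually carried out. The assembly of the directions $\vv_\pi$ at the distinct points $t_\pi$ into a single tangent direction at $v$ is also not a rigorous operation as stated. The missing ingredient is much simpler and makes the whole direction two lines: each $|\Delta(\pi)|$ is a finite \emph{subtree} of $\cV_0$ in the sense of~\S\ref{S117}, hence convex (if $w_1,w_2\in|\Delta(\pi)|$ then $[w_1,w_2]\subseteq|\Delta(\pi)|$), and the quasimonomial valuations $\bigcup_\pi|\Delta(\pi)|$ are dense in $\cV_0$. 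So if a non-quasimonomial $v$ had two tangent directions $\vv_1,\vv_2$, pick quasimonomial $w_i\in U(\vv_i)$ by density, choose $\pi$ with $w_1,w_2\in|\Delta(\pi)|$ by directedness; then $v\in[w_1,w_2]\subseteq|\Delta(\pi)|$, contradicting that $v$ lies in no dual graph. (Equivalently, in your notation: if $v<w$ then $v\in[\ord_0,r_\pi(w)]\subseteq|\Delta(\pi)|$ once $r_\pi(w)>v$, which happens for $\pi$ large since $r_\pi(w)\to w$.) This is presumably why the paper dismisses this implication as clear; without invoking convexity of the dual graphs, your argument does not close.
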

\begin{proof}
  The assertion in the lemma amounts to the ends of 
  the tree $\varprojlim\Delta(\pi)$ being exactly the
  points that do not belong to any single dual graph.
  It is clear that all points of the latter type are ends.
  On the other hand, 
  if $t\in\Delta(\pi)$ for some blowup $\pi$, then there exists a blowup $\pi'\in\fB_0$
  dominating $\pi$ such that $\iota_{\pi'\pi}(t)$ is not an end of 
  $\Delta(\pi')$. When $t$ is already not an endpoint of $\Delta(\pi)$, 
  this is clear. Otherwise $t=b_i^{-1}e_i$, in which case $\pi'$ can be 
  chosen as the blowup of a free point on the associated exceptional 
  prime $E_i$.
\end{proof}
The hyperbolic space $\H\subseteq\cV_0$ induced by the generalized
metric on $\cV_0$ contains all
quasimonomial valuations but also some non-quasimonomial ones,
see~\S\ref{S235}.
%
%
\subsubsection{Rooted tree structure}\label{S240}
We choose the valuation $\ord_0$ as the root of the tree
$\cV_0$ and write $\le$ for the corresponding partial ordering.

The two parametrizations $\a_\pi$ and $A_\pi$ on the dual graph 
$\Delta(\pi)$ in~\S\ref{S253} give rise to parametrizations\footnote{The increasing parametrization $-\a$ is denoted by $\a$ and called \emph{skewness} in~\cite{valtree}. The increasing parametrization $A$ is called \emph{thinness} in \loccit.}
\begin{equation}\label{e143}
  \a:\cV_0\to[-\infty,-1]
  \quad\text{and}\quad
  A:\cV_0\to[2,\infty].
\end{equation}
The parametrization $\a$ gives rise to the generalized metric
on $\cV_0$ and we have 
\begin{equation}\label{e207}
  \a(v)=-(1+d(v,\ord_0)).
\end{equation} 
The choice of parametrization will be justified in~\S\ref{S144}.
Note that hyperbolic space $\H\subseteq\cV_0$ is given by 
$\H=\{\a>-\infty\}$.

There is also a unique, lower semicontinuous 
\emph{multiplicity} function 
\begin{equation*}
  m:\cV_0\to\N\cup\{\infty\}
\end{equation*}
on $\cV_0$ induced by the multiplicity on dual graphs. 
It has the property that $m(w)$ divides $m(v)$ if $w\le v$.
The two parametrizations $\a$ and $A$ are related through the 
multiplicity by
\begin{equation*}
  A(v)=2+\int_{\ord_0}^vm(w)\,d\a(w);
\end{equation*}
this follows from~\eqref{e227}.

There is also a generalized metric induced by $A$, but we shall not use it.
%
%
\subsubsection{Retraction}
It will be convenient to regard the dual graph and fan as subsets 
of the valuation spaces $\cV_0$ and $\hcV_0$, respectively.
To this end, we introduce
\begin{equation*}
  |\Delta(\pi)|:=\emb_\pi(\Delta(\pi))
  \quad\text{and}\quad
  |\hDelta^*(\pi)|:=\emb_\pi(\hDelta^*(\pi)).
\end{equation*}
Note that if $\pi'\ge\pi$, then $|\hDelta^*(\pi)|\subseteq|\hDelta^*(\pi')|$.

The evaluation maps now give rise to \emph{retractions}
\begin{equation*}
  r_\pi:=\emb_\pi\circ\ev_\pi
\end{equation*}
of $\hcV_0^*$ and $\cV_0$ onto $|\Delta^*_0|$ and $|\Delta(\pi)|$,
respectively.
It is not hard to see that $r_\pi'\circ r_\pi=r_\pi$ when $\pi'\ge\pi$.

Let us describe the retraction in more detail.
Let $\xi=c_\pi(v)$ be the center of $v$ on $X_\pi$
and let $E_i$, $i\in J$ be the exceptional primes containing $\xi$.
Write $E_i=(\zeta_i=0)$ in local algebraic coordinates $\zeta_i$ at $\xi$ and set 
$t_i=v(\zeta_i)>0$. 
Then $w:=r_\pi(v)\in|\hDelta^*(\pi)|$ is the monomial valuation
such that $w(\zeta_i)=t_i$, $i\in J$.

It follows from Theorem~\ref{T103} that 
\begin{equation*}
  r_\pi\to\id 
  \quad\text{as $\pi\to\infty$}.
\end{equation*}
In fact, we have the following more precise result.
\begin{Lemma}\label{L125}
  If $v\in\hcV^*_0$ and $\pi\in\fB_0$ is a blowup, then
  \begin{equation*}
    (r_\pi v)(\fa)\le v(\fa)
  \end{equation*}
  for every ideal $\fa\subseteq R$,
  with equality if the strict transform of $\fa$ to $X_\pi$ does
  not vanish at the center of $v$ on $X_\pi$.
  In particular, equality holds if 
  $\fa$ is primary and  $\pi$ is a log resolution of $\fa$.
\end{Lemma}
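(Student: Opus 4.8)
\textbf{Proof plan for Lemma~\ref{L125}.}
The plan is to reduce everything to the description of the retraction $r_\pi = \emb_\pi\circ\ev_\pi$ given just above the lemma, combined with the fact that evaluating a semivaluation on an ideal amounts to evaluating it on a (local) generator after passing to a log resolution. First I would fix $v\in\hcV_0^*$, a blowup $\pi\in\fB_0$, and an ideal $\fa\subseteq R$, and let $\xi=c_\pi(v)$ be the center of $v$ on $X_\pi$, with $E_i=(\zeta_i=0)$, $i\in J$, the exceptional primes through $\xi$ and $t_i=v(\zeta_i)>0$; then $w:=r_\pi(v)$ is the monomial valuation with $w(\zeta_i)=t_i$. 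The key observation is that $w$ and $v$ agree on $\cO_{X_\pi,\xi}$ restricted to the monomial subalgebra $K[\zeta_i : i\in J]$, and more importantly that for \emph{any} $\phi\in\cO_{X_\pi,\xi}$, writing $\phi$ as a power series in the $\zeta_i$ with unit (or zero) coefficients, one has $v(\phi)\ge w(\phi)$, with equality when the ``strict transform'' of $\phi$ — i.e.\ $\phi$ modulo the monomial factor — is a unit at $\xi$. This is exactly the content of the monomial-valuation inequality of~\S\ref{S129} (cf.\ \eqref{e132}): a monomial valuation is the \emph{smallest} valuation taking the prescribed values on the coordinates, among those nonnegative on $\cO_{X_\pi,\xi}$.

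Next I would pass from polynomials to the ideal $\fa$. Pick generators $\phi_1,\dots,\phi_r$ of $\fa$; then $v(\fa)=\min_k v(\phi_k)$ and similarly $w(\fa)=\min_k w(\phi_k)$. Pulling back to $X_\pi$, each $\phi_k$, viewed in $\cO_{X_\pi,\xi}$, can be written as $\phi_k=\mu\cdot\psi_k$ where $\mu=\prod_{i\in J}\zeta_i^{c_i}$ is the common monomial part coming from the divisor $Z$ with $\fa\cdot\cO_{X_\pi}\subseteq\cO_{X_\pi}(Z)$ locally, and the $\psi_k$ have no common monomial factor; the strict transform of $\fa$ at $\xi$ is the ideal generated by the $\psi_k$. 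Applying the inequality from the previous paragraph term by term gives $v(\phi_k)\ge w(\phi_k)$ for all $k$, hence $v(\fa)\ge w(\fa)=(r_\pi v)(\fa)$, which is the first assertion. For the equality statement, if the strict transform of $\fa$ does not vanish at $\xi$, then some $\psi_k$ is a unit in $\cO_{X_\pi,\xi}$, so $v(\psi_k)=0=w(\psi_k)$ and $v(\phi_k)=v(\mu)=w(\mu)=w(\phi_k)$; moreover for every $k$ one has $v(\phi_k)\ge v(\mu)$ and $w(\phi_k)\ge w(\mu)$, so the minima are both attained at this $k$ and $v(\fa)=v(\mu)=w(\fa)$.

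Finally, the last sentence of the lemma is immediate from the equality case: if $\fa$ is primary and $\pi$ is a log resolution of $\fa$, then by definition $\fa\cdot\cO_{X_\pi}=\cO_{X_\pi}(Z)$ is locally principal, i.e.\ the strict transform of $\fa$ is the unit ideal everywhere on $\pi^{-1}(0)$, in particular it does not vanish at $c_\pi(v)$; hence equality holds.

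\textbf{Main obstacle.} The only genuinely non-formal point is justifying $v(\phi)\ge w(\phi)$ for $\phi\in\cO_{X_\pi,\xi}$ when $\xi$ is a satellite point and $w$ is defined by the monomial formula~\eqref{e208}: one must check that no cancellation among the (infinitely many) terms of the power series expansion of $\phi$ can make $v(\phi)$ drop below $w(\phi)=\min\{t_1\b_1+t_2\b_2 : c_\b\ne 0\}$. This follows from the ultrametric inequality $v(\sum c_\b\zeta^\b)\ge\min_\b v(c_\b\zeta^\b)$ together with $v(c_\b)\ge 0$ (as $c_\b\in\cO_{X_\pi,\xi}$) and $v(\zeta_i)=t_i=w(\zeta_i)$, plus a standard $\fm_\xi$-adic continuity argument to handle the passage to the completion $\widehat{\cO}_{X_\pi,\xi}$; I expect this to be routine given the setup in~\S\ref{S129} and~\S\ref{S226} but it is where the real work sits.
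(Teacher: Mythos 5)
Your proof is correct and follows the same approach as the paper: the paper's own proof also sets $w=r_\pi(v)$, notes that by construction $w$ is the smallest valuation on $\widehat{\cO}_{X_\pi,\xi}$ agreeing with $v$ on the $\zeta_i$ (hence $w\le v$ on $\widehat{\cO}_{X_\pi,\xi}\supseteq R$), and for the equality case observes that when the strict transform does not vanish at $\xi$ the ideal $\fa\cdot\widehat{\cO}_{X_\pi,\xi}$ is generated by a single monomial. You have simply spelled out the details (the power-series/ultrametric estimate for the inequality, the factorization $\phi_k=\mu\psi_k$ and minimization for the equality case) that the paper leaves implicit.
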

\begin{proof}
  Pick $v\in\hcV^*_0$ and set $w=r_\pi(v)$. 
  Let $\xi$ be the center of $v$ on $X_\pi$ and 
  $E_i=(\zeta_i=0)$, $i\in J$, the exceptional primes of $\pi$ containing $\xi$.
  By construction, $w$ is the smallest valuation on 
  $\widehat{\cO}_{X_\pi,\xi}$ taking the same values as 
  $v$ on the $\zeta_i$. 
  Thus $w\le v$ on $\widehat{\cO}_{X_\pi,\xi}\supseteq R$,
  which implies $w(\fa)\le v(\fa)$ for all ideals $\fa\subseteq R$. 

  Moreover, if the strict transform of $\fa$ to $X_\pi$ does
  not vanish at $\xi$, then $\fa\cdot\widehat{\cO}_{X_\pi,\xi}$ is 
  generated by a single monomial in the $\zeta_i$, and then it is clear
  that $v(\fa)=w(\fa)$.
\end{proof} 
%
%
%
%
\subsection{Classification of valuations}\label{S169}
Similarly to points in the Berkovich
affine line, we can classify semivaluations in the valuative 
tree into four classes. The classification is 
discussed in detail in~\cite{valtree} but already appears
in a slightly different form in the work of Spivakovsky~\cite{spiv}.
One can show that the set of semivaluations of each of the four types 
below is dense in $\cV_0$, see~\cite[Proposition~5.3]{valtree}.

Recall that any semivaluation $v\in\hcV_0^*$ extends
to the fraction field $F$ of $R$. In particular, it extends to 
the local ring $\cO_0:=\cO_{\A^2,0}$.  Since $v(\fm_0)>0$,
$v$ also defines a semivaluation on the completion $\widehat{\cO}_0$.
%
%
\subsubsection{Curve semivaluations}\label{S237}
The subset 
$\fp:=\{v=\infty\}\subsetneq\widehat{\cO}_0$ is a prime ideal
and $v$ defines a valuation on the quotient ring
$\widehat{\cO}_0/\fp$. If $\fp\ne0$, then $\widehat{\cO}_0/\fp$ is principal
and we say that $v$ is a \emph{curve semivaluation}
as $v(\phi)$ is proportional to the order of vanishing at 0
of the restriction of $\phi$ to the formal curve defined by $\fp$.
A curve semivaluation $v\in\cV_0$ is always an endpoint in
the valuative tree. One can check that they satisfy $\a(v)=-\infty$
and $A(v)=\infty$.
%
%
\subsubsection{Numerical invariants}
Now suppose $v$ defines a \emph{valuation} on $\widehat{\cO}_0$,
that is, $\fp=(0)$.
As in~\S\ref{S133} we associate to $v$ two basic numerical invariants:
the rational rank and the transcendence degree.
It does not make a difference whether we compute these in 
$R$, $\cO_0$ or $\widehat{\cO}_0$. The Abhyankar inequality 
says that 
\begin{equation*}
  \trdeg v+\ratrk v\le 2
\end{equation*}
and equality holds iff $v$ is a quasimonomial valuation.
%
%
\subsubsection{Divisorial valuations}\label{S163}
A valuation $v\in\hcV^*_0$ is 
\emph{divisorial} if it has the numerical invariants 
$\trdeg v=\ratrk v=1$.
In this situation there exists a blowup $\pi\in\fB_0$ 
such that the center of $v$ 
on $X_\pi$ is the generic point of an exceptional prime $E_i$ of $\pi$.
In other words, $v$ belongs to the one-dimensional 
cone $\hsigma_i$ of the dual fan $|\hDelta^*(\pi)|$ and 
$v=t\ord_{E_i}$ for some $t>0$. 
We then set $b(v):=b_i=\ord_{E_i}(\fm_0)$. 

More generally, suppose $v\in\hcV^*_0$ is divisorial and
$\pi\in\fB_0$ is a blowup such that the center of $v$
on $X_\pi$ is a closed point $\xi$. 
Then there exists a blowup $\pi'\in\fB_0$  
dominating $\pi$ in which the (closure of the)
center of $v$ is an exceptional prime of $\pi'$. 
Moreover, by a result of Zariski
(\cf~\cite[Theorem~3.17]{Kollar}),
the birational morphism $X_{\pi'}\to X_\pi$ is an 
isomorphism above $X_\pi\setminus\{\xi\}$ and 
can be constructed by successively blowing
up the center of $v$.

We will need the following result in~\S\ref{S168}.
\begin{Lemma}\label{L123}
  Let $\pi\in\fB_0$ be a blowup and $v\in\hcV_0^*$ a semivaluation.
  Set $w:=r_\pi(v)$.
  \begin{itemize}
  \item[(i)]
    if $v\not\in|\hDelta^*(\pi)|$, then $w$ is necessarily divisorial;
  \item[(ii)]
    if $v\not\in|\hDelta^*(\pi)|$ and $v$ is divisorial, then $b(w)$ divides $b(v)$;
  \item[(iii)]
    if $v\in|\hDelta^*(\pi)|$, then $v$ is divisorial iff it is
    a rational point in the given integral affine structure; 
    in this case, there exists a 
    blowup $\pi'\ge\pi$  such that $|\hDelta^*(\pi')|=|\hDelta^*(\pi)|$
    as subsets of $\hcV_0^*$ and such that $v$ belongs to a one-dimensional
    cone of $|\hDelta^*(\pi')|$;
  \item[(iv)]
    if $v\in|\hDelta^*(\pi)|$ is divisorial and lies in the interior of
    a two-dimensional cone, say $\hsigma_{12}$ of $|\hDelta(\pi)|$,
    then $b(v)\ge b_1+b_2$. 
  \end{itemize}  
\end{Lemma}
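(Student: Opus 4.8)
\textbf{Proof plan for Lemma~\ref{L123}.}
The four assertions are of different natures, so I would treat them separately, all relying on the local description of the retraction $r_\pi$ in~\S\ref{S138} (if $\xi = c_\pi(v)$ and $E_i = (\zeta_i = 0)$, $i \in J$, are the exceptional primes through $\xi$, then $w = r_\pi(v)$ is the monomial valuation with $w(\zeta_i) = t_i := v(\zeta_i)$) together with the structure of the dual fan from~\S\ref{S157} and the numerical recursions in~\S\ref{S252}.

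For~(i): since $v \notin |\hDelta^*(\pi)|$, the center $\xi$ of $v$ on $X_\pi$ must be a \emph{closed} point (otherwise $v$ itself would be monomial in the single coordinate cutting out the exceptional prime, hence in $|\hDelta^*(\pi)|$), so $\xi$ lies on one exceptional prime $E_1$ (if $\xi$ is free) or two, $E_1, E_2$ (if $\xi$ is a satellite point). In the free case $w = t_1 \ord_{E_1}$ is divisorial. In the satellite case $w$ is monomial with weights $t_1, t_2 > 0$ on $\zeta_1, \zeta_2$; its numerical invariants can be read off from~\S\ref{S129}: if $t_1/t_2 \in \Q$ then $\ratrk w = \trdeg w = 1$ and $w$ is divisorial, whereas if $t_1/t_2 \notin \Q$ then $w$ would be irrational, but then $w$ cannot equal $r_\pi(v) = r_{\pi'}(r_\pi(v))$ after one more blowup unless... — actually the cleaner route is: $w$ lies in the relative interior of the two-dimensional cone $\hsigma_{12}$, and $w = r_\pi(v)$ depends only on $v$; blowing up the satellite point $E_1 \cap E_2$ repeatedly (as in~\S\ref{S163}, Zariski's result) either eventually makes the center of $v$ land on an exceptional prime — forcing $w$ divisorial — or never does, but in the latter case one checks $v \in |\hDelta^*(\pi)|$ after all. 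I expect this dichotomy to need a short but careful argument; it is the one genuinely delicate point, essentially equivalent to saying that a monomial valuation with irrational weight ratio is not a retraction of anything outside the fan.

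For~(ii): granting $v$ divisorial and $v \notin |\hDelta^*(\pi)|$, I would run the construction of~\S\ref{S163}: successively blow up the center of $v$, obtaining $\pi' \geq \pi$ with $v = b(v)^{-1}\ord_{E}$ for a new exceptional prime $E$. Along this chain of blowups, each new prime is created over a point lying on (the strict transform of) the primes through $c_\pi(v)$, and the invariants $b$ transform by~\eqref{e212} (free: $b$ unchanged) and~\eqref{e214} (satellite: $b$ adds). Tracking $w = r_\pi(v)$: in the free case $w = t_1 \ord_{E_1}$ with $b(w) = b_1$, and $\ord_E$ has $b_E = b_1$ too, so $b(w) = b_1 \mid b(v)$; in the satellite case $b(v)$ is a positive integer combination of $b_1, b_2$ that is a multiple of... — here one uses that $w$ is monomial with weights $t_1, t_2$, $t_1/t_2 = p/q$ in lowest terms, and $b(w) = q b_1 + p b_2$ (or a proportional quantity), while $b(v)$ is computed along the blowup tree and is seen inductively to be a multiple of $b(w)$ using~\eqref{e214} and the Euclidean-algorithm structure of satellite blowups. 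I would phrase this as an induction on the number of blowups needed to realize $v$ as a divisorial prime.

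For~(iii): that a point of $|\hDelta^*(\pi)|$ is divisorial iff it is rational in the integral affine structure is immediate from~\S\ref{S129}/\eqref{e130} (a monomial valuation with weights $t_i$ has $\Gamma_v = \sum \Z t_i$, so $\ratrk = 1$ exactly when the $t_i$ are commensurable, i.e. the point is rational). For the refinement, given such a rational $v$ in the interior of a cone $\hsigma_{12}$, I would invoke Lemma~\ref{L210}~(iii) and its inverse Lemma~\ref{L204}: after a sequence of blowups at satellite points subdividing $\hsigma_{12}$ (mirroring the continued-fraction expansion of $t_1/t_2$), one reaches $\pi' \geq \pi$ with $\iota_{\pi'\pi}$ an isometry, $|\hDelta^*(\pi')| = |\hDelta^*(\pi)|$ as subsets of $\hcV_0^*$ (since no free blowups are performed, no new directions are added), and $v$ now sitting at a vertex, i.e. in a one-dimensional cone. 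For~(iv): with $v$ in the interior of $\hsigma_{12}$, write $v = t_1 e_1 + t_2 e_2$ with $t_1, t_2 > 0$; using~\eqref{e208} one has $b(v) = v(\fm_0) \cdot(\text{normalization})$ — more directly, realizing $v$ as $b(v)^{-1}\ord_E$ via~(iii), the new prime $E$ arises by a satellite blowup inside $\hsigma_{12}$, and the very first such blowup already gives a prime with $b = b_1 + b_2$ by~\eqref{e214}; every subsequent satellite blowup only increases (or keeps) this value by~\eqref{e214} again, so $b(v) \geq b_1 + b_2$. The main obstacle throughout is the bookkeeping in~(i) and~(ii) linking the continued-fraction combinatorics of satellite blowups to the divisibility of the $b$-invariants; everything else is a direct consequence of results already established in the excerpt.
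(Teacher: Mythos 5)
Your treatment of (iii) and (iv) matches the paper's: rationality of the point is read off from the value group of a monomial valuation, the refinement $\pi'$ is produced by satellite blowups (which leave the dual fan unchanged as a subset of $\hcV_0^*$), and the bound $b(v)\ge b_1+b_2$ follows from the first satellite blowup via~\eqref{e214}/\eqref{e161} together with monotonicity under further satellite blowups. The problems are in (i) and (ii).

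For (i), in the satellite case you correctly isolate the key claim --- that a point outside the fan cannot retract to an irrational point --- but you neither prove it nor should you expect the ``repeated blowup'' dichotomy to substitute for it; that route just postpones the same question to finer and finer fans. The actual argument is short and direct: if $t_1=v(\zeta_1)$ and $t_2=v(\zeta_2)$ are rationally independent, then the numbers $t_1\b_1+t_2\b_2$ are pairwise distinct as $\b$ ranges over $\Z_{\ge0}^2$, so for any $\phi=\sum c_\b\zeta^\b$ the minimum of $t\cdot\b$ over $\{c_\b\ne0\}$ is attained at a unique monomial and the ultrametric inequality forces $v(\phi)$ to equal that minimum. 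Hence $v$ \emph{is} the monomial valuation $w$, contradicting $v\notin|\hDelta^*(\pi)|$. So $t_1,t_2$ are rationally dependent, $\ratrk w=1$, and $w$ is divisorial. You should supply this; without it (i) is unproved.

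For (ii), your induction in the satellite case does not work as stated. If $w$ lies in the interior of $\hsigma_{12}$ with weight ratio $p/q$ in lowest terms, then $b(w)=pb_1+qb_2$, but the primes created along the Euclidean/continued-fraction sequence of satellite blowups have $b$-invariants $p_jb_1+q_jb_2$ for the intermediate convergents $(p_j,q_j)$, and these are in general \emph{not} multiples of $pb_1+qb_2$; so ``$b(v)$ is seen inductively to be a multiple of $b(w)$'' fails at every intermediate step. The missing ingredient is to first apply part~(iii) \emph{to $w$}: replace $\pi$ by $\pi'\ge\pi$ with $|\hDelta^*(\pi')|=|\hDelta^*(\pi)|$ as subsets of $\hcV_0^*$ and with $w$ on a one-dimensional cone, say $w=v_{E'}$, $b(w)=b_{E'}$. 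Since the fan is unchanged, $r_{\pi'}(v)=w$ still, which forces the center of $v$ on $X_{\pi'}$ to be a \emph{free} point of $E'$ (a satellite center would make $r_{\pi'}(v)$ lie in the interior of a two-dimensional cone). Now the first blowup in the Zariski sequence is free, so $b$ is preserved by~\eqref{e212}/\eqref{e160}, all subsequent centers lie over that free point so every satellite sum~\eqref{e214} involves only $E'$ and primes already known to have $b$ divisible by $b(w)$, and the divisibility follows by induction.
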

\begin{proof}[Sketch of proof]
  For~(i), let $\xi$ be the  common center of 
  $v$ and $w$ on $X_\pi$. If there is a unique exceptional prime 
  $E_1$ containing $\xi$, then it is clear that $w$ is proportional to 
  $\ord_{E_1}$ and hence divisorial. Now suppose $\xi$ is the intersection
  point between two distinct exceptional primes $E_1$ and $E_2$.
  Pick coordinates $\zeta_1$, $\zeta_2$ at $\xi$ such that 
  $E_i=(\zeta_i=0)$ for $i=1,2$.
  If $v(\zeta_1)$ and $v(\zeta_2)$ are rationally independent, then $v$
  gives different values to all monomials
  $\zeta_1^{\b_1}\zeta_2^{\b_2}$, so we must have $v=w$,
  contradicting $v\not\in|\hDelta^*(\pi)|$. 
  Hence $w(\zeta_1)=v(\zeta_1)$ and $w(\zeta_2)=v(\zeta_2)$ are 
  rationally dependent, so $\ratrk w=1$.
  Since $w$ is quasimonomial, it must be divisorial.

  For~(iii), we may assume that the center of $v$ on $X_\pi$ 
  is the intersection point between two distinct exceptional 
  primes $E_1=(\zeta_1=0)$ and $E_2=(\zeta_2=0)$ as above.
  Then $v$ is monomial in coordinates $(\zeta_1,\zeta_2)$ and it is clear
  that $\ratrk v=1$ if $v(\zeta_1)/v(\zeta_2)\in\Q$ and 
  $\ratrk v=2$ otherwise. This proves the first statement.
  Now suppose $v$ is divisorial.
  We can construct $\pi'$ in~(iii) by successively blowing
  up the center of $v$ using the result of Zariski referred to 
  above. Since $v$ is monomial, the center is always a satellite point
  and blowing it up does not change the dual fan, viewed as a 
  subset of $\hcV^*_0$.

  When proving~(ii) we may by~(iii) assume that $w$ belongs to a 
  one-dimensional cone $\hsigma_1$ of $|\hDelta(\pi)|$. 
  Then $b(w)=b_1$. We now successively blow up the 
  center of $v$. This leads to a sequence of divisorial valuations
  $w_0=w,w_1,\dots,w_m=v$. Since the first blowup is at a free point,
  we have $b(w_1)=b_1$ in view of~\eqref{e160}.
  Using~\eqref{e160} and~\eqref{e161} one now shows by induction
  that $b_1$ divides $b(w_j)$ for $j\le m$, concluding the proof of~(ii).

  Finally, in~(iv) we obtain $v$ after a finitely many satellite blowups,
  so the result follows from~\eqref{e161}.
\end{proof}
%
%
\subsubsection{Irrational valuations}\label{S236}
A valuation $v\in\hcV_0^*$ is \emph{irrational} if $\trdeg v=0$, $\ratrk v=2$.
In this case $v$ is not divisorial but still quasimonomial;
it belongs to a dual fan $|\hDelta^*(\pi)|$ for some 
blowup $\pi\in\fB_0$ and for any such $\pi$, $v$ belongs to the interior
of a two-dimensional cone.
%
%
\subsubsection{Infinitely singular valuations}\label{S235}
A valuation $v\in\hcV_0^*$ is \emph{infinitely singular} if it has the 
numerical invariants $\ratrk v=1$, $\trdeg v=0$.
Every infinitely singular valuation in 
the valuative tree $\cV_0$ is an end.
However, some of these ends still belong to hyperbolic space $\H$, 
\begin{Example}\label{E210}
  Consider a sequence $(v_j)_{j=0}^\infty$ defined as follows.
  First, $v_0=\ord_0=\ord_{E_0}$. Then $v_j=b_j^{-1}\ord_{E_j}$ is
  defined inductively as follows: for $j$ odd, $E_j$ is obtained by 
  blowing up a free point on $E_{j-1}$ and for $j$ even, 
  $E_j$ is obtained by blowing up the satellite point 
  $E_{j-1}\cap E_{j-2}$. The sequence $(v_{2j})_{j=0}^\infty$ is increasing
  and converges to an infinitely singular valuation $v$,
  see Figure~\ref{F206}.
  We have $b_{2n}=b_{2n+1}=2^{-n}$, $A(v_{2n})=3-2^{-n}$ and 
  $\a(v_{2n})=-\frac13(5-2^{1-2n})$.
  Thus $\a(v)=-5/3$ and $A(v)=3$. 
  In particular, $v\in\H$.
\end{Example}
For more information on infinitely singular valuations, 
see~\cite[Appendix~A]{valtree}. 
We shall not describe them further here, but they do play a role in dynamics.
\begin{figure}[ht]
 \includegraphics{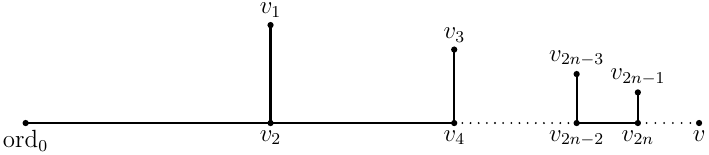}
 \caption{An infinitely singular valuation.
   The divisorial valuation $v_j$ is obtained by performing a sequence
   of $j+1$ blowups, every other free, and every other a satellite blowup.
   The picture is not to scale: we have 
   $d(v_{2n},v_{2n+2})=d(v_{2n+1},v_{2n+2})=2^{-(2n+1)}$ for $n\ge0$.
   Further, $\a(v)=-5/3$, $A(v)=-3$ and $d(\ord_0,v)=2/3$. 
   In particular, $v$ belongs to  hyperbolic space $\H$.
 }\label{F206}
\end{figure}
 %
%
%
%
\subsection{Potential theory}\label{S113}
In~\S\ref{S116} we outlined the first elements of
a potential theory on a general metric tree and
in~\S\ref{S134} we applied this to the 
Berkovich projective line.
 
However, the general theory applied literally to the
valuative tree $\cV_0$  does not quite lead
to a satisfactory notion. The reason is that 
one should really view a function on $\cV_0$ as
the restriction of a homogeneous function on the 
cone $\hcV_0$. In analogy with the 
situation over the complex numbers, one expects that 
for any ideal $\fa\subseteq R$, the function $\log|\fa|$
defined by\footnote{The notation reflects the fact that $|\cdot|:=e^{-v}$ 
  is a seminorm on $R$, see~\eqref{e209}.}
\begin{equation*}
  \log|\fa|(v):=-v(\fa)
\end{equation*}
should be plurisubharmonic on $\hcV_0$.
Indeed, $\log|\fa|$ is a maximum of finitely 
many functions of the form $\log|\phi|$,
where $\phi\in R$ is a polynomial.
As a special case, the function 
$\log|\fm_0|$ should be plurisubharmonic on $\hcV_0$.
This function has a pole (with value $-\infty$) 
at the point $\triv_0$ and so should 
definitely not be pluriharmonic on 
$\hcV_0$. However, it is constantly equal to $-1$ on $\cV_0$,
and so would be harmonic there with the usual definition of the Laplacian.
%
%
\subsubsection{Subharmonic functions and Laplacian on $\cV_0$}\label{S144}
An \textit{ad hoc} solution to the problem above is to
extend the valuative tree $\cV_0$ to a slightly larger tree
$\tcV_0$ by connecting the root $\ord_0$ to 
a ``ground'' point $G\in\tcV_0$ using an interval of length one.
See Figure~\ref{F105}.

\begin{figure}[ht]
  \includegraphics{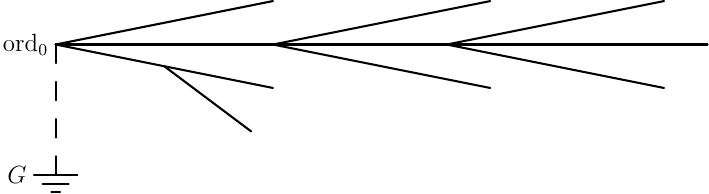}
 \caption{Connecting the valuative tree $\cV_0$ to ``ground'' 
    gives rise to the auxiliary tree $\tcV_0$.}\label{F105}
\end{figure}

Denote the Laplacian on $\tcV_0$ by $\tDelta$.
We define the class $\SH(\cV_0)$ 
of \emph{subharmonic functions}\footnote{If $\f\in\SH(\cV_0)$, then $-\f$
is a \emph{positive tree potential} in the sense of~\cite{valtree}.}
on $\cV_0$ as the set of restrictions to $\cV_0$ of functions
$\f\in\QSH(\tcV_0)$ with
\begin{equation*}
  \f(G)=0
  \quad\text{and}\quad
  \tDelta\f=\rho-a\delta_G,
\end{equation*}
where $\rho$ is a positive measure supported
on $\cV_0$ and $a=\rho(\cV_0)\ge0$. 
In particular, $\f$ is affine
of slope $\f(\ord_0)$ on the segment 
$[G,\ord_0[\,=\tcV_0\setminus\cV_0$.
We then define 
\begin{equation*}
  \Delta\f:=\rho=(\tDelta\f)|_{\cV_0}.
\end{equation*}
For example, if $\f\equiv -1$ on $\cV_0$, 
then $\tDelta\f=\delta_{\ord_0}-\delta_G$
and $\Delta\f=\delta_{\ord_0}$.

From this definition and the analysis in~\S\ref{S116} 
one deduces:
\begin{Prop}\label{P108}
  Let $\f\in\SH(\cV_0)$ and write $\rho=\Delta\f$.
  Then:
  \begin{itemize}
  \item[(i)]
    $\f$ is decreasing in the partial 
    ordering of $\cV_0$ rooted in $\ord_0$;
  \item[(ii)]
    $\f(\ord_0)=-\rho(\cV_0)$;
  \item[(iii)]
    $|D_\vv\f|\le\rho(\cV_0)$ for all
    tangent directions $\vv$ in $\cV_0$.
  \end{itemize}
\end{Prop}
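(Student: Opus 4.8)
All three assertions should follow from the corresponding facts about $\QSH(\tcV_0)$ and $\tDelta$ developed in~\S\ref{S116}, transported across the definition of $\SH(\cV_0)$. Write $\f\in\SH(\cV_0)$, extend it to $\tf\in\QSH(\tcV_0)$ with $\tf(G)=0$ and $\tDelta\tf=\rho-a\delta_G$, $a=\rho(\cV_0)$, and work on the rooted tree $\tcV_0$ with root $G$; note that $\ord_0$ lies on the segment $[G,\ord_0]$ at distance $1$ from $G$, and that $\cV_0$ is exactly the set of $v\in\tcV_0$ with $v\ge\ord_0$ in the ordering rooted at $G$.

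First I would prove~(ii). Since $\tDelta\tf=\rho-a\delta_G$ with $\rho$ supported on $\cV_0$ and $G$ an end of $\tcV_0$, the function $\tf$ is affine on the segment $]G,\ord_0[$, which contains no mass of $\tDelta\tf$ in its interior. Its slope there is constant; by Lemma~\ref{L212} (or directly from the integral representation~\eqref{e137} applied on $\tcV_0$ with reference point $G$), for $v$ on this segment one has $\tf(v)-\tf(G)=-\int_G^v(\tDelta\tf)\{z\ge y\}\,d\a(y)=-\int_G^v\rho(\cV_0)\,d\a(y)$, because every point of $\cV_0$ dominates every point of $]G,\ord_0[$ in the $G$-rooted order. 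Evaluating at $v=\ord_0$, whose $\a$-distance to $G$ is $1$, gives $\f(\ord_0)=\tf(\ord_0)=-\rho(\cV_0)=-a$, which is~(ii).

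Next, (i): for $w\le v$ in the $\ord_0$-rooted order on $\cV_0$ (equivalently, $[G,w]\subseteq[G,v]$ in $\tcV_0$), I would again invoke the estimate from Lemma~\ref{L212}, which gives $\f(v)-\f(w)=-\int_w^v(\tDelta\tf)\{z\ge y\}\,d\a(y)\le 0$, since $\tDelta\tf$ restricted to the segment $]w,v]\subseteq\cV_0$ equals the nonnegative measure $\rho$ there and $\{z\ge y\}$ has nonnegative $\rho$-mass. Hence $\f$ is decreasing, proving~(i). Finally, for~(iii), the bound $|D_\vv\f|\le\rho(\cV_0)$ at a tangent direction $\vv$ in $\cV_0$: if $\vv$ points away from $\ord_0$ this is the directional-derivative estimate for $\rho_0$-subharmonic functions~\eqref{e138}, applied on $\tcV_0$ with $\rho_0=a\delta_G$, since $D_\vv\tf=-(\tDelta\tf)(U(\vv))+(\text{boundary terms})$ and the total variation is controlled by the total mass $a=\rho(\cV_0)$; more precisely $0\ge D_\vv\f\ge -\rho(U(\vv))\ge-\rho(\cV_0)$ by~\eqref{e210} and the decreasing property. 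If $\vv$ points toward $\ord_0$, then $\vv$ is (the image of) the tangent direction at $v$ containing $G$, and $D_\vv\f$ equals the slope of $\f$ along $[G,v]$, which by the computation in~(ii)–(i) is the negative of $(\tDelta\tf)\{z\ge v\}=\rho\{z\ge v\}\in[0,\rho(\cV_0)]$; so $|D_\vv\f|\le\rho(\cV_0)$ as well.

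\textbf{Main obstacle.} The only delicate point is bookkeeping about which tree one is working on: one must consistently pass between the root $\ord_0$ used in the statement of the proposition and the root $G$ natural for the Laplacian $\tDelta$, and check that the auxiliary segment $[G,\ord_0]$ carries no mass of $\rho$ and contributes the correct normalization $\a$-length $1$. Once that is set up, (i)–(iii) are immediate consequences of the monotonicity and the slope estimate~\eqref{e138}–\eqref{e210} for quasisubharmonic functions on a metric tree established in~\S\ref{S116}, together with Lemma~\ref{L212}. I expect no genuinely new estimate is required beyond those.
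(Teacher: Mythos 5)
Your proof is correct and is exactly the deduction the paper has in mind: the paper states Proposition~\ref{P108} with no proof, deferring to the analysis of~\S\ref{S116}, and your argument (extend to $\tcV_0$, use the integral representation behind Lemma~\ref{L212} for~(i) and~(ii), and the gradient bounds~\eqref{e138} and~\eqref{e210} for~(iii)) is precisely that deduction, with the bookkeeping about the roots $G$ and $\ord_0$ and the unit-length segment $[G,\ord_0]$ handled correctly. The only blemish is an inconsequential sign slip in the last case of~(iii): when $\vv$ points toward $\ord_0$, the outward derivative is $D_\vv\f=+\rho\{z\ge v\}\ge0$ (since $\f$ increases toward the root by~(i)), not its negative, but the bound $|D_\vv\f|\le\rho(\cV_0)$ is unaffected.
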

As a consequence we have the estimate
\begin{equation}\label{e144}
  -\a(v)\f(\ord_0)\le\f(v)\le\f(\ord_0)\le 0
\end{equation}
for all $v\in\cV_0$, where $\a:\cV_0\to[-\infty,-1]$
is the parametrization given by~\eqref{e207}.
The exact sequence in~\eqref{e139} shows that 
\begin{equation}\label{e142}
  \Delta:\SH(\cV_0)\to\cM^+(\cV_0),
\end{equation}
is a homeomorphism whose inverse is given by 
\begin{equation}\label{e164}
  \f(v)=\int_{\cV_0}\a(w\wedge_{\ord_0}v)d\rho(w).
\end{equation}
In particular, for any $C>0$, the set 
$\{\f\in\SH(\cV_0)\mid\f(\ord_0)\ge-C\}$
is compact. 
Further, if $(\f_i)_i$ is a decreasing 
net in $\SH(\cV_0)$, and $\f:=\lim\f_i$,
then either $\f_i\equiv-\infty$ on $\cV_0$ 
or $\f\in\SH(\cV_0)$. 
Moreover, if $(\f_i)_i$ is a family in $\SH(\cV_0)$
with $\sup_i\f(\ord_\infty)<\infty$, 
then the upper semicontinuous regularization of 
$\f:=\sup_i\f_i$ belongs to $\SH(\cV_0)$.
%
%
\subsubsection{Subharmonic functions from ideals}\label{S145}
The definitions above may seem arbitrary, but the next result 
justifies them. It shows that the Laplacian is intimately connected to
intersection numbers and shows that the generalized metric on $\cV_0$
is the correct one.
\begin{Prop}\label{P110}
  If $\fa\subseteq R$ is a primary ideal, then 
  the function $\log|\fa|$ on $\cV_0$ is subharmonic.
  Moreover, if $\pi\in\fB_0$ is a log resolution of $\fa$,
  with exceptional primes $E_i$, $i\in I$, 
  and if we write $\fa\cdot\cO_{X_\pi}=\cO_{X_\pi}(Z)$, then
  \begin{equation*}
    \Delta\log|\fa|=\sum_{i\in I}b_i(Z\cdot E_i)\delta_{v_i},
  \end{equation*}
  where $b_i=\ord_{E_i}(\fm_0)$ and 
  $v_i=b_i^{-1}\ord_{E_i}\in\cV_0$.
\end{Prop}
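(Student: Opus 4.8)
\textbf{Proof plan for Proposition~\ref{P110}.}
The plan is to verify the two assertions separately, first identifying the Laplacian on a dual graph and then using that $\log|\fa|$ is constructed from an exceptional divisor on a log resolution. First I would fix a log resolution $\pi\in\fB_0$ of $\fa$ with exceptional primes $E_i$, $i\in I$, and write $\fa\cdot\cO_{X_\pi}=\cO_{X_\pi}(Z)$ with $Z=-\sum_i r_iE_i\in\Div(\pi)$ (the coefficients $r_i$ being nonnegative since $\fa$ is an ideal, indeed antieffective since $Z$ is relatively nef). By Lemma~\ref{L125}, for every $v\in\cV_0$ we have $(r_\pi v)(\fa)=v(\fa)$, so $\log|\fa|=\log|\fa|\circ r_\pi$ is the pullback under the retraction $r_\pi:\cV_0\to|\Delta(\pi)|$ of its restriction to the finite subtree $|\Delta(\pi)|$. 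In view of~\eqref{e121} (applied on $\tcV_0$) it therefore suffices to compute $\Delta(\log|\fa|)$ on the finite metric tree $|\Delta(\pi)|$ and check it is a positive atomic measure supported on the vertices $v_i$ with the stated masses; subharmonicity on all of $\cV_0$ then follows from $r_\pi^*\SH(|\Delta(\pi)|,\ord_0)\subseteq\SH(\cV_0)$, i.e.\ the tree-potential-theoretic fact in~\S\ref{S116}.

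The core computation is thus local on $|\Delta(\pi)|$. On the open edge $\sigma_{ij}$ (corresponding to a satellite point or to the blowup relating $E_i$ and $E_j$), parametrize by the divisorial valuations $w_t=t\ord_{E_i}+\dots$; using the explicit monomial description of $\emb_\pi$ in Lemma~\ref{L205} one has $\log|\fa|(w)=-w(Z)=\sum_k r_k\langle\ev_\pi(w),E_k\rangle$, which is integral affine in the integral affine structure $\Aff(\pi)$, hence affine with respect to the generalized metric~\eqref{e225} up to the multiplicity factor; consequently $\Delta\log|\fa|$ vanishes on the interiors of all edges. So $\Delta\log|\fa|$ is atomic, supported at the vertices $v_i=b_i^{-1}\ord_{E_i}$. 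To compute the mass at $v_i$ I would sum the outgoing directional derivatives $D_\vv(\log|\fa|)$ over all tangent directions $\vv$ at $v_i$ in $|\Delta(\pi)|$. Each such direction corresponds to an adjacent exceptional prime $E_j$ meeting $E_i$; along that direction $\log|\fa|$ changes by $-\ord_{E_j}(Z)\,d\a$ scaled appropriately, and by~\eqref{e225} the relevant slope works out, after clearing the $b$'s, to $b_i(Z\cdot E_j)$ per neighbor. Using the duality divisors $\vE_i$ and the relation $Z=-\sum r_k E_k$ together with $(\vE_i\cdot E_j)=\delta_{ij}$, the alternating sum of slopes at $v_i$ telescopes to $b_i\sum_j(Z\cdot E_j)(\text{something})$; the identity $(Z\cdot E_i)=\sum_j(\text{edge contributions})$ is exactly the statement that the discrete Laplacian on the dual graph, with weights coming from~\eqref{e225}, reproduces the intersection numbers $(Z\cdot E_i)$. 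I expect the cleanest route is to first treat $\fa=\fm_0$ (where $Z=Z_0$ and $\log|\fm_0|\equiv-1$ on $\cV_0$, giving $\tDelta=\delta_{\ord_0}-\delta_G$, consistent with $b_0(Z_0\cdot E_0)=1\cdot 1$) and then reduce the general case to this by writing $\fa$ via its log resolution and using additivity of $\Delta$ under the natural pairing with exceptional divisors.

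The main obstacle will be the bookkeeping in this last step: matching, with the correct $b_i$-factors, the tree-theoretic directional derivatives of $w\mapsto -w(Z)$ at each vertex $v_i$ against the intersection numbers $(Z\cdot E_i)$, across the three geometric situations (a free point on $E_i$, a satellite point $E_i\cap E_j$, and the root $\sigma_0$). Here I would lean on the auxiliary formulas in~\S\ref{S252}, especially~\eqref{e217} and~\eqref{e225}, and on the behavior of the invariants under free and satellite blowups recorded in~\eqref{e212}--\eqref{e215}, arguing by induction on the number of blowups in $\pi$ exactly as in the proof of Proposition~\ref{P203}: the identity is trivial for $\pi=\pi_0$, and blowing up one more point either adds a new leaf (free case) or subdivides an edge (satellite case), in both of which the metric~\eqref{e225} and the intersection numbers transform compatibly. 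The positivity $(Z\cdot E_i)\ge 0$, hence $\Delta\log|\fa|\ge 0$, is automatic from Proposition~\ref{P202} since $\cO_{X_\pi}(Z)$ is relatively base point free, which also re-confirms subharmonicity intrinsically.
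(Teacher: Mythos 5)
Your plan follows the paper's proof essentially verbatim: reduce to the finite dual graph via Lemma~\ref{L125}, observe that $\log|\fa|$ is affine on the interiors of the edges so that $\Delta\log|\fa|$ is atomic at the vertices, and then compute each vertex mass by summing directional derivatives weighted by $d(v_i,v_j)^{-1}=b_ib_j$, with the telescoping carried out exactly through the relation $\fm_0\cdot\cO_{X_\pi}=\cO_{X_\pi}(Z_0)$ and $(Z_0\cdot E_i)=\delta_{i0}$, plus the extra ground-segment term $\f(\ord_0)$ at the root. The only difference is organizational (the paper performs the vertex computation directly rather than by induction on blowups), so the approach is the same.
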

\begin{proof}
  Write $\f=\log|\fa|$. 
  It follows from Lemma~\ref{L125} that 
  $\f=\f\circ r_\pi$, so 
  $\Delta\f$
  is supported on the dual graph $|\Delta(\pi)|\subseteq\cV_0$.
  Moreover, the proof of the same lemma shows that $\f$
  is affine on the interior of each 1-dimensional 
  simplex  so $\Delta\f$ is zero there. Hence it suffices to compute
  the mass of $\Delta\f$ at each $v_i$.

  Note that $\pi$ dominates $\pi_0$, the simple blowup of 0.
  Let $E_0$ be the strict transform of the exceptional divisor of 
  $\pi_0$.  
  Write $\fm_0\cdot\cO_{X_\pi}=\cO_{X_\pi}(Z_0)$,
  where  $Z_0=-\sum_i b_iE_i$. 
  Since $\pi_0$ already is a log resolution of $\fm_0$
  we have 
  \begin{equation}\label{e145}
    (Z_0\cdot E_0)=1
    \quad\text{and}\quad
    (Z_0\cdot E_j)=0,\ j\ne 0.
  \end{equation}
  Fix $i\in I$ and let $E_j$, $j\in J$  be the exceptional primes 
  that intersect $E_i$ properly. 
  First assume $i\ne 0$. 
  Using~\eqref{e145} and $(E_i\cdot E_j)=1$ for $j\in J$ we get 
  \begin{multline*}
    \Delta\f\{v_i\}
    =\sum_{j\in J}\frac{\f(v_j)-\f(v_i)}{d(v_i,v_j)}
    =\sum_{j\in J}b_ib_j(\f(v_j)-\f(v_i))=\\
    =\sum_{j\in J}(b_i\ord_{E_j}(Z)-b_j\ord_{E_i}(Z))(E_i\cdot E_j)=\\
    =b_i(Z\cdot E_i)-\ord_{E_i}(Z)(Z_0\cdot E_i)
    =b_i(Z\cdot E_i).
  \end{multline*}  
  If instead $i=0$, then, by the definition 
  of the Laplacian on $\cV_0\subseteq\tcV_0$, we get
  \begin{multline*}
    \Delta\f\{v_0\}
    =\sum_{j\in J}\frac{\f(v_j)-\f(v_0)}{d(v_0,v_j)}
    +\f(\ord_0)
    =\sum_{j\in J}b_j(\f(v_j)-\f(v_i))
    +\f(\ord_0)=\\
    =\sum_{j\in J}(\ord_{E_j}(Z)-b_j\f(\ord_0))(E_j\cdot E_0)
    +\f(\ord_0)=\\
    =(Z\cdot E_0)-\f(\ord_0)(Z_0\cdot E_0)+\f(\ord_0)
    =(Z\cdot E_0),
  \end{multline*}  
  which completes the proof. (Note that $b_0=1$.)
\end{proof}
\begin{Cor}\label{C110}
  If $v=v_E=b_E^{-1}\ord_E\in\cV_0$ is a divisorial valuation, 
  then there exists a primary ideal $\fa\subseteq R$ 
  such that $\Delta\log|\fa|=b_E\delta_{v_E}$.
\end{Cor}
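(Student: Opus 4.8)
The idea is to realize the given divisorial valuation $v_E=b_E^{-1}\ord_E$ as a vertex of a dual graph and then invoke Proposition~\ref{P110} with a carefully chosen primary ideal. Starting from the divisorial valuation $\ord_E$, I would first pick a blowup $\pi\in\fB_0$ for which $E$ is an exceptional prime; this exists by definition of divisoriality (cf.~\S\ref{S163} and Lemma~\ref{L123}~(iii)). Then $v_E$ is one of the vertices $v_i$ of the embedded dual graph $|\Delta(\pi)|$, say $v_E=v_{i_0}$ with $b_{i_0}=b_E$. Now I need a primary ideal $\fa$ whose log-resolution data at $\pi$ singles out $v_E$ with the right coefficient.

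\textbf{Key steps.} The natural candidate is $\fa=\pi_*\cO_{X_\pi}(Z)$ for a suitable relatively nef exceptional divisor $Z\in\Div(\pi)$. By Proposition~\ref{P202}, if $Z$ is relatively nef then $\cO_{X_\pi}(Z)$ is relatively base point free, and by the discussion in~\S\ref{S215} we then have $\fa\cdot\cO_{X_\pi}=\cO_{X_\pi}(Z)$, so $\pi$ is a log resolution of the primary ideal $\fa$. Applying Proposition~\ref{P110} gives
\begin{equation*}
  \Delta\log|\fa|=\sum_{i\in I}b_i(Z\cdot E_i)\,\delta_{v_i}.
\end{equation*}
So I want to choose $Z$ relatively nef with $(Z\cdot E_{i_0})=1$ and $(Z\cdot E_i)=0$ for $i\ne i_0$. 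But that is exactly the dual basis divisor: take $Z=\vE_{i_0}$ from~\S\ref{S159}. That divisor is relatively nef (indeed it generates an extremal ray of the nef semigroup) and antieffective, and by construction $(\vE_{i_0}\cdot E_i)=\delta_{i i_0}$. Hence with $\fa=\pi_*\cO_{X_\pi}(\vE_{i_0})$ we get $\Delta\log|\fa|=b_{i_0}\delta_{v_{i_0}}=b_E\delta_{v_E}$, which is the claim. One should also check that $\fa$ is genuinely $\fm_0$-primary: since $\vE_{i_0}$ is antieffective and exceptional, $\cO_{X_\pi}(\vE_{i_0})\subseteq\cO_{X_\pi}$, so $\fa\subseteq R$, and $\fa$ is primary because $\vE_{i_0}$ is supported on all of $\pi^{-1}(0)$ (its coefficients are strictly negative in every $E_j$ by the positivity discussion in~\S\ref{S159}), forcing $\fa$ to contain a power of $\fm_0$.

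\textbf{Main obstacle.} The routine parts are the intersection-theoretic bookkeeping and verifying primariness; the one point requiring a little care is making sure that $\vE_{i_0}$, viewed in $\Div(\pi)$ for the specific $\pi$ chosen, really does have $v_E$ as the corresponding vertex $v_{i_0}$ with $b_{i_0}=b_E$, and that enlarging $\pi$ (if needed, via Lemma~\ref{L123}~(iii) to arrange $E$ to be an honest exceptional prime rather than sitting over a closed center) does not disturb the conclusion — but since $\vE$ is stable under pullback ($\vE_{\pi'}=\mu^*\vE_\pi$, see~\S\ref{S249}) and the Laplacian computation only involves the vertex $v_E$, this causes no difficulty. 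Thus the proof is essentially a one-line application of Proposition~\ref{P110} to $\fa=\pi_*\cO_{X_\pi}(\vE_E)$.
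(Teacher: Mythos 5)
Your proof is correct and follows exactly the paper's own argument: realize $E$ as an exceptional prime of some blowup $\pi$, take the dual divisor $\vE$ (relatively nef), use Proposition~\ref{P202} to produce a primary ideal $\fa$ with $\fa\cdot\cO_{X_\pi}=\cO_{X_\pi}(\vE)$, and conclude via Proposition~\ref{P110}. The extra checks you include (primariness, stability of $\vE$ under pullback) are fine but not needed beyond what the cited results already provide.
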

\begin{proof}
  Let $\pi\in\fB_0$ be a blowup such that
  $E$ is among the exceptional primes $E_i$, $i\in I$.
  As in~\S\ref{S159} above, define 
  $\vE\in\Div(\pi)$ by $(\vE\cdot F)=\delta_{EF}$.
  Thus $\vE$ is relatively nef, so by Proposition~\ref{P202}
  there exists a primary ideal $\fa\subseteq R$ such that 
  $\fa\cdot\cO_{X_\pi}=\cO_{X_\pi}(\vE)$. 
  The result now follows from Proposition~\ref{P110}.
\end{proof}
\begin{Remark}
  One can show that the function $\log|\fa|$ determines
  a primary ideal $\fa$ up to integral closure.
  (This fact is true in any dimension.)
  Furthermore, the product of two integrally closed ideals is
  integrally closed. Corollary~\ref{C110} therefore shows that 
  the assignment $\fa\mapsto\Delta\log|\fa|$ is a semigroup isomorphism
  between integrally closed primary ideals of $R$ and finite
  atomic measures on $\cV_0$ whose mass at a divisorial valuation 
  $v_E$ is an integer divisible by $b_E$.
\end{Remark}
\begin{Cor}\label{C111}
  If $\phi\in R\setminus\{0\}$ is a nonzero polynomial, then the function 
  $\log|\phi|$ on $\cV_0$ is subharmonic.
  More generally, the function $\log|\fa|$ is subharmonic
  for any nonzero ideal $\fa\subseteq R$.
\end{Cor}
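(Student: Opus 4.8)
\textbf{Proof strategy for Corollary~\ref{C111}.}

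The plan is to reduce the case of an arbitrary nonzero ideal to the case of a single polynomial, and then to reduce the single-polynomial case to the primary-ideal case already established in Proposition~\ref{P110}. First I would treat a nonzero polynomial $\phi\in R\setminus\{0\}$. Since $\phi$ may vanish along a curve through $0$, the ideal $(\phi)$ need not be $\fm_0$-primary, so Proposition~\ref{P110} does not apply directly. The idea is to approximate: for each $n\ge1$ the ideal $\fa_n:=(\phi)+\fm_0^n$ is $\fm_0$-primary, hence $\log|\fa_n|=\min\{\log|\phi|,-n\ord_0\}$ is subharmonic on $\cV_0$ by Proposition~\ref{P110}. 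Moreover $v(\fa_n)=\min\{v(\phi),n\,v(\fm_0)\}=\min\{v(\phi),n\}$ since $v(\fm_0)=1$ on $\cV_0$, so the functions $\log|\fa_n|$ increase pointwise to $\log|\phi|$ as $n\to\infty$ (they equal $\log|\phi|$ wherever $v(\phi)\le n$, and $v(\phi)<\infty$ on the dense set of points that are not curve semivaluations along the curve $\{\phi=0\}$, while at such curve semivaluations $\log|\phi|=-\infty$ and $\log|\fa_n|=-n\to-\infty$ as well).

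The monotone limit alone is not enough, because increasing limits of subharmonic functions need not be subharmonic without an upper semicontinuous regularization and a uniform upper bound. Here the bound is automatic: by~\eqref{e144} every $\log|\fa_n|$ satisfies $\log|\fa_n|(v)\le\log|\fa_n|(\ord_0)=-\ord_0(\fa_n)\le 0$, and $\ord_0(\fa_n)=\min\{\ord_0(\phi),n\}=\ord_0(\phi)$ for $n$ large, so the family is uniformly bounded above by $-\ord_0(\phi)$. Thus by the compactness/closure properties of $\SH(\cV_0)$ recorded after~\eqref{e164}, the upper semicontinuous regularization of $\sup_n\log|\fa_n|$ lies in $\SH(\cV_0)$. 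It remains to check that $\log|\phi|$ is already upper semicontinuous and equals this supremum; upper semicontinuity of $v\mapsto-v(\phi)$ follows from the fact that $v\mapsto v(\phi)$ is continuous on $\hcV_0^*$ and lower semicontinuous as an extended-real function when $\phi$ factors through a curve, and the pointwise identity $\sup_n\log|\fa_n|=\log|\phi|$ was verified above. This gives $\log|\phi|\in\SH(\cV_0)$.

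Finally, for a general nonzero ideal $\fa=(\phi_1,\dots,\phi_r)$ with each $\phi_j\ne0$, we have $v(\fa)=\min_j v(\phi_j)$, hence
\begin{equation*}
  \log|\fa|=\max_{1\le j\le r}\log|\phi_j|.
\end{equation*}
Each $\log|\phi_j|$ is subharmonic by the previous paragraph, the maximum of finitely many subharmonic functions is subharmonic (again using the closure property of $\SH(\cV_0)$ under finite suprema together with the uniform upper bound $\log|\fa|(v)\le\log|\fa|(\ord_0)=-\ord_0(\fa)\le0$ from~\eqref{e144}), and $\log|\fa|$ is a finite maximum of upper semicontinuous functions, hence upper semicontinuous. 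Therefore $\log|\fa|\in\SH(\cV_0)$, completing the proof.

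The main obstacle I anticipate is the justification that the increasing limit $\log|\phi|=\lim_n\log|\fa_n|$ genuinely lands in $\SH(\cV_0)$ rather than merely being a sup of subharmonic functions: one must be careful that $\log|\phi|$ is itself the (already upper semicontinuous) pointwise supremum, so that no regularization is actually needed, and that the value $-\infty$ along curve semivaluations is consistent with membership in $\SH(\cV_0)$ (which it is, since such valuations are ends of $\cV_0$ with $\a=-\infty$, outside hyperbolic space). Handling this boundary behavior cleanly—rather than the routine subharmonicity bookkeeping—is where the care is required.
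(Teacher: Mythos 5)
Your overall strategy is exactly the paper's: approximate by the primary ideals $\fa_n=\fa+\fm_0^n$ (or $(\phi)+\fm_0^n$), apply Proposition~\ref{P110}, and pass to the limit. But you have the direction of monotonicity backwards, and this propagates into the part of your argument where you claim the real work lies. Since $v(\fm_0)=1$ on $\cV_0$, we have $v(\fa_n)=\min\{v(\phi),n\}$, hence
\begin{equation*}
  \log|\fa_n|=-\min\{v(\phi),n\}=\max\{\log|\phi|,-n\},
\end{equation*}
a \emph{maximum}, not the minimum you wrote. Consequently the sequence $\log|\fa_n|$ \emph{decreases} pointwise to $\log|\phi|$ (at a point with $v(\phi)=5$ the values are $-1,-2,\dots,-5,-5,\dots$; at a curve semivaluation they are $-n\to-\infty$). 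Your entire second paragraph — the uniform upper bound, the upper semicontinuous regularization of a supremum, the verification that no regularization is actually needed — is addressing a problem that does not arise. What is actually needed is the closure of $\SH(\cV_0)$ under decreasing limits (either the limit is $\equiv-\infty$ or it is again subharmonic), which is recorded in the paper right after~\eqref{e164} and applies immediately since $\log|\fa|\not\equiv-\infty$ for $\fa\ne0$. With that substitution your proof is correct and coincides with the paper's, which in fact treats a general nonzero ideal $\fa$ in one stroke via $\fa_n=\fa+\fm_0^n$, making your final reduction to finitely many generators and a finite maximum unnecessary (though not wrong).
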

\begin{proof}
  For $n\ge 1$, the ideal $\fa_n:=\fa+\fm_0^n$ 
  is primary. Set $\f_n=\log|\fa_n|$.
  Then $\f_n$ decreases pointwise on $\cV_0$ to
  $\f:=\log|\fa|$. Since the $\f_n$ are subharmonic,  so is~$\f$.
\end{proof}
\begin{Exer}\label{E211}
  If $\phi\in\fm_0$ is a nonzero irreducible polynomial, show that 
  \begin{equation*}
    \Delta\log|\phi|=\sum_{j=1}^nm_j\delta_{v_j}
  \end{equation*}
  where $v_j$, $1\le j\le n$ are the curve valuations associated to the 
  local branches $C_j$ of $\{\phi=0\}$ at 0 and 
  where $m_j$ is the multiplicity of $C_j$ at $0$, that is,
  $m_j=\ord_0(\phi_j)$, where $\phi_j\in\hcO_0$ is a local equation of
  $C_j$.
  \textit{Hint} Let $\pi\in\fB_0$ be an embedded resolution of singularities of 
  the curve $C=\{\phi=0\}$. 
\end{Exer}
This exercise confirms that the generalized metric on $\cV_0$ 
is the correct one.

While we shall not use it, we have the following regularization result.
\begin{Thm}\label{T104}
  Any subharmonic function on $\cV_0$ is a decreasing limit of 
  a sequence $(\f_n)_{n\ge 1}$, where 
  $\f_n=c_n\log|\fa_n|$, with $c_n$ a positive rational number and 
  $\fa_n\subseteq R$ a primary ideal.
\end{Thm}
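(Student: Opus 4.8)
The plan is to reduce the regularization statement to the structure of subharmonic functions via their Laplacians, and then to approximate the relevant measures by finite atomic measures supported on divisorial valuations with the correct divisibility property. First I would invoke the homeomorphism $\Delta:\SH(\cV_0)\to\cM^+(\cV_0)$ from~\eqref{e142}, together with its explicit inverse~\eqref{e164}, to translate the problem: constructing a decreasing sequence $\f_n\searrow\f$ with $\f_n=c_n\log|\fa_n|$ amounts to constructing an increasing sequence of measures $\rho_n\nearrow\rho:=\Delta\f$ (in an appropriate sense guaranteeing monotone convergence of the potentials from above), where each $\rho_n$ is of the form $c_n\Delta\log|\fa_n|$ for $\fa_n$ a primary ideal. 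Here I would use that $\f$ is decreasing in the partial ordering (Proposition~\ref{P108}) and that the integrand $\a(w\wedge_{\ord_0}v)$ in~\eqref{e164} is increasing in $\rho$, so that $\rho_n\le\rho_{n+1}\le\rho$ with $\rho_n\to\rho$ weakly yields $\f_n\searrow\f$ pointwise.

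The core of the argument is therefore: given a positive Radon measure $\rho$ on $\cV_0$, produce finite atomic measures $\rho_n$ increasing to $\rho$, each of which is a positive rational multiple of $\Delta\log|\fa_n|$ for a primary ideal $\fa_n$. By Proposition~\ref{P110}, measures of the form $\Delta\log|\fa|$ are exactly finite atomic measures supported on divisorial valuations $v_i=b_i^{-1}\ord_{E_i}$ whose mass at $v_i$ is divisible by $b_i$ (as in the Remark following Corollary~\ref{C110}); and by Corollary~\ref{C110}, for any single divisorial $v_E$ the measure $b_E\delta_{v_E}$ is realized by a primary ideal. Using that finite sums of such ideals' log-functions add up (the product of integrally closed primary ideals is integrally closed, with Laplacian the sum), any finite atomic measure supported on divisorial valuations with rational masses is a positive rational multiple of some $\Delta\log|\fa|$: clear denominators and multiply each atom's coefficient up to a multiple of the relevant $b_E$. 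So the task becomes the purely tree-theoretic one of approximating an arbitrary positive measure on $\cV_0$ from below by finite atomic measures supported on divisorial valuations.

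For this last step I would use the pro-finite tree structure $\cV_0\simeq\varprojlim_\pi\Delta(\pi)$ from Theorem~\ref{T103}, together with the coherent-system-of-measures description of Radon measures on a pro-finite tree from~\S\ref{S203}. Concretely, fix an exhausting sequence of blowups $\pi_n$ and let $r_{\pi_n}:\cV_0\to|\Delta(\pi_n)|$ be the retractions; the pushforwards $(r_{\pi_n})_*\rho$ converge weakly to $\rho$. Each $(r_{\pi_n})_*\rho$ lives on the finite metric tree $|\Delta(\pi_n)|$, and I would discretize it by pushing its mass onto the divisorial valuations $v_i\in|\Delta(\pi_n)|$ (the vertices of the dual graph) — e.g. via further retraction onto the vertex set, or via the standard balayage on a finite tree that moves mass monotonically toward leaves — while also rationalizing the masses (rounding down to rationals with controlled denominators) to get measures $\rho_n$ that are finite atomic, supported on divisorial valuations, with rational masses, and with $\rho_n\nearrow\rho$. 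One must be a little careful: to get a genuinely \emph{decreasing} sequence of potentials one needs $\rho_n\le\rho_{n+1}$, not merely weak convergence, so I would arrange the discretization on nested dual graphs so that mass is only ever added, refining the atomic approximation as $\pi_n$ grows. The main obstacle I anticipate is precisely this monotonicity bookkeeping — ensuring simultaneously that (a) the $\rho_n$ increase, (b) they remain finite atomic on divisorial valuations with the $b_i$-divisibility (hence realizable by primary ideals), and (c) they converge weakly to $\rho$ so that $\f_n\searrow\f$ rather than merely $\f_n\to\f$. Once the monotone atomic approximation on the pro-finite tree is set up correctly, translating back through Proposition~\ref{P110} and Corollary~\ref{C110} to the ideals $\fa_n$ and the constants $c_n$ is routine.
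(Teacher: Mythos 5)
Your toolkit is the right one --- the homeomorphism $\Delta:\SH(\cV_0)\to\cM^+(\cV_0)$, the realization of finite atomic measures with divisorial support and suitable rational weights as $c\,\Delta\log|\fa|$ via Proposition~\ref{P110} and Corollary~\ref{C110}, and the pro-finite description of $\cV_0$. But the reduction at the heart of your proposal has a gap that is fatal in general. You propose to construct finite atomic measures $\rho_n$ supported on divisorial valuations with $\rho_n\nearrow\rho$, and in particular $\rho_n\le\rho$. No such sequence can exist when $\rho$ has a nonatomic part: set $S:=\bigcup_n\supp\rho_n$, a countable Borel set; then $\rho(S)\ge\rho_n(S)=\rho_n(\cV_0)$ for all $n$, while $\rho_n(\cV_0)\to\rho(\cV_0)$ by weak convergence, so $\rho(\cV_0\setminus S)=0$ and $\rho$ is purely atomic. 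Since $\cM^+(\cV_0)$ contains nonatomic measures (Lebesgue measure on a nontrivial segment of $\cV_0$, say), and every such measure equals $\Delta\f$ for some $\f\in\SH(\cV_0)$, the ``monotonicity bookkeeping'' you flag as the anticipated obstacle is not a bookkeeping issue but an outright obstruction. This is already visible in your concrete construction: neither $(r_{\pi_n})_*\rho$ nor its further pushforward onto the vertices of $\Delta(\pi_n)$ is dominated by $\rho$, so the measures you actually build do not satisfy the hypothesis you need.

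The source of the error is the translation step: $\rho_n\nearrow\rho$ is a \emph{sufficient} condition for $\f_n\searrow\f$ via~\eqref{e164}, but it is far too strong. The paper's proof instead applies Theorem~\ref{T202} to the auxiliary tree $\tcV_0$ to produce a decreasing sequence $\f_n\in\SH(\cV_0)$ whose Laplacians are finite atomic and supported on quasimonomial valuations; the decrease is achieved by redistributing mass along segments and by retracting onto a refining family of finite subtrees, not by dominating the measure, and the $\Delta\f_n$ are in general not comparable to $\rho$. Next, working inside the finite dual graph $|\Delta(\pi_n)|$ carrying $\Delta\f_n$, one perturbs to $\p_n\in\SH(\cV_0)$ with $\Delta\p_n$ finite atomic, divisorially supported, with rational weights, and $|\p_n-\f_n|\le 2^{-n}$ on $\cV_0$ (using density of divisorial valuations in the dual graph). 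Finally, shifting $\p_n$ by the constant $3\cdot 2^{-n}$ makes the sequence decreasing again, and this changes each Laplacian only by a Dirac mass at $\ord_0$, so Corollary~\ref{C110} produces the primary ideals $\fa_n$ and rational constants $c_n$. You do mention balayage and retraction --- that is indeed the right mechanism --- but you tie them to the impossible target $\rho_n\nearrow\rho$; separating the segment-wise redistribution of mass from that target, and then restoring monotonicity of the \emph{potentials} by a small additive shift rather than monotonicity of the measures, is what closes the gap.
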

\begin{proof}
  By Theorem~\ref{T202} (applied to the tree $\tcV_0$) any given 
  function $\f\in\SH(\cV_0)$ is the limit of a 
  decreasing sequence $(\f_n)_n$ of functions in $\SH(\cV_0)$
  such that $\Delta\f_n$ is a finite atomic measure supported on 
  quasimonomial valuations.
  Let $\pi_n\in\fB_0$ be a blowup such that $\Delta\f_n$ is 
  supported on the dual graph $|\Delta(\pi_n)|$. Since the divisorial
  valuations are dense in $|\Delta(\pi_n)|$, we may pick 
  $\p_n\in\SH(\cV_0)$ such that 
  $\Delta\p_n$ is a finite atomic
 measure supported on divisorial valuations in $|\Delta(\pi_n)|$,
 with rational weights, such that $|\p_n-\f_n|\le2^{-n}$ on $\cV_0$.
  The sequence $(\p_n+3\cdot2^{-n})_{n\ge1}$ is then decreasing and 
  $\Delta(\p_n+3\cdot2^{-n})=\Delta\p_n+3\cdot2^{-n}\delta_{\ord_0}$
  is a finite atomic measure supported on divisorial valuations in $|\Delta(\pi_n)|$,
  with rational weights. The result now follows from Corollary~\ref{C110}.
\end{proof}
Regularization results such as Theorem~\ref{T104} play an important
role in higher dimensions, but the above proof, which uses tree
arguments together with Lipman's result in Proposition~\ref{P202},
does not generalize. Instead, one can construct the ideals $\fa_n$
as \emph{valuative multiplier ideals}. This is done in~\cite{valmul}
in dimension two, and in~\cite{hiro} in higher dimensions.
%
%
%
%
\subsection{Intrinsic description of the tree structure on $\cV_0$}\label{S207}
As explained in~\S\ref{S244}, the valuative tree inherits
a partial ordering and a (generalized) metric from the dual graphs. 
We now describe these two structures intrinsically, 
using the definition of elements in $\cV_0$ as functions on $R$.
The potential theory in~\S\ref{S113} is quite useful for this purpose.
%
%
\subsubsection{Partial ordering}
The following result
gives an intrinsic description of the partial ordering on $\cV_0$.
\begin{Prop}\label{P109}
  If $w, v\in\cV_0$, then the following are equivalent:
  \begin{itemize}
  \item[(i)]
    $v\le w$ in the partial ordering induced by 
    $\cV_0\simeq\varprojlim\Delta(\pi)$;
  \item[(ii)]
    $v(\phi)\le w(\phi)$ for all polynomials $\phi\in R$;
  \item[(iii)]
    $v(\fa)\le w(\fa)$ for all primary ideals $\fa\subseteq R$.
  \end{itemize}
\end{Prop}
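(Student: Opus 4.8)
\textbf{Proof plan for Proposition~\ref{P109}.}

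The plan is to prove the chain of implications (i)$\implies$(ii)$\implies$(iii)$\implies$(i), which closes the loop. The implication (ii)$\implies$(iii) is immediate: since each primary ideal $\fa$ is finitely generated, $v(\fa)=\min_{\phi\in\fa}v(\phi)$ with the minimum attained over a finite generating set, so if $v\le w$ pointwise on polynomials then the same inequality passes to the minimum. (Strictly one should check the minimum is over the same generators for both $v$ and $w$, but that is automatic since $v(\fa)$ and $w(\fa)$ are each defined as a minimum over \emph{all} elements of $\fa$.)

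For (i)$\implies$(ii): if $v\le w$ in the tree $\cV_0\simeq\varprojlim\Delta(\pi)$, then for every blowup $\pi\in\fB_0$ we have $\ev_\pi(v)\le\ev_\pi(w)$ in the dual graph $\Delta(\pi)$, i.e.\ $r_\pi(v)\le r_\pi(w)$ along the segment $[\ord_0,r_\pi(w)]$. I would first treat the case of a primary ideal $\fa$: pick a log resolution $\pi$ of $\fa$, write $\fa\cdot\cO_{X_\pi}=\cO_{X_\pi}(Z)$ with $Z=\sum_i r_iE_i$ effective, so that $\log|\fa|=\log|\fa|\circ r_\pi$ by Lemma~\ref{L125}, and $\log|\fa|$ restricted to $|\Delta(\pi)|$ is an explicit function that, by Proposition~\ref{P110} and Proposition~\ref{P108}(i), is \emph{decreasing} in the partial ordering rooted at $\ord_0$. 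Hence $v\le w$ forces $-v(\fa)=\log|\fa|(v)\ge\log|\fa|(w)=-w(\fa)$, i.e.\ $v(\fa)\le w(\fa)$; this already gives (i)$\implies$(iii). To get the statement for an arbitrary polynomial $\phi\in R$ (not necessarily in $\fm_0$), note $v(\phi)=v(\fa_n)$ for $n\gg0$ where $\fa_n=(\phi)+\fm_0^n$ is primary and $v(\fm_0^n)=n\to\infty$ while $v(\phi)<\infty$ unless $v$ is a curve semivaluation along $\{\phi=0\}$; in the latter exceptional case $v(\phi)=+\infty\ge w(\phi)$ is false, so one must instead argue directly that $w(\phi)=+\infty$ as well, using that $v\le w$ and that $v(\fa)\le w(\fa)$ for all primary $\fa$ forces the center of $w$ to dominate that of $v$ on every model, hence $\{w=\infty\}\supseteq\{v=\infty\}$ as prime ideals of $\widehat{\cO}_0$. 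More cleanly: apply the primary-ideal conclusion to $\fa_n$ for all $n$ to get $v(\fa_n)\le w(\fa_n)$, then let $n\to\infty$; since $v(\fa_n)\nearrow v(\phi)$ and $w(\fa_n)\nearrow w(\phi)$ in $[0,+\infty]$, the inequality $v(\phi)\le w(\phi)$ follows in all cases including $+\infty$.

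For (iii)$\implies$(i), this is the contrapositive direction and the main obstacle. Suppose $v\not\le w$ in the tree. Let $u:=v\wedge_{\ord_0}w$ be the infimum; then $u<v$ and there is a tangent direction $\vv$ at $u$ pointing towards $v$ but not towards $w$. I would produce a divisorial valuation $v_E$ with $u<v_E\le v$, $v_E$ representing $\vv$, and such that $w$ does not lie above $v_E$ — concretely, choose a blowup $\pi$ large enough that $r_\pi(v)$ and $r_\pi(w)$ already separate in $\Delta(\pi)$, and take $v_E=b_E^{-1}\ord_E$ a divisorial valuation on the segment $]u,r_\pi(v)]$ close enough to $u$. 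By Corollary~\ref{C110} there is a primary ideal $\fa$ with $\Delta\log|\fa|=b_E\delta_{v_E}$; then $\log|\fa|$ is subharmonic, locally constant off the segment $[\ord_0,v_E]$, and strictly decreasing along $[\ord_0,v_E]$. Since $v\ge v_E$ we get $\log|\fa|(v)=\log|\fa|(v_E)$, which is strictly less than $\log|\fa|(u)=\log|\fa|(w)$ because $w\wedge_{\ord_0}v_E=u\lneq v_E$. Hence $-v(\fa)<-w(\fa)$, i.e.\ $v(\fa)>w(\fa)$, contradicting (iii). The one point requiring care is the case $v\le w$ fails because $w\le v$ or because $v,w$ are incomparable: the argument above handles incomparability, and if $w\lneq v$ then simply $\log|\fa|(v)<\log|\fa|(w)$ for $\fa$ attached to a divisorial $v_E$ with $w<v_E\le v$, giving the same contradiction. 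I expect the bookkeeping around curve semivaluations (where values are $+\infty$) and around the precise choice of $v_E$ and $\fa$ to be the fiddly part, but the potential-theoretic input — monotonicity of $\log|\fa|$ from Proposition~\ref{P108}(i) together with the existence result Corollary~\ref{C110} — makes the structure of the argument clean.
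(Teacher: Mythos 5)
Your proposal is correct and follows essentially the same route as the paper: (i)$\implies$(ii) via monotonicity of subharmonic functions (Proposition~\ref{P108} applied to $\log|\phi|$, whose subharmonicity is exactly the primary-ideal approximation you spell out, cf.\ Corollary~\ref{C111}), (ii)$\implies$(iii) trivially, and (iii)$\implies$(i) by passing to $v':=v\wedge w$, replacing $v$ by a divisorial valuation in $]v',v]$, and invoking Corollary~\ref{C110} to produce a primary ideal $\fa$ with $w(\fa)=v'(\fa)<v(\fa)$. The extra care you take around curve semivaluations and the limit $v(\fa_n)\nearrow v(\phi)$ is sound but not needed beyond what the cited results already give.
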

\begin{proof}
  The implication~(i)$\implies$(ii) is a consequence of 
  Proposition~\ref{P108} and the fact that $\log|\phi|$ is subharmonic.
  That~(ii) implies~(iii) is obvious.
  It remains to prove that~(iii) implies~(i). 
  Suppose that $v\not\le w$ in the sense of~(i). 
  After replacing $v$ and $w$ by $r_\pi(v)$ and $r_\pi(w)$,
  respectively, for a sufficiently large $\pi$, we may assume that 
  $v,w\in|\Delta(\pi)|$. 
  Set $v':=v\wedge w$. Then $v'<v$, $v'\le w$
  and $\,]v',v]\cap[v',w]=\emptyset$.
  Replacing $v$ by a divisorial valuation in $]v',v]$
  we may assume that $v$ is divisorial.
  By Corollary~\ref{C110} we can find an ideal $\fa\subseteq R$
  such that $\Delta\log|\fa|$ is supported at $v$.
  Then $w(\fa)=v'(\fa)<v(\fa)$, so~(iii) does not hold.
\end{proof}
%
%
\subsubsection{Integral affine structure}
Next we give an intrinsic description of the 
integral affine structure.
\begin{Prop}\label{P113}
  If $\pi\in\fB_0$ is a blowup, then 
  a function $\f:\hcV_0\to\R$ belongs to $\Aff(\pi)$ iff
  it is of the form 
  $\f=\log|\fa|-\log|\fb|$, where
  $\fa$ and $\fb$ are primary ideals of $R$
  for which $\pi$ is a common log resolution.
\end{Prop}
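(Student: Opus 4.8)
The plan is to establish the two inclusions separately, using the dictionary between exceptional divisors on $X_\pi$ and integral affine functions on $\hDelta(\pi)$, together with the formula for $\Delta\log|\fa|$ in Proposition~\ref{P110} and the structure theorem (Theorem~\ref{T103}). The key point to keep in mind is that by~\S\ref{S232}, $\Aff(\pi)$ is canonically identified with $\Div(\pi)$, and the function $\log|\fa|$ associated to a primary ideal $\fa$ with log resolution $\pi$ satisfies $\log|\fa|=\f_\pi\circ\ev_\pi$ on $\hcV_0^*$, where $\f_\pi\in\Aff(\pi)$ corresponds to the divisor $-Z_\pi(\fa)$ (since $\log|\fa|(v)=-v(\fa)=-\langle\ev_\pi(v),Z_\pi(\fa)\rangle$, using Lemma~\ref{L125} and~\eqref{e159}).

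First I would prove that any $\f$ of the stated form lies in $\Aff(\pi)$. Given primary ideals $\fa,\fb$ with common log resolution $\pi$, write $\fa\cdot\cO_{X_\pi}=\cO_{X_\pi}(Z_\fa)$ and $\fb\cdot\cO_{X_\pi}=\cO_{X_\pi}(Z_\fb)$ with $Z_\fa,Z_\fb\in\Div(\pi)$. Then $\log|\fa|-\log|\fb|=(\f_{Z_\fa}-\f_{Z_\fb})\circ\ev_\pi$, where $\f_{Z_\fa},\f_{Z_\fb}\in\Aff(\pi)$ are the integral affine functions corresponding to $-Z_\fa$ and $-Z_\fb$ respectively; since $\Aff(\pi)$ is a group, the difference lies in $\Aff(\pi)$. (Here I use that $\hcV_0^*$ is the homogeneous version and $\ev_\pi$ maps onto $\hDelta^*(\pi)$, so pulling back an element of $\Aff(\pi)$ by $\ev_\pi$ is precisely what it means to be integral affine on $\hcV_0^*$.)

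For the converse, let $\f\in\Aff(\pi)$, so $\f$ corresponds to a divisor $D=\sum_{i\in I}d_iE_i\in\Div(\pi)$, i.e. $\f=\f_{-D}\circ\ev_\pi$ with $\f_{-D}(e_i)=-d_i$. The idea is to write $D=Z_\fb-Z_\fa$ as a difference of two divisors each of which is $Z_\pi$ of a primary ideal with log resolution $\pi$. By the structure of the semigroup of relatively nef divisors (\S\ref{S159}), the divisors $\vE_i$ generate the relatively nef cone, and any relatively nef divisor $Z$ with $\cO_{X_\pi}(Z)$ relatively base point free equals $Z_\pi(\fa)$ for $\fa=\pi_*\cO_{X_\pi}(Z)$; Proposition~\ref{P202} guarantees that relatively nef implies relatively base point free. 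So it suffices to write $D$ as a difference of two relatively nef (hence antieffective) exceptional divisors. Since $-Z_0=\sum_ib_iE_i$ is relatively nef (it is $\fm_0\cdot\cO_{X_\pi}$, and $\pi$ dominates $\pi_0$ which is a log resolution of $\fm_0$), for $N$ large enough both $-NZ_0$ and $-NZ_0+D = -NZ_0 + \sum_i d_i E_i$ will be relatively nef: indeed $(-NZ_0\cdot E_j)=N\cdot 0=0$ for $j\neq 0$ and $=N$ for $j=0$ by~\eqref{e145}, while $(D\cdot E_j)$ is a fixed integer, so choosing $N$ large makes $(-NZ_0+D)\cdot E_j\ge 0$ for all $j$. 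Wait --- this needs $(-NZ_0\cdot E_j)\ge 0$ to dominate, which fails for $j\neq 0$ since that intersection is $0$. The fix: instead use that $\{\vE_i\}$ span a full-dimensional cone, so I can pick a relatively nef $W$ with $(W\cdot E_j)$ strictly positive for all $j$ (e.g. $W=-\sum_i\vE_i$, which is relatively nef since each $(-\sum_i\vE_i\cdot E_j) = -1 <0$... no). Let me restate: take $W=\sum_i \vE_i$; then $(W\cdot E_j)=1>0$ for all $j$, so $W$ is relatively nef, and $-NW+D$ has $((-NW+D)\cdot E_j) = -N + (D\cdot E_j)$, still negative. The correct choice is $W$ with $(W\cdot E_j)<0$ impossible for nef; rather I want $NW+D$ nef, i.e. $N(W\cdot E_j) + (D\cdot E_j)\ge 0$, which holds for large $N$ since $(W\cdot E_j)>0$. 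So set $\fb$ with $Z_\pi(\fb)=-NW-$... I need antieffective representatives: relatively nef divisors are antieffective, and $-NW$ is the relevant one. Concretely: $D = (NW+D) - NW$ where for $N\gg 0$ both $NW+D$ and $NW$ are relatively nef (the second trivially, the first by the positivity of $(W\cdot E_j)$), hence by Proposition~\ref{P202} equal to $Z_\pi(\fa)$, $Z_\pi(\fb)$ respectively for primary ideals $\fa,\fb$ with log resolution $\pi$; then $\f = \log|\fa| - \log|\fb|$ as functions on $\hcV_0$ (agreeing on $\hcV_0^*$ by the identification above, and at $\triv_{\A^2}$ and $\triv_0$ by continuity/homogeneity).

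The main obstacle I expect is bookkeeping rather than conceptual: pinning down the correct positivity statement (finding a relatively nef $W$ with $(W\cdot E_j)>0$ for \emph{every} $j$, which exists because the $\vE_i$ form a basis of $\Div(\pi)\otimes\Q$ lying in the open nef cone, so a suitable positive combination works --- e.g. $W=\sum_i \vE_i$ has $(W\cdot E_j)=1$), and checking that "primary ideal with log resolution $\pi$" is exactly captured by "$\cO_{X_\pi}(Z)$ relatively base point free with $Z$ antieffective exceptional", which is the content of~\S\ref{S215} and Proposition~\ref{P202}. One should also verify the equality of the two functions holds on all of $\hcV_0$ (not just $\hcV_0^*$), but this is immediate from homogeneity since both sides vanish at $\triv_{\A^2}$ and are $+\infty$-free, and at $\triv_0$ one uses that $\f$, being integral affine, is determined by its restriction to $\hcV_0^*$ by continuity.
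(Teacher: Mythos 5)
Your proposal is correct and follows the same route as the paper's sketch: the crux in both is that every exceptional divisor can be written as a difference of two relatively nef divisors, each of which corresponds to a primary ideal via Proposition~\ref{P202}. You supply the explicit decomposition $D = (NW+D) - NW$ with $W=\sum_i\vE_i$ (so $(W\cdot E_j)=1>0$ for all $j$), which is exactly the kind of concrete construction the paper leaves implicit; there are a couple of sign-bookkeeping slips in relating $\log|\fa|$ to $Z_\pi(\fa)$, but they cancel and do not affect the argument.
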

\begin{proof}[Sketch of proof]
  After unwinding definitions
  this boils down to the fact that 
  any exceptional divisor can be
  written as the difference of two relatively nef divisors.
  Indeed, by Proposition~\ref{P202}, if $Z$ is
  relatively nef, then there exists a primary
  ideal $\fa\subseteq R$ such that  $\fa\cdot\cO_{X_\pi}=\cO_{X_\pi}(Z)$.
\end{proof} 
\begin{Cor}\label{C112}
  A function $\f:\hcV^*_0\to\R$ is integral affine iff
  it is of the form 
  $\f=\log|\fa|-\log|\fb|$, where
  $\fa$ and $\fb$ are primary ideals in $R$.
\end{Cor}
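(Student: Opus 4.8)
The plan is to deduce the corollary from Proposition~\ref{P113} by passing to the direct limit over the blowups $\pi\in\fB_0$, exactly as the integral affine structure $\Aff(\hcV_0^*)=\varinjlim_\pi\ev_\pi^*\Aff(\pi)$ was defined.

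For the forward implication, suppose $\f\colon\hcV_0^*\to\R$ is integral affine. By definition there is a blowup $\pi\in\fB_0$ and an element $\f_\pi\in\Aff(\pi)$ with $\f=\f_\pi\circ\ev_\pi$. This says precisely that $\f$ belongs to $\Aff(\pi)$ in the sense of Proposition~\ref{P113}, so that proposition produces primary ideals $\fa,\fb\subseteq R$ admitting $\pi$ as a common log resolution with $\f=\log|\fa|-\log|\fb|$ on $\hcV_0$, and in particular on $\hcV_0^*$. (Here one uses that $\log|\fa|(tv)=-tv(\fa)=t\log|\fa|(v)$, so $\log|\fa|$ is a genuine homogeneous function on the pointed cone $\hcV_0^*$, consistent with elements of $\Aff(\pi)$ being linear on $N_\R(\pi)$.) This is the desired representation.

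For the reverse implication, let $\fa,\fb\subseteq R$ be arbitrary primary ideals. Each admits a log resolution by the basic fact recalled earlier, and since $\fB_0$ is directed we may pick $\pi\in\fB_0$ dominating log resolutions of both $\fa$ and $\fb$; as the log-resolution property is preserved under domination, $\pi$ is then a common log resolution of $\fa$ and $\fb$. Write $\fa\cdot\cO_{X_\pi}=\cO_{X_\pi}(Z_\pi(\fa))$ and $\fb\cdot\cO_{X_\pi}=\cO_{X_\pi}(Z_\pi(\fb))$ with $Z_\pi(\fa),Z_\pi(\fb)\in\Div(\pi)$. As computed in the proof of Theorem~\ref{T103}, $\log|\fa|(v)=-v(\fa)=\langle\ev_\pi(v),Z_\pi(\fa)\rangle$ for all $v$, and similarly for $\fb$; under the identification $\Aff(\pi)=\Div(\pi)$ of §\ref{S232} this exhibits $\log|\fa|$ and $\log|\fb|$ as elements of $\ev_\pi^*\Aff(\pi)$. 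Hence $\f=\log|\fa|-\log|\fb|=\bigl(Z_\pi(\fa)-Z_\pi(\fb)\bigr)\circ\ev_\pi\in\ev_\pi^*\Aff(\pi)\subseteq\Aff(\hcV_0^*)$, so $\f$ is integral affine. (Alternatively, one may invoke Lemma~\ref{L125}, which guarantees that $\log|\fa|$ and $\log|\fb|$ factor through the retraction $r_\pi=\emb_\pi\circ\ev_\pi$, and then check that the induced functions on $\hDelta(\pi)$ are integral affine.)

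Thus the corollary is a bookkeeping consequence of Proposition~\ref{P113}: the only points needing a little care are the existence of a common log resolution of two given primary ideals (handled by directedness of $\fB_0$) and the identification of $\log|\fa|$ with the integral affine function on the dual fan attached to the divisor $Z_\pi(\fa)$, together with the remark that $\log|\fa|$ is homogeneous and so descends correctly to the pointed cone $\hcV_0^*$ rather than only to $\cV_0$. I do not anticipate any genuine obstacle.
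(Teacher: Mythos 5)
Your proof is correct and is essentially the argument the paper intends: Corollary~\ref{C112} is stated as an immediate consequence of Proposition~\ref{P113}, obtained exactly as you do by combining the definition $\Aff(\hcV_0^*)=\varinjlim_\pi\ev_\pi^*\Aff(\pi)$ with the existence of a common log resolution of any two primary ideals (directedness of $\fB_0$ plus invariance of the log-resolution property under domination). Your extra remarks on homogeneity and on identifying $\log|\fa|$ with the integral affine function attached to $Z_\pi(\fa)$ are accurate bookkeeping and do not change the route.
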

%
%
\subsubsection{Metric}
Recall the parametrization $\a$ of $\cV_0\simeq\varprojlim\Delta(\pi)$
given by~\eqref{e207}.
\begin{Prop}\label{P106}
  For any $v\in\cV_0$ we have 
  \begin{equation*}
    \a(v)
    =-\sup\left\{\frac{v(\phi)}{\ord_0(\phi)}\ \bigg|\ 
      p\in\fm_0\right\}
    =-\sup\left\{\frac{v(\fa)}{\ord_0(\fa)}\ \bigg|\ 
      \fa\subseteq R\ \fm_0\text{-primary}\right\}
  \end{equation*}
  and the suprema are attained when $v$ is quasimonomial.
\end{Prop}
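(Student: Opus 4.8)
The plan is to prove the two displayed equalities and the attainment statement by reducing everything to the case of primary ideals and then to divisorial valuations, where Proposition~\ref{P110} gives an explicit handle on $\a$.

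First I would set up notation: write
\begin{equation*}
  \b(v):=\sup\left\{\frac{v(\phi)}{\ord_0(\phi)}\ \bigg|\ \phi\in\fm_0\right\},
  \quad
  \g(v):=\sup\left\{\frac{v(\fa)}{\ord_0(\fa)}\ \bigg|\ \fa\subseteq R\ \fm_0\text{-primary}\right\},
\end{equation*}
so the claim is $-\a(v)=\b(v)=\g(v)$. The inequality $\b(v)\le\g(v)$ is immediate since a principal ideal $(\phi)$ with $\phi\in\fm_0$ is a special case once we intersect with a power of $\fm_0$; more precisely, for $\phi\in\fm_0$ and $n$ large, $\fa_n=(\phi)+\fm_0^n$ is primary with $v(\fa_n)=v(\phi)$ and $\ord_0(\fa_n)=\ord_0(\phi)$ for $n\gg1$, so each ratio $v(\phi)/\ord_0(\phi)$ occurs among the ratios defining $\g(v)$. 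For the reverse inequality $\g(v)\le\b(v)$, given a primary ideal $\fa$ with generators $\phi_1,\dots,\phi_r\in\fm_0$, we have $v(\fa)=\min_j v(\phi_j)$ and $\ord_0(\fa)=\min_j\ord_0(\phi_j)$; then for the index $j_0$ achieving $v(\fa)=v(\phi_{j_0})$ we get $v(\fa)/\ord_0(\fa)\le v(\phi_{j_0})/\ord_0(\phi_{j_0})\le\b(v)$, using $\ord_0(\fa)\le\ord_0(\phi_{j_0})$. Hence $\b(v)=\g(v)$, and it remains to identify this common value with $-\a(v)$.

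The core of the argument is the identity $-\a(v)=\g(v)$. For ``$\le$'': I would use Corollary~\ref{C110}, which for a divisorial $v=v_E=b_E^{-1}\ord_E$ produces a primary ideal $\fa$ with $\Delta\log|\fa|=b_E\delta_{v_E}$. By the integral formula~\eqref{e164} for the inverse Laplacian (applied in $\cV_0$), $\log|\fa|(w)=b_E\,\a(v_E\wedge_{\ord_0}w)$; evaluating at $w=\ord_0$ gives $-v_E(\fa)=\log|\fa|(\ord_0)=b_E\a(v_E)$, while evaluating at $w=v_E$ and using $\ord_0(\fa)=-\log|\fa|(\ord_0)=-b_E\a(v_E)$ — wait, more carefully: $\ord_0(\fa)=\Delta\log|\fa|(\cV_0)=b_E$ by Proposition~\ref{P108}(ii) applied with sign, actually $\log|\fa|(\ord_0)=-\rho(\cV_0)=-b_E$, so $\ord_0(\fa)=b_E$ and $v_E(\fa)=-b_E\a(v_E)$. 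Therefore $v_E(\fa)/\ord_0(\fa)=-\a(v_E)$, showing $\g(v_E)\ge-\a(v_E)$ for divisorial $v$, hence attainment at divisorial valuations. For the opposite inequality $\g(v)\le-\a(v)$, valid for all $v\in\cV_0$: given any primary $\fa$ with log resolution $\pi$ (exceptional primes $E_i$, $\fa\cdot\cO_{X_\pi}=\cO_{X_\pi}(Z)$), Proposition~\ref{P110} gives $\Delta\log|\fa|=\sum_i b_i(Z\cdot E_i)\delta_{v_i}$ with $(Z\cdot E_i)\le0$ since $\fa$ is primary (so $-Z$ is effective on the exceptional locus, and relatively nef divisors... actually one needs $Z\le0$ and $(Z\cdot E_i)\le 0$, which holds as $\cO_{X_\pi}(Z)$ is relatively base point free). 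Then $\log|\fa|=\log|\fa|\circ r_\pi$ is subharmonic, and by~\eqref{e164} $\log|\fa|(v)=\int\a(w\wedge_{\ord_0}v)\,d(\Delta\log|\fa|)(w)\ge\a(v)\cdot\Delta\log|\fa|(\cV_0)$ since $\a\le-1\le0$... hmm, $\a(w\wedge v)\ge\a(v)$ because $w\wedge v\le v$ and $\a$ is increasing toward the root. Thus $-v(\fa)=\log|\fa|(v)\ge\a(v)\ord_0(\fa)$, i.e. $v(\fa)/\ord_0(\fa)\le-\a(v)$, giving $\g(v)\le-\a(v)$ for all $v$.

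Combining, $\g(v)=-\a(v)$ with equality attained at divisorial valuations; for general quasimonomial $v$ one gets attainment by a density/continuity argument — the divisorial valuations are dense in the hyperbolic subtree, $\a$ is continuous there, and the ratios $v\mapsto v(\fa)/\ord_0(\fa)$ are continuous, so the sup is attained as a limit and in fact (using that $v$ lies in some dual graph $|\Delta(\pi)|$ and choosing $\fa$ with $\pi$ a log resolution, so $v(\fa)=r_\pi(v)(\fa)$ by Lemma~\ref{L125}) attained exactly. The main obstacle I anticipate is bookkeeping the signs and the precise value-group normalizations in the passage through Proposition~\ref{P110} and formula~\eqref{e164} — in particular verifying $\ord_0(\fa)=b_E$ in Corollary~\ref{C110}'s ideal and $(Z\cdot E_i)\le0$ for primary $\fa$ — rather than any conceptual difficulty; once those are pinned down the two inequalities close up cleanly.
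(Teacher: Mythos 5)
Your overall strategy is the same as the paper's: use subharmonicity (i.e.~\eqref{e144}) to get $v(\fa)\le-\a(v)\ord_0(\fa)$ and $v(\phi)\le-\a(v)\ord_0(\phi)$ for all $v$, then use Corollary~\ref{C110} to produce a primary ideal achieving equality, with a retraction/continuity argument for general $v$. But there are two real problems in the execution.

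First, your reduction $\g(v)\le\b(v)$ is wrong as stated. With $v(\fa)=v(\phi_{j_0})$ and $\ord_0(\fa)\le\ord_0(\phi_{j_0})$, you have divided by the \emph{smaller} denominator, so the inequality goes the other way: $v(\fa)/\ord_0(\fa)\ge v(\phi_{j_0})/\ord_0(\phi_{j_0})$. A single generator need not simultaneously realize both $v(\fa)$ and $\ord_0(\fa)$. The correct (and standard) fix is to take $\phi$ a \emph{generic} $K$-linear combination of the generators, for which $v(\phi)=v(\fa)$ and $\ord_0(\phi)=\ord_0(\fa)$ hold simultaneously ($K$ is infinite here). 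In fact, the paper never isolates $\g\le\b$ as a lemma; it simply observes at the end that taking $\phi$ a general element of the constructed $\fa$ gives the polynomial case directly. Relatedly, your parenthetical claim that $(Z\cdot E_i)\le 0$ for a log resolution of a primary ideal has the wrong sign: relative base point freeness gives $(Z\cdot E_i)\ge 0$, which is also what makes $\Delta\log|\fa|$ a \emph{positive} measure in Proposition~\ref{P110}.

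Second, your passage from divisorial $v$ to general quasimonomial $v$ is too vague to count as a proof of attainment. ``The sup is attained as a limit'' is not the same as attainment, and the parenthetical about Lemma~\ref{L125} does not close the gap. The clean argument, which the paper uses, is: for $v$ quasimonomial pick a blowup $\pi$ with $v\in|\Delta(\pi)|$, pick a \emph{divisorial} $w\in|\Delta(\pi)|$ with $w\ge v$, and take $\fa$ from Corollary~\ref{C110} applied to $w$. Since $\Delta\log|\fa|$ is a point mass at $w$, formula~\eqref{e164} shows $\log|\fa|$ is affine of slope $-\ord_0(\fa)$ (in the parametrization $\a$) on the entire segment $[\ord_0,w]$, which contains $v$; hence $v(\fa)=-\a(v)\ord_0(\fa)$ exactly, and the generic $\phi\in\fa$ does the same for the polynomial supremum. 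The density argument you gesture at is only needed (and only gives equality of the two sides, not attainment) for general, non-quasimonomial $v$.
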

In fact, one can show that supremum in the second equality is 
attained \emph{only} if $v$ is quasimonomial. Further, the supremum in the first 
equality is never attained when $v$ is infinitely singular, but is
attained if $v$ is a curve semivaluation (in which case $\a(v)=-\infty$), 
and we allow $\phi\in\fm_0\cdot\widehat{\cO}_0$.
\begin{proof}
  Since the functions $\log|\fa|$ and $\log|\phi|$ are 
  subharmonic,~\eqref{e144} shows that 
  $v(\fa)\le-\a(v)\ord_0(\fa)$ and 
  $v(\phi)\le-\a(v)\ord_0(\phi)$
  for all $\fa$ and all $\phi$.

  Let us prove that equality can be achieved when 
  $v$ is quasimonomial. Pick a blowup $\pi\in\fB_0$
  such that $v\in|\Delta(\pi)|$ and pick $w\in|\Delta(\pi)|$
  divisorial with $w\ge v$. By Corollary~\ref{C110}
  there exists a primary ideal $\fa$ such that 
  $\Delta\log|\fa|$ is supported at $w$. This implies
  that the function $\log|\fa|$ is affine with slope 
  $-\ord_0(\fa)$ on the segment $[\ord_0,w]$. 
  In particular, $v(\fa)=-\a(v)\ord_0(\fa)$.
  By picking $\phi$ as a general element in $\fa$ we also
  get $v(\phi)=-\a(v)\ord_0(\phi)$. 

  The case of a general $v\in\cV_0$ follows from what precedes,
  given that $r_\pi v(\phi)$, $r_\pi v(\fa)$ and $\a(r_\pi(v))$ converge 
  to $v(\phi)$, $v(\fa)$ and $\a(v)$, respectively, as $\pi\to\infty$.
\end{proof}
Notice that Proposition~\ref{P106} gives a very precise 
version of the Izumi-Tougeron inequality~\eqref{e131}.
Indeed, $\a(v)>-\infty$ for all quasimonomial valuations $v\in\cV_0$.
%
%
\subsubsection{Multiplicity}\label{S275}
The multiplicity function 
$m:\cV_0\to\N\cup\{\infty\}$ can also
be characterized intrinsically.
For this, one first notes that if $v=v_C$ is a curve semivaluation,
defined by a formal curve $C$, then $m(v)=\ord_0(C)$.
More generally, one can show that 
\begin{equation*}
  m(v)=\min\{m(C)\mid v\le v_C\}.
\end{equation*}
In particular, $m(v)=\infty$ iff $v$ cannot be dominated by a curve 
semivaluation, which in turn is the case iff $v$ is infinitely singular.
%
%
\subsubsection{Topology}\label{S162}
Theorem~\ref{T103} shows that the topology on $\cV_0$ induced from 
$\BerkAtwo$ coincides with the tree topology 
on $\cV_0\simeq\varprojlim\Delta(\pi)$.
It is also possible to give a more geometric description.

For this, consider a blowup $\pi\in\fB_0$ and a \emph{closed}
point $\xi\in\pi^{-1}(0)$. Define $U(\xi)\subseteq\cV_0$ as the set of
semivaluations having center $\xi$ on $X_\pi$. 
This means precisely that $v(\fm_\xi)>0$, where 
$\fm_\xi$ is the maximal ideal of the local ring $\cO_{X_\pi,\xi}$.
Thus $U(\xi)$ is open in $\cV_0$.
One can in fact show that these sets $U(\xi)$ generate the topology
on $\cV_0$.

If $\xi$ is a \emph{free} point, belonging to a unique 
exceptional prime $E$ of $X_\pi$, then we have 
$U(\xi)=U(\vv)$ for a tangent direction $\vv$ at $v_E$ in 
$\cV_0$, namely, the tangent direction for which 
$\ord_\xi\in U(\vv)$. 
As a consequence, the open set $U(\xi)$ is connected
and its boundary is a single point: $\partial U(\xi)=\{v_E\}$.
%
%
%
%
\subsection{Relationship to the Berkovich unit disc}\label{S132}
Let us briefly sketch how to relate
the valuative tree with the Berkovich unit disc.
Fix global coordinates $(z_1,z_2)$ on $\A^2$ vanishing at 0
and let $L=K((z_1))$ be the field of Laurent series in $z_1$. 
There is a unique extension of the trivial valuation on $K$ to a
valuation $v_L$ on $L$ for which $v_L(z_1)=1$. 
The Berkovich open unit disc over $L$ is the set 
of semivaluations $v:L[z_2]\to\R_+$, extending $v_L$,
for which $v(z_2)>0$.
If $v$ is such a semivaluation, then $v/\min\{1, v(z_2)\}$ 
is an element in the valuative tree $\cV_0$. Conversely, 
if $v\in\cV_0$ is not equal to the curve semivaluation 
$v_C$ associated to the curve $(z_1=0)$, then 
$v/ v(z_1)$ defines an element in the Berkovich open
unit disc over $L$.

Even though $L$ is not algebraically closed, the classification 
of the points in the Berkovich affine line into Type~1-4 points
still carries over, see~\S\ref{S273}. Curve valuations become Type~1 points, 
divisorial valuations become Type~2 points and irrational valuations
become Type~3 points. An infinitely singular valuation $v\in\cV_0$
is of Type~4 or Type~1, depending on whether the log discrepancy $A(v)$
is finite or infinite.
The parametrization and partial orderings on $\cV_0$
and the Berkovich unit disc are related, but different.
See~\cite[\S3.9, \S4.5]{valtree} for more details.

Note that the identification of the valuative tree with the 
Berkovich unit disc depends on a choice of coordinates.
In the study of polynomial dynamics in~\S\ref{S107},
it would usually not be natural to fix coordinates. 
The one exception to this is when studying the dynamics
of a skew product 
\begin{equation*}
  f(z_1,z_2)=(\phi(z_1),\psi(z_1,z_2)),
\end{equation*}
with $\phi(0)=0$, in a neighborhood of the invariant line $z_1=0$.
However, it will be more efficient to study general polynomial mappings 
in two variables using the Berkovich affine plane over the
trivially valued field $K$.

As noted in~\S\ref{S139}, the Berkovich unit disc over the 
field $K((z_1))$ of Laurent series is in fact more naturally 
identified with the space $\cVe{C}$, where $C=\{z_1=0\}$.
%
%
%
%
\subsection{Other ground fields}\label{S230}
Let us briefly comment on the case when the field $K$
is not algebraically closed.

Let $K^a$ denote the 
algebraic closure and $G=\Gal(K^a/K)$ the Galois group.
Using general theory we have an identification 
$\BerkAtwo(K)\simeq\BerkAtwo(K^a)/G$.

First suppose that the closed point $0\in\A^2(K)$ is
$K$-rational, that is, $\cO_0/\fm_0\simeq K$. Then $0$ 
has a unique preimage $0\in\A^2(K^a)$.
Let $\cV_0(K^a)\subseteq\BerkAtwo(K^a)$ denote the 
valuative tree at $0\in\A^2(K^a)$. 
Every $g\in G$ induces an automorphism of $\BerkAtwo(K^a)$
that leaves $\cV_0(K^a)$ invariant.
In fact, one checks that $g$ preserves the partial ordering 
as well as the parametrizations $\a$ and $A$ and the 
multiplicity $m$. 
Therefore, the quotient $\cV_0(K)\simeq\cV_0(K^a)$
also is naturally a tree. 
As in~\S\ref{S273} we define a parametrization $\a$ of $\cV_0(K)$
using the corresponding parametrization of $\cV_0(K^a)$ and
the degree of the map $\cV_0(K^a)\to\cV_0(K)$. 
This parametrization gives rise to the correct generalized
metric in the sense that the analogue of Exercise~\ref{E211} holds.

When the closed point $0$ is not $K$-rational, it has 
finitely many preimages $0_j\in\A^2(K^a)$. At each $0_j$
we have a valuative tree $\cV_{0_j}\subseteq\BerkAtwo(K^a)$
and $\cV_0$, which is now the quotient of the disjoint union of
the $\cV_{0_j}$ by $G$, still has a natural metric tree structure.

In fact, even when $K$ is not algebraically closed, we can 
analyze the valuative tree using blowups and dual graphs
much as we have done above. One thing to watch out for,
however, is that the intersection form on $\Div(\pi)$ is
no longer unimodular. Further, when $E_i$, $E_j$ are 
exceptional primes intersecting properly, it is
no longer true that $(E_i\cdot E_j)=1$. In order to
get the correct metric on the valuative tree,
so that Proposition~\ref{P110} holds for instance,
we must take into account the degree over $K$
of the residue field whenever we blow up a closed point $\xi$.
The resulting metric is the same as the one obtained
above using the Galois action.
%
%
%
%
\subsection{Notes and further references}\label{S241}
The valuative tree was introduced and studied extensively in the
monograph~\cite{valtree} by Favre and myself. One of our original
motivations was in fact to study superattracting fixed points,
but it turned out that while valuations on surfaces had been
classified by Spivakovsky, the structure of this valuation 
space had not been explored.

It was not remarked in~\cite{valtree} that the valuative tree 
can be viewed as a subset of the Berkovich affine plane
over a trivially valued field. The connection that was made was
with the Berkovich unit disc over the field of Laurent series.

In~\cite{valtree}, several approaches to the valuative tree are
pursued. The first approach is algebraic, using 
\emph{key polynomials} as developed 
by MacLane~\cite{MacLane}. While beautiful, this method is coordinate
dependent and involves some quite delicate combinatorics.
In addition, even though there is a notion
of key polynomials in higher dimensions~\cite{Vaquie2},
these seem hard to use for our purposes.

The geometric approach, using blowups and dual graphs is
also considered in~\cite{valtree} but perhaps not
emphasized as much as here. As already mentioned, this
approach can be partially generalized to higher dimensions,
see~\cite{hiro}, where it is still true that the
valuation space $\cV_0$ is an inverse limit of dual graphs. 
The analogue of the Laplace operator on $\cV_0$ is then a
nonlinear Monge-Amp\`ere operator, but this operator is
defined geometrically, using intersection theory,
rather than through the simplicial structure of the space.
In higher dimensions, the relation between the different
positivity notions on exceptional divisors is much more
subtle than in dimension two.
Specifically, Proposition~\ref{P202} is no longer true.

Granja~\cite{Granja} has generalized the construction 
of the valuative tree to a general two-dimensional regular local ring.

The valuative tree gives an efficient way to encode singularities in
two dimensions. For example, it can be used to study the singularities
of planar plurisubharmonic functions, see~\cite{pshsing,valmul}.
It is also related to many other constructions in singularity theory.
We shall not discuss this further here, but refer to the 
paper~\cite{Popescu-Pampu} by Popescu-Pampu for further references.
In this paper, the author, defines an interesting object, the 
\emph{kite} (cerf-volant), which also encodes the combinatorics 
of the exceptional primes of a blowup.

In order to keep notes reasonably coherent, and in order to reflect
changing trends, I have taken the freedom to change some of the
notation and terminology from~\cite{valtree}. 
Notably, in~\cite{valtree}, the valuative tree is simply denoted $\cV$
and its elements are called valuations. Here we wanted
to be more precise, so we call them semivaluations.
What is called subharmonic functions here correspond to 
positive tree potentials in~\cite{valtree}. 
The valuation $\ord_0$ is called $\nu_\fm$ in~\cite{valtree}.
%
%
%
%
%
%
\newpage
\section{Local plane polynomial dynamics}\label{S107}
Next we will see how the valuative tree can be used to study
superattracting fixed points for polynomial maps of $\A^2$. 
%
%
%
%
\subsection{Setup}
Let $K$ be an algebraically closed field, equipped with the trivial valuation.
(See~\S\ref{S271} for the case of other ground fields.)
Further, $R$ and $F$ are the coordinate ring and
function field of the affine plane $\A^2$ over $K$.
Recall that the Berkovich affine plane $\BerkAtwo$ 
is the set of semivaluations on $R$ that restrict to the trivial valuation
on $K$.
%
%
%
%
\subsection{Definitions and results}
We briefly recall the setup from~\S\ref{S109} of the introduction.
Let $K$ be an algebraically closed field of characteristic zero.
Consider a polynomial mapping 
$f:\A^2\to\A^2$ over $K$.
We assume that $f$ is \emph{dominant}, since otherwise
the image of $f$ is contained in a curve.
Consider a (closed) fixed point $0=f(0)\in\A^2$ and define
\begin{equation*}
  c(f):=\ord_0(f^*\fm_0),
\end{equation*}
where $\fm_0$ denotes the maximal ideal at $0$.
We say that $f$ is \emph{superattracting} if $c(f^n)>1$ 
for some $n\ge 1$.
\begin{Exer}
  Show that if $f$ is superattracting, then in fact
  $c(f^2)>1$. On the other hand, find an example 
  of a superattracting $f$ for which $c(f)=1$.
\end{Exer}
\begin{Exer}
  Show that if $f$ is superattracting and $K=\C$, 
  then there exists a neighborhood $0\in U\subseteq\A^2$ 
  (in the usual Euclidean topology)
  such that $f(U)\subseteq U$, and $f^n(z)\to 0$ as $n\to\infty$ 
  for any $z\in U$.
\end{Exer}

As mentioned in the introduction, the sequence $(c(f^n))_{n\ge1}$
is supermultiplicative, so the limit 
\begin{equation*}
  c_\infty(f):=\lim_{n\to\infty}c(f^n)^{1/n}=\sup_{n\to\infty}c(f^n)^{1/n}
\end{equation*}
exists.
\begin{Exer}
  Verify these statements!
  Also show that $f$ is superattracting iff
  $c_\infty(f)>1$ iff $df_0$ is nilpotent.
\end{Exer}
\begin{Exer}
  In coordinates $(z_2,z_2)$ on $\A^2$, let $f_c$ be the homogeneous
  part of $f$ of degree $c=c(f)$. 
  Show that if $f_c^2\not\equiv0$, then in fact $f_c^n\ne0$ for all $n\ge 1$,
  so that $c(f^n)=c^n$ and $c_\infty=c=c(f)$ is an integer.
\end{Exer}
\begin{Example}
  If $f(z_1,z_2)=(z_2,z_1z_2)$, then $c(f^n)$ is the $(n+2)$th Fibonacci number
  and $c_\infty=\frac12(\sqrt{5}+1)$ is the golden mean.
\end{Example}
For the convenience of the reader, we recall the result that we are
aiming for:
\begin{ThmB}
  The number $c_\infty=c_\infty(f)$ is a quadratic integer: there exists
  $a,b\in\Z$ such that $c_\infty^2=ac_\infty+b$. 
  Moreover, there exists a constant 
  $\delta>0$ such that 
  \begin{equation*}
    \delta c_\infty^n\le c(f^n)\le c_\infty^n
  \end{equation*}
  for all $n\ge 1$.
\end{ThmB}
Here it is the left-hand inequality that is nontrivial. 
%
%
%
%
\subsection{Induced map on the Berkovich affine plane}
As outlined in~\S\ref{S109}, we approach Theorem~B 
by studying the induced map
\begin{equation*}
  f:\BerkAtwo\to\BerkAtwo
\end{equation*}
on the Berkovich affine plane $\BerkAtwo$. Recall the subspaces 
\begin{equation*}
  \cV_0\subseteq\hcV_0^*\subseteq\hcV_0\subseteq\BerkAtwo
\end{equation*}
introduced in~\S\ref{S103}: $\hcV_0$ is the set of semivaluations
whose center on $\A^2$ is the point $0$. It has the structure of
a cone over the valuative tree $\cV_0$, with apex at $\triv_0$. 
It is clear that 
\begin{equation*}
  f(\hcV_0)\subseteq\hcV_0
  \quad\text{and}\quad
  f(\triv_0)=\triv_0.
\end{equation*}
In general, $f$ does not map the pointed cone
$\hcV_0^*$ into itself. Indeed, suppose there exists
an algebraic curve $C=\{\phi=0\}\subseteq\A^2$
passing through $0$ and contracted
to $0$ by $f$. Then any semivaluation
$v\in\hcV_0^*$ such that $v(\phi)=\infty$
satisfies $f(v)=\triv_0$.
To rule out this behavior, we introduce
\begin{Assum}
  From now on, and until~\S\ref{S128} we assume
  that the germ $f$ is \emph{finite}.
\end{Assum}
This assumption means that the ideal $f^*\fm_0\subseteq\cO_0$ 
is primary, that is, $\fm_0^s\subseteq f^*\fm_0$ 
for some $s\ge 1$,
so it exactly rules out the existence of contracted curves.
Certain modifications are required to handle the
more general case when $f$ is merely dominant.
See~\S\ref{S128} for some of this.

The finiteness assumption implies that 
$f^{-1}\{\triv_0\}=\{\triv_0\}$. Thus we obtain a
well-defined map
\begin{equation*}
  f:\hcV_0^*\to\hcV_0^*,
\end{equation*}
which is clearly continuous and homogeneous.

While $f$ preserves $\hcV_0^*$, it does not preserve the 
``section'' $\cV_0\subseteq\hcV_0^*$ given by the condition
$v(\fm_0)=1$.
Indeed, if $v(\fm_0)=1$, there is no reason why $f(v)(\fm_0)=1$.
Rather, we define 
\begin{equation*}
  c(f, v):= v(f^*\fm_0)
  \quad\text{and}\quad
  f_\bullet v:=\frac{f(v)}{c(f, v)}. 
\end{equation*}
The assumption that $f$ is finite at 0 is equivalent to the existence
of a constant $C>0$ such that $1\le c(f, v)\le C$ for all $v\in\cV_0$.
Indeed, we can pick $C$ as any integer $s$ 
such that $f^*\fm_0\supseteq\fm_0^s$.
Also note that 
\begin{equation*}
  c(f)=c(f,\ord_0).
\end{equation*}
The normalization factors $c(f, v)$ naturally define a
dynamical \emph{cocycle}. Namely, we can look at 
$c(f^n, v)$ for every $n\ge 0$ and $v\in\cV_0$
and we then have
\begin{equation*}
  c(f^n, v)=\prod_{i=0}^{n-1}c(f, v_i),
\end{equation*}
where $v_i=f^i_\bullet v$ for $0\le i<n$.

Apply this equality to $v=\ord_0$.
By definition, we have $v_i=f^i_\bullet\ord_0\ge\ord_0$
for all $i$. This gives $c(f, v_i)\ge c(f,\ord_0)=c(f)$,
and hence $c(f^n)\ge c(f)^n$, as we already knew.
More importantly, 
we shall use the \emph{multiplicative cocycle} $c(f^n, v)$ in order to 
study the \emph{supermultiplicative sequence} $(c(f^n))_{n\ge0}$.

%
%
%
%
\subsection{Fixed points on dual graphs}\label{S168}
Consider a blowup $\pi\in\fB_0$.
We have seen that the dual graph of $\pi$ embeds as
a subspace $|\Delta(\pi)|\subseteq\cV_0$ of the valuative tree, 
and that there is a retraction $r_\pi:\cV_0\to|\Delta(\pi)|$.
We shall study the selfmap
\begin{equation*}
  r_\pi f_\bullet:|\Delta(\pi)|\to|\Delta(\pi)|.
\end{equation*}
Notice that this map is continuous since $r_\pi$ and $f_\bullet$ are.
Despite appearances, 
it does not really define an induced dynamical system on $|\Delta(\pi)|$,
as, in general, we may have 
$(r_\pi f_\bullet)^2\ne r_\pi f_\bullet^2$.
However, the fixed points of $r_\pi f_\bullet$ will
play an important role.

It is easy to see that a continuous selfmap of a finite simplicial tree
always has a fixed point. (See also~Proposition~\ref{P119}.)
Hence we can find $v_0\in|\Delta(\pi)|$
such that $r_\pi f_\bullet v_0= v_0$. There are then three possibilities:
\begin{itemize}
\item[(1)]
  $v_0$ is divisorial and $f_\bullet v_0= v_0$;
\item[(2)]
  $v_0$ is divisorial and $f_\bullet v_0\ne v_0$;
\item[(3)]
  $v_0$ is irrational and $f_\bullet v_0= v_0$.
\end{itemize}
Indeed, if $v\in\cV_0\setminus|\Delta(\pi)|$ is any valuation, 
then $r_\pi(v)$ is divisorial, see Lemma~\ref{L123}.
The same lemma also allows us to assume,
in cases~(1) and~(2), that the center of $v_0$ on $X_\pi$
is an exceptional prime $E\subseteq X_\pi$.

In case~(2), this means that the center of $f_\bullet v_0$ on $X_\pi$
is a \emph{free} point $\xi\in E$, that is, a point that does not belong to 
any other exceptional prime of $\pi$.
%
%
%
%
\subsection{Proof of Theorem~B}\label{S148}
Using the fixed point $v_0$ that we just constructed, and still assuming 
$f$ finite, we can now prove Theorem~B. 

The proof that $c_\infty$ is a quadratic 
integer relies on a calculation using value groups. 
Recall that the value group of a valuation $v$ is 
defined as $\Gamma_v= v(F)$, where $F$ is the fraction field of $R$.
\begin{Lemma}
  In the notation above, we have 
  $c(f, v_0)\Gamma_{v_0}\subseteq\Gamma_{v_0}$.
  As a consequence, $c(f, v_0)$ is a quadratic integer.
\end{Lemma}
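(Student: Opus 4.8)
The plan is to extract the relation $c(f,v_0)\Gamma_{v_0}\subseteq\Gamma_{v_0}$ directly from the eigenvaluation property $f_\bullet v_0 = v_0$, and then deduce that $c(f,v_0)$ is a quadratic integer from the structure of value groups of valuations on a two-dimensional function field. First I would recall that since $v_0$ is quasimonomial (either divisorial or irrational), Abhyankar's inequality~\eqref{e129} is an equality, so by~\eqref{e130} the value group $\Gamma_{v_0}$ is a finitely generated subgroup of $\R$ of rank $r = \ratrk v_0 \in\{1,2\}$; in particular $\Gamma_{v_0}\simeq\Z^r$ as an abstract group. The key computation is the following: for any rational function $\phi\in F$, one has $f(v_0)(\phi) = v_0(f^*\phi)$, and $f^*$ maps $F$ into $F$ (as $f$ is dominant), so $f(v_0)(F)\subseteq v_0(F) = \Gamma_{v_0}$. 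Since $f(v_0) = c(f,v_0)\, f_\bullet v_0 = c(f,v_0)\, v_0$, this says precisely $c(f,v_0)\,\Gamma_{v_0}\subseteq\Gamma_{v_0}$.

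Next, to conclude that $c:=c(f,v_0)$ is a quadratic integer, I would use that $\Gamma_{v_0}$ is a free abelian group of rank $r\le 2$ and that multiplication by $c$ is an injective endomorphism of it (injective because $c>0$ is a real number and $\Gamma_{v_0}\subseteq\R$). If $r=1$, write $\Gamma_{v_0}=\Z\gamma$ for some $\gamma>0$; then $c\gamma = k\gamma$ for some $k\in\Z$, forcing $c=k$, an ordinary (hence quadratic) integer. If $r=2$, fix a basis $\gamma_1,\gamma_2$ of $\Gamma_{v_0}$ over $\Z$; then multiplication by $c$ is represented by an integer matrix $M\in\mathrm{GL}_2(\Z)$-sized matrix (not necessarily invertible over $\Z$) with respect to this basis, and $c$ is an eigenvalue of $M$. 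Hence $c$ satisfies the characteristic polynomial $\lambda^2 - (\operatorname{tr}M)\lambda + \det M = 0$ with $\operatorname{tr}M,\det M\in\Z$, which exhibits $c$ as a quadratic integer with $a = \operatorname{tr}M$, $b = -\det M$.

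I expect the main subtlety to be the rank-two case: one must verify that multiplication by $c$, which a priori is just a $\Q$-linear (indeed $\R$-linear) endomorphism of $\Gamma_{v_0}\otimes_\Z\Q$, actually carries the lattice $\Gamma_{v_0}$ into itself, so that its matrix in an integral basis has integer entries. This is exactly the content of the inclusion $c\Gamma_{v_0}\subseteq\Gamma_{v_0}$ established in the first paragraph, so once that inclusion is in hand the argument is essentially linear algebra over $\Z$. A minor point to check is that $c>0$, which is immediate since $c(f,v_0) = v_0(f^*\fm_0) \ge 1$ by the finiteness assumption on $f$ (indeed $f^*\fm_0\supseteq\fm_0^s$ gives $1\le v_0(f^*\fm_0)\le s$ for $v_0\in\cV_0$). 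Finally, I would remark that this quadratic integer $c(f,v_0)$ is the candidate for $c_\infty(f)$: the remaining (and genuinely harder) part of Theorem~B, namely the two-sided bound $\delta c_\infty^n\le c(f^n)\le c_\infty^n$, will be handled separately using the Izumi-type inequality of Proposition~\ref{P106} applied to the quasimonomial valuation $v_0$, together with the cocycle identity $c(f^n,v)=\prod_{i=0}^{n-1}c(f,v_i)$; that argument is sketched in~\S\ref{S109} of the introduction and does not feed into the present lemma.
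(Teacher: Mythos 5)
There is a genuine gap in your first paragraph: you derive the inclusion $c(f,v_0)\Gamma_{v_0}\subseteq\Gamma_{v_0}$ from the identity $f(v_0)=c(f,v_0)\,v_0$, which presupposes $f_\bullet v_0=v_0$. But in the notation of~\S\ref{S168}, the point $v_0$ is only a fixed point of the composition $r_\pi f_\bullet$ on $|\Delta(\pi)|$, and the paper explicitly singles out case~(2), where $v_0$ is divisorial and $f_\bullet v_0\ne v_0$ (this is described as the typical case, e.g.\ when $\pi$ is the single blowup of the origin and $v_0=\ord_0$). In that case $f(v_0)$ is \emph{not} a multiple of $v_0$ and your key step collapses. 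The repair is the one the paper uses: for any $w\in\hcV_0^*$ one has $\Gamma_{r_\pi(w)}\subseteq\Gamma_w$, because $r_\pi(w)$ is the monomial valuation whose weights are the values $w(\zeta_i)\in\Gamma_w$ of $w$ on local coordinates at its center, so by~\eqref{e130} its value group is generated by elements of $\Gamma_w$. Applying this with $w=f_\bullet v_0$ gives $\Gamma_{v_0}=\Gamma_{r_\pi f_\bullet v_0}\subseteq\Gamma_{f_\bullet v_0}$, and then
\begin{equation*}
  c_0\Gamma_{v_0}\subseteq c_0\Gamma_{f_\bullet v_0}=\Gamma_{c_0 f_\bullet v_0}=\Gamma_{f(v_0)}\subseteq\Gamma_{v_0},
\end{equation*}
where the last inclusion is your (correct) observation that $\Gamma_{f(v)}=v(f^*F)\subseteq v(F)=\Gamma_v$ for a dominant map.

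Your second paragraph, deducing that $c_0$ is a quadratic integer from $c_0\Gamma_{v_0}\subseteq\Gamma_{v_0}$ and $\Gamma_{v_0}\simeq\Z^r$ with $r\in\{1,2\}$ via the characteristic polynomial of the integer matrix, is correct and is exactly the paper's argument; the observations that $c_0>0$ and that injectivity of multiplication by $c_0$ is automatic are fine. Only the derivation of the group inclusion needs the retraction step above to cover case~(2).
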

We shall see that under suitable assumptions on the blowup
$\pi$ we have $c(f, v_0)=c_\infty(f)$. This will show that 
$c_\infty(f)$ is a quadratic integer.
\begin{proof}
  In general, $\Gamma_{f(v)}\subseteq\Gamma_v$
  and $\Gamma_{r_\pi(v)}\subseteq\Gamma_v$ for 
  $v\in\hcV_0^*$. 
  If we write $c_0=c(f, v_0)$, then this leads to 
  \begin{equation*}
    c_0\Gamma_{v_0}
    =c_0\Gamma_{r_\pi f_\bullet v_0}
    \subseteq c_0\Gamma_{f_\bullet v_0}
    =\Gamma_{c_0f_\bullet v_0}
    =\Gamma_{f(v_0)}
    \subseteq\Gamma_{v_0},
  \end{equation*}
  which proves the first part of the lemma.  
  
  Now $v_0$ is quasimonomial, so the structure of 
  its value group is given by~\eqref{e125}.
  When $v_0$ is divisorial, 
  $\Gamma_{v_0}\simeq\Z$ and the inclusion
  $c_0\Gamma_{v_0}\subseteq\Gamma_{v_0}$
  immediately implies that $c_0$ is an integer.
  If instead $v_0$ is irrational, 
  $\Gamma_{v_0}\simeq\Z\oplus\Z$ 
  and $c_0$ is a quadratic integer.
  Indeed, if we write 
  $\Gamma_{v_0}=t_1\Z\oplus t_2\Z$,
  then there exist integers $a_{ij}$ such that 
  $c_0t_i=\sum_{j=1}^2a_{ij}t_j$ for $i=1,2$.
  But then $c_0$ is an eigenvalue of the matrix $(a_{ij})$,
  hence a quadratic integer.
\end{proof}
It remains to be seen that $c(f, v_0)=c_\infty(f)$ and that the 
estimates in Theorem~B hold.
We first consider cases~(1) and~(3) above, so that $f_\bullet v_0= v_0$.
It follows from~\eqref{e144} that the valuations $v_0$ and $\ord_0$ 
are comparable. More precisely, 
$\ord_0\le v_0\le-\alpha_0\ord_0$, where $\alpha_0=\alpha(v_0)$.
The condition $f_\bullet v_0= v_0$ means that $f(v_0)=c v_0$,
where $c=c(f, v_0)$. This leads to
\begin{equation*}
  c(f^n)
  =\ord_0(f^{n*}\fm_0)
  \le v_0(f^{n*}\fm_0)
  =(f^n_* v_0)(\fm_0)
  =c^n v_0(\fm_0)
  =c^n
\end{equation*}
and, similarly, $c^n\le-\alpha_0c(f^n)$.
In view of the definition of $c_\infty$, this implies that 
$c_\infty=c$, so that 
\begin{equation*}
  f(v_0)=c_\infty v_0
  \quad\text{and}\quad
  -\alpha_0^{-1}c_\infty^n\le c(f^n)\le c_\infty^n,
\end{equation*}
proving Theorem~B in this case.

Case~(2) is more delicate and is in some sense the typical case.
Indeed, note that we have not made any restriction on 
the modification $\pi$. For instance, $\pi$ could be a simple blowup
of the origin. In this case $|\Delta(\pi)|=\{\ord_0\}$ is a singleton,
so $v_0=\ord_0$ but there is no reason why $f_\bullet\ord_0=\ord_0$.
To avoid this problem, we make
\begin{Assum}
  The map $\pi:X_\pi\to\A^2$ defines a log resolution of the 
  ideal $f^*\fm$. In other words, the ideal sheaf 
  $f^*\fm\cdot\cO_{X_\pi}$ is locally principal.
\end{Assum}
Such a $\pi$ exists by resolution of singularities. Indeed our 
current assumption
that $f$ be a \emph{finite} germ implies that $f^*\fm$ is an 
$\fm$-primary ideal.

For us, the main consequence of $\pi$ being a log resolution of $f^*\fm$
is that 
\begin{equation*}
  c(v)= v(f^*\fm_0)=(r_\pi v)(f^*\fm_0)=c(r_\pi v)
\end{equation*}
for all $v\in\cV_0$, see Lemma~\ref{L125}.

As noted above, we may assume that the center of $v_0$
on $X_\pi$ is an exceptional prime $E$. Similarly, 
the center of $f_\bullet v_0$ on $X_\pi$ is a free point $\xi\in E$.
Let $U(\xi)$ be the set of all valuations $v\in\cV_0$ whose center
on $X_\pi$ is the point $\xi$. 
By~\S\ref{S162}, this is a connected open set and its closure is
given by $\overline{U(\xi)}=U(\xi)\cup\{ v_0\}$.
We have $r_\pi U(\xi)=\{ v_0\}$, so $c(f, v)=c(f, v_0)$
for all $v\in U(\xi)$ by Lemma~\ref{L125}.

We claim that $f_\bullet(\overline{U(\xi)})\subseteq U(\xi)$. 
To see this, we could use~\S\ref{S130}
but let us give a direct argument.
Note that $v\ge v_0$, and hence 
$f(v)\ge f(v_0)$ for all $v\in\overline{U(\xi)}$. 
Since $c(f, v)=c(f, v_0)$, this implies
$f_\bullet v\ge f_\bullet v_0> v_0$ 
for all $v\in\overline{U(\xi)}$. In particular, 
$f_\bullet v\ne v_0$ for all $v\in\overline{U(\xi)}$,
so that 
\begin{equation*}
  \overline{U(\xi)}\cap f_\bullet^{-1}U(\xi)
  =\overline{U(\xi)}\cap f_\bullet^{-1}\overline{U(\xi)}.
\end{equation*}
It follows that $\overline{U(\xi)}\cap f_\bullet^{-1}U(\xi)$ is a subset of 
$\overline{U(\xi)}$ that is both open and closed. It is also nonempty,
as it contains $v_0$. By connectedness of $\overline{U(\xi)}$, 
we conclude that $f_\bullet(\overline{U(\xi)})\subseteq U(\xi)$.

The proof of Theorem~B can now be concluded in the same way
as in cases~(1) and~(3). Set $v_n:=f^n_\bullet v_0$
for $n\ge 0$. Then we have $v_n\in\overline{U(\xi)}$ and hence 
$c(f, v_n)=c(f, v_0)=:c$ for all $n\ge 0$. This implies
$c(f^n, v_0)=\prod_{i=0}^{n-1}c(f, v_i)=c^n$ for all $n\ge 1$.
As before, this implies that $c=c_\infty$ and 
$-\alpha_0^{-1}c_\infty^n\le c(f^n)\le c_\infty^n$,
where $\alpha_0=\alpha(v_0)<\infty$.
%
%
%
%
\subsection{The case of a non-finite germ}\label{S128}
Let us briefly discuss the situation when 
$f:\A^2\to\A^2$ is dominant but not finite at a fixed point $0=f(0)$. 
In other words, the ideal $f^*\fm_0\subseteq\fm_0$ is not
primary. In this case, the subset 
$I_f\subseteq\cV_0$ given by $c(f,\cdot)=+\infty$ 
is nonempty but finite. Each element of $I_f$ is a curve valuation
associated to an irreducible germ of a curve $C$ at $0$ such that 
$f(C)=0$.  In particular, $I_f$ does not contain any 
quasimonomial valuations.
Write $\hI_f=\R_+^*I_f$,
$\hD_f:=\hcV_0^*\setminus\hI_f=\{c(f,\cdot)<+\infty\}$
and $D_f:=\cV_0\setminus I_f=\hD_f\cap\cV_0$. 
For $v\in\hI_f$ we have $f(v)=\triv_0$. We can view
$f:\hcV_0^*\dashrightarrow\hcV_0^*$
as a partially defined map having domain of definition $\hD_f$.
On $D_f$ we define $f_\bullet$ as before,
namely $f_\bullet v=f(v)/c(f, v)$. 
One can show that $f_\bullet$ extends 
continuously through $I_f$ to a map 
$f_\bullet:\cV_0\to\cV_0$. More precisely, any $v\in I_f$
is associated to an analytically irreducible branch of 
an algebraic curve $D\subseteq\A^2$ for which $f(D)=0$.
The valuation $f(\ord_D)$ is divisorial and has 0 as 
its center on $\A^2$, hence $f(\ord_D)=r v_E$,
where $r\in\N$ and $v_E\in\cV_0$ is divisorial.
The continuous extension of $f_\bullet$ across 
$v$ is then given by $f_\bullet v= v_E$. 
In particular, $f_\bullet I_f\cap I_f=\emptyset$.

Now we can find a log resolution $\pi:X_\pi\to\A^2$ 
of the ideal $f^*\fm_0$.
By this we mean that the ideal sheaf $f^*\fm_0\cdot\cO_{X_\pi}$ 
on $X_\pi$ is
locally principal and given by a normal crossings divisor
in a neighborhood of $\pi^{-1}(0)$. We can embed the dual
graph of this divisor as a finite subtree 
$|\Delta|\subseteq\cV_0$. Note that $|\Delta|$ 
contains all elements of $I_f$.
There is a continuous retraction map 
$r:\cV_0\to|\Delta|$.  Thus we get a continuous selfmap 
$rf_\bullet:|\Delta|\to|\Delta|$, which admits a
fixed point $v\in|\Delta|$. 
Note that $v\not\in I_f$ since $f_\bullet I_f\cap I_f=\emptyset$
and $r^{-1}I_f=I_f$. Therefore $v$ is quasimonomial.
The proof now goes through exactly as in the finite case. 
%
%
%
%
\subsection{Further properties}
Let us outline some further results from~\cite{eigenval} that
one can obtain by continuing the analysis. 

First, one can construct an \emph{eigenvaluation}, by which
we mean a semivaluation $v\in\cV_0$ such that 
$f(v)=c_\infty v$. Indeed, suppose $f$ is finite for simplicity
and look at the three cases~(1)--(3)
in~\S\ref{S168}. In cases~(1) and~(3) the valuation $v_0$
is an eigenvaluation. In case~(2) one can show that the sequence
$(f^n_\bullet v_0)_{n=0}^\infty$ increases to an eigenvaluation.

Second, we can obtain local normal forms for the dynamics.
For example, in Case~(2) in~\S\ref{S168} we showed that 
$f_\bullet$ mapped the open set $U(\xi)$ into itself, where 
$U(\xi)$ is the set of semivaluations whose center of $X_\pi$ is
equal to $\xi$, the center of $f_\bullet v_0$ on $X_\pi$.
This is equivalent to the the lift 
$f:X_\pi\dashrightarrow X_\pi$ being regular at $\xi$ and $f(\xi)=\xi$.
By choosing $X_\pi$ judiciously one can even guarantee that
$f:(X_\pi,\xi)\to(X_\pi,\xi)$ is a \emph{rigid} germ, a dynamical
version of simple normal crossings singularities. 
Such a rigidification result was proved in~\cite{eigenval} for 
superattracting germs and later extended by 
Matteo Ruggiero~\cite{Ruggiero} to more general germs.

When $f$ is finite,
$f_\bullet:\cV_0\to\cV_0$ is a tree map in the sense of~\S\ref{S130},
so the results in that section apply, but in our approach here we 
did not need them. In contrast, the approach in~\cite{eigenval}
consists of first using the tree analysis in~\S\ref{S130} to 
construct an eigenvaluation. 

Using numerical invariants one can show that $f$ preserves the
type of a valuation in the sense of~\S\ref{S169}. There is also
a rough analogue of the ramification locus for selfmaps
of the Berkovich projective line as in~\S\ref{S171}.
At least in the case of a finite map,
the ramification locus is a finite subtree given by the convex 
hull of the preimages of the root $\ord_0$.

While this is not pursued in~\cite{eigenval}, the induced dynamics
on the valuative tree is somewhat similar to the dynamics of a selfmap
of the unit disc over $\C$. Indeed, recall from~\S\ref{S132} that we 
can embed the valuative tree inside the Berkovich unit disc over
the field of Laurent series (although this does not seem very useful
from a dynamical point of view). 
In particular, the dynamics is (essentially) globally attracting. This
is in sharp contrast with selfmaps
of the Berkovich projective line that are nonrepelling on hyperbolic 
space $\H$.

For simplicity we only studied the dynamics of polynomial maps,
but the analysis goes through also for formal fixed point germs.
In particular, it applies to fixed point germs defined by rational
maps of a projective surface and to holomorphic (perhaps
transcendental) fixed point germs. In the latter case, one
can really interpret $c_\infty(f)$ as a speed at which typical 
orbits tend to 0, see~\cite[Theorem~B]{eigenval}.
%
%
%
%
\subsection{Other ground fields}\label{S271}
Let us briefly comment on the case when the field $K$
is not algebraically closed. Specifically, let us argue why
Theorem~B continues to hold in this case.

Let $K^a$ be the algebraic closure of $K$ and $G=\Gal(K^a/K)$
the Galois group. 
Then $\A^2(K)\simeq\A^2(K^a)/G$ and 
any polynomial mapping $f:\A^2(K)\to\A^2(K)$ induces a equivariant 
polynomial mapping $f:\A^2(K^a)\to\A^2(K^a)$.

If the point $0\in\A^2(K)$ is $K$-rational, then it has
a unique preimage in $0\in K^a$ and the value of 
$\ord_0(\phi)$, for $\phi\in R$, is the same when 
calculated over $K$ or over $K^a$. The same therefore holds
for $c(f^n)$, so since Theorem~B holds over $K^a$, it also holds over $K$.

In general, $0\in\A^2$ has finitely many preimages $0_j\in\A^2(K^a)$
but if $\phi\in R$ is a polynomial with coefficients in $K$, then 
$\ord_0(\phi)=\ord_{0_j}(\f)$ for all $j$. Again we can deduce
Theorem~B over $K$ from its counterpart over $K^a$, although some care
needs to be taken to prove that $c_\infty$ is a quadratic integer in
this case.

Alternatively, we can consider the action of $f$ directly on
$\BerkAtwo(K)$.
As noted in~\S\ref{S230}, the subset of semivaluations centered at $0$
is still the cone over a tree and we can consider the induced
dynamics. The argument for proving that $c_\infty$ is a quadratic
integer, using value groups, carries over to this setting.
%
%
%
%
\subsection{Notes and further references}
In~\cite{eigenval} and~\cite{dyncomp} we used the notation
$f_*v$ instead of $f(v)$ as the action of $f$ on the valuative tree is
given as a pushforward. However, one usually does not 
denote induced maps on Berkovich spaces as pushforwards,
so I decided to deviate from \loccit in order to 
keep the notation uniform across these notes.

In analogy with the degree growth of polynomial maps
(see~\ref{S170}) I would expect the sequence $(c(f^n))_{n=0}^\infty$ 
to satisfy an integral linear recursion relation, but this has
not yet been established.\footnote{The existence
of such a relation has in fact recently 
been established by W. Gignac and M. Ruggiero in
\texttt{arXiv:1209.3450}}.

My own path to Berkovich spaces came through joint work with 
Charles Favre. Theorem~B, in a version for holomorphic selfmaps
of $\P^2$, has ramifications for problem of equidistribution
to the Green current. See~\cite{brolin} and 
also~\cite{DSAENS,Parra} for higher dimensions.
%
%
%
%
%
%
\newpage
\section{The valuative tree at infinity}\label{S108}
In order to study the dynamics at infinity of 
polynomial maps of $\A^2$ we will use the subspace
of the Berkovich affine plane $\BerkAtwo$ consisting of 
semivaluations centered at infinity. 
As in the case of semivaluations centered at a point, 
this is a cone over a tree that we call 
the \emph{valuative tree at infinity}.\footnote{The notation in these notes
differs from~\cite{eigenval,dyncomp} where 
the valuative tree at infinity is denoted by $\cV_0$.
In \loccit the valuation $\ord_\infty$ defined in~\eqref{e146} 
is denoted by $-\deg$.}
Its structure is superficially similar to that of the valuative tree
at a point, which we will refer to as the \emph{local case},
but, as we will see, there are some significant differences.
%
%
%
%
\subsection{Setup}
Let $K$ be an algebraically closed field 
of characteristic zero, equipped with the trivial valuation.
(See~\S\ref{S246} for the case of other ground fields.)
Further, $R$ and $F$ are the coordinate ring and
function field of the affine plane $\A^2$ over $K$.
Recall that the Berkovich affine plane $\BerkAtwo$ 
is the set of semivaluations on $R$ that restrict to the trivial valuation
on $K$.

A \emph{linear system} $|\fM|$ of curves on $\A^2$
is the projective space associated to a nonzero, 
finite-dimensional vector space
$\fM\subseteq R$. The system is \emph{free} if its base 
locus is empty, that is, for every point $\xi\in\A^2$ there exists
a polynomial $\phi\in\fM$ with $\phi(\xi)\ne 0$. 
For any linear system $|\fM|$ and any $v\in\BerkAtwo$
we write $v(|\fM|)=\min\{v(\phi)\mid\phi\in\fM\}$.
%
%
%
%
\subsection{Valuations centered at infinity}
We let $\hcVe{\infty}\subseteq\BerkAtwo$ denote
the set of semivaluations $v$ having center at infinity, that is,
such that $v(\phi)<0$ for some polynomial $\phi\in R$.
Note that $\hcVe{\infty}$ is naturally a pointed cone:
in contrast to $\hcV_0$ there is no element `$\triv_\infty$'.

The valuative tree at infinity is the base of this cone
and we want to realize it as a ``section'.
In the local case, the valuative tree at a closed point $0\in\A^2$ was defined
using the maximal ideal $\fm_0$. In order to do something
similar at infinity, we fix an embedding $\A^2\hookrightarrow\P^2$. 
This allows us to define the \emph{degree} of a polynomial
in $R$ and in particular defines the free linear system $|\fL|$ of \emph{lines},
associated to the subspace $\fL\subseteq R$ of \emph{affine functions} 
on $\A^2$, that is, polynomials of degree at most one. 
Note that $v\in\BerkAtwo$ has center at infinity iff $v(|\fL|)<0$.

We say that two polynomials $z_1,z_2$ are affine coordinates
on $\A^2$ if $\deg z_i=1$ and $R=K[z_1,z_2]$.
In this case, $F=K(z_1,z_2)$ and $v(|\fL|)=\min\{v(z_1),v(z_2)\}$.
\begin{Def}
  The \emph{valuative tree at infinity} $\cVe{\infty}$ is the set of 
  semivaluations $v\in\BerkAtwo$ such that $v(|\fL|)=-1$.
\end{Def}
The role of $\ord_0\in\cV_0$ is played by the valuation 
$\ord_\infty\in\cVe{\infty}$, 
defined by
 \begin{equation}\label{e146}
  \ord_\infty(\phi)=-\deg(\phi).
\end{equation}
In particular, $v(\phi)\ge\ord_\infty(\phi)$ for every $\phi\in R$
and every $v\in\cVe{\infty}$.
We emphasize that both $\cVe{\infty}$ and $\ord_\infty$ depend
on a choice of embedding $\A^2\hookrightarrow\P^2$.

We equip $\cVe{\infty}$ and $\hcVe{\infty}$ with the subspace topology from
$\BerkAtwo$.
It follows from Tychonoff's theorem that 
$\cVe{\infty}$ is a compact Hausdorff space.
The space $\hcVe{\infty}$ is open in $\BerkAtwo$ 
and its boundary consists of the trivial valuation $\triv_{\A^2}$ and 
the set of semivaluations centered at a curve in $\A^2$. 

As in the local case, we can classify the elements of $\hcVe{\infty}$ into 
curve semivaluations, divisorial valuations, irrational valuations 
and infinitely singular valuations. We do this by considering 
$v$ as a semivaluation on the ring $\widehat{\cO}_{\P^2,\xi}$, where 
$\xi$ is the center of $\xi$ on $\P^2$.
%
%
%
%
\subsection{Admissible compactifications}\label{S213}
The role of a blowup of $\A^2$ above a closed point is played here
by a \emph{compactification} of $\A^2$, by which we mean a 
projective surface containing $\A^2$ as Zariski open subset.
To make the analogy even stronger, recall that we have fixed an 
embedding $\A^2\hookrightarrow\P^2$. We will use
\begin{Def}
  An \emph{admissible compactification} of $\A^2$ is a 
  smooth projective surface $X$ containing $\A^2$ as a
  Zariski open subset, such that the induced birational map
  $X\dashrightarrow\P^2$ induced by the identity on $\A^2$,
  is regular.
\end{Def}
By the structure theorem of birational surface maps,
this means that the morphism $X\to\P^2$ is a finite composition 
of point blowups above infinity. The set of admissible 
compactifications is naturally partially ordered and in fact a 
directed set: any two admissible compactifications are
dominated by a third.

Many of the notions below will in fact not depend on the 
choice of embedding $\A^2\hookrightarrow\P^2$ but would
be slightly more complicated to state without it.
\begin{Remark}\label{R203}
  Some common compactifications of $\A^2$, for instance $\P^1\times\P^1$,
  are not admissible in our sense. However,
  the set of admissible compactifications is cofinal 
  among compactifications of $\A^2$:
  If $Y$ is an irreducible, normal projective surface 
  containing $\A^2$ as a Zariski open subset, 
  then there exists an admissible 
  compactification $X$ of $\A^2$ such that the birational map
  $X\dashrightarrow Y$ induced by the identity on $\A^2$
  is regular. Indeed, $X$ is obtained
  by resolving the indeterminacy points of the similarly defined
  birational map $\P^2\dashrightarrow Y$. 
  See~\cite{Morrow,Kishimoto} for a classification of smooth 
  compactifications of $\A^2$.
\end{Remark}
%
%
\subsubsection{Primes and divisors at infinity}  
Let $X$ be an admissible compactification of $\A^2$.
A \emph{prime at infinity} of $X$  is an irreducible component 
of $X\setminus\A^2$.
We often identify a prime of $X$ at infinity with its  
strict transform in any compactification $X'$ dominating $X$. 
In this way we can identify a prime at infinity $E$ (of some 
admissible compactification) with the 
corresponding divisorial valuation $\ord_E$. 

Any admissible compactification contains a special prime $L_\infty$,
the strict transform of $\P^2\setminus\A^2$.
The corresponding divisorial valuation is $\ord_{L_\infty}=\ord_\infty$.

We say that a point in $X\setminus\A^2$ is a 
\emph{free point} if it belongs to a unique prime at infinity;
otherwise it is a \emph{satellite point}.

A \emph{divisor at infinity} on $X$ is a divisor
supported on $X\setminus\A^2$. We write 
$\Div_\infty(X)$ for the abelian group of divisors
at infinity. 
If $E_i$, $i\in I$ are the primes of $X$ at infinity,
then $\Div_\infty(X)\simeq\bigoplus_i\Z E_i$.
%
%
\subsubsection{Intersection form and linear equivalence}
We have the following basic facts.
\begin{Prop}\label{P205}
  Let $X$ be an admissible compactification of $\A^2$. Then
  \begin{itemize}
    \item[(i)]
      Every divisor on $X$ is linearly 
      equivalent to a unique divisor at infinity, so 
      $\Div_\infty(X)\simeq\Pic(X)$.
    \item[(ii)]
      The intersection form on $\Div_\infty(X)$ is nondegenerate
      and unimodular. It has signature $(1,\rho(X)-1)$. 
    \end{itemize}
  \end{Prop}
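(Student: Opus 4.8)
The plan is to reduce both statements to the corresponding facts for $\P^2$ together with the behaviour of $\Pic$ and the intersection form under a single point blowup, and then induct on the number of blowups in $X \to \P^2$.

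For part (i), the first step is to recall that $\Pic(\A^2) = 0$: since $R = K[z_1,z_2]$ is a UFD, every divisor on $\A^2$ is principal. Hence for any divisor $D$ on $X$, its restriction $D|_{\A^2}$ is linearly equivalent to $0$, so there is a rational function $\phi \in F$ with $D - \div(\phi)$ supported on $X \setminus \A^2$, i.e.\ $D$ is linearly equivalent to a divisor at infinity. This gives surjectivity of the natural map $\Div_\infty(X) \to \Pic(X)$. For injectivity we must show that a divisor $D \in \Div_\infty(X)$ with $D = \div(\phi)$ for some $\phi \in F^*$ is necessarily $0$. Such a $\phi$ has neither zeros nor poles on $\A^2$, hence is a regular invertible function on $\A^2$, hence (again because $R$ is a UFD with $K^* = R^*$) a nonzero constant; so $D = \div(\phi) = 0$. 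Thus $\Div_\infty(X) \simeq \Pic(X)$.

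For part (ii), I would argue by induction on the number $n$ of point blowups in the composition $X \to \P^2$. When $n = 0$, $X = \P^2$, $\Div_\infty = \Z L_\infty$ with $(L_\infty \cdot L_\infty) = 1$: nondegenerate, unimodular, signature $(1,0) = (1, \rho(\P^2) - 1)$. For the inductive step, write $X = X' \xrightarrow{\mu} \P^2$ where $\mu$ is a blowup at a point $\xi$ lying above infinity (so $\xi \in X'' \setminus \A^2$ for the compactification $X''$ dominated by $X$), with exceptional prime $E$. Then $\Pic(X) = \mu^*\Pic(X'') \oplus \Z E$ is an orthogonal decomposition for the intersection form, with $(E \cdot E) = -1$ and $(\mu^* \cdot \mu^*)$ agreeing with the form on $\Pic(X'')$; moreover $\Div_\infty(X) = \mu^*\Div_\infty(X'') \oplus \Z E$ matches this under the isomorphism of part (i). Since adding an orthogonal summand $\Z E$ with self-intersection $-1$ preserves unimodularity and nondegeneracy and increases both $\rho$ and the negative part of the signature by $1$, the claim follows by induction. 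Alternatively, as in the proof of Proposition~\ref{P203}, one can invoke the Hodge Index Theorem and Poincaré duality on the smooth rational surface $X$ directly to get signature $(1, \rho(X)-1)$ and unimodularity, using part (i) to transport these from $\Pic(X)$ to $\Div_\infty(X)$.

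The main obstacle is purely bookkeeping: one must be careful that the point $\xi$ being blown up genuinely lies over $X \setminus \A^2$ (so that $E$ is a prime \emph{at infinity} and the decomposition $\Div_\infty(X) = \mu^*\Div_\infty(X'') \oplus \Z E$ is correct rather than just $\Div(X) = \mu^*\Div(X'') \oplus \Z E$) — this is exactly where admissibility of the compactification, forcing $X \to \P^2$ to factor through blowups above infinity only, is used. Everything else is a direct transcription of the surface-geometry facts already invoked in Section~\ref{S103}, so I expect no serious difficulty beyond organising the induction cleanly.
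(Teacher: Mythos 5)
Your proof is correct and your treatment of part~(ii) is essentially the paper's: the same induction on the number of blowups, with the orthogonal decomposition $\Div_\infty(X')=\mu^*\Div_\infty(X'')\oplus\Z E$, $(E\cdot E)=-1$, and the base case $X=\P^2$; the appeal to Hodge Index and Poincar\'e duality as an alternative also mirrors the paper's closing remark. Where you differ is in part~(i). The paper carries $\Div_\infty(X)\simeq\Pic(X)$ through the induction (both sides pick up the same $\Z E$ summand at each step), whereas you give a direct, induction-free argument: surjectivity from the fact that $\Pic(\A^2)=0$ since $R=K[z_1,z_2]$ is a UFD, and injectivity from the fact that a rational function with no zeros or poles on $\A^2$ is a unit in $R$, hence a constant since $R^*=K^*$. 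Your version is slightly more illuminating in that it isolates exactly what is being used about $\A^2$ — the vanishing of its Picard group and the triviality of its units — rather than leaving this implicit in the bookkeeping. Both arguments are short and correct; yours has the small advantage of decoupling (i) from (ii), while the paper's has the advantage of establishing the orthogonal decomposition once and using it for both statements simultaneously.
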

\begin{proof}
  We argue by induction on the number of blowups needed
  to obtain $X$ from $\P^2$. 
  If $X=\P^2$, then the statement is clear: 
  $\Div_\infty(X)=\Pic(X)=\Z L_\infty$ and $(L_\infty\cdot L_\infty)=1$.
  For the inductive step, suppose $\pi'=\pi\circ\mu$, where
  $\mu$ is the simple blowup of a closed point on $X\setminus\A^2$,
  resulting in an exceptional prime $E$. 
  Then we have an orthogonal decomposition 
  $\Div_\infty(X')=\mu^*\Div_\infty(X)\oplus\Z E$,
  $\Pic(X')=\mu^*\Pic(X)\oplus\Z E$
  and $(E\cdot E)=-1$.

  Statement~(ii) about the intersection form is also
  a consequence of the Hodge Index Theorem
  and Poincar{\'e} Duality. 
\end{proof}
Concretely, the isomorphism $\Pic(X)\simeq\Div_\infty(X)$ can
be understood as follows. Any irreducible curve $C$ in $X$
that is not contained in $X\setminus\A^2$
is the closure in $X$ of an affine curve $\{\phi=0\}$ for
some polynomial $\phi\in R$. Then $C$ is linearly equivalent to 
the element in $\Div_\infty(X)$ defined as the divisor of poles of
$\phi$, where the latter is viewed as a rational function on $X$.

Let $E_i$, $i\in I$ be the primes of $X$ at infinity.
It follows from Proposition~\ref{P205} 
that for each $i\in I$ there exists a divisor 
$\vE_i\in\Div_\infty(X)$ such that 
$(\vE_i\cdot E_i)=1$ and $(\vE_i\cdot E_j)=0$ for all $j\ne i$.
%
%
\subsubsection{Invariants of primes at infinity}\label{S151}
Analogously to the local case (see~\ref{S249}) we associate two
basic numerical invariants $\a_E$ and $A_E$ to any prime $E$ at infinity 
(or, equivalently, to the associated divisorial valuation 
$\ord_E\in\hcVe{\infty}$.

To define $\a_E$, pick an admissible compactification $X$ of $\A^2$
in which $E$ is a prime at infinity. Above we defined the 
divisor $\vE=\vE_X\in\Div_\infty(X)$ by duality:
$(\vE_X\cdot E)=1$ and $(\vE_X\cdot F)=0$ for all primes $F\ne E$
of $X$ at infinity. Note that if $X'$ is an admissible
compactification dominating $X$, then the divisor $\vE_{X'}$ on 
$X'$ is the pullback of $\vE_X$ under the morphism 
$X'\to X$. In particular, the self-intersection number 
\begin{equation*}
  \a_E:=\a(\ord_E):=(\vE\cdot\vE)
\end{equation*}
is an integer independent of the choice of $X$.

The second invariant is the \emph{log discrepancy} $A_E$.
Let $\omega$ be a nonvanishing regular 2-form on $\A^2$. 
If $X$ is an admissible compactification of $\A^2$, then $\omega$
extends as a rational form on $X$. For any prime $E$ of $X$ at infinity, 
with associated divisorial valuation $\ord_E\in\hcVe{\infty}$, we define
\begin{equation}\label{e220}
  A_E:=A(\ord_E):=1+\ord_E(\omega).
\end{equation}
This is an integer whose value does not depend
on the choice of $X$ or $\omega$.
Note that $A_{L_\infty}=-2$ since 
$\omega$ has a pole of order 3 along $L_\infty$. In general, $A_E$
can be positive or negative. 

We shall later need the analogues of~\eqref{e212} and~\eqref{e213}.
Thus let $X$ be an admissible compactification of $\A^2$ and 
$X'$ the blowup of $X$ at a free point $\xi\in X\setminus\A^2$.
Let $E'$ be the ``new'' prime of $X'$, that is, the inverse image of $\xi$ in $X'$.
Then 
\begin{equation}\label{e152}
  A_{E'}=A_E+1,\
  b_{E'}=b_E 
  \quad\text{and}\quad
  \vE'=\vE-E',
\end{equation}
where, in the right hand side, we identify 
the divisor $\vE\in\Div_\infty(X)$ with its
pullback to $X'$. 
As a consequence,
\begin{equation}\label{e155}
 \a_{E'}:=(\vE'\cdot\vE')=(\vE\cdot\vE)-1=\a_E-1.
\end{equation}

Generalizing both~\S\ref{S249} and~\S\ref{S151}, the 
invariants $\a_E$ and $A_E$ can in fact be defined for any 
divisorial valuation $\ord_E$ in the Berkovich affine plane.
%
%
\subsubsection{Positivity}\label{S214}
Recall that in the local case, the notion of 
relative positivity was very well behaved and easy
to understand, see~\S\ref{S159}.
Here the situation is much more subtle,
and this will account for several difficulties.

As usual, we say that a divisor $Z\in\Div(X)$ is \emph{effective}
if it is a positive linear combination of prime divisors on $X$.
We also say that $Z\in\Div(X)$ is \emph{nef} if 
$(Z\cdot W)\ge 0$ for all effective divisors $W$. 
These notions make sense also for $\Q$-divisors. 
It is a general fact that if $Z\in\Div(X)$ is nef, then $(Z\cdot Z)\ge0$.

Clearly, the semigroup of effective divisors in $\Div_\infty(X)$ is
freely generated by the primes $E_i$, $i\in I$ at infinity.
A divisor $Z\in\Div_\infty(X)$ is \emph{nef at infinity} if 
$(Z\cdot W)\ge0$ for every effective divisor $W\in\Div_\infty(X)$.
This simply means that $(Z\cdot E_i)\ge 0$ for all $i\in I$. 
It follows easily that the subset of $\Div_\infty(X)$ consisting 
of divisors that are nef at infinity is a free semigroup generated
by the $\vE_i$, $i\in I$.

We see that a divisor $Z\in\Div_\infty(X)$ is nef  iff
it is nef at infinity and, in addition, $(Z\cdot C)\ge 0$
whenever $C$ is the closure in $X$ of an irreducible 
curve in $\A^2$. 
In general, a divisor that is nef at infinity may not be nef. 
\begin{Example}\label{E202}
  Consider the surface $X$ obtained by first blowing up
  any closed point at infinity, creating the prime $E_1$,
  then blowing up a free point on $E_1$, creating the prime $E_2$.
  Then the divisor $Z:=\vE_2=L_\infty-E_2$ is nef at infinity but 
  $Z$ is not nef since $(Z\cdot Z)=-1<0$.
\end{Example}
However, a divisor $Z\in\Div_\infty(X)$ 
that is nef at infinity and effective
is always nef: as above it suffices to show that $(Z\cdot C)\ge0$ 
whenever $C$ is the closure in $X$ of a curve in $\A^2$. 
But $(E_i\cdot C)\ge0$ for all $i\in I$, so since $Z$ has
nonnegative coefficients in the basis $E_i$, $i\in I$,
we must have $(Z\cdot C)\ge0$.

On the other hand, it is possible for a divisor to be nef 
but not effective. 
The following example was communicated by Adrien Dubouloz~\cite{Dubouloz}.
\begin{Example}\label{E206}
  Pick two distinct points $\xi_1$, $\xi_2$ on the line at infinity 
  $L_\infty$ in $\P^2$ and let $C$ be a conic passing through $\xi_1$
  and $\xi_2$. 
  Blow up $\xi_1$ and let $D$ be the exceptional divisor.
  Now blow up $\xi_2$, creating $E_1$, blow up $C\cap E_1$, 
  creating $E_2$ and finally blow up $C\cap E_2$ creating $F$. 
  We claim that the non-effective divisor $Z=2D+5L_\infty+3E_1+E_2-F$
  on the resulting surface $X$ is nef. 
  
  To see this, we successively contract the primes 
  $L_\infty$, $E_1$ and $E_2$. A direct computation shows that 
  each of these is a $(-1)$-curve at the time we contract
  it, so by Castelnuovo's criterion we obtain a birational morphism
  $\mu:X\to Y$, with $Y$ a smooth rational surface. Now 
  $Y$ is isomorphic to $\P^1\times\P^1$. Indeed,
  one checks that $(F\cdot F)=(C\cdot C)=0$
  and $(F\cdot C)=1$ on $Y$ and it is easy to see in coordinates
  that each of $F$ and $C$ is part of a fibration on $Y$.
  Now $Z$ is the pullback of the divisor $W=2D-F$ on $Y$,
  Further, $\Pic(Y)\simeq\Z C\oplus\Z F$ and 
  $(W\cdot C)=1>0$ and $(W\cdot F)=2>0$, so $W$
  is ample on $Y$ and hence $Z=\mu^*W$ is nef on $X$.
\end{Example}
Finally, in contrast to the local case (see Proposition~\ref{P202})
it can happen that a divisor $Z\in\Div_\infty(X)$ is nef
but that the line bundle $\cO_X(Z)$ has base points,
that is, it is not generated by its global sections.
\begin{Example}\label{E203}
  Consider the surface $X$ obtained from blowing $\P^2$ nine times,
  as follows. First blow up at three distinct points on 
  $L_\infty$, creating primes $E_{1j}$, $j=1,2,3$. 
  On each $E_{1j}$ blow up a free point, creating a new prime
  $E_{2j}$. Finally blow up a free point on each $E_{2j}$,
  creating a new prime $E_{3j}$. 
  Set $Z=3L_\infty+\sum_{j=1}^3(2E_{2j}+E_{1j})$.
  Then $Z=\sum_{j=1}^3\vE_{3j}$, so $Z$ is nef at infinity.
  Since $Z$ is also effective, it must be nef.

  However, we claim that if the points at which we
  blow up are generically chosen, then the line bundle
  $\cO_X(Z)$ is not generated by its global sections.
  To see this, consider a global section of $\cO_X(Z)$
  that does not vanish identically along $L_\infty$.
  Such a section is given by a polynomial 
  $\phi\in R$ of degree 3 satisfying $\ord_{E_{ij}}(\phi)=3-i$, $1\le i,j\le 3$. 
  This gives nine conditions on $\phi$. Note that if $\phi$ is such a
  section, then so is $\phi-c$ for any constant $c$, so we may assume
  that  $\phi$ has zero constant coefficient. Thus $\phi$ is given by 
  eight coefficients. For a generic choice of points blown up, no such 
  polynomial $\phi$ will exist. This argument is of course not rigorous, but can
  be made so by an explicit computation in coordinates 
  that we invite the reader to carry out.
\end{Example}
%
%
%
%
\subsection{Valuations and dual fans and graphs}\label{S218}
Analogously to~\S\ref{S143} we can realize
$\hcVe{\infty}$ and $\cVe{\infty}$ as inverse limits
of dual fans and graphs, respectively. 

To an admissible compactification $X$ of $\A^2$
we associate a dual fan $\hDelta(X)$ with integral affine structure 
$\Aff(X)\simeq\Div_\infty(X)$. 
This is done exactly as in the local case,
replacing exceptional primes with primes at infinity.
Inside the dual fan we embed the dual graph $\Delta(X)$
using the integral affine function associated to the divisor
$\pi^*L_\infty=\sum_ib_iE_i\in\Div_\infty(X)$.
The dual graph is a tree.

The numerical invariants $A_E$ and $\a_E$
uniquely to homogeneous functions $A$ and $\a$ on
the dual fan $\hDelta(X)$ of degree one and two, respectively
and such that these functions are affine on the dual graph.
Then $A$ and $\a$ give parametrizations of the dual graph rooted
in the vertex corresponding to $L_\infty$. We equip the dual graph
with the metric associated to the parametrization $\a$:
the length of a simplex $\sigma_{ij}$ is equal to $1/(b_ib_j)$.
We could also (but will not) use $A$ to define a metric on 
the dual graph. This metric is the same as the one induced by the
integral affine structure: the length of the simplex $\sigma_{ij}$
is $m_{ij}/(b_ib_j)$, where $m_{ij}=\gcd\{b_i,b_j\}$ is the 
multiplicity of the segment.

Using monomial valuations we embed the dual fan as a 
subset $|\hDelta(X)|$ of the Berkovich affine plane. 
The image $|\hDelta^*(X)|$ of the punctured dual fan lies
in $\hcVe{\infty}$.
The preimage of $\cVe{\infty}\subseteq\hcVe{\infty}$ under the 
embedding $|\hDelta^*(X)|\subseteq\hcVe{\infty}$
is exactly $|\Delta(X)|$. In particular, a vertex $\sigma_E$ of 
the dual graph is identified with the corresponding 
normalized valuation $v_E\in\cVe{\infty}$, defined by 
\begin{equation}\label{e154}
  v_E=b_E^{-1}\ord_E
  \quad\text{where}\
  b_E:=-\ord_E(|\fL|).
\end{equation}
Note that $v_{L_\infty}=\ord_{L_\infty}=\ord_\infty$.

We have a retraction $r_X:\hcVe{\infty}\to|\hDelta^*(X)|$
that maps $\cVe{\infty}$ onto $|\Delta(X)|$. 
The induced maps
\begin{equation}\label{e135}
  r:\cVe{\infty}\to\varprojlim_X|\Delta(X)|
  \quad\text{and}\quad
  r:\hcVe{\infty}\to\varprojlim_X|\hDelta^*(X)|
\end{equation} 
are homeomorphisms.
The analogue of Lemma~\ref{L123} remains true
and we have the following analogue of 
Lemma~\ref{L125}.
\begin{Lemma}\label{L126}
  If $v\in\hcVe{\infty}$ and $X$ is an admissible compactification 
  of $\A^2$, then 
  \begin{equation*}
    (r_X v)(\phi)\le v(\phi)
  \end{equation*}
  for every polynomial $\phi\in R$, 
  with equality if the closure in $X$ of the curve $(\phi=0)\subseteq\A^2$
  does not pass through the center of $v$ on $X$. 
\end{Lemma}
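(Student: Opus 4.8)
\textbf{Proof proposal for Lemma~\ref{L126}.}

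The plan is to mimic closely the proof of Lemma~\ref{L125} in the local setting, using the realization of $\hcVe{\infty}$ as an inverse limit of punctured dual fans (see~\S\ref{S218}) together with the description of the retraction $r_X$ in terms of centers on $X$. First I would fix $v\in\hcVe{\infty}$ and an admissible compactification $X$, and set $w:=r_X v\in|\hDelta^*(X)|$. Let $\xi$ be the center of $v$ on $X$; by definition $\xi\in X\setminus\A^2$ lies on one or two primes at infinity of $X$, say $E_i=(\zeta_i=0)$, $i\in J$, in local algebraic coordinates $\zeta_i$ at $\xi$ (here $\# J\in\{1,2\}$). By the very construction of $r_X$ (the analogue of the description just before Lemma~\ref{L125}), $w$ is the monomial valuation at $\xi$ with $w(\zeta_i)=v(\zeta_i)$ for $i\in J$; equivalently, $w$ is the smallest semivaluation on $\widehat{\cO}_{X,\xi}$ taking the same values as $v$ on the $\zeta_i$. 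Consequently $w\le v$ on $\widehat{\cO}_{X,\xi}$.

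The second step is to transfer this inequality to polynomials $\phi\in R$. Every $\phi\in R$ is a rational function on $X$; since $\xi\in X\setminus\A^2$ and $\phi$ is regular on $\A^2$, $\phi$ may have a pole at $\xi$. So I would not claim $\phi\in\widehat{\cO}_{X,\xi}$ directly, but instead write $\phi=\psi/\zeta^{m}$ (or more carefully $\phi\cdot(\text{monomial in }\zeta_i)\in\widehat{\cO}_{X,\xi}^{\,\times}\cdot\widehat{\cO}_{X,\xi}$), i.e. express the principal part of $\phi$ at $\xi$ in terms of the local coordinates $\zeta_i$. Since both $v$ and $w$ are semivaluations extending the same values on the $\zeta_i$, and $w\le v$ on $\widehat{\cO}_{X,\xi}$, the relation $w(\phi)\le v(\phi)$ follows by clearing denominators: $w(\phi)=w(\psi)-\sum_{i\in J}a_i w(\zeta_i)\le v(\psi)-\sum_{i\in J}a_i v(\zeta_i)=v(\phi)$, using $w(\psi)\le v(\psi)$ for $\psi\in\widehat{\cO}_{X,\xi}$ together with $w(\zeta_i)=v(\zeta_i)$. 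This gives the desired inequality $(r_X v)(\phi)\le v(\phi)$ for all $\phi\in R$.

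For the equality statement, I would argue exactly as in the local case. Suppose the closure $\bar C$ in $X$ of the curve $C=(\phi=0)\subseteq\A^2$ does not pass through $\xi=c_X(v)$. Then $\phi$, viewed in $\widehat{\cO}_{X,\xi}$ after multiplying by the appropriate monomial in the $\zeta_i$ to kill its pole along the $E_i$, becomes a \emph{unit} times that monomial: the strict transform $\bar C$ of $(\phi=0)$ avoids $\xi$, so the only vanishing of $\phi$ near $\xi$ comes from the primes $E_i$, $i\in J$. Hence $\phi\cdot\prod_{i\in J}\zeta_i^{a_i}$ is a unit in $\widehat{\cO}_{X,\xi}$ for suitable $a_i\in\Z$, and both $v$ and $w$ give it value $0$; since $w(\zeta_i)=v(\zeta_i)$, we conclude $w(\phi)=-\sum_{i\in J}a_i v(\zeta_i)=v(\phi)$.

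\textbf{Main obstacle.} The one genuinely delicate point, compared with Lemma~\ref{L125}, is bookkeeping the poles at infinity: a polynomial $\phi\in R$ is not a regular function near $\xi\in X\setminus\A^2$, so one must be careful in passing between $R$ and $\widehat{\cO}_{X,\xi}$, writing $\phi$ as (unit or non-unit)$\times$(monomial in the $\zeta_i$) and checking that the monomial exponents are handled consistently by $v$ and $w=r_X v$. Once one observes that $v$ and $w$ agree on the local defining functions $\zeta_i$ of the primes at infinity through $\xi$, and that $w$ is by construction the minimal such semivaluation, the argument is identical to the local one; I expect no further difficulty.
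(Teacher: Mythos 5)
Your proposal is correct, and it implements exactly what the paper leaves implicit: the text merely asserts Lemma~\ref{L126} as ``the analogue of Lemma~\ref{L125}'' without giving a separate proof, so the intended argument is indeed to redo the proof of Lemma~\ref{L125} in the at-infinity setting. You have identified the one genuine new wrinkle correctly: in the local case one has $R\subseteq\widehat{\cO}_{X_\pi,\xi}$, whereas at infinity a polynomial $\phi\in R$ has poles along the primes $E_i$ through $\xi=c_X(v)$, so one must first write $\phi=u\cdot\prod_{i\in J}\zeta_i^{m_i}$ with $m_i=\ord_{E_i}(\phi)\in\Z$ and $u\in\widehat{\cO}_{X,\xi}$ not vanishing along any $E_i$ near $\xi$. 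Because $w=r_Xv$ agrees with $v$ on the $\zeta_i$ and is the minimal semivaluation on $\widehat{\cO}_{X,\xi}$ with those values, the monomial factor contributes identically to $v(\phi)$ and $w(\phi)$, and the inequality $w\le v$ on $\widehat{\cO}_{X,\xi}$ applied to $u$ gives the claim; when the strict transform of $(\phi=0)$ misses $\xi$, the factor $u$ is a unit and both semivaluations give it value zero, yielding equality. This is exactly the argument the paper intends, and there is no gap.
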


The second homeomorphism in~\eqref{e135}  equips $\hcVe{\infty}$ with an
integral affine structure: a function $\f$ on $\hcVe{\infty}$ is integral 
affine if it is of the form $\f=\f_X\circ r_X$,
where $\f_X\in\Aff(X)$.

The first homeomorphism in~\eqref{e135} induces a metric tree structure on 
$\cVe{\infty}$ as well as two
parametrizations\footnote{In~\cite{valtree} the parametrization $A$ is
  called \emph{thinness} whereas $-\a$ is called \emph{skewness}.} 
\begin{equation}\label{e140}
  \a:\cVe{\infty}\to[-\infty,1]
  \quad\text{and}\quad
  A:\cVe{\infty}\to[2,\infty]
\end{equation}
of $\cVe{\infty}$, viewed as a tree rooted in $\ord_\infty$.
We extend $A$ and $\a$ as homogeneous functions on $\hcVe{\infty}$
of degrees one and two, respectively.
%
%
%
%
\subsection{Potential theory}
Since $\cVe{\infty}$ is a metric tree, we can do potential theory on it, 
but just as in the case of the valuative tree at a closed point, we
need to tweak the general approach in~\S\ref{S116}. The reason is again 
that one should view a function on $\cVe{\infty}$ as the restriction
of a homogeneous function on $\hcVe{\infty}$. 

A first guideline is that functions of the form $\log|\fM|$, 
defined by\footnote{As in~\S\ref{S113} the notation reflects the fact that $|\cdot|:=e^{-v}$ 
  is a seminorm on $R$.}
\begin{equation}\label{e211}
  \log|\fM|(v)=-v(|\fM|)
\end{equation}
should be subharmonic on $\cVe{\infty}$, for any 
linear system $|\fM|$ on $\A^2$.
In particular, the function $\log|\fL|\equiv1$ should
be subharmonic (but not harmonic).
A second guideline is that the Laplacian should be closely
related to the intersection product on divisors at infinity.
%
%
\subsubsection{Subharmonic functions and Laplacian on $\cVe{\infty}$}
As in~\S\ref{S144} we extend the valuative tree $\cVe{\infty}$ 
to a slightly larger tree $\tcVe{\infty}$ by connecting the root $\ord_\infty$ to 
a point $G$ using an interval of length one. Let $\tDelta$
denote the Laplacian on $\tcVe{\infty}$.

We define the class $\SH(\cVe{\infty})$ 
of \emph{subharmonic functions} on 
$\cVe{\infty}$ as the set of restrictions to $\cVe{\infty}$ of functions
$\f\in\QSH(\tcVe{\infty})$ such that
\begin{equation*}
  \f(G)=2\f(\ord_\infty)
  \quad\text{and}\quad
  \tDelta\f=\rho-a\delta_G,
\end{equation*}
where $\rho$ is a positive measure supported
on $\cVe{\infty}$ and $a=\rho(\cVe{\infty})\ge0$.
In particular, $\f$ is affine
of slope $-\f(\ord_\infty)$ on the segment 
$[G,\ord_\infty[\,=\tcVe{\infty}\setminus\cVe{\infty}$.
We then define $\Delta\f:=\rho=(\tDelta\f)|_{\cVe{\infty}}$.
For example, if $\f\equiv 1$ on $\cVe{\infty}$, 
then $\f(G)=2$,
$\tDelta\f=\delta_{\ord_\infty}-\delta_G$
and $\Delta\f=\delta_{\ord_\infty}$.

From this definition and the analysis in~\S\ref{S116} 
one deduces:
\begin{Prop}\label{P111}
  Let $\f\in\SH(\cVe{\infty})$ and write $\rho=\Delta\f$.
  Then:
  \begin{itemize}
  \item[(i)]
    $\f$ is decreasing in the partial 
    ordering of $\cVe{\infty}$ rooted in $\ord_\infty$;
  \item[(ii)]
    $\f(\ord_\infty)=\rho(\cVe{\infty})$;
  \item[(iii)]
    $|D_\vv\f|\le\rho(\cVe{\infty})$ for all
    tangent directions $\vv$ in $\cVe{\infty}$.
  \end{itemize}
\end{Prop}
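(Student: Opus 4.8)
The plan is to reduce Proposition~\ref{P111} to the corresponding statements for functions in $\QSH(\tcVe{\infty})$ established in~\S\ref{S116}, applied to the slightly enlarged tree $\tcVe{\infty}$. Recall that by definition a function $\f\in\SH(\cVe{\infty})$ is the restriction to $\cVe{\infty}$ of some $\f\in\QSH(\tcVe{\infty})$ with $\f(G)=2\f(\ord_\infty)$, $\tDelta\f=\rho-a\delta_G$, $\rho$ a positive measure on $\cVe{\infty}$, and $a=\rho(\cVe{\infty})$. The key structural fact, which I would state first, is that $\tDelta\f=\rho-a\delta_G$ together with the requirement that $\rho$ be supported on $\cVe{\infty}$ forces $\f$ to be affine of slope $-\f(\ord_\infty)$ on the half-open segment $[G,\ord_\infty[$; this pins down the boundary behaviour and is what makes the normalization $\f(G)=2\f(\ord_\infty)$ compatible, since moving a distance one from $G$ to $\ord_\infty$ changes $\f$ by $-\f(\ord_\infty)$, giving $\f(\ord_\infty)=\f(G)-(-\f(\ord_\infty))\cdot 1$, i.e.\ $\f(G)=2\f(\ord_\infty)$ automatically — conversely the normalization determines the slope.

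For~(ii), I would integrate the Laplacian. Since $\tDelta\f=\rho-a\delta_G$ has total mass zero on $\tcVe{\infty}$ (as $a=\rho(\cVe{\infty})=\rho(\tcVe{\infty})$), the general theory of~\S\ref{S116} applies: using the representation formula~\eqref{e137} for $\f$ on $\tcVe{\infty}$ rooted at $G$, one gets $\f(\ord_\infty)-\f(G)=-\int_G^{\ord_\infty}(\tDelta\f)\{z\ge y\}\,d\a(y)$ along the segment $[G,\ord_\infty]$. On the interior of that segment $(\tDelta\f)\{z\ge y\}=\rho(\cVe{\infty})=a$ since all the mass of $\rho$ lies above $\ord_\infty$, and the segment has length one, so $\f(\ord_\infty)-\f(G)=-a$. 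Combined with $\f(G)=2\f(\ord_\infty)$ this yields $\f(\ord_\infty)=a=\rho(\cVe{\infty})$, which is~(ii). Alternatively, and perhaps more cleanly, I would read off~(ii) directly from the slope condition: $\f$ has slope $-\f(\ord_\infty)$ on $[G,\ord_\infty[$ and simultaneously this slope equals $-\tDelta\f\{z\ge\cdot\}=-a$ there.

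For~(i) and~(iii), these are immediate from the structure of $\rho_0$-subharmonic functions in~\S\ref{S116} once we know $a=\rho(\cVe{\infty})<\infty$. Indeed $\f\in\SH(\tcVe{\infty},a\delta_G)$, so by the inequality~\eqref{e210} (with $\rho_0=a\delta_G$ supported only at $G$) every directional derivative $D_\vv\f$ pointing away from $G$ is $\le 0$; since in the partial ordering of $\cVe{\infty}$ rooted at $\ord_\infty$ "increasing" means "moving away from $G$", this gives that $\f$ is decreasing, proving~(i). For~(iii), the bound $|D_\vv\f|\le\rho(\cVe{\infty})$ for all tangent directions $\vv$ in $\cVe{\infty}$ follows from the estimate~\eqref{e138} applied in the normalized setting, together with~(ii): the total mass of the defining measure $a\delta_G$ is $a=\rho(\cVe{\infty})=\f(\ord_\infty)$, and~\eqref{e138}/\eqref{e122}--\eqref{e137} bound each directional derivative by that total mass. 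I expect the only real subtlety — and hence the main point to be careful about — is bookkeeping the sign conventions and the factor coming from the length-one segment $[G,\ord_\infty]$ and the normalization $\f(G)=2\f(\ord_\infty)$, i.e.\ making sure that "decreasing" and the tangent-direction conventions on $\cVe{\infty}$ rooted at $\ord_\infty$ match those on $\tcVe{\infty}$ rooted at $G$; the potential-theoretic content is entirely contained in~\S\ref{S116} and requires no new ideas.
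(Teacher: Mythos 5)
Your proposal is correct and follows exactly the route the paper intends: the paper states Proposition~\ref{P111} with no proof beyond the remark that it follows ``from this definition and the analysis in~\S\ref{S116},'' and your reduction to $\SH(\tcVe{\infty},a\delta_G)$ with $\rho=a\delta_G+\tDelta\f$ is precisely that deduction, with~(i) from~\eqref{e210}, (iii) from~\eqref{e138} after rescaling, and~(ii) by matching the two expressions for the constant slope on $[G,\ord_\infty[\,$. The only blemish is the momentarily circular wording (``\dots $\f(G)=2\f(\ord_\infty)$ automatically — conversely the normalization determines the slope''): the normalization is an imposed condition, not a consequence, and it is the equality of that imposed slope $-\f(\ord_\infty)$ with the potential-theoretic slope $-a$ that yields~(ii) — which is what you correctly say in the ``more cleanly'' sentence that follows.
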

As a consequence we have the estimate
\begin{equation}\label{e151}
  \a(v)\f(\ord_\infty)\le\f(v)\le\f(\ord_\infty)
\end{equation}
for all $v\in\cVe{\infty}$. 
Here $\a:\cVe{\infty}\to[-\infty,+1]$
is the parametrization in~\eqref{e140}.
It is important to remark that a subharmonic function 
can take both positive and negative values.
In particular,~\eqref{e151} is not so useful when $\a(v)<0$.

The exact sequence in~\eqref{e139} shows that 
\begin{equation}\label{e158}
  \Delta:\SH(\cVe{\infty})\to\cM^+(\cVe{\infty}),
\end{equation}
is a homeomorphism whose inverse is given by 
\begin{equation}\label{e147}
  \f(v)=\int_{\cVe{\infty}}\a(w\wedge_{\ord_\infty}v)d\rho(w).
\end{equation}

The compactness properties in~\S\ref{S116} carry over to 
the space $\SH(\cVe{\infty})$. 
In particular, for any $C>0$, the set 
$\{\f\in\SH(\cVe{\infty})\mid\f(\ord_\infty)\le C\}$
is compact. 
Further, if $(\f_i)_i$ is a decreasing 
net in $\SH(\cVe{\infty})$, and $\f:=\lim\f_i$,
then $\f\in\SH(\cVe{\infty})$.
Moreover, if $(\f_i)_i$ is a family in $\SH(\cVe{\infty})$
with $\sup_i\f(\ord_\infty)<\infty$, 
then the upper semicontinuous regularization of 
$\f:=\sup_i\f_i$ belongs to $\SH(\cVe{\infty})$.

While the function $-1$ on $\cVe{\infty}$ is not subharmonic,
it is true that $\max\{\f,r\}$ is subharmonic
whenever $\f\in\SH(\cVe{\infty})$ and $r\in\R$.
%
%
\subsubsection{Laplacian of integral affine functions}
Any integral affine function $\f$ on $\hcVe{\infty}$ 
is associated to a divisor
at infinity $Z\in\Div_\infty(X)$ for some admissible 
compactification $X$ of $\A^2$: the value of $\f$ at 
a divisorial valuation $\ord_{E_i}$ is the coefficient $\ord_{E_i}(Z)$ 
of $E_i$ in $Z$. Using the same computations as in the proof
of Proposition~\ref{P110} we show that
\begin{equation*}
  \Delta\f=\sum_{i\in I}b_i(Z\cdot E_i)\delta_{v_i},
\end{equation*}
where $b_i=-\ord_{E_i}(|\fL|)\ge1$ and $v_i=b_i^{-1}\ord_{E_i}$.
In particular, $\f$ is subharmonic iff $Z$ is nef at infinity.

Recall that we have defined divisors $\vE_i\in\Div_\infty(X)$ such that 
$(\vE_i\cdot E_i)=1$ and $(\vE_i\cdot E_j)=0$ for all $j\ne i$.
The integral affine function 
$\f_i$ on $\cVe{\infty}$ associated to $\vE_i$
is subharmonic and satisfies 
$\Delta\f_i=b_i\delta_{v_i}$.
In view of~\eqref{e147}, this shows that 
$\min_{\cVe{\infty}}\f_i=\f_i(v_i)=b_i\a(v_i)$.
This implies
\begin{equation}\label{e148}
  \a_{E_i}
  =(\vE_i\cdot\vE_i)
  =\ord_{E_i}(\vE_i)
  =b_i^2\a(v_i)
  =\a(\ord_{E_i}).
\end{equation}
\begin{Prop}\label{P201}
  Let $E$ be a divisor at infinity on some admissible 
  compactification $X$ of $\A^2$. Let $\vE\in\Div_\infty(X)$
  be the associated element of the dual basis and 
  $v_E=b_E^{-1}\ord_E\in\cVe{\infty}$ the associated 
  normalized divisorial valuation. 
  Then $\vE$ is nef at infinity and the following 
 statements are equivalent:
  \begin{itemize}
  \item[(i)]
    $\vE$ is nef;
  \item[(ii)]
    $(\vE\cdot\vE)\ge0$;
  \item[(iii)]
    $\a(v_E)\ge0$.
 \end{itemize}
\end{Prop}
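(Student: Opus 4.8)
\textbf{Proof proposal for Proposition~\ref{P201}.}
The plan is to organize the argument around the dual-basis divisor $\vE\in\Div_\infty(X)$ and its associated normalized divisorial valuation $v_E$, using the dictionary between divisors at infinity, integral affine functions on $\cVe{\infty}$, and subharmonic functions established in~\S\ref{S218} and the preceding discussion of the Laplacian. First I would recall that $\vE$ is nef at infinity essentially by definition: $(\vE\cdot E_i)=\delta_{EE_i}\ge0$ for every prime $E_i$ of $X$ at infinity, so $(\vE\cdot W)\ge0$ for every effective $W\in\Div_\infty(X)$. Equivalently, the integral affine function $\f_E$ on $\cVe{\infty}$ attached to $\vE$ is subharmonic, with $\Delta\f_E=b_E\delta_{v_E}$, as computed in the paragraph preceding Proposition~\ref{P201}. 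This will be the cheap part of the statement.

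For the equivalences, I would first dispense with (i)$\implies$(ii), which is the general fact quoted in~\S\ref{S214} that any nef divisor $Z$ satisfies $(Z\cdot Z)\ge0$; here $Z=\vE$. Next, (ii)$\iff$(iii) is a direct computation: by~\eqref{e148} we have $\a_E=(\vE\cdot\vE)=b_E^2\,\a(v_E)$, and since $b_E\ge1$ the sign of $(\vE\cdot\vE)$ agrees with the sign of $\a(v_E)$. So the only substantive implication is (iii)$\implies$(i), i.e.\ $\a(v_E)\ge0\implies\vE$ nef.

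For (iii)$\implies$(i), the key idea is to reduce nefness of $\vE$ to the assertion that $(\vE\cdot C)\ge0$ for the closure $C$ in $X$ of every irreducible affine curve $\{\phi=0\}$, since we already know $\vE$ is nef at infinity and $\Div(X)$ is spanned by primes at infinity together with such closures $C$. Now $(\vE\cdot C)=\ord_E(Z_\phi)$ where $Z_\phi\in\Div_\infty(X)$ is the divisor of poles of $\phi$ on $X$, and by Lemma~\ref{L126} together with the description of $\log|\fM|$-type functions as subharmonic, this quantity can be read off as $-\,\ord_E$ applied to (a pullback of) the principal divisor, i.e.\ it equals $b_E\bigl(-v_E(\phi)\bigr)=b_E\cdot\log|\phi|(v_E)$ in the notation of~\eqref{e211}. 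The plan is then to invoke the estimate~\eqref{e151}: since $\log|\phi|\le\log|\fL|^{\deg\phi}$ gives $-v_E(\phi)\ge\a(v_E)\cdot(-\ord_\infty(\phi))\cdot(\text{something})$ — more precisely, writing $\psi=\log|\fM|$ for the linear system $|\fM|$ cut out by $\phi$ and degree-$d$ forms, $\psi\in\SH(\cVe{\infty})$ with $\psi(\ord_\infty)=d\ge0$, so~\eqref{e151} yields $\psi(v_E)\ge\a(v_E)\,\psi(\ord_\infty)=d\,\a(v_E)\ge0$ when $\a(v_E)\ge0$. Hence $(\vE\cdot C)\ge0$, and combined with nefness at infinity we conclude $\vE$ is nef.

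The main obstacle I anticipate is making the identification $(\vE\cdot C)=b_E\,\log|\phi|(v_E)$ precise and coordinate-free: one must pass from the chosen compactification $X$ to a larger admissible $X'$ on which the closure of $\{\phi=0\}$ is well-behaved (meeting $X'\setminus\A^2$ transversally, say), track how $\vE$ pulls back, and check that the intersection number is insensitive to this — this is where Lemma~\ref{L126} and the invariance of $\vE$ under pullback (noted in~\S\ref{S151}) do the work, but the bookkeeping needs care. A secondary point to handle cleanly is that $\log|\fM|$ is genuinely subharmonic (a finite max of $\log|\phi_i|$), which is needed to legitimately apply~\eqref{e151}; this should follow from the stability of $\SH(\cVe{\infty})$ under finite maxima noted after~\eqref{e147}, but I would state it explicitly as a lemma if it is not already available in the form needed.
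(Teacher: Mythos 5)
Your proof is correct, and the easy parts (nefness at infinity from the definition of the dual basis, (i)$\implies$(ii) as the general fact about nef divisors, (ii)$\iff$(iii) via~\eqref{e148}) coincide with the paper's. For the substantive implication (iii)$\implies$(i), however, you take a genuinely different route. The paper argues: since the integral affine function $\f$ attached to $\vE$ satisfies $\Delta\f=b_E\delta_{v_E}$, its minimum over $\cVe{\infty}$ is $\f(v_E)=b_E\a(v_E)\ge0$, so all coefficients of $\vE$ are nonnegative, i.e.\ $\vE$ is \emph{effective}; and a divisor that is nef at infinity and effective is automatically nef (the observation in~\S\ref{S214}, since $(E_i\cdot C)\ge0$ for every affine curve closure $C$). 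You instead test $\vE$ directly against each $C=\overline{\{\phi=0\}}$, using $C\sim-\sum_i\ord_{E_i}(\phi)E_i$ from Proposition~\ref{P205} to get $(\vE\cdot C)=-\ord_E(\phi)=b_E\log|\phi|(v_E)$, and then applying~\eqref{e151} to the subharmonic function $\log|\phi|$ to conclude $(\vE\cdot C)\ge b_E\,\a(v_E)\deg\phi\ge0$. Both are valid; the paper's version only ever manipulates the single function $\f_E$ and isolates effectivity as a reusable intermediate fact, whereas yours trades that for the subharmonicity of $\log|\phi|$ (the Exercise in~\S\ref{S206}) applied to arbitrary $\phi$ — it is slightly more work per curve but bypasses the ``nef at infinity $+$ effective $\Rightarrow$ nef'' lemma. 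Two small remarks: the detour through ``the linear system cut out by $\phi$ and degree-$d$ forms'' is unnecessary, since $\log|\phi|$ itself is already subharmonic with $\Delta\log|\phi|$ of total mass $\deg\phi$; and the obstacle you anticipate about passing to a larger compactification is not actually there — the linear equivalence $C\sim Z_\phi$ and the resulting intersection computation take place on $X$ itself, with no transversality needed.
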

\begin{proof}
  That $\vE$ is nef at infinity is clear from the definition
  and has already been observed.
  That~(ii) is equivalent to~(iii) is an immediate consequence
  of~\eqref{e148}.
  If $\vE$ is nef, then $(\vE\cdot\vE)\ge0$, showing 
  that~(i) implies~(ii). On the other hand, if 
  $\a(v_E)\ge0$, then we have seen above that the minimum on
  $\cVe{\infty}$ of the integral affine function $\f$ associated to
  $\vE$ is attained at $v_E$ and is nonnegative. Thus $\vE$ is
  effective. Being nef at infinity and effective, $\vE$ must 
  be nef, proving that~(ii) implies~(i).
\end{proof}
%
%
\subsubsection{Subharmonic functions from linear systems}\label{S206}
Let $|\fM|$ be a nonempty linear system of affine curves.
We claim that the function $\log|\fM|$, defined by~\eqref{e211}
is subharmonic on $\cVe{\infty}$. To see this, note that 
$\log|\fM|=\max\log|\phi|$, where $\phi$ ranges over polynomials
defining the curves in $|\fM|$. The claim therefore follows from
\begin{Exer}
  If $\phi\in R$ is an irreducible polynomial, show that 
  $\log|\phi|$ is subharmonic on $\cVe{\infty}$ and that 
  \begin{equation*}
    \Delta\log|\phi|=\sum_{j=1}^nm_j\delta_{v_j}
  \end{equation*}
  where $v_j$, $1\le j\le n$ are the curve valuations associated to the 
  all the local branches $C_j$ of $\{\phi=0\}$ at infinity
  and where $m_j=(C_j\cdot L_\infty)$ is the local intersection number of 
  $C_j$ with the line at infinity in $\P^2$.
\end{Exer}
\begin{Example}\label{E204}
  Fix affine coordinates $(z_1,z_2)$ on $\A^2$ and let
  $\fM\subseteq R$ be the vector space 
  spanned by $z_1$ 
  Then  $\log|\fM|(v)=\max\{-v(z_1),0\}$ and 
  $\Delta\log|\fM|$ is a Dirac mass at the monomial valuation
  with $v(z_1)=0$, $v(z_2)=-1$.
\end{Example}
\begin{Example}\label{E205}
  Fix affine coordinates $(z_1,z_2)$ on $\A^2$ and let
  $\fM\subseteq R$ be the vector space 
  spanned by $z_1z_2$ and the constant function 1.
  Then  $\log|\fM|(v)=\max\{-(v(z_1)+v(z_2),0\}$ and 
  $\Delta\log|\fM|=\delta_{v_{-1,1}}+\delta_{v_{1,-1}}$, where 
  $v_{t_1,t_2}$ is the monomial valuation with weights 
  $v_{t_1,t_2}(z_i)=t_i$, $i=1,2$.
\end{Example}
\begin{Prop}\label{P118}
  Let $|\fM|$ be a linear system of affine curves on $\A^2$.
  Then the following conditions are equivalent:
  \begin{itemize}
  \item[(i)]
    the base locus of $|\fM|$ on $\A^2$ contains no curves;
  \item[(ii)]
    the function $\log|\fM|$ is bounded on $\cVe{\infty}$;
  \item[(iii)]
    the measure $\Delta\log|\fM|$ on $\cVe{\infty}$ is supported
    at divisorial valuations.
  \end{itemize}
\end{Prop}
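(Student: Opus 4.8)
The plan is to prove the three equivalences by relating the base locus of $|\fM|$, the growth of $v \mapsto -v(|\fM|)$, and the structure of $\Delta\log|\fM|$, using the tools already developed: the realization $\cVe{\infty} \simeq \varprojlim_X |\Delta(X)|$, the formula for the Laplacian of integral affine functions, and the parametrization $\a$.

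First I would establish (i)$\iff$(ii). If a curve $C = \{\phi = 0\}$ lies in the base locus of $|\fM|$, pick the curve semivaluation $v_C \in \cVe{\infty}$ associated to a branch of $C$ at infinity; then $v_C(|\fM|) = +\infty$ since $\phi$ divides every element of $\fM$, so $\log|\fM|(v_C) = -\infty$ and $\log|\fM|$ is unbounded (below) on $\cVe{\infty}$. Conversely, suppose the base locus of $|\fM|$ contains no curve. Factor out the greatest common divisor of $\fM$ (which by hypothesis is a constant, up to scalar) so that we may assume $\fM$ has no common factor; then the base locus is a finite set of points of $\A^2$. Choose an admissible compactification $X$ such that the closures in $X$ of the curves in $|\fM|$ have no common point on $X \setminus \A^2$ — concretely, resolve $|\fM|$ so that $\fM \cdot \cO_X$ is, near $X \setminus \A^2$, the ideal sheaf of a divisor $Z_\infty \in \Div_\infty(X)$ (with no further base points at infinity). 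Then for every $v$, $v(|\fM|) = \min_i \ord_{E_i}(Z_\infty)\, v(|\fL|)^{-1}\cdot(\ldots)$ — more precisely, writing $\phi \in \fM$ with no common factor, $-v(\phi) \le -v(|\fL|) \cdot \deg \phi$ on all of $\cVe{\infty}$ by the inequality $v(\phi) \ge \ord_\infty(\phi)$, but crucially the \emph{minimum} over $\fM$ is controlled by the divisorial part at infinity: $\log|\fM| = \log|\fM|\circ r_X$ because the only contribution to $v(|\fM|)$ that can differ from $(r_X v)(|\fM|)$ comes from branches of base-locus curves passing through the center of $v$ on $X$, and there are none (Lemma~\ref{L126}). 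Since $|\Delta(X)|$ is compact and $\log|\fM|$ is continuous, it is bounded.

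Next I would do (ii)$\iff$(iii), which is essentially the heart of the matter and where I expect the main technical work. The implication (iii)$\implies$(ii): if $\rho := \Delta\log|\fM|$ is a finite atomic measure supported at divisorial valuations, then in particular $\rho$ is supported in hyperbolic space $\H$ (divisorial valuations have $\a > -\infty$), and by Proposition~\ref{P111}(ii), $\log|\fM|(\ord_\infty) = \rho(\cVe{\infty}) < \infty$; combined with the monotonicity in Proposition~\ref{P111}(i) and the lower bound coming from~\eqref{e147} (an integral of $\a(w \wedge_{\ord_\infty} v)$ against a measure supported on points with $\a > -\infty$, hence bounded below), $\log|\fM|$ is bounded. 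For (ii)$\implies$(iii): if $\log|\fM|$ is bounded, then by~\eqref{e147} the measure $\rho$ cannot charge any point $w$ with $\a(w) = -\infty$ — such points are exactly the curve semivaluations and the infinitely singular valuations with infinite log discrepancy. So it remains to rule out infinitely singular valuations in the support. Here I would use that $\log|\fM| = \max_\phi \log|\phi|$ is a finite max of functions $\log|\phi|$, each of which (by the Exercise preceding the statement) has Laplacian a finite atomic measure supported at curve semivaluations — i.e. at \emph{ends} of the tree of type~1. The Laplacian of a finite maximum of such functions is supported on the (finite) set of divisorial valuations where two of the $\log|\phi_j|$ cross, plus possibly some of the original curve-semivaluation atoms where they do not cross; boundedness kills the latter, leaving a finite atomic measure at divisorial valuations. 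I would need to make the ``crossing'' argument precise: on the finite subtree spanned by the finitely many branches at infinity of the curves defining $|\fM|$, retract everything via some $r_X$, observe $\log|\fM| = \log|\fM| \circ r_X$, reduce to a piecewise-affine function on a finite metric tree whose Laplacian is automatically finite atomic, and check that boundedness forces the atoms to sit at interior (divisorial) vertices rather than at the leaves.

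The main obstacle, I expect, is the last step: cleanly showing that a bounded subharmonic function of the form $\log|\fM|$ has no mass at infinitely singular valuations, i.e. that the only way $\Delta \log|\fM|$ can fail to be supported at divisorial valuations is for $\log|\fM|$ to be $-\infty$ somewhere (a base curve). The resolution I favor is the reduction to a log resolution $X$ of the linear system: once $\fM \cdot \cO_X$ is locally principal near infinity with no base points at infinity, $\log|\fM| = \log|\fM| \circ r_X$ is integral affine on $|\Delta(X)|$, hence its Laplacian is $\sum_i b_i(Z_\infty \cdot E_i)\delta_{v_i}$, manifestly atomic at divisorial valuations; and the obstruction to finding such an $X$ is precisely the presence of a curve in the base locus on $\A^2$ (which no compactification can remove). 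This closes the loop and simultaneously reproves (i)$\implies$(iii) directly.
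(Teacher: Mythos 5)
Your proposal is correct and, in its final form, follows the same route as the paper's (sketched) proof: (ii)$\implies$(i) via a curve semivaluation at a branch at infinity of a base curve, (iii)$\implies$(ii) via the representation formula~\eqref{e147}, and (i)$\implies$(iii) by passing to an admissible compactification resolving the base points of $|\fM|$ at infinity, so that $\log|\fM|=\log|\fM|\circ r_X$ becomes integral affine on $|\Delta(X)|$ with atomic Laplacian at divisorial valuations. The intermediate ``crossing'' argument for (ii)$\implies$(iii) is not needed once you close the cycle this way, and you correctly recognize that yourself.
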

Linear systems $|\fM|$ satisfying these equivalent conditions are
natural analogs of primary ideals $\fa\subseteq R$ in the
local setting. 
\begin{proof}[Sketch of proof]
  That~(iii) implies~(ii) follows from~\eqref{e147}.
  If the base locus of $|\fM|$ contains an affine curve
  $C$, let $v\in\cVe{\infty}$ be a curve valuation 
  associated to one of the branches at infinity of $C$.
  Then $\log|\fM|(v)=-v(\f)=-\infty$ so~(ii) implies~(i).

  Finally, let us prove that~(i) implies~(iii). 
  Suppose the base locus on $|\fM|$ on $\A^2$
  contains no curves. Then we can pick an admissible
  compactification of $\A^2$ such that the strict transform of $|\fM|$
  to $X$ has no base points at infinity. In this case one shows that 
  $\Delta\log|\fM|$ is an atomic measure supported on the divisorial
  valuations associated to some of the primes of $X$ at infinity.
\end{proof}
In general, it seems very hard to characterize
the measures on $\cVe{\infty}$ appearing in~(iii).
Notice that if $\Delta\log|\fM|$ is a Dirac mass at 
a divisorial valuation $v$ then $\a(v)\ge 0$, 
as follows from~\eqref{e147}. There are also sufficient conditions:
using the techniques in the proof of Theorem~\ref{T105} one can
show that if $\rho$ is an atomic measure with rational coefficients
supported on divisorial valuations in the tight tree $\cV'_\infty$
(see~\S\ref{S276}) 
then there exists a linear system $|\fM|$ such that $\log|\fM|\ge 0$
and $\Delta\log|\fM|=n\rho$ for some integer $n\ge 1$.
%
%
\subsection{Intrinsic description of tree structure on $\cVe{\infty}$}\label{S205}
We can try to describe the tree structure on 
$\cVe{\infty}\simeq\varprojlim|\Delta(X)|$ intrinsically,
viewing the elements of $\cVe{\infty}$ purely as 
semivaluations on the ring $R$.
This is more complicated than in the case of the 
valuative tree at a closed point (see~\S\ref{S207}). 
However, the partial ordering can be characterized essentially as expected:
\begin{Prop}\label{P112}
  If $w, v\in\cVe{\infty}$, then the following are equivalent:
  \begin{itemize}
  \item[(i)]
    $v\le w$ in the partial ordering induced by 
    $\cVe{\infty}\simeq\varprojlim|\Delta(X)|$;
  \item[(ii)]
    $v(\phi)\le w(\phi)$ for all polynomials $\phi\in R$;
  \item[(iii)]
    $v(|\fM|)\le w(|\fM|)$ for all free linear systems $|\fM|$ on $\A^2$.
 \end{itemize}
\end{Prop}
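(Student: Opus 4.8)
The three conditions are clearly related by trivial implications in one direction, so the work is to close the loop. First I would establish $(i)\implies(ii)$. Since $\log|\phi|$ is subharmonic on $\cVe{\infty}$ for any $\phi\in R$ (as recorded in the discussion of $\log|\fM|$, using the exercise that $\log|\phi|$ is subharmonic), Proposition~\ref{P111}(i) says any subharmonic function is decreasing in the partial ordering rooted at $\ord_\infty$. Care is needed because the ``decreasing'' statement is with respect to the root $\ord_\infty$, whereas the partial ordering in (i) on $\cVe{\infty}\simeq\varprojlim|\Delta(X)|$ is exactly the one rooted at $\ord_\infty$ (this is how the tree parametrizations $\a,A$ in~\eqref{e140} were set up). So $v\le w$ gives $\log|\phi|(v)\ge\log|\phi|(w)$, i.e. $-v(\phi)\ge-w(\phi)$, which is $(ii)$. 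The implication $(ii)\implies(iii)$ is immediate: $v(|\fM|)=\min_{\phi\in\fM}v(\phi)\le\min_{\phi\in\fM}w(\phi)=w(|\fM|)$ since each term increases.

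The substantive step is $(iii)\implies(i)$, and I expect this to be the main obstacle. The strategy mirrors the proof of Proposition~\ref{P109} in the local case, but with primary ideals replaced by base-point-free (off of curves) linear systems and with the added subtlety that divisors nef at infinity need not be nef (cf. Examples~\ref{E202}, \ref{E206}, \ref{E203}). Suppose $v\not\le w$ in the tree $\cVe{\infty}$. Using the homeomorphism $\cVe{\infty}\simeq\varprojlim_X|\Delta(X)|$ and the retractions $r_X$, I would choose an admissible compactification $X$ large enough that $r_X(v)\not\le r_X(w)$ inside $|\Delta(X)|$; by the analogue of Lemma~\ref{L123}, $r_X(v)$ and $r_X(w)$ are (proportional to) divisorial valuations attached to primes at infinity, and by Lemma~\ref{L126} there is a polynomial $\phi$ whose affine curve $\{\phi=0\}$ avoids the centers on $X$, so that $v(\phi)=r_X(v)(\phi)$ and $w(\phi)=r_X(w)(\phi)$ — reducing matters to the finite tree $|\Delta(X)|$. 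Set $v'=v\wedge w$ (meet in the tree rooted at $\ord_\infty$); then $v'<v$, $v'\le w$, and the segments $]v',v]$ and $[v',w]$ are disjoint. Replacing $v$ by a divisorial valuation in $]v',v]$ (which only strengthens the desired strict inequality), I would then seek a free linear system $|\fM|$ — or at least one whose base locus contains no affine curve, so that Proposition~\ref{P118} applies — whose associated function $\log|\fM|$ is affine along $[\ord_\infty,v]$ with a genuine change of slope at $v$, so that $v(|\fM|)\neq v'(|\fM|)=w(|\fM|)$, contradicting $(iii)$.

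The construction of this $|\fM|$ is where the argument becomes delicate and is the heart of the matter. In the local case one invokes Corollary~\ref{C110}, which uses Lipman's theorem (Proposition~\ref{P202}) that a relatively nef exceptional divisor is relatively base point free; here the corresponding divisor $\vE$ attached to the prime $E$ realizing $v$ is nef at infinity but, as noted, may fail to be nef or base point free. The fix is to pass to a further blowup: after choosing $X$ so that $v=v_E$ for a prime $E$ at infinity lying appropriately far out, one can arrange (by blowing up free points to push $\a(v_E)$ down past $0$, using~\eqref{e155}) that $\vE$ has $(\vE\cdot\vE)\ge0$, hence by Proposition~\ref{P201} is nef, and moreover — by blowing up further generic free points to separate sections, as in the analysis behind Example~\ref{E203} and the sufficiency remark after Proposition~\ref{P118} — that a multiple of $\vE$ is globally generated, giving a linear system $|\fM|$ with $\Delta\log|\fM|$ a positive multiple of $\delta_{v_E}$ plus possibly mass at $\ord_\infty$. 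By~\eqref{e147} such a $\log|\fM|$ is affine on $[\ord_\infty,v_E]$ with the required slope break at $v_E$. I would carry out this reduction carefully, checking at each blowup that the relevant centers and the curve $\{\phi=0\}$ stay away from the points being blown up, and that the retraction identities $r_{X'}\circ r_X=r_X$ keep the values $v(|\fM|)$, $w(|\fM|)$ equal to their values computed on the finite model. Granting the linear system, the contradiction is immediate and the proof is complete.
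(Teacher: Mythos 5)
Your implications (i)$\implies$(ii) and (ii)$\implies$(iii) match the paper, and you correctly identify (iii)$\implies$(i) as the substantive step and correctly diagnose the obstruction (nef at infinity does not imply nef). But your construction of the separating linear system has a genuine gap. You want a free linear system $|\fM|$ with $\Delta\log|\fM|$ essentially a Dirac mass at a divisorial valuation $v_E\in\,]v\wedge w,v]$. As the paper notes right after Proposition~\ref{P118}, this forces $\a(v_E)\ge0$; yet the segment $]v\wedge w,v]$ may lie entirely in the region $\{\a<0\}$ of $\cVe{\infty}$ (we are not in the tight tree here), in which case no such $|\fM|$ exists. Your proposed remedy --- ``blowing up free points to push $\a(v_E)$ down past $0$'' --- does not work: $\a(v_E)=(\vE\cdot\vE)$ is an invariant of the divisorial valuation itself, independent of the compactification, so it cannot be altered by further blowups; and in any case~\eqref{e155} shows that the newly created primes have \emph{smaller} $\a$, which is the wrong direction for achieving $(\vE\cdot\vE)\ge0$.

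The paper's actual fix is different: working on a compactification $X$ with $v,w\in|\Delta(X)|$ (after retracting, in the general case), it picks a prime $E_i$ with $v_i\ge v$ and forms $Z=\sum_j\vE_j+r\vE_i+s\vL_\infty$ with $1\ll r\ll s$. The large multiple of $\vL_\infty$ (which has positive self-intersection) makes $Z$ \emph{ample} by Nakai--Moishezon, so a multiple $nZ$ is base point free and $|\fM|=|\cO_X(nZ)|$ is a free linear system. The resulting $\Delta\log|\fM|$ is not concentrated at one point, but the term $r\f_i$ with $\f_i=b_i\,\a(\cdot\wedge_{\ord_\infty}v_i)$ satisfies $\f_i(v)<\f_i(w)$ (since $v\wedge v_i=v$ while $w\wedge v_i=w\wedge v<v$), and taking $r$ large makes this term dominate, giving $v(|\fM|)>w(|\fM|)$. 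You should replace your single-Dirac-mass construction with an argument of this kind.
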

\begin{proof}
  The implication (i)$\implies$(ii) follows from the 
  subharmonicity of $\log|\phi|$ 
  together with Proposition~\ref{P111}~(i). 
  The implication (ii)$\implies$(iii) is obvious. It remains to prove
  (iii)$\implies$(i).

  Suppose $v\not\le w$ in the partial ordering on
  $\cVe{\infty}\simeq\varprojlim|\Delta(X)|$. 
  We need to find a free linear system $|\fM|$  on $\A^2$
  such that $v(|\fM|)>w(|\fM|)$.
  First assume that $v$ and $w$ are quasimonomial and 
  pick an admissible compactification $X$ of $\A^2$ such that 
  $v,w\in|\Delta(X)|$. Let $E_i$, $i\in I$, be the primes of 
  $X$ at infinity. One of these primes is $L_\infty$ and there exists
  another prime (not necessarily unique) $E_i$ such that 
  $v_i\ge v$. Fix integers $r$, $s$ with $1\ll r\ll s$ and
  define the divisor $Z\in\Div_\infty(X)$ by 
  \begin{equation*}
    Z:=\sum_{j\in I}\vE_j+r\vE_i+s\vL_\infty.
  \end{equation*}
  We claim that $Z$ is an \emph{ample} divisor on $X$.
  To prove this, it suffices, by the Nakai-Moishezon criterion,
  to show  that $(Z\cdot Z)>0$, $(Z\cdot E_j)>0$ for all $j\in I$
  and $(Z\cdot C)>0$ whenever $C$ is the closure in $X$ of a
  curve $\{\phi=0\}\subseteq\A^2$. 

  First, by the definition of $\hE_j$ it follows that $(Z\cdot E_j)\ge 1$
  for all $j$. Second, we have $(\vL_\infty\cdot C)=\deg\phi$ 
  and $(\vE_j\cdot C)=-\ord_{E_i}(\phi)\ge\a(v_i)\deg\phi$ for all
  $j\in I$ in view of~\eqref{e151}, so that 
  $(Z\cdot C)>0$ for $1\le r\ll s$.
  Third, since $(\vL_\infty\cdot\vL_\infty)=1$, a similar argument shows 
  that $(Z\cdot Z)>0$ for $1\le r\ll s$.

  Since $Z$ is ample, there exists an integer $n\ge1$ such that 
  the line bundle $\cO_X(nZ)$ is base point free. 
  In particular, the corresponding 
  linear system $|\fM|:=|\cO_X(nZ)|$ is free on $\A^2$.
  Now, the integral affine function on $|\Delta(X)|$
  induced by $\vL_\infty$ is the constant function $+1$.
  Moreover, the integral affine function on $|\Delta(X)|$
  induced by $\vE_i$ is the function 
  $\f_i=b_i\a(\cdot\wedge_{\ord_\infty} v_i)$.
  Since $v_i\ge v$ and $v\not\le w$, 
  this implies $\f_i(v)<\f_i(w)$. 
  For $r\gg1$ this translates into $v(|\fM|)>w(|\fM|)$
  as desired.

  Finally, if $v$ and $w$ are general semivaluations in $\cVe{\infty}$
  with $v\not\le w$, then we can pick an admissible compactification 
  $X$ of $\A^2$ such that $r_X(v)\not\le r_X(w)$.
  By the previous construction there exists a free linear system $|\fM|$
  on $\A^2$ such that $r_X(v)(|\fM|)>r_X(w)(|\fM|)$. 
  But since the linear system $|\fM|$ was free also on $X$,
  it follows that $v(|\fM|)=r_X(v)(|\fM|)$ and $w(|\fM|)=r_X(w)(|\fM|)$.
  This concludes the proof.
\end{proof}
The following result is a partial analogue of Corollary~\ref{C112}
and characterizes integral affine functions on $\hcVe{\infty}$.
\begin{Prop}\label{P114}
  For any integral affine function $\f$ on $\hcVe{\infty}$
  there exist free linear systems $|\fM_1|$ and $|\fM_2|$ on $\A^2$
  and an integer $n\ge 1$ such that 
  $\f=\frac1n(\log|\fM_1|-\log|\fM_2|)$.
\end{Prop}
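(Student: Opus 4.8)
\textbf{Proof proposal for Proposition~\ref{P114}.}

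The plan is to reduce the statement to the fact that any divisor at infinity can be written as a difference of two base-point-free (hence very ample after scaling) divisors, using the dictionary between integral affine functions on $\hcVe{\infty}$ and divisors in $\Div_\infty(X)$ established in~\S\ref{S218}. First I would unwind the definitions: by construction, an integral affine function $\f$ on $\hcVe{\infty}$ is of the form $\f=\f_X\circ r_X$ for some admissible compactification $X$ of $\A^2$, where $\f_X\in\Aff(X)\simeq\Div_\infty(X)$. Thus $\f$ corresponds to a divisor $Z\in\Div_\infty(X)$, with $\f(\ord_{E_i})=\ord_{E_i}(Z)$ for each prime $E_i$ of $X$ at infinity, extended homogeneously and affinely on the dual graph exactly as in the computation preceding Proposition~\ref{P110}.

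Next I would produce, for any such $Z$, an ample divisor $W\in\Div_\infty(X)$ with $Z+W$ also ample. This is the standard ``a divisor is a difference of very ample divisors'' argument carried out at infinity: using the divisors $\vE_i$ and $\vL_\infty$ of~\S\ref{S214}, one checks via the Nakai--Moishezon criterion — just as in the proof of Proposition~\ref{P112} — that $W:=\sum_{j}\vE_j+r\vL_\infty$ is ample for $r\gg1$, and that $Z+W$ is ample as well for $r$ chosen even larger (depending on the coefficients of $Z$). Here the key intersection estimates are $(\vE_j\cdot C)\ge\a(v_j)\deg\phi$ and $(\vL_\infty\cdot C)=\deg\phi$ for $C$ the closure of an affine curve $\{\phi=0\}$, together with $(\vL_\infty\cdot\vL_\infty)=1$, which force positivity of all the relevant intersection numbers once $r$ is large. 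Then choose $n\ge1$ so that both $\cO_X(n(Z+W))$ and $\cO_X(nW)$ are base point free (possible since a positive tensor power of an ample line bundle is globally generated), and set $|\fM_1|:=|\cO_X(n(Z+W))|$ and $|\fM_2|:=|\cO_X(nW)|$. Both are free linear systems on $X$, hence in particular free on $\A^2$.

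Finally I would verify the identity $\f=\tfrac1n(\log|\fM_1|-\log|\fM_2|)$. Since $|\fM_k|$ is free on all of $X$, Lemma~\ref{L126} gives $v(|\fM_k|)=(r_X v)(|\fM_k|)$ for every $v\in\hcVe{\infty}$, so it suffices to check the equality on $|\hDelta(X)|$, i.e. on divisorial valuations $\ord_{E_i}$ and by homogeneity and piecewise-affineness everywhere on the dual graph. On $\ord_{E_i}$ the value $-\ord_{E_i}(|\fM_k|)$ equals the coefficient of $E_i$ in the divisor $n(Z+W)$ resp.\ $nW$ — this is because a free linear system realizing $\cO_X(D)$ has $\ord_{E_i}(|\fM_k|)=-\ord_{E_i}(D)$, the multiplicity of the fixed part along $E_i$ being zero by freeness — so the difference is exactly $n\ord_{E_i}(Z)=n\f(\ord_{E_i})$. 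Both sides being integral affine on $\hcVe{\infty}$ (i.e.\ determined by their values at divisorial valuations and affine on each simplex), the identity follows on all of $\hcVe{\infty}$.

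The main obstacle I anticipate is the careful bookkeeping in the Nakai--Moishezon step: one must check ampleness of $Z+W$ uniformly, which requires controlling $(Z\cdot C)$ from below for \emph{all} closures $C$ of affine curves, not just finitely many, and this is where the estimate $(\vE_j\cdot C)\ge\a(v_j)\deg\phi$ from~\eqref{e151} and the positivity of $(\vL_\infty\cdot\vL_\infty)$ must be combined with a correct choice of the parameter $r$ depending on the (possibly negative) coefficients of $Z$. A secondary subtlety is the precise relation between the fixed part of a free linear system and the divisor $\ord_{E_i}(|\fM_k|)$, which needs the freeness of $|\fM_k|$ at infinity — but this is exactly what the construction guarantees, so it should be routine once stated carefully.
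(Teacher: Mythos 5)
Your proof follows the same route as the paper's (which is only a few lines): represent $\f$ by a divisor $Z\in\Div_\infty(X)$, write $Z$ as a difference of two ample divisors at infinity, pass to a multiple $n$ making both base point free, and take the associated linear systems. One small slip in your explicit Nakai--Moishezon step: with $W=\sum_j\vE_j+r\vL_\infty$, increasing $r$ alone does not make $Z+W$ ample, because $(\vL_\infty\cdot E_j)=0$ for every prime $E_j\ne L_\infty$, so $((Z+W)\cdot E_j)=(Z\cdot E_j)+1$ can remain nonpositive no matter how large $r$ is; you need to scale the $\sum_j\vE_j$ part as well, e.g.\ take $W=m\sum_j\vE_j+r\vL_\infty$ with $1\ll m\ll r$ and $m$ large compared to the coefficients of $Z$. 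With that fix the argument is correct and coincides with the paper's, with your verification of the identity $\f=\frac1n(\log|\fM_1|-\log|\fM_2|)$ via Lemma~\ref{L126} supplying detail the paper leaves implicit.
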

\begin{proof}
  Pick an admissible compactification $X$ of $\A^2$ such that 
  $\f$ is associated to divisor $Z\in\Div_\infty(X)$.
  We may write $Z=Z_1-Z_2$, where $Z_i\in\Div_\infty(X)$ is 
  ample. For a suitable $n\ge 1$, $nZ_1$ and $nZ_2$ are
  very ample, and in particular base point free. 
  We can then take $|\fM_i|=|\cO_X(nZ_i)|$, $i=1,2$.
\end{proof}
It seems harder to describe the parametrization $\a$. 
While~\eqref{e151} implies
\begin{equation*}
  \a(v)\ge\sup_{\phi\in R\setminus 0}\frac{v(\phi)}{\ord_\infty(\phi)}
\end{equation*}
for any $v$, it is doubtful that equality holds 
in general.\footnote{In fact, P.~Mondal has given examples in~\texttt{arXiv:1301.3172} showing that 
  equality does not always hold.}

One can show that equality does hold when $v$ is a quasimonomial
valuation in the tight tree $\cV'_\infty$, to be defined shortly.
%
%
%
%
\subsection{The tight tree at infinity}\label{S276}
For the study of polynomial dynamics in~\S\ref{S104}, the 
full valuative tree at infinity is too large.
Here we will introduce a very interesting and useful subtree.
%
%
\begin{Def}
  The \emph{tight tree at infinity} is the subset $\cV'_\infty\subseteq\cVe{\infty}$
  consisting of semivaluations $v$ for which $A(v)\le0\le\a(v)$. 
\end{Def}
Since $\a$ is decreasing and $A$ is increasing in the partial ordering on 
$\cVe{\infty}$, it is clear that $\cV'_\infty$ is a subtree of $\cVe{\infty}$.
Similarly, $\a$ (resp.\ $A$) is lower semicontinuous (resp.\ upper semicontinuous)
on $\cVe{\infty}$, 
which implies that $\cV'_\infty$ is a closed subset of $\cVe{\infty}$.
It is then easy to see that $\cV'_\infty$ is a metric tree in the sense 
of~\S\ref{S131}.

Similarly, we define $\hcV'_\infty$ as the set of semivaluations $v\in\hcVe{\infty}$
satisfying $A(v)\le 0\le\a(v)$. Thus $\hcV'_\infty=\R_+^*\cVe{\infty}$
The subset $\hcV'_\infty\subset\BerkAtwo$ does not depend on 
the choice of embedding $\A^2\hookrightarrow\P^2$. In particular,
it is invariant under polynomial automorphisms of $\A^2$.
Further, $\hcV'_\infty$ is nowhere dense as it contains no curve semivaluations. 
Its closure is the union of itself and the trivial valuation $\triv_{\A^2}$. 
%
%
\subsubsection{Monomialization}\label{S152}
The next, very important result characterizes some of the ends of 
the tree $\cV'_\infty$.
\begin{Thm}\label{T105}
  Let $\ord_E$ be a divisorial valuation centered at infinity such that 
  $A(\ord_E)\le0=(\vE\cdot\vE)$.
  Then $A(\ord_E)=-1$ and there exist coordinates $(z_1,z_2)$
  on $\A^2$ in which $\ord_E$ is monomial with
  $\ord_E(z_1)=-1$ and $\ord_E(z_2)=0$.
\end{Thm}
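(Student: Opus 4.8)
The plan is to reduce the statement to a concrete birational-geometric fact about compactifications of $\A^2$, namely that a prime at infinity $E$ with $(\vE\cdot\vE)=0$ can be realized, after a suitable choice of coordinates, as the exceptional curve of a ``linear'' pencil. First I would set up an admissible compactification $X$ in which $E$ appears as a prime at infinity, and consider the divisor $\vE\in\Div_\infty(X)$ which, by construction, is nef at infinity. The hypothesis $(\vE\cdot\vE)=0$ together with Proposition~\ref{P201} shows that $\vE$ is in fact nef (since the equivalences (i)--(iii) there give that $\a(v_E)=b_E^2(\vE\cdot\vE)/b_E^2\ge0$ forces $\vE$ effective and hence nef). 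So $\vE$ is a nef divisor at infinity with self-intersection $0$.

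Next I would analyze the linear system $|\cO_X(n\vE)|$ for $n\gg1$. Since $(\vE\cdot\vE)=0$ and $\vE$ is nef, the Riemann--Roch theorem on the rational surface $X$ gives that $h^0(X,\cO_X(n\vE))$ grows linearly in $n$, so $|n\vE|$ defines, after removing base components, a fibration $X\dashrightarrow\P^1$ whose general fiber $F$ satisfies $F^2=0$ and $(F\cdot E)\ge0$ small; one wants to arrange $(F\cdot E)=0$ or $=1$. The key point is that $\vE$ is supported at infinity and is the dual divisor to $E$, so the fibers of this pencil meet the affine part $\A^2$ in curves, and the generic such curve $C=\{\phi=0\}$ has $(\vE\cdot C)=$ (the coefficient of $E$ in the divisor of poles of $\phi$) which by duality is controlled by $\ord_E(\phi)$. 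Pushing this down to $\A^2$, the pencil restricts to a pencil of affine curves whose members are fibers of a polynomial $\phi\colon\A^2\to K$; one then argues, using that $X\to\P^2$ is a composition of blowups at infinity and that the pencil is base-point free at infinity on a model dominating $X$, that $\phi$ can be taken of degree one, i.e.\ $\phi$ is an affine coordinate $z_1$. Taking $z_2$ to be any complementary affine coordinate, $\ord_E$ is then monomial in $(z_1,z_2)$ with $\ord_E(z_1)=-1$ (the fiber direction, meeting $L_\infty$ simply) and $\ord_E(z_2)=0$.

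Once monomiality is established, computing $A(\ord_E)$ is a short calculation: for the monomial valuation $v$ with $v(z_1)=-1$, $v(z_2)=0$ one has $v=\ord_{E}$ where $E$ is the prime at infinity obtained by a single blowup of the point $[0:1:0]$ on $L_\infty$ (in the chart where $z_1$ is the ``vertical'' coordinate), and a direct check with the $2$-form $\omega=dz_1\wedge dz_2$ gives $\ord_E(\omega)=-2$, hence $A_E=1+\ord_E(\omega)=-1$. This matches the claim and also re-confirms $\a_E=(\vE\cdot\vE)=0$ via the blowup formulas~\eqref{e155}. Alternatively, one may avoid the explicit chart computation by using the general relation between $A$ and $\a$ along the tree together with the fact that the segment from $\ord_\infty$ to $\ord_E$ has $b\equiv1$ (since $\ord_E(z_1)=-1$ forces $b_E=1$), so that $A$ and $\a$ differ by a constant on that segment; evaluating at $\ord_\infty$, where $A=-2$ and $\a=1$, and at $\ord_E$, where $\a=0$, yields $A(\ord_E)=-1$.

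The main obstacle I anticipate is the step of showing that the pencil $|n\vE|$ descends to a pencil of \emph{lines} on $\A^2$, i.e.\ that one can genuinely choose the coordinate $z_1$ of degree one rather than merely some polynomial fibration with fibers of higher degree. This is where Examples~\ref{E202}, \ref{E203} (pathologies of nef-at-infinity and base-point-freeness) warn that care is needed: one must verify that the nef divisor $\vE$ with $\vE^2=0$ is actually semiample, which on a rational surface follows from Riemann--Roch plus the fact that there are no nef divisors of self-intersection $0$ with higher iterated base locus on a surface (Zariski's theorem on the structure of the semigroup of effective divisors, or explicitly running the contraction/fibration argument as in Example~\ref{E206}). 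Then one must identify the resulting fibration $X\to\P^1$ with the one coming from a coordinate projection; this uses that $\A^2$ has no nonconstant invertible functions and that a fibration of $\A^2$ by curves with one place at infinity and self-intersection $0$ is a coordinate (an Abhyankar--Moh type input). Making this last identification rigorous, rather than by the hand-waving ``in suitable coordinates,'' is the crux of the argument.
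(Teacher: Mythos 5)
Your overall strategy is the right one -- produce sections of (a multiple of) $\vE$ by Riemann--Roch, extract a pencil of affine curves, and invoke an Abhyankar--Moh type result -- and you correctly identify that last step as the crux. But the crux is exactly what is left unexecuted, and it is where the actual content of the proof lies. What is needed is to verify the hypotheses of the Line Embedding Theorem for a generic member of the pencil, and this follows from intersection theory with $\vE$ itself, not $n\vE$: since $(\vE\cdot K_X)=A_E-1<0$ and $h^2(\cO_X(\vE))=h^0(\cO_X(K_X-\vE))=0$ (as $\vE$ is effective and $X$ is rational), Riemann--Roch gives $h^0(\cO_X(\vE))\ge2$ directly, so there is a nonconstant $\phi\in R$ with poles bounded by $\vE$, and $\phi+t$ works for all $t$. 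Because $C_t\equiv\vE$ in $\Pic(X)$ one gets $(C_t\cdot E)=1$ and $(C_t\cdot F)=0$ for every other prime $F$ at infinity; hence $C_t$ meets $X\setminus\A^2$ in a single free point of $E$, transversally, so $C_t\cap\A^2$ has one place at infinity, is smooth for generic $t$, and is rational by adjunction. Only then does Abhyankar--Moh--Suzuki apply, yielding coordinates with $\phi+t=z_2$. This entirely bypasses your worries about semiampleness of $n\vE$ and Zariski decomposition.

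There is also a genuine error in your intermediate claim that ``$\phi$ can be taken of degree one, i.e.\ $\phi$ is an affine coordinate $z_1$.'' This is false: the coordinates produced by the theorem are polynomial coordinates ($R=K[z_1,z_2]$), not affine coordinates of degree one with respect to the fixed embedding $\A^2\hookrightarrow\P^2$. For instance, conjugating the standard projection by $(w_1,w_2)\mapsto(w_1,w_2+w_1^2)$ produces a prime $E$ satisfying $A_E=-1$ and $(\vE\cdot\vE)=0$ whose associated pencil consists of parabolas of degree $2$, and $b_E=2$. The same example breaks your ``alternative'' computation of $A(\ord_E)$ at the end: $\ord_E(z_1)=-1$ does \emph{not} force $b_E=1$, since $b_E=-\ord_E(|\fL|)$ is computed against degree-one polynomials in the \emph{original} embedding. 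The direct computation of $A_E=-1$ should instead be done on the compactification $Y\simeq\P^1\times\P^1$ attached to the new coordinates, where $dz_1\wedge dz_2$ (which equals $\omega$ up to a nonzero constant) visibly has a pole of order $2$ along the prime $F$ with $\ord_F=\ord_E$.
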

This is proved in~\cite[Theorem~A.7]{eigenval}. Here we provide 
an alternative, more geometric proof. This proof uses the Line
Embedding Theorem and is the reason why we work in characteristic
zero throughout~\S\ref{S108}. 
(It is quite possible, however, that Theorem~\ref{T105} is 
true also over an algebraically closed field of positive characteristic).
\begin{proof}
  Let $X$ be an admissible compactification of $\A^2$ on  
  which $E$ is a prime at infinity. 
  The divisor $\vE\in\Div_\infty(X)$ is nef at infinity. 
  It is also effective, and hence nef, since $(\vE\cdot\vE)\ge 0$;
  see Proposition~\ref{P201}.

  Let $K_X$ be the canonical class of $X$. 
  We have $(\vE\cdot K_X)=A(\ord_E)-1<0$.
  By the Hirzebruch-Riemann-Roch Theorem we have 
  \begin{equation*}
    \chi(\cO_X(\vE))=\chi(\cO_X)+\frac12((\vE\cdot\vE)-(\vE\cdot K_X))
    >\chi(\cO_X)=1.
  \end{equation*}
  Serre duality yields $h^2(\cO_X)=h^0(\cO_X(K_X-\vE))=0$, so 
  since $h^1(\cO_X(\vE))\ge0$ we conclude that $h^0(\cO_X(\vE))\ge 2$.
  Thus there exists a nonconstant polynomial $\phi\in R$ that defines a global 
  section of $\cO_X(\vE)$. Since $\vE$ is effective, $\phi+t$
  is also a global section for any $t\in K$.
  
  Let $C_t$ be the closure in $X$ of the affine curve 
  $(\phi+t=0)\subset\A^2$. 
  For any $t$ we have $C_t=\vE$ in $\Pic(X)$, so 
  $(C_t\cdot E)=1$ and $(C_t\cdot F)=0$ for all primes $F$ at infinity
  different from $E$. 
  This implies that $C_t$ intersects $X\setminus\A^2$ at a unique
  point $\xi_t\in E$; this point is furthermore free on $E$, $C_t$ is 
  smooth at $\xi_t$, and the intersection is transverse. 
  Since $\ord_E(\phi)=(\vE\cdot\vE)=0$, the image of the 
  map $t\mapsto\xi_t$ is Zariski dense in $E$.
  
  For generic $t$, the affine curve $C_t\cap\A^2=(\phi+t=0)$ is smooth, hence 
  $C_t$ is smooth for these $t$. By adjunction,  $C_t$ is
  rational. 
  In particular, $C_t\cap\A^2$ is a smooth curve with one place at infinity.

  The Line Embedding Theorem by Abhyankar-Moh
  and Suzuki~\cite{AM,Suzuki} now shows that 
  there exist coordinates $(z_1,z_2)$ on $\A^2$ such that $\phi+t=z_2$. 
  We use these coordinates to define a compactification
  $Y\simeq\P^1\times\P^1$ of $\A^2$. Let $F$ be the irreducible
  compactification of $Y\setminus\A^2$ that intersects the strict
  transform of each curve $z_2=\mathrm{const}$. Then the birational
  map $Y\dashrightarrow X$ induced by the identity on $\A^2$
  must map $F$ onto $E$. It follows that $\ord_E=\ord_F$.
  Now $\ord_F$ is monomial in $(z_1,z_2)$ with 
  $\ord_F(z_1)=-1$ and $\ord_F(z_2)=0$.
  Furthermore, the 2-form $dz_1\wedge dz_2$ has a pole of order 2 along 
  $F$ on $Y$ so $A(\ord_F)=-1$.
  This completes the proof.
\end{proof}
%
%
\subsubsection{Tight compactifications}\label{S150}
We say that an admissible compactification $X$ of $\A^2$ is \emph{tight} if 
$|\Delta(X)|\subseteq\cV'_\infty$. 
Let $E_i$, $i\in I$ be the primes of $X$ at infinity. 
Since the parametrization $\a$ and the
log discrepancy $A$ are both affine on the simplices of $|\Delta(X)|$,
$X$ is tight iff $A(v_i)\le 0\le\a(v_i)$ for all $i\in I$.
In particular, this implies $(\vE_i\cdot\vE_i)\ge0$, so the 
the divisor $\vE_i\in\Div_\infty(X)$ is nef for all $i\in I$. 
Since every divisor in $\Div_\infty(X)$ that is nef at infinity is 
a positive linear combination of the $\vE_i$, we conclude
\begin{Prop}\label{P115}
  If $X$ is a tight compactification of $\A^2$,
  then the nef cone of $X$ is simplicial.
\end{Prop}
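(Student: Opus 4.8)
The statement to prove is Proposition~\ref{P115}: if $X$ is a tight compactification of $\A^2$, then the nef cone of $X$ is simplicial. The plan is to unwind the definitions so that the claim reduces to the observation, already recorded in the discussion preceding the proposition, that the dual basis divisors $\vE_i$ are nef whenever $X$ is tight, together with the structure of the cone of divisors that are nef at infinity.

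First I would recall the setup from~\S\ref{S214} and~\S\ref{S218}. Let $E_i$, $i\in I$, be the primes of $X$ at infinity; by Proposition~\ref{P205}, $\Div_\infty(X)\simeq\Pic(X)$, so the nef cone of $X$ can be regarded as a cone inside $\Div_\infty(X)_\R$. The cone of divisors that are \emph{nef at infinity} is, by the discussion in~\S\ref{S214}, the simplicial cone freely generated by the $\vE_i$, $i\in I$. Since a nef divisor is in particular nef at infinity, the nef cone is contained in this simplicial cone, hence every nef $Z\in\Div_\infty(X)$ can be written uniquely as $Z=\sum_{i\in I}t_i\vE_i$ with $t_i\in\R$; because $Z$ is nef at infinity we moreover have $t_i=(Z\cdot E_i)\ge 0$ for all $i$. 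Thus the nef cone is a subcone of $\sum_i\R_+\vE_i$ consisting of certain nonnegative combinations.

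The key step is then to show that \emph{every} nonnegative combination $\sum_i t_i\vE_i$ with $t_i\ge 0$ is actually nef, which forces the nef cone to equal the full simplicial cone $\sum_i\R_+\vE_i$. For this I would invoke the consequence of tightness noted just before the proposition: since $A(v_i)\le 0\le\a(v_i)$ for all $i\in I$ (using that $\a$ and $A$ are affine on the simplices of $|\Delta(X)|$ and tightness means $|\Delta(X)|\subseteq\cV'_\infty$), Proposition~\ref{P201} applies to each $E_i$ and gives that $\vE_i$ is nef. The cone of nef divisors is closed under addition and under multiplication by nonnegative scalars, so any $\sum_i t_i\vE_i$ with $t_i\ge 0$ is nef. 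Combining the two inclusions, the nef cone of $X$ equals $\sum_{i\in I}\R_+\vE_i$, a simplicial cone.

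I do not expect a serious obstacle here: the proposition is essentially a bookkeeping consequence of Proposition~\ref{P201} and the description of the cone nef at infinity. The only point requiring a line of care is the passage from tightness of $X$ (a statement about $|\Delta(X)|$ lying in $\cV'_\infty$) to the inequalities $A(v_i)\le 0\le\a(v_i)$ at the vertices $v_i$, and from there to $\a(\ord_{E_i})=b_i^2\a(v_i)\ge 0$ via~\eqref{e148}, so that Proposition~\ref{P201}(iii)$\Rightarrow$(i) can be applied. Once that is in place, the conclusion is immediate, and I would also remark that the generators $\vE_i$ are linearly independent (being a dual basis to the $E_i$ under the nondegenerate intersection form of Proposition~\ref{P205}), which is what makes the cone genuinely simplicial rather than merely polyhedral.
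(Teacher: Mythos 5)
Your proposal is correct and follows the same route the paper takes in the paragraph immediately preceding the proposition: tightness gives $\a(v_i)\ge 0$ for each $i$, hence each $\vE_i$ is nef by Proposition~\ref{P201}, and since the cone of divisors nef at infinity is exactly the free cone on the $\vE_i$, the nef cone equals $\sum_i\R_+\vE_i$. Your added remark that the $\vE_i$ are linearly independent (via nondegeneracy of the intersection form from Proposition~\ref{P205}), so that the cone is genuinely simplicial, is a worthwhile explicit touch.
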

See~\cite{CPR1,CPR2,GM1,GM2,Monserrat} for other cases 
when the nef cone is known to be simplicial. 
For a general admissible compactification of $\A^2$ one would,
however, expect the nef cone to be rather complicated.
\begin{Lemma}\label{L127}
  Let $X$ be a tight compactification of $\A^2$ and 
  $\xi$ a closed point of $X\setminus\A^2$.
  Let $X'$ be the admissible compactification of $\A^2$ 
  obtained by blowing up $\xi$. Then $X'$ is tight unless $\xi$
  is a free point on a prime $E$ for which $\a_E=0$ or $A_E=0$.
\end{Lemma}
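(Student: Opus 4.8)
The plan is to analyze what happens to the numerical invariants $\a$ and $A$ when passing from $X$ to the blowup $X'$ at $\xi$, distinguishing the free and satellite cases using the formulas~\eqref{e152}--\eqref{e155} and their satellite counterparts. Tightness of $X'$ is equivalent to $A(v_i) \le 0 \le \a(v_i)$ at every vertex $v_i$ of $|\Delta(X')|$, and since $|\Delta(X')|$ is obtained from $|\Delta(X)|$ by adding one new vertex $\sigma_{E'}$ (in the free case, attached to the unique prime $E$ through $\xi$) or subdividing an edge $\sigma_{E_1E_2}$ (in the satellite case), the only thing to check is the behavior at the new vertex $v_{E'}$; all old vertices are unchanged. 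So the proof reduces to computing $\a(v_{E'})$ and $A(v_{E'})$ and showing they have the right signs except in the stated exceptional situation.

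First I would treat the satellite case: $\xi = E_1 \cap E_2$ with $v_{E_1}, v_{E_2} \in \cV'_\infty$. Here $v_{E'}$ lies on the segment $[v_{E_1}, v_{E_2}]$ inside $|\Delta(X)| \subseteq \cV'_\infty$ (the dual graph, hence the tree $\cV'_\infty$, does not change as a subset of $\BerkAtwo$ under a satellite blowup — compare the discussion after Lemma~\ref{L123} and in the proof of Lemma~\ref{L123}(iii)). Therefore $v_{E'} \in \cV'_\infty$ automatically, so $X'$ is tight. Concretely, $\a$ is affine along that segment with respect to the relevant parameter and $A$ is affine too, so $A(v_{E'}) \le \max\{A(v_{E_1}), A(v_{E_2})\} \le 0$ and $\a(v_{E'}) \ge \min\{\a(v_{E_1}), \a(v_{E_2})\} \ge 0$. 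This case needs essentially no computation.

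Next I would treat the free case: $\xi$ is a free point on a unique prime $E$, with $v_E \in \cV'_\infty$, i.e. $A_E := A(\ord_E) \le 0$ and $\a(v_E) \ge 0$, equivalently $b_E^{-1}A_E \le 0 \le b_E^2\a(v_E) = \a_E = (\vE\cdot\vE)$ (using~\eqref{e148}). By~\eqref{e152}, $b_{E'} = b_E$ and $A_{E'} = A_E + 1$, so $A(v_{E'}) = b_{E'}^{-1}A_{E'} = b_E^{-1}(A_E+1) \le 0$ precisely when $A_E \le -1$; since $A_E$ is an integer, this holds unless $A_E = 0$. Also by~\eqref{e155}, $\a_{E'} = \a_E - 1$, so $\a(v_{E'}) = b_{E'}^{-2}\a_{E'} = b_E^{-2}(\a_E - 1) \ge 0$ precisely when $\a_E \ge 1$; since $\a_E$ is an integer, this holds unless $\a_E = 0$, i.e. unless $A_E = 0$ or $\a_E = 0$, which is exactly the exceptional case $\a_E = 0$ or $A_E = 0$ in the statement. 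Combining, $v_{E'} \in \cV'_\infty$ — and hence $X'$ is tight — unless $\xi$ is a free point on a prime $E$ with $\a_E = 0$ or $A_E = 0$.

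The main subtlety to get right is the claim that no old vertex leaves $\cV'_\infty$ and, in the satellite case, the assertion that the dual graph of $X'$, viewed inside $\cVe{\infty}$, contains no new points outside $|\Delta(X)|$; this is where one must invoke the affineness of $\a$ and $A$ on the simplices of the dual graph together with the fact that a satellite blowup only subdivides an existing edge (Lemma~\ref{L210}, Lemma~\ref{L123}). Everything else is the bookkeeping with~\eqref{e152} and~\eqref{e155} above, plus the integrality of $\a_E$ and $A_E$, which is what forces the strict-sign conditions $A_E \le -1$ and $\a_E \ge 1$ to fail only at the boundary values $0$.
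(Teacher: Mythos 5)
Your proposal is correct and matches the paper's proof almost exactly: the same case split into free versus satellite blowups, the same observation that $|\Delta(X')|=|\Delta(X)|$ as a subset of $\cVe{\infty}$ in the satellite case, and the same use of $A_{E'}=A_E+1$, $\a_{E'}=\a_E-1$ together with integrality to isolate $\a_E=0$ or $A_E=0$ as the only obstruction in the free case. The extra rewriting through $A(v_{E'})=b_E^{-1}A_{E'}$ and $\a(v_{E'})=b_E^{-2}\a_{E'}$ is harmless since $b_E>0$ and leads to the same sign conditions the paper uses directly on $A_{E'}$ and $\a_{E'}$.
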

\begin{proof}
  If $\xi$ is a satellite point, then $X'$ is tight since $|\Delta(X')|=|\Delta(X)|$.
  
  Now suppose $\xi$ is a free point, belonging to a 
  unique prime on $E$. 
  Let $E'$ be the prime of $X'$ resulting from blowing up $\xi$. 
  Then $X'$ is tight iff $\a_{E'}:=(\vE'\cdot\vE')\ge0\ge A_{E'}$.
  But it follows from~\eqref{e152} that
  $A_{E'}=A_E+1$ and $\a_{E'}=\a_E-1$.
  Hence $\a_{E'}\ge0\ge A_{E'}$ unless $\a_E=0$ or $A_E=0$.
  The proof is complete.
\end{proof}
\begin{Cor}\label{C113}
  If $X$ is a tight compactification of $\A^2$ and 
  $v\in\hcV'_\infty$ is a divisorial valuation, then there exists
  a tight compactification $X'$ dominating $X$ such that 
  $v\in|\hDelta^*(X')|$.
\end{Cor}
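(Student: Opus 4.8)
The plan is to prove Corollary~\ref{C113} by a finite blowup process, starting from $X$ and repeatedly blowing up the center of $v$, using Lemma~\ref{L127} to control tightness along the way. Since $v$ is divisorial, there is some admissible compactification $X''$ (not necessarily tight, and not necessarily dominating $X$) on which $v$ lives as a prime at infinity; by taking a common refinement we may assume $X''\ge X$. The construction of $X''$ from $X$ is a finite composition of point blowups, and by the result of Zariski used in~\S\ref{S163} (see also Lemma~\ref{L123}(iii)), after possibly continuing to blow up we may assume each blowup in the sequence $X=X_0\to X_1\to\dots\to X_N=X''$ is the blowup of the successive center of $v$. This is the natural candidate sequence; the issue is that the intermediate $X_j$ need not be tight, so I cannot simply invoke Lemma~\ref{L127} at every step.

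First I would reduce to showing that the center of $v$ on each $X_j$ is a point where Lemma~\ref{L127} does not fail, i.e.\ that we never blow up a free point lying on a prime $E$ with $\a_E=0$ or $A_E=0$ \emph{in a way that leaves the tight tree}. The key observation is that $v\in\hcV'_\infty$, so $A(v)\le 0\le\a(v)$, and that along the sequence the normalized valuations $v_{E_j}$ associated to the exceptional primes created are ordered: $v_{E_0}\le v_{E_1}\le\dots$ increasing toward $v$ in the partial order of $\cVe{\infty}$ rooted at $\ord_\infty$. Since $\a$ is decreasing and $A$ increasing in this order, and since $\a(v)\ge 0$ and $A(v)\le 0$, we get $\a(v_{E_j})\ge\a(v)\ge 0$ and $A(v_{E_j})\le A(v)\le 0$ for all $j$ --- provided $v_{E_j}\le v$, which holds because the center of $v$ on $X_j$ is a point of $E_j$ (when the $j$-th blowup is free) or an intersection point (when it is satellite). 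Thus the new prime $E_j$ always satisfies $v_{E_j}\in\cV'_\infty$, so $|\Delta(X_j)|$ stays inside $\cV'_\infty$ once we check the whole dual graph, not just the last vertex.

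The remaining point is that tightness of $X_j$ requires $|\Delta(X_j)|\subseteq\cV'_\infty$, i.e.\ \emph{all} vertices, and Lemma~\ref{L127} tells us how tightness propagates under one blowup: $X_{j+1}$ is tight if $X_j$ is, unless we blow up a free point on a prime $E$ with $\a_E=0$ or $A_E=0$. Here I would argue that if such a ``bad'' free point $\xi$ on $E$ is the center of $v$, then the new prime $E'$ has $\a_{E'}=\a_E-1=-1<0$ or $A_{E'}=A_E+1=1>0$ by~\eqref{e152}, contradicting $v_{E'}\in\cV'_\infty$ established in the previous paragraph (since $v_{E'}\le v$). Hence the bad case never occurs along our sequence, Lemma~\ref{L127} applies at every step, and by induction $X_j$ is tight for all $j$; in particular $X''=X_N$ is a tight compactification dominating $X$ with $v\in|\hDelta^*(X'')|$, which is the assertion. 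The main obstacle I anticipate is the bookkeeping to guarantee $v_{E_j}\le v$ at every stage --- that is, that running the ``blow up the center of $v$'' procedure genuinely produces a chain of divisorial valuations increasing to $v$ rather than overshooting; this follows from the description of the retraction $r_{X_j}$ and Lemma~\ref{L126}, together with the fact (Lemma~\ref{L123}(iii) adapted to infinity) that finitely many such blowups realize $v$ as a prime at infinity, but it should be written carefully.
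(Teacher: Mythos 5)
Your overall strategy---realize $v$ by the Zariski tower of blowups of successive centers of $v$ above $X$, and check via Lemma~\ref{L127} that tightness survives each step---is viable and close in spirit to the paper's argument. But the point you yourself flag as the ``main obstacle'' is exactly where the proof as written breaks: it is \emph{not} true that the exceptional primes $E_j$ created along the tower form an increasing chain $v_{E_0}\le v_{E_1}\le\dots$ dominated by $v$. A satellite blowup of $E_i\cap E_j$ inserts the new vertex in the \emph{interior} of the edge $[v_{E_i},v_{E_j}]$, so the sequence zigzags rather than increases; and even for a free blowup of the center $\xi_j\in E_j$ of $v$, the new prime $E_{j+1}$ need not satisfy $v_{E_{j+1}}\le v$. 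Indeed, the set $U(\xi_j)$ of semivaluations with center $\xi_j$ contains points whose retraction to $|\Delta(X_{j+1})|$ lands in the open edge $\,]v_{E_j},v_{E_{j+1}}[\,$ and which branch off there; for such a $v$ (whose center on $X_{j+1}$ is then the satellite point $E_j\cap E_{j+1}$) one has $v\wedge v_{E_{j+1}}<v_{E_{j+1}}$, so $v\not\ge v_{E_{j+1}}$. The alternating free/satellite towers as in Example~\ref{E210} produce divisorial valuations whose Zariski tower contains primes not dominated by $v$. Consequently the deduction ``$v_{E_{j+1}}\le v$, hence $v_{E_{j+1}}\in\cV'_\infty$, hence the bad case of Lemma~\ref{L127} leads to a contradiction'' is unjustified as it stands.

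The conclusion you need is nevertheless true, and the repair is short. The exceptional case in Lemma~\ref{L127} concerns only \emph{free} blowups, so satellite blowups along the tower preserve tightness unconditionally. For a free blowup the correct inequality involves the \emph{old} prime: if the center $\xi_j$ of $v$ on $X_j$ is a free point of $E_j$, then (after normalizing $v$ into $\cV'_\infty$) the set $U(\xi_j)$ is, by the analogue at infinity of \S\ref{S162}, a connected component of $\cVe{\infty}\setminus\{v_{E_j}\}$ not containing $\ord_\infty$, so $v>v_{E_j}$ strictly. Since $\a$ is strictly decreasing and $A$ strictly increasing away from the root, and $\a(v)\ge0\ge A(v)$, this gives $\a_{E_j}>0>A_{E_j}$; hence $E_j$ is never a ``bad'' prime and Lemma~\ref{L127} applies at every step. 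This is precisely the inequality $\a(w)>\a(v)\ge0\ge A(v)>A(w)$ on which the paper's proof rests; the paper packages the argument differently, as an induction on $b(v)/b(r_X(v))$ in which maximal chains of free blowups are shown to terminate because $A_{E_n}=A_{E_0}+n$ by~\eqref{e152} must remain $\le0$, whereas your one-pass version gets termination for free from Zariski's theorem. With the correction above, your approach goes through.
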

\begin{proof}
  In the proof we shall repeatedly use the analogues at infinity
  of the results in~\S\ref{S163}, in particular Lemma~\ref{L123}.
  
  We may assume $v=\ord_E$ for some prime $E$ at infinity.
  By Lemma~\ref{L123}, the valuation $w:=r_X(v)$ is divisorial and 
  $b(w)$ divides $b(v)$. We argue by induction on the integer $b(v)/b(w)$.

  By the same lemma we can 
  find an admissible compactification $X_0$ dominating $X$
  such that $|\hDelta^*(X_0)|=|\hDelta^*(X)|$,
  and $w$ is contained in a one-dimensional
  cone in $|\hDelta^*(X_0)|$. 
  Then the center of $v$ on $X_0$ is a free point $\xi_0$. 
  Let $X_1$ be the blowup of $X_0$ in $\xi_0$.
  Note that since $v\ne w$ we have
  $\a(w)>\a(v)\ge0\ge A(v)>A(w)$, so by Lemma~\ref{L127}
  the compactification $X_1$ is tight. 

  If $v\in|\hDelta^*(X_1)|$ then we are done.
  Otherwise, set $v_1=r_{X_1}(v)$. If the center $\xi_1$ of $v$ on $X_1$
  is a satellite point, then it follows from Lemma~\ref{L123}
  that $b(v_1)>b(v_0)$. 
  If $b(w)=b(v)$, this is impossible and if $b(w)<b(v)$,
  we are done by the inductive hypothesis.

  The remaining case is when $\xi_1$ is a free point on $E_1$, the 
  preimage of $\xi_0$ under the blowup map. 
  We continue this procedure: assuming that
  the center of $v$ on $X_j$ is a free point $\xi_j$, we let
  $X_{j+1}$ be the blowup of $X_j$ in $\xi_j$.
  By~\eqref{e152} we have $A_{E_n}=A_{E_0}+n$.
  But $A_{E_n}\le0$ so the procedure must stop 
  after finitely many steps. When it stops, we either have 
  $v\in|\hDelta^*(X_n)|$ or the center of $v$ on $X_n$ is a
  satellite point. In both cases the proof is complete in
  view of what precedes.
\end{proof}
\begin{Cor}
  If $X$ is a tight compactification of $\A^2$ and $f:\A^2\to\A^2$ is
  a polynomial automorphism, then there exists a tight compactification
  $X'$ such that the birational map $X'\dashrightarrow X$ induced by 
  $f$ is \emph{regular}.
\end{Cor}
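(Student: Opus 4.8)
The plan is to use tight compactifications as a ``pivot'': since $f$ is an automorphism of $\A^2$, the pullback $f^*$ acts on semivaluations centered at infinity, and the key point is that it preserves the tight tree $\hcV'_\infty$. First I would recall why this is so: $\hcV'_\infty\subseteq\BerkAtwo$ was defined by the conditions $A(v)\le 0\le\a(v)$, and these conditions are intrinsic in the sense that they do not depend on the choice of embedding $\A^2\hookrightarrow\P^2$ (this was observed right after the definition of $\cV'_\infty$, using that $\hcV'_\infty$ is invariant under polynomial automorphisms of $\A^2$). Concretely, $f$ induces a homeomorphism $f_\bullet$ of $\cVe{\infty}$ that sends divisorial valuations to divisorial valuations, multiplies the log discrepancy and self-intersection appropriately, and in any case carries $\cV'_\infty$ onto itself. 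In particular, if $E$ is a prime at infinity of $X$ with $v_E\in|\Delta(X)|\subseteq\cV'_\infty$, then the valuation $f_\bullet^{-1}(v_E)$ (or rather the corresponding element of $\hcV'_\infty$) is again a \emph{divisorial} valuation lying in $\hcV'_\infty$.

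Next I would reduce the statement to a statement about finitely many divisorial valuations. The birational map $g:X'\dashrightarrow X$ induced by $f$ is regular precisely when the center on $X'$ of each valuation $\ord_{E_i}$, where $E_i$ ranges over the primes of $X$ at infinity, is a prime at infinity of $X'$ rather than a closed point; equivalently, when $g^{-1}=f^{-1}:X\dashrightarrow X'$ contracts no curve at infinity to a point. So it suffices to produce a single tight compactification $X'$ of $\A^2$ such that for every prime $E_i$ of $X$ at infinity, the divisorial valuation $w_i:=\ord_{E_i}\circ f^* = $ (a positive multiple of) $f_\bullet^{-1}(v_{E_i})$ lies in $|\hDelta^*(X')|$. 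Here I use that $f^*$ maps $R$ to $R$, so $w_i$ is a well-defined divisorial valuation centered at infinity, and by the invariance of $\hcV'_\infty$ under $f$ we have $w_i\in\hcV'_\infty$.

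The main technical input is Corollary~\ref{C113}: given a tight compactification $X$ and a divisorial valuation $v\in\hcV'_\infty$, there is a tight compactification dominating $X$ whose dual fan contains $v$. Applying this successively to the finitely many valuations $w_1,\dots,w_m$ obtained above — start with $X$ itself (which is tight by hypothesis), use Corollary~\ref{C113} to absorb $w_1$, then absorb $w_2$ in a tight compactification dominating that one, and so on — produces a tight compactification $X'$ dominating $X$ with $w_i\in|\hDelta^*(X')|$ for all $i$. Since $w_i=\ord_{E_i}\circ f^*$, this says exactly that the center of $\ord_{E_i}$ under $g=f^{-1}\circ(\text{id on }\A^2)$... more precisely, that $f$ maps the prime of $X'$ corresponding to $w_i$ dominantly onto $E_i$, so the birational map $X'\dashrightarrow X$ induced by $f$ has no point of indeterminacy along $X'\setminus\A^2$; since it is clearly regular on $\A^2$, it is regular everywhere.

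The step I expect to be the main obstacle is the careful verification that $f^*$ carries $\hcV'_\infty$ into itself and that $\ord_{E_i}\circ f^*$ is divisorial with center on a suitable blowup matching the formal picture — i.e.\ translating ``$g:X'\dashrightarrow X$ regular'' into ``$w_i\in|\hDelta^*(X')|$'' cleanly. One must check that membership of $w_i$ in the dual fan of $X'$ really is equivalent to the center of $w_i$ on $X'$ being the generic point of a prime at infinity, and that this prime is mapped by $f$ onto $E_i$ (surjectivity, not just dominance, on the level of the compactifications). This is essentially bookkeeping with the valuative criterion of properness and the structure theorem for birational surface maps, together with the fact that $f$ being an automorphism of $\A^2$ forces the induced map on primes at infinity to be a well-defined correspondence; but it requires care because $f$ need not extend to a morphism between the original compactifications, which is the whole point of passing to $X'$.
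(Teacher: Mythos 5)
Your strategy is exactly the paper's: transport the divisorial valuations $\ord_{E_i}$ attached to the primes of $X$ at infinity by the inverse automorphism, observe that the resulting divisorial valuations remain in $\hcV'_\infty$ (Proposition~\ref{P117} applied to $f^{-1}$, or the coordinate-independence of $\hcV'_\infty$), absorb them one at a time into a tight compactification $X'$ dominating $X$ via Corollary~\ref{C113}, and conclude regularity because the map $X\dashrightarrow X'$ induced by $f^{-1}$ then contracts no curve at infinity. One formula is reversed, however: with the convention $f(v)=v\circ f^*$ used throughout the paper, your $w_i=\ord_{E_i}\circ f^*$ equals $f(\ord_{E_i})$, hence is a positive multiple of $f_\bullet(v_{E_i})$, not of $f_\bullet^{-1}(v_{E_i})$ as you assert. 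The valuation whose center on $X'$ must be (the generic point of) a prime at infinity is the image of the generic point of $E_i$ under the map induced by $f^{-1}$, namely $\ord_{E_i}\circ(f^{-1})^*=f^{-1}(\ord_{E_i})$; taken literally, your $w_i$ would instead establish regularity of the birational map $X'\dashrightarrow X$ induced by $f^{-1}$. Since $f^{-1}$ is also a polynomial automorphism this is harmless: replace $f^*$ by $(f^{-1})^*$ in the definition of $w_i$, and the rest of your argument --- including the computation $\ord_{E'_i}\circ f^*=\ord_{E_i}$ showing that $f$ maps the new prime $E'_i$ of $X'$ onto $E_i$ --- goes through as written.
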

\begin{proof}
  Let $E_i$, $i\in I$ be the primes of $X$ at infinity. Now $f^{-1}$ maps
  the divisorial valuations $v_i:=\ord_{E_i}$ to divisorial valuations 
  $v'_i=\ord_{E'_i}$. We have $v'_i\in\hcV'_\infty$, so after a repeated application
  of Corollary~\ref{C113} we find an admissible compactification $X'$
  of $\A^2$ such that $v'_i\in|\hDelta^*(X')|$ for all $i\in I$. 
  But then it is easy to check that $f:X'\to X$ is regular.
\end{proof}
\begin{Cor}
  Any two tight compactifications can be dominated by a third,
  so the set of tight compactifications is a directed set.
  Furthermore, the retraction maps  $r_X:\hcVe{\infty}\to|\hDelta^*(X)|$
  give rise to homeomorphisms
  \begin{equation*}
    \hcV'_\infty\simto\varprojlim_X|\hDelta^*(X)|
    \quad\text{and}\quad
    \cV'_\infty\simto\varprojlim_X|\Delta(X)|,
  \end{equation*}
  where $X$ ranges over all tight compactifications of $\A^2$.  
\end{Cor}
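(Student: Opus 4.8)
The plan is to deduce everything from Corollary~\ref{C113} together with the general inverse-limit formalism for metric trees developed in~\S\ref{S137}. First I would establish that the set of tight compactifications is directed. Given two tight compactifications $X_1$ and $X_2$, pick any admissible compactification $X_3$ dominating both (the set of admissible compactifications is directed). The divisorial valuations $v_i=\ord_{E_i}$ associated to the primes $E_i$ of $X_3$ at infinity need not lie in $\hcV'_\infty$, so $X_3$ itself may fail to be tight; however, the primes of $X_1$ and $X_2$ at infinity do give divisorial valuations in $\hcV'_\infty$. Applying Corollary~\ref{C113} repeatedly, starting from $X_1$ (say), to each of these finitely many divisorial valuations in turn, I would produce a tight compactification $X'$ dominating $X_1$ such that all the divisorial valuations coming from primes of $X_2$ at infinity lie in $|\hDelta^*(X')|$. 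As in the proof of the preceding corollary, this forces the birational map $X'\dashrightarrow X_2$ to be regular, so $X'$ dominates $X_2$ as well. Hence $X'$ is a tight compactification dominating both $X_1$ and $X_2$, and directedness follows.

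Next I would verify that the tight compactifications form a \emph{cofinal} family among tight compactifications in the following sense needed for the inverse limit: the dual graphs $|\Delta(X)|$, as $X$ ranges over tight compactifications, exhaust $\cV'_\infty$. Concretely, every divisorial valuation $v\in\hcV'_\infty$ lies in $|\hDelta^*(X)|$ for some tight $X$ — this is exactly Corollary~\ref{C113} applied with, say, $X=\P^2$ as the initial tight compactification (one checks $\P^2$ is tight since $|\Delta(\P^2)|=\{\ord_\infty\}$ and $A(\ord_\infty)=-2\le0\le1=\a(\ord_\infty)$). Since divisorial valuations are dense in $\cV'_\infty$ and each $|\Delta(X)|$ is a finite subtree, the net $(|\Delta(X)|)_X$ over tight $X$ is a rich net of finite metric subtrees of $\cV'_\infty$ in the sense of~\S\ref{S117}, and the retractions $r_X$ are compatible: $r_{X}\circ r_{X'}=r_{X}$ when $X'$ dominates $X$. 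Moreover each inclusion $|\Delta(X)|\hookrightarrow|\Delta(X')|$ is an isometric embedding of finite metric trees, by the discussion of the metric in~\S\ref{S218} (the same computation as in the local case, \S\ref{S135}).

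Granting these two points, the homeomorphism $\cV'_\infty\simto\varprojlim_X|\Delta(X)|$ (over tight $X$) follows from the abstract statement in~\S\ref{S137}: an inverse limit of finite metric trees along isometric embeddings with the compatible retractions is again a metric tree, and the natural map from $\cV'_\infty$ to this inverse limit is a homeomorphism precisely because the net is rich (injectivity) and because of the limit property — surjectivity here uses that $\cV'_\infty$ is closed in $\cVe{\infty}$, which is itself $\varprojlim_X|\Delta(X)|$ over \emph{all} admissible $X$, so a coherent system over tight $X$ can be completed to a coherent system over all admissible $X$ lying in $\cV'_\infty$ at each finite stage. The homogeneous version $\hcV'_\infty\simto\varprojlim_X|\hDelta^*(X)|$ is obtained by taking cones, using $\hcV'_\infty=\R_+^*\cV'_\infty$ and the fact that $r_X$ is homogeneous. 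I expect the main obstacle to be the directedness step: one must be careful that the repeated application of Corollary~\ref{C113} terminates and that the resulting $X'$ genuinely dominates \emph{both} given compactifications, which requires the regularity argument from the previous corollary (tracking that $f=\id$ induces a regular map once all relevant divisorial valuations have been realized as primes at infinity). The inverse-limit bookkeeping is then routine given~\S\ref{S137}.
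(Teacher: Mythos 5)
Your proof is correct and is the argument the paper intends but leaves unwritten: directedness is obtained by rerunning the proof of the immediately preceding corollary with $f=\mathrm{id}$, and the inverse-limit statement then follows from the rich-net formalism of~\S\ref{S117} once Corollary~\ref{C113} guarantees that every divisorial valuation in $\hcV'_\infty$ is captured by some tight compactification (with $\P^2$ as the tight starting point, as you note). One step worth making explicit: knowing only that each $\ord_{E_i}$, for $E_i$ a prime of $X_2$ at infinity, lies in $|\hDelta^*(X')|$ does not by itself force $X'\dashrightarrow X_2$ to be regular, since such a valuation could sit in the interior of a two-dimensional cone, in which case its center on $X'$ is a satellite point and $E_i$ is contracted by the inverse map; one must first perform the additional satellite blowups supplied by the analogue of Lemma~\ref{L123}~(iii), which change neither the dual fan as a subset of $\hcVe{\infty}$ nor tightness (Lemma~\ref{L127}), so that each $\ord_{E_i}$ lies on a one-dimensional cone and hence has one-dimensional center on $X'$. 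The paper's proof of the preceding corollary elides the same point, so this is a refinement rather than a gap.
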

%
%
%
%
\subsection{Other ground fields}\label{S246}
Throughout the section we assumed that the ground field was
algebraically closed and of characteristic zero. Let us briefly
discuss what happens when one or more of these assumptions 
are not satisfied.

First suppose $K$ is algebraically closed but of characteristic $p>0$.
Everything in~\S\ref{S108} goes through, except for the proof of
the monomialization theorem, Theorem~\ref{T105}, which relies
on the Line Embedding Theorem. On the other hand, it is quite possible
that the proof of Theorem~\ref{T105} can be modified to work also
in characteristic $p>0$.

Now suppose $K$ is not algebraically closed. There are two
possibilities for studying the set of semivaluations in $\BerkAtwo$
centered at infinity.
One way is to pass to the algebraic closure $K^a$. 
Let $G=\Gal(K^a/K)$ be the Galois group. 
Using general theory we have an identification 
$\BerkAtwo(K)\simeq\BerkAtwo(K^a)/G$ and $G$ preserves the
open subset $\hcVe{\infty}(K^a)$ of semivaluations centered at infinity.
Any embedding $\A^2(K)\hookrightarrow\P^2(K)$ induces an embedding
$\A^2(K^a)\hookrightarrow\P^2(K^a)$ and allows us to define subsets
$\cVe{\infty}(K)\subseteq\hcVe{\infty}(K)$ and 
$\cVe{\infty}(K^a)\subseteq\hcVe{\infty}(K^a)$.
Each $g\in G$ maps $\cVe{\infty}(K^a)$ into itself and preserves
the partial ordering parametrizations
as well as the parametrizations $\a$ and $A$ and the 
multiplicity $m$. 
Therefore, the quotient $\cVe{\infty}(K)\simeq\cVe{\infty}(K^a)/G$
also is naturally a tree that we equip with a metric that takes into account 
the degree of the map $\cVe{\infty}(K^a)\to\cVe{\infty}(K)$. 

Alternatively, we can obtain the metric tree structure directly
from the dual graphs of the admissible compactifications by
keeping track of the residue fields of the closed points being blown up.
%
%
%
%
\subsection{Notes and further references}
The valuative tree at infinity was introduced in~\cite{eigenval}
for the purposes of studying the dynamics at infinity of 
polynomial mappings of $\C^2$ (see the next section).
It was not explicitly identified as a subset of the Berkovich
affine plane over a trivially valued field.

In~\cite{eigenval}, the tree structure of $\cVe{\infty}$ was deduced
by looking at the center on $\P^2$ of a semivaluation 
in $\cVe{\infty}$. Given a closed point $\xi\in\P^2$, the 
semivaluations having center at $\xi$ form a tree 
(essentially the valuative tree at $\xi$ but normalized by
$v(L_\infty)=1$). 
By gluing these 
trees together along $\ord_\infty$ we see that $\cVe{\infty}$
itself is a tree. The geometric approach here, using admissible
compactifications, seems more canonical and amenable to
generalization to higher dimensions.

Just as with the valuative tree at a point, I have allowed myself
to change the notation from~\cite{eigenval}. Specifically,
the valuative tree at infinity is (regrettably) denoted $\cV_0$
and the tight tree at infinity is denoted $\cV_1$. The notation
$\cVe{\infty}$ and $\cV'_\infty$ seems more natural.
Further, the valuation $\ord_\infty$ is denoted $-\deg$
in~\cite{eigenval}.

The tight tree at infinity $\cV'_\infty$ 
was introduced in~\cite{eigenval}
and tight compactifications in~\cite{dyncomp}.
They are both very interesting notions. The tight tree
was studied in~\cite{eigenval} 
using key polynomials, more or less in the 
spirit of Abhyankar and Moh~\cite{AM}. While key polynomials
are interesting, they are notationally cumbersome as they
contain a lot of combinatorial information and they depend on
a choice of coordinates, something that I have striven to avoid here.

As indicated in the proof of Theorem~\ref{T105}, it is possible to study the tight 
tree at infinity using the basic theory for compact surfaces.
In particular, while the proof of the structure result for $\cV'_\infty$
in~\cite{eigenval} used the Line Embedding Theorem in a 
crucial way (just as in Theorem~\ref{T105}) one can use the 
framework of tight compactifications together with surface theory to
give a proof of the Line Embedding Theorem.
(It should be mentioned, however, that by now there are quite a few
proofs of the line embedding theorem.)

One can also prove Jung's theorem, on the structure $\Aut(\C^2)$
using the tight tree at infinity. It would be interesting to see
if there is a higher-dimensional version of the tight tree at
infinity, and if this space could be used to shine some 
light on the wild automorphisms of $\C^3$, the existence of 
which was proved by Shestakov and Umirbaev in~\cite{UmSh}.

The log discrepancy used here is a slight variation of the 
standard notion in algebraic geometry (see~\cite{graded})
but has the advantage of not depending on the 
choice of compactification. 
If we fix an embedding $\A^2\hookrightarrow\P^2$ and 
$A_{\P^2}$ denotes the usual log discrepancy on $\P^2$, 
then we have $A(v)=A_{\P^2}(v)-3v(|\fL|)$. 
%
%
%
%
%
%
\newpage
\section{Plane polynomial dynamics at infinity}\label{S104}
We now come to the third type of dynamics on Berkovich spaces:
the dynamics at infinity of polynomial mappings of $\A^2$.
The study will be modeled on the dynamics near a (closed) fixed 
point as described in~\S\ref{S107}. We will refer to the latter
situation as the local case.
%
%
%
%
\subsection{Setup}
Let $K$ is an algebraically closed field 
of characteristic zero, equip\-ped with the trivial valuation.
(See~\S\ref{S272} for the case of other ground fields.)
Further, $R$ and $F$ are the coordinate ring and
function field of the affine plane $\A^2$ over $K$.
Recall that the Berkovich affine plane $\BerkAtwo$ 
is the set of semivaluations on $R$ that restrict to the trivial valuation
on $K$.
%
%
%
%
\subsection{Definitions and results}
We keep the notation from~\S\ref{S108} and
consider a polynomial mapping 
$f:\A^2\to\A^2$, which we assume to be \emph{dominant} 
to avoid degenerate cases.
Given an embedding $\A^2\hookrightarrow\P^2$,
the degree $\deg f$ is defined as the degree of 
the curve $\deg f^*\ell$ for a general line 
$\ell\in|\fL|$.

The degree growth sequence 
$(\deg f^n)_{n\ge0}$ is submultiplicative, 
\begin{equation*}
  \deg f^{n+m}\le\deg f^n\cdot\deg f^m,
\end{equation*}  
and so the limit 
\begin{equation*}
  d_\infty=\lim_{n\to\infty}(\deg f^n)^{1/n}
\end{equation*}
is well defined.
Since $f$ is assumed dominant, $\deg f^n\ge 1$ for all $n$, 
hence $d_\infty\ge 1$. 
\begin{Exer}
  Verify these statements!
\end{Exer}
\begin{Example}
  If $f(z_1,z_2)=(z_2,z_1z_2)$, then $\deg f^n$ is the $(n+1)$th Fibonacci number
  and $d_\infty=\frac12(\sqrt{5}+1)$ is the golden mean.
\end{Example}
\begin{Example}
  For $f(z_1,z_2)=(z_1^2,z_1z_2^2)$, $\deg f^n=(n+2)2^{n-1}$ and $d_\infty=2$.
\end{Example}
\begin{Exer}
  Compute $d_\infty$ for a skew product 
  $f(z_1,z_2)=(\phi(z_1),\psi(z_1,z_2))$.
\end{Exer}
Here is the result that we are aiming for.
\begin{ThmC}
  The number $d_\infty=d_\infty(f)$ is a quadratic integer: there exist
  $a,b\in\Z$ such that $d_\infty^2=ad_\infty+b$. 
  Moreover, we are in exactly one of the following two cases:
  \begin{itemize}
  \item[(a)]
    there exists $C>0$ such that 
    $d_\infty^n\le\deg f^n\le Cd_\infty^n$ for all $n$;
  \item[(b)]
    $\deg f^n\sim nd_\infty^n$ as $n\to\infty$.
  \end{itemize}
  Moreover, case~(b) occurs iff $f$, after conjugation 
  by a suitable polynomial automorphism of $\A^2$,
  is a skew product of the form
  \begin{equation*}
    f(z_1,z_2)=(\phi(z_1),\psi(z_1)z_2^{d_\infty}+O_{z_1}(z_2^{d_\infty-1})),
  \end{equation*}
  where $\deg\phi=d_\infty$ and $\deg\psi>0$.
\end{ThmC}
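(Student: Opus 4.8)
The strategy mirrors the proof of Theorem~B in~\S\ref{S148}, replacing the valuative tree at a point by the tight tree at infinity $\cV'_\infty$ of~\S\ref{S276}. The first step is to lift $f$ to a selfmap of Berkovich space. Since $f$ is dominant, it induces a continuous map $f:\BerkAtwo\to\BerkAtwo$, and for a semivaluation $v$ centered at infinity the value $f(v)(\phi)=v(f^*\phi)$ is again negative on some polynomial (because $\deg(f^*\ell)=\deg f\ge1$ forces $f^*\fL$ to contain a nonconstant polynomial), so $f(\hcVe{\infty})\subseteq\hcVe{\infty}$ provided no curve is contracted to a point at infinity; the general dominant case requires a small modification exactly as in~\S\ref{S128}, removing the finite set of curve semivaluations sent to $\triv_{\A^2}$. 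One then defines the multiplier $d(f,v):=-v(f^*|\fL|)=-f(v)(|\fL|)>0$ and the normalized action $f_\bullet v:=f(v)/d(f,v)\in\cVe{\infty}$, giving a cocycle $d(f^n,v)=\prod_{i=0}^{n-1}d(f,f^i_\bullet v)$, with $d(f)=\deg f=d(f,\ord_\infty)$ and $\deg f^n=d(f^n,\ord_\infty)$ since $\ord_\infty(f^{n*}|\fL|)=-\deg f^n$. Submultiplicativity of $\deg f^n$ gives the right-hand bound $\deg f^n\le d_\infty^n$ in case~(a) automatically (after checking $f_\bullet^i\ord_\infty\le\ord_\infty$ fails in general — in fact one must instead bound $d(f,v)$ uniformly, which holds because $f_\bullet$ is continuous and $\cVe{\infty}$ compact, but the precise constant requires the eigenvaluation).

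The core step is to produce an \emph{eigenvaluation in the tight tree}: a semivaluation $v_\star\in\cV'_\infty$ with $f_\bullet v_\star=v_\star$. To do this I would show first that $f$ preserves the tight tree, i.e.\ $f_\bullet(\cV'_\infty)\subseteq\cV'_\infty$; this is where the invariance of $\hcV'_\infty$ under polynomial automorphisms (noted in~\S\ref{S276}) and, more substantially, a monotonicity estimate for $A$ and $\a$ under $f_\bullet$ enter. Granting this, pick a tight compactification $X$; the retraction $r_X\circ f_\bullet:|\Delta(X)|\to|\Delta(X)|$ is a continuous selfmap of a finite metric tree, hence has a fixed point $v_0$, and exactly as in~\S\ref{S168} we are in one of three cases: (1) $v_0$ divisorial with $f_\bullet v_0=v_0$; (2) $v_0$ divisorial with $f_\bullet v_0\ne v_0$ but $r_Xf_\bullet v_0=v_0$; (3) $v_0$ irrational with $f_\bullet v_0=v_0$. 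In cases~(1) and~(3), $v_0$ is the eigenvaluation. In case~(2), by choosing $X$ to be a tight compactification which also resolves the linear system $f^*|\fL|$ (possible by Corollary~\ref{C113} together with a resolution of the base locus of $f^*|\fL|$, as in Proposition~\ref{P118}), one gets $d(f,\cdot)$ constant on the open set $U(\xi)$ of semivaluations centered at the free point $\xi=c_X(f_\bullet v_0)$, and the monotonicity $f(v)\ge f(v_0)$ on $\overline{U(\xi)}$ forces $f_\bullet(\overline{U(\xi)})\subseteq U(\xi)$, so the increasing sequence $f_\bullet^n v_0$ converges to an eigenvaluation $v_\star$; it lies in $\cV'_\infty$ because that tree is closed and $f_\bullet$-invariant.

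Once $v_\star$ is in hand, writing $d=d(f,v_\star)$ we get $f(v_\star)=d\,v_\star$, hence $d(f^n,v_\star)=d^n$ for all $n$. Since $v_\star\in\cV'_\infty$ satisfies $\a(v_\star)\ge0$, Proposition~\ref{P111}~(ii)-(iii) and the intrinsic bound $v_\star(\phi)\le\a(v_\star)\ord_\infty(\phi)$ (the analogue of~\eqref{e151}, valid on $\cV'_\infty$ as remarked after Proposition~\ref{P114}) sandwich $\deg f^n=-\ord_\infty(f^{n*}|\fL|)$ between $c_1 d^n$ and $c_2 d^n$ when $\a(v_\star)>0$; this is case~(a), and $d_\infty=d$. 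For the quadratic-integer claim one runs the value-group argument of~\S\ref{S148} verbatim: $d\,\Gamma_{v_\star}\subseteq\Gamma_{v_\star}$, and $\Gamma_{v_\star}\cong\Z$ or $\Z^2$ since $v_\star$ is quasimonomial, so $d$ is a root of a monic integer polynomial of degree $\le2$. The degenerate case~(b) occurs precisely when the eigenvaluation forced by the construction is the \emph{divisorial} valuation with $\a(v_\star)=0$, i.e.\ $(\vE\cdot\vE)=0$ and $A(\vE)\le0$; by Theorem~\ref{T105} there are then coordinates $(z_1,z_2)$ in which $v_\star=\ord_E$ is monomial with weights $(-1,0)$, i.e.\ $v_\star=-\deg_{z_1}$, and $f_\bullet$ fixing this valuation translates exactly into $f$ being a skew product $f=(\phi(z_1),\psi(z_1)z_2^{d}+\cdots)$ with $\deg\phi=d=d_\infty$; a direct computation of $\deg f^n$ for such a skew product (as in the Example $f=(z_1^2,z_1z_2^2)$) gives $\deg f^n\sim n\,d_\infty^n$, and conversely such maps are easily seen to fall outside case~(a).

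\textbf{Main obstacle.} The delicate point is the invariance $f_\bullet(\cV'_\infty)\subseteq\cV'_\infty$, equivalently the monotonicity of the pair $(A,\a)$ under the normalized dynamics; unlike the local case, where $\ord_0\le f_\bullet v$ for all $v$ is essentially automatic, here both the failure of $\ord_\infty$ to be minimal on all of $\cVe{\infty}$ and the fact that subharmonic functions at infinity change sign (so~\eqref{e151} is only useful when $\a\ge0$) mean one genuinely needs the tight tree and its good compactifications. Establishing that $f_\bullet$ does not push valuations out of the region $\{A\le0\le\a\}$ — and handling the boundary case $\a=0$ via the Abhyankar–Moh–Suzuki input of Theorem~\ref{T105} — is where the real work lies; everything else is a faithful transcription of the local argument.
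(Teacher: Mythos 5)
There is a genuine gap in your treatment of case~(2), precisely at the point the paper flags as the central difficulty. You claim one can choose a \emph{tight} compactification $X$ that simultaneously resolves the linear system $f^*|\fL|$, citing Corollary~\ref{C113}. This is not possible in general: Corollary~\ref{C113} only allows a tight $X$ to realize divisorial valuations already lying in the tight tree $\hcV'_\infty$, but the divisorial valuations needed to resolve the base locus of $f^*|\fL|$ at infinity typically violate $A\le 0\le\a$ and so cannot appear on any tight compactification. The paper explicitly makes this observation before introducing the recursive procedure: ``Unfortunately, it is not always possible, even for $f$ proper, to find a tight $X$ that defines a log resolution of infinity of $f^*|\fL|$.'' Without such a resolution, Lemma~\ref{L126} does not yield constancy of $d(f,\cdot)$ on $U(\xi)$, so your claim $f_\bullet(\overline{U(\xi)})\subseteq U(\xi)$ and the subsequent convergence of $f_\bullet^n v_0$ to an eigenvaluation are unsupported. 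The paper's actual mechanism is different: Lemma~\ref{L105} and Corollary~\ref{C101} show that $d(f,\cdot)$ is automatically constant on $U(\xi)$ once the generic multiplicity satisfies $b(v_E)>\deg f$ — no log resolution needed — and the recursive blowups in Lemma~\ref{L107} drive $b(v)$ up (or drive $\a(v)$ down to $0$, landing in case~(c)) until one of the terminating cases of Lemma~\ref{L106} applies. This replaces your single appeal to a nonexistent compactification.

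A secondary issue: even granting the existence of an eigenvaluation $v_\star\in\cV'_\infty$, you need $v_\star$ to be quasimonomial to run the value-group argument ($\Gamma_{v_\star}\simeq\Z$ or $\Z^2$), but a limit of an increasing sequence of divisorial valuations in the tight tree can be infinitely singular (the affine analogue of Example~\ref{E210}). The paper's recursion avoids this by terminating at a finite stage with a fixed point inside a dual graph $|\Delta(X')|$, hence automatically quasimonomial. Your strategy would require a separate argument that the dynamically produced eigenvaluation is quasimonomial, or else a retraction trick reverting to the recursive viewpoint. The remainder of your plan — invariance of $\cV'_\infty$ via $A(f(v))=A(v)+v(Jf)$, the sandwich $\ord_\infty\le v_\star\le\a(v_\star)\ord_\infty$ when $\a(v_\star)>0$, the value-group computation for the quadratic integer, and the use of Theorem~\ref{T105} to extract the skew-product normal form when $\a(v_\star)=0$ — matches the paper's argument.
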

The behavior of the degree growth sequence does not
depend in an essential way on our choice of 
embedding $\A^2\hookrightarrow\P^2$.
To see this, fix such an embedding, 
let $g:\A^2\to\A^2$ be  a polynomial automorphism and 
set $\tf:=g^{-1}fg$. Then 
$\tf^n=g^{-1}f^ng$, $f^n=g\tf^ng^{-1}$ and so
\begin{equation*}
  \frac1{\deg g\deg g^{-1}}
  \le \frac{\deg{\tf^n}}{\deg f^n}
  \le \deg g\deg g^{-1}
\end{equation*}
for all $n\ge1$.
As a consequence, when proving Theorem~C, we may
conjugate by polynomial automorphisms of $\A^2$, if necessary.
%
%
%
%
\subsection{Induced action}
The strategy for proving Theorems~C is superficially very
similar to the local case explored in~\S\ref{S107}.
Recall that $f$ extends to a map
\begin{equation*}
  f:\BerkAtwo\to\BerkAtwo,
\end{equation*} 
given by $f(v)(\phi):= v(f^*\phi)$.

We would like to study the dynamics of $f$ at infinity. 
For any admissible compactification $X$
of $\A^2$, $f$ extends to a rational map 
$f:X\dashrightarrow\P^2$.
Using resolution of singularities we can find $X$ such that
$f:X\to\P^2$ is a morphism. There are then two cases:
either $f(E)\subseteq L_\infty$ for every prime $E$ of $X$ at infinity,
or there exists a prime $E$ such that $f(E)\cap\A^2\ne\emptyset$.
The first case happens iff $f$ is \emph{proper}.

Recall that $\hcVe{\infty}$ denotes the set of semivaluations 
in $\BerkAtwo$ having center at infinity. It easily follows that $f$
is proper iff $f(\hcVe{\infty})\subseteq\hcVe{\infty}$. Properness
is the analogue of finiteness in the local case.
%
%
\subsubsection{The proper case}
When $f$ is proper, if induces a selfmap
\begin{equation*}
  f:\hcVe{\infty}\to\hcVe{\infty}.
\end{equation*} 
Now $\hcVe{\infty}$ is the pointed cone over the valuative 
tree at infinity $\cVe{\infty}$, whose elements are normalized by 
the condition $v(|\fL|)=-1$.
As in the local case, we can break the action of 
$f$ on $\hcVe{\infty}$ into two parts: the induced dynamics
\begin{equation*}
  f_\bullet:\cVe{\infty}\to\cVe{\infty},
\end{equation*}
and a multiplier $d(f,\cdot):\cVe{\infty}\to\R_+$.
Here 
\begin{equation*}
  d(f, v)=-v(f^*|\fL|),
\end{equation*}  
Further, $f_\bullet$ is defined by
\begin{equation*}
  f_\bullet v=\frac{f(v)}{d(f, v)}.
\end{equation*}
The break-up of the action is compatible with the dynamics
in the sense that $(f^n)_\bullet=(f_\bullet)^n$ and
\begin{equation*}
 d(f^n, v)=\prod_{i=0}^{n-1}d(f, v_i),
 \quad\text{where $v_i=f^i_\bullet v$}.
\end{equation*} 
Recall that $\ord_\infty\in\cVe{\infty}$ is the valuation given by
$\ord_\infty(\phi)=-\deg(\phi)$ for any polynomial $\phi\in R$.
We then have 
\begin{equation*}
  \deg f^n=d(f^n,\ord_\infty)=\prod_{i=0}^{n-1}d(f, v_i),
  \quad\text{where $v_i=f^i_\bullet\ord_\infty$}.
\end{equation*}
Now $v_i\ge\ord_\infty$ on $R$, so it follows that 
$\deg f^n \le(\deg f)^n$ as we already knew. 
The multiplicative cocycle $d(f,\cdot)$ is the main
tool for studying the submultiplicative sequence $(\deg f^n)_{n\ge 0}$.
%
%
\subsubsection{The non-proper case}
When $f:\A^2\to\A^2$ is dominant but not necessarily proper,
there exists at least one divisorial valuation $v\in\hcVe{\infty}\subseteq\BerkAtwo$
for which $f(v)\in\BerkAtwo\setminus\hcVe{\infty}$. 
We can view $f:\hcVe{\infty}\dashrightarrow\hcVe{\infty}$
as a partially defined map. Its domain of definition is the 
open set $\hD_f\subseteq\hcVe{\infty}$  consisting of semivaluations
for which there exists an affine function $L$ with
$v(f^*L)<0$. Equivalently, if we as before define 
$d(f, v)=-v(f^*|\fL|)$, then $\hD_f=\{d(f,\cdot)>0\}$.
On $D_f:=\hD_f\cap\cVe{\infty}$ we define $f_\bullet$ as before,
namely $f_\bullet v=f(v)/d(f, v)$.

Notice that $D_{f^n}=\bigcap_{i=0}^{n-1}f_\bullet^{-i}D_f$, so the
domain of definition of $f_\bullet^n$ decreases as $n\to\infty$.
One may even wonder whether the intersection $\bigcap_nD_{f^n}$
is empty. However, a moment's reflection reveals that $\ord_\infty$ 
belongs to this intersection. More generally, it is not hard to see
that the set of valuations $v\in\cVe{\infty}$ for which 
$v(\phi)<0$ for all nonconstant polynomials $\phi$, is a subtree of
$\cVe{\infty}$ contained in $D_f$ and invariant under $f$,
for any dominant polynomial mapping $f$. 

For reasons that will become apparent later, we will in fact
study the dynamics on the even smaller subtree, namely the 
tight subtree $\cVe{\infty}'\subseteq\cVe{\infty}$
defined in~\S\ref{S276}. We shall see shortly that 
$f_\bullet\cVe{\infty}'\subseteq\cVe{\infty}'$, so we have a natural
induced dynamical system on $\cVe{\infty}'$ for 
any dominant polynomial mapping $f$.
%
%
%
%
\subsection{Invariance of the tight tree $\cVe{\infty}'$}\label{S147}
Theorem~B, the local counterpart to Theorem~C,
follows easily under the
additional assumption (not always satisfied) that there exists 
a quasimonomial valuation $v\in\cV_0$ such that 
$f_\bullet v= v$. Indeed, such a valuation satisfies
\begin{equation*}
  \ord_0\le v\le\a v,
\end{equation*}
where $\a=\a(v)<\infty$. If $f(v)=c v$,
then this gives $c=c_\infty$ and 
$\a^{-1}c_\infty\le c(f^n)\le c_\infty^n$. Moreover,
the inclusion $c_\infty\Gamma_v=\Gamma_{f(v)}\subseteq\Gamma_v$
implies that $c_\infty$ is a quadratic integer. See~\S\ref{S148}.

In the affine case, the situation is more complicated. We cannot
just take \emph{any} quasimonomial fixed point $v$ for $f_\bullet$.
For a concrete example, consider the product map
$f(z_1,z_2)=(z_1^3,z_2^2)$ and let $v$ be the monomial 
valuation with weights $v(z_1)=0$, $v(z_2)=-1$. 
Then $f(v)=2 v$, whereas $d_\infty=3$. The problem here is that while 
$v\ge\ord_\infty$, the reverse inequality 
$v\le C\ord_\infty$ does not hold for any constant $C>0$.

The way around this problem is to use the tight tree $\cVe{\infty}'$
introduced in~\S\ref{S276}. Indeed, if $v\in\cVe{\infty}'$ is
quasimonomial, then either there exists $\a=\a(v)>0$
such that $\a^{-1}\ord_\infty\le v\le\ord_\infty$ on $R$, or
$v$ is monomial in suitable coordinates on $\A^2$, see 
Theorem~\ref{T105}.
As the example above shows, the latter case still has to be treated
with some care.

We start by showing that the tight tree is invariant.
\begin{Prop}\label{P117}
  For any dominant polynomial mapping 
  $f:\A^2\to\A^2$
  we have $f(\hcV'_\infty)\subseteq\hcV'_\infty$.
  In particular, $\cV'_\infty\subseteq D_f$ and 
  $f_\bullet\cV'_\infty\subseteq\cV'_\infty$.
\end{Prop}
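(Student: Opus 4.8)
The plan is to reduce the statement to a concrete criterion expressed via the numerical invariants $\a$ and $A$, which by definition cut out $\hcV'_\infty$ inside $\hcVe{\infty}$. Recall that a semivaluation $v\in\hcVe{\infty}$ lies in $\hcV'_\infty$ precisely when $A(v)\le0\le\a(v)$, where both invariants are extended as homogeneous functions (of degrees one and two, respectively). Since $f$ acts homogeneously on semivaluations and $\hcV'_\infty=\R_+^*\cV'_\infty$, it suffices to show: if $v\in\cV'_\infty$, then $f(v)\in\hcV'_\infty$, i.e.\ $A(f(v))\le 0\le\a(f(v))$. The ``in particular'' clauses then follow immediately: $\cV'_\infty\subseteq D_f$ because $f(v)$ still has center at infinity (so $d(f,v)=-v(f^*|\fL|)>0$), and $f_\bullet v = f(v)/d(f,v)$ lies in $\cV'_\infty$ by homogeneity of the defining inequalities.

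First I would handle the divisorial case, which is the crux. Let $v=\ord_E$ be a divisorial valuation in $\hcV'_\infty$, so $A(\ord_E)\le 0\le\a(\ord_E)$. The key structural input is the monomialization theorem, Theorem~\ref{T105}: the hypothesis $(\vE\cdot\vE)=\a(\ord_E)\ge 0$ together with $A(\ord_E)\le0$ forces $A(\ord_E)=-1$ and the existence of coordinates $(z_1,z_2)$ in which $\ord_E$ is monomial with $\ord_E(z_1)=-1$, $\ord_E(z_2)=0$. In these coordinates one computes $f(\ord_E)$ directly: writing $f^*z_i=P_i(z_1,z_2)$, the value $f(\ord_E)(z_i)=\ord_E(P_i)$ is read off from the Newton polygon of $P_i$ with respect to the weight $(-1,0)$, hence is determined by the $z_1$-degree of $P_i$. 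A short argument shows $f(\ord_E)$ is again a monomial valuation (possibly after noting it is a weighted order), and one verifies $\a(f(\ord_E))\ge 0$ and $A(f(\ord_E))\le 0$ using the behavior of $\a$ and $A$ under pushforward together with the fact that $A(f(v))$ can be expressed via the log discrepancy and the Jacobian of $f$; concretely, a monomial valuation $w$ with $w(z_1)=-a\le 0$, $w(z_2)=b$ of the appropriate normalization automatically satisfies $A(w)\le 0\le\a(w)$, and one checks the image falls into this class. Alternatively, and perhaps more cleanly, I would use Theorem~\ref{T105} to realize $\ord_E$ geometrically as $\ord_F$ for a fibration-type prime $F$ on a compactification $Y\simeq\P^1\times\P^1$ adapted to the coordinates, and track how $f$ moves this data.

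Next I would pass from divisorial valuations to all of $\hcV'_\infty$ by a density and continuity argument. Divisorial valuations are dense in $\cV'_\infty$ (they are dense in $\cVe{\infty}$, and $\cV'_\infty$ is a closed subtree cut out by the semicontinuity of $\a$ and $A$), and the map $f:\BerkAtwo\to\BerkAtwo$ is continuous. Moreover $\a$ is lower semicontinuous and $A$ is upper semicontinuous on $\hcVe{\infty}$. So if $v\in\cV'_\infty$ is arbitrary, pick divisorial $v_j\to v$ in $\cV'_\infty$; then $f(v_j)\to f(v)$, and passing to the limit in $A(f(v_j))\le 0\le\a(f(v_j))$ using semicontinuity gives $A(f(v))\le 0\le\a(f(v))$, hence $f(v)\in\hcV'_\infty$. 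One caveat: one must ensure $f(v)\ne\triv_{\A^2}$, i.e.\ that $v$ is not a curve semivaluation contracted by $f$ — but $\cV'_\infty$ contains no curve semivaluations (their $\a$ is $-\infty$), so this is automatic, and in fact $v(\phi)<0$ for some polynomial persists under the limit because the relevant semivaluations in $\cV'_\infty$ dominate $\ord_\infty$ up to the monomial exceptions.

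The main obstacle I expect is the monomial case in the divisorial step: when $\ord_E$ is monomial with a zero weight, the naive inequality $v\le C\ord_\infty$ fails (as the example $f(z_1,z_2)=(z_1^3,z_2^2)$ with $v(z_1)=0$, $v(z_2)=-1$ shows), so one genuinely needs the coordinate normal form and an honest computation of the Newton-polygon behavior of $f^*z_1$ and $f^*z_2$, rather than a soft comparison estimate. Getting the log-discrepancy bookkeeping right here — verifying $A(f(\ord_E))\le 0$, which amounts to controlling the order of vanishing of $f^*(dz_1\wedge dz_2)$ along the image prime — is the delicate point, and is precisely where Theorem~\ref{T105} (and implicitly the characteristic-zero Line Embedding Theorem behind it) does the real work.
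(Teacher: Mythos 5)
Your reduction to the divisorial case and the closing density/semicontinuity argument are reasonable and match the paper's ``it suffices to prove\dots'' opening move. But the core of your divisorial-case argument rests on a misreading of Theorem~\ref{T105}: that theorem requires $\a(\ord_E) = (\vE\cdot\vE) = 0$, not merely $\a(\ord_E)\ge 0$. Inside the tight tree the condition is $A(v)\le 0\le\a(v)$, and $\a(v)$ is typically strictly positive — for instance $\a(\ord_\infty)=1$ and $A(\ord_\infty)=-2$, so $\ord_\infty\in\cV'_\infty$ but monomialization does not apply to it. Theorem~\ref{T105} only describes the ends of $\cV'_\infty$ where $\a$ vanishes; it says nothing about the generic divisorial valuation in the tight tree. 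So your proposed Newton-polygon computation in adapted coordinates cannot even get started for most of the valuations you need to handle, and the argument collapses at the first step.

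The paper's proof handles all divisorial $v\in\hcV'_\infty$ uniformly without any monomial normal form. The two defining inequalities are propagated by two separate mechanisms. For $A$, one uses the Jacobian formula $A(f(v)) = A(v) + v(Jf)$; the hypothesis $\a(v)\ge 0$ combined with the subharmonicity estimate~\eqref{e151} applied to $\log|Jf|$ gives $v(Jf)\le 0$, and together with $A(v)\le 0$ this yields $A(f(v))\le 0$ — and, as a welcome byproduct, the resulting negativity of $A$ forces the image prime $E'$ to lie at infinity, so one also gets $f(v)\in\hcVe{\infty}$ for free, which is the point you try to secure by a separate limiting argument. For $\a$, one chooses compactifications $X\to X'$ making $f$ regular and observes $f_*\vE = r\vE'$; since $(\vE\cdot\vE)\ge 0$ and $\vE$ is nef at infinity, $\vE$ is effective and hence nef, so its pushforward $\vE'$ is effective and nef, giving $\a(f(v)) = r^2(\vE'\cdot\vE')\ge 0$. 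Neither step needs the Line Embedding Theorem or any choice of adapted coordinates; indeed Theorem~\ref{T105} is not used in the proof of Proposition~\ref{P117} at all (it is used elsewhere, in the proof of Theorem~C, to handle the boundary case $\a(v)=0$).

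A secondary issue with your limiting step: even granting the divisorial case, you need $f(v)\in\hcVe{\infty}$ before the semicontinuity of $\a$ and $A$ (which are only defined there) can be invoked, and $\hcVe{\infty}$ is \emph{open}, so a limit of points in it need not stay inside. Your remark that $\cV'_\infty$ contains no curve semivaluations does not by itself close this; you would need to argue directly that $v(f^*\phi)<0$ for some $\phi$, which is essentially the statement $\cV'_\infty\subseteq D_f$ that you are trying to prove. In the paper's argument this worry never arises because the Jacobian computation on divisorial valuations already shows the image prime is at infinity.
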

\begin{proof}[Sketch of proof]
  It suffices to prove that if $v\in\hcV'_\infty$
  is divisorial, then $f(v)\in\hcV'_\infty$.
  After rescaling, we may assume $v=\ord_E$.
  Arguing  using numerical invariants as in~\S\ref{S118},
  we show that $f(v)$ is divisorial, of the form 
  $f(v)=r\ord_{E'}$ for some prime divisor $E'$ on $\A^2$
  (a priori not necessarily at infinity).

  We claim that the formula
  \begin{equation}\label{e156} 
    A(f(v))=A(v)+v(Jf)
  \end{equation}
  holds, where $Jf$ denotes the Jacobian determinant of $f$. 
  Note that the assumption $\a(v)\ge 0$ implies $v(Jf)\le 0$
  by~\eqref{e151}. Together with the assumption $A(v)\le0$,
  we thus see that $A(f(v))\le 0$. In particular, the 2-form $\omega$
  on $\A^2$ has a pole along $E'$, which implies that $E'$ must 
  be a prime at infinity. 

  Hence $f(v)\in\hcVe{\infty}$ and $A(f(v))\le 0$. It remains to
  prove that $\a(f(v))\ge0$. Let $X'$ be an admissible
  compactification of $\A^2$ in which $E'$ is a prime at infinity and 
  pick another compactification $X$ of $\A^2$ such that the induced
  map $f:X\to X'$ is regular. 
  The divisors $\vE\in\Div_\infty(X)$ and $\vE'\in\Div_\infty(X')$
  are both nef at infinity and satisfies $f_*\vE=r\vE'$.
  Since $(\vE\cdot\vE)=\a(v)\ge0$, $\vE$ is effective (and hence
  nef). As a consequence, $\vE'=r^{-1}f_*\vE$ is effective and
  hence nef. In particular, $\a(f(v))=r^2(\vE'\cdot\vE')\ge0$,
  which completes the proof.

  Finally we prove~\eqref{e156}. Write $A_E=A(\ord_E)$ and 
  $A_{E'}=A(\ord_{E'})$. Recall that $\omega$ is a 
  nonvanishing 2-form on $\A^2$. Near $E'$ it has a zero of order
  $A_{E'}-1$. From the chain rule, and the fact that 
  $f(\ord_E)=r\ord_{E'}$, it follows that $f^*\omega$ has a 
  zero of order $r-1+r(A_{E'}-1)=rA_{E'}-1$ along $E$.
  On the other hand we have 
  $f^*\omega=Jf\cdot\omega$ in $\A^2$
  and the right hand side vanishes to order 
  $\ord_E(Jf)+A_E-1$ along $E$. This concludes the proof.
\end{proof}
%
%
%
%
\subsection{Some lemmas}\label{S149}
Before embarking on the proof of Theorem~C, let us 
record some useful auxiliary results.
\begin{Lemma}\label{L105}
  Let $\phi\in R$ be a polynomial, $X$ an admissible compactification
  of $\A^2$ and $E$ a prime of $X$ at infinity. Let $C_X$ be the closure
  in $X$ of the curve $\{\phi=0\}$ in $\A^2$ and assume that $C_X$
  intersects $E$. Then $\deg p\ge b_E$, where $b_E:=-\ord_E(|\fL|)$.
\end{Lemma}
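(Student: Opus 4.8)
The plan is to relate the degree of $\phi$ to intersection numbers on the compactification $X$ and then to use the positivity structure on $\Div_\infty(X)$ developed in~\S\ref{S213}. First I would recall that, writing $\fL\subseteq R$ for the space of affine functions, the divisor $\vL_\infty\in\Div_\infty(X)$ is the class in $\Pic(X)\simeq\Div_\infty(X)$ of the closure of a general line $\{\ell=0\}$, and that for the closure $C_X$ of an affine curve $\{\phi=0\}$ we have $\deg\phi=(C_X\cdot\vL_\infty)$, since $C_X$ is linearly equivalent to its divisor of poles at infinity and the degree counts intersection with a general line. The point is that $b_E=-\ord_E(|\fL|)=\ord_E(\vL_\infty)$ is exactly the coefficient of $E$ in the effective divisor at infinity representing $\vL_\infty$, i.e. in $\pi^*L_\infty=\sum_i b_iE_i$ where $\pi:X\to\P^2$ is the structure morphism.

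Next I would expand $(C_X\cdot\vL_\infty)$ using $\vL_\infty=\sum_{i\in I}b_iE_i$, obtaining
\begin{equation*}
  \deg\phi=(C_X\cdot\vL_\infty)=\sum_{i\in I}b_i\,(C_X\cdot E_i).
\end{equation*}
All the terms $(C_X\cdot E_i)$ are nonnegative: $C_X$ is the closure of an affine curve, hence not contained in any prime at infinity, so it meets each $E_i$ properly and $(C_X\cdot E_i)\ge0$. By hypothesis $C_X$ meets $E$, so if $E=E_{i_0}$ then $(C_X\cdot E_{i_0})\ge 1$. Therefore
\begin{equation*}
  \deg\phi=\sum_{i\in I}b_i(C_X\cdot E_i)\ge b_{i_0}(C_X\cdot E_{i_0})\ge b_{i_0}=b_E,
\end{equation*}
which is the desired inequality.

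The only real subtlety — and the step I would take most care with — is the identification $\deg\phi=(C_X\cdot\vL_\infty)$ and, correlatively, the identification of $b_E$ with the coefficient $\ord_E(\pi^*L_\infty)$ rather than with $-\ord_E$ of some other representative of $\vL_\infty$. This is fine because $\vL_\infty$ is represented by the \emph{effective} divisor $\pi^*L_\infty$ at infinity (it is the divisor of poles of a general affine function $\ell$, viewed as a rational function on $X$), so $\ord_E(\vL_\infty)=\ord_E(\pi^*L_\infty)=b_E\ge1$, and this is visibly consistent with the definition $b_E=-\ord_E(|\fL|)$ in~\eqref{e154}. One should also note the degenerate possibility that $\{\phi=0\}$ is empty in $\A^2$, i.e. $\phi$ is a nonzero constant; but then $C_X$ is empty and does not meet $E$, so the hypothesis excludes this and we may assume $\phi$ nonconstant, $\deg\phi\ge1$. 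With these points clarified the argument is just the displayed chain of inequalities, so there is no genuinely hard step beyond bookkeeping with the intersection form on $\Div_\infty(X)$.
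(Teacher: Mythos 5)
Your proof is correct and is essentially the paper's argument: both reduce to the chain $b_E\le b_E(C_X\cdot E)\le(C_X\cdot\pi^*L_\infty)=\deg\phi$, using that $\pi^*L_\infty=\sum_ib_iE_i=\vL_\infty$ and that $C_X$ meets every prime at infinity nonnegatively. The only cosmetic difference is that the paper justifies $(C_X\cdot\pi^*L_\infty)=\deg\phi$ via the projection formula and B\'ezout on $\P^2$, whereas you invoke the linear equivalence of $C_X$ with its divisor of poles; both are standard and equivalent.
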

\begin{proof}
  This follows from elementary intersection theory. 
  Let $\pi:X\to\P^2$ be the birational morphism induced by the 
  identity on $\A^2$ and let $C_{\P^2}$ be the closure in $\P^2$
  of the curve $\{\phi=0\}\subseteq\A^2$. Then $\ord_E(\pi^*L_\infty)=b_E$.
  Assuming that $C_X$ intersects $E$, we get
  \begin{equation*}
    b_E
    \le b_E(C_X\cdot E)
    \le(C_X\cdot\pi^*L_\infty)
    =(C_{\P^2}\cdot L_\infty)
    =\deg p,
  \end{equation*}
  where the first equality follows from the projection
  formula and the second from B\'ezout's Theorem.
\end{proof}
Applying Lemma~\ref{L105} and Lemma~\ref{L126} to 
$\phi=f^*L$, for $L$ a general affine function, we obtain
\begin{Cor}\label{C101}
  Let $f:\A^2\to\A^2$ be a dominant polynomial mapping, 
  $X$ an admissible compactification
  of $\A^2$ and $E$ a prime of $X$ at infinity.
  Assume that $\deg(f)<b_E$. Then 
  $d(f, v)=d(f, v_E)$ for all $v\in\cVe{\infty}$
  such that $r_X(v)=v_E$.
\end{Cor}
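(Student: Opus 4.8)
The plan is to deduce the corollary from a single application of Lemma~\ref{L126}, using the degree bound $\deg f<b_E$ to keep the relevant curves away from $E$. First I would recall that for $v\in\cVe{\infty}$ one has $d(f,v)=-v(f^*|\fL|)=-\min_{L\in\fL}v(f^*L)$, the minimum running over nonzero affine functions $L$ on $\A^2$. Hence it suffices to prove that $v(f^*L)=v_E(f^*L)$ for \emph{every} such $L$: this equality of values for all $L$ forces the two minima to coincide, and therefore $v(f^*|\fL|)=v_E(f^*|\fL|)$, i.e.\ $d(f,v)=d(f,v_E)$.

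So I would fix $L\in\fL$ and set $\phi:=f^*L\in R$, noting $\deg\phi\le\deg f<b_E$ (if $\phi$ is constant there is nothing to prove, since then $v(\phi)=v_E(\phi)=0$). Let $C_X$ be the closure in $X$ of the affine curve $\{\phi=0\}$. The first step is the contrapositive of Lemma~\ref{L105}: since $\deg\phi<b_E$, the curve $C_X$ cannot meet $E$. The second step is to locate the center $c_X(v)$ of $v$ on $X$: from the description of the retraction $r_X$ as a monomial valuation read off at $c_X(v)$ (see \S\ref{S218}, in analogy with the local picture of \S\ref{S162}), the hypothesis $r_X(v)=v_E$ --- which places $r_X(v)$ on the one-dimensional cone of the dual fan $\hDelta(X)$ associated with $E$ --- forces $E$ to pass through $c_X(v)$; in particular $c_X(v)\in E$. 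Combining the two steps, $C_X$ does not pass through $c_X(v)$, so Lemma~\ref{L126} yields $v(\phi)=(r_Xv)(\phi)=v_E(\phi)$, that is $v(f^*L)=v_E(f^*L)$. Taking the minimum over $L$ then completes the argument.

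The only point needing genuine care --- and which I regard as the main (mild) obstacle --- is the identification $c_X(v)\in E$ from the assumption $r_X(v)=v_E$: one should check this uniformly over the possibilities $v=v_E$ (center the generic point of $E$), $c_X(v)$ a free closed point of $E$, and $c_X(v)$ a satellite point $E\cap E'$, in each case reading off $c_X(v)\in E$ directly from the recipe for $r_X$. As an alternative that sidesteps any concern about $\phi$ being reducible or non-reduced, one can instead run the argument only for a \emph{general} affine function $L$, for which $v(f^*L)=v(f^*|\fL|)$ and $v_E(f^*L)=v_E(f^*|\fL|)$ hold simultaneously (two Zariski-open conditions on $L$), and then conclude $d(f,v)=-v(f^*L)=-v_E(f^*L)=d(f,v_E)$; since Lemma~\ref{L105} is stated for arbitrary polynomials, however, this refinement is not strictly necessary.
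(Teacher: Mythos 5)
Your argument is exactly the paper's: the corollary is stated there as an immediate consequence of applying Lemma~\ref{L105} (in contrapositive form, so that $\deg f^*L<b_E$ keeps the closure of $\{f^*L=0\}$ off $E$) and Lemma~\ref{L126} to $\phi=f^*L$ for a general affine function $L$, using that $r_X(v)=v_E$ places the center of $v$ on $E$. Your write-up just makes these steps explicit, and is correct.
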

%
%
%
%
\subsection{Proof of Theorem~C}
If we were to follow the proof in the local case, we would
pick a log resolution at infinity of the linear system 
$f^*|\fL|$ on $\P^2$. 
By this we mean an admissible compactification 
$X$ of $\A^2$ such that the strict transform of $f^*|\fL|$
to $X$ has no base points on $X\setminus\A^2$. 
Such an admissible compactification exists by resolution
of singularities. At least when $f$ is proper, we get a well defined
selfmap $r_Xf_\bullet:|\Delta(X)|\to|\Delta(X)|$. 
However, a fixed point $v$ of this map does not
have an immediate bearing on Theorem~C. 
Indeed, we have seen in~\S\ref{S147} that even when $v$
is actually fixed by $f_\bullet$, so that $f(v)=d v$ for some 
$d>0$, it may happen that $d<d_\infty$. 

One way around this problem would be to ensure that the 
compactification $X$ is tight, in the sense of~\S\ref{S150}. 
Unfortunately, it is not always
possible, even for $f$ proper, to find a tight $X$ that defines
a log resolution of infinity of $f^*|\fL|$.

Instead we use a recursive procedure. 
The proof below in fact works also when $f$ is merely
dominant, and not necessarily proper.
Before starting the procedure, let us write down a 
few cases where we actually obtain a proof of Theorem~C.
\begin{Lemma}\label{L106}
  Let $X$ be a tight compactification of $\A^2$ with associated
  retraction $r_X:\cVe{\infty}\to|\Delta(X)|$. Consider a fixed point 
  $v\in|\Delta(X)|$ of the induced selfmap 
  $r_Xf_\bullet:|\Delta(X)|\to|\Delta(X)|$. 
  Assume that we are in one of the following three situations:
  \begin{itemize}
  \item[(a)]
    $f_\bullet v= v$ and $\alpha(v)>0$;
  \item[(b)]
    $f_\bullet v\ne v$, 
    $\alpha(v)>0$, $v$ is divisorial 
    and $b(v)>\deg(f)$;
 \item[(c)]
    $\alpha(v)=0$ and $(r_Xf_\bullet)^n w\to v$ as $n\to\infty$
    for $w\in|\Delta(X)|$ close to $v$.
  \end{itemize}
  Then Theorem~C holds.
\end{Lemma}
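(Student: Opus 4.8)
The plan is to handle the three cases (a), (b), (c) of Lemma~\ref{L106} by exploiting the cocycle structure $d(f^n,\ord_\infty)=\prod_{i=0}^{n-1}d(f,v_i)$, where $v_i=f_\bullet^i\ord_\infty$, together with the retraction estimate of Lemma~\ref{L126} and the invariance of $\cV'_\infty$ from Proposition~\ref{P117}. In all three cases the strategy is the same: produce a constant $d>0$ such that $d(f^n,\ord_\infty)$ is comparable to $d^n$ (or to $nd^n$ in case~(c)), identify $d$ with $d_\infty$, and extract quadratic integrality from the value-group inclusion $d\,\Gamma_v\subseteq\Gamma_v$ exactly as in the local argument of~\S\ref{S148}.

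First I would treat case~(a). Since $\alpha(v)>0$, Theorem~\ref{T105} (or rather the dichotomy it yields on $\cV'_\infty$) gives $\alpha(v)^{-1}\ord_\infty\le v\le\ord_\infty$ on $R$. Write $f(v)=d\,v$ with $d=d(f,v)$. Then for any polynomial $\phi$ one has $v(f^{n*}\phi)=d^n v(\phi)$, and sandwiching $v$ between $\ord_\infty$ and $\alpha(v)^{-1}\ord_\infty$ gives $d^n\ord_\infty(\phi)\le -\,d^n|v(\phi)|\le \cdots$; applied to $\phi$ a general affine function this yields $\alpha(v)^{-1}d^n\le\deg f^n\le d^n$ after the usual sign bookkeeping, hence $d_\infty=d$ and case~(a) of Theorem~C holds with $C=\alpha(v)^{-1}$. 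The quadratic integrality of $d_\infty$ follows since $v$ is quasimonomial: $d_\infty\Gamma_v=\Gamma_{f(v)}\subseteq\Gamma_v$, and $\Gamma_v$ is $\Z$ or $\Z^2$, so $d_\infty$ is an eigenvalue of an integer matrix. Case~(b) is the analogue of case~(2) in~\S\ref{S168}: here $r_Xf_\bullet v=v$ but $f_\bullet v\ne v$, and $v=\ord_E$ is divisorial with $b(v)=b_E>\deg f$. By Corollary~\ref{C101}, $d(f,\cdot)$ is constant equal to $d:=d(f,v_E)$ on the set $U$ of valuations retracting to $v_E$ under $r_X$; one then checks, as in~\S\ref{S148} (using $v\ge v_0$, monotonicity of $f_\bullet$, and connectedness of $\overline{U}$), that $f_\bullet(\overline U)\subseteq U$, so the orbit $v_n=f_\bullet^n\ord_\infty$ eventually enters and stays in $U$ (note $\ord_\infty\le v_E$ forces $\ord_\infty$ or at least its first iterate into $\overline U$; if not, one restarts the argument from a divisorial valuation in $|\Delta(X)|$ dominated appropriately). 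Then $d(f^n,\ord_\infty)$ equals $d^n$ up to a bounded factor, $d_\infty=d$, and integrality follows from the value-group inclusion as before.

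The main obstacle is case~(c), where $\alpha(v)=0$. Here Theorem~\ref{T105} applies: $A(v)\le0\le\alpha(v)=0$ forces $A(v)=-1$ and there are coordinates $(z_1,z_2)$ in which $v$ is monomial with $v(z_1)=-1$, $v(z_2)=0$. In these coordinates $v$ is the valuation $\ord_E$ where $E$ is the prime at infinity "transverse to the $z_2$-fibration", and $d_\infty$ should equal $d:=d(f,v)=-v(f^*|\fL|)$. The subtlety is that $v\le\ord_\infty$ but $v\not\ge C\ord_\infty$ for any $C$, so one cannot sandwich directly; instead the convergence hypothesis $(r_Xf_\bullet)^nw\to v$ must be upgraded to genuine convergence $f_\bullet^n\ord_\infty\to v$, with a quantitative rate. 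The expected picture is that $f_\bullet$ has $v$ as a (one-sided) attracting fixed point along the segment of $|\Delta(X)|$ issuing toward $\ord_\infty$, with local multiplier $1$ on that segment, which is precisely what produces the linear factor: $\deg f^n=d(f^n,\ord_\infty)\sim n\,d^n$. To make this rigorous I would analyze $f_\bullet$ near $v$ using the monomial coordinates: $f_\bullet$ acts on the incoming tangent direction, and the derivative computation (in the $\alpha$-parametrization, with $\alpha(v)=0$ at the fixed point) shows the approach is polynomially slow of order exactly one, so $\prod_{i<n}d(f,v_i)=d^n\prod_{i<n}(1+O(1/i))\asymp n d^n$. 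The monomial form $f(z_1,z_2)=(\phi(z_1),\psi(z_1)z_2^{d}+\cdots)$ with $\deg\phi=d$, $\deg\psi>0$, then drops out by reading off how $f$ must act to fix this monomial valuation while having $d(f,v)=d$; conjugating by the automorphism bringing $v$ to monomial form gives the skew-product normal form in Theorem~C(b), and this is the only way $\alpha(v)=0$ can arise at a fixed point, matching the "exactly one of (a),(b)" dichotomy. Quadratic integrality of $d_\infty=d$ in this case is immediate since $d=\deg\phi\in\Z$. The genuinely delicate point throughout is ensuring the hypotheses of Lemma~\ref{L106} are met with the orbit of $\ord_\infty$ itself (not merely some auxiliary valuation), which requires the recursive blow-up procedure alluded to in the text — choosing successively larger tight compactifications until the fixed point of $r_Xf_\bullet$ stabilizes into one of the three good configurations — and I expect this bookkeeping to be the longest part of the full argument.
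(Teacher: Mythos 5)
Your strategy coincides with the paper's proof in all three cases: in~(a) the sandwich between multiples of $\ord_\infty$ plus the value-group inclusion $d\Gamma_v\subseteq\Gamma_v$; in~(b) Corollary~\ref{C101} making $d(f,\cdot)$ constant on the basin $U(\xi)$ of the free point $\xi=c_X(f_\bullet v)$, whence $f_\bullet\overline{U(\xi)}\subseteq U(\xi)$ and $d(f^n,v)=d^n$; in~(c) Theorem~\ref{T105} followed by the skew-product normal form. Two corrections to~(a) and~(b). Your sandwich is written backwards: at infinity one has $\a^{-1}v\le\ord_\infty\le v$ on $R$ (since $\ord_\infty(\phi)\le v(\phi)\le\a(v)\,\ord_\infty(\phi)$ by~\eqref{e151}), which gives $d^n\le\deg f^n\le\a^{-1}d^n$; your stated conclusion $\a^{-1}d^n\le\deg f^n\le d^n$ is inconsistent for $\a(v)<1$. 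Also, in~(b) the cocycle is evaluated along the orbit of $v$ itself, not of $\ord_\infty$ — once $d(f^n,v)=d^n$ is known, the same sandwich applied to $f^{n*}|\fL|$ bounds $\deg f^n$ — so your detour about forcing the orbit of $\ord_\infty$ into $\overline U$, and the suggestion to ``restart from another valuation,'' are unnecessary.

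The one genuine gap is at the start of case~(c): the hypothesis only gives $r_Xf_\bullet v=v$, and you pass directly to ``how $f$ must act to fix this monomial valuation'' without establishing $f_\bullet v=v$; your proposed fix (upgrading to convergence of the orbit of $\ord_\infty$) is not the relevant point and would not by itself yield the normal form. The missing argument is short: $r_Xf_\bullet v=v$ forces $f_\bullet v\ge v$ (Lemma~\ref{L126}), while $\a(v)=0$ makes $v$ an end of the tight tree $\cV'_\infty$, which is $f_\bullet$-invariant by Proposition~\ref{P117}; since any $w>v$ has $\a(w)<0$, necessarily $f_\bullet v=v$. From $f_\bullet v(z_1)=0$ one then gets the skew product, and the induced map $t\mapsto t\deg\phi/(d+t\deg\psi)$ on nearby monomial valuations together with the attraction hypothesis forces $\deg\phi<d$, or $\deg\phi=d$ and $\deg\psi>0$. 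Note finally that case~(c) does not always produce $\deg f^n\sim nd^n$ as you assert: when $\deg\phi<d$ the convergence $t_n\to0$ is geometric, the product $\prod_i d(f,w_{t_i})$ is comparable to $d^n$, and one lands in alternative~(a) of Theorem~C; the linear factor occurs only when $\deg\phi=d$.
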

\begin{proof}
  Case~(a) is treated as in the local situation. 
  Since $\a:=\a(v)>0$ we have 
  $\a^{-1} v\le\ord_\infty\le v$ on $R$.
  Write $f(v)=d v$, where $d=d(f, v)>0$. Then
  \begin{equation*}
    \deg f^n
    =-\ord_\infty(f^{n*}|\fL|)
    \le-\a^{-1} v(f^{n*}|\fL|)
    =-\a^{-1}d^n v(|\fL|)
    =\a^{-1}d^n.
  \end{equation*}
  Similarly, $\deg f^n\ge d^n$. This proves statement~(a) of Theorem~C
  (and that $d_\infty=d$). The fact that $d=d_\infty$ is a quadratic
  integers
  is proved exactly as in the local case, using value groups.
  Indeed, one obtains $d\Gamma_v\subseteq\Gamma_v$. 
  Since $\Gamma_v\simeq\Z$ or 
  $\Gamma_v\simeq\Z\oplus\Z$, $d$ must be a quadratic integer.

  Next we turn to case~(b). 
  By the analogue of Lemma~\ref{L123} we may assume that 
  the center of $f_\bullet v$ on $X$
  is a free point $\xi$ of $E$. By Corollary~\ref{C101} we have
  $d(f,\cdot)\equiv d:=d(f, v)$ on $U(\xi)$. 
  As in the local case, this implies that 
  $f_\bullet\overline{U(\xi)}\subseteq U(\xi)$, $d(f^n, v)=d^n$,
  $d^n\le\deg(f^n)\le\alpha^{-1} d^n$, so that we are in case~(a)
  of Theorem~C, with $d_\infty=d$. 
  The fact that $d=d_\infty$ is a quadratic integer follows from
  $d\Gamma_v\subseteq\Gamma_v\simeq\Z$.
  In fact, $d\in\N$.

  Finally we consider case~(c). Recall that the statements of 
  Theorem~C are invariant under conjugation
  by polynomial automorphisms.
  Since $X$ is tight and $\a(v)=0$, we may 
  by Theorem~\ref{T105}  choose coordinates 
  $(z_1,z_2)$ on $\A^2$ in which $v$ is monomial
  with $v(z_1)=0$, $v(z_2)=-1$.
  Since $v$ is an end in the $f_\bullet$-invariant tree 
  $\cV'_\infty$ and $r_Xf_\bullet v= v$,
  we must have $f_\bullet v= v$.
  In particular, $f_\bullet v(z_1)=0$, which implies that
  $f$ is a skew product of the form
  \begin{equation*}
    f(z_1,z_2)=(\phi(z_1),\psi(z_1)z_2^d+O_{z_1}(z_2^{d-1})),
  \end{equation*}
  where $d\ge1$ and  $\phi,\psi$ are nonzero polynomials.
  The valuations in $|\Delta(X)|$ close to $v$ must 
  also be monomial valuations, of the form $w_t$,
  with $w_t(z_1)=-t$ and $w_t(z_2)=-1$, where $0\le t\ll1$.
  We see that $f(w_t)(z_1)=-t\deg\phi$ and 
  $f(w_t)(z_2)=-(d+t\deg q)$. When $t$ is irrational,
  $f_\bullet w_t$ must be monomial, of the form $ w_{t'}$,
  where $t'=t\frac{\deg p}{d+t\deg q}$. By continuity, 
  this relationship must hold for all real $t$, $0\le t\ll1$.
  By our assumptions, $t'<t$ for $0<t\ll1$. This implies
  that either $\deg p<d$ or that $\deg p=d$, $\deg q>0$.
  It is then clear that $d_\infty=d$ is an integer, proving 
  the first statement in Theorem~C.
  Finally, from a direct computation, that we leave as an 
  exercise to the reader, it follows that $\deg f^n\sim nd^n$.
\end{proof}
The main case not handled by Lemma~\ref{L106} is the
case~(b) but without the assumption that $b_E>\deg f$.
In this case we need to blow up further.
\begin{Lemma}\label{L107}
  Let $X$ be a tight compactification of $\A^2$ with associated
  retraction $r_X:\cVe{\infty}\to|\Delta(X)|$. Assume that 
  $v= v_E=b_E^{-1}\ord_E\in|\Delta(X)|$ 
  is a divisorial valuation such that 
  $r_Xf_\bullet v_E= v_E$ but $f_\bullet v_E\ne v_E$.
  Then there exists a tight compactification $X'$ of $\A^2$ 
  dominating $X$ and a valuation
  $v'\in|\Delta(X')|\setminus|\Delta(X)|$
  such that $r_{X'}f_\bullet v'= v'$ and such that we are in 
  one of the following cases:
  \begin{itemize}
  \item[(a)]
    $f_\bullet v'= v'$ and $\alpha(v')>0$;
  \item[(b)]
    $f_\bullet v'\ne v'$, 
    $v'$ is divisorial, $\alpha(v')>0$ and $b(v')>b(v)$;
  \item[(c)]
    $\alpha(v')=0$ and $(r_{X'}f_\bullet)^n w\to v'$ as $n\to\infty$
    for $w\in|\Delta(X)|$ close to $v'$.
  \end{itemize}
\end{Lemma}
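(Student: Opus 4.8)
\textbf{Plan for the proof of Lemma~\ref{L107}.}
The plan is to mimic the recursive blow-up procedure used in Corollary~\ref{C113}, but now carried out \emph{equivariantly} with respect to $f_\bullet$. We start from the divisorial valuation $v=v_E$ with $r_Xf_\bullet v=v$ but $f_\bullet v\neq v$. By the analogue at infinity of Lemma~\ref{L123}, after enlarging $X$ (keeping $|\hDelta^*(X)|$ unchanged and $X$ tight, which is possible by Lemma~\ref{L127} since $\a(v)\ge 0\ge A(v)$, with strict inequalities unless we are already essentially in case~(c)) we may assume that the center of $f_\bullet v$ on $X$ is a \emph{free} point $\xi\in E$, not equal to the center of any other valuation in $|\Delta(X)|$. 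Let $X_1$ be the blow-up of $X$ at $\xi$, with new prime $E_1$; by Lemma~\ref{L127}, $X_1$ is tight unless $\a_E=0$ or $A_E=0$. If $A_E=0$ or $\a_E=0$ we are led (after possibly one more reduction) to case~(c): indeed $A_E=0$ forces, via Theorem~\ref{T105}, monomial coordinates at $v$, and the skew-product analysis from case~(c) of Lemma~\ref{L106} applies; similarly $\a_E=0$ already puts $v$ in that situation. So assume $X_1$ is tight.

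The next step is to locate a fixed point of $r_{X_1}f_\bullet$ on the segment of $|\Delta(X_1)|$ newly created, i.e. on $[\,v_E,v_{E_1}]$ together with $U(\xi)\cap|\Delta(X_1)|$. Since $f_\bullet v_E\in U(\xi)$ and $f_\bullet$ is continuous, a standard fixed-point argument for continuous selfmaps of a finite metric tree (Proposition~\ref{P119}) applied to $r_{X_1}f_\bullet$ restricted to $\overline{U(\xi)}\cap|\Delta(X_1)|$ — which maps into itself because $v\ge v_E$ implies $f_\bullet v\ge f_\bullet v_E>v_E$ for all $v\in\overline{U(\xi)}$, exactly as in~\S\ref{S148} — produces a fixed point $v_1\in|\Delta(X_1)|\setminus|\Delta(X)|$ with $r_{X_1}f_\bullet v_1=v_1$. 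Now there are again three possibilities for $v_1$: either $f_\bullet v_1=v_1$ (and then $\a(v_1)>0$ since $v_1\in\cV'_\infty$ is not an end of the type handled by Theorem~\ref{T105}, unless $\a(v_1)=0$ in which case we are in case~(c)); or $v_1$ is irrational and fixed, which again gives $\a(v_1)>0$ or case~(c); or $v_1$ is divisorial with $f_\bullet v_1\neq v_1$. In the first two cases we are in~(a) or~(c) and we stop. In the third case we claim $b(v_1)>b(v)$: since $\xi$ was a free point on $E$, the first blow-up gives $b(E_1)=b(E)$ by~\eqref{e152}, and $v_1\ne v_{E_1}$ being divisorial and $>v_{E_1}$ in the order forces, by the analogue of Lemma~\ref{L123}(iv) together with the structure of satellite blow-ups, that $b(v_1)$ is a proper multiple of $b(E_1)=b(E)$; hence $b(v_1)>b(v)$ and we are in case~(b).

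The main obstacle I anticipate is showing the procedure actually \emph{terminates in the stated alternatives} rather than running forever in case~(b) with $b(v)$ strictly increasing but never triggering (a) or (c). This is where the tightness constraint is essential: along the sequence of blow-ups the log discrepancies of the newly created primes satisfy $A_{E_{j+1}}=A_{E_j}+1$ (for free blow-ups, by~\eqref{e152}), while tightness demands $A\le 0$ on all of $|\Delta(X_j)|$; so only finitely many free blow-ups in a row are permitted before the center of $f_\bullet v$ must become a \emph{satellite} point, at which stage the behaviour changes and one analyses it as in the proof of Corollary~\ref{C113}. Thus the real content is a careful bookkeeping argument — tracking $b(v_j)$, $A(v_j)$, $\a(v_j)$ and whether centers are free or satellite — to guarantee that after finitely many steps we land in (a), (b) (with the strict inequality $b(v')>b(v)$, which then feeds back into Lemma~\ref{L106}(b) and a further finite induction on $b$), or (c). Once the combinatorial termination is established, the verification that each terminal configuration matches (a), (b) or (c) is routine given Lemma~\ref{L106}, Theorem~\ref{T105}, Corollary~\ref{C101} and the invariance $f_\bullet\cV'_\infty\subseteq\cV'_\infty$ from Proposition~\ref{P117}.
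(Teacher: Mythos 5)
Your plan follows the paper's overall strategy — blow up the free point at the center of $f_\bullet v_E$, locate a fixed point of $r_{X_1}f_\bullet$ on the new segment, classify it, and iterate using a monotone invariant — but two concrete points need correction.

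First, the degenerate cases $\a_E=0$ or $A_E=0$ do not lead to case~(c); they simply cannot occur under the hypotheses of the lemma. Either condition makes $v_E$ an end of the tight tree $\cV'_\infty$ (as $\a$ is strictly decreasing and $A$ strictly increasing along the tree rooted at $\ord_\infty$). Since $f_\bullet v_E\in\cV'_\infty$ by Proposition~\ref{P117}, while $r_Xf_\bullet v_E=v_E$ forces $f_\bullet v_E\ge v_E$, we would conclude $f_\bullet v_E=v_E$, contradicting the hypothesis. So in fact $\a(v_E)>0$ and $A(v_E)<0$ strictly; integrality of $\a_E$ and $A_E$ then gives $\a(v_E)\ge b_E^{-2}$ and $A(v_E)\le -b_E^{-1}$, which is exactly what makes the blow-up $X_1$ tight automatically. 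No separate reduction to case~(c) is needed, and indeed none exists here.

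Second, your classification of the new fixed point $v_1$ silently assumes $v_1\ne v_{E_1}$ when deducing $b(v_1)>b(v)$ for case~(b). But the fixed point of $r_{X_1}f_\bullet$ can perfectly well be the new endpoint $v_{E_1}$ itself, with $f_\bullet v_{E_1}\ne v_{E_1}$ and $\a(v_{E_1})>0$; then $b(v_{E_1})=b(v_E)$ by~\eqref{e152}, so none of (a), (b), (c) hold at this stage. This is precisely the case where one must blow up again at the free center of $f_\bullet v_{E_1}$ on $X_1$, and so on. Your termination paragraph gestures at the right invariants but does not close the argument: the point is that each such step is a \emph{free} blow-up, so $b_n=b_E$ stays constant and, by~\eqref{e155}, $\a(v_n)=\a(v_E)-n\,b_E^{-2}$. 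Tightness forces $\a(v_n)\ge 0$, so this loop halts after at most $b_E^2\,\a(v_E)$ steps, at which point one of (a), (b), (c) is reached; equivalently one may track $A(v_n)=A(v_E)+n\,b_E^{-1}\le 0$. Supplying this quantitative bound — and cleanly separating the ``endpoint'' sub-case that triggers the loop from the ``interior'' sub-case that yields (b) directly — is the missing content in your plan; otherwise it matches the paper's proof.
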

It is clear that repeated application of Lemma~\ref{L106} and
Lemma~\ref{L107} leads to a proof of Theorem~C. 
The only thing remaining is to prove Lemma~\ref{L107}.
\begin{proof}
  Write $v_0= v$. 
  By (the analogue at infinity of) Lemma~\ref{L123} 
  we may find an admissible compactification
  $X_0$ dominating $X$, 
  such that $|\Delta_0|:=|\Delta(X_0)|=|\Delta(X)|$, 
  $r_0:=r_{X_0}=r_X$ and such that the center of 
  $v_0= v$ on $X_0$ is a prime $E_0$ of $X_0$ at infinity. 
  Since $f_\bullet v_0\ne v_0$, the center of $f_\bullet v_0$
  must be a free point $\xi_0\in E_0$.
  Let $X_1$ be the blowup of $X_0$ at $\xi_0$, 
  $E_1$ the exceptional divisor and $v_1=b_1^{-1}\ord_{E_1}$
  the associated divisorial valuation. Note that 
  $b_1=b_0$ and $\a(v_1)=\a(v_0)-b_0^{-1}$ by~\eqref{e155}.
  In particular, $X_1$ is still tight.
  Write $|\Delta_1|=|\Delta(X_1)|$ and $r_1:=r_{X_1}$.
  We have 
  $r_1f_\bullet v_0\in|\Delta_1|\setminus|\Delta_0|=\,] v_0, v_1]$.
  Thus there are two cases:
 \begin{itemize}
  \item[(1)]
    there exists a fixed point 
    $v'\in\,] v_0, v_1[$ for $r_1f_\bullet$;
  \item[(2)]
    $(r_1f_\bullet)^n\to v_1=r_1f_\bullet v_1$ as $n\to\infty$;
  \end{itemize}
  Let us first look at case~(1). Note that $\a(v')>\a(v_1)\ge0$.
  If $f_\bullet v'= v'$, then we are in situation~(a) and the
  proof is complete. Hence we may assume that 
  $f_\bullet v'\ne v'$.
  Then $v'$ is necessarily divisorial. 
  By Lemma~\ref{L123} we have $b(v')>b_0=b(v)$.
  We are therefore in situation~(b), so the 
  proof is complete in this case.

  It remains to consider case~(2). 
  If $\a(v_1)=0$, then we set $X'=X_1$, 
  $v'= v_1$ and we are in situation~(c).
  We can therefore assume that $\a(v_1)>0$.
  If $f_\bullet v_1= v_1$, then we set
  $X'=X_1$, $v'= v_1$ and we are in 
  situation~(a).  
  If $f_\bullet v_1\ne v_1$, so that the center of 
  $f_\bullet v_1$ is a free point $\xi_1\in E_1$,
  then we can repeat the procedure above.
  Let $X_2$ be the blowup of $X_1$ at $\xi_1$, let $E_2$
  be the exceptional divisor and $v_2=b_2^{-1}\ord_{E_2}$
  the associated divisorial valuation. 
  We have $b_2=b_1=b$ and $\a(v_2)=\a(v_1)-b^{-1}=\a(v)-2b^{-1}$ 
  by~\eqref{e155}.

  Continuing the procedure above must eventually lead us to
  the situation in~(a) or~(c). Indeed, all of our compactifications
  are tight, so in particular all valuations $v_n$ satisfy
  $\a(v_n)\ge0$. But $\a(v_n)=\a(v)-nb^{-2}$. 
  This completes the proof.
\end{proof}
%
%
%
%
\subsection{Further properties}\label{S170}
The presentation above was essentially optimized 
to give a reasonably short proof of Theorem~C. 
While it is beyond the scope of these notes to present the details, let 
us briefly summarize some further results from~\cite{eigenval,dyncomp}.
Let $f:\A^2\to\A^2$ be a polynomial mapping and write $f$
also for its extension $f:\BerkAtwo\to\BerkAtwo$.

To begin, $f$ interacts well with the classification of points: 
if $v\in\hcVe{\infty}$ and $f(v)\in\hcVe{\infty}$ then $f(v)$ is
of the same type as $v$ (curve, divisorial, irrational or
infinitely singular). This is proved using numerical invariants
in the same way as in~\S\ref{S118}.

At least when $f$ is proper the induced map 
$f_\bullet:\cVe{\infty}\to\cVe{\infty}$ is continuous, finite and 
open. This follows from general results on Berkovich spaces,
just as in Proposition~\ref{P107}. As a consequence, the 
general results on tree maps in~\S\ref{S130} apply.

In~\cite{eigenval,dyncomp}, the existence of an \emph{eigenvaluation} 
was emphasized. This is a valuation $v\in\cVe{\infty}$ such that 
$f(v)=d_\infty v$. One can show from general tree arguments that 
there must exist such a valuation in the tight tree $\cV'_\infty$.
The proof of Theorem~C gives an alternative construction of 
an eigenvaluation in $\cV'_\infty$. 

Using a lot more work, the global dynamics on $\cV'_\infty$ is 
described in~\cite{dyncomp}. Namely, the set $\cT_f$ 
of eigenvaluations in $\cV'_\infty$ is either a singleton 
or a closed interval. (The ``typical'' case is that of 
a singleton.) In both cases we have $f^n_\bullet v\to\cT_f$ 
as $n\to\infty$, for all but at most one $v\in\cV'_\infty$.
This means that the dynamics on the tight tree 
$\cVe{\infty}$ is globally contracting, as opposed to 
a rational map on the Berkovich projective line, which
is globally expanding.

Using the dynamics on $\cV'_\infty$, the cocycle $d(f^n,v)$ can be very well
described: for any $v\in\cV'_\infty$ the sequence $(d(f^n,v))_{n\ge 0}$ 
satisfies an integral recursion relation. Applying this to $v=\ord_\infty$
we see that the degree growth sequence $(\deg(f^n))$ satisfies such
a recursion relation.

As explained in the introduction, one motivation for the results in this
section comes from polynomial mappings of the complex plane 
$\C^2$, and more precisely
understanding the rate at which orbits are attracted to infinity.
Let us give one instance of what can be proved.
Suppose $f:\C^2\to\C^2$ is a dominant polynomial mapping
and assume that $f$ has ``low topological degree'' in the sense
that the asymptotic degree $d_\infty(f)$ is strictly larger than the topological
degree of $f$, \ie the number of preimages of a typical point.
In this case, we showed in~\cite{dyncomp} that the functions
\begin{equation*}
  \frac1{d_\infty^n}\log^+\|f^n\|
\end{equation*}
converge uniformly on compact subsets of $\C^2$ to a plurisubharmonic
function $G^+$ called the \emph{Green function} of $f$. 
Here $\|\cdot\|$ is any norm on $\C^2$ 
and we write $\log^+\|\cdot\|:=\max\{\log|\cdot|,0\}$.
This Green function is important for understanding the ergodic
properties of $f$, as explored by 
Diller, Dujardin and Guedj~\cite{DDG1,DDG2,DDG3}.
%
%
%
%
\subsection{Other ground fields}\label{S272}
Throughout this section we assumed that the ground field was
algebraically closed and of characteristic zero. Let us briefly
discuss what happens when one or more of these assumptions 
are not satisfied.

\subsection{Other ground fields}\label{S272}
Throughout the section we assumed that the ground field was
algebraically closed and of characteristic zero.

The assumption on the characteristic was used in the
proof of formula~\eqref{e156} and hence of Proposition~\ref{P117}.
The proof of the monomialization result (Theorem~\ref{T105})
also used characteristic zero.
It would be interesting to  have an argument for Theorem~C that works in
arbitrary characteristic.

On the other hand, assuming that $\charac K=0$,
the assumption that $K$ be algebraically closed is
unimportant for Theorem~C, at least for statements~(a) and~(b).
Indeed, if $K^a$ is the algebraic closure of $K$, then any polynomial
mapping $f:\A^2(K)\to\A^2(K)$ induces a polynomial
mapping $f:\A^2(K^a)\to\A^2(K^a)$.
Further, an embedding $\A^2(K)\hookrightarrow\P^2(K)$
induces an embedding $\A^2(K^a)\hookrightarrow\P^2(K^a)$
and the degree of $f^n$ is then independent of whether we work over
$K$ or $K^a$. Thus statements~(a) and~(b) of Theorem~C trivially
follow from their counterparts over an algebraically closed field
of characteristic zero.

%
%
%
%
\subsection{Notes and further references}
  The material in this section is adapted from the papers~\cite{eigenval,dyncomp}
  joint with Charles Favre,
  but with a few changes in the presentation. 
  In order to keep these lecture notes reasonably coherent, I have also 
  changed some of the notation from the original papers.
  I have also emphasized a geometric 
  approach that has some hope of being applicable in higher 
  dimensions and the presentation is streamlined to give a reasonably
  quick proof of Theorem~C.
  
  Instead of working on valuation space, it is possible to consider
  the induced dynamics on divisors on the Riemann-Zariski space.
  By this we mean the data of one divisor at infinity for each
  admissible compactification of $\A^2$ (with suitable compatibility 
  conditions when one compactification dominates another.
  See~\cite{dyncomp} for more details and~\cite{deggrowth} for 
  applications of this point of view in a  slightly different context.
%
%
%
%
%
%

\end{document}